\numberwithin{equation}{section}
\DeclareSymbolFont{cyrletters}{OT2}{wncyr}{m}{n}
\DeclareMathSymbol{\Sha}{\mathalpha}{cyrletters}{"58}
\newcommand{\tony}[1]{{\color{blue} \sf
    $\spadesuit\spadesuit\spadesuit$ TONY: [#1]}}
\newcommand{\F}{\mathbf{F}}
\newcommand{\CC}{\mathbf{C}}
\newcommand{\G}{\mathbf{G}}
\newcommand{\tr}[0]{\operatorname{tr}}
\newcommand{\wt}[1]{\widetilde{#1}}
\newcommand{\Q}{\mathbf{Q}}
\newcommand{\Z}{\mathbf{Z}}
\newcommand{\mf}[1]{\mathfrak{#1}}
\newcommand{\Gal}{\operatorname{Gal}}
\newcommand{\ul}[1]{\underline{#1}}
\newcommand{\ol}[1]{\overline{#1}}
\newcommand{\wh}[1]{\widehat{#1}}
\newcommand{\mbb}[1]{\mathbb{#1}}
\newcommand{\Cal}[1]{\mathcal{#1}}
\DeclareMathOperator{\et}{\text{\'{e}t}}
\newcommand{\co}{\colon}
\newcommand{\mrm}[1]{\mathrm{#1}}
\newcommand{\msf}[1]{\mathsf{#1}}
\newcommand{\bs}{\backslash}
\newcommand{\TT}{\mathbb{T}}
\newcommand{\dotimes}{\stackrel{\mrm{L}}\otimes}
\newcommand{\ld}{{}^L}
\newcommand{\FF}{\mathbb{F}}
\newcommand{\bbm}[1]{\mathbbm{#1}}
\newcommand{\bu}{\bullet}
\newcommand{\surj}{\twoheadrightarrow}
\newcommand{\inj}{\hookrightarrow}
\newcommand{\lp}{{}^{\mf{p}}}
\newcommand{\colim}{\varinjlim}
\newcommand{\limit}{\varprojlim}
\newcommand{\mBM}{\mathrm{H}^{\mathrm{BM}}}
\newcommand{\OO}{\mbb{O}}
\newcommand{\cHom}{\Cal{H}om}
\newcommand{\cRHom}{\Cal{R}\Cal{H}om}
\newcommand{\Rlim}{\mrm{R}\hspace{-0.1cm}\varprojlim}
\newcommand{\dimg}{\mrm{dim.}\,\mrm{trg}\,}
\DeclareMathOperator{\bd}{bd}
\newcommand{\DSY}[2]{D_{(L^+{#1})}^{\ULA}(\Gr_{{#1}, S/\Div^1_X};{#2})^{\bd}}
\newcommand{\PSY}[3]{\Parity^{\ULA}(\Gr_{#1, S/\Div^{#3}_X};{#2})}
\newcommand{\nPSY}[3]{\Parity_{\msf{n}}^{\ULA}(\Gr_{#1, S/\Div^{#3}_X};{#2})}
\newcommand{\ellt}{^{(\ell)}}
\newcommand{\iellt}{^{(\ell^{-1})}}
\newcommand{\SatGr}[3]{\Sat(\Gr_{#1, S/(\Div^1_X)^{#3}};{#2})}
\newcommand{\PP}{\mathbb{P}}
\newcommand\cB{\mathcal{B}}
\newcommand\cD{\mathcal{D}}
\newcommand\cE{\mathcal{E}}
\newcommand\cF{\mathcal{F}}
\newcommand\cG{\mathcal{G}}
\newcommand\cH{\mathcal{H}}
\newcommand\cK{\mathcal{K}}
\newcommand\cL{\mathcal{L}}
\newcommand\cO{\mathcal{O}}
\newcommand\cP{\mathcal{P}}
\newcommand\cS{\mathcal{S}}
\newcommand\cT{\mathcal{T}}
\newcommand\cY{\mathcal{Y}}
\newcommand\cZ{\mathcal{Z}}
\newcommand\sH{\mathscr{H}}
\DeclareMathOperator{\GL}{GL}
\DeclareMathOperator{\Frob}{Frob}
\DeclareMathOperator{\Hom}{Hom}
\DeclareMathOperator{\Ima}{Im\,}
\DeclareMathOperator{\ord}{ord}
\DeclareMathOperator{\Aut}{Aut}
\DeclareMathOperator{\Rep}{Rep}
\DeclareMathOperator{\Nm}{Nm}
\DeclareMathOperator{\Spec}{Spec\,}
\DeclareMathOperator{\Lie}{Lie}
\DeclareMathOperator{\End}{End}
\DeclareMathOperator{\ad}{ad}
\DeclareMathOperator{\Br}{Br}
\DeclareMathOperator{\Res}{Res}
\DeclareMathOperator{\Frac}{Frac}
\DeclareMathOperator{\Div}{Div}
\DeclareMathOperator{\Stab}{Stab}
\DeclareMathOperator{\Bun}{Bun}
\DeclareMathOperator{\Ext}{Ext}
\DeclareMathOperator{\Id}{Id}
\DeclareMathOperator{\Gr}{Gr}
\DeclareMathOperator{\pt}{pt}
\DeclareMathOperator{\Hck}{Hck}
\DeclareMathOperator{\Fix}{Fix}
\DeclareMathOperator{\Sht}{Sht}
\DeclareMathOperator{\Sat}{Sat}
\DeclareMathOperator{\Perf}{Perf}
\DeclareMathOperator{\Psm}{Psm}
\DeclareMathOperator{\Tilt}{Tilt}
\DeclareMathOperator{\Exc}{Exc}
\DeclareMathOperator{\Parity}{Par}
\DeclareMathOperator{\Shv}{Shv}
\DeclareMathOperator{\gr}{gr}
\DeclareMathOperator{\alg}{alg}
\DeclareMathOperator{\Mod}{Mod}
\DeclareMathOperator{\pr}{pr}
\DeclareMathOperator{\supp}{supp}
\DeclareMathOperator{\Iw}{Iw}
\DeclareMathOperator{\BS}{BS}
\newcommand{\cHck}{\mathcal{H}\mrm{ck}}
\DeclareMathOperator{\ULA}{ULA}
\DeclareMathOperator{\Spd}{Spd}
\DeclareMathOperator{\IC}{IC}
\DeclareMathOperator{\Spa}{Spa}
\DeclareMathOperator{\Tor}{Tor}
\DeclareMathOperator{\conv}{conv}
\DeclareMathOperator{\twi}{tw}
\DeclareMathOperator{\sm}{sm}
\DeclareMathOperator{\FS}{FS}
\DeclareMathOperator{\cInd}{c-Ind}
\DeclareMathOperator{\Witt}{Witt}
\DeclareMathOperator{\Flat}{Flat}
\DeclareMathOperator{\CT}{CT}
\DeclareMathOperator{\red}{red}
\DeclareMathOperator{\GM}{GM}
\DeclareMathOperator{\TV}{TV}
\DeclareMathOperator{\Fun}{Fun}
\DeclareMathOperator{\Loc}{Loc}
\DeclareMathOperator{\cd}{cd}
\DeclareMathOperator{\good}{good}
\newcommand{\RGamma}{\mathrm{R}\Gamma}
\newcommand{\bdr}{\mathrm{B_{dR}}}
\newcommand{\bdrp}{\mathrm{B}^+_{\mathrm{dR}}}
\newcommand{\tw}[1]{\langle #1 \rangle}
\newcommand{\Iwu}{\mathrm{Iw}_{\mathrm{u}}}
\newcommand{\di}{\diamond}
\newcommand{\Projf}{\mrm{Proj}^{\mrm{f}}}
\newcommand{\DD}{\mathbb{D}}
\newcommand{\GG}{\mathbb{G}}
\newcommand{\UU}{\mathbb{U}}
\DeclareMathOperator{\Fr}{Fr}
\newcommand{\rB}{\ensuremath{\mathrm{B}}\xspace}
\newcommand{\rE}{\ensuremath{\mathrm{E}}\xspace}
\newcommand{\rH}{\ensuremath{\mathrm{H}}\xspace}
\newcommand{\rR}{\ensuremath{\mathrm{R}}\xspace}
\newcommand{\rT}{\ensuremath{\mathrm{T}}\xspace}
\newcommand{\bF}{\mathbf{F}}
\newcommand{\bT}{\mathbf{T}}
\newcommand{\bW}{\mathbf{W}}
\newcommand{\brE}{\breve{E}}
\newcommand{\br}{\mathrm{br}}
\newcommand{\cBr}{\mathcal{B}r}
\newcommand{\cbr}{br}
\newcommand{\wcbr}{\wt{\cbr}}
\newcommand{\chE}{\check{E}}
\newcommand{\chG}{\check{G}}
\newcommand{\chH}{\check{H}}
\newcommand{\chT}{\check{T}}
\newcommand{\chbT}{\check{\bT}}
\newcommand{\chphi}{\check{\phi}}
\newcommand{\chpsi}{\check{\psi}}
\newcommand{\DulaI}[3]{D_{(L^+{#1})}^{\ULA}(\Gr_{{#1}, S/(\Div^{1}_X)^{#3}}; #2)^{\bd}}
\newcommand{\Dula}[3]{D_{(L^+{#1})}^{\ULA}(\Gr_{{#1}, S/\Div^{#3}_X}; #2)^{\bd}}
\newcommand{\DulacHckI}[3]{D_{\et}^{\ULA}(\cHck_{{#1}, S/(\Div^{1}_X)^{#3}}; #2)^{\bd}}
\newcommand{\DulacHck}[3]{D_{\et}^{\ULA}(\cHck_{{#1}, S/\Div^{#3}_X}; #2)^{\bd}}
\newcommand{\perfulaI}[3]{\Perf_{(L^+{#1})}^{\ULA}(\Gr_{{#1}, S/(\Div^{1}_X)^{#3}}; #2)^{\bd}}
\newcommand{\perfula}[3]{\Perf_{(L^+{#1})}^{\ULA}(\Gr_{{#1}, S/\Div^{#3}_X}; #2)^{\bd}}
\newcommand{\perfulacHckI}[3]{\Perf^{\ULA}(\cHck_{{#1}, S/(\Div^{1}_X)^{#3}}; #2)^{\bd}}
\newcommand{\perfulacHckeq}[3]{(\perfulacHckeq{#1}{#2}{#3})^{B\Sigma}}
\newtheorem{thm}{Theorem}[subsection]
\newtheorem{lemma}[thm]{Lemma}
\newtheorem{prop}[thm]{Proposition}
\newtheorem{cor}[thm]{Corollary}
\newtheorem{conj}[thm]{Conjecture}
\theoremstyle{remark}
\newtheorem{remark}[thm]{Remark} 
\newtheorem{notation}[thm]{Notation} 
\newtheorem{defn}[thm]{Definition}
\newtheorem{const}[thm]{Construction}
\newtheorem{example}[thm]{Example}
\def\th@remark{%
  \thm@headfont{\bfseries}%
  \normalfont 
  \thm@preskip \thm@preskip 
  \thm@postskip\thm@preskip
}
\def\imod#1{\allowbreak\mkern5mu({\operator@font mod}\,\,#1)}
\numberwithin{equation}{section}
\title[Modular functoriality in the Local Langlands Correspondence]{Modular functoriality in the Local Langlands Correspondence}
\author{Tony Feng}
\begin{document}

\begin{abstract}
We develop a theory of Smith-Treumann localization and relative parity sheaves in the context of Fargues-Scholze's Geometrization of the Local Langlands Correspondence. We then apply this theory to prove some conjectures of Treumann-Venkatesh concerning mod-$\ell$ Local Langlands functoriality between a reductive group $G$ and its fixed subgroup under an order $\ell$ automorphism. As another application, we explicitly calculate the Fargues-Scholze parameters of certain mod-$\ell$ toral representations. 
\end{abstract}

\maketitle

\tableofcontents

	\section{Introduction}

\subsection{Modular Langlands functoriality} Let $E$ be a local field of residue characteristic $p$ and $G$ be a reductive group over $E$. For simplicity, we assume for now that $G$ is split; beyond the Introduction, the text always treats general $G$. The Local Langlands Correspondence predicts, vaguely speaking, that the smooth representations of $G(E)$ over a field $k$ are controlled by the \emph{Langlands dual group} $\chG$ over $k$. 

For example, letting $W_E \subset \Gal(E^s/E)$ be the Weil group of $E$, the Local Langlands Correspondence predicts that there should be a natural parametrization of irreducible smooth representations of $G(E)$ over $k$ by ``$L$-parameters'', which are homomorphisms $W_E \rightarrow \chG(k)$ up to conjugacy. The existence of such a parametrization has surprising implications for representation theory: for example, a homomorphism $\chpsi \co \chH \rightarrow \chG$ of dual groups induces an obvious map 
\[
\{\text{$L$-parameters for $\chH$}\} \xrightarrow{\chpsi_*} \{\text{$L$-parameters for $\chG$}\}
\]
and therefore suggests some \emph{Langlands functoriality} operation from (packets of) irreducible representations of $H(E)$ to (packets of) irreducible representations of $G(E)$. In practice, it is usually difficult to construct such operations directly, or to describe them explicitly in representation-theoretic terms. 

In this paper we investigate Langlands functoriality for a specific class of dual homomorphisms $\chpsi$ that was identified by Treumann-Venkatesh in \cite{TV}. It considers the situation where the reductive group $H$ arises as the fixed points of an order $\ell$ automorphism $\sigma$ of another reductive group $G$, where $\ell$ is a prime different from $p$. Furthermore, we take $k$ to be a field of characteristic $\ell$. In this situation (and under some hypotheses), Treumann-Venkatesh constructed a dual homomorphism $\chpsi \co \chH \rightarrow \chG$ over $k$, with a specific property in terms of the Satake isomorphism that we will discuss later. We refer to this situation as \emph{modular functoriality}, because its construction depends on special features of modular arithmetic (and this paper depends in turn on special features of modular representation theory). By contrast, Langlands' original conjectures were focused on the case where $k$ has characteristic zero. We note that in many examples the map $\chpsi$ does not lift to characteristic zero, and the resulting functoriality is truly specific to modular arithmetic. 

The story extends to non-split groups. In that case, the dual group $\chG$ should be augmented to the $L$-group $\ld G \cong \wh{G} \rtimes W_E$, where the action of $W_E$ on $\wh{G}$ reflects the twisting of $G$ over $E$ relative to its split form. This generalization is necessary to treat some of the most interesting examples. 

\begin{example}
A familiar example is cyclic base change, where $G = \Res_{E'/E} H$ for a cyclic $\ell$-extension $E'/E$ and $\sigma$ is a generator of $\Gal(E'/E)$ acting on $G$ in the natural way, so that $G^{\sigma} = H$. This particular example lifts to characteristic $0$, but many do not; several examples are tabulated in the ArXiv version of Treumann-Venkatesh's paper \cite{TVarxiv}. We also consider here some interesting examples that are ruled out by the hypotheses of \cite{TVarxiv, TV}: a useful one is where $\sigma$ is conjugation by a strongly regular element of $G$, in which case $H$ is a (not necessarily split!) maximal torus. Note that the work of Treumann-Venkatesh only treats examples where $G$ is simply connected and $H$ is semi-simple.
\end{example}
 
 \subsection{Conjectures of Treumann-Venkatesh} We describe some conjectures of Treumann-Venkatesh that will be proved in this paper, up to technical hypotheses that exclude small $\ell$. We again restrict our attention to the case where $G$ and $H$ are split, for simplicity. We let $\sH(G,K)$ be the spherical Hecke algebra of $G$ (with respect to some maximal compact subgroup $K$ stable under $\sigma$) with coefficients in $k$. Then the Satake isomorphism supplies a $k$-algebra isomorphism 
\[
\sH(G,K) \cong \cO(\chG \sslash \chG)
\]
Under some technical assumptions, Treumann-Venkatesh construct a $k$-algebra homomorphism $\br \co \sH(G, K) \rightarrow \sH(H, K^\sigma)$ which they call the \emph{(normalized) Brauer homomorphism}. (The construction uses in an essential way the assumption that char $k = \ell = \ord(\sigma)$.) This implies the existence of a commutative diagram 
\[
\begin{tikzcd}
\sH(G,K) \ar[r, "\sim","\text{Satake}"'] \ar[d, "\br"]  & \cO(\chG \sslash \chG) \ar[d, dashed] \\
\sH(H, K^\sigma) \ar[r, "\sim", "\text{Satake}"']   & \cO(\check{H} \sslash \check{H})
\end{tikzcd}
\]
It is then natural to ask if the dashed map is induced by restriction along a homomorphism $\chpsi \co \chH \rightarrow \chG$. If so, then $\chpsi$ will be called a ``$\sigma$-dual homomorphism''. One of the main theorems of Treumann-Venkatesh \cite[\S 1.3]{TV} is that if $G$ is simply connected and $H$ is semisimple, then a $\sigma$-dual homomorphism exists, with three possible exceptions if $G$ has type $\rE_6$. In \cite[\S 1.4(iii)]{TVarxiv}, Treumann-Venkatesh pose the open problem of constructing a $\sigma$-dual homomorphism in full generality. Our first theorem addresses this question.

\begin{figure}
  \centering
\begin{tabular}{|c|c|c|c|c|c|c|}
\hline 
Type & $A_n$ & $B_n, D_n$ & $C_n$ & $G_2, F_4, E_6$ & $E_7$ & $E_8$ \\
\hline 
$b(\Phi)$ & 1 & 2 & $n$ & 3 & 19 & 31  \\
\hline
\end{tabular}
\caption{Excluded primes for each root system.}\label{fig: bad primes}
\end{figure}

\begin{thm}\label{thm: intro sigma-dual} Suppose $\ell >\max\{ b(\chG), b(\chH)\}$ where $b(\chG)$ is the maximum over the bad primes $b(\check{\Phi})$ over all root systems $\check{\Phi}$ of the simple factors of $G$, tabulated in Figure \ref{fig: bad primes}, and $b(\chH)$ is defined similarly. Then a $\sigma$-dual homomorphism $\chpsi \co \chH \rightarrow \chG$ exists. 
\end{thm}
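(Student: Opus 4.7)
The plan is to categorify the Brauer homomorphism by constructing a Smith--Treumann localization functor on the Satake categories, and then to extract $\chpsi$ from the resulting symmetric monoidal functor by Tannakian reconstruction.

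First, I would invoke the mod-$\ell$ geometric Satake equivalence to identify $\Rep_k(\chG)$ with the symmetric monoidal category $\Perv_{L^+G}(\Gr_G; k)$ of $L^+G$-equivariant perverse sheaves (and analogously for $H$). The hypothesis $\ell > b(\chG), b(\chH)$ ensures that Satake works cleanly with $k$-coefficients and that the Mirkovi\'c--Vilonen sheaves $\IC_\lambda$ are parity. Using the Smith--Treumann localization developed earlier in the paper, I would then construct a functor
\[
\Psi : \Parity_{L^+G}^{\sigma\text{-eq}}(\Gr_G; k) \longrightarrow \Parity_{L^+H}(\Gr_H; k),
\]
exploiting the identification of the $\sigma$-fixed locus of $\Gr_G$ with (a union of copies of) $\Gr_H$. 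The central input from the body of the paper is that, when everything is formulated relative to the spaces $\Div^n_X$ (i.e.\ in the language of relative parity sheaves), $\Psi$ carries a \emph{symmetric} monoidal structure with respect to the fusion product, not merely an associative one coming from convolution.

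Next, I would pass to the heart of the perverse $t$-structure to obtain a $k$-linear, exact, symmetric monoidal functor $\ol\Psi : \Rep_k(\chG) \to \Rep_k(\chH)$. By Tannakian reconstruction, applied to the reductive group $\chH$ over $k$, any such functor is restriction of representations along a uniquely determined homomorphism $\chpsi : \chH \to \chG$ of reductive group schemes over $k$. It then remains to verify that $\chpsi$ is $\sigma$-dual in the sense of the diagram preceding the theorem, i.e.\ that the induced map on $K_0$ coincides with the Brauer homomorphism $\br$. This is a compatibility between the Smith localization and the function--sheaf dictionary, which I would check on a generating set of Satake sheaves (for instance, on $\IC_\lambda$'s corresponding to standard generators of the Hecke algebra); the desired identification is essentially built into the definition of $\Psi$ as a modular fixed-point/Tate-cohomology functor.

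The main obstacle, and the reason earlier approaches were confined to simply connected $G$ with semisimple $H$, is promoting $\Psi$ from an associative-monoidal to a symmetric monoidal functor at this level of generality: in the classical (non-relative) setup, convolution of $\sigma$-equivariant sheaves is not manifestly commutative, so one cannot directly feed the output into Tannakian formalism. The geometry of relative parity sheaves over $\Div^n_X$ developed in the body of the paper, together with the Smith--Treumann theory set up there, is exactly what bypasses this obstruction, and the restriction on $\ell$ is what keeps both the parity sheaf calculus and the Tannakian reconstruction step safely inside their ranges of validity.
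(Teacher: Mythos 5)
Your overall strategy --- categorify the Brauer homomorphism to a symmetric monoidal functor between Satake categories via Smith--Treumann localization and parity sheaves, then extract $\chpsi$ by Tannakian reconstruction --- is exactly the paper's strategy in \S\ref{sec: categorical TV}, and your diagnosis that symmetric monoidality must come from fusion over the Beilinson--Drinfeld base (rather than from bare convolution) is also correct. But the functor $\Psi$ as you define it is not well-defined, and the passage from $\Psi$ to a functor out of all of $\Rep_k(\chG)$ is missing its essential ingredient.

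First, Smith--Treumann localization does not land in parity sheaves on $\Gr_H$: by construction $\Psm$ takes values in a Tate category (a Verdier quotient), and one must come back out of it via a lifting functor. More seriously, $\Psm$ of a general $\sigma$-equivariant parity complex is \emph{not} Tate-parity: with $k$-coefficients the Tate cohomology of the trivial $\Sigma$-module is nonzero in every degree (Example~\ref{ex: Tate cohomology of trivial coeff}), so no parity vanishing is possible at all; one must work over $\OO=W(k)$, and even then preservation of Tate-parity requires the restricted (co)stalks to be ``good'' $\OO[\Sigma]$-modules (Proposition~\ref{prop: Tate-parity}), a condition that fails for arbitrary equivariant objects. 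Second, the theorem concerns the \emph{normalized} Brauer homomorphism $\br=\Br\circ\Nm\iellt$, whose source is $\sH(G,K)$ rather than $\sH(G,K)^\sigma$, so the functor must be defined on $\Sat(\Gr_G;k)$ itself and not on its $\Sigma$-equivariant version; your proposal never explains how the equivariant structure on the source is produced or discarded. The missing step is the norm $\Nm(\cF)=\cF\star{}^{\sigma}\cF\star\cdots\star{}^{\sigma^{\ell-1}}\cF$, which simultaneously supplies the equivariant structure and guarantees goodness (via permutation bases adapted to the Brylinski--Kostant filtration, Lemma~\ref{lem: Nm is good}). Since $\Nm$ is Frobenius-semilinear and non-additive, one must afterwards linearize with respect to the $\F_\ell$-structure and prove additivity and exactness a posteriori via compatibility with the fiber functor; and the extension from parity complexes to all of $\Rep_k(\chG)$ goes through the identification of normalized parity sheaves with tilting modules together with the abelian-envelope property of $\Tilt_k(\chG)$, not by ``passing to the heart'' (a general parity complex is not perverse, and a general perverse sheaf is not parity). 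Relatedly, the role of $\ell>\max\{b(\chG),b(\chH)\}$ is not that the $\IC_\lambda$ are parity --- they generally are not, since they correspond to simple rather than tilting modules --- but that indecomposable normalized parity sheaves are perverse and correspond to tilting modules (Theorem~\ref{thm: parity = tilting}).
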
 

\begin{remark}Theorem \ref{thm: intro sigma-dual} is not a strict improvement on the work of Treumann-Venkatesh, as our characteristic assumption actually rules out many interesting examples. On the other hand, relaxing the condition that $H$ be semisimple is also interesting in examples; even the case where $H$ is a torus is very useful, as we shall discuss below. 
\end{remark}

The construction of the $\sigma$-dual homomorphism in \cite{TV} is a tour de force: it invokes classification theorems to tabulate all examples of modular functoriality, and analyzes them case-by-case. By contrast, the proof of Theorem \ref{thm: intro sigma-dual} is completely uniform across all cases, and makes no use of classification theorems. The idea is quite natural: instead of contemplating the dual groups, we try to \emph{categorify} the Brauer homomorphism $\br$ to a \emph{Brauer functor} $\wcbr$ from the Satake category of $G$ to the Satake category of $H$. After equipping this Brauer functor with a Tannakian structure, we obtain the $\sigma$-dual homomorphism for free from the Geometric Satake equivalence.\footnote{A subtle but important point found by Treumann-Venkatesh is that, when taking into account the full $L$-group, a $\sigma$-dual homomorphism may not exist with the ``usual'' notion of $L$-group due to Langlands. Treumann-Venkatesh suggest in \cite[\S 7.8]{TVarxiv} that this problem might be repaired by instead using the ``$c$-group'', which is the variant of the $L$-group that naturally comes out of the geometric Satake equivalence \cite[Remark 5.5.11]{Zhu17}. For non-archimedean local fields there is actually an isomorphism between the $L$-group and the $c$-group (possibly depending on a choice of square root of $p$ in $k$), so this distinction is not essential for our purposes, but it seems to support the morality of our approach.} 
\[
\begin{tikzcd}
\Sat(\Gr_G; k)  \ar[rr, "\sim","\text{Geom. Satake}"'] \ar[d, "\wcbr"]  & & \Rep_k(\chG) \ar[d, dashed] \\
\Sat(\Gr_H; k)  \ar[rr, "\sim", "\text{Geom. Satake}"']  & & \Rep_k( \chH)
\end{tikzcd}
\]
Although this idea is simple, its implementation is quite involved and will be elaborated upon later; for now we just mention that for the sole purpose of producing the $\sigma$-dual homomorphism, it should be easy to improve the assumption $\ell > \max\{b(\chG), b(\chH)\}$ to ``$\ell$ is a good prime for $\chG$ and $\chH$'', which for example holds as long as $\ell > 5$. The extra inefficiency in our Theorem \ref{thm: intro sigma-dual} comes from our desire not to work with the usual Geometric Satake equivalence, but with the version on the $\bdrp$-affine Grassmannians occurring in the work of Fargues-Scholze \cite{FS}, whose geometric representation theory is not as developed. 

We next turn to describe conjectures of Treumann-Venkatesh pertaining to the Local Langlands Correspondence. We write $\Z[\sigma]$ for the group ring of $\tw{\sigma} \cong \Z/\ell\Z$ and let $N := 1 + \sigma + \ldots + \sigma^{\ell-1} \in \Z[\sigma]$. If $\Pi$ is a representation of $G(E) \rtimes \sigma$, then its \emph{Tate cohomology} groups $\rT^j(\Pi)$, for $j \in \Z/2\Z$, are defined as 
\begin{equation}\label{eq: Tate cohomology}
\rT^0 (\Pi) := \frac{\ker(1-\sigma \mid \Pi)}{\Ima(N \mid \Pi)} \hspace{1cm}
\rT^1 (\Pi) := \frac{\ker(N \mid \Pi)}{\Ima(1-\sigma \mid \Pi)}
\end{equation}
and the $G(E)$-action on $\Pi$ induces an $H(E)$-action on each $\rT^{j}(\Pi)$.
 
 \begin{conj}[{\cite[Conjecture 6.3]{TV}}]\label{conj: TV} 
 Let $\Pi$ be an irreducible smooth representation of $G(E)$ whose isomorphism class is fixed by $\sigma$, so that the $G(E)$-action on $\Pi$ uniquely extends to a $G(E) \rtimes \sigma$-action by \cite[Proposition 6.1]{TV}. Then for each $j \in \Z/2\Z$:
 \begin{enumerate}
 \item (Admissibility Conjecture) $\rT^j(\Pi)$ is admissible as a representation of $H(E)$.
 \item (Functoriality Conjecture) The $L$-parameter of every irreducible $H(E)$-subquotient of $\rT^{j}(\Pi)$ is sent by the $\sigma$-dual homomorphism $\chpsi$ to the $L$-parameter for $\Pi^{(\ell)} := \Pi \otimes_{k, \Frob} \ell$, the Frobenius twist of $\Pi$. 
 \end{enumerate}
 \end{conj}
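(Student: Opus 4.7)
The plan is to prove both parts by categorifying the Tate cohomology functor to the sheaf-theoretic setting on $\Bun_G$, in the spirit of Fargues-Scholze. To a smooth representation $\Pi$ with its $\sigma$-action one associates a $B\tw{\sigma}$-equivariant sheaf $\cA_\Pi \in D(\Bun_G;k)^{B\tw{\sigma}}$ via the fully faithful embedding of smooth $G(E)$-representations into $D(\Bun_G;k)$. The core construction will then be a Smith-Treumann localization functor
\[
\Psm \colon D(\Bun_G;k)^{B\tw{\sigma}} \longrightarrow D(\Bun_H;k),
\]
built from the Smith-Treumann machinery set up earlier in the paper and the fact that the $\sigma$-fixed locus $\Bun_G^{\sigma}$ contains (or maps from) $\Bun_H$. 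The first task is to identify $\Psm(\cA_\Pi)$ with the sheaf attached to $\rT^*(\Pi^{(\ell)})$, which is the standard ``Smith theory computes Tate cohomology'' principle applied to the geometrization.

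For part (1), after unwinding the definitions admissibility amounts to finite-dimensionality of $\rT^j(\Pi)^{K'}$ for every compact open $K' \subset H(E)$. I would prove this on the sheaf side: admissibility of $\Pi$ corresponds to a bounded/$\ULA$ property of $\cA_\Pi$ locally on $\Bun_G$, and the relative parity sheaf formalism developed earlier in the paper should show that such properties are preserved by Smith-Treumann localization, yielding an admissible output on $\Bun_H$.

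For part (2), the decisive ingredient is a Hecke-intertwining property: for every $V \in \Rep_k(\chG)$, $\Psm$ must carry the Hecke operator $T_V$ on $D(\Bun_G;k)^{B\tw{\sigma}}$ to the Hecke operator $T_{\chpsi^* V}$ on $D(\Bun_H;k)$, up to the Frobenius twist intrinsic to Smith theory in characteristic $\ell$. Geometrically this is a base-change calculation for Smith-Treumann localization on the $\sigma$-equivariant Hecke correspondence for $G$, whose $\sigma$-fixed locus recovers the Hecke correspondence for $H$ with kernel $\wcbr(\IC_V)$; at the level of Satake this is exactly the categorified Brauer square whose Tannakian structure produced $\chpsi$ in Theorem \ref{thm: intro sigma-dual}. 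Granting this intertwining, an excursion-operator argument---the Fargues-Scholze parameter of an irreducible subquotient of $\rT^j(\Pi)$ is determined by the action of excursion operators, which via the intertwining factors through the excursion operators for $\chpsi^* V$ acting on $\Pi^{(\ell)}$---forces the parameter $\phi_H$ of any such subquotient to satisfy $\chpsi \circ \phi_H = \phi_{\Pi^{(\ell)}}$.

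The main obstacle will be extending the Hecke-intertwining statement from the one-legged setting to the relative setting over $\Div^I_X$ for arbitrary finite $I$, since Fargues-Scholze parameters are pinned down by excursion operators built from Hecke operators at several moving points on the curve. This requires the relative parity sheaf and Smith-Treumann formalism developed earlier to be compatible with fusion and with the partial Frobenii at each leg, so that the Frobenius twist on the spectral side emerges consistently across all legs. A secondary subtlety, flagged in the Introduction's footnote, is the $L$-group versus $c$-group distinction; this is absorbed once a square root of $p$ in $k$ is chosen, but must be tracked carefully when transporting parameters through the geometric Satake side of the excursion argument.
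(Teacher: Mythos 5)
Your proposal for Part (1) has a genuine gap, and it is precisely the gap that the paper itself does not close: the text explicitly sets aside the Admissibility Conjecture as work in progress and only proves a version of Part (2). The step that fails is the claim that admissibility of $\rT^j(\Pi)$ follows because ``bounded/ULA properties are preserved by Smith-Treumann localization.'' The Smith operation lands in a Tate category, which is by construction a Verdier quotient killing all complexes of finite tor-dimension over $k[\Sigma]$; it is exactly finiteness information that this quotient discards, and the paper's large Tate category $\Shv(Y;\cT_\Lambda)$ deliberately admits non-constructible, infinite-dimensional objects for this reason. Recovering finite-dimensionality of $\rT^j(\Pi)^{K'}$ from the localized object is the hard open problem, not a formal consequence of the formalism. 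Relatedly, your identification of $\Psm(\cA_\Pi)$ with ``the sheaf attached to $\rT^*(\Pi^{(\ell)})$'' misplaces the Frobenius twist: the Smith operation computes $\rT^*(\Pi)$ on the automorphic side, and the twist by $\Frob$ enters only on the spectral side, through the Frobenius-semilinearity of the norm $\Nm$ on excursion operators (this is why the paper's Theorem on supports involves $F_*$ on $\Exc_k(W_E,\wh{G})$).

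For Part (2) your strategy is the right one in spirit --- Smith-Treumann localization intertwining the $\Sigma$-equivariant Hecke action for $G$ with the Hecke action for $H$ via the Brauer functor, followed by an excursion-operator argument --- but your geometric carrier differs from the paper's. You localize on $\Bun_G$, whereas the paper works exclusively with the moduli spaces of local shtukas $\Sht_{(G,1_G,I),K}$ and the single object $\cInd_K^{G(E)}k = \rR f_{K!}\cS_{\bbm 1}$, computing $\Fix(\sigma,\Sht_{(G,1_G,I),K})=\coprod_{b'\in\iota^{-1}(1_G)}\Sht_{(H_{b'},1,I),\iota^*_{b'}K}$ and deducing an identity between the normed excursion action of $\Exc_k(W_E,\wh G)$ and the native action of $\Exc_k(W_E,\wh H)$ through $\ld\psi^*$; the general statement for $\Pi$ then follows by a purely algebraic support argument (Tate spectral sequence plus the unique-extension property of characters of $\rT^0\Exc_k(W_E,\wh G)$). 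Your route would additionally require a Smith-Treumann formalism on the Artin v-stack $\Bun_G$ and an identification of $\Bun_G^{\sigma}$, which (as the shtuka computation already shows) is not simply $\Bun_H$ but decomposes over classes $b'\in\iota^{-1}(1_G)$; you acknowledge this only with ``contains (or maps from) $\Bun_H$.'' You would also be implicitly leaning on categorical structure that the paper pointedly avoids, noting that the full categorical conjectures of Fargues-Scholze do not seem to make the theorem easier. Finally, even for Part (2) the conclusion only holds under $\ell>\max\{b(\chG),b(\chH)\}$, since the Brauer functor and hence $\chpsi$ are only constructed in that range; your proposal does not flag this restriction.
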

 
 We do not consider Part (1) of Conjecture \ref{conj: TV} in this paper; it is the subject of current work-in-progress. We will focus on Part (2). At the time of its formulation, the Local Langlands Correspondence was constructed only for certain families of groups, so the precise meaning of Conjecture \ref{conj: TV}(2) was left vague for general groups. We may now formulate a precise version for all $G$ using the work of Fargues-Scholze \cite{FS}; this is what we will describe next. 
 
\begin{example} For $G = \GL_n$, Vign\'{e}ras had constructed in \cite{Vig01} the Local Langlands Correspondence over $k$ (using the characteristic zero version due to Harris-Taylor \cite{HT01}), many years before the work of Fargues-Scholze. This gives a precise meaning to Conjecture \ref{conj: TV}(2) when $G$ and $H$ are both general linear groups, and in the context of cyclic base change, special cases have been proven by other authors, as will be discussed more in \S \ref{ssec: related}. Even for this case, where the statements of our results can be formulated in classical terms, the proofs will utilize the work of Fargues-Scholze. 
\end{example}

\subsection{The Fargues-Scholze correspondence} An output of the work \cite{FS} of Fargues-Scholze is a map 
\begin{align}\label{eq: LLC}
\left\{ \begin{array}{@{}c@{}}  \text{irreducible admissible representations} \\  \text{$\Pi$ of $G(E)$ over $k$}\end{array} \right\}/\sim  & \longrightarrow \left\{ \begin{array}{@{}c@{}}  \text{semi-simple $L$-parameters} \\ 
\rho_{\Pi} \co W_E \rightarrow \ld G(k)  \end{array} \right\}/\sim
\end{align}
that we call the \emph{Fargues-Scholze correspondence}. We refer to $\rho_\Pi$, which is most naturally regarded as an element of $\rH^1(W_E; \wh{G}(k))$, as the \emph{Fargues-Scholze parameter} of $\Pi$. The map $\Pi \mapsto \rho_\Pi$ is expected to be the semi-simplification of ``the'' hypothetical Local Langlands Correspondence. For specific groups including tori and $\GL_n$, the Local Langlands Correspondence has been constructed previously (by class field theory and by Harris-Taylor, respectively), and in these cases it is known that the Fargues-Scholze correspondence is compatible with the previous construction. 

For most groups, the Fargues-Scholze correspondence is quite mysterious. For example, it is expected that \eqref{eq: LLC} is surjective and has finite fibers, but proving this is wide open. For \emph{regular supercuspidal representations} of quite general groups, Kaletha \cite{Kal19} has prescribed explicit constructions for the $L$-parameters, which are strongly supported by the expected endoscopic character relations; while for general groups almost nothing is known about the Fargues-Scholze parameters of such representations. We remark that in the cases where the Fargues-Scholze correspondence has been explicated, there have been important geometric implications; an example is the work of Koshikawa \cite{Kosh21} towards torsion vanishing in the cohomology of Shimura varieties, where the relevant compatibility with the classical LLC is proven in \cite{HKW22}. 

\subsubsection{The functoriality conjecture} We prove the following result concerning modular functoriality in the Fargues-Scholze correspondence, which we take as fulfilling Conjecture \ref{conj: TV}(2), away from small $\ell$. 

\begin{thm}\label{thm: intro TV conj}
Assume $\ell >\max\{ b(\chG), b(\chH)\}$. Let $\Pi$ be an irreducible smooth representation of $G(E)$ whose isomorphism class is fixed by $\sigma$. Then for each $j \in \Z/2\Z$ and every $H(E)$-irreducible subquotient $\pi$ of $\rT^{j}(\Pi)$, the $\sigma$-dual homomorphism $\chpsi_* \co \rH^1(W_E; \wh{H}(k)) \rightarrow \rH^1(W_E; \wh{G}(k))$ sends $\rho_{\pi} \mapsto \rho_{\Pi^{(\ell)}}$. 
\end{thm}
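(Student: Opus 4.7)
The plan is to reduce Theorem \ref{thm: intro TV conj} to a categorical compatibility, and deduce it from the Brauer-functor categorification described in the discussion of Theorem \ref{thm: intro sigma-dual}. Recall that the Fargues-Scholze parameter $\rho_\Pi$ is characterized by the scalars by which excursion operators act on $\Pi$: to each datum $(I, V, f, g, \gamma_\bu)$, with $V \in \Rep_k(\wh G{}^I \rtimes W_E^I)$, $f \co \mathbf{1} \to V|_{\wh G}$, $g \co V|_{\wh G} \to \mathbf{1}$, and $\gamma_\bu \in W_E^I$, one attaches an operator $S^G_{I,V,f,g,\gamma_\bu}$ acting on $\Pi$ by $\langle g, \rho_\Pi^I(\gamma_\bu)\cdot f\rangle$. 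The theorem amounts to showing that for every such $G$-datum, the corresponding $H$-excursion operator $S^H_{I, \chpsi^*V, \chpsi^*f, \chpsi^*g, \gamma_\bu}$ acts on any subquotient $\pi$ of $\rT^{j}(\Pi)$ by the scalar $\langle g, \rho_\Pi^I(\gamma_\bu)^{(\ell)}\cdot f\rangle$, which is the scalar computing $\rho_{\Pi^{(\ell)}}$.

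The crucial input is a Hecke-equivariant enhancement of the Brauer functor $\wcbr$. Concretely, I would show that for every $\sigma$-equivariant object $\cF$ in the Fargues-Scholze Satake category of $G$ and every $\sigma$-fixed $\Pi$, there is a natural $H(E)$-equivariant isomorphism
\[
\rT^j\bigl(T_\cF \cdot \Pi\bigr) \;\cong\; T_{\wcbr(\cF)} \cdot \rT^j(\Pi)
\]
on stalks of the relevant sheaves on $\Bun_G$ and $\Bun_H$, where $T_{(-)}$ denotes the Hecke action. This is where the paper's technology does the real work: using Smith-Treumann localization on the $\bdrp$-affine Grassmannian, sheaves on $\Gr_{H, S/\Div^1_X}$ are identified with the Tate cohomology of $\sigma$-equivariant sheaves on $\Gr_{G, S/\Div^1_X}$ along the $\sigma$-fixed locus, and parity sheaves furnish the perverse-$t$-exactness needed to promote this to a monoidal functor on Satake categories. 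Iterating this identification across the multiple legs and monodromy loops appearing in a $I$-indexed excursion datum produces a $W_E^I$-equivariant compatibility, which at the level of traces becomes the desired comparison of excursion scalars after applying $\chpsi$.

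The Frobenius twist $\Pi^{(\ell)}$ enters in the following way. In characteristic $\ell$, Tate cohomology is $k$-linear only after a Frobenius twist: the natural Smith construction on the derived category comes equipped with an $\ell$-th power map on endomorphisms, and the categorified Brauer functor matches the Satake-theoretic pullback along $\chpsi$ only after composing with a Frobenius twist on the representation side, i.e.\ $\wcbr \simeq \chpsi^* \circ (-)^{(\ell)}$ under Geometric Satake. Consequently the trace computed by $H$-excursion on $\rT^j(\Pi)$ matches the $G$-excursion trace on the Frobenius twist $\Pi^{(\ell)}$, not on $\Pi$ itself — which is exactly the assertion of the theorem.

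The main obstacle is (a): establishing that the Brauer functor is compatible with the spectral/Hecke action on sheaves on $\Bun_G$ in the Fargues-Scholze framework, since the geometric Satake used by Fargues-Scholze is the $\bdrp$-version and the relative $v$-stack geometry behaves quite differently from the equal-characteristic setup where Smith theory is more developed. Once this is pinned down — together with its $W_E^I$-equivariance for multi-legged Hecke correspondences — both (b) the formal identification of excursion operators and (c) the bookkeeping of the Frobenius twist are essentially automatic, and the theorem follows uniformly for all $G$, $H$, and $\sigma$ satisfying the characteristic hypothesis.
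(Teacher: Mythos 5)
Your overall strategy -- categorify the Brauer homomorphism, use Smith--Treumann localization to compare excursion operators for $G$ and $H$, and track a Frobenius twist -- is indeed the paper's strategy. But the key lemma you propose, a natural isomorphism $\rT^j(T_\cF\cdot\Pi)\cong T_{\wcbr(\cF)}\cdot\rT^j(\Pi)$ for \emph{arbitrary} $\sigma$-fixed $\Pi$ realized as a sheaf on $\Bun_G$, is not what the paper proves, and as stated it hides the hardest point rather than isolating it. The paper never applies equivariant localization to the sheaf attached to a general $\Pi$; it applies it only to the \emph{universal} object, namely $\rR f_{K!}\cS_{\bbm{1}}\cong\cInd_K^{G(E)}k$ viewed as cohomology of $\Sht_{(G,1,I),K}$ with varying Satake coefficients $\cS_{\Nm^{(\ell^{-1})}(V)}$. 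There the $\Sigma$-fixed points decompose as $\coprod_{b'\in\iota^{-1}(1_G)}\Sht_{(H_{b'},1,I),\iota^*K}$ (Proposition \ref{prop: fixed points of sht}), which already shows that even in the universal case the comparison is not a clean statement about a single $H$-side object; your $\Bun_G$-level isomorphism would face the same (worse) decomposition issue. The output of this universal computation is an identity of excursion operators, i.e.\ of elements of the Bernstein center (Theorem \ref{thm: localize excursion operator} and the square \eqref{eq: base change diagram}), and it is the Bernstein center -- acting compatibly on $\Pi$, on $\rH^*(\Pi)$, and on $\rT^j(\Pi)$ -- that propagates the identity to an arbitrary irreducible $\Pi$. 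Your proposal has no mechanism for this propagation.

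Two further points. First, the steps you call ``essentially automatic'' are not: passing from the operator identity to the support statement requires the Tate spectral sequence $\rT^j(\rH^i(\Pi))\Rightarrow\rT^{i+j}(\Pi)$ to bound $\supp\rT^j(\Pi)$ by $\supp\rH^*(\Pi)$, and requires the fact that a character of $\rT^0\Exc_k(W_E,\wh G)$ extends uniquely to $\Exc_k(W_E,\wh G)$ in order to descend from the $\sigma$-invariant subalgebra back to the full excursion algebra. Second, your account of the Frobenius twist is misplaced: under geometric Satake $\wcbr$ corresponds exactly to $\ld\psi^*$ with no twist (Corollary \ref{cor: 1-leg brauer functor}); the twist on $\Pi$ arises because the comparison naturally involves the \emph{normed} excursion action $S_\cD\mapsto\Nm^{(\ell^{-1})}S_\cD$, and for $\Sigma$-invariant data $\Nm(S_\cD)=S_\cD^\ell$, so the normed action is the native action precomposed with the linearized Frobenius $F$ on $\Exc_k(W_E,\wh G)$ -- whence $\rho_{\Pi^{(\ell)}}$ rather than $\rho_\Pi$.
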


This result appears as Corollary \ref{cor: functoriality character} in the main text. It is a consequence of a more powerful statement, Theorem \ref{thm: TV}, which treats ``derived smooth representations'' $\Pi \in D^b(\Rep^{\mrm{sm}}_k G(E))$. The derived version is more useful for calculations but less elementary to formulate, so we do not state it here. 

Theorem \ref{thm: intro TV conj} does not seem to be made any easier by assuming the full categorical conjectures of \cite{FS}, or the conjectural compatibility with Kaletha's explicit Local Langlands Correspondence, or any other standard conjectures about the Local Langlands Correspondence that we know. 
On the other hand, it may not be unreasonable to speculate that Theorem \ref{thm: intro TV conj} could be useful for proving some of these other conjectures. For example, as an application of (the derived version of) Theorem \ref{thm: intro TV conj}, we calculate explicitly the Fargues-Scholze parameters of certain classes of representations.

\subsubsection{Explicit calculation of Fargues-Scholze parameters} The basic idea is that if the $L$-parameter $\rho_{\Pi} \co W_E \rightarrow \ld G(k)$ admits some factorization 
\begin{equation}\label{eq: L-param factorization}
\begin{tikzcd}
W_E \ar[rr, "\rho_{\Pi} "] \ar[dr, dashed] &  &  \ld G(k) \\
& \ld H(k) \ar[ur, "\ld \psi"'] 
\end{tikzcd}
\end{equation}
through an $L$-parameter into $\ld H(k)$ where $H$ is a \emph{torus} arising as the fixed points of some appropriate $\sigma$, then Theorem \ref{thm: intro TV conj} identifies $\rho_\Pi$ explicitly in terms of the $L$-parameter of $\rT^j(\Pi)$, which is computable since the Fargues-Scholze correspondence is completely understood for tori. Note that ``most'' supercuspidal parameters factor through the $L$-group of \emph{some} (not necessarily split) torus; for example, all of them factor in this way if $G$ is tamely ramified and $p$ does not divide the order of the Weyl group of $G$. 

We apply this idea to the toral representations considered in work of Chan-Oi \cite{CO21}. They are generalizations of depth-zero supercuspidals, with an analogous construction but instead using ``deeper-level Deligne-Lusztig representations'' studied in work of Chan-Ivanov \cite{CI21}; consequently, they include supercuspidal representations with arbitrarily high depth. The input for the (modular version of the) Chan-Oi construction is an elliptic unramified torus $T \subset G$ and a character $\theta \co T(E) \rightarrow k^\times$. 

Under the assumption that $\ell > \max\{b(\chG), b(\chH)\}$, and that $T$ contains a strongly regular element of order $\ell$, we prove (Corollary \ref{cor: cohomology constituent L-parameter}) that some irreducible constituent in the Chan-Oi construction has Fargues-Scholze parameter of the form
\begin{equation}\label{eq: intro L-param}
W_E \xrightarrow{\ld \theta} \ld T(k) \xrightarrow{ \ld j} \ld G(k),
\end{equation}
where $\ld j$ is the canonical embedding of the $L$-group of an unramified maximal torus. This is morally in accordance with the prediction with Kaletha's explicit Local Langlands Correspondence for regular supercuspidal representations in \cite{Kal19}; we say ``morally'' because Kaletha does not consider modular coefficients. Because the precise definition of a toral representation is complicated, we defer the precise formulation of our result to Theorem \ref{thm: derived parameter}. 

We emphasize that the simple appearance of \eqref{eq: intro L-param} belies the intricacy of Kaletha's recipe, which for example involves twisting the most natural (from a representation-theoretic perspective) guess for the $L$-parameter by a subtle ``twisting character''. In the relatively simple example of epipelagic representations of unitary groups, these characters were explicated in \cite{FRT} and found to be already very complicated there. The geometric construction of Chan-Oi somehow bakes this twisting character into the geometry (see \cite[\S 8]{CO21} for more discussion of this point) which is then reflected in our computation.  


\subsection{Further results} We mention some further results. 
 
 \subsubsection{Existence of functorial lifts} The following Theorem guarantees the existence of functorial lifts along any $\sigma$-dual homomorphism. 

\begin{thm}\label{thm: intro functorial lift}
Assume $\ell > \max\{b(\chG), b(\chH)\}$. Let $\pi$ be an irreducible smooth representation of $H(E)$ over $k$, with Fargues-Scholze parameter $\rho_{\pi} \in \rH^1(W_E; \wh{H}(k))$. Then there exists an irreducible smooth representation $\Pi$ of $G(E)$ over $k$ with Fargues-Scholze parameter $\rho_{\Pi} \cong \chpsi \circ \rho_{\pi}  \in \rH^1(W_E; \wh{G}(k))$. 
\end{thm}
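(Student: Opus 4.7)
The plan is to reverse the direction of Theorem \ref{thm: intro TV conj}: rather than starting from a $\sigma$-fixed $\Pi'$ on $G(E)$ and producing an $H(E)$-representation via Tate cohomology, we start from $\pi$ and construct some $\Pi'$ whose Tate cohomology contains $\pi$, then invoke Theorem \ref{thm: intro TV conj} to identify the parameter of $\Pi'$. Since $\chpsi$ is defined over the prime field $\FF_\ell$, the Frobenius twist $\rho \mapsto \rho^{(\ell)}$ commutes with $\chpsi_*$. Hence the existence of a $\sigma$-fixed irreducible $\Pi'$ such that $\pi$ occurs as an irreducible $H(E)$-subquotient of $\rT^j(\Pi')$ for some $j \in \Z/2\Z$ forces, via Theorem \ref{thm: intro TV conj}, the identity $\chpsi \circ \rho_\pi = (\rho_{\Pi'})^{(\ell)}$; setting $\Pi := (\Pi')^{(\ell)}$ then yields $\rho_\Pi = \chpsi \circ \rho_\pi$, which is the desired conclusion.

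To construct $\Pi'$, I plan to use the categorified Brauer functor $\wcbr$ developed in the body of the paper in the course of proving Theorems \ref{thm: intro sigma-dual} and \ref{thm: intro TV conj}. On representations this functor computes Tate cohomology, and at the level of the Fargues--Scholze spectral decomposition it should correspond to pullback along the map of parameter stacks induced by $\chpsi$. Viewing $\pi$ as a non-zero object of $D_{\lis}(\Bun_H;k)$ supported at the $L$-parameter $\rho_\pi$, the spectral compatibility of the Brauer functor with $\chpsi_*$ should produce---via a pullback-and-act construction on the spectral side---a non-zero $\sigma$-equivariant object $\mathcal{F} \in D_{\lis}(\Bun_G;k)^{\sigma}$ whose Brauer image has $\pi$ as a subquotient. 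Extracting any irreducible subquotient in a $G(E)$-cohomology of $\mathcal{F}$ gives the desired $\Pi'$, whose Fargues--Scholze parameter is then pinned down by Theorem \ref{thm: intro TV conj} (using that an irreducible subquotient inherits the Fargues--Scholze parameter of its ambient representation, since the parameter is determined by the action of excursion operators).

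The main obstacle is the non-vanishing step in the construction of $\mathcal{F}$: one must ensure that the ``pullback along $\chpsi$'' of a non-zero object at the parameter $\rho_\pi$ remains non-zero on the $G$-side. This requires the categorical compatibility between the Brauer functor and the Fargues--Scholze spectral action along $\chpsi$, together with a non-degeneracy statement (e.g., that the relevant $\chpsi_*$-pullback is conservative on the spectral substrata in question). Once this compatibility is in place---which should be available from the same categorical framework that delivers Theorem \ref{thm: intro TV conj}---the remainder of the argument reduces to bookkeeping with Frobenius twists and with the passage from derived objects to irreducible constituents.
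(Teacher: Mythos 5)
There is a genuine gap, and it sits exactly where you locate it: the construction of a $\sigma$-fixed irreducible $\Pi'$ with $\pi$ occurring as a subquotient of $\rT^j(\Pi')$. Your Frobenius-twist bookkeeping at the end is fine (since $\chpsi$ is defined over $\F_\ell$, indeed $\chpsi_*(\rho_\pi) = \rho_{(\Pi')^{(\ell)}}$ forces $\Pi := (\Pi')^{(\ell)}$ to work), but the existence of such a $\Pi'$ is a strictly harder statement than the theorem you are trying to prove. It amounts to a surjectivity-of-descent claim combined with a non-vanishing statement for Tate cohomology, and the latter is of the same nature as the Admissibility Conjecture (Conjecture \ref{conj: TV}(1)), which the paper explicitly does not address. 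Moreover, the machinery you invoke to produce $\cF \in D_{\lis}(\Bun_G;k)^{\sigma}$ --- a compatibility of the Brauer functor with the Fargues--Scholze spectral action on $\Bun_G$, plus conservativity of $\chpsi_*$-pullback on spectral strata --- is nowhere established in the paper; the introduction even remarks that the results do not become easier assuming the full categorical conjectures of \cite{FS}. So the ``main obstacle'' you flag is not a technical point to be filled in from the existing framework: it is the entire content of the theorem, displaced into an unavailable (and likely false-in-general, or at least unproven) intermediate claim.

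The paper's actual proof (Theorem \ref{thm: mod ell lifting}) never constructs a representation whose Tate cohomology contains $\pi$. It works entirely with Hecke algebras and excursion algebras: choose a $\Sigma$-stable pro-$p$ subgroup $K$ with $\pi^{\iota^*K} \neq 0$; use the surjectivity of the un-normalized Brauer homomorphism $\Br \co \sH(G,K)^\sigma \surj \sH(H,\iota^*K)$ (Proposition \ref{prop: inj}) together with the commutative square of Theorem \ref{thm: FS TV} to show that the character $\chi_\pi \circ \ld\psi^*$ of (a suitable subalgebra of) $\Exc_k(W_E,\wh{G})$ lies in the support of $\sH(G,K)$; then invoke finiteness of $\sH(G,K)$ over $\mf{Z}(G,K)$ (Dat--Helm--Kurinczuk--Moss), the Artin--Tate lemma, and Nakayama to produce a non-zero finite-dimensional quotient of $\sH(G,K)$ on which the excursion algebra acts through $\chi_\pi \circ \ld\psi^*$, and take any irreducible subquotient. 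The non-vanishing that defeats your approach is obtained there for free from the surjectivity of $\Br$ at the level of functions. If you want to salvage your strategy, you should replace the ``find $\Pi'$ with $\pi \subset \rT^j(\Pi')$'' step by this algebraic support argument.
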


The more general version (allowing non-split groups) is Theorem \ref{thm: mod ell lifting}. The more general version allows to treat further examples such as the following. 

\begin{example}[Base change]\label{ex: base change}
Taking $G = \Res_{E'/E} H$ for a cyclic $\ell$-extension $E'/E$ and $\sigma$ a generator of $\Gal(E'/E)$ acting in the natural way, Theorem \ref{thm: intro functorial lift} asserts the existence of base change along $E'/E$. In equal characteristic and for the Genestier-Lafforgue correspondence, this was established in \cite[Theorem 1.1]{Fe23}. 
\end{example}

\begin{example}
Take $\sigma$ to be conjugation by a strongly regular $\ell$-torsion element of $G$. Then $H$ is a torus, so every $\rho \in \rH^1(W_E; \wh{T}(k))$ is realized as a Fargues-Scholze parameter. Then Theorem \ref{thm: intro functorial lift} implies that every $\rho \in \rH^1(W_E; \wh{G}(k))$ that factors through $\ld \psi$ is realized as a Fargues-Scholze parameter. It would be interesting to investigate which $\ld \psi$ can arise as a $\sigma$-dual homomorphism, as this might help to show surjectivity of the Fargues-Scholze correspondence over $k$: recall that if $p$ is not too small relative to $G$, then every supercuspidal $L$-parameter factors through the $L$-group of some torus, and we know that the Fargues-Scholze correspondence is surjective for tori. 	
\end{example}

\subsubsection{Functoriality for the Bernstein center} Let $\mf{Z}(G;k)$ be the Bernstein center of $G$ with coefficients in $k$, and similarly for $H$. The Fargues-Scholze correspondence \eqref{eq: LLC} is deduced from the construction of a $k$-algebra homomorphism  
\begin{equation}\label{eq: intro FS homomorphism}
\FS_G \co \Exc_k(W_E; \wh{G}) \rightarrow \mf{Z}(G;k)
\end{equation}
where $\Exc_k(W_E; \wh{G})$ is the \emph{excursion algebra} (over $k$). 

Building on ideas of Treumann-Venkatesh, we construct a map of Bernstein centers
\begin{equation}\label{eq: intro TV homomorphism} 
\mf{Z}_{\TV} \co \mf{Z}(G;k) \rightarrow  \mf{Z}(H;k)
\end{equation}
which we call the \emph{Treumann-Venkatesh homomorphism}. We show in Theorem \ref{thm: FS TV} that if $\ell > \max\{b(\chG), b(\chH)\}$, then there is a commutative diagram
\begin{equation}\label{eq: intro bernstein center functoriality}
\begin{tikzcd}
\Exc_k(W_E, \wh{G})  \ar[r, "\chpsi^*"] \ar[d, "\FS_G"']  & \Exc_k(W_E, \wh{H})  \ar[d, "\FS_H"]\\
\mf{Z}(G)\ar[r, "\mf{Z}_{\TV}"] & \mf{Z}(H)
\end{tikzcd}
\end{equation}
This shows that $\mf{Z}_{\TV}$ realizes functoriality for the Bernstein center. In fact, it is the main ingredient in the proof of Theorem \ref{thm: intro functorial lift}.

 \subsection{Methods} The proofs of the aforementioned results synthesize recent breakthroughs in several different areas of mathematics, including: 
 \begin{enumerate}
 \item The work \cite{TV, Tr19} of Treumann-Venkatesh on \emph{Smith theory} and Langlands functoriality.  
 \item The work \cite{JMW14, JMW16} of Juteau-Mautner-Williamson on \emph{parity sheaves} in modular representation theory. 
 \item The work \cite{FS} of Fargues-Scholze on the \emph{geometrization of the Local Langlands Correspondence}. 
 \end{enumerate}

Let us sketch vaguely how these ingredients fit together. In the sketch below, we use some abbreviated and simplified notation which does not match that of the main body of the text. 

We begin by commenting on the proof of Theorem \ref{thm: intro sigma-dual}. As explained earlier, the proof is based on categorifying the Brauer homomorphism of Treumann-Venkatesh to a \emph{Brauer functor}, from perverse sheaves on the affine Grassmannian $\Gr_G$ for $G$ to perverse sheaves on the affine Grassmannian $\Gr_H$ for $H$, which should be Tannakian (i.e., additive, symmetric monoidal, and compatible with the fiber functor). 

The construction of the Brauer homomorphism is based on the observation that the \emph{restriction of $\sigma$-equivariant functions} from $G(E)/K$ to $H(E)/K^\sigma$ has the miraculous property of being compatible with convolution specifically in characteristic $\ell$ (the order of $\sigma$). The naive categorification of this operation would be restriction of sheaves from $\Gr_G$ to $\Gr_H$, but the miracle does not (naively) persist to the level of sheaves. Instead we apply an operation that we call \emph{Smith-Treumann localization}, which is restriction followed by a certain Verdier quotient. The Smith-Treumann localization functor is then monoidal to some extent, but does not interact well with perversity. However it turns out that it can be made to interact well with \emph{parity sheaves} in the sense of Juteau-Mautner-Williamson. This allows to lift Smith-Treumann localization to a functor from perverse parity sheaves on $\Gr_G$ to perverse parity sheaves on $\Gr_H$. Then studying the interaction of parity and perversity on affine Grassmannians allows to extend the functor to all perverse sheaves (under conditions on $\ell$). 	
 
The preceding construction could have been executed on the ``classical'' affine Grassmannian as soon as $\ell$ is good for $\chG$ and $\chH$.\footnote{Explicitly, this means that $\ell>2$ if $\chG$ has simple factors of type $B,C$ or $D$; $\ell>3$ if $\wh{G}$ has simple factors of type $G_2, F_4, E_6, E_7$; and $\ell>5$ if $\wh{G}$ has simple factors of type $E_8$.} However, at the next step we want to combine it with the constructions of Fargues-Scholze, so we actually need carry everything out on the $\bdrp$-affine Grassmannians and their Beilinson-Drinfeld variants, which are built out of the period rings of $p$-adic Hodge theory. As those objects live in the world of $p$-adic geometry, which behaves very differently from algebraic geometry in several key aspects (for example, there is no good theory of constructible sheaves), there are substantial technical difficulties to overcome in order to develop the appropriate technology in this new setting. We defer discussion of these technical difficulties to the individual sections in which they appear. The formalism that we develop here lays the groundwork for further applications of geometric representation theory to $p$-adic geometry, which we hope to pursue in future work.

\begin{figure}[!h]
    \centering
    \begin{tikzpicture}
        \matrix (m) [matrix of nodes, row sep=1cm, column sep=4cm] {
            |[draw, circle, shift={(0.75cm,0)}]| \(\Rep_k(\chG)\) & |[draw, circle, shift={(0.75cm,0)}]| \(\Rep_k(\chH)\) \\
            |[draw, circle, shift={(0.75cm,0)}]| \(\Sat(\Gr_G;k)\) & |[draw, circle, shift={(0.75cm,0)}]| \(\Sat(\Gr_H;k)\) \\
            |[draw]| Tate$(\rR\Gamma_c(\Sht_G, -))$ & |[draw]|  Tate$(\rR\Gamma_c(\Sht_H, -))$\\
        };
        
        \draw[->] ($(m-2-1.south) + (0.4cm, 0.1)$) -- node[midway, right] {$\Sat_G$} ($(m-3-1.south east)!0.5!(m-3-1.north east) + (-0.5,0.1)$);
        \draw[->] ($(m-2-2.south)+ (.4cm, 0.1)$) -- node[midway, left] {$\Sat_H$}  ($(m-3-2.south east)!0.5!(m-3-2.north east) +  (-0.5,0.1)$);
        \draw[->] (m-1-1.south) -- node[midway, right]{Geom. Satake} (m-2-1.north);
        \draw[->] (m-1-2.south) -- node[midway, left]{Geom. Satake} (m-2-2.north);
        \draw[->] (m-2-1.east) -- node[midway, above] {$\wt{\cbr}$} (m-2-2.west);
        \draw[->] (m-1-1.east) -- node[midway, above] {$\chpsi^*$} (m-1-2.west);
        
        \draw[dashed, <->] (m-3-2.west) -- node[midway, above] {equiv. localization} (m-3-1.east);
    \end{tikzpicture}
    \caption{This cartoon (produced with the aid of ChatGPT-3.5 after much coaxing) depicts the Brauer functor $\wt{\cbr}$ interfacing with the Tate cohomology of moduli of local shtukas and with the $\sigma$-dual homomorphism $\chpsi$.}\label{fig: sht}
\end{figure}

Next we turn towards the proof of Theorem \ref{thm: intro TV conj}. For this we need to integrate the Brauer functor with the construction of the Fargues-Scholze correspondence for $G$ and for $H$. The Fargues-Scholze correspondence for $G$ can be obtained by constructing excursion operators on the cohomology of \emph{moduli spaces $\Sht_G$ of local $G$-shtukas} with coefficients in Satake sheaves coming from $\Gr_G$. We need to compare such cohomology groups to the ones obtained from applying the Brauer functor to get Satake sheaves on $\Gr_H$, transporting them to the moduli spaces $\Sht_H$ of local $H$-shtukas, and then taking cohomology. This comparison is mediated by \emph{equivariant localization} for Tate cohomology. Figure~\ref{fig: sht} depicts a cartoon of the strategy. From this comparison, we extract relations among certain excursion operators for $\chG$ and for $\chH$, which are ultimately used to establish Theorem \ref{thm: intro TV conj}.

\subsection{Related work}\label{ssec: related} We note some related work on the problems studied here. 

\subsubsection{The functoriality conjecture} We focus first on the ``functoriality conjecture'', Conjecture \ref{conj: TV}(2). Ronchetti \cite{Ron16} studied Conjecture \ref{conj: TV}(2) in the special case of cyclic base change for $\GL_n$, proving it for depth-zero cuspidal representations ``of level zero and minimal-maximal type''. 

Dhar-Nadimpalli \cite{DS23} studied Conjecture \ref{conj: TV}(2) in the special case of cyclic base change for $\GL_n$, proving it for generic representations (i.e., those possessing mod-$\ell$ Whittaker models). 

The earlier work of the author \cite{Fe23} proved the analogous statement to Theorem \ref{thm: intro TV conj} for the Genestier-Lafforgue correspondence (in equal characteristic) in the special case of cyclic base change, and in doing so introduced a primordial form of some ideas developed here. We point out several differences. 
\begin{itemize}
\item The methods of \cite{Fe23} were based on local-global compatibility, and were thus fundamentally restricted to the function field setting. Here our methods are purely local, and we encompass both mixed characteristic and equal characteristic local fields. To do this, we work in the context of $p$-adic geometry, which presents substantial new difficulties. 
\item The argument of \cite{Fe23} was restricted to the case of cyclic base change, for both local and global reasons. On the \emph{local} side, the $\sigma$-dual homomorphism is obvious for cyclic base change, and the construction of the Brauer functor in that case exploited some simplifying special features of cyclic base change; the present paper debuts more general and conceptual arguments to treat the local aspects in arbitrary generality (as long as $\ell$ is not too small). We still do not know how to generalize the \emph{global} arguments even in the function field case, but fortunately they are irrelevant for the new approach. 
\end{itemize} 

The author's work \cite{BFHKT} with B\"{o}ckle-Harris-Khare-Thorne explicates Conjecture \ref{conj: TV}(2) for the Genestier-Lafforgue correspondence in the special case of cyclic base change for toral supercuspidals, assuming several conjectures about torsion in the cohomology of deep-level Deligne-Lusztig varieties. The analogous results for the Fargues-Scholze correspondence are subsumed by the $L$-parameter calculations in \S \ref{sec: toral supercuspidals}. 
 
\subsubsection{The $\sigma$-dual homomorphism} The work \cite{RW} of Riche-Williamson, which gives a geometric proof of the linkage principle, is not logically related to problems we study here, but has some philosophical similarities. We apply Smith-Treumann localization from $\Gr_G$ to $\Gr_H$ coming from an automorphism of $G$, while Riche-Williamson apply (a slightly different form of) Smith-Treumann localization to a self-embedding of $\Gr_G$ coming from the action of $\mu_\ell \subset \G_m$ via loop rotation. 

The paper \cite{LL} of Leslie-Lonergan also applies Smith-Treumann localization along this self-embedding, in attempt to give a geometric construction of the Frobenius contraction functor on $\Rep_k(\chG)$. The formalism of localization that they develop is more similar to the one that we use in this paper.

 \subsection{Organization of the paper} We now indicate the structure of the paper. 
 
 In \S \ref{sec: notation} we collect some notation and abbreviations used commonly throughout the paper. 
 
 In \S \ref{sec: Smith-Treumann} we develop a general formalism of Smith-Treumann localization for diamonds, which refers to a certain type of sheaf restriction operation from a diamond $Y$ to its $\sigma$-fixed points. Then in \S \ref{sec: fixed point} we calculate the $\sigma$-fixed points of various diamonds associated to $G$, such as the $\bdrp$-affine Grassmannian and its Beilinson-Drinfeld or twisted variants, as well as moduli spaces of local $G$-shtukas. These calculations are used when applying Smith-Treumann localization to such spaces. 
 
 In \S \ref{sec: parity sheaves}, we develop a notion of ``relative parity complexes'' on the Beilinson-Drinfeld affine Grassmannians arising in $p$-adic geometry. Then in \S \ref{sec: tilting} we prove that away from small characteristics, (normalized) indecomposable relative parity complexes are relative perverse and correspond under the Geometric Satake equivalence to tilting modules; this is analogous to a theorem of Juteau-Mautner-Williamson from \cite{JMW16}. In fact, each of \S \ref{sec: Smith-Treumann}, \S \ref{sec: parity sheaves}, and \S \ref{sec: tilting} is parallel to some existing theory in algebraic geometry, but there are non-trivial new issues encountered in the setting of $p$-adic geometry, which we try to illuminate at the beginnings of the respective sections. 
 
 In \S \ref{sec: categorical TV}, we construct the Brauer functor and establish its properties, proving Theorem \ref{thm: intro sigma-dual}. Then in \S \ref{sec: local shtukas} we study the (Tate) cohomology of moduli spaces of local shtukas, and integrate it with the Brauer functor in order to prove certain identities of excursion operators. The applications ripen for picking in \S \ref{sec: TV conjecture}, where we combine the preceding sections to prove Theorem \ref{thm: intro TV conj}, Theorem \ref{thm: intro functorial lift}, construct the Treumann-Venkatesh homomorphism and establish the commutative diagram \eqref{eq: intro bernstein center functoriality}. 
 
 Finally, in \S \ref{sec: toral supercuspidals} we study the example where $\sigma$ is conjugation by a strongly regular order-$\ell$ element of an unramified elliptic maximal torus of $G$. We calculate the $\sigma$-dual embedding, the Tate cohomology of deep level Deligne-Lusztig varieties and their compact inductions, and deduce explicit computations of Fargues-Scholze parameters. 
  
The beginnings of individual sections summarize their contents in more detail. 
  
 \subsection{Acknowledgments} It is a pleasure to acknowledge Brian Conrad, Olivier Dudas, Ian Gleason, Jesper Grodal, David Hansen, Tasho Kaletha, Teruhisa Koshikawa, Gopal Prasad, Simon Riche, Peter Scholze, Sug Woo Shin, Jay Taylor, David Treumann, Akshay Venkatesh, and Geordie Williamson for relevant discussions. Special thanks to Venkatesh for posing the question of finding a uniform construction of the $\sigma$-dual homomorphism to me in 2017, to Scholze for the proof of Lemma \ref{lem: extra small open embedding}, and to Conrad for help with Lemma \ref{lem: fixed borus}. Thanks also to Johannes Ansch\"{u}tz, Shachar Carmeli, Jessica Fintzen, David Hansen, and Mingjia Zhang for comments on a draft. Parts of this paper were completed at the Hausdorff Institute for Mathematics, funded by the Deutsche Forschungsgemeinschaft (DFG, German Research Foundation) under Germany's Excellence Strategy – EXC-2047/1 -- 390685813. This work was supported by NSF Postdoctoral Fellowship DMS-1902927 and NSF Grant DMS-2302520. 

\section{Notation}\label{sec: notation}

We fix a prime $p$. 

\subsection{Local fields}
We let $E$ be a local field of residue characteristic $p$, $\cO_E$ its ring of integers, $\varpi_E$ a uniformizer, and $\F_q  = \cO_E/\varpi_E$ its residue field. 

Let $\brE$ be the completion of the maximal unramified extension of $E$. For a reductive group $G/E$, $B(G)$ denotes the Kottwitz set of $G$, i.e., elements of $G(\brE)$ modulo Frobenius-conjugacy. 

Let $E^s$ be a separable closure of $E$ and $W_E \subset \Gal(E^s/E)$ be the Weil group of $E$.

\subsection{The group $\Sigma$} Throughout, $\ell$ denotes a prime number different from $p$ and $\Sigma$ denotes a finite cyclic group of order $\ell$. The notation $\sigma$ always denotes a generator of $\Sigma$; conversely, if an automorphism $\sigma$ is constructed first then $\Sigma$ always denotes the group generated by $\sigma$. Some constructions (e.g., Tate cohomology) are phrased in terms of $\sigma$, but ultimately all constructions are independent of the choice of $\sigma$. 

We denote by $N$ the element $1 + \sigma + \ldots + \sigma^{\ell-1} \in \Z[\Sigma]$.  

For an object $Y$ with an action of $\Sigma$, we denote by $Y^\sigma$ or $\Fix(\sigma, Y)$ the $\sigma$-fixed points of $Y$. 

\subsection{Reductive groups} Throughout, $G$ denotes a reductive group over $E$. When the notation $H$ appears, it refers to a reductive group arising as the subgroup of $G$ fixed by an action of $\Sigma$. We denote by $\iota \co H \rightarrow G$ the tautological embedding, and we use the same notation for induced maps such as $H(E) \rightarrow G(E)$, $B(H) \rightarrow B(G)$, $\Gr_H \rightarrow \Gr_G$, etc. 

For a torus $T$, we denote by $X_*(T)$ and $X^*(T)$ the cocharacter and character groups of $T$, respectively, and by $X_*(T)^+$ and $X^*(T)^+$ the subsets of dominant (co)characters. 

We denote by $\Rep^{\sm}_k G(E)$ the category of smooth representations of $G(E)$ over $k$. 

\subsection{Perfectoid spaces}

The notation $(C,C^+)$ will always mean that $C$ is an algebraically closed perfectoid field and $C^+ \subset C$ is an open bounded valuation subring.

Following \cite{FS}, we denote by $\Perf = \Perf_{\ol{\F}_p}$ the category of perfectoid spaces over $\ol{\F}_p$.

\subsection{Coefficients}\label{ssec: notation sheaves}
Throughout, $k$ denotes an algebraically closed field of characteristic $\ell$. We denote by $\OO := W(k)$ the Witt vectors of $k$. 

We write $\Frob \co k \rightarrow k$ for the absolute Frobenius automorphism $x \mapsto x^\ell$.

\subsection{Sheaves} We will use $\Lambda$ to denote a coefficient ring which is finite over $\OO$. In particular, $\Lambda$ is $\ell$-adically complete. The six-functor formalism for such sheaves is developed in \cite{Sch17} (the adic case is in \S 27 of \emph{loc. cit.}).

\begin{defn}\label{def: shriekable}
We say that a map $f \co Y \rightarrow Y'$ of small v-stacks on $\Perf$ is \emph{shriekable} if it is compactifiable, representable in locally spatial diamonds, and has locally finite $\dimg$. This is exactly the hypothesis in \cite[\S 1]{Sch17} for $Rf_! \co D_{\et}(Y; \Lambda) \rightarrow D_{\et}(Y'; \Lambda)$ to exist, and under the same assumptions its right adjoint $Rf^! \co D_{\et}(Y'; \Lambda) \rightarrow D_{\et}(Y; \Lambda) $ exists.
\end{defn}

For a shriekable map $f \co Y \rightarrow Y'$, we denote by $\DD_{Y/Y'}$ the relative Verdier duality functor, 
\[
\DD_{Y/Y'}(-) := \cRHom_{ D_{\et}(Y; \Lambda) }(-, f^! \Lambda).
\]
We abuse notation and write $\DD_{Y/Y'} := \DD_{Y/Y'}(\Lambda) = f^! \Lambda$ for the relative dualizing sheaf. We omit $Y'$ from the notation in the case $Y' = \Spd \ol{\F}_p$.

\subsection{Categories}\label{ssec: categories}

For an abelian category $\msf{C}$ with an action of a group $\Gamma$, we let $\msf{C}^{B \Gamma}$ denote the category of $\Gamma$-equivariant objects in $\msf{C}$. This comes equipped with a forgetful functor to $\msf{C}$. 

For a triangulated category $\msf{C}$ with a natural stable $\infty$-categorical enhancement, then by $\msf{C}^{B \Gamma}$ we mean the homotopy category of the $\Gamma$-equivariant objects in its stable $\infty$-categorical enhancement.

For an $A$-linear abelian category $\msf{C}$ and a commutative ring homomorphism $A \rightarrow B$, we abbreviate
\[
\msf{C} \otimes_{A} B := \msf{C} \otimes_{A-\Mod} (B-\Mod).
\]
When $A = \OO$ and $B = k$, we write 
\[
\FF \co \msf{C} \rightarrow \msf{C} \otimes_{\OO} k
\]
for the tautological base change.

\section{Smith-Treumann localization for diamonds}\label{sec: Smith-Treumann}

In this section we develop for diamonds a form of \emph{Smith-Treumann localization}, which refers to a type of sheaf-theoretic equivariant localization from a $\Sigma$-equivariant space to its $\Sigma$-fixed points. There is a loose analogy between Smith-Treumann localization and the perhaps more familiar hyperbolic localization for spaces with a $\G_m$-action.

The output of the theory looks similar to that for complex algebraic varieties or schemes (developed in \cite{Tr19} and \cite{RW}). However, the details of the proofs are quite different, owing to the different behavior of \'{e}tale sheaves on adic spaces. For example, the theory relies crucially on finiteness results, which in the case of schemes comes from quasi-compactness (among other things). On the other hand, non-quasicompactness is ubiquitous in $p$-adic geometry. For example, the complement of a closed subspace is usually not quasi-compact, hence the extension-by-zero from such a complement is not constructible; this example already illustrates that we will frequently need to contend with non-constructible sheaves. 

The contents of this section are as follows. In \S \ref{ssec: extra small} we codify the notion of ``extra small'' v-stacks. The adjective ``extra small''  is a slight strengthening of ``small'', and guarantees that we can approximate the v-stack by quasicompact subspaces. In \S \ref{ssec: Tate category}, we define the (large) ``Tate category'' of a locally spatial diamond, following ideas of Treumann \cite{Tr19} but with modifications as in \cite[\S 3]{Fe23} to allow sheaves with infinite-dimensional stalks. In \S \ref{ssec: permanence big tate category} we prove technical bounds for the tor-dimension of various sheaf operations, in order to show in that these operations pass to the Tate category, which we do in \S \ref{ssec: functors on Tate cat}. In \S \ref{ssec: smith operation} we define the ``Smith operation'', a kind of localization functor to the $\Sigma$-fixed points, and establish compatibility properties that may be thought of as forms of equivariant localization. Finally, in \S \ref{ssec: tate cohomology} we define Tate cohomology and various generalizations of it.

\subsection{Extra small v-stacks}\label{ssec: extra small}

Let $\omega_1$ be the first uncountable cardinal. Recall that an \emph{$\omega_1$-cofiltered} inverse system $I$ is one for which any functor $J \rightarrow I$, with $J$ a countable category, extends to $J^{\triangleleft} \rightarrow I$, where the cone $J^{\triangleleft}$ consists of $J$ plus an extra object with a unique map to each object of $J$. 

\begin{defn}[Extra small v-stacks] We say that a v-stack $Y$ is \emph{extra small} if for any $\omega_1$-cofiltered inverse system $\{S_i = \Spa(R_i, R_i^+)\}_{i \in I}$ of affinoid perfectoid spaces with inverse limit $S = \Spa(R, R^+)$, the natural map 
\[
Y(S) \rightarrow \colim_{i \in I} Y(S_i) 
\]
is an isomorphism. 
\end{defn}

The following Lemma justifies the terminology. 

\begin{lemma}
If a v-stack $Y$ is extra small, then $Y$ is small. 
\end{lemma}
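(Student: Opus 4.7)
The plan is to deduce smallness from a cardinality argument: the $\omega_1$-cofiltered continuity of $Y$ forces it to be determined by its restriction to a small sub-site of $\Perf$.

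First, I would establish an approximation statement for affinoid perfectoids. Fix a cardinal $\kappa \geq \aleph_1$ (e.g.\ $\kappa = \aleph_1$). I claim that any affinoid perfectoid $S = \Spa(R,R^+)$ over $\ol{\F}_p$ can be written as an $\omega_1$-cofiltered inverse limit $S \cong \varprojlim_{i \in I} S_i$ with $S_i = \Spa(R_i, R_i^+)$ and $\#R_i \leq \kappa$. The system is constructed by letting $R_i$ range over the perfectoid sub-Tate-$\ol{\F}_p$-subrings of $R$ of cardinality $\leq \kappa$, with $R_i^+ := R_i \cap R^+$. The union of all such $R_i$ is dense in $R$, so $S \cong \varprojlim S_i$. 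The crucial point is that this system is $\omega_1$-cofiltered: given countably many such subrings $R_{i_n}$, their union has cardinality at most $\aleph_0 \cdot \kappa = \kappa$, and one may close it up under the perfectoid operations (adjoining $p$-power roots and passing to the uniform completion, intersected with $R^+$) without exceeding cardinality $\kappa$, producing a common upper bound in $I$.

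Applying the extra small hypothesis now gives
\[
Y(S) \;\cong\; \colim_{i \in I} Y(S_i)
\]
for every affinoid perfectoid $S$, with all $S_i$ drawn from the essentially small full subcategory $\Perf_{\leq \kappa} \subset \Perf$ of affinoid perfectoids of cardinality $\leq \kappa$. Hence $Y|_{\Perf_{\leq \kappa}}$ determines $Y$ on all affinoid perfectoids, and therefore on all of $\Perf$, since any v-stack is in particular a v-sheaf and hence determined by its values on affinoid perfectoids. Because $\Perf_{\leq \kappa}$ is essentially small, $Y$ is determined by a set rather than a proper class of test objects; in particular it admits a surjection from a small disjoint union of affinoid perfectoids whose self-fiber product is again presented by such a disjoint union, which is the working definition of smallness in the sense of Scholze used in the paper.

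The main obstacle is the approximation step. One must verify not merely filteredness but $\omega_1$-cofilteredness of the sub-system of bounded-cardinality perfectoid subrings, which depends on the cardinal arithmetic $\aleph_0 \cdot \kappa = \kappa$ and on the fact that the closure operations defining the perfectoid structure (extracting $p$-th roots, uniform completion, and intersection with $R^+$) preserve the cardinality bound. Once this approximation is in hand, the passage from extra smallness to smallness is purely formal.
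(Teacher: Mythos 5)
Your argument is essentially the paper's: the proof given there simply defers to the first paragraph of the proof of \cite[Proposition III.1.3]{FS}, which runs exactly this approximation argument — writing $\Spa(R,R^+)$ as an $\omega_1$-cofiltered inverse limit of affinoid perfectoids built from topologically countably generated (hence bounded-cardinality) perfectoid subrings, applying the continuity hypothesis, and concluding that $Y$ is covered by a set's worth of test objects. One correction to your parenthetical: $\kappa=\aleph_1$ cannot serve, since a perfectoid Tate ring already has cardinality at least $2^{\aleph_0}$ and uniform completion can inflate cardinality to $\kappa^{\aleph_0}$, so you need $\kappa \geq 2^{\aleph_0}$ with $\kappa^{\aleph_0}=\kappa$ (e.g.\ $\kappa = 2^{\aleph_0}$).
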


\begin{proof}
The proof is contained in the first paragraph of \cite[Proof of Proposition III.1.3]{FS}. 
\end{proof}

\begin{remark}Informally speaking, all the small v-stacks that have come up in our experience are also extra small. The remark below \cite[Proposition III.1.3]{FS} suggests that all ``reasonable'' v-stacks are extra small. 
\end{remark}

\begin{example}\label{ex: extra small Bun}
The last paragraph of the proof of \cite[Proposition III.1.3]{FS} shows that $\Bun_G$ is extra small. By minor variations on this argument, the following v-stacks are also extra small.
\begin{enumerate}
\item The local Hecke stack $\cHck_{G/(\Div^1_X)^I}$ \cite[Definition VI.1.6]{FS}, and the global Hecke stack $\Hck_G^I$ \cite[\S IX.2]{FS}. 
\item The Beilinson-Drinfeld Grassmannian $\Gr_{G,(\Div^1_X)^I}$ \cite[Definition VI.I.8]{FS}. 
\item The moduli spaces of local shtukas $\Sht_{(G, b, \mu_{\bu}), K}$ \cite[Lecture 23]{SW20}. 
\end{enumerate}
\end{example}




The importance of extra smallness (for us) is to control the cohomological dimension of direct image along open embeddings. For quasi-compact open embeddings, the direct image is actually exact, but we will typically be contending with non-quasicompact open embeddings. Under hypotheses that spaces are extra small, we will be able to approximate open embeddings by quasicompact ones, as articulated in  the following Lemma (pointed out by Peter Scholze). 

\begin{lemma}\label{lem: extra small open embedding}
Let $Y$ be a locally spatial diamond admitting a countable open cover by spatial diamonds $Y_n$. Let $Z \inj Y$ be a closed embedding such that $Z \cap Y_n$ is extra small for each $n$. Then $U := Y \setminus Z$ is a countable union of quasicompact open subspaces. 
\end{lemma}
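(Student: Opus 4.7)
The plan is to reduce to the case where $Y$ itself is spatial, then argue by contradiction using extra smallness. For the reduction, $Y = \bigcup_n Y_n$ gives $U = \bigcup_n (Y_n \setminus (Z \cap Y_n))$, and a qc open of $Y_n$ is also a qc open of $Y$, so it suffices to prove each $Y_n \setminus (Z \cap Y_n)$ is a countable union of qc opens. Thus I may assume $Y$ is spatial and $Z \inj Y$ is closed and extra small.

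The goal now is to produce a countable decreasing chain of closed constructible subspaces $W_1 \supseteq W_2 \supseteq \ldots$ of $Y$ containing $Z$ with $\bigcap_n W_n = Z$, whose complements furnish the desired countable cover of $U$ by qc opens. Suppose no such chain exists. I would use Zorn-style transfinite recursion to construct an $\omega_1$-cofiltered strictly decreasing system $\{W_\alpha\}_{\alpha < \omega_1}$ of closed constructibles in $Y$ containing $Z$, whose inverse limit as v-sheaves is $Z$. Then choose a v-surjection $\pi \co \tilde Y \to Y$ from a strictly totally disconnected affinoid perfectoid, so that $|\tilde Y|$ is profinite and each pullback $\tilde W_\alpha := \pi^{-1}(W_\alpha)$ corresponds to a clopen subspace of $\tilde Y$, hence is itself affinoid perfectoid. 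The $\{\tilde W_\alpha\}$ then form an $\omega_1$-cofiltered inverse system of affinoid perfectoids with inverse limit $\tilde Z := \pi^{-1}(Z)$. Extra smallness of $Z$ applied to the tautological map $\tilde Z \to Z$ in $Z(\tilde Z) = \colim_\alpha Z(\tilde W_\alpha)$ forces it to factor through some $\tilde W_\alpha \to Z$; but $\pi$ is surjective, so the image of $\tilde W_\alpha$ in $Y$ is $W_\alpha$, forcing $W_\alpha = Z$ and contradicting strictness of the chain.

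The main obstacle I anticipate is the construction of the $\omega_1$-cofiltered chain, specifically at countable limit ordinals: naively, at such a limit $\lambda$, one needs a closed constructible $W_\lambda$ sandwiched between $Z$ and $\bigcap_{\alpha < \lambda} W_\alpha$, but the latter is merely closed (not constructible), offering no guarantee that such $W_\lambda$ exists in $Y$. My plan to circumvent this is to carry out the construction directly on $\tilde Y$: in the profinite space $|\tilde Y|$, closed subsets admit neighborhood bases of clopens closed under finite intersection, so the failure of a countable exhaustion of $|\tilU|$ by clopens (which would descend to one of $|U|$) allows one to build an $\omega_1$-cofiltered chain of clopen neighborhoods of $\tilde Z$ with v-sheaf inverse limit $\tilde Z$. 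Extra smallness of $Z$ then delivers the contradiction as above, and the conclusion is transferred to $Y$ via $\pi$.
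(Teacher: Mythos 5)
Your reduction to the case that $Y$ is spatial and $Z$ extra small is fine, and your endgame (pull back a strictly totally disconnected cover to an $\omega_1$-cofiltered system of affinoid perfectoids over the members of the chain, apply extra smallness of $Z$ to factor the limit through a finite stage, and conclude that stage equals $Z$) is essentially the right use of the hypothesis. The gap is in the middle: the transfinite recursion you propose cannot be carried out at countable limit ordinals, and your proposed fix does not address this. At a limit $\lambda$ you have a countable decreasing family $\{W_\alpha\}_{\alpha<\lambda}$ of constructible closed sets containing $Z$ whose intersection $C$ strictly contains $Z$ (else you are done), and you must choose a constructible closed $W_\lambda$ with $Z \subseteq W_\lambda \subseteq C$. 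Such a set need not exist: $C$ is merely closed, not a neighborhood of $Z$, and even in a profinite space a closed set with empty interior containing $Z$ contains no clopen containing $Z$. The clopen-neighborhood-base property you invoke only produces clopens shrinking \emph{down to} $\tilde Z$ from outside; it does not produce a clopen \emph{inside} a prescribed closed superset. (Two smaller inaccuracies: $|\tilde Y|$ is not profinite for a strictly totally disconnected perfectoid space — its connected components are spectra of affinoid fields, not points — and the preimage of a constructible closed subset is constructible closed, not clopen. These are repairable, e.g.\ closed subspaces of strictly totally disconnected spaces are still affinoid, but the limit-ordinal problem is not.)

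The way to avoid this — and what the paper does — is to drop both the contradiction and the constructibility requirement: consider the poset $\cZ$ of \emph{all} closed subdiamonds $Z_i \supseteq Z$ whose complement is a countable union of quasicompact opens. A countable intersection of members of $\cZ$ is again in $\cZ$ (a countable union of countable unions of qc opens is a countable union of qc opens), so $\cZ$ is $\omega_1$-cofiltered with no transfinite recursion and no interpolation at limit stages. One checks $\varprojlim_{\cZ} Z_i = Z$ on topological spaces exactly because every point of $U$ lies in a qc open contained in $U$. Then your extra-smallness/retract argument, applied to the pulled-back system of affinoids over $\cZ$, shows some $Z_i$ equals $Z$, i.e.\ $Z \in \cZ$, which is the desired conclusion. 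In short: enlarge the class of sets in your chain from "constructible closed" to "closed with complement a countable union of qc opens," and replace the strictly decreasing $\omega_1$-chain by the whole poset.
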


\begin{proof}
Let $Z_n := Z \cap Y_n$. If $Y_n \setminus Z_n$ is a countable union of quasicompact open subsets for each $n$, then 
\[
Y \setminus Z = \bigcup_{n} (Y_n \setminus Z_n)
\]
is also a countable union of quasicompact open subsets. Therefore, renaming $Y_n$ to $Y$ and $Z_n$ to $Z$, we may assume that $Y$ is spatial and $Z$ is extra small, with the goal of showing that $U := Y \setminus Z$ is a countable union of quasicompact open subsets. 

Consider the collection $\cZ$ of closed sub-diamonds $Z_i \subset Y$ such that 
\begin{itemize}
\item $Z_i \supset Z$, and 
\item $Y \setminus Z_i$ is a countable union of quasi-compact open subsets.
\end{itemize}
Viewing $\cZ$ as a partially ordered set under inclusion, we claim that $\cZ$ is $\omega_1$-cofiltered. Indeed, if $I$ is countable then we have that
\[
Y \setminus \bigcap_{i \in I} Z_i = \bigcup_{i \in I} (Y \setminus Z_i) 
\]
is a countable union of countable unions of quasi-compact open subsets, so $\bigcap_{i \in I} Z_i \in \cZ$. 

There is an obvious map 
\begin{equation}\label{eq: intersection of Z_i}
Z \rightarrow \limit_{Z_i \in \cZ} Z_i,
\end{equation}
which we claim is an isomorphism. Recall that closed sub-diamonds of $Y$ are determined by their underlying topological spaces, as follows from \cite[Proposition 11.20]{Sch17} (the generalization of this statement to v-stacks is \cite[Proposition 1.3]{AGLR}), so it suffices to check \eqref{eq: intersection of Z_i} at the level of topological spaces. Any point of the open subspace $|Y \setminus Z|$ lies in a quasicompact open subset thereof, since $Y$ is spatial. This shows the second equality in the sequence of identifications
\[
|\limit_{Z_i \in \cZ} Z_i| = \limit_{Z_i \in \cZ} |Z_i| = |Z|,
\]
verifying the claim.

Since $Y$ is spatial, there exists \cite[Proposition 11.24]{Sch17} a quasi-pro-\'etale cover of $Y$ by a strictly totally disconnected space, which is of the form 
\begin{equation}\label{eq: extra small cover}
 \Spa (B,B^+) \surj Y
 \end{equation}
 by \cite[Proposition 1.15]{Sch17}. Recall that a closed subspace of a strictly totally disconnected space is again strictly totally disconnected by \cite[Proposition 7.16]{Sch17}, and then affinoid by \cite[Proposition 1.15]{Sch17}. Hence for each $Z_i \in \cZ$, the cover \eqref{eq: extra small cover} pulls back to a cover $\Spa (A_i, A_i^+) \surj Z_i$ where $\Spa (A_i, A_i^+)$ is strictly totally disconnected. We showed that the system $\{ \Spa(A_i, A_i^+)\}_{Z_i \in \cZ}$ is $\omega_1$-cofiltered. Then, letting $\Spa (A,A^+)  := \varprojlim_{Z_i \in \cZ} \Spa (A_i, A_i^+)$,\footnote{So $A = \colim A_i$, which we note is already complete since any Cauchy sequence already lies in some $A_i$ by the $\omega_1$-cofiltered condition.} the composite map 
\[
\Spa (A,A^+)  = \limit_{Z_i \in \cZ} \Spa (A_i, A_i^+) \rightarrow \limit_{Z_i \in \cZ} Z_i = Z
\]
factors through some $\Spa (A_i, A_i^+) \rightarrow Z$ since $Z$ is extra small. This gives a retract $Z_i \rightarrow Z$ over $Y$, which must then be an isomorphism since these are closed subdiamonds of $Y$. Hence $Z \in \cZ$, which means by definition that $Y \setminus Z$ is a countable union of quasi-compact open subsets. 
\end{proof}

Motivated by Lemma \ref{lem: extra small open embedding}, we make the following definition. 

\begin{defn}
We say that a locally spatial diamond $Y$ is \emph{($\omega_0$-)locally spatial} if $Y$ can be written as a (countable) union $\bigcup_n Y_n$ of open spatial sub-diamonds. (Recall that $\omega_0 = \aleph_0$ is the cardinality of the natural numbers.) We say that a closed subdiamond $Z \inj Y$ is \emph{$\omega_0$-locally extra small} if such a union can be arranged so that $Y_n \cap Z$ is extra small for each $n$. 

We say that a morphism of v-stacks $f \co Y' \rightarrow Y$ is ($\omega_0$-)\emph{locally spatial} if for any spatial diamond $X$ and any map $X \rightarrow Y$, the fibered product $X \times_Y Y'$ is ($\omega_0$-)locally spatial.
 
\end{defn}

\begin{example}
Let $i \co Z \inj Y$ be a closed embedding of $\omega_0$-locally spatial diamonds. If $Z \subset Y$ is $\omega_0$-locally extra small, then Lemma \ref{lem: extra small open embedding} says that the complementary open embedding $j \co U \inj Y$ is $\omega_0$-locally spatial. 
\end{example}

\begin{defn}
For $d \geq 0$, we say that a locally spatial diamond $Y$ has \emph{$\ell$-cohomological dimension $\leq d$} if its cohomology on \'{e}tale $\F_\ell$-sheaves has amplitude in $[0, d]$. We say that $Y$ is \emph{$\ell$-bounded} if has $\ell$-cohomological dimension $\leq d$ for some $d<\infty$. 
\end{defn}

\begin{example}The following statements are used to bound the $\ell$-cohomological dimension in practice. Recall from \cite[Definition 21.1]{Sch17} that the \emph{dimension} of a locally spectral space is the supremum of the lengths of chains of specializations. By \cite[Proposition 21.11]{Sch17}, if $Y$ is a spatial diamond of finite dimension, then 
\[
\text{$Y$ has $\ell$-cohomological dimension $\leq \dim Y + \sup_y \cd_\ell y$}
\]
where $y$ runs through maximal points of $Y$. According to \cite[Proposition 21.16]{Sch17}, if $f \co Y \rightarrow \Spa (C, C^+)$ is a map of locally spatial diamonds, then $\cd_{\ell} y \leq \dimg f$ for all maximal points $y \in Y$.
\end{example}

\begin{lemma}\label{lem: tor dim j_*}
Let $A$ be a noetherian ring. Let $Y$ be an $\omega_0$-locally spatial diamond with $\ell$-cohomological dimension $\leq d$. Let $Z \inj Y$ be an $\omega_0$-locally extra small closed subdiamond with (open) complement $j \co U \inj Y$. Then $\rR j_* \co D_{\et}(U; A) \rightarrow D_{\et}(Y;A)$ has cohomological dimension $\leq d + 1$. 
\end{lemma}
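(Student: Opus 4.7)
My plan is to bound the cohomology sheaves $R^i j_* \cF$ stalk-wise at geometric points, reducing to an estimate on the cohomology of punctured spatial neighborhoods. Fix $\cF \in D_{\et}(U; A)$ concentrated in degree zero; the goal is to show $R^i j_* \cF = 0$ for $i > d+1$. For a geometric point $\bar y$ over $U$ this is clear, so consider $\bar y$ lying over $Z$. Using that $Y$ is $\omega_0$-locally spatial and $Z$ is $\omega_0$-locally extra small, I choose a spatial open $Y_n \subset Y$ containing (the image of) $\bar y$ with $Y_n \cap Z$ extra small. The stalk $(R^i j_* \cF)_{\bar y}$ is then the filtered colimit of $H^i(V \cap U, \cF|_{V \cap U})$ as $V$ runs over a cofinal system of spatial étale neighborhoods of $\bar y$ factoring through $Y_n$, so it suffices to bound $H^i(V \cap U, \cF|_{V \cap U})$ for each such $V$.

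Applying Lemma~\ref{lem: extra small open embedding} to the closed embedding $Y_n \cap Z \hookrightarrow Y_n$, I write $Y_n \cap U = \bigcup_m W_m$ as a countable increasing union $W_m \subset W_{m+1}$ of quasi-compact open sub-diamonds of $Y_n$. Intersecting with $V$ (or pulling back along $V \to Y_n$) gives $V \cap U = \bigcup_m (V \cap W_m)$ as an increasing countable union of quasi-compact opens of $V$. Since $R^p\!\varprojlim$ vanishes for $p \geq 2$ on countable inverse systems, and injective resolutions are flasque enough that restriction maps along $V \cap W_{m+1} \supset V \cap W_m$ behave well, the Milnor (derived-limit) short exact sequence
\[
0 \to {\textstyle\varprojlim_m}^{\,1}\, H^{i-1}(V \cap W_m, \cF|_{V \cap W_m}) \to H^i(V \cap U, \cF|_{V \cap U}) \to {\textstyle\varprojlim_m}\, H^i(V \cap W_m, \cF|_{V \cap W_m}) \to 0
\]
holds. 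This sequence accounts precisely for the ``$+1$'' appearing in the bound $d+1$: a vanishing range of $[0,d]$ for each term is shifted to $[0, d+1]$ after the derived limit.

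To finish, I would show $H^i(V \cap W_m, \cF|_{V \cap W_m}) = 0$ for $i > d$. Each $V \cap W_m$ is a quasi-compact open sub-diamond of $Y$, and the dimension-theoretic estimate $\cd_\ell \leq \dim + \max_y \cd_\ell(y)$ recalled in the example preceding the lemma shows that the $\ell$-cohomological dimension bound $\cd_\ell(Y) \leq d$ descends to such opens, since both the dimension and the $\cd_\ell$ at any point of $V \cap W_m$ are dominated by the corresponding quantities on $Y$. Standard dévissage for $\ell$-torsion $A$ (or passage to an $\ell$-adic limit when $A$ is $\ell$-adically complete, as in the paper's applications) transfers this bound from $\F_\ell$ to $A$-coefficients. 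Plugging the vanishing $H^{>d}(V \cap W_m, \cF|_{V \cap W_m}) = 0$ into the Milnor sequence yields $H^{>d+1}(V \cap U, \cF|_{V \cap U}) = 0$, and taking the colimit over $V$ gives $(R^{>d+1} j_* \cF)_{\bar y} = 0$. The main obstacle is the inheritance of the $\ell$-cohomological dimension by quasi-compact open sub-diamonds, which is not formal from the abstract hypothesis and must be justified via the dimension-theoretic bound from the example.
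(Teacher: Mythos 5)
Your skeleton matches the paper's: both arguments run through Lemma \ref{lem: extra small open embedding} to exhaust $U$ by countably many quasi-compact opens, and both extract the ``$+1$'' from a derived limit over a countable index set (your Milnor sequence is the global-sections shadow of the paper's identity $\Rlim_n A_n \cong \mrm{cone}(\rR\prod_n A_n \to \rR\prod_n A_n)[-1]$). The difference is where the limit is taken: the paper writes $\rR j_* \cong \Rlim_n \rR j_{n*} j_n^*$ directly in $D_{\et}(Y;A)$, whereas you pass to stalks and take the limit of cohomology groups over an \'etale neighborhood $V$ of a geometric point.

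The gap is in your final step, precisely the point you flag as ``the main obstacle.'' You need $H^i(V\cap W_m, \cF)=0$ for $i>d$ and propose to deduce this from $\cd_\ell(Y)\le d$ via the estimate $\cd_\ell \le \dim + \sup_y \cd_\ell(y)$. That estimate runs the wrong way: it bounds $\cd_\ell$ from above by dimension-theoretic data, so the abstract hypothesis $\cd_\ell(Y)\le d$ gives no control on $\dim$ or on $\cd_\ell$ of maximal points, hence none on $\cd_\ell(V\cap W_m)$. Moreover, cohomological dimension is simply not inherited by open subspaces (already for schemes one has $\cd_\ell(\mathbf{A}^2_{\CC}\setminus\{0\})=3>2=\cd_\ell(\mathbf{A}^2_{\CC})$), and bounding $H^i$ of an open subspace of $Y$ in terms of $H^i$ of $Y$ is essentially the content of the lemma itself, so this step is close to circular. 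The ingredient you are missing is \cite[Lemma 21.13]{Sch17}: pushforward along a \emph{quasi-compact} open immersion of locally spatial diamonds is exact. This is what the paper uses---it replaces each $\rR j_{n*} j_n^*\cK$ by an honest sheaf on $Y$, after which the only functor whose amplitude must be controlled is the countable $\Rlim$ (equivalently $\rR\prod_n$) on $Y$ itself, which is exactly where the hypothesis $\cd_\ell(Y)\le d$ enters. If you insist on the stalk-wise formulation, you would have to (i) invoke this exactness to rewrite $H^i(V\cap W_m,\cF)$ as $H^i(V, (j_m)_*\cF)$ for the quasi-compact open immersion $j_m$, and (ii) still justify a cohomological dimension bound for the \'etale neighborhoods $V$, which again does not follow formally from the bound for $Y$; the paper's sheaf-level formulation avoids introducing the $V$'s altogether.
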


\begin{proof}
The statement is local, so we may immediately reduce to the case that $Y$ is a spatial diamond and $Z \inj Y$ is extra small.

By Lemma \ref{lem: extra small open embedding}, there is a countable sequence of quasicompact open embeddings $j_n \co U_n  \inj Y$ such that $U_n \subset U$ and $\colim_n U_n = U$. Hence we have 
\[
\rR j_* \cong \Rlim_n \rR j_{n*} j_n^* \co D_{\et}(U;A) \rightarrow D_{\et}(Y;A).
\]
By \cite[Lemma 21.13]{Sch17}, the (derived) direct image along a \emph{quasi-compact} open embedding is exact, hence has cohomological dimension zero. This applies to $j_n$ for every $n$. Therefore, it suffices to see that the derived inverse limit functor $\Rlim_n$, along a countable index set, has finite cohomological dimension. Thanks to this countability, $\Rlim_n A_n$ can be expressed as 
\begin{equation}\label{eq: countable Rlim}
\Rlim_n A_n \cong \mrm{cone}(\rR{\prod_n} A_n \rightarrow \rR{\prod_n} A_n)[-1]
\end{equation}
where $\rR\hspace{-.1cm}\prod_n$ is the derived functor of the product, defined as the product of injective resolutions of the terms. By the standard generalities in homological algebra, $\rR\hspace{-.1cm}\prod_n$ can be calculated using a product of acyclic (for the global sections functor) resolutions, which can be taken to be of length bounded by the cohomological dimension of $Y$, which by assumption is $d<\infty$. Then from \eqref{eq: countable Rlim} we see that $\Rlim_n$ has cohomological dimension at most $d+1$.

\end{proof}

\begin{cor}\label{cor: open finite tor-dimension}
In the situation of Lemma \ref{lem: tor dim j_*}, if $\cK \in D_{\et}^b(U; A)$ has finite tor-dimension then $\rR j_* \cK \in D_{\et}(Y; A)$ has finite tor-dimension. 
\end{cor}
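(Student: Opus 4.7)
The plan is to deduce finite tor-dimension of $\rR j_*\cK$ from the cohomological-dimension bound of Lemma \ref{lem: tor dim j_*} via a projection formula. Since $A$ is noetherian, tor-amplitude can be tested on finitely generated $A$-modules, so suppose $\cK$ has tor-amplitude in $[a,b]$ and fix a finitely generated $A$-module $N$. Applying Lemma \ref{lem: tor dim j_*} to the complex $\cK \otimes_A^L N \in D^{[a,b]}_{\et}(U;A)$ yields $\rR j_*(\cK \otimes_A^L N) \in D^{[a, b+d+1]}_{\et}(Y;A)$. It then suffices to identify this with $(\rR j_*\cK) \otimes_A^L N$, as the latter will inherit the amplitude bound uniformly in $N$, forcing $\rR j_*\cK$ to have tor-amplitude in $[a, b+d+1]$.

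To establish this projection formula, I would reuse the countable quasi-compact open exhaustion $U = \bigcup_n U_n$ from the proof of Lemma \ref{lem: tor dim j_*}. For each quasi-compact open immersion $j_n \co U_n \hookrightarrow Y$, the direct image $\rR j_{n*}$ is exact and commutes with filtered colimits (in particular with finite direct sums), so after resolving $N$ by finitely generated free $A$-modules it commutes with $\otimes_A^L N$; this yields the projection formula at each finite stage, and in particular each $\rR j_{n*}j_n^*\cK$ has tor-amplitude in $[a,b]$. One then transports the projection formula through the derived inverse limit $\rR j_*\cK \cong \Rlim_n \rR j_{n*}j_n^*\cK$ using the Milnor-type presentation
\[
\Rlim_n X_n \cong \mathrm{fib}\bigl(\textstyle\prod_n X_n \xrightarrow{1 - \mathrm{shift}} \prod_n X_n\bigr)
\]
already employed in the proof of Lemma \ref{lem: tor dim j_*}, exploiting the finite generation of $N$ to interchange $\otimes_A^L N$ with the countable products $\prod_n$.

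The main obstacle will be this last interchange. In general countable derived products do not commute with derived tensor products, but the uniform tor-amplitude of the terms $\rR j_{n*}j_n^*\cK$ in $[a,b]$, combined with the fact that tensoring with a finite-rank free module commutes with arbitrary products, lets the Milnor fiber sequence propagate the amplitude bound through the limit. I expect verifying this interchange cleanly to be the most technically delicate point of the argument.
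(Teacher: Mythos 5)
Your route is genuinely different from the paper's. The paper disposes of this corollary in one line by citing the general principle \cite[Expos\'e 4, Th\'eor\`eme 5.2.11]{SGA4}: a functor of finite cohomological dimension on a topos with enough points preserves finite tor-dimension; the two hypotheses are exactly Lemma \ref{lem: tor dim j_*} and \cite[Proposition 14.3]{Sch17}. You are instead attempting to reprove that principle by hand in this special case, via a projection formula $\rR j_*(\cK\otimes^L_A N)\cong(\rR j_*\cK)\otimes^L_A N$ pushed through the $\Rlim$/Milnor presentation. That is a legitimate strategy, and your finite-stage analysis (exactness of $\rR j_{n*}$ for quasi-compact $j_n$, uniform tor-amplitude of the terms) is sound, but be aware that your reduction to testing against constant sheaves $\ul{N}$ with $N$ finitely generated already silently uses that $Y_{\et}$ has enough points, since tor-amplitude of a complex of sheaves is defined by testing against arbitrary sheaves of $A$-modules and only becomes a stalkwise (hence constant-coefficient) condition in the presence of enough points. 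So the ingredient the paper invokes explicitly cannot actually be avoided.

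The genuine gap is the step you yourself flag: the interchange of $\otimes^L_A N$ with $\rR\prod_n$. This is not a routine verification — it is where the entire content of the cited SGA4 theorem lives. Note that $\rR\prod_n$ here is a \emph{derived} product (defined via injective resolutions in the proof of Lemma \ref{lem: tor dim j_*}, with cohomological amplitude up to $d$), so the elementary fact that finitely presented modules commute with underived products does not apply directly; and for $N$ of infinite projective dimension over $A$ you must work with an unbounded-below finite free resolution, truncate it using the uniform tor-amplitude bound $[a,b]$ on the terms $\rR j_{n*}j_n^*\cK$, and control the resulting error term through both $\rR\prod_n$ and the fiber sequence. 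All of this can be made to work, but as written the proof is incomplete precisely at its load-bearing point. The efficient fix is the paper's: once Lemma \ref{lem: tor dim j_*} gives finite cohomological dimension of $\rR j_*$ and enough points are available, quote the general theorem rather than reconstruct its proof.
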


\begin{proof}
This is a special case of \cite[Expos\'e XVII, Th\'eor\`eme 5.2.11]{SGA4}, which applies because $\rR j_*$ has finite cohomological dimension by Lemma \ref{lem: tor dim j_*} and the fact \cite[Proposition 14.3]{Sch17} that $Y_{\et}$ has enough points, thanks to $Y$ being locally spatial. 


\end{proof}

\subsection{The Tate category}\label{ssec: Tate category}
Let $\Lambda$ be a finite commutative $W(k)$-algebra; we will be most interested in the cases $\Lambda = k$ or $W(k)$. Recall that $\Sigma$ is a cyclic group of order $\ell$ and $\Lambda[\Sigma]$ denotes its group ring. 


\begin{defn} Let $Y$ be a small v-stack. We define $\Flat^b(Y; \Lambda[\Sigma]) \subset D^b_{\et}(Y; \Lambda[\Sigma])$ to be the full subcategory consisting of complexes with finite tor-dimension \cite[Tag 0651]{stacks-project}, i.e., which have tor-amplitude in $[a,b]$ for some $a,b \in \Z$.
\end{defn}

\begin{defn}\label{def: Tate category} We define the \emph{(large) Tate category of $Y$} (with respect to $\Lambda$) to be the Verdier quotient category 
\[
\Shv(Y; \cT_{\Lambda}) := D^b_{\et}(Y; \Lambda[\Sigma]) / \Flat^b(Y; \Lambda[\Sigma]).
\]
(A ``small'' variant of the Tate category will appear later in \S \ref{ssec: small smith-treumann}.) We denote the tautological projection map from $D^b_{\et}(Y; \Lambda[\Sigma])$ to $\Shv(Y; \cT_{\Lambda})$ by 
\[
\TT^* \co D^b_{\et}(Y; \Lambda[\Sigma]) \rightarrow \Shv(Y; \cT_{\Lambda}).
\]
\end{defn}

\begin{remark}We will only be using the large Tate category for $\Lambda =k$, but some arguments are repeated in the small version which also gets used for $\Lambda = W(k)$, so we formulate it with more general coefficients. 
\end{remark}

For $\Lambda = k$, the following Lemma (pointed out by Jesper Grodal) gives an alternative characterization of the subcategory $\Flat^b(Y; k[\Sigma])$, showing that it is determined by conditions on geometric stalks. 

\begin{lemma}\label{lem: flat =  finite tor dimension k large} The subcategory $\Flat^b(Y; k[\Sigma] ) \subset D^b_{\et}(Y; k[\Sigma])$ coincides with the full subcategory spanned by objects whose stalks at all geometric points have finite tor-dimension over $k[\Sigma]$. 
\end{lemma}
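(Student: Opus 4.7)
The forward direction is immediate: pullback $\bar{y}^*$ to any geometric point is exact and symmetric monoidal, so preserves tor-amplitude.

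For the reverse direction I would use the sheafy Tate construction as a bridge. First, since $k[\Sigma]$ is an Artinian local ring with residue field $k$ (maximal ideal generated by $\sigma-1$), every $k[\Sigma]$-module is a filtered colimit of finite-length modules with composition factors isomorphic to $k$. A standard dévissage via long exact sequences (for extensions by $k$) and commutation of tensor with filtered colimits then shows that $K \in D^b(Y; k[\Sigma])$ has finite tor-dimension iff $K_{h\Sigma} := K \otimes^L_{k[\Sigma]} \underline{k}$ is cohomologically bounded, with the analogous module-level statement applying at each geometric stalk. Next I would introduce the Tate construction $K^{t\Sigma} := \mathrm{cofib}(K_{h\Sigma} \to K^{h\Sigma})$, where $K^{h\Sigma} := \rR\cHom_{k[\Sigma]}(\underline{k}, K)$ and the map is the norm; crucially, $K^{t\Sigma}$ is $2$-periodic since $\Sigma$ is cyclic of order $\ell$ equal to the residue characteristic.

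The key technical point is that for $K \in D^b$, $K_{h\Sigma}$ is automatically bounded above while $K^{h\Sigma}$ is bounded below (both visible by computing with the periodic free resolution of $\underline{k}$ by copies of $\underline{k[\Sigma]}$). The long exact sequence of the fiber sequence $K_{h\Sigma} \to K^{h\Sigma} \to K^{t\Sigma}$ then yields isomorphisms $\cH^{i-1}(K^{t\Sigma}) \cong \cH^i(K_{h\Sigma})$ for $i \ll 0$, and combined with $2$-periodicity of $K^{t\Sigma}$ this gives the equivalence: $K^{t\Sigma} = 0$ iff $K_{h\Sigma}$ is cohomologically bounded iff $K$ has finite tor-dimension. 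Moreover, $K \mapsto K^{t\Sigma}$ commutes with pullback to geometric points, because $\otimes^L$ does and because $\rR\cHom_{k[\Sigma]}(\underline{k}, K)$ can be computed by totalization against the same periodic resolution, which for bounded $K$ contributes only finitely many nonzero terms in each cohomological degree (so that the totalization commutes with $\bar{y}^*$).

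To finish, I would invoke that the étale site of a locally spatial diamond has enough points (\cite[Proposition 14.3]{Sch17}, as already used in the proof of Corollary~\ref{cor: open finite tor-dimension}), so $K^{t\Sigma} = 0$ iff $(K_{\bar{y}})^{t\Sigma} = 0$ for every geometric point $\bar{y}$, iff every stalk $K_{\bar{y}}$ has finite tor-dimension. The main obstacle that the Tate-construction formalism resolves is that stalkwise cohomological boundedness of $K_{h\Sigma}$ does not a priori imply uniform global boundedness—the lower bound of the tor-amplitude could in principle vary with $\bar{y}$—and the $2$-periodicity of $K^{t\Sigma}$ is precisely what supplies the required uniformity.
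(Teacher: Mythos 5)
Your proof is correct, but it takes a genuinely different route from the paper's. Both arguments have to confront the same crux: by \cite[Tag 0DJJ]{stacks-project}, having tor-amplitude in a \emph{fixed} interval is a stalkwise condition, so the only real issue is whether finite tor-dimension at each geometric stalk could fail to be uniform. The paper resolves this purely module-theoretically: choosing a representative of $\cK$ concentrated in degrees $[a,b]$, it observes that $k[\Sigma]$ is Artinian with finitistic dimension $0$ (flat $=$ projective, and projective amplitude is bounded by cohomological amplitude), so any stalk with finite tor-dimension automatically has tor-amplitude in $[a,b]$ --- the global representative itself supplies the uniform bound. You instead package the condition into the single sheaf-level object $K^{t\Sigma}$, whose vanishing is equivalent to finite tor-dimension via the $2$-periodicity argument and is detected stalkwise because the site has enough points (\cite[Proposition 14.3]{Sch17}) and because, for bounded $K$, both $K_{h\Sigma}$ and the degreewise-finite totalization computing $K^{h\Sigma}$ commute with exact pullback. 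The $2$-periodicity of $K^{t\Sigma}$ plays exactly the role that finitistic dimension $0$ plays in the paper: it converts ``$K_{h\Sigma}$ bounded below somewhere'' into ``bounded below by $a$''. Your route is heavier, but it is arguably closer in spirit to the Smith-theoretic formalism of \S\ref{ssec: Tate category} (vanishing of $K^{t\Sigma}$ is essentially the condition of being killed by $\TT^*$), whereas the paper's argument is shorter and entirely elementary. The one step you should spell out slightly more is the implication from boundedness of $K_{h\Sigma}$ back to finite tor-amplitude of $K$ at the sheaf level; this follows either from the $(\sigma-1)$-adic filtration of an arbitrary sheaf of $k[\Sigma]$-modules by sheaves of $k$-modules, or simply by falling back on \cite[Tag 0DJJ]{stacks-project} once each stalk is known to have tor-amplitude in $[a,b]$.
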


\begin{proof} We apply \cite[Tag 0DJJ]{stacks-project} which immediately gives one containment: if $\cK \in D^b_{\et}(Y; k[\Sigma])$ has finite tor-dimension over $k[\Sigma]$, then all its stalks at geometric points have finite tor-dimension over $k[\Sigma]$. 

For the other containment, again by \cite[Tag 0DJJ]{stacks-project} it suffices to show that if all the geometric stalks of $\cK \in D^b_{\et}(Y; k[\Sigma])$ have finite tor-amplitude, then there is a \emph{uniform} bound on the tor-amplitude of the geometric stalks. To see this, since $\cK$ is bounded we may pick a bounded complex representing it, say concentrated in degrees $[a,b]$. It then suffices to show: if $\cK$ is a complex of $k[\Sigma]$-modules concentrated in degrees $[a,b]$ and has finite tor-dimension, then in fact its tor-amplitude lies in $[a,b]$. 

Since $k[\Sigma]$ is Artinian, the properties of being flat and projective over $k[\Sigma]$ coincide by \cite[Tag 051E]{stacks-project}. Therefore $\cK$ also has finite projective dimension \cite[Tag 0A5M]{stacks-project}. Furthermore, since $k[\Sigma]$ has finitistic dimension $0$ (by \cite{GR71}, the finitistic dimension of a commutative ring is bounded by the Krull dimension), the projective amplitude of $\cK$ lies in $[a,b]$. Hence $\cK$ is represented by a complex of projective $k[\Sigma]$-modules supported in degrees $[a,b]$, and therefore has tor-amplitude in $[a,b]$. 	
\end{proof}

\begin{defn}\label{def: TT and epsilon}
Let $\epsilon \co \Lambda[\Sigma] \surj \Lambda$ be the augmentation and $\epsilon^* \co D^b_{\et}(Y; \Lambda) \rightarrow D^b_{\et}(Y; \Lambda[\Sigma])$ be pullback along $\epsilon$. Define the functor 
\[
\TT := \TT^* \epsilon^* \co  D^b_{\et}(Y; \Lambda) \rightarrow \Shv(Y; \cT_{\Lambda}).
\] 
\end{defn}

\begin{remark}
The notation $\TT, \TT^*, \epsilon^*$ follows that of Leslie-Lonergan \cite{LL}. 
\end{remark}

\subsection{Fixed points} For an endomorphism $\sigma$ of a v-stack $Y$, we write $\Fix(\sigma, Y) := Y^\sigma$ for the fibered product 
\[
\begin{tikzcd}
Y^\sigma \ar[r] \ar[d] & Y \ar[d, "\Id \times \sigma"] \\
Y \ar[r, "\Delta"] & Y \times Y
\end{tikzcd}
\]
Observe that if $Y$ is a separated locally spatial diamond, then $\Delta$ is a closed embedding, hence $Y^\sigma  \rightarrow Y$ is a closed embedding. We will only be interested in this situation. We denote by $\Sigma = \tw{\sigma}$ the cyclic group of order $\ell$. Recall the equivariant bounded derived category $D_{\et}^b(Y; \Lambda)^{B \Sigma}$, which is the homotopy category of $\cD_{\et}^b(Y; \Lambda)^{B \Sigma}$, the $\infty$-category of functors from $B \Sigma$ to the $\infty$-category $\cD_{\et}^b(Y; \Lambda)$ from \cite[\S 17]{Sch17}. 

The $\Sigma$-action on $Y$ induces the trivial $\Sigma$-action on $Y^\sigma$, for which we have an equivalence of derived categories
\begin{equation}\label{eq: derived cat for trivial action}
D_{\et}^b(Y^\sigma;\Lambda)^{B \Sigma} \cong D^b_{\et}(Y^\sigma; \Lambda[\Sigma]).
\end{equation}

\subsection{Bounding tor-dimension}\label{ssec: permanence big tate category}  Here we establish technical results that show the ``permanence'' of finite tor-dimension under various operations.

\begin{lemma}\label{lem: big Tate 1}
Let $Y$ be a locally spatial diamond with a free action of $\Sigma$. Let $q \co Y \rightarrow Y / \Sigma$ denote the quotient. Then for any $\cF \in D^b_{\et}(Y; \Lambda[\Sigma])$, we have $q_* \cF \in \Flat^b(Y/\Sigma; \Lambda[\Sigma])  \subset D^b_{\et}(Y/\Sigma; \Lambda[\Sigma])$. 
\end{lemma}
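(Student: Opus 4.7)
The plan is to check the finite tor-dimension condition on geometric stalks, using the $\Sigma$-torsor structure of $q$ to exhibit the stalks of $q_*\cF$ as induced $\Lambda[\Sigma]$-modules, which are automatically flat up to a twist by the $\Lambda$-tor-amplitude of $\cF$.

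First I would establish a local criterion: membership in $\Flat^b(Y/\Sigma; \Lambda[\Sigma])$ is equivalent to all geometric stalks having tor-amplitude over $\Lambda[\Sigma]$ bounded uniformly. For $\Lambda = k$, this is Lemma \ref{lem: flat =  finite tor dimension k large}; the general case of $\Lambda$ finite over $\OO$ should be handled by an analogous argument. Then I would compute the stalk at a geometric point $\bar y$ of $Y/\Sigma$. Since $\Sigma$ acts freely, $q$ is an étale $\Sigma$-torsor, and for any lift $\tilde y \in Y$ of $\bar y$ one can choose a cofinal system of étale neighborhoods $U$ of $\bar y$ such that $q^{-1}(U) = \bigsqcup_{\sigma \in \Sigma} U_\sigma$, with $\Sigma$ permuting the sheets by translation. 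This yields
\[
(q_*\cF)_{\bar y} \cong \bigoplus_{\sigma \in \Sigma} \cF_{\sigma \tilde y}
\]
as $\Lambda$-modules.

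The key step is to identify the $\Lambda[\Sigma]$-structure on the right-hand side: it is the diagonal combination of $\cF$'s intrinsic $\Lambda[\Sigma]$-action (acting within each summand) with the Galois permutation action from the torsor (permuting the summands). An elementary change of basis $(m_\sigma)_\sigma \mapsto \sum_\sigma [\sigma] \otimes_\Lambda (\sigma^{-1} m_\sigma)$ identifies this diagonal module with the induced module $\Lambda[\Sigma] \otimes_\Lambda \cF_{\tilde y}$, on which $\Sigma$ acts only via the regular representation on the left factor. Such a module has tor-amplitude over $\Lambda[\Sigma]$ equal to the tor-amplitude of $\cF_{\tilde y}$ over $\Lambda$, which is bounded uniformly in $\bar y$ because $\cF$ is bounded and $\Lambda$ itself has bounded projective dimension as an $\OO$-module.

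The main obstacle will be keeping the two $\Sigma$-actions straight and confirming carefully that the pushforward functor $q_*$ indeed equips $q_*\cF$ with the diagonal $\Lambda[\Sigma]$-structure described above. Once this bookkeeping is in place, the proof reduces to the elementary observation that a $\Lambda[\Sigma]$-module induced from the trivial subgroup is free (and in particular flat) over $\Lambda[\Sigma]$ whenever the underlying $\Lambda$-module is.
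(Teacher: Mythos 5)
Your proof is correct in substance and follows essentially the same route as the paper's: both trivialize the $\Sigma$-torsor $q$ locally, identify $q_*\cF$ there with the module induced from the trivial subgroup via the standard untwisting isomorphism, and conclude flatness over $\Lambda[\Sigma]$ from the corresponding property of the underlying $\Lambda$-module. The paper works with sections over trivializing opens where you work with geometric stalks; this difference is immaterial. One caveat: your closing justification that $\cF_{\tilde{y}}$ has finite tor-amplitude over $\Lambda$ ``because $\Lambda$ has bounded projective dimension as an $\OO$-module'' is a non sequitur --- what is actually needed is that $\Lambda$ itself is regular, so that every bounded complex of $\Lambda$-modules automatically has finite tor-amplitude over $\Lambda$. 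This holds in the cases the paper actually uses ($\Lambda = k$ or $\OO$, a field or a DVR) but would fail for, say, $\Lambda = \OO/\ell^2$; the paper's own assertion that the resulting module is ``visibly free'' quietly relies on the same restriction.
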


\begin{proof}
It suffices to show that for any open subset $U \subset Y/\Sigma$, the $\Lambda[\Sigma]$-module $q_* \cF (U)$ is free. Indeed, since $\Sigma$ acts freely on $Y$ we have 
\[
q^{-1}(U) = \coprod_{i \in \Z/\ell \Z}^\ell U_i
\]
where $q$ maps each $U_i$ isomorphically to $U$ and $\sigma$ cyclically permutes $U_i \mapsto U_{i+1}$. Then 
\[
q_* \cF (U) \cong  \cF(q^{-1}(U))  \cong \prod_{i=1}^\ell \cF(U_i)
\]
where $\sigma$ acts by sending $(f_i \in \cF(U_i))_{i \in \Z/\ell \Z} \mapsto (\sigma^* (f_i) \in \cF(U_{i-1}))_{i \in \Z/\ell \Z}$, which is visibly free. 

\end{proof}

\begin{lemma}\label{lem: big Tate 2}
Let $Y$ be an $\ell$-bounded, separated, $\omega_0$-locally spatial diamond with an action of $\Sigma$. Assume that the embedding of the $\Sigma$-fixed points $i \co Y^\sigma \inj Y$ is $\omega_0$-locally extra small. Let $U := Y \setminus Y^{\sigma}$ and $j \co U \inj Y$ be its inclusion into $Y$. Then for any $\cF \in D^b_{\et}(U; \Lambda[\Sigma])$, we have $i^*\rR j_* \cF \in \Flat^b(Y^\sigma; \Lambda[\Sigma])$. 

\end{lemma}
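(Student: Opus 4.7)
My plan is to compute the stalk of $i^* \rR j_* \cF$ at a geometric point $y \in Y^\sigma$ and exploit the free $\Sigma$-action on $U$, combined with Lemma~\ref{lem: big Tate 1}.

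Since the conclusion can be checked on geometric stalks (by Lemma~\ref{lem: flat =  finite tor dimension k large} in the case $\Lambda = k$, with the general $\Lambda$ case reducing to it after base change) and the statement is \'{e}tale local on $Y^\sigma$, I first reduce to the case where $Y$ is spatial and $Y^\sigma \inj Y$ is extra small, using the $\omega_0$-local hypotheses. By Lemma~\ref{lem: extra small open embedding}, $U$ is then a countable union of quasi-compact open sub-diamonds, and combining this with $\ell$-boundedness of $Y$, Lemma~\ref{lem: tor dim j_*} gives a uniform bound on the cohomological dimension of $\rR j_*$.

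At a geometric point $y \in Y^\sigma$, the stalk is computed as
\[
(\rR j_* \cF)_y = \colim_V \rR\Gamma(V \cap U, \cF|_{V \cap U})
\]
as $V$ ranges over a cofinal system of $\Sigma$-invariant \'{e}tale neighborhoods of $y$, which exists since $y$ is a $\Sigma$-fixed geometric point. For each such $V$, the $\Sigma$-action on $V \cap U$ is free, so the quotient map $q_V \co V \cap U \to (V \cap U)/\Sigma$ is a finite \'{e}tale cover of degree $\ell$. Leray applied to $q_V$ (which is \'{e}tale, so $\rR q_{V,*} = q_{V,*}$) then gives $\rR\Gamma(V \cap U, \cF|_{V \cap U}) \cong \rR\Gamma((V \cap U)/\Sigma, q_{V,*} \cF|_{V \cap U})$, and by Lemma~\ref{lem: big Tate 1} the pushforward $q_{V,*} \cF|_{V \cap U}$ is locally $\Lambda[\Sigma]$-free, hence $\Lambda[\Sigma]$-flat.

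It remains to argue that $\rR\Gamma$ of a $\Lambda[\Sigma]$-flat sheaf over a space of finite $\ell$-cohomological dimension has finite tor-amplitude over $\Lambda[\Sigma]$, with a bound depending only on the cohomological dimension. The principal obstacle is making this rigorous in the diamond setting: to exhibit finite tor-amplitude one must exchange the derived tensor product against an arbitrary $\Lambda[\Sigma]$-module $M$ with $\rR\Gamma$ via a form of projection formula, which is not automatic for non-perfect $M$. I expect this to follow from writing $M$ as a filtered colimit of perfect $\Lambda[\Sigma]$-modules and combining with the quasi-compact exhaustion provided by Lemma~\ref{lem: extra small open embedding} to commute the colimit with $\rR\Gamma$. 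Once this uniform tor-amplitude bound is established, it passes to the colimit over $V$, and the resulting uniform bound on all geometric stalks of $i^* \rR j_* \cF$ yields the desired conclusion $i^* \rR j_* \cF \in \Flat^b(Y^\sigma; \Lambda[\Sigma])$.
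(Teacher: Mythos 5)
Your route is genuinely different from the paper's: you compute the stalk of $i^*\rR j_*\cF$ at a fixed geometric point as a colimit over $\Sigma$-invariant \'{e}tale neighborhoods and use the free action on each punctured neighborhood, whereas the paper works globally on the quotient. Concretely, the paper observes that $q\co Y\to Y/\Sigma$ is totally ramified over $Y^\sigma$, so $\ol{i}:=q\circ i$ is a closed embedding fitting into a Cartesian square with $q$, giving $i^*\rR j_*\cF\cong \ol{i}^*\rR\ol{j}_* (q_{U*}\cF)$; then Lemma~\ref{lem: big Tate 1} makes $q_{U*}\cF$ of finite tor-dimension and Corollary~\ref{cor: open finite tor-dimension} propagates this through $\rR\ol{j}_*$. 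That global formulation buys a clean citation for the finiteness step; your stalkwise version is more hands-on but must re-prove that step by hand.

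And that step is where your proposal has a real gap. You defer the claim that $\rR\Gamma$ of a $\Lambda[\Sigma]$-flat sheaf over a space of bounded $\ell$-cohomological dimension has uniformly bounded tor-amplitude, proposing to write an arbitrary $\Lambda[\Sigma]$-module $M$ as a filtered colimit of \emph{perfect} complexes. That does not work as stated: in the abelian sense a module is a filtered colimit of finite free modules only if it is flat (Lazard), and e.g.\ the trivial module $k$ over $k[\Sigma]$ is not perfect, so non-flat test modules are not reached this way. The correct reduction is: Tor commutes with filtered colimits of \emph{modules}, so one may take $M$ finitely presented, hence pseudo-coherent over the Noetherian ring $\Lambda[\Sigma]$; one then truncates a finite-free resolution of $M$ using the finite cohomological dimension of $\rR\Gamma$ (a way-out argument). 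This is exactly the content of \cite[Expos\'e 4, Th\'eor\`eme 5.2.11]{SGA4}, which the paper invokes (via Corollary~\ref{cor: open finite tor-dimension}, using that the \'{e}tale site has enough points by \cite[Proposition 14.3]{Sch17}). If you replace your colimit-of-perfects step with this citation, and keep in mind that Lemma~\ref{lem: flat =  finite tor dimension k large} is only stated for $\Lambda=k$ (so for general $\Lambda$ you must, as you note, produce a uniform stalkwise bound and conclude via \cite[Tag 0DJJ]{stacks-project} rather than mere pointwise finiteness), your argument goes through.
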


\begin{proof}
Let $Y/\Sigma$ be the quotient diamond; then $Y/\Sigma$ is also $\ell$-bounded, $\omega_0$-locally spatial, and $Y^\sigma \inj Y/\Sigma$ is $\omega_0$-locally extra small. Since the map $q \co Y \rightarrow Y/\Sigma$ is totally ramified over $Y^\sigma$, the composition 
\[
Y^\sigma \xrightarrow{i} Y \xrightarrow{q} Y/\Sigma
\]
is a closed embedding, which we denote $\ol{i}$, and the square 
\begin{equation}
\begin{tikzcd}
Y^\sigma \ar[d, equals] \ar[r, "i"] & Y \ar[d,"q"] \\
Y^\sigma \ar[r, "\ol{i}"]  & Y/\Sigma
\end{tikzcd}
\end{equation}
is Cartesian. Applying base change to it, we have an isomorphism
\begin{equation}\label{eq: big tate 2 eq 1}
i^* \rR j_* \cF \cong  \ol{i}^* q_* \rR j_* \cF  \in D^b_{\et}(Y^\sigma; \Lambda[\Sigma]).
\end{equation}
From the commutativity of the diagram 
\[
\begin{tikzcd}
U \ar[r, hook, "j"] \ar[d, "q_U"] & Y \ar[d, "q"] \\
U/\Sigma \ar[r, hook, "\ol{j}"] & Y/\Sigma
\end{tikzcd}
\]
we have $\ol{i}^*q_* \rR j_* \cF  \cong \ol{i}^*\rR\ol{j}_* q_{U*} \cF$. Now Lemma \ref{lem: big Tate 1} implies that $q_{U*} \cF$ has finite tor-dimension. The extra smallness hypothesis allows us to apply Corollary \ref{cor: open finite tor-dimension} to deduce that $\rR\ol{j}_* q_{U*} \cF$ also has finite tor-dimension, hence $\ol{i}^* \rR\ol{j}_* q_{U*} \cF$ also has finite tor-dimension, and then we conclude using \eqref{eq: big tate 2 eq 1}. 

\end{proof}

\begin{lemma}\label{lem: * vs !}
Retain the notation and assumptions from Lemma \ref{lem: big Tate 2}. Then for any $\cK \in D^b_{\et}(Y; \Lambda)^{B\Sigma}$, the cone of the natural map $i^! \cK \rightarrow i^* \cK$ belongs to $\Flat^b(Y^\sigma; \Lambda[\Sigma])$. 
\end{lemma}

\begin{proof}We will first recall the construction of the natural map. Let $j \co Y \setminus Y^\sigma \inj Y$. Consider the exact triangle $i_* i^! \cK \rightarrow \cK \rightarrow j_* j^* \cK$ in $D_{\et}^b(Y; \Lambda)^{B\Sigma}$. Applying $i^*$ to it yields the exact triangle in $D^b_{\et}(Y^{\sigma}; \Lambda)^{B \Sigma}$,
\[
i^! \cK \rightarrow i^* \cK \rightarrow i^* \rR j_* j^* \cK,
\]
in which the first map is the one from the statement of Lemma \ref{lem: * vs !}. Then Lemma \ref{lem: big Tate 2} implies that $\mrm{Cone}(i^! \cK \rightarrow i^* \cK) \cong i^* \rR j_* j^* \cK $ lies in $\Flat^b(Y^{\sigma}; \Lambda[\Sigma])$, as desired. 
\end{proof}

\begin{lemma}\label{lem: f_! finite cd}
Let $A$ be a finite $\Lambda$-algebra (not necessarily commutative\footnote{Although we will only apply this Lemma in the case where $A$ is commutative.}). Let $Y$ and $S$ be locally spatial diamonds, and let $f \co Y \rightarrow S$ be shriekable. 

(1) Then $\rR f_! \co D_{\et}^b(Y; A) \rightarrow D_{\et}^b(S; A)$ has cohomological dimension $\leq 3 \dimg f$. 

(2) If furthermore $S$ has $\ell$-cohomological dimension $\leq d$ and $f$ is $\omega_0$-locally spatial, then $\rR f_* \co D_{\et}^b(Y; A) \rightarrow D_{\et}^b(S; A)$ has cohomological dimension $\leq 3 \dimg f + d + 1$. 
\end{lemma}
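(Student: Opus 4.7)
For part (1), apply the shriekability of $f$ to obtain a compactification $f = \ol f \circ j$ with $\ol f$ proper and $j$ an open embedding. Since $j_!$ is exact, it suffices to bound the cohomological dimension of $\rR \ol f_* = \rR \ol f_!$ on $D_{\et}^b(\ol Y; A)$. Using that $A$ is $\ell$-adically complete and finite over $\Lambda$, the forgetful functor $D_{\et}(-; A) \to D_{\et}(-; \F_\ell)$ commutes with $\rR \ol f_*$ and detects cohomological amplitude (since a complex of $A$-sheaves has bounded cohomology iff its underlying $\F_\ell$-complex does), reducing to the case of $\F_\ell$-coefficients. The bound $3 \dimg f$ is then a standard estimate in the six-functor formalism for locally spatial diamonds, arising e.g.\ by passage to a strictly totally disconnected pro-\'etale cover of $\ol Y$ and using topological dimension bounds on the fibers.

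For part (2), factor $f = \ol f \circ j$ as in (1), so $\rR f_* = \rR \ol f_* \circ \rR j_*$. Part (1) bounds the cohomological dimension of $\rR \ol f_*$ by $3 \dimg \ol f = 3 \dimg f$. For $\rR j_* \colon D_{\et}(Y; A) \to D_{\et}(\ol Y; A)$, the $\omega_0$-locally spatial hypothesis on $f$ transfers to the closed complement $\ol Y \setminus Y$ being $\omega_0$-locally extra small in $\ol Y$. Working locally on $S$ (where the $\ell$-cohomological dimension is $\leq d$), Lemma~\ref{lem: tor dim j_*} yields that $\rR j_*$ has cohomological dimension $\leq d + 1$; composing with the Grothendieck spectral sequence bound gives the stated $3 \dimg f + d + 1$.

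The main obstacle is arranging the compactification so that the ambient $\ell$-cohomological dimension needed to apply Lemma~\ref{lem: tor dim j_*} is controlled by $d$, rather than by $d + 3 \dimg f$ (which would be the naive Leray bound on $\cd_\ell(\ol Y)$). This requires localizing the argument carefully on $S$ and using properness of $\ol f$ — in particular, that for a qc open $V \subset S$ the restriction $\ol f^{-1}(V)$ admits an $\omega_0$-locally spatial structure whose ambient cohomological dimension matches that of $V$ itself — so that the extra $3 \dimg f$ does not propagate into the $\rR j_*$ bound. A complementary subtlety is checking that the compactification inherits the $\omega_0$-locally spatial property from $f$, which is what licenses the extra-smallness hypothesis of Lemma~\ref{lem: tor dim j_*}.
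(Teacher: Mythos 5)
Your part (1) is essentially workable, though the paper takes a more direct route: since $Y$ and $f$ are locally spatial, $\rR f_!$ is by definition $\colim_i \rR(f|_{Y_i})_! j_i^*$ over the spatial open subdiamonds $Y_i \subset Y$, each term has cohomological dimension $\leq 3\dimg f$ by \cite[Theorem 22.5]{Sch17}, and filtered colimits are exact. Your detour through $\F_\ell$-coefficients is both unnecessary and slightly off — there is no forgetful functor $D_{\et}(-;A)\to D_{\et}(-;\F_\ell)$ when, say, $A=W(k)$; the cited bound already holds for general finite $\Lambda$-algebras.

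Part (2) has a genuine gap. Your factorization $\rR f_* = \rR\ol f_* \circ \rR j_*$ hinges on bounding $\rR j_*$ for the boundary inclusion $j\co Y\inj \ol Y$, and two things go wrong. First, Lemma \ref{lem: tor dim j_*} requires the closed complement $\ol Y\setminus Y$ to be $\omega_0$-locally extra small, and nothing in the hypotheses supplies this: the $\omega_0$-local spatiality of $f$ is a condition on $Y$ over $S$, not on the boundary of a compactification. Second, even granting that, Lemma \ref{lem: tor dim j_*} bounds $\rR j_*$ by $\cd_\ell(\ol Y)+1$, and $\cd_\ell(\ol Y)$ is at best $d+3\dimg f$ by Leray; composing with $\rR\ol f_*$ then yields $6\dimg f + d + 1$, not the claimed bound. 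Your proposed repair — that $\ol f^{-1}(V)$ has ambient cohomological dimension matching that of $V\subset S$ — is false, since the fibers of $\ol f$ contribute up to $3\dimg f$ to the cohomological dimension. The correct argument avoids the compactification entirely: use $\omega_0$-local spatiality to write $Y=\bigcup_n Y_n$ as a \emph{countable} union of spatial opens, express $\rR f_*\cong \Rlim_n \rR(f|_{Y_n})_* j_n^*$, bound each term by $3\dimg f$ via \cite[Theorem 22.5]{Sch17}, and then bound the countable $\Rlim$ by $d+1$ using the presentation of $\Rlim$ as the shifted cone of a map of derived products, the derived product on $D_{\et}(S;A)$ having cohomological dimension $\leq d=\cd_\ell(S)$ (exactly as in the proof of Lemma \ref{lem: tor dim j_*}). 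This is where the hypotheses on $S$ and on $f$ are actually consumed.
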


\begin{proof}
(1) By the hypothesis that $Y$ and $f$ are locally spatial, we may write $Y \cong \colim_i Y_i$ as a filtered colimit of spatial open subdiamonds $j_i \co Y_i \inj Y$. We have by definition \cite[p.134-135]{Sch17} that 
\[
\rR f_! \cong \colim_i \rR (f|_{Y_i})_! j_i^*.
\]
By \cite[Theorem 22.5]{Sch17}, each $\rR(f|_{Y_i})_!$ has cohomological dimension at most $3 \dimg f$. Since $j_i^*$ is exact and filtered colimits are exact, we deduce that $Rf_!$ also has cohomological dimension at most $3 \dimg f$. 

(2) Similarly to (1), we have 
\[
\rR f_* \cong \Rlim_i \rR(f|_{Y_i})_* j_i^*
\]
where now we may arrange the indexing set to be countable, thanks to the hypothesis that $f$ is $\omega_0$-locally spatial. Each $\rR(f|_{Y_i})_* j_i^*$ has cohomological dimension at most $3\dimg f$ by \cite[Theorem 22.5]{Sch17}. Then we bound the cohomological dimension of the derived inverse limit as in the proof of Lemma \ref{lem: tor dim j_*}. 

\end{proof}

\begin{lemma}\label{lem: pushforward preserves perfect complexes}
Let $A$ be a finite $\Lambda$-algebra (not necessarily commutative). Let $Y$ and $S$ be locally spatial diamonds, and let $f \co Y \rightarrow S$ be shriekable.

(1) Then $\rR f_! \co D^b_{\et}(Y; A) \rightarrow D^b_{\et}(S; A)$ carries $\Flat^b(Y; A)$ to $\Flat^b(S; A)$.

(2) If furthermore $S$ is $\ell$-bounded and $f$ is $\omega_0$-locally spatial, then $\rR f_* $ carries $\Flat^b(Y; A)$ to $\Flat^b(S; A)$. 
\end{lemma}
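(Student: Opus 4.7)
The strategy for both parts is the same: combine the cohomological-dimension bounds from Lemma \ref{lem: f_! finite cd} with a projection formula, thereby converting a bound on cohomological amplitude into a bound on tor-amplitude. Let $\cF \in \Flat^b(Y; A)$ have tor-amplitude in $[a,b]$, meaning that for every (right) $A$-module $M$, the complex $\cF \otimes^L_A \underline{M}_Y$ of sheaves of $A$-modules has cohomology concentrated in degrees $[a,b]$, where $\underline{M}_Y$ denotes the constant $A$-module sheaf on $Y$ with value $M$, and similarly for $\underline{M}_S$.

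For part (1), the projection formula for $\rR f_!$ in the Scholze six-functor formalism \cite{Sch17} yields a canonical isomorphism
$$\rR f_!\cF \otimes^L_A \underline{M}_S \cong \rR f_!\bigl(\cF \otimes^L_A \underline{M}_Y\bigr).$$
By the tor-amplitude hypothesis on $\cF$, the object on the right has underlying sheaf-theoretic input $\cF \otimes^L_A \underline{M}_Y$ lying in degrees $[a,b]$, and Lemma \ref{lem: f_! finite cd}(1) then places the pushforward in degrees $[a, b + 3\dimg f]$. Since this holds for all $A$-modules $M$, we conclude that $\rR f_!\cF$ has tor-amplitude in $[a, b + 3\dimg f]$, proving (1).

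Part (2) proceeds by the same template, with Lemma \ref{lem: f_! finite cd}(2) in place of Lemma \ref{lem: f_! finite cd}(1), yielding an upper tor-amplitude bound $b + 3\dimg f + d + 1$. The one additional input required is the analogous projection formula
$$\rR f_*\cF \otimes^L_A \underline{M}_S \cong \rR f_*\bigl(\cF \otimes^L_A \underline{M}_Y\bigr),$$
which is not formally automatic in the six-functor formalism for non-proper $f$. To establish it, observe first that the statement is immediate when $M$ is a finite free $A$-module, since $\rR f_*$ commutes with finite direct sums. For general $M$, choose a resolution $F_\bullet \to M$ by finite free $A$-modules (which exists as $A$ is noetherian, being finite over $\Lambda$), and truncate at a length exceeding $b - a + 3\dimg f + d + 1$. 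The finite tor-amplitude of $\cF$ together with Lemma \ref{lem: f_! finite cd}(2) ensures that the kernel of the truncation contributes only outside the cohomological range of interest on both sides, reducing the projection formula to the finite-free case already handled.

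The principal obstacle is thus the projection formula for $\rR f_*$: the $\rR f_!$ version is standard, but the $\rR f_*$ version must be built by hand using the finite-cohomological-dimension hypothesis to truncate free resolutions. With that established, the proofs of (1) and (2) reduce to bookkeeping of cohomological amplitudes against the bounds from Lemma \ref{lem: f_! finite cd}.
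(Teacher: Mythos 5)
Your part (1) is essentially the paper's argument: the projection formula for $\rR f_!$ plus the cohomological dimension bound of Lemma \ref{lem: f_! finite cd}(1). The only cosmetic difference is that the paper tests tor-amplitude against arbitrary sheaves $M$ on $S$ (via $f^*M$), whereas you test against constant sheaves $\underline{M}$ and implicitly rely on the stalk criterion \cite[Tag 0DJJ]{stacks-project} to conclude; both are fine.

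Part (2) is where you genuinely diverge. The paper does not prove a projection formula for $\rR f_*$ at all: it simply invokes \cite[Expos\'e 4, Th\'eor\`eme 5.2.11]{SGA4}, a general statement that a functor of finite cohomological dimension on a topos with enough points preserves finite tor-dimension, with the finite cohomological dimension supplied by Lemma \ref{lem: f_! finite cd}(2) and the enough-points hypothesis by \cite[Proposition 14.3]{Sch17}. Your hand-built projection formula is a more self-contained route, but it has one real gap as written: you resolve an \emph{arbitrary} right $A$-module $M$ by finite free modules, which is only possible for finitely generated $M$ (Noetherianity of $A$ gives resolutions of finitely generated modules, not of all modules). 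To repair this, you should first reduce to finitely generated $M$: by the stalk criterion it suffices to bound the tor-amplitude of each geometric stalk $(\rR f_*\cF)_{\bar s}$, and since $\mathrm{Tor}$ and cohomology commute with filtered colimits of modules, testing stalks against finitely generated $M$ suffices. A second, smaller issue is that a single truncation length does not establish the projection formula in all degrees; you need to let the truncation length $N$ grow with the cohomological degree $n$ being inspected (for each fixed $n < a$, choose $N$ so that both error terms $\rR f_*\cF\otimes^L\underline{K_N}[N]$ and $\rR f_*(\cF\otimes^L\underline{K_N}[N])$ sit in degrees below $n-1$, then apply the five lemma). With those two adjustments your argument for (2) goes through, so the proposal is a correct alternative modulo the finitely-generated reduction; the paper's citation of SGA4 buys the same conclusion with no work, at the cost of relying on a black box.
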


\begin{proof}
(1) Suppose $\cF \in \Flat^b(Y; A)$. We need to verify that for any sheaf $M$ (concentrated in degree $0$) in $D^b_{\et}(S; A)$, we have $\rH^{-i}(M \dotimes \rR f_! \cF) = 0$ for $i \gg 0$. By the projection formula, we have 
\[
\rH^{-i}(M \dotimes_A \rR f_! \cF) \cong \rH^{-i}(\rR f_! (f^*M \dotimes_A \cF)).
\]
By the assumption that $\cF \in \Flat^b(Y; A)$, $f^*M \dotimes_A \cF$ is bounded. Then by Lemma \ref{lem: f_! finite cd}(1), $\rR f_! (f^*M \dotimes_A \cF)$ is bounded, so $\rH^{-i}(\rR f_! (f^*M \dotimes_A \cF)) = 0$ for $i \gg 0$, as desired. 

(2) This follows from \cite[Expos\'e XVII, Th\'eor\`eme 5.2.11]{SGA4}, which applies because $\rR f_*$ has finite cohomological dimension by Lemma \ref{lem: f_! finite cd} and the fact \cite[Proposition 14.3]{Sch17} that $S_{\et}$ has enough points, thanks to $S$ being locally spatial. 
\end{proof}

\begin{cor}\label{cor: pushforward from free is flat}Let $Y$ and $S$ be locally spatial diamonds, and let $f \co Y \rightarrow S$ be shriekable. Suppose $\Sigma$ acts trivially on $S$ and freely on $Y$, and $f$ is $\Sigma$-equivariant. 

(1) Then $\rR f_! \co D^b_{\et}(Y; \Lambda[\Sigma]) \rightarrow D^b_{\et}(S; \Lambda[\Sigma])$ lands in $\Flat^b(Y; \Lambda[\Sigma])$.

(2) If furthermore $S$ has $\ell$-cohomological dimension $\leq d$ and $f$ is $\omega_0$-locally spatial, then $\rR f_* $ carries $\Flat^b(Y; \Lambda[\Sigma])$ to $\Flat^b(S; \Lambda[\Sigma])$. 
\end{cor}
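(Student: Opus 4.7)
The plan is to reduce to the previous lemma by factoring $f$ through the quotient by the free $\Sigma$-action. Since $\Sigma$ acts trivially on $S$ and $f$ is $\Sigma$-equivariant, $f$ descends through the quotient map $q \co Y \rightarrow Y/\Sigma$ to a map $\bar{f} \co Y/\Sigma \rightarrow S$, and $f = \bar{f} \circ q$. Because the $\Sigma$-action on $Y$ is free, $Y/\Sigma$ is a locally spatial diamond and $q$ is a $\Sigma$-torsor; in particular, $q$ is finite \'etale (hence both proper and of cohomological dimension zero), so $\rR q_! \cong q_* \cong \rR q_*$.

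For part (1), factor $\rR f_! \cong \rR\bar{f}_! \circ \rR q_! \cong \rR\bar{f}_! \circ q_*$. For any $\cF \in D^b_{\et}(Y; \Lambda[\Sigma])$, Lemma \ref{lem: big Tate 1} supplies $q_* \cF \in \Flat^b(Y/\Sigma; \Lambda[\Sigma])$ (in fact, its sections on any open subset of $Y/\Sigma$ are free $\Lambda[\Sigma]$-modules). Applying Lemma \ref{lem: pushforward preserves perfect complexes}(1) to $\bar{f}$ then yields $\rR f_! \cF \in \Flat^b(S; \Lambda[\Sigma])$. The hypothesis that $\bar{f}$ is shriekable follows from $f$ being shriekable and $q$ being finite \'etale: compactifiability, representability in locally spatial diamonds, and the $\dimg$ bound all descend along the finite \'etale cover $q$, since $\dimg \bar{f} = \dimg f$.

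For part (2), factor analogously as $\rR f_* \cong \rR \bar{f}_* \circ \rR q_* \cong \rR \bar{f}_* \circ q_*$. For $\cF \in \Flat^b(Y; \Lambda[\Sigma])$, Lemma \ref{lem: big Tate 1} again gives $q_* \cF \in \Flat^b(Y/\Sigma; \Lambda[\Sigma])$, using nothing about $\cF$ beyond its membership in $D^b_{\et}(Y; \Lambda[\Sigma])$. To invoke Lemma \ref{lem: pushforward preserves perfect complexes}(2) for $\bar{f}$, we need $S$ to be $\ell$-bounded (given) and $\bar{f}$ to be $\omega_0$-locally spatial; the latter is inherited from $f$ via the finite \'etale quotient $q$, since pulling back $\bar{f}$ along a spatial map to $S$ yields the $\Sigma$-quotient of the pullback of $f$, and a free quotient of an $\omega_0$-locally spatial diamond by $\Sigma$ is again $\omega_0$-locally spatial.

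The only mild subtlety, and the one closest to being an obstacle, is the bookkeeping around the factorization: verifying that $\bar{f}$ is shriekable (resp.\ $\omega_0$-locally spatial) and that the canonical base change $\rR f_{(!\text{ or }*)} \cong \rR\bar{f}_{(!\text{ or }*)} \circ \rR q_{(!\text{ or }*)}$ is valid. These are standard features of the six-functor formalism in \cite{Sch17} applied to the finite \'etale torsor $q$, so the argument is essentially a formal consequence of Lemma \ref{lem: big Tate 1} and Lemma \ref{lem: pushforward preserves perfect complexes}.
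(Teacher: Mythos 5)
Your proof is correct and follows exactly the paper's argument: factor $f$ through the free quotient $q \co Y \rightarrow Y/\Sigma$, apply Lemma \ref{lem: big Tate 1} to $q_! = q_*$, and then apply Lemma \ref{lem: pushforward preserves perfect complexes} to $\ol{f}$ with $A = \Lambda[\Sigma]$. The extra bookkeeping you supply on shriekability and $\omega_0$-local spatiality of $\ol{f}$ is fine and is left implicit in the paper.
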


\begin{proof}
By the hypotheses, we may factor $f$ as the composition of $\Sigma$-equivariant maps 
\[
Y \xrightarrow{q} Y/\Sigma \xrightarrow{\ol{f}} S.
\]
For (1), apply Lemma \ref{lem: big Tate 1} to $q_!$ and Lemma \ref{lem: pushforward preserves perfect complexes}(1) to $\rR\ol{f}_!$ with $A := \Lambda[\Sigma]$. For (2), apply Lemma \ref{lem: big Tate 1} to $q_* = q_!$ and Lemma \ref{lem: pushforward preserves perfect complexes}(2) to $\rR\ol{f}_*$ with $A:= \Lambda[\Sigma]$.
\end{proof}

\subsection{Functors on Tate categories}\label{ssec: functors on Tate cat}

Let $f \co Y \rightarrow S$ denote a $\Sigma$-equivariant map of separated locally spatial diamonds with $\Sigma$-action.

\subsubsection{Pullback} Since the pullback functor $f^* \co D^b_{\et}(S^{\sigma}; k)^{B \Sigma} \rightarrow D^b_{\et}(Y^{\sigma}; k)^{B \Sigma}$ preserves stalks, Lemma \ref{lem: flat =  finite tor dimension k large} implies that it carries $\Flat^b(S^\sigma; k[\Sigma])$ to $\Flat^b(Y^\sigma; k[\Sigma])$, hence induces a functor 
\[
f^* \co \Shv(S^{\sigma}; \Cal{T}_{k}) \rightarrow \Shv(Y^{\sigma}; \Cal{T}_{k}).
\]

\subsubsection{Pushforward} Assume furthermore that $f \co Y \rightarrow S$ is shriekable. By Lemma \ref{lem: pushforward preserves perfect complexes}(1), the functor $\rR f_!\co D^b_{\et}(Y^{\sigma}; \Lambda[\Sigma]) \rightarrow D^b_{\et}(S^{\sigma}; \Lambda[\Sigma])$ carries $\Flat^b(Y^\sigma; \Lambda[\Sigma])$ to $\Flat^b(S^\sigma; \Lambda[\Sigma])$, hence induces a functor
\[
\rR f_! \co \Shv(Y^{\sigma}; \cT_{\Lambda}) \rightarrow \Shv(S^{\sigma}; \cT_{\Lambda}).
\]

If $f$ is furthermore $\omega_0$-locally spatial and $i \co Y^\sigma \inj Y$ is $\omega_0$-locally extra small, then similarly using Lemma \ref{lem: pushforward preserves perfect complexes}(2) gives a functor
\[
\rR f_* \co \Shv(Y^{\sigma}; \cT_{\Lambda}) \rightarrow \Shv(S^{\sigma}; \cT_{\Lambda}).
\]

\subsection{The Smith operation}\label{ssec: smith operation} We define the analogue in $p$-adic geometry of the ``Smith operation'' $\Psm$ introduced by Treumann in \cite{Tr19}.

\begin{defn}
Let $Y$ be a separated locally spatial diamond with an action of $\Sigma$ and $i \co Y^\sigma \inj Y$ the inclusion of its $\Sigma$-fixed points. The \emph{Smith operation} is the functor 
\begin{equation}\label{eq: Psm for diamonds}
\Psm  := \TT^* \circ i^* \co D_{\et}^b(Y; \Lambda)^{B \Sigma} \rightarrow \Shv(Y^{\sigma}; \Cal{T}_{\Lambda})
\end{equation}
defined as the composition of $i^* \co D_{\et}^b(Y; \Lambda)^{B \Sigma} \rightarrow D_{\et}^b(Y^{\sigma}; \Lambda)^{B \Sigma} \stackrel{\eqref{eq: derived cat for trivial action}}\cong D^b_{\et}(Y^{\sigma}; \Lambda[\Sigma])$ with the projection $\TT^*$ to $\Shv(Y^{\sigma}; \Cal{T}_{\Lambda})$.
\end{defn}

The following result may be viewed as a form of equivariant localization for spaces with $\Sigma$-action, comparing the (relative) cohomology of a space with that of its $\Sigma$-fixed points.

\begin{prop}\label{prop: equivariant localization} Let $Y$ and $S$ be separated locally spatial diamonds, and let $f \co Y \rightarrow S$ be shriekable. Denote be $f^{\sigma} \co Y^{\sigma} \rightarrow S^\sigma$ the induced map on fixed points. 

(1) Then the following diagram commutes: 
\[
\begin{tikzcd}
D^b_{\et}(Y;\Lambda)^{B \Sigma} \ar[d, "\Psm"] \ar[r, "\rR f_!"]  &  D^b_{\et}(S;\Lambda)^{B \Sigma}   \ar[d, "\Psm"]  \\
\Shv(Y^{\sigma}; \Cal{T}_\Lambda)  \ar[r, "\rR f_!^\sigma
"]  & \Shv(S^{\sigma}; \Cal{T}_\Lambda)
\end{tikzcd} 
\]

(2) If $f$ is $\omega_0$-locally spatial and $i \co Y^\sigma \inj Y$ is $\omega_0$-locally extra small, then the following diagram commutes: 
\[
\begin{tikzcd}
D^b_{\et}(Y;\Lambda)^{B \Sigma} \ar[d, "\Psm"] \ar[r, "\rR f_*"]  &  D^b_{\et}(S;\Lambda)^{B \Sigma}   \ar[d, "\Psm"]  \\
\Shv(Y^{\sigma}; \Cal{T}_\Lambda)  \ar[r, "\rR f_*^\sigma
"]  & \Shv(S^{\sigma}; \Cal{T}_\Lambda)
\end{tikzcd} 
\]

\end{prop}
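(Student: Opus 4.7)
The plan is, in both (1) and (2), to reduce to the case where $\Sigma$ acts trivially on the target via base change, and then apply the exact triangle attaching $Y^\sigma$ to the base-changed source, whose open complement carries a free $\Sigma$-action. The free-action term is killed in the Tate category by Corollary~\ref{cor: pushforward from free is flat}.

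Concretely, form the Cartesian square
\[
\begin{tikzcd}
Y' \ar[d, "f'"'] \ar[r, "i'"] & Y \ar[d, "f"] \\
S^\sigma \ar[r, "i_S"] & S
\end{tikzcd}
\]
with $Y' := Y \times_S S^\sigma$. The $\Sigma$-action on $Y$ restricts to one on $Y'$, with fixed locus $(Y')^\sigma = Y^\sigma$. Write $i'' \co Y^\sigma \inj Y'$ and $j'' \co Y' \setminus Y^\sigma \inj Y'$ for the complementary embeddings, so that $i' \circ i'' = i_Y$ and $f' \circ i'' = f^\sigma$. By construction, $\Sigma$ acts freely on $Y' \setminus Y^\sigma$ and trivially on $S^\sigma$, which is exactly the setup of Corollary~\ref{cor: pushforward from free is flat}.

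For (1), proper base change gives $i_S^* \rR f_! \cong \rR f'_! \, i'^*$. Applying $\rR f'_!$ to the triangle $j''_! j''^* \to \Id \to i''_* i''^*$ evaluated at $i'^* \cF$, and using that $i''_* = i''_!$ (closed embedding), $f' \circ i'' = f^\sigma$, and $i' \circ i'' = i_Y$, yields
\[
\rR(f' \circ j'')_! \, j''^* i'^* \cF \;\longrightarrow\; \rR f'_! i'^* \cF \;\longrightarrow\; \rR f^\sigma_! i_Y^* \cF.
\]
Corollary~\ref{cor: pushforward from free is flat}(1) places the first term in $\Flat^b(S^\sigma; \Lambda[\Sigma])$, so applying $\TT^*$ gives $\Psm \rR f_! \cF = \TT^* i_S^* \rR f_! \cF \cong \TT^* \rR f^\sigma_! i_Y^* \cF = \rR f^\sigma_! \Psm \cF$, where the last equality holds because $\rR f^\sigma_!$ on the Tate category was defined so as to commute with $\TT^*$.

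For (2), the dual argument replaces $i^*$ with $\rR i^!$. Lemma~\ref{lem: * vs !} gives $\TT^* i_S^* \rR f_* \cF \cong \TT^* \rR i_S^! \rR f_* \cF$, and base change for $\rR i^!$ along the Cartesian square yields $\rR i_S^! \rR f_* \cong \rR f'_* \, \rR i'^!$. Applying $\rR f'_*$ to the triangle $\rR i''_* \rR i''^! \to \Id \to \rR j''_* j''^*$ at $\rR i'^! \cF$, and using $\rR i''^! \rR i'^! = \rR i_Y^!$ and $f' \circ i'' = f^\sigma$, produces
\[
\rR f^\sigma_* \, \rR i_Y^! \cF \;\longrightarrow\; \rR f'_* \, \rR i'^! \cF \;\longrightarrow\; \rR(f' \circ j'')_* \, j''^* \, \rR i'^! \cF.
\]
Corollary~\ref{cor: pushforward from free is flat}(2) places the last term in $\Flat^b(S^\sigma; \Lambda[\Sigma])$, so applying $\TT^*$ and using Lemma~\ref{lem: * vs !} on $Y$ to replace $\rR i_Y^!$ by $i_Y^*$ in the Tate category yields $\Psm \rR f_* \cF \cong \rR f^\sigma_* \Psm \cF$.

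The main technical obstacle is verifying that the geometric hypotheses needed for these invocations propagate through the construction, namely: that $f' \circ j''$ is shriekable and $\omega_0$-locally spatial (so that Corollary~\ref{cor: pushforward from free is flat}(2) applies), and that $S^\sigma \inj S$ satisfies the hypotheses of Lemma~\ref{lem: * vs !} on $S$ (in particular $S$ being $\ell$-bounded and $S^\sigma \inj S$ being $\omega_0$-locally extra small). These should follow from the hypotheses on $Y$ and $f$ together with Lemma~\ref{lem: extra small open embedding} and standard permanence of $\omega_0$-local spatiality and $\ell$-boundedness under base change; in practice they will be satisfied in all the geometric settings (affine Grassmannians, moduli of local shtukas) where the Proposition is applied.
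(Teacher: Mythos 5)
Your part (1) is essentially the paper's proof: reduce to trivial $\Sigma$-action on the base via proper base change, apply the excision triangle $j_!j^* \to \Id \to i_*i^*$, and kill the free-locus term using Corollary~\ref{cor: pushforward from free is flat}(1). No issues there.

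For part (2) you take a genuinely different route, and it has a gap. You begin by invoking Lemma~\ref{lem: * vs !} \emph{on $S$}, i.e.\ for the embedding $S^\sigma \inj S$, in order to replace $i_S^*$ by $\rR i_S^!$ and then use the base change $\rR i_S^! \rR f_* \cong \rR f'_* \rR i'^!$. That lemma requires $S$ to be $\ell$-bounded and $S^\sigma \inj S$ to be $\omega_0$-locally extra small. Neither is among the hypotheses of the Proposition, and contrary to your closing remark they do \emph{not} follow from the hypotheses on $Y$ and $f$: the assumptions for (2) constrain only $Y^\sigma \inj Y$ and the fibers of $f$, and say nothing about the fixed locus of $S$ (consider, e.g., $Y = \emptyset$). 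So as written your argument proves a weaker statement with extra hypotheses on the base. The paper avoids touching $i_S^!$ altogether: it applies the triangle $i_{Y*}\rR i_Y^!\cF \to \cF \to \rR j_* j^*\cF$ on $Y$, pushes forward by $\rR f_*$, uses $\rR f_* i_{Y*} = i_{S*}\rR f^\sigma_*$ together with $i_S^* i_{S*} = \Id$ to identify the $*$-restriction of the first term with $\rR f^\sigma_* \rR i_Y^!\cF$, kills the third term as you do, and only then invokes Lemma~\ref{lem: * vs !} --- but \emph{on $Y$}, where the $\omega_0$-locally-extra-small hypothesis is actually available. Your proof is repaired by substituting that maneuver for the $i_S^! $ detour; the final application of Lemma~\ref{lem: * vs !} on $Y$ that you already have then finishes the argument.
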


\begin{proof} For both statements, we may replace $S$ by $S^{\sigma}$ and thereby reduce to the case where the $\Sigma$-action on $S$ is trivial. 

(1) Let $\cF \in D_{\et}^b(Y;\Lambda)^{B \Sigma}$. Writing $j \co (Y \setminus Y^\sigma) \inj Y$ for the open complement of $i \co Y^{\sigma} \inj Y$, we have an exact triangle
\[
j_! j^* \cF  \rightarrow  \cF \rightarrow  i_* i^* \cF \in D^b_{\et}(Y; \Lambda)^{B\Sigma}.
\] 
By definition $\Sigma$ acts freely on $Y \setminus Y^\sigma$, hence Lemma \ref{lem: pushforward preserves perfect complexes}(1) implies that $\rR f_! \circ  (j_! j^* \cF) \in \Flat^b(S; \Lambda[\Sigma])$. Then the cone of $\rR f_!  \cF \rightarrow \rR f^{\sigma}_! (i^*  \cF)$ lies in $\Flat^b(S; \Lambda[\Sigma])$, and therefore becomes $0$ in $\Shv(S; \cT_{\Lambda})$. Therefore
\[
\TT^*(\rR f_! \cF) \cong  \TT^* ( \rR f^{\sigma}_! (i^* \cF)) \cong \rR f_!^\sigma \Psm(\cF) \in \Shv(S; \cT_{\Lambda}),
\]
which exactly expresses the desired commutativity. 

(2) The argument is similar to that for (1), but using instead the exact triangle 
\[
i_* i^! \cF  \rightarrow  \cF \rightarrow  j_* j^* \cF.
\] 
By Lemma \ref{lem: pushforward preserves perfect complexes}(2), the cone of $\rR f_* i_* i^! \cF  \rightarrow  \rR f_* \cF$ lies in $\Flat^b(S; \Lambda[\Sigma])$, and therefore projects to $0$ in $\Shv(S; \cT_{\Lambda})$. Then using Lemma \ref{lem: * vs !}, the maps $\rR f_* i_* i^! \cF  \rightarrow  \rR f_* i_* i^* \cF \rightarrow \rR f_* \cF$ project to isomorphisms in $\Shv(S; \cT_{\Lambda})$. 

\end{proof}

\subsection{Tate cohomology}\label{ssec: tate cohomology}
For a $\Lambda[\Sigma]$-module $M$, its \emph{Tate cohomology groups} were defined in \eqref{eq: Tate cohomology}. We extend the definition 2-periodically to define $\rT^j(M)$ for all $j \in \Z$. 

\begin{example}[Tate cohomology of trivial coefficients]\label{ex: Tate cohomology of trivial coeff}
Equip $k$ with the trivial $\Sigma$-action. Then we have 
\[
\rT^j(k)  \cong k \text{ for all } j \in \Z. 
\]
Equip $\OO$ with the trivial $\Sigma$-action. Then we have
\[
\rT^j(\OO)  \cong \begin{cases} k & j \equiv 0 \pmod{2}, \\ 0 & j \equiv 1 \pmod{2}. \end{cases}
\]
\end{example}

\begin{example}\label{ex: Tate cohomology of Nm}
Let $V$ be a $k$-vector space. Then $V^{\otimes \ell}$ has an action of $\Sigma$, where $\sigma$ acts by cyclic rotation of the factors, and a straightforward calculation shows that 
\[
\rT^0(V^{\otimes \ell}) \cong V^{(\ell)} := V \otimes_{k, \Frob_\ell} k,
\]
the \emph{Frobenius twist} of $V$. 
\end{example}

Next we consider generalizations of Tate cohomology to complexes and sheaves. 

\subsubsection{Tate cohomology of complexes}  The Tate cohomology of a complex of $\Lambda[\Sigma]$-modules is defined in \cite[\S 3.4.1]{Fe23}. We summarize below. The exact sequence of $\Lambda[\Sigma]$-modules
\[
0 \rightarrow \Lambda \rightarrow \Lambda[\Sigma] \xrightarrow{1-\sigma} \Lambda[\Sigma] \rightarrow \Lambda \rightarrow 0
\]
induces a morphism 
\begin{equation}\label{eq: lambda shift by 2}
\Lambda \rightarrow \Lambda[2] \in D^b(\Lambda[\Sigma]).
\end{equation}
Note that it becomes an isomorphism in the Tate category $D^b(\Lambda[\Sigma])/\Flat^b(\Lambda[\Sigma])$, since the middle two terms project to $0$. Given a bounded-below complex of $\Lambda[\Sigma]$-modules $C^{\bu}$, we define its \emph{Tate cohomology} to be  
\begin{align*}
\rT^n(C^{\bu}) &= \colim_{j \rightarrow \infty} \Hom_{D^+(\Lambda[\Sigma])}(\Lambda, C^{\bu}[n+2j]) 
\end{align*}
where the transition maps are those induced by \eqref{eq: lambda shift by 2}. Evidently $\rT^n(C^{\bu})$ is 2-periodic in $n$, and we occasionally view the indexing as being $n \in \Z/2\Z$. It is clear that this construction descends to the derived category.

Now we state some assertions that may be specific\footnote{Our argument uses $\Lambda = k$, but the assertions could be true in greater generality, as far as we know.} to $\Lambda = k$. By \cite[Tag 051E]{stacks-project}, a module over $k[\Sigma]$ is flat if and only if it is free. Since Tate cohomology of free $k[\Sigma]$-complexes vanishes (by inspection), this construction further factors through the Tate category, inducing 
\[
\rT^n \co \Shv(\pt; \cT_{k}) \rightarrow  \mrm{Vect}_{/k}.
\]
We write $\rT^* := \bigoplus_n \rT^n$ and $\rH^* := \bigoplus_n \rH^n$. 

\begin{remark}[Tate cohomology as Hom in the Tate category]\label{rem: tate cohomology as hom}
If $C^{\bu} \in  D^b(k[\Sigma])$ is bounded, then the argument of \cite[Proposition 4.5.1]{LL} gives a natural isomorphism 
\[
\rT^i(C^{\bu}) \cong \Hom_{\Shv(\pt; \cT_k)}(k, \TT^* C^{\bu}[i]).
\]
\end{remark}	

\begin{example}[Tate cohomology of trivial complexes]\label{ex: tate coh of trivial}
If $C^{\bullet} \in D^b(\OO)$, then (cf. Definition \ref{def: TT and epsilon} for notation) we have 
\[
\rT^*(\epsilon^* C^\bullet) \cong \rH^*(\mrm{Tot}(\ldots \xrightarrow{\ell} C^{\bu} \xrightarrow{0} C^{\bu} \xrightarrow{\ell} C^{\bu}  \xrightarrow{0} \ldots))
\]
so that 
\[
\rT^n(\epsilon^* C^\bullet) \cong  \bigoplus_{i \equiv n \pmod{2}} \rH^i(C^{\bu} \dotimes_{\OO} k). 
\]

If $C^{\bullet} \in D^b(k)$, then we have 
\begin{equation}\label{eq: tate trivial k-module}
\rT^*(\epsilon^* C^\bullet) \cong \rH^*(C^{\bullet}) \otimes_k \rT^*(k) 
\end{equation}
where $k$ is equipped with the trivial $\Sigma$-action. By Example \ref{ex: Tate cohomology of trivial coeff}, \eqref{eq: tate trivial k-module} simplifies to  
\[
\rT^n(\epsilon^* C^\bullet) \cong \bigoplus_{i \in \Z} \rH^i(C^{\bullet}) .
\]
\end{example}

\subsubsection{Tate cohomology sheaves}\label{sssec: tate cohomology sheaves}
Let $S$ be a locally spatial diamond with trivial $\Sigma$-action, so that $\Shv(S; \Cal{T}_\Lambda)$ is defined. Given $\Cal{F} \in D^+_{\et}(S; \Lambda[\Sigma])$, we define \emph{Tate cohomology sheaves} 
\[
\rT^n (\Cal{F}) := \colim_{j \rightarrow \infty} \cHom_{D^+_{\et}(S; \Lambda[\Sigma])}(\Lambda, \cF[n+2j]) 
\]
where the transition maps are induced by \eqref{eq: lambda shift by 2}. The $\rT^i(\cF)$ are \'{e}tale sheaves of $T^0(\Lambda)$-modules, where $T^0(\Lambda)$ is the 0th Tate cohomology of the trivial $\Sigma$-module $\Lambda$.

\begin{remark}For $\Lambda = k$, as in Remark \ref{rem: tate cohomology as hom} we also have the description
\[
\rT^i (\cF) \cong \cHom_{\Shv(S; \cT_k)}(k, \TT^* \cF[i]).
\]
\end{remark}

\subsubsection{Tate cohomology for a morphism}\label{sssec: Tate morphism} Assume that $Y$ and $S$ are separated locally spatial diamonds with an action of $\Sigma$, and $f \co Y \rightarrow S$ is a $\Sigma$-equivariant shriekable morphism. 

For $\Cal{F} \in D^b_{\et}(Y; \Lambda)^{B \Sigma}$, we have $\rR f_! (\cF) \in D^b_{\et}(S; \Lambda)^{B \Sigma}$. If $S$ has the trivial $\Sigma$-action, then we can form the ``relative Tate cohomology'' sheaves $\rT^n (\rR f_! \cF)$ on $S$.

\begin{remark}\label{rem: tate cohomology factor over tate category}
Note that if $\Sigma$ acts trivially on $Y$ and on $S$, then the construction $\cF \mapsto \rR f_! (\cF)$ factors over $\Shv(Y; \cT_k)$ by Lemma \ref{lem: pushforward preserves perfect complexes}. In this situation we will also regard $\rT^i ( \rR f_!)$ as a functor on $\Shv(Y; \cT_k)$. 
\end{remark}

\subsubsection{The long exact sequence for Tate cohomology}\label{sssec: LES} Let assumptions be as in \S \ref{sssec: Tate morphism}, and suppose furthermore that the $\Sigma$-action on $S$ is trivial. Given a distinguished triangle $\Cal{F}' \rightarrow \Cal{F} \rightarrow \Cal{F}'' \in D^b_{\et}(Y;\Lambda)^{B\Sigma}$, we have a long exact sequence of \'{e}tale sheaves on $S$, 
\[
\begin{tikzcd}[row sep = tiny]
& & \ldots \ar[r] & \rT^{-1} (\rR f_!( \Cal{F}''))  \\
\ar[r] &  \rT^0 (\rR f_!(\Cal{F}')) \ar[r] &  \rT^0 (\rR f_!(\Cal{F})) \ar[r] &  \rT^0 (\rR f_!( \Cal{F}'')) \\
\ar[r] &  \rT^1 (\rR f_!( \Cal{F}')) \ar[r] &  \ldots 
\end{tikzcd}
\]

\section{$\Sigma$-fixed point calculations}\label{sec: fixed point}

 Let $G$ be a reductive group over $E$ and $\sigma$ an order $\ell \neq p$ automorphism of $G$ such that $H := G^\sigma$ is (connected) reductive.
 
 \begin{example}
 By \cite[Theorem VIII.5.14]{FS} and \cite[Theorem 8.1]{St68}, $H$ is automatically (connected) reductive if $G$ is semisimple simply connected.
 \end{example}
 
In this section we will analyze the fixed points of $\sigma$ on various spaces affiliated with $G$, and relate them to the analogous spaces affiliated with $H$, such as:
\begin{itemize}
\item The $\bdrp$-affine Grassmannians, as well as their variants such as Beilinson-Drinfeld affine Grassmannians, convolution affine Grassmannians, and ``twisted'' affine Grassmannians (which are the target of the generalized Grothendieck-Messing period maps). 
\item The moduli spaces of local shtukas.
\end{itemize}
These calculations are used later when applying Smith-Treumann localization to such spaces. 

\subsection{Affine Grassmannians}\label{ssec: fixed ponts affine grassmanians} We begin by formulating the results for affine Grassmannians and their variants.

\begin{prop}\label{prop: algebraic gr fixed points} Let $\CC$ be an algebraically closed field. Let $\Gr_{G, \CC}^{\alg}, \Gr_{H, \CC}^{\alg}$ be the algebraic affine Grassmannians over $\CC$. Then the map $\Gr_{H,\CC}^{\alg} \rightarrow \Fix(\sigma, \Gr_{G,\CC}^{\alg})$ is an isomorphism on underlying reduced ind-schemes. 
\end{prop}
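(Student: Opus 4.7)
The natural closed immersion $\iota \co H \hookrightarrow G$ induces a closed immersion of algebraic affine Grassmannians $\Gr_H^{\alg} \hookrightarrow \Gr_G^{\alg}$, and since $\sigma$ acts trivially on $H$ this factors through $\Fix(\sigma, \Gr_G^{\alg})$. Since $\Gr_H^{\alg}$ is already reduced, the proposition will follow once the resulting closed immersion $\Gr_H^{\alg} \hookrightarrow \Fix(\sigma, \Gr_G^{\alg})_{\red}$ is shown bijective on $\CC$-points: indeed, a closed immersion of reduced ind-finite-type $\CC$-schemes which is bijective on closed points is an isomorphism, since closed points are dense in each bounded piece. Using the identification $\Gr_G^{\alg}(\CC) = LG/L^+G$ with $LG := G(\CC((t)))$ and $L^+G := G(\CC[[t]])$, injectivity is elementary: if $h_1, h_2 \in LH$ give the same coset in $LG/L^+G$, then $h_1^{-1}h_2 \in LH \cap L^+G = L^+H$, where the last equality uses that $H$ is closed in $G$.

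For surjectivity, apply a classical theorem of Steinberg to choose a $\sigma$-stable maximal torus $T \subset G$ whose identity component of $\sigma$-fixed points is a maximal torus $T_H \subset H$. The Cartan decomposition
\[
LG = \bigsqcup_{\mu \in X_*(T)^+} L^+G \cdot t^\mu \cdot L^+G
\]
is $\sigma$-equivariant, so the dominant coweight $\mu$ labeling the stratum of any $\sigma$-fixed coset $gL^+G$ must be $\sigma$-invariant; hence $\mu$ factors through $T^\sigma$ and $t^\mu \in LH$. It remains to show that every $\sigma$-fixed point of the Schubert cell $\Gr_G^\mu \cong L^+G / (L^+G \cap t^\mu L^+G t^{-\mu})$ lies in the image of the corresponding union of $\Gr_H^{\mu'}$ for dominant $\mu' \in X_*(T_H)^+$ mapping to $\mu$. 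Truncating the parahoric $L^+G \cap t^\mu L^+G t^{-\mu}$ to a sufficiently high finite level reduces this to the finite-dimensional statement that $(G/P)^\sigma$ is, on $\CC$-points, a union of $H$-orbits of parabolics $P_H \subset H$, which is a classical Lang--Steinberg-type calculation; the prounipotent kernel of the truncation is handled separately via its filtration by $\sigma$-stable root subgroups.

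The main obstacle is the surjectivity step, whose essential content is a restricted cohomological vanishing in $\ker\bigl(H^1(\Sigma, L^+G) \to H^1(\Sigma, LG)\bigr)$, a question about deforming a $\sigma$-coboundary in $LG$ to one in $L^+G$. The Cartan decomposition plus truncation localizes this to finite-dimensional classical geometry where Steinberg's vanishing theorem for semisimple automorphisms of connected reductive groups applies; the only subtlety arises when $\mrm{char}(\CC) = \ell$, but even then the filtered prounipotent structure of the kernel $L^+G \to G$ keeps the argument intact, and the statement on reduced structures is unaffected since we only need bijectivity on $\CC$-points.
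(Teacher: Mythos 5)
Your overall skeleton is sound and runs parallel to the paper's: reduce to bijectivity on $\CC$-points of the reduced ind-schemes, dispose of injectivity by $LH \cap L^+G = L^+H$, and localize surjectivity to strata of a $\sigma$-equivariant group-theoretic stratification. The main structural difference is that you stratify by $L^+G$-orbits (Cartan decomposition), whereas the paper refines to the \emph{Iwahori} stratification indexed by all of $X_*(T)$. That refinement is not cosmetic: each Iwahori orbit is an affine space explicitly coordinatized as a product of affine root subgroups $U_{\alpha+m\hbar}$ (equation \eqref{eq: Iwlambda}), so the fixed-point computation becomes a direct calculation with $\sigma$ permuting root groups, and the matching of strata is the bijection $X_*(T)^\sigma = X_*(T^\sigma)$ with no partial flag varieties and no bookkeeping of several $H$-orbits inside one $G$-stratum. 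Your coarser stratification forces you to analyze $(G/P_\mu)^\sigma$ and the prounipotent fibers separately, which is exactly where your write-up stops being a proof.

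Two concrete gaps in the surjectivity step. First, there is no ``Steinberg vanishing theorem'' of the kind you invoke: for a finite cyclic group $\Sigma$ of order $\ell$ acting on a connected reductive group over an algebraically closed field, $\rH^1(\Sigma, G(\CC))$ need not vanish (already $\rH^1(\Z/\ell, \CC^\times) = \mu_\ell(\CC) \neq 1$ for the trivial action), and Lang--Steinberg surjectivity is special to Frobenius-type endomorphisms with finite fixed-point set. The correct finite-dimensional input --- that $(G/P)^\sigma$ is a disjoint union of open-and-closed $H$-orbits, each of the form $H/Q$ with $Q$ parabolic --- is established in the paper by the tangent-space/open-orbit argument of Lemma \ref{lem: fixed borus} (via \cite[Proposition A.8.10(2)]{CGP15} and Chevalley's criterion), not by any $\rH^1$-vanishing; you need to supply that argument, not cite a vanishing theorem. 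Second, the fibers: the individual affine root subgroups $U_{\alpha+m\hbar}$ are \emph{not} $\sigma$-stable when $\sigma\alpha \neq \alpha$ --- $\sigma$ permutes them in orbits --- and the fixed locus of such an orbit is a nontrivial ``diagonal'' that must be identified with the affine root subgroup of $H$ attached to the restricted root $\alpha|_{T^\sigma}$ (with commutator corrections if the roots in an orbit do not commute); when $\sigma\alpha = \alpha$ one must still decide whether $\sigma$ acts on $U_\alpha$ by a nontrivial root of unity. This identification is the actual computational content of the proposition and cannot be absorbed into the phrase ``handled separately via its filtration by $\sigma$-stable root subgroups.'' Finally, your parenthetical about $\mathrm{char}(\CC) = \ell$ is moot here but would genuinely break the argument (non-semisimplicity of $\sigma$ on Lie algebras, possible non-smoothness of fixed points), so it should not be waved through.
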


The proof of Proposition \ref{prop: algebraic gr fixed points} will be given shortly below, in \S \ref{prop: algebraic gr fixed points}. For now, we assume it and deduce a few consequences for mixed-characteristic Beilinson-Drinfeld Grassmannians.

Let $I$ be a non-empty finite set and let $S$ be a separated locally spatial diamond over $(\Div_{X}^1)^I$. Then we define 
\[
\Gr_{G, S/(\Div^1_X)^I} := \Gr_{G, (\Div^1_X)^I} \times_{(\Div_X^1)^I} S.
\]

\begin{prop}\label{prop: BD gr fixed points}
Let $S$ be a separated locally spatial diamond over $(\Div_{X}^1)^I$. Then the natural map 
\[
\Gr_{H, S/(\Div^1_X)^I} \rightarrow \Fix(\sigma, \Gr_{G, S/(\Div^1_X)^I})
\]
is an isomorphism.  
\end{prop}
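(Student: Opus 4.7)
The plan is to reduce the claim to Proposition~\ref{prop: algebraic gr fixed points} by testing on geometric points and invoking the Cohen structure theorem to pass from the $\bdrp$-affine Grassmannian to the algebraic one.

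First, observe that the natural map $\iota \co \Gr_{H, S/(\Div^1_X)^I} \to \Gr_{G, S/(\Div^1_X)^I}$ is a closed immersion (being the pullback of the closed immersion $\Gr_H \to \Gr_G$ induced by $H \hookrightarrow G$), and it factors through the closed sub-v-sheaf $\Fix(\sigma, \Gr_{G, S/(\Div^1_X)^I})$. To show that the resulting closed immersion $\iota' \co \Gr_H \hookrightarrow \Gr_G^\sigma$ is an isomorphism of small v-sheaves, it suffices to verify surjectivity on $\Spa(C, C^+)$-points for algebraically closed perfectoid $(C, C^+)$.

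Second, fix such a geometric point $\Spa(C, C^+) \to S$ and let $\{C_i^\sharp\}_{i \in I}$ denote the untilts arising from the composite $\Spa(C, C^+) \to S \to (\Div^1_X)^I$. By the Beauville-Laszlo description of the Beilinson-Drinfeld Grassmannian (as in \cite{FS}), the $(C, C^+)$-points of $\Gr_{G, S/(\Div^1_X)^I}$ are identified with a coset space $G(B_I)/G(B_I^+)$, where $B_I^+$ is formed from the Fontaine period rings $\bdrp(C_i^\sharp)$ (with the appropriate fusion accounting for any coincidences among the untilts). The $\sigma$-action on this coset space comes solely from $\sigma \in \Aut(G)$, acting trivially on $B_I$ and $B_I^+$.

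Third, invoke Cohen structure: each $\bdrp(C_i^\sharp)$ is a complete discrete valuation ring of equal characteristic zero with algebraically closed residue field $C_i^\sharp$, hence is non-canonically isomorphic as a $C_i^\sharp$-algebra to $C_i^\sharp[[\xi_i]]$. This identifies $G(B_I)/G(B_I^+)$, $\sigma$-equivariantly, with the $C_i^\sharp$-points of a (possibly fused) product of algebraic affine Grassmannians $\Gr_{G, C_i^\sharp}^{\alg}$. Now Proposition~\ref{prop: algebraic gr fixed points} applies (the ``underlying reduced ind-scheme'' caveat is inconsequential because field-valued points of an ind-scheme and its reduction coincide), yielding that the $\sigma$-fixed subset equals $\Gr_{H, C_i^\sharp}^{\alg}(C_i^\sharp)$. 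Translating back gives the desired equality $\Fix(\sigma, \Gr_{G, S/(\Div^1_X)^I})(C, C^+) = \Gr_{H, S/(\Div^1_X)^I}(C, C^+)$.

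The main obstacle I anticipate is handling the case when several untilts coincide in $\Div^1_X$, so that the BD Grassmannian has a nontrivial fusion structure rather than being a product of $\bdrp$-Grassmannians. Since $\sigma$ acts trivially on the base and commutes with the BL-gluing, fixed points should commute with fusion, but this compatibility requires a careful unwinding of the fusion construction. A secondary, more routine point is justifying the reduction to bijectivity on geometric points for closed immersions of small v-sheaves, which should follow from standard v-sheaf techniques.
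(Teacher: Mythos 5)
Your proposal is correct and follows essentially the same route as the paper: reduce to a bijection on $\Spa(C,C^+)$-points (the paper justifies this via the closed embedding being qcqs together with \cite[Lemma 11.11]{Sch17}), then choose a uniformizer of $\bdr(C^{\sharp})$ to identify the $\bdrp$-affine Grassmannian with the algebraic one and invoke Proposition \ref{prop: algebraic gr fixed points}. Your worry about coinciding untilts is reasonable but is not treated any more explicitly in the paper's proof, which writes out the comparison for a single untilt.
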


\begin{proof}
Since $\Gr_{G, S/(\Div^1_X)^I}$ is separated, $\Fix(\sigma, \Gr_{G, S/(\Div^1_X)^I})$ is a closed subdiamond of $\Gr_{G, S/(\Div^1_X)^I}$. The map $
\Gr_{H, S/(\Div^1_X)^I} \rightarrow \Fix(\sigma, \Gr_{G, S/(\Div^1_X)^I})$ is a closed embedding by the argument of \cite[Lemma 19.1.5]{SW20}, hence qcqs, so by \cite[Lemma 11.11]{Sch17} it is an isomorphism if and only if it induces a bijection on $\Spa(C, C^+)$-points for all $(C,C^+)$. Checking this reduces to the case where $S = \Spa(C,C^+)$, in which case we abbreviate $\Gr_{G, C}:= \Gr_{G, \Spa(C,C^+)/(\Div^1_X)^I}$ and similarly for $H$. 

A $\Spa(C,C^+)$-point of $\Gr_{G, C}$ consists of $|I|$ untilts $\{C^{\sharp}_i/E\}_{i \in I}$, and for each $i \in I$ a $(C^{\sharp}_i, \cO_{C^\sharp_i})$-point of $\Gr_{G}^{\bdrp}$, the $\bdrp$-affine Grassmannian built using $C^{\sharp}_i$ as discussed in \cite[Lecture XIX]{SW20}. Any choice of uniformizer $\xi$ for $\bdr(C^{\sharp}_i)$ induces an isomorphism $\bdr(C^{\sharp}_i) \cong C^{\sharp}_i((\xi))$, which induces (cf. \cite[proof of Proposition 19.2.1]{SW20}) a commutative diagram  
\[
\begin{tikzcd}
\Gr_{H}^{\bdrp}(C^\sharp_i, \cO_{C^\sharp_i}) \ar[r, hook] \ar[d, equals] & \Fix(\sigma, \Gr_{G}^{\bdrp})(C^\sharp_i, \cO_{C^\sharp_i})  \ar[r, hook] \ar[d, equals] &   \Gr_{G}^{\bdrp}(C^\sharp_i, \cO_{C^\sharp_i}) \ar[d, equals] \\
\Gr_{H,C^\sharp_i}^{\alg}(C^{\sharp}_i) \ar[r, hook] & \Fix(\sigma, \Gr_{G,C^\sharp_i}^{\alg})(C^{\sharp}_i) \ar[r, hook] & \Gr_{G,C^\sharp_i}^{\alg}(C^{\sharp}_i)
\end{tikzcd}
\]
By Proposition \ref{prop: algebraic gr fixed points}, the map $\Gr_{H, C^\sharp_i}^{\alg}(C^{\sharp}_i) \rightarrow \Fix(\sigma, \Gr_{G, C^\sharp_i}^{\alg})(C^{\sharp}_i)$ is a bijection, hence so is the parallel map in the top row. Thus the map 
\[
\Gr_{H, C}(C,C^+) \rightarrow \Fix(\sigma, \Gr_{G,C})(C,C^+)
\]
is also a bijection, as desired. 
\end{proof}

\begin{remark}
The same argument works for variants like $(\Div_{\cY}^1)^I$ instead of $(\Div_X^1)^I$. 
\end{remark}

We also deduce variants for convolution Grassmannians and the ``twisted'' Grassmannians $\Gr^{\twi}_G$ from \cite[\S 23.5]{SW20}.

\begin{cor}\label{cor: fixed points conv Gr} Let $m  \in \Z_{\geq 1}$. 
\begin{enumerate}
\item Let $\CC$ be an algebraically closed field. Let $\wt{\Gr}_{G, \CC}^{\alg, (m)}$ be the $m$-step convolution Grassmannian for $G$, and similarly for $H$. Then the natural map 
\[
\wt{\Gr}_{H, \CC}^{\alg, (m)}  \rightarrow \Fix(\sigma, \wt{\Gr}_{G, \CC}^{\alg, (m)} )
\]
is an isomorphism. 
\item Let $I$ be  non-empty finite set and let $S$ be a separated locally spatial diamond with a map $S \rightarrow (\Div_{X}^1)^I$. Let $\wt{\Gr}_{G, S/(\Div_{X}^1)^I}^{(m)}$ be the $m$-step convolution Beilinson-Drinfeld Grassmannian for $G$, and similarly for $H$. Then the natural map 
\[
\wt{\Gr}_{H, S/(\Div_{X}^1)^I}^{(m)} \rightarrow \Fix(\sigma, \wt{\Gr}_{G, S/(\Div_{X}^1)^I}^{(m)})
\]
is an isomorphism. 

\item Let $I$ be a non-empty finite set. Let $\Gr_{G,I}^{\twi} = \colim_{\mu_{\bu}} \Gr_{G, \prod_{i \in I} \Spd E_i, \leq \mu_{\bu}}^{\twi}$ be the twisted affine Grassmannian (cf. \cite[Definition 23.5.1]{SW20}). Then the natural map
\[
\Gr_{H, I}^{\twi} \xrightarrow{\sim} \Fix(\sigma, \Gr_{G,I}^{\twi})
\]
is an isomorphism. 
\end{enumerate}
\end{cor}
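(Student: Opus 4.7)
The plan is to deduce all three parts from Propositions~\ref{prop: algebraic gr fixed points} and~\ref{prop: BD gr fixed points} by induction on the convolution length $m$ for parts (1) and (2), and by v-descent for part (3).

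For part (1), I would induct on $m$, with base case $m = 1$ provided by Proposition~\ref{prop: algebraic gr fixed points}. For the inductive step, consider the $\sigma$-equivariant forgetful projection
\[
p \co \wt{\Gr}_{G, \CC}^{\alg, (m)} \longrightarrow \wt{\Gr}_{G, \CC}^{\alg, (m-1)}
\]
which remembers only the first $m-1$ modifications of a chain $\cE_0 \dashrightarrow \cE_1 \dashrightarrow \cdots \dashrightarrow \cE_m$. The fiber of $p$ over a chain is the $\cE_{m-1}$-twisted form $\Gr_G^{\cE_{m-1}}$ of $\Gr_G$, parametrizing modifications of $\cE_{m-1}$. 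By the inductive hypothesis, the $\sigma$-fixed locus of the target is $\wt{\Gr}_{H, \CC}^{\alg, (m-1)}$ on reduced ind-schemes. Over a $\sigma$-fixed chain, the $L^+G$-torsor $\cE_{m-1}$ acquires a $\sigma$-equivariant structure and hence descends to an $L^+H$-torsor (this is Proposition~\ref{prop: algebraic gr fixed points} applied at the level of classifying stacks, or equivalently Steinberg's theorem in this setting). With respect to this reduction, the fiber identifies $\sigma$-equivariantly with $\Gr_G$, whose $\sigma$-fixed locus is $\Gr_H$ again by Proposition~\ref{prop: algebraic gr fixed points}. Splicing base with fiber yields $\wt{\Gr}_{H, \CC}^{\alg, (m)}$.

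For part (2), the identical inductive argument works verbatim, with Proposition~\ref{prop: BD gr fixed points} as the base case, carrying out each step relatively over $S$ and over $(\Div_X^1)^I$. The required $\sigma$-equivariant reduction of a torsor to an $H$-torsor may be verified v-locally, reducing to the algebraic situation treated in part (1). For part (3), I would use the presentation in \cite[\S 23.5]{SW20} which exhibits $\Gr_{G, I, \leq \mu_\bullet}^{\twi}$ via a cover that is v-locally on the target a $\bdrp$-Beilinson--Drinfeld affine Grassmannian of the type treated in Proposition~\ref{prop: BD gr fixed points}. Since $\sigma$ acts through its action on $G$, with the trivial action on $\prod_i \Spd E_i$, the cover can be chosen $\sigma$-equivariantly, and the claim follows by v-descent from (2).

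The main obstacle I anticipate is the inductive step in parts (1) and (2): one needs the $\sigma$-equivariant reduction of the $L^+G$-torsor $\cE_{m-1}$ to an $L^+H$-torsor, and to argue that the associated $\sigma$-equivariant isomorphism $\Gr_G^{\cE_{m-1}} \simeq \Gr_G$ is canonical enough to assemble in families. A related delicate point in part (3) is the need to verify that the pro-étale cover underlying the construction of $\Gr^{\twi}$ can be arranged compatibly with the $\sigma$-action; here one likely exploits that $\sigma$ acts trivially on the base and functorially on the bundle-theoretic data.
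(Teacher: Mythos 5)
Your route is genuinely different from the paper's, and substantially more laborious. For part (1) the paper does not induct on $m$ at all: it uses the standard $\Sigma$-equivariant isomorphism $\wt{\Gr}_{G,\CC}^{\alg,(m)} \xrightarrow{\sim} (\Gr_{G,\CC}^{\alg})^m$ sending a chain $(\cE_0 \dashrightarrow \cdots \dashrightarrow \cE_m)$ to the tuple of cumulative modifications $(\cE_0 \dashrightarrow \cE_i)_{i}$, which transports the statement directly to Proposition \ref{prop: algebraic gr fixed points}. Parts (2) and (3) are then handled not by redoing anything ``relatively over $S$'' or by v-descent of covers, but by observing that the map in question is a closed embedding of separated diamonds, hence (being qcqs) an isomorphism if and only if it is a bijection on $\Spa(C,C^+)$-points; over a geometric point one passes to an untilt and invokes (1), resp.\ (2). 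Reducing to points first is what lets the paper sidestep all the family-theoretic delicacies you anticipate in your last paragraph.

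There is also a genuine gap in your inductive step as justified. The assertion that $\cE_{m-1}$ ``acquires a $\sigma$-equivariant structure and hence descends to an $L^+H$-torsor'' is false as a general principle: a $\sigma$-equivariant $L^+G$-torsor reduces to an $L^+H$-torsor only if a class in the nonabelian cohomology set $\rH^1(\Sigma, L^+G(\CC))$ vanishes, and this set is nonzero in general (already $\rH^1(\Z/\ell, \G_m(\CC)) \cong \mu_\ell(\CC)$ for the trivial action on a central torus, so neither Steinberg's theorem nor ``Proposition \ref{prop: algebraic gr fixed points} at the level of classifying stacks'' gives what you need). What saves the argument in your setting is that the inductive hypothesis identifies the $\sigma$-fixed chain with a chain of $H$-torsors, so $\cE_{m-1} \cong \cF_{m-1} \times^{L^+H} L^+G$ comes with an $H$-structure for free; writing $g_1\cdots g_{m-1} = \eta c$ with $\eta \in LH$ and $c \in L^+G$, the twisting cocycle on the fiber is the coboundary $c^{-1}\sigma(c)$, which is why the twisted action on the fiber untwists to the standard one and the fixed locus of the fiber is a translate of $\Gr_H$. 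If you keep the inductive route you must make this untwisting explicit rather than appeal to a descent principle that does not hold.
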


\begin{proof}
(1) The map $\wt{\Gr}_{G}^{\alg, (m)} \rightarrow (\Gr_G^{\alg})^m$ sending $(\cE^0 \dashrightarrow \cE_1 \dashrightarrow \ldots \dashrightarrow \cE_m)$ to $(\cE^0 \dashrightarrow \cE_1, \cE^0 \dashrightarrow \cE_2, \ldots, \cE^0 \dashrightarrow \cE_m)$ is a $\Sigma$-equivariant isomorphism. Under this isomorphism, the statement transports to (the $m$th power of) that of Proposition \ref{prop: algebraic gr fixed points}. 

(2) As in the proof of Proposition \ref{prop: BD gr fixed points}, the statement reduces to the case where $S = \Spa(C, C^+)$, and then it follows from the algebraic case over untilts $C^{\sharp}$ of $C$, which were treated in (1). 

(3) As in the proof of Proposition \ref{prop: BD gr fixed points}, the statement reduces to the case $S = \Spa(C,C^+)$, where it is then a consequence of (2). 
\end{proof}

\subsection{Borels} Recall that a \emph{Borus} of a reductive group $G$ is a pair $(B,T)$ of a Borel subgroup $B <G$ and a split maximal torus $T < B$. 

\begin{lemma}\label{lem: fixed borus}
Suppose $\Sigma$ stabilizes a Borus $(B,T)$ of $G$. Then $(B^\sigma, T^\sigma)$ is a Borus of $H$. 
\end{lemma}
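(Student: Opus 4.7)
The plan is to combine a classical theorem of Steinberg with self-normalization properties of Boruses, and then separately handle the split structure over $E$.

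First I would invoke Steinberg's theorem on fixed points of semisimple automorphisms of connected reductive groups: since $\sigma$ has order $\ell$, which is coprime to $\mathrm{char}(E)$ (either $0$ or $p$, and $\ell \neq p$ by hypothesis), $\sigma$ is semisimple, so that for the $\sigma$-stable Borus $(B,T)$ the identity components $(B^{\sigma})^{\circ}$ and $(T^{\sigma})^{\circ}$ form a Borus of $(G^{\sigma})^{\circ}$. By the standing hypothesis that $H = G^{\sigma}$ is connected, $(B^{\sigma})^{\circ}$ is a Borel subgroup of $H$ with maximal torus $(T^{\sigma})^{\circ}$.

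Next I would upgrade these identity components to the full fixed-point schemes. The subgroup scheme $B^{\sigma} \subseteq H$ normalizes its identity component, but Borel subgroups of connected reductive groups are self-normalizing, so
\[
B^{\sigma} \subseteq N_{H}\bigl((B^{\sigma})^{\circ}\bigr) = (B^{\sigma})^{\circ},
\]
forcing $B^{\sigma} = (B^{\sigma})^{\circ}$. Likewise, $T^{\sigma} \subseteq T$ is abelian, hence contained in $Z_{H}\bigl((T^{\sigma})^{\circ}\bigr)$; since maximal tori of connected reductive groups are self-centralizing, this centralizer equals $(T^{\sigma})^{\circ}$, giving $T^{\sigma} = (T^{\sigma})^{\circ}$. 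The inclusion $T^{\sigma} \subseteq B^{\sigma}$ is inherited from $T \subseteq B$.

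It remains to verify that $T^{\sigma}$ is \emph{split} over $E$. As a closed diagonalizable subgroup scheme of $T$, its character group is the quotient $X^{*}(T)/(\sigma^{*}-1)X^{*}(T)$; connectedness (just established) forces this quotient to be torsion-free, and its Galois action is trivial because it is inherited from the trivial Galois action on $X^{*}(T)$. Hence $T^{\sigma}$ is a split $E$-torus.

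The main technical point will be to cite Steinberg's theorem in the generality needed here: $G$ is reductive (not necessarily simply connected) and $E$ is not algebraically closed. Connectedness of $B^{\sigma}$ and $T^{\sigma}$ can be checked after base change to $\ol E$, and splitness is handled by the character-group computation, so these subtleties should dissolve upon inspection and the argument is essentially formal once Steinberg's theorem is in hand.
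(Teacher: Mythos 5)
Your proof is correct, but it takes a genuinely different route from the paper's. You outsource the geometric content to Steinberg's fixed-point theorem (Theorem 8.2 of \emph{Endomorphisms of linear algebraic groups}), which already yields that $((B^\sigma)^\circ,(T^\sigma)^\circ)$ is a Borel pair of $(G^\sigma)^\circ=H$, and then you promote identity components to the full fixed-point schemes via self-normalization of Borel subgroups and self-centralization of maximal tori --- a clean and standard upgrade, legitimate here because $\ell\neq p$ makes the fixed-point schemes smooth (so they agree with their reduced geometric counterparts) and because Borel-ness, maximality and connectedness all descend from $\ol{E}$. The paper avoids Theorem 8.2 entirely: it verifies Chevalley's criterion (solvable $+$ parabolic $\Rightarrow$ Borel) by exhibiting $H/B^\sigma$ as an open-and-closed, hence projective, subscheme of the projective scheme $(G/B)^\sigma$, using smoothness of fixed points under the linearly reductive $\Sigma$ to see that $H$-orbits are open; it then deduces connectedness and maximality of $T^\sigma$ from the smooth map $B^\sigma\to (B/U)^\sigma$ together with its section. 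Your approach is shorter and rests on a classical citation; the paper's is self-contained at the cost of more geometry. Two minor remarks: your claim that connectedness of $T^\sigma$ forces $X^*(T)/(\sigma^*-1)X^*(T)$ to be torsion-free is not quite right in characteristic $p$ (connectedness only excludes prime-to-$p$ torsion; it is the smoothness of $T^\sigma$, or equivalently the already-established fact that $T^\sigma=(T^\sigma)^\circ$ is a torus, that excludes $p$-torsion), though this does not affect the conclusion; and the splitness of $T^\sigma$, which you address and the paper's proof leaves implicit, is indeed part of the paper's definition of a Borus, so it is good that you checked it.
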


\begin{proof}
Suppose for the moment that we know $B^\sigma$ is a Borel subgroup of $H$. We will argue that $T^\sigma$ is a maximal torus of $H$. Let $U$ be the unipotent radical of $B$. Applying \cite[Proposition A.8.10(2)]{CGP15}, which implies that the map on $\Sigma$-fixed points of a smooth $\Sigma$-equivariant morphism is again smooth, to $B \rightarrow B/U \cong T$ shows that $B^\sigma \rightarrow (B/U)^\sigma  \cong T^\sigma$ is a smooth map whose non-empty fibers are $U^\sigma$-torsors. Since the section $T^\sigma \inj B^\sigma$ shows that $B^\sigma \rightarrow T^\sigma$ is surjective, we conclude that the map $B^\sigma/U^\sigma \rightarrow (B/U)^\sigma \cong T^\sigma$ is an isomorphism. This shows that $T^\sigma$ is a maximal torus of $H$ (and in particular is connected). 

Now we return to showing that $B^\sigma$ is a Borel subgroup of $H$. Since $B^{\sigma}$ is clearly solvable, in order to show that it is a Borel subgroup of $H$ it suffices by a result of Chevalley \cite[Theorem 1.3.1]{CF} to show that it is parabolic, i.e., that $H/B^\sigma$ is projective. By assumption $G/B$ is projective, hence its closed subscheme $(G/B)^\sigma$ is projective. We will prove that $H/B^\sigma$ is closed and open in $(G/B)^\sigma$, which will imply that $H/B^\sigma$ is projective. Evidently $H$ acts by translation on $(G/B)^\sigma$, and it suffices to show that each $H$-orbit in $(G/B)^\sigma$ is open, since then the orbits are finite in number and pairwise disjoint, hence open-closed. Indeed, for any geometric point $g$ of $G$ representing a point in $(G/B)^\Sigma$, the orbit map $G^\sigma \rightarrow (G/B)^\sigma$ through $gB$ is a smooth morphism by \cite[Proposition A.8.10(2)]{CGP15} again, hence has open image. This completes the proof. 
\end{proof}

\begin{prop}\label{prop: GrB fixed points}
Let $S$ be a seaparated locally spatial diamond over $(\Div_{X}^1)^I$. Suppose that $G$ has a $\Sigma$-stable Borel subgroup $B$. Let $B_H := B^\sigma$, a Borel subgroup of $H$ by Lemma \ref{lem: fixed borus}. Then the natural map 
\[
\Gr_{B_H, S/(\Div_{X}^1)^I} \rightarrow \Fix(\sigma, \Gr_{B, S/(\Div_{X}^1)^I})
\]
is an isomorphism. 
\end{prop}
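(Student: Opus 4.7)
My approach imitates the proof of Proposition \ref{prop: BD gr fixed points}. First, note that $\Gr_{B, S/(\Div_X^1)^I}$ is separated (e.g., from its presentation as a Zariski-locally trivial fibration over the separated diamond $\Gr_{T, S/(\Div_X^1)^I}$ with separated fibers $\Gr_{U, S/(\Div_X^1)^I}$). Hence $\Fix(\sigma, \Gr_B)$ is a closed sub-diamond, and $\Gr_{B_H, S/(\Div_X^1)^I} \to \Fix(\sigma, \Gr_{B, S/(\Div_X^1)^I})$ is a closed embedding by the argument of \cite[Lemma 19.1.5]{SW20}, hence qcqs. By \cite[Lemma 11.11]{Sch17}, it suffices to check the map is a bijection on $\Spa(C, C^+)$-points; this further reduces to $S = \Spa(C, C^+)$. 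Then, exactly as in Proposition \ref{prop: BD gr fixed points}, a choice of uniformizer $\xi$ of $\bdr(C^\sharp)$ identifies $\Spa(C, C^+)$-points of the $\bdrp$-affine Grassmannian with $C^\sharp$-points of the algebraic affine Grassmannian $\Gr_{B, C^\sharp}^{\alg}$, reducing the claim to: the natural map $\Gr_{B_H, C^\sharp}^{\alg}(C^\sharp) \to \Fix(\sigma, \Gr_{B, C^\sharp}^{\alg})(C^\sharp)$ is a bijection.

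For the algebraic statement, pick a $\Sigma$-stable maximal torus $T \subset B$ (which exists by a Lang--Steinberg argument on the solvable group $B$, since $\ell$ is coprime to $p$); Lemma \ref{lem: fixed borus} then identifies $T_H := T^\sigma$ as a maximal torus of $H$ inside $B_H$. The Levi decomposition $B = T \ltimes U$ is $\Sigma$-equivariant, and induces a $\Sigma$-equivariant fibration $\pi \co \Gr_B \to \Gr_T$ with fibers $\Gr_U$. It suffices to check the fixed-point statement on base and fibers. For the base, $\Gr_T(C^\sharp) = X_*(T)$ with $\Sigma$ acting through its action on $X_*(T)$, and $X_*(T)^\sigma = X_*(T_H)$ by connectedness of $T_H$. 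For the fibers, $U$ has a $\Sigma$-equivariant central filtration by normal subgroups with vector-group quotients $V$ on which $\Sigma$ acts linearly; for such $V$ we have $\Gr_V(C^\sharp) \cong V \otimes_{C^\sharp} \xi^{-1} C^\sharp[\xi^{-1}]$, so $\Fix(\sigma, \Gr_V)(C^\sharp) = V^\sigma \otimes_{C^\sharp} \xi^{-1} C^\sharp[\xi^{-1}] = \Gr_{V^\sigma}(C^\sharp)$ since $\ell$ is invertible in $C^\sharp$. D\'evissage on the filtration of $U$ (using $H^1(\Sigma, V) = 0$ for each vector-group quotient since $|\Sigma|$ is invertible) yields $\Fix(\sigma, \Gr_U)(C^\sharp) = \Gr_{U_H}(C^\sharp)$.

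\textbf{Main obstacle.} The delicate point is the $\Sigma$-action on the fiber of $\Gr_B \to \Gr_T$ over a $\sigma$-fixed cocharacter $\lambda \in X_*(T_H)$: a priori this action is $\sigma$ twisted by conjugation with any chosen representative $t^\lambda \in LT$ of $\lambda$, since $\sigma(t^\lambda)$ agrees with $t^\lambda$ only modulo $L^+T$. The resolution is that because $\lambda \in X_*(T)^\sigma = X_*(T_H)$, one can choose a representative $t^\lambda \in LT_H \subset LT$ that is literally $\sigma$-fixed; with such a choice, the action on the fiber is the untwisted $\sigma$-action on $\Gr_U$, and the d\'evissage above applies directly. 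Assembling the base and fiber computations through the fibration completes the proof.
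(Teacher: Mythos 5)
Your proof is correct, and the reduction steps (closed embedding, qcqs, check on $\Spa(C,C^+)$-points, pass to the algebraic affine Grassmannian over an untilt $C^\sharp$) coincide with the paper's. Where you genuinely diverge is in how the bijection on geometric points is established. The paper does no computation at this stage: it cites (a variation on) \cite[Proposition VI.3.1]{FS}, i.e.\ the Iwasawa decomposition, to see that $\Gr_{B} \rightarrow \Gr_{G}$ and $\Gr_{B_H} \rightarrow \Gr_{H}$ are bijections on geometric points, and then deduces the claim for $B$ from the already-proved Proposition \ref{prop: BD gr fixed points} for $G$ by a three-line diagram chase. You instead argue directly on $\Gr_B$ via the $\Sigma$-equivariant fibration $\Gr_B \rightarrow \Gr_T$ with fibers $\Gr_U$, computing fixed points on the base ($X_*(T)^\sigma = X_*(T_H)$ by connectedness of $T^\sigma$, which is Lemma \ref{lem: fixed borus}) and on the fibers by d\'evissage through the central series of $U$ and vanishing of $\rH^1(\Sigma,V)$ for vector groups, since $\ell$ is invertible in $C^\sharp$. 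Your treatment of the twisting issue over a fixed $\lambda$ — choosing the literal representative $\lambda(\xi) \in LT_H$ so that the identification of the fiber with $\Gr_U$ is $\Sigma$-equivariant — is exactly the right fix and is the one nontrivial point your route forces you to confront; it is essentially the same care the paper takes in its proof of Proposition \ref{prop: algebraic gr fixed points} when analyzing Iwahori orbits via root subgroups. What the paper's route buys is brevity and reuse of Proposition \ref{prop: BD gr fixed points}; what yours buys is self-containedness at the level of $B$ (no appeal to the semi-infinite orbit decomposition of $\Gr_G$). Both are sound; if you wanted to shorten yours, citing \cite[Proposition VI.3.1]{FS} would let you skip the fiberwise d\'evissage entirely.
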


\begin{proof} The map $\Gr_{B, S/(\Div_{X}^1)^I} \rightarrow \Fix(\sigma, \Gr_{B, S/(\Div_{X}^1)^I})$ is a closed embedding by the same argument as for \cite[Lemma 19.1.5]{SW20}, hence qcqs, so by \cite[Lemma 11.11]{Sch17} it suffices to check that it induces a bijection on geometric points. A variation on \cite[Proposition VI.3.1]{FS} shows that the maps $\Gr_{B_H, S/(\Div_{X}^1)^I} \rightarrow \Gr_{H, S/(\Div_{X}^1)^I}$ and $\Gr_{B, S/(\Div_{X}^1)^I} \rightarrow \Gr_{G, S/(\Div_{X}^1)^I}$ are bijections on geometric points. Hence in the commutative diagram 
\[
\begin{tikzcd}
\Gr_{B_H, S/(\Div_{X}^1)^I} \ar[r] \ar[d] &  \Fix(\sigma, \Gr_{B, S/(\Div_{X}^1)^I}) \ar[d] \\
\Gr_{H, S/(\Div_{X}^1)^I} \ar[r] & \Fix(\sigma, \Gr_{G, S/(\Div_{X}^1)^I}) 
\end{tikzcd}
\]
both vertical maps and the bottom horizontal map induce bijections on geometric points (the latter by Proposition \ref{prop: BD gr fixed points}). Therefore the upper horizontal map also induces a bijection on geometric points, concluding the proof. 


\end{proof}

\subsection{Proof of Proposition \ref{prop: algebraic gr fixed points}}
We write $LG^{\alg}$ for the loop group of $G$ with respect to the loop variable $t$, so $LG^{\alg}(R) = G(R((t)))$. We write $L^+G^{\alg} \subset LG^{\alg}$ for the arc group, with functor of points $L^+G^{\alg}(R) = G(R[[t]])$. The algebraic affine Grassmannian is the fppf quotient $LG^{\alg}/L^+G^{\alg}$, which has a natural ind-scheme structure. 

For the purpose of proving Proposition \ref{prop: algebraic gr fixed points}, we may base change $G$ to the algebraically closed field $\CC$. Since $\sigma$ is a semisimple automorphism of $G$ over an algebraically closed field, a theorem of Steinberg \cite[Theorem 7.5]{St68} implies that $\sigma$ stabilizes a Borus $(B,T)$ of $G$. By Lemma \ref{lem: fixed borus}, $(B^\sigma, T^\sigma)$ is a Borus of $H$.

\subsubsection{Iwahori stratification} 

Let $\Iw^G$ be the Iwahori subgroup of $L^+G$ corresponding to $B$. This induces a stratification by $\Iw^G$-orbits (cf. \cite[(4.6)]{RW})
\[
\Gr_{G, \CC}^{\alg, \red} = \coprod_{\lambda \in X_*(T)} \Gr_{G, \lambda}^{\alg}.
\]
Furthermore, $\Iw^G$ is stable under $\sigma$ and $\Iw^H := (\Iw^G)^\sigma$ is the Iwahori subgroup of $L^+H$ corresponding to $B^\sigma$, so we have an analogous stratification 
\[
\Gr_{H, \CC}^{\alg, \red} = \coprod_{\lambda \in X_*(T^\sigma)} \Gr_{H, \lambda}^{\alg}. 
\]
The action of $\sigma$ on $G$ (stabilizing $T$) induces an action on $X_*(T)$. We will show that: 
\begin{enumerate}
\item If $\lambda$ is not fixed by $\sigma$, then $\Gr_{G, \lambda}^{\alg} \cap (\sigma \cdot \Gr_{G, \lambda}^{\alg}) = \emptyset$. 
\item If $\lambda$ is fixed by $\sigma$, then $\Fix(\sigma, \Gr_{G, \lambda}^{\alg})  \xleftarrow{\sim} \Gr_{H, \lambda}^{\alg}$. 
\end{enumerate}
The combination of (1) and (2) clearly suffices to prove Proposition \ref{prop: algebraic gr fixed points}. 

Item (1) is immediate: since we arranged $\sigma$ to preserve $\Iw^G$, we have $\sigma \cdot \Gr_{G, \lambda} = \Gr_{G, \sigma(\lambda)}$. Because distinct $\Iw^G$-orbits are disjoint, the intersection of any two distinct $\Iw^G$-orbits is empty.

Next we turn to (2). For this we analyze the structure of the strata.

\subsubsection{Fixed points of strata}
For any root $\alpha \in \mf{R}$, let $U_\alpha \subset G$ be the corresponding root subgroup. For an affine root $\alpha + m \hbar$ of $G$, denote by $U_{\alpha + m \hbar}$ the corresponding affine root subgroup of $LG$ (cf. \cite[\S 4.3]{RW}), which for any isomorphism $u_{\alpha} \co \G_a \xrightarrow{\sim} U_\alpha$, identifies with the image of $x \mapsto u_\alpha (xt^m)$. Set 
\[
\delta_{\alpha} := \begin{cases} 1 & \alpha \in \mf{R}^+, \\ 0 & \text{otherwise},\end{cases}
\]
and for $\lambda \in X_*(T)$ define
\begin{equation}\label{eq: Iwlambda}
\Iwu^{G,\lambda} := \prod_{\alpha \in \mf{R}} \left( \prod_{\delta_{\alpha} \leq m < \tw{\lambda, \alpha}} U_{\alpha+m\hbar}\right),
\end{equation}
where the product may be taken in any order. Then as explained in the proof of \cite[Lemma 4.4]{RW}, the action of $\Iwu^{G,\lambda}$ on $t^\lambda \in \Gr_G(\CC)$ induces an isomorphism 
\begin{equation}\label{eq: iwahori orbit}
\Iwu^{G,\lambda} \xrightarrow{\sim} \Gr_{G, \lambda}^{\alg}.
\end{equation}

Suppose $\sigma$ fixes $\lambda$. Then \eqref{eq: iwahori orbit} implies that 
\[
\Fix(\sigma, \Gr_{G, \lambda}^{\alg})  = (\Iwu^{G,\lambda})^{\sigma} \cdot t^\lambda.
\]
Denote by $\Iwu^{G,\lambda}(\alpha)$ the factor of $\Iwu^{G,\lambda}$ in \eqref{eq: Iwlambda} indexed by $\alpha$. If $\sigma(\alpha) \neq \alpha$, then $\Iwu^{G,\lambda}(\alpha)^\sigma = 0$. On the other hand, if $\sigma \alpha = \alpha$ then (using that $\sigma$ was arranged to preserve $\mf{R}^+$) we have $\Iwu^{G,\lambda}(\alpha)^\sigma = \Iwu^{H, \lambda}(\alpha)$, so that 
\[
(\Iwu^{G,\lambda})^{\sigma} \cdot t^\lambda \cong  \prod_{\alpha \in \mf{R}^\sigma}  \Iwu^{H, \lambda}(\alpha)  \cong \Gr_{H, \lambda}^{\alg}.
\]
This completes the proof of Proposition \ref{prop: algebraic gr fixed points}. \qed 

\subsection{Moduli of local shtukas}\label{ssec: fixed points sht} We now turn to examine the moduli spaces of local shtukas, for which a reference is Scholze's Berkeley lectures, especially \cite[Lecture XXIII]{SW20}. 

Let $(G,b, \{\mu_i\}_{i \in I})$ be a local shtuka datum (cf. \cite[Definition 23.1.1]{SW20}, except we allow the equal characteristic case as well): $G$ is a reductive group over $E$, $b \in B(G)$, and $\mu_i$ is a conjugacy class of cocharacters $\G_m \rightarrow G_{E^s}$. For any compact open subgroup $K \subset G(E)$ there is a \emph{moduli space of local shtukas with $K$-level structure} $\Sht_{(G,b, \{\mu_i\}_{i \in I}), K}$, which is functorial in $K$. There is a moduli description of $\Sht_{(G, b, \{\mu_i\}_{i \in I}),K}$ given (in the mixed characteristic case) in \cite[\S 23]{SW20}. 


For $\{\mu_i\}_{i \in I} \leq \{\mu_i'\}_{i \in I}$ in the component-wise Bruhat order, there are closed embeddings 
$\Sht_{(G, b, \{\mu_i\}),K} \inj \Sht_{(G, b, \{\mu_i'\}),K}$. It will also be convenient to consider the following variant without any cocharacter ``bounds'': 

\begin{defn}We define $\Sht_{(G, b,I),K} := \varinjlim_{\{\mu_i\}_{i \in I}} \Sht_{(G,b,\{ \mu_i\}_{i \in I}),K}$. 
\end{defn}


\subsubsection{Grothendieck-Messing period map}
We recall some facts about the generalized Grothendieck-Messing period map
\begin{equation}\label{eq: Grothendieck-Messing}
\pi_{\GM}^G \co \Sht_{(G,b,I),K} \rightarrow \Gr^{\twi}_{G,I}
\end{equation}
from \cite[\S 23.5]{SW20}. Note that we have not imposed any ``bound'' above; our \eqref{eq: Grothendieck-Messing} comes from the colimit over $\{\mu_i\}_{i \in I}$ of the map from \cite[Corollary 23.5.3]{SW20}. 

 The image of $\pi_{\GM}^G$ is an open subset of $\Gr^{\twi, a}_{G,I} \subset \Gr^{\twi}_{G,I}$ called the \emph{admissible} locus. In terms of points, $\Gr^{\twi}_{G,I}$ parametrizes $G$-torsors $\cP_{\eta}$ on $\cY_{(0, \infty)}$ plus additional data $\varphi_{\cP_{\eta}}$ and $\iota_r$ from \cite[Definition 23.5.1]{SW20} that we do not need to reference right now. The admissible locus is cut out by the condition that the Newton point $\nu_{\cP_{\eta}}$ and the Kottwitz point $\kappa_{\cP_\eta}$ are both identically zero \cite[Theorem 22.6.2]{SW20}. 

\begin{defn}
For a basic $b \in B(G)$, denote by $\Gr^{\twi, b}_{G,I} $ the subspace where $\nu_{\cP_\eta} = \nu(b)$ and $\kappa_{\cP_{\eta}} = \kappa(b)$. This is an open subspace, and when $b=1_G$ this recovers the admissible locus, i.e., $\Gr^{\twi, 1_G}_{G,I} = \Gr^{\twi, a}_{G,I} \subset \Gr^{\twi}_{G,I}$. 
\end{defn}

\begin{example}
If $b \in B(G)$ is basic, then there is a pure inner twist $G_b$ (sometimes denoted $J_b$ in the literature) and a canonical isomorphism 
\begin{equation}\label{eq: basic twist}
\tr_b \co  B(G) \xrightarrow{\sim} B(G_b)
\end{equation}
sending $b \in B(G)$ to $1_{G_b} \in B(G_b)$. There is also a compatible isomorphism $\Gr_{G,I}^{\twi} \cong \Gr_{G_b,I}^{\twi}$ as in \cite[\S III.4.1]{FS}, which takes $\Gr_{G,I}^{\twi, b}$ to $\Gr_{G_b,I}^{\twi,a}$. So we could have formulated the subspaces $\Gr^{\twi, b}_{G,I}$ as admissible loci for pure inner twists $G_b$. However, we prefer to keep the distinction, for psychological reasons if nothing else. 
\end{example}

Let $\iota \co B(H) \rightarrow B(G)$ be the map induced by the inclusion $\iota \co H \inj G$. The induced map $\iota \co  \Gr_{H,I}^{\twi} \inj \Gr_{G,I}^{\twi}$ is also a closed embedding (by the proof of \cite[Lemma 19.1.5]{SW20}), etc.

\begin{remark}\label{rem: twist embedding} Since $X_*(T_{E^s}^\sigma)_{\Q} \subset X_*(T_{E^s})_{\Q}$, the subset $\iota^{-1}(1_G) \subset B(H)$ consists of basic elements. For each $b' \in \iota^{-1}(1_G)$, there is an embedding 
\[
\iota_{b'} \co H_{b'} \inj G_{\iota(b')} = G.
\]
As a variant of \cite[Proposition III.4.2]{FS}, this induces an embedding $\Gr_{H_{b'},I}^{\twi} \inj \Gr_{G,I}^{\twi}$, identifying $\Gr_{H_{b'},I}^{\twi,a}$ with $\Gr_{H,I}^{\twi, b'}$ as sub-v-sheaves of $\Gr_{G,I}^{\twi}$. 
\end{remark}

\begin{lemma}\label{lem: fixed points of admissible} We have
\[
\Gr^{\twi,a}_{G,I} \cap \Gr^{\twi}_{H,I} = \coprod_{b' \in \iota^{-1}(1_G)} \Gr^{\twi, b'}_{H,I}.
\]
as subdiamonds of $\Gr^{\twi}_{G,I}$. 
\end{lemma}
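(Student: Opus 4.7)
The plan is to reduce the identity to a comparison of Newton and Kottwitz invariants for $H$-bundles and $G$-bundles on $\cY_{(0,\infty)}$, by exploiting the functoriality of these invariants along the embedding $\iota \co H \hookrightarrow G$. The basic strategy is (i) to identify both sides as loci in $\Gr^{\twi}_{H,I}$ cut out by conditions on invariants of the underlying $H$-bundle, and (ii) to match these conditions on the nose.

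First, I will use the characterization of the admissible locus recalled in the text: $\Gr^{\twi,a}_{G,I}$ is the open sub-v-sheaf of $\Gr^{\twi}_{G,I}$ cut out by $\nu_{\cP_\eta} = 0$ and $\kappa_{\cP_\eta} = 0$, following \cite[Theorem 22.6.2]{SW20}. For each basic $b' \in B(H)$, the open subspace $\Gr^{\twi, b'}_{H,I}$ is by definition the locus where the $H$-bundle $\cP_{H,\eta}$ has Newton point $\nu(b')$ and Kottwitz point $\kappa(b')$. Since the embedding $\iota \co \Gr^{\twi}_{H,I} \inj \Gr^{\twi}_{G,I}$ is induced by induction of structure group on the $G$-torsor data, the intersection $\Gr^{\twi,a}_{G,I} \cap \Gr^{\twi}_{H,I}$ is identified with the locus of $H$-bundles $\cP_{H,\eta}$ whose induced $G$-bundle $\iota_* \cP_{H,\eta}$ has trivial Newton and Kottwitz invariants.

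Next, I will invoke the functoriality of $(\nu, \kappa)$ under induction of structure group: if $\cP_{H,\eta}$ is an $H$-bundle on $\cY_{(0,\infty)}$ then
\[
\nu_{\iota_* \cP_{H,\eta}} = \iota_*\!\left(\nu_{\cP_{H,\eta}}\right), \qquad \kappa_{\iota_* \cP_{H,\eta}} = \iota_*\!\left(\kappa_{\cP_{H,\eta}}\right).
\]
Thus the intersection is the locus in $\Gr^{\twi}_{H,I}$ where the class of the underlying $H$-bundle in $B(H)$ maps to $1_G \in B(G)$. Choosing compatible maximal tori $T_H \subset T_G$, the map $\iota_* \co X_*(T_H)_{\Q}/W_H \to X_*(T_G)_{\Q}/W_G$ is induced by the injection $X_*(T_H) \hookrightarrow X_*(T_G)$, so $\iota_*(\nu) = 0$ forces $\nu = 0$. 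Hence any $H$-bundle lying in the intersection is automatically basic, and its class $b' \in B(H)$ satisfies $b' \in \iota^{-1}(1_G)$, matching Remark \ref{rem: twist embedding}. Conversely, each $b' \in \iota^{-1}(1_G)$ is basic (by the same argument) and the corresponding $\Gr^{\twi, b'}_{H,I}$ lies in the intersection. Since distinct basic Newton-Kottwitz strata are open and pairwise disjoint in $\Gr^{\twi}_{H,I}$, assembling these pieces yields the asserted disjoint union.

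The main technical obstacle is locating/justifying the functoriality statement $\nu_{\iota_* \cP_{H,\eta}} = \iota_*(\nu_{\cP_{H,\eta}})$ and its Kottwitz analogue for bundles on $\cY_{(0,\infty)}$; this is a standard compatibility but deserves a careful reference to \cite[\S 22]{SW20}. The rest of the argument is essentially bookkeeping with invariants, once this compatibility is in hand.
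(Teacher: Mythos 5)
Your proposal is correct and follows the same route as the paper: both arguments identify the intersection as the pullback to $\Gr^{\twi}_{H,I}$ of the simultaneous vanishing locus of the Newton and Kottwitz invariants, and then match that pullback with $\coprod_{b' \in \iota^{-1}(1_G)} \Gr^{\twi,b'}_{H,I}$ via functoriality of $(\nu,\kappa)$ along $\iota$. You simply spell out the last identification (including why $\iota_*(\nu)=0$ forces $\nu=0$, which is the content of Remark \ref{rem: twist embedding}) in more detail than the paper does.
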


\begin{proof}
In Corollary \ref{cor: fixed points conv Gr} we have seen that $\Fix(\sigma, \Gr_{G,I}^{\twi}) = \Gr_{H,I}^{\twi}$ (with the obvious embedding). The admissible locus $\Gr^{\twi,a}_{G,I}  \subset \Gr^{\twi}_{G,I}$ is cut out by the condition that $\nu(\cP_{\eta}) = 0$ and $\kappa(\cP_\eta) = 0$. Hence $\Fix(\sigma, \Gr_{G,I}^{\twi,a})$ is the pullback of the simultaneous vanishing loci of the Newton and Kottwitz maps from $\Gr_{G,I}^{\twi}$ to $\Gr_{H,I}^{\twi}$, which is precisely the expression on the RHS. 
\end{proof}

\subsubsection{Fixed points of moduli of local shtukas} We now recall more about the structure of $\Sht_{(G, b, I),K}$ in terms of the Grothendieck-Messing period map. According to \cite[Corollary 23.4.2]{SW20}, $\Gr_{G, I}^{\twi,a}$ carries a $\ul{G(E)}$-torsor $\PP_\eta^G$, and $\pi_{\GM}^G \co \Sht_{(G, b, I),K} \rightarrow \Gr_{G, I}^{\twi,a}$ is the \'{e}tale cover parametrizing $K$-lattices $\PP^G \subset \PP_{\eta}^G$.

For basic $b' \in B(G)$, note that $\Gr_{G,I}^{\twi, b'}$ carries a $\ul{G_b'(E)}$-torsor that we denote $\PP_\eta^{G,b'}$, for example using the pure inner twisting procedure from \cite[Proposition III.4.2]{FS} to reduce to the case where $b' = 1_G$ where it is $\PP_\eta^{G}$.

\begin{defn}\label{def: twisted local shtukas} Let $b' \in B(G)$ be basic and $G_{b'}$ the corresponding inner twist of $G$. For a compact open subgroup $K \subset G_{b'}(E)$, we denote by $\Sht_{(G, b,I),K}^{b'} \rightarrow \Gr_{G, I}^{\twi, b'}$ the \'etale cover parametrizing $K$-lattices in $\PP_\eta^{G, b'}$. Note that $G_b(E)$ acts on $\Sht_{(G, b,I),K}^{b'}$ by change of framing. 

In fact, the pure inner twisting procedure of \cite[Proposition III.4.2]{FS} gives a $G_b(E)$-equivariant isomorphism between $\Sht_{(G, b,I), K}^{b'}$ and $\Sht_{(G_{b'}, b'', I),K}$ for $b'' \in B(G)$ such that the inner twist of $G_{b'}$ corresponding to $b''$ is isomorphic to $G_b$, so it was unnecessary to introduce this new notation. However, we like to use it to distinguish what arises naturally from calculations. 
\end{defn}

For $b' \in \iota^{-1}(1_G) \subset B(H)$, write $\iota^*_{b'} K \subset H_{b'}(E)$ for the pre-image of $K$ under the map $\iota_{b'} \co H_{b'} \inj G$ from Remark \ref{rem: twist embedding}. We abbreviate $\iota^* K = \iota^*_{1_H}K = K^\sigma \subset H(E)$. 

\begin{prop}\label{prop: fixed points of sht}
Suppose $K \subset G(E)$ is a $\Sigma$-stable open compact subgroup, with prime-to-$\ell$ pro-order. Then we have 
\[
\Fix(\sigma, \Sht_{(G,1_G, I),K}) = \coprod_{b' \in \iota^{-1}(1_G)} \Sht_{(H, 1_H, I), \iota^*_{b'} K}^{b'}
\]
as sub-v-sheaves of $\Sht_{(G,1_G, I),K}$ (with embedding on the RHS induced by $\iota_{b'}$ of Remark \ref{rem: twist embedding}). In particular, $\Sht_{(H,1_H,I), \iota^* K}$ is an open-closed subdiamond of $\Fix(\sigma, \Sht_{(G,1_G,I),K})$. 
\end{prop}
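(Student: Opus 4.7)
The strategy is to pass through the $\Sigma$-equivariant Grothendieck-Messing period map $\pi_{\GM}^G \co \Sht_{(G,1_G,I),K} \to \Gr^{\twi,a}_{G,I}$ (which is an \'etale cover parametrizing $K$-lattices in the $\ul{G(E)}$-torsor $\PP_\eta^G$) and reduce the claim to a fiberwise statement on the base and on the lattices.

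First I would apply Lemma \ref{lem: fixed points of admissible} to the target to identify
\[
\Fix(\sigma, \Gr^{\twi,a}_{G,I}) = \coprod_{b' \in \iota^{-1}(1_G)} \Gr^{\twi,b'}_{H,I},
\]
and then, via Remark \ref{rem: twist embedding}, identify each $\Gr^{\twi,b'}_{H,I}$ with $\Gr^{\twi,a}_{H_{b'},I}$. Pulling back partitions $\Fix(\sigma, \Sht_{(G,1_G,I),K})$ into open-closed pieces indexed by $b' \in \iota^{-1}(1_G)$, each lying over $\Gr^{\twi,a}_{H_{b'},I}$.

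Next, for each such $b'$, the functoriality of the moduli of shtukas along $H_{b'} \inj G_{b'} \cong G$ (cf.\ Remark \ref{rem: twist embedding}) yields a natural map
\[
\Sht_{(H_{b'}, 1_{H_{b'}}, I), \iota^*_{b'} K} \longrightarrow \Fix(\sigma, \Sht_{(G,1_G,I),K})
\]
landing in the $b'$-component, because on the GM side the restriction $\PP_\eta^G|_{\Gr^{\twi,b'}_{H,I}}$ is the pushout $\PP_\eta^{H_{b'}} \times^{H_{b'}(E)} G(E)$, so an $\iota^*_{b'} K$-lattice in $\PP_\eta^{H_{b'}}$ induces by pushout a $\sigma$-stable $K$-lattice in $\PP_\eta^G$. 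To conclude that the assembled map is an isomorphism, I would argue as in Proposition \ref{prop: BD gr fixed points}: the map is a closed embedding (by the argument of \cite[Lemma 19.1.5]{SW20}) and qcqs, so by \cite[Lemma 11.11]{Sch17} it suffices to check bijectivity on $\Spa(C,C^+)$-points. Over a point of $\Gr^{\twi,b'}_{H,I}$, the $\sigma$-fixed fiber of $\pi_{\GM}^G$ is the $\sigma$-fixed set of the coset space $G(E)/K$, where $\sigma$ acts via the automorphism twisted by a cocycle representing $b'$ (so that its fixed subgroup in $G(E)$ is precisely $H_{b'}(E)$).

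The main obstacle is this last fiberwise step: comparing $H_{b'}(E)/\iota^*_{b'} K$ with $(G(E)/K)^\sigma$ amounts to a non-abelian cohomology computation. Injectivity is immediate, and surjectivity follows from the vanishing $H^1(\Sigma, K) = 0$ applied to $1 \to K \to G(E) \to G(E)/K \to 1$, which in turn uses that $K$ is pro-$p$ while $|\Sigma| = \ell \neq p$. The pro-$p$ hypothesis on $K$ is thus essential here: it prevents any extraneous cosets from appearing in the fixed locus and guarantees that the decomposition is indexed precisely by $\iota^{-1}(1_G)$. The final statement that $\Sht_{(H,1_H,I),\iota^* K}$ is an open-closed subdiamond is then the $b' = 1_H$ summand of this decomposition.
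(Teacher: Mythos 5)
Your proposal is correct and follows essentially the same route as the paper: reduce along the $\Sigma$-equivariant Grothendieck--Messing period map using Lemma \ref{lem: fixed points of admissible}, use Remark \ref{rem: twist embedding} to reduce to $b'=1$, and identify the $\sigma$-fixed fiber with $(G(E)/K)^\sigma$ via the long exact sequence of non-abelian cohomology, where the vanishing $\rH^1(\Sigma;K)=0$ (from $K$ pro-$p$ and $\ell \neq p$) gives the surjectivity of $H(E)/\iota^*K \to (G(E)/K)^\sigma$. The only cosmetic difference is that you invoke the closed-embedding-plus-geometric-points criterion explicitly, which the paper leaves implicit in its fiberwise reduction.
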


\begin{remark}\label{rem: sht twist} 
As explained in Definition \ref{def: twisted local shtukas}, the term $\Sht_{(H, 1_H, I), \iota^*_{b'} K}^{b'}$ is $H(E)$-equivariantly isomorphic to the local Shimura variety $\Sht_{(H_{b'}, b, I), \iota^*_{b'} K}$ where $b \in B(H_{b'})$ is such that the inner twist $(H_{b'})_b$ is isomorphic to $H$. Only the assertion that $\Sht_{(H,1_H,I), \iota^* K}$ is an open-closed subdiamond of $\Fix(\sigma, \Sht_{(G,1_G,I),K})$, will be really crucial in future sections. 
\end{remark}

\begin{proof}
We have already calculated the $\sigma$-fixed points in the image of the Grothendieck-Messing period map $\pi_{\GM}^G$ in Lemma \ref{lem: fixed points of admissible}, and in view of the answer, it suffices to show that for all $x \in \Gr^{\twi, b'}_{H,I}$, the inclusion $\Sht_{(H, 1_H, I), \iota^*_{b'} K}^{b'} \rightarrow \Sht_{(G,1_G, I),K}$ sends $(\pi_{\GM}^H)^{-1}(x)$ isomorphically to $\Fix(\sigma, (\pi_{\GM}^G)^{-1}(x))$. By pure inner twisting procedure, as explained in Remark \ref{rem: twist embedding} and Remark \ref{rem: sht twist}, we may reduce this statement to the case where $b'=1$. The $\ul{G(E)}$-torsor $\PP_\eta^G$ corresponding to $x$ has an $H$-structure $\PP^H_\eta$ since $x$ lies in the image of $\pi_{\GM}^H$, and $(\pi_{\GM}^H)^{-1}(x)$ parametrizes $\iota^* K$-lattices in $\PP^H_\eta$. The action of $H(E)$ on a fixed $\iota^*K$-lattice $L_0$ induces an isomorphism $(\pi_{\GM}^H)^{-1}(x) \cong H(E)/\iota^*K$. 

On the other hand, $\Fix(\sigma, (\pi_{\GM}^G)^{-1}(x))$ parametrizes $K$-lattices in $\PP_\eta^G$ which are stable under $\Sigma$. As $\PP_\eta^G \cong \PP^H_\eta \times^{H(E)} G(E)$, the action of $G(E)$ on $L_0$ induces an isomorphism $\Fix(\sigma, (\pi_{\GM}^G)^{-1}(x)) \cong \Fix(\sigma, G(E)/K)$, with $\sigma$ acting in the natural way. Hence it suffices to see that
\begin{equation}\label{eq: fixed lattices}
H(E)/K^\sigma \xrightarrow{\sim} (G(E)/K)^\sigma.
\end{equation}
The long exact sequence of non-abelian cohomology of $\Sigma$ reads
\[
K^\sigma \inj G(E)^\sigma  \rightarrow  (G(E)/K)^\sigma \rightarrow \rH^1(\Sigma; K) \rightarrow \rH^1(\Sigma; G(E)).
\]
Since $K$ has prime-to-$\ell$ pro-order by assumption, we have $\rH^1(\Sigma; K) = 0$. This shows \eqref{eq: fixed lattices}, completing the proof. 
\end{proof}

\section{Parity sheaf theory on $p$-adic affine Grassmannians}\label{sec: parity sheaves}
The theory of \emph{parity sheaves}, originating in work of Juteau-Mautner-Williamson \cite{JMW14}, has driven many recent developments in geometric representation theory: see \cite{AR15, AR16, MR18, AMRW, W18} for a sampling of applications. Parity sheaves are defined somewhat similarly to perverse sheaves, but with \emph{parity} conditions instead of inequalities on cohomological degrees. 

In this section, we develop a theory of \emph{relative} parity sheaves on affine Grassmannians arising in $p$-adic geometry. The usage of the adjective ``relative'' is in the same sense as ``relative perversity'' of Hansen-Scholze \cite{HS23}: it means parity along the geometric fibers of a morphism. In practice, the morphism is usually the projection of a Beilinson-Drinfeld type affine Grassmannian to the base. Leslie-Lonergan \cite{LL} introduced the ``Tate-parity sheaves'' as the analogue of parity sheaves in Tate categories, and we also develop a theory of ``relative Tate-parity sheaves'' on families of affine Grassmannians. 

The main results of the structure theory of relative (Tate-)parity sheaves include the calculation of Ext groups, the vanishing of maps between incongruous objects, the fact that tilting objects are relative Tate-parity, and the generation of all relative (Tate-)parity sheaves by direct sums of shifts of tilting objects. Some of the difficulties are similar to those present in the theory of perverse sheaves on $p$-adic affine Grassmannians: the lack of a definitive dimension theory for adic spaces, and the failure of constructibility for the sheaves of interest. There are also some new technical issues, coming from the fact that the Ext-vanishing properties underlying the structure theory of parity sheaves have an ``absolute'' nature, which is in tension with the ``relative'' nature of our definitions. Moreover, the Krull-Schmidt property, which was an important technical property underpinning \cite{RW} and \cite{LL}, fails badly in the relative situation, due to the complicated nature of local systems on a general profinite set. The theory of ULA sheaves developed in \cite{FS} ultimately helps us to overcome these difficulties. 

We now give an overview of the contents of this section. In \S \ref{ssec: relative parity} we define relative parity complexes, etc. on $\Gr_{G, S/\Div_X^1}$ for a small v-stack $S$ over $\Div_X^1$ and establish their structure theory (mostly for the case where $S$ is strictly totally disconnected). In \S \ref{ssec: small smith-treumann} we introduce a ``small'' version of the Tate category for $\Gr_{G, S/\Div_X^1}$, the adjective ``small'' referring to that the sheaves are required to have strong finiteness properties, and a ``small'' version of Smith-Treumann localization. In \S \ref{ssec: Tate-parity} we define relative Tate-parity complexes, etc. on $\Gr_{G, S/\Div_X^1}$ and establish their structure theory. In \S \ref{ssec: even maps} we define ``even maps'' and prove that they preserve relative parity, and next in \S \ref{ssec: demazure} we apply this to certain Demazure resolutions in order to show that relative parity sheaves exist for all strata, and that relative parity is preserved by convolution. In \S \ref{ssec: modular reduction} we study the interaction of relative parity sheaves with modular reduction $\FF$ and the operation $\TT$. This is used in \S \ref{ssec: lifting functor} to define the ``lifting functor'' \`a la Leslie-Lonergan, which will constitute a key step in the construction of the Brauer functor.

\subsection{Relative parity sheaves}\label{ssec: relative parity} Let $S$ be a small v-stack with a map $S \rightarrow \Div^1_X$. Then we may form $\Gr_{G,S/\Div^1_X} := \Gr_{G, \Div^1_X} \times_{\Div_X^1} S$ and $\cHck_{G, S/\Div^1_X} := \cHck_{G, \Div^1_X} \times_{\Div_X^1} S$ as in \cite[\S VI]{FS}.

\subsubsection{Schubert stratification}\label{sssec: schubert stratification} Suppose $S \in \Perf$ is strictly totally disconnected. Then \cite[VI.2]{FS} applies and we have a presentation 
\[
\cHck_{G, S/\Div^1_X} = \varinjlim_{\mu \in X_*(T)^+} \cHck_{G, S/\Div^1_X, \leq \mu}
\]
where $T$ is a split maximal torus in $G_{E^s}$; here $\cHck_{G, S/\Div^1_X, \leq \mu}$ is the subfunctor of $\cHck_{G, S/\Div^1_X} $ parametrizing modifications which are given by some $\mu' \leq \mu$ at each geometric point of $S$. The open subfunctor $\cHck_{G, S/\Div^1_X, \mu} \inj \cHck_{G, S/\Div^1_X, \leq \mu}$ is defined as the complement of $\cHck_{G, S/\Div^1_X, \leq \mu'}$ for $\mu' < \mu$. 

The pullback of $\cHck_{G, S/\Div^1_X, \leq \mu}$ (resp. $\cHck_{G, S/\Div^1_X, \mu}$) to $\Gr_{G,S/\Div^1_X}$ is denoted $\Gr_{G,S/\Div^1_X, \leq \mu}$ (resp. $\Gr_{G,S/\Div^1_X, \mu}$). Let $L^+_{S/\Div^1_X} G := L^+_{\Div^1_X}G \times_{\Div_X^1} S$. Then $L^+_{S/\Div^1_X} G$ acts on $\Gr_{G,S/\Div^1_X}$ by left translation, and the orbits are the $\Gr_{G,S/\Div^1_X, \mu}$. 

We denote by $i_{\mu}$ the locally closed embedding $i_\mu \co \Gr_{G,S/\Div^1_X, \mu} \inj \Gr_{G,S/\Div^1_X}$. By \cite[Proposition VI.2.4]{FS}, $\Gr_{G,S/\Div^1_X, \mu}$ has the structure of a fibration over the diamond of the (opposite) partial flag variety $(P_{\mu}^-)^{\di}_S$, with the fibers being iterated extensions of $(\Lie G)^{\di}_{\mu \leq m} \{m\}$ where $(\Lie G)_{\mu \leq m}$ is the subspace on which $\G_m$ acts with weights $\leq m$ via the adjoint action composed with $\mu$. (Here $\{m\}$ is a ``Breuil-Kisin twist''.) 

\begin{lemma}\label{lem: fiber cohomology even} 
For the projection 
$\pi \co \Gr_{G,S/\Div^1_X, \mu} \rightarrow S$, each $\rR^n \pi_* \Lambda$ has $\Lambda$-free geometric stalks and vanishes if $n$ is odd. 
\end{lemma}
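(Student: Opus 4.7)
The plan is to exploit the fibration structure just described to reduce the computation of $\rR\pi_*\Lambda$ to the cohomology of a partial flag variety, which is known to be torsion-free and even-concentrated. Factor $\pi$ as
\[
\Gr_{G,S/\Div^1_X,\mu} \xrightarrow{q} (P_\mu^-)^{\di}_S \xrightarrow{p} S
\]
and analyze $\rR q_*$ and $\rR p_*$ separately. Since cohomological pushforward commutes with composition and with passing to cohomology sheaves when one of the factors is concentrated in a single degree, it will suffice to show that $\rR q_* \Lambda \cong \Lambda$ and that $\rR^n p_* \Lambda$ is $\Lambda$-free and vanishes for $n$ odd.

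For $q$, the cited structure result presents it as an iterated fibration whose successive quotients are (Breuil–Kisin twists of) affine-space diamonds $(\Lie G)^{\di}_{\mu\le m}$. Since $\ell\neq p$, the relative $\ell$-torsion \'etale cohomology of $\mathbb{A}^{n,\di}$ over a base is trivial: $\rR(\pr)_*\Lambda = \Lambda$, by $\mathbb{A}^1$-homotopy invariance of \'etale cohomology with prime-to-$p$ coefficients (which holds in the diamond setting; it can be checked after pulling back to a strictly totally disconnected cover of $S$ where it reduces to the computation of $\rH^*_{\et}(\mathbb{A}^{n,\di}_C,\Lambda)$). The Breuil–Kisin twist only alters the Galois action, not the underlying $\Lambda$-module, so we may ignore it. Iterating over the steps of the filtration yields $\rR q_* \Lambda \cong \Lambda$, concentrated in degree zero.

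For $p$, use the Bruhat decomposition of the partial flag variety $P_\mu^-$ over $E^s$ into affine cells $C_w \cong \mathbb{A}^{d(w)}$ indexed by cosets in the Weyl group, and pull back to obtain a locally closed stratification of $(P_\mu^-)^{\di}_S$. By the same $\mathbb{A}^1$-homotopy computation just used, the relative cohomology $\rR(p|_{C_w^{\di}})_! \Lambda$ is $\Lambda[-2d(w)]$ (up to a Tate twist, again irrelevant for $\Lambda$-freeness and parity). Building the cohomology up inductively via the excision triangles associated to closing up one cell at a time, the relevant boundary maps connect groups of even degrees of different parities of dimension, hence vanish for degree reasons; this is the standard ``cells in even dimensions'' argument. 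One concludes that each $\rR^n p_* \Lambda$ is $\Lambda$-free of rank equal to the number of Bruhat cells of complex dimension $n/2$, and vanishes for $n$ odd.

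The main obstacle is the foundational input of $\mathbb{A}^1$-homotopy invariance for relative \'etale cohomology of affine-space diamonds with $\Lambda$-coefficients over a general small v-stack base; once this is in hand, everything else reduces to a bookkeeping exercise with excision and the Bruhat stratification. A clean way to package this foundational step is to note that it suffices to verify it after v-cover, and then after pullback to $S = \Spa(C,C^+)$ strictly totally disconnected it follows from the standard computation for affinoid adic spaces.
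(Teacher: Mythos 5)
Your proof is correct and follows essentially the same route as the paper: both rest on the fibration structure of $\Gr_{G,S/\Div^1_X,\mu}$ over $(P_\mu^-)^{\di}_S$ recalled just before the lemma, the triviality of the higher direct images along the vector-group (affine-space) fibers, and the even, $\Lambda$-free cohomology of the partial flag variety. The paper packages this as degeneration of the Serre spectral sequence forced by even-ness, whereas you spell out the fiberwise $\mathbb{A}^1$-invariance and the Bruhat-cell computation of the base explicitly; the mathematical content is the same.
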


\begin{proof}
Both $(P_{\mu}^-)^{\di} $ and $(\Lie G)^{\di}_{\mu \leq m}$ have the property that their cohomology with constant coefficients is supported in even degrees. The preceding description shows that $\Gr_{G,S/\Div^1_X, \mu}$ is an iterated fibration over $(P_{\mu}^-)^{\di}_S$ with fibers of the form $(\Lie G)^{\di}_{\mu \leq m}$ (note that the Breuil-Kisin twist is trivializable over geometric points), so the result follows from the Serre spectral sequence, which is forced to degenerate by the even-ness.  
\end{proof}

\subsubsection{Categories of sheaves}\label{sssec: categories of sheaves}

We will take coefficients in a ring $\Lambda$ which is an $\ell$-adically complete local PID over $\Z_\ell$, i.e., a field of characteristic $\ell$ or a complete DVR with residue characteristic $\ell$. In our applications of interest, $\Lambda$ will be either $k$ or $\OO := W(k)$.


\begin{defn}
We define $\DSY{G}{\Lambda}$ to be the full subcategory of $D_{\et}^{\ULA}(\Gr_{G,S/\Div^1_X};\Lambda)^{\bd}$ spanned by the image of $D_{\et}^{\ULA}(\cHck_{G,S/\Div^1_X};\Lambda)^{\bd}$ under $*$-pullback. 
\end{defn}

\begin{remark}[Stability under six operations] As a consequence of {\cite[Corollary VI.6.6]{FS}}, the category $\DSY{G}{\Lambda}$ is stable under the operations of Verdier duality, $- \dotimes_\Lambda -$, $\cRHom_\Lambda(-, -)$, $i_! i^*, \rR i_* i^*, i_!  i^!, \rR i_*  i^!$ where $i = i_\mu$ is the inclusion of any stratum. (The analogous statement does \emph{not} hold in the ``multiple legs'' situation, unless the map from $S$ factors through the locus where the legs are disjoint.)

\end{remark}

Recall that an additive category is called \emph{Krull-Schmidt} if each object is a finite direct sum of indecomposable objects with local endomorphism rings (in particular, this implies that an object is indecomposable if and only if its endomorphism ring is local). We will show that $\Dula{G}{\Lambda}{1}$ is Krull-Schmidt when $S = \Spa(C,C^+)$ is a geometric point.

\begin{lemma}\label{lem: filtration}
Assume $S = \Spa (C,C^+)$. Then all objects of $\DSY{G}{\Lambda}$ admit a finite filtration by objects of the form $i_{\mu !} \Lambda$, and also a finite filtration by objects of the form $\rR i_{\mu *} \Lambda$. 
\end{lemma}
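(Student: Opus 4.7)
The plan is to proceed by induction on the (finite) number of Schubert strata meeting the support of $\cF$, where finiteness comes from the boundedness condition encoded in the superscript $\bd$. The inductive step peels off one stratum at a time via the recollement triangles, which should yield both filtrations in parallel.

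Concretely, I would pick $\mu_0$ maximal among strata meeting $\supp(\cF)$, so that $i_{\mu_0} \co \Gr_{G, S/\Div^1_X, \mu_0} \hookrightarrow \supp(\cF)$ is an open embedding, and write $i$ for the inclusion of its closed complement. For the filtration by $i_{\mu !}\Lambda$, the standard recollement triangle
\[
i_{\mu_0 !} i_{\mu_0}^* \cF \longrightarrow \cF \longrightarrow i_* i^* \cF \xrightarrow{[1]}
\]
exhibits $\cF$ as an extension where the quotient $i_* i^* \cF$ lies in $\DSY{G}{\Lambda}$ (by the stability of this category under the six-functor operations noted after its definition) and meets strictly fewer strata, so by induction it admits a filtration by $i_{\mu !}\Lambda$ with $\mu < \mu_0$; pulling the triangle together then gives a filtration of $\cF$, provided the top piece $i_{\mu_0 !} i_{\mu_0}^* \cF$ is already of the desired form. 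Dually, for the filtration by $\rR i_{\mu *}\Lambda$, I would use
\[
i_* i^! \cF \longrightarrow \cF \longrightarrow \rR i_{\mu_0 *} i_{\mu_0}^* \cF \xrightarrow{[1]},
\]
with $i_* i^! \cF$ the inductive subobject and $\rR i_{\mu_0 *} i_{\mu_0}^* \cF$ the top quotient.

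In both cases, the induction reduces the lemma to the key claim that $i_{\mu_0}^* \cF$ (resp.\ $i_{\mu_0}^! \cF$) is isomorphic to a direct sum of shifts of the constant sheaf $\Lambda$ on $\Gr_{G, S/\Div^1_X, \mu_0}$. This is where I expect the main obstacle. The heuristic is that $\cF$, being pulled back from $\cHck_{G, S/\Div^1_X}$, is $L^+_{S/\Div^1_X} G$-equivariant; each stratum is a single orbit, so an equivariant sheaf on it descends from a sheaf on the classifying stack of the stabilizer; and combined with the ULA property along the map to $S$ together with the structure of the stratum as an iterated affine fibration over $(P_{\mu_0}^-)^\di_S$ recorded in Proposition VI.2.4 of \cite{FS} (whose fibers have even, free cohomology by Lemma \ref{lem: fiber cohomology even}), this should force the decomposition. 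Pinning down this decomposition rigorously in the $p$-adic setting, over a general strictly totally disconnected $S$ and for an infinite-dimensional equivariant group, is the main technical point; for the $\rR i_{\mu *}$ side one also uses Lemma \ref{lem: * vs !}-style comparisons between $i^!$ and $i^*$ up to shift to reduce to the $i^*$ claim.
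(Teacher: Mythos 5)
Your devissage skeleton (induction on the finitely many strata in the support, peeling off the open stratum with the recollement triangles $i_{\mu_0!}i_{\mu_0}^*\cF \to \cF \to i_*i^*\cF$ and $i_*i^!\cF \to \cF \to \rR i_{\mu_0*}i_{\mu_0}^*\cF$) is the right frame, and you have correctly located where all the content sits: the claim that $i_{\mu_0}^*\cF$ and $i_{\mu_0}^!\cF$ are constant on the stratum with perfect $\Lambda$-fibers. But you then leave exactly that claim as a heuristic ("this should force the decomposition", "the main technical point"), so as written the argument has a genuine gap: nothing in your proposal actually proves that a bounded ULA object pulled back from $\cHck_{G,S/\Div^1_X}$ restricts to a constant sheaf on each Schubert cell over a strictly totally disconnected $S$, and this is the only nontrivial assertion in the lemma.

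The paper's proof consists entirely of citing \cite[Proposition VI.6.5]{FS}, which is precisely this statement packaged as the filtration result: Fargues--Scholze prove there that bounded ULA objects on the local Hecke stack over strictly totally disconnected $S$ admit finite filtrations with graded pieces $i_{\mu!}$ (resp. $\rR i_{\mu*}$) of constant perfect complexes, using the ULA formalism together with the structure of the strata as iterated fibrations recorded in \cite[Proposition VI.2.4]{FS}. So to close your argument you should either invoke that proposition directly (in which case the recollement induction becomes redundant, since the cited result already gives the filtration) or actually carry out the constancy argument, which is substantially harder in the $p$-adic setting than the equivariance-plus-orbit heuristic suggests. One further small point: the graded pieces one gets are a priori $i_{\mu!}M$ for $M$ a perfect complex of $\Lambda$-modules rather than literally shifts of $\Lambda$; since $\Lambda$ is a field or a complete DVR this is harmless (refine the filtration using a finite free resolution), but it is worth saying if you want the statement exactly as phrased.
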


\begin{proof}
This follows immediately from \cite[Proposition VI.6.5]{FS}. 
\end{proof}

\begin{lemma}\label{lem: end fg}
Assume $S = \Spa (C,C^+)$. Then for any object $\cK \in \DSY{G}{\Lambda}$, $\End(\cK)$ is finitely generated as a $\Lambda$-module. 
\end{lemma}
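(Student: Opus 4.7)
The plan is to use the finite filtrations from Lemma \ref{lem: filtration} to reduce the computation of $\End(\cK) = \Hom(\cK, \cK)$ to a finite collection of Hom groups between standard and costandard sheaves on individual strata, which can be controlled via the explicit Schubert-cell geometry.

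First I would pick a finite standard filtration $0 = \cK_0 \subset \cdots \subset \cK_m = \cK$ with successive cones $\cK_i/\cK_{i-1} \cong i_{\mu_i !} \Lambda[a_i]$, and a finite costandard filtration $0 = \cK'_0 \subset \cdots \subset \cK'_n = \cK$ with successive cones $\cK'_j/\cK'_{j-1} \cong \rR i_{\nu_j *} \Lambda[b_j]$, both furnished by Lemma \ref{lem: filtration}. A two-step induction --- first applying $\Hom(-,\cK)$ to the standard filtration and then $\Hom(\cK,-)$ to the costandard filtration, invoking the long exact sequences attached to the cofiber triangles at each stage, and using that finitely generated modules over the Noetherian ring $\Lambda$ are closed under extensions --- reduces the finite generation of $\Hom(\cK, \cK[k])$ for every $k \in \Z$ to the finite generation of $\Hom(i_{\mu_i !} \Lambda[a_i], \rR i_{\nu_j *} \Lambda[b_j + k])$ for the finitely many pairs $(i,j)$ and every $k$.

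Next I would invoke the $(i_\nu^*, \rR i_{\nu *})$-adjunction to identify
\[
\Hom(i_{\mu !} \Lambda[a], \rR i_{\nu *} \Lambda[b]) \cong \Hom(i_\nu^* i_{\mu !} \Lambda, \Lambda[b-a]).
\]
This vanishes when $\mu \neq \nu$, since distinct Schubert strata are disjoint so $i_\nu^* i_{\mu !} \Lambda = 0$; when $\mu = \nu$ it equals $\rH^{b-a}(\Gr_{G, S/\Div^1_X, \mu}, \Lambda)$. It therefore suffices to show that $\rH^n(\Gr_{G, S/\Div^1_X, \mu}, \Lambda)$ is a finitely generated $\Lambda$-module for each $n$ and each of the finitely many $\mu$ appearing in the bounded support of $\cK$.

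The main obstacle is establishing this last finiteness, and this is where the specific Schubert-cell geometry is indispensable. I would use the description from \cite[Proposition VI.2.4]{FS} of $\Gr_{G, S/\Div^1_X, \mu}$ as an iterated fibration over $(P_\mu^-)^\di_S$ with fibers iterated extensions of $(\Lie G)^\di_{\mu \leq m}\{m\}$; combined with Lemma \ref{lem: fiber cohomology even}, the Leray spectral sequence for the projection to $S$, and the vanishing of higher étale cohomology on the strictly totally disconnected $S$, this expresses $\rH^n(\Gr_{G, S/\Div^1_X, \mu}, \Lambda)$ as a finite free $\Lambda$-module in each degree $n$, in fact concentrated in the bounded range $0 \leq n \leq 2\langle \mu, 2\rho \rangle$. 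The essential point is that, despite the general lack of constructibility in $p$-adic geometry, this specific Schubert-cell geometry makes each stratum cohomologically behave like a finite-type object over $S$.
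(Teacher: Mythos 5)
Your proposal is correct and takes essentially the same route as the paper's proof: reduce via the two filtrations of Lemma \ref{lem: filtration} to the groups $\Hom(i_{\mu!}\Lambda, \rR i_{\nu*}\Lambda[n])$, apply the $(i_\nu^*, \rR i_{\nu*})$-adjunction, and use that $i_\nu^* i_{\mu!}\Lambda$ vanishes for $\nu \neq \mu$ and is $\Lambda$ for $\nu = \mu$. You are more careful than the paper about tracking shifts and about why the resulting stratum cohomology $\rH^n(\Gr_{G,S/\Div^1_X,\mu};\Lambda)$ is finitely generated (the paper simply asserts the Hom is $\Lambda$), though note that at this last step both your argument and the paper's implicitly treat $S$ as a geometric point: for a general strictly totally disconnected $S$ the group is valued in continuous functions on $|S|$ rather than being finite free over $\Lambda$.
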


\begin{proof}
Applying Lemma \ref{lem: filtration}, it suffices to show that for each $\mu \in X_*(T)^+$,
\[
\Hom(i_{\mu!} \Lambda, \rR i_{\mu'*} \Lambda) \cong \Hom(i^*_{\mu'} i_{\mu !} \Lambda, \Lambda) \text{
is finitely generated as a $\Lambda$-module.}
\]
If $\mu \neq \mu'$, then $i^*_{\mu'} i_{\mu!} \Lambda = 0$. If $\mu' = \mu$, then $i^*_{\mu'} i_{\mu !}  \Lambda \cong \Lambda$, so that $\Hom(i^*_{\mu'} i_{\mu !} \Lambda, \Lambda) \cong \Lambda$. 
\end{proof}

\begin{lemma}\label{lem: KRS}
Assume $S = \Spa (C,C^+)$. Then the category $\DSY{G}{\Lambda}$ is Krull-Schmidt. 
\end{lemma}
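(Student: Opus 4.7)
The plan is to combine Lemma \ref{lem: end fg} with the classical criterion that an idempotent-complete additive category is Krull--Schmidt whenever every endomorphism ring is semi-perfect (see e.g.\ Krause's survey on Krull--Schmidt categories). Two things must be established: semi-perfectness of all endomorphism rings, and idempotent completeness of $\DSY{G}{\Lambda}$.

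Semi-perfectness is essentially automatic from Lemma \ref{lem: end fg}. For any $\cK \in \DSY{G}{\Lambda}$, the ring $A := \End(\cK)$ is a module-finite algebra over the complete local ring $\Lambda$. In particular $A$ is $\fm_\Lambda$-adically complete, the ideal $\fm_\Lambda A$ lies in the Jacobson radical of $A$, and the quotient $A/\fm_\Lambda A$ is a finite-dimensional algebra over the residue field of $\Lambda$, hence Artinian and a fortiori semi-perfect. Lifting idempotents through the $\fm_\Lambda$-adic filtration, which is permitted by completeness, transports semi-perfectness up to $A$ itself.

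Idempotent completeness is the more delicate point, which I expect to be the main obstacle. The plan is to split any idempotent $e \in \End(\cK)$ already in the equivariant derived category on $\cHck_{G, S/\Div^1_X}$, and then $*$-pull back to $\Gr_{G, S/\Div^1_X}$. This uses that the equivariant derived category, being a bounded derived category with a natural stable $\infty$-categorical enhancement, is idempotent complete. Boundedness and the ULA property both pass to direct summands, so the summand of $\cK$ cut out by $e$ lies back in $\DSY{G}{\Lambda}$. Combining the two ingredients produces a decomposition of any $\cK$ into finitely many indecomposable summands (finiteness being forced by the module-finite endomorphism ring), with local endomorphism rings (each summand being indecomposable exactly when its endomorphism ring is local, a general consequence of the semi-perfect hypothesis), which is what the Krull--Schmidt property asserts.
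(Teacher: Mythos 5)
Your overall architecture is the same as the paper's: reduce Krull--Schmidt to (i) idempotent completeness and (ii) semiperfectness of all endomorphism rings, then invoke the standard criterion (the paper cites \cite[Theorem A.1]{CYZ} where you cite Krause; same statement). Your semiperfectness argument is correct and is just the expanded version of the paper's one-line claim that any finite $\Lambda$-algebra is semiperfect: $\End(\cK)$ is module-finite over the complete local ring $\Lambda$ by Lemma \ref{lem: end fg}, so it is $\fm_\Lambda$-adically complete with Artinian reduction, and idempotents lift.

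The gap is in your treatment of idempotent completeness, exactly the step you flagged as delicate. First, the justification ``a bounded derived category with a natural stable $\infty$-categorical enhancement is idempotent complete'' is not valid: possessing a stable enhancement does not imply idempotent completeness (stable $\infty$-categories need not be idempotent complete, and passing to the homotopy category does not help). The reason bounded derived categories \emph{are} Karoubian is that they carry a bounded t-structure; this is the theorem of \cite{LC07}, and it is precisely what the paper applies --- directly to $\DSY{G}{\Lambda}$ with its usual bounded t-structure, so that the idempotent splits inside the category itself. Second, your detour through $\cHck_{G,S/\Div^1_X}$ introduces an unjustified lifting step: an idempotent $e \in \End(\cK)$ computed in $\DSY{G}{\Lambda}$ (a full subcategory of sheaves on $\Gr_{G,S/\Div^1_X}$) need not come from an idempotent of a preimage on $\cHck_{G,S/\Div^1_X}$, because $*$-pullback from the Hecke stack is not full on all of $D^{\ULA}$ (full faithfulness is only available after restricting to the flat perverse subcategory, as in \cite[Proposition VI.7.2]{FS}). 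The fix is simply to drop the detour and apply \cite{LC07} to $\DSY{G}{\Lambda}$ itself; with that substitution your argument coincides with the paper's.
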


\begin{proof}
The usual t-structure on the category $\DSY{G}{\Lambda}$ is bounded. Therefore, it is Karoubian (i.e., every idempotent splits) by \cite[Theorem]{LC07}. By \cite[Theorem A.1]{CYZ}, a Karoubian category such that $\End(\cK)$ is semiperfect (cf. \cite[Chapter 8]{Lam01} for a reference on semiperfect rings) for every $\cK$ is Krull-Schmidt. It therefore suffices to show that $\End(\cK)$ is semiperfect for every $\cK \in \DSY{G}{\Lambda}$. For this we note that any finite $\Lambda$-algebra is semiperfect, and we showed in Lemma \ref{lem: end fg} that $\End(\cK)$ is a finite $\Lambda$-module. 
\end{proof}

\subsubsection{Relative parity complexes}\label{sssec: parity complexes} A \emph{pariversity} on $\Gr_{G,S/\Div^1_X}$ is a function $\dagger$ from $X_*(T)^+$, thought of as the set of strata on the geometric fibers over $S$, to $\Z/2\Z$.

\begin{example}[Dimension pariversity]\label{ex: dimension pariversity}
In this section we fix $\dagger$ to be the ``dimension pariversity'' $\dagger_G$, defined by 
\[
\dagger_G(\lambda) := \tw{2\rho, \lambda} \pmod{2}  \in \Z/2\Z
\]
where $2\rho$ is the sum of the positive roots of $G$. 
\end{example}

The definition below is a ``relative to $S$'' (in a sense parallel to the notion of relative perversity in \cite[\S VI.7]{FS}) version of the definition of parity sheaves in \cite[Definition 2.4]{JMW14}.

\begin{defn}[Parity complexes]\label{defn: parity} Let $S$ be a small v-stack. For a geometric point $\ol{s} = \Spa(C,C^+) \rightarrow S$ and $\cK \in \DSY{G}{\Lambda}$, we denote by $\cK|_{\ol{s}}$ the $*$-restriction of $\cK$ along $\Gr_{G,\ol{s}/\Div^1_X} \rightarrow \Gr_{G,S/\Div^1_X}$. For each $\mu \in X_*(T)^+$ we let $i_\mu \co \Gr_{G,\ol{s}/\Div^1_X, \mu} \inj \Gr_{G,\ol{s}/\Div^1_X}$ be the locally closed embedding of the corresponding stratum. 
\begin{enumerate}
\item For $? \in \{*,!\}$, we say $\cK \in \DSY{G}{\Lambda}$ is \emph{relative $?$-even} if for all geometric points $\ol{s} \rightarrow S$ and all $\mu \in X_*(T)^+$ and all $n \in \Z$, the cohomology sheaf $\cH^n(i_\mu^? (\cK|_{\ol{s}}))$ is constant and $\Lambda$-free, and vanishes for $n \not\equiv \dagger(\mu) \pmod{2}$. 
\item For $? \in \{*,!\}$, we say that $\cK \in \DSY{G}{\Lambda}$ is \emph{relative $?$-odd} if $\cK[1]$ is relative $?$-even. 
\item For $? \in \{*,!\}$, we say that $\cK \in \DSY{G}{\Lambda}$ is \emph{relative $?$-parity} if $\cK$ is either relative $?$-even or relative $?$-odd. 
\item We say $\Cal{K} \in \DSY{G}{\Lambda}$ is a \emph{relative even complex} if $\cK$ is both relative $*$-even and relative $!$-even. We say that $\cK \in \DSY{G}{\Lambda}$ is a \emph{relative odd complex} if $\Cal{K}[1]$ is even. 
\item We say $\cK \in \DSY{G}{\Lambda}$ is a \emph{relative parity complex} if it is isomorphic to a finite direct sum of relative even and relative odd complexes. The full subcategory of $\DSY{G}{\Lambda}$ spanned by relative parity complexes is denoted $\PSY{G}{\Lambda}{1}$. 
\end{enumerate}

\end{defn}

The following result is part of \cite[Corollary VI.6.6]{FS}, but we state it for emphasis and take the opportunity to spell out the proof. 

\begin{lemma}[\cite{FS}]\label{lem: ula base change}
Let $f \co S' \rightarrow S$ be a map of small v-stacks over $\Div_X^1$ and $i_{\mu} \co \Gr_{G, S/\Div^1_X, \mu}  \inj \Gr_{G, S/\Div^1_X}$ be the locally closed immersion of a Schubert cell. Consider the diagram 
\[
\begin{tikzcd}
\Gr_{G, S'/\Div^1_X, \mu}  \ar[d, "i'_\mu"] \ar[r, "f_\mu"]  & \Gr_{G, S/\Div^1_X, \mu} \ar[d, "i_\mu"]  \\
\Gr_{G, S'/\Div^1_X}  \ar[r, "\wt{f}"]  \ar[d] & \Gr_{G, S/\Div^1_X}  \ar[d] \\
S' \ar[r, "f"] & S
\end{tikzcd}
\]
where all squares are Cartesian. Then we have a natural isomorphism $f_\mu^*(i_\mu)^! \cong  (i_\mu')^!\wt{f}^*$ of functors $\Dula{G}{\Lambda}{1} \rightarrow D_{(L^+G)}^{\ULA}(\Gr_{G, S'/\Div^1_X, \mu} ; \Lambda)^{\bd}$. 
\end{lemma}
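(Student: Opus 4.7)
The plan is to construct the claimed isomorphism as a base change morphism for a Cartesian square, and then to verify that it is an equivalence on the ULA subcategory by invoking the ULA formalism of Fargues-Scholze.

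First I would construct the candidate natural transformation $\alpha\colon (i_\mu')^!\wt f^* \to f_\mu^*(i_\mu)^!$. Factor $i_\mu$ as an open immersion $j_\mu$ into the closed Schubert variety $\Gr_{G, S/\Div^1_X, \leq \mu}$ followed by a closed immersion $\ol{i}_\mu$, and likewise over $S'$. For the closed piece, proper base change provides an isomorphism $\wt f^* \ol{i}_{\mu \ast} \simeq \ol{i}_{\mu \ast}' \bar f_\mu^\ast$ (where $\bar f_\mu$ denotes the evident pullback of $\wt f$ to the Schubert variety); passing to right adjoints of the fully faithful $\ol{i}_{\mu \ast}$ produces a canonical base change morphism $(\ol{i}_\mu')^! \wt f^* \to \bar f_\mu^* \ol{i}_\mu^!$. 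For the open immersion $j_\mu$, $!$-restriction coincides with $*$-restriction, which commutes with $*$-pullback tautologically. Composing these gives the desired $\alpha$.

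Second, I would verify that $\alpha$ is an equivalence on $\DSY{G}{\Lambda}$. The essential input is that the ULA condition is transverse to arbitrary base change in $S$ over $\Div^1_X$: this is a consequence of \cite[Corollary VI.6.6]{FS}, which shows that the operations $i_! i^*$, $\rR i_* i^*$, $i_! i^!$, $\rR i_* i^!$ preserve ULA-ness, and (as a shadow of this) that the formation of $(i_\mu)^! \cK$ for $\cK \in \DSY{G}{\Lambda}$ commutes with arbitrary pullback in $S$. Applied to our diagram this is exactly the invertibility of $\alpha$ on ULA sheaves.

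The principal difficulty here is essentially conceptual rather than computational: one has to correctly recognize the statement as a specialization of the Fargues-Scholze ULA base change package, and confirm that \cite[Corollary VI.6.6]{FS} supplies base change compatibility (and not merely ULA-preservation) for $!$-restriction to strata. Without the ULA hypothesis, no such base change could be hoped for since $f$ is an arbitrary map of small v-stacks with no smoothness assumption. Once this input is granted, no further arguments are needed.
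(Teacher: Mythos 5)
Your proposal reaches the right statement but by a different route from the paper, and the step that carries all the weight is under-justified. The paper's proof does not construct a base change morphism and then check invertibility; instead it writes $i_\mu^!$ as the conjugate $\DD_{/S}\, i_\mu^*\, \DD_{/S}$ of $i_\mu^*$ by relative Verdier duality (valid on ULA objects), invokes \cite[Proposition IV.2.15]{FS} for the compatibilities $f_\mu^*\DD_{/S}\cong \DD_{/S'}f_\mu^*$ and $\wt{f}^*\DD_{/S}\cong \DD_{/S'}\wt{f}^*$ of relative Verdier duality with arbitrary base change, and \cite[Corollary IV.2.25]{FS} for its involutivity on ULA objects; chaining these isomorphisms reduces the $!$-base-change to the tautological commutation of $*$-pullbacks. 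That conjugation is the actual mechanism by which the ULA hypothesis enters, and it is what makes the statement true with no smoothness assumption on $f$.

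Two criticisms of your write-up. First, the canonical mate of the proper base change isomorphism $\wt{f}^*\,\ol{i}_{\mu*}\cong \ol{i}'_{\mu*}\,\bar{f}_\mu^*$ with respect to the adjunctions $\ol{i}_{\mu*}\dashv \ol{i}_\mu^!$ goes in the direction $\bar{f}_\mu^*\,\ol{i}_\mu^! \to (\ol{i}'_\mu)^!\,\wt{f}^*$, not the direction you wrote; this is cosmetic but should be fixed. Second, and more seriously, your justification for invertibility --- that base change compatibility of $(i_\mu)^!$ on ULA objects is ``a shadow of'' the statements in \cite[Corollary VI.6.6]{FS} that $i_!i^*$, $\rR i_*i^*$, $i_!i^!$, $\rR i_*i^!$ preserve ULA-ness --- is not a valid deduction: preservation of the ULA condition under these operations does not formally imply that $i^!$ commutes with base change. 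The corollary does also contain the base change assertion (the paper itself flags the lemma as being ``part of'' it), so your argument can be repaired by citing that assertion directly, but then your proof collapses to a bare citation. If you want an actual argument, the Verdier duality conjugation above is the one to give.
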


\begin{proof}
We abbreviate $\DD_{/S}$ (resp. $\DD_{/S'}$) for the relative Verdier duality over $S$ (resp. $S'$). By \cite[Proposition IV.2.15]{FS}, relative Verdier duality is compatible with base change, meaning there are natural isomorphisms
\begin{equation}\label{eq: ula base change 1}
f_\mu^* \DD_{/S} \xrightarrow{\sim} \DD_{/S'} f_{\mu}^* \quad \text{ and } \quad \wt{f}^* \DD_{/S} \xrightarrow{\sim} \DD_{/S'} \wt{f}^*.
\end{equation}
Using also that $i_\mu^! \cong \DD_{/S} i_\mu^* \DD_{/S}$ and $(i_\mu')^! \cong \DD_{/S'} (i'_{\mu})^*\DD_{/S'} $ on ULA objects (because relative Verdier duality is involutive on ULA objects \cite[Corollary IV.2.25]{FS}), we have natural isomorphisms
\[
f_\mu^*i_\mu^! \cong f_{\mu}^* \DD_{/S} i_\mu^* \DD_{/S} \stackrel{\eqref{eq: ula base change 1}}\cong \DD_{/S'} f_{\mu}^* i_\mu^* \DD_{/S} \cong \DD_{/S'} (i'_{\mu})^* \wt{f}^* \DD_{/S} \stackrel{\eqref{eq: ula base change 1}}\cong \DD_{/S'} (i'_{\mu})^*\DD_{/S'} \wt{f}^*  \cong  (i_\mu')^!\wt{f}^*.
\]

\end{proof}

\subsubsection{Structure theory for strictly totally disconnected $S$}\label{sssec: structure parity} Here we prove several structural results about $\PSY{G}{\Lambda}{1}$, parallel to \cite[\S 2]{JMW14}, \emph{under the assumption that $S$ is strictly totally disconnected.} In particular, thanks this assumption, we have the stratification of $\Gr_{G, S/\Div_X^1}$ by $\Gr_{G, S/\Div_X^1, \mu}$, ranging over $\mu \in X_*(T)^+$, and we let $i_\mu$ be the locally closed embedding $\Gr_{G, S/\Div_X^1, \mu} \inj \Gr_{G, S/\Div_X^1}$. \textbf{Below, all Hom and Ext groups are taken in the category} $\DSY{G}{\Lambda}$, which we omit for ease of notation. 

Lemma \ref{lem: ula base change} implies the following connection between relative parity as in Definition \ref{defn: parity}, and an ``absolute'' notion of parity. 

\begin{cor}\label{cor: absolute parity}
Let $\cK \in \DSY{G}{\Lambda}$. For $? \in \{*, !\}$, if $\cK$ is relative ?-even then for all $\mu \in X_*(T)^+$ and all $n \in \Z$, the cohomology sheaf $\cH^n(i_\mu^? (\cK))$ is constant and $\Lambda$-free, and vanishes for $n \not\equiv \dagger(\mu) \pmod{2}$. 
\end{cor}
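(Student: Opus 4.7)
The plan is to deduce the corollary directly from Lemma~\ref{lem: ula base change}, which shows that the cohomology sheaves $\cH^n(i_\mu^? \cK)$ have the desired fiberwise properties after pullback to any geometric point of $S$, hence enjoy the analogous absolute properties.

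Concretely, I would fix a geometric point $f \co \ol s = \Spa(C,C^+) \to S$ with base-change maps $\wt f \co \Gr_{G,\ol s/\Div^1_X} \to \Gr_{G,S/\Div^1_X}$ and $f_\mu \co \Gr_{G,\ol s/\Div^1_X,\mu} \to \Gr_{G,S/\Div^1_X,\mu}$, writing $i_\mu' \co \Gr_{G,\ol s/\Div^1_X,\mu} \inj \Gr_{G,\ol s/\Div^1_X}$ for the stratum inclusion over $\ol s$. For $? = *$, the base-change identity $f_\mu^* i_\mu^* \cK \cong (i_\mu')^*(\cK|_{\ol s})$ is automatic, while for $? = !$ it is exactly Lemma~\ref{lem: ula base change}. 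Since $f_\mu^*$ is exact, applying it term-by-term to cohomology sheaves yields
\[
f_\mu^* \, \cH^n(i_\mu^? \cK) \;\cong\; \cH^n\!\bigl((i_\mu')^?(\cK|_{\ol s})\bigr),
\]
and by the relative $?$-even hypothesis the right-hand side is constant and $\Lambda$-free, vanishing for $n \not\equiv \dagger(\mu) \pmod 2$.

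I would then read off the three conclusions in turn. For parity vanishing: the display shows that for $n \not\equiv \dagger(\mu) \pmod 2$ the sheaf $\cH^n(i_\mu^? \cK)$ has vanishing stalk at every geometric point of $\Gr_{G,S/\Div^1_X,\mu}$; since an étale sheaf on a locally spatial diamond is determined by its geometric stalks (cf.~\cite[Proposition~14.3]{Sch17}), this forces $\cH^n(i_\mu^? \cK) = 0$. For $\Lambda$-freeness, flatness over the complete local PID $\Lambda$ can be detected on geometric stalks, and those stalks are $\Lambda$-free by the display. For constancy, interpreted along each geometric fiber of $\Gr_{G,S/\Div^1_X,\mu} \to S$, the display is itself the assertion.

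The main subtlety is the interpretation of the word ``constant'' in the corollary: since the rank of the cohomology sheaf can in principle vary between geometric points of $S$, constancy is not being asserted on the total space $\Gr_{G,S/\Div^1_X,\mu}$ but along its geometric fibers over $S$; this matches the relative nature of the definitions and is exactly what the base-change isomorphism produces. No further input (such as $L^+G$-equivariance of $\cK$) is needed once this point is clear.
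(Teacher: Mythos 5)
Your treatment of the $\Lambda$-freeness and of the parity vanishing is essentially the paper's argument: both are checked on geometric points of $S$ via the base-change isomorphism of Lemma~\ref{lem: ula base change} (which is only needed for $? = !$; the $*$-case of the vanishing is immediate from the definition). That part is fine.

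The problem is the constancy claim, and specifically your final assertion that ``no further input (such as $L^+G$-equivariance of $\cK$) is needed.'' That is exactly the input the paper uses: its proof of constancy is the one line ``by definition of $\DSY{G}{\Lambda}$,'' i.e.\ $\cK$ lies in the image of $*$-pullback from $\cHck_{G,S/\Div^1_X}$, so $i_\mu^?\cK$ descends to the quotient of the stratum by $L^+G$ and is therefore constant along the stratum, not merely along the geometric fibers of $\Gr_{G,S/\Div^1_X,\mu}\to S$. Your reinterpretation of ``constant'' as ``constant along geometric fibers'' makes the constancy part of the corollary essentially a restatement of the hypothesis and proves a strictly weaker statement than the one written. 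This matters downstream: in Lemma~\ref{lem: endomorphisms free} the corollary is invoked to say that $i_\mu^*\cF$ and $i_\mu^!\cG$ are \emph{constant complexes} on the stratum, so that $\Ext^\bu(i_\mu^*\cF, i_\mu^!\cG)$ reduces to the cohomology of the stratum with constant coefficients and the parity statement of Lemma~\ref{lem: fiber cohomology even} applies. A sheaf that is only fiberwise constant would not support that computation as written. The fix is easy — add the observation that $\cK$, being pulled back from the Hecke stack, has restrictions to the $L^+G$-orbits $\Gr_{G,S/\Div^1_X,\mu}$ whose cohomology sheaves are constant — but it is a needed ingredient, not a dispensable one.
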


\begin{proof}
The constancy is automatic from the definition of $\DSY{G}{\Lambda}$. The $\Lambda$-freeness of $?$-restrictions can be checked after base changing to geometric points $\ol{s} \rightarrow S$, hence follows from the definition of relative $?$-even, using Lemma \ref{lem: ula base change} to commute base change on $S$ with $i_\mu^!$. The fact that $\cH^n(i_\mu^* (\cK))$ vanishes for $n \not\equiv \dagger(\mu) \pmod{2}$ follows immediately from the definition of relative ?-even, while the fact that $\cH^n(i_\mu^!(\cK))$ vanishes for $n \not\equiv \dagger(\mu) \pmod{2}$ can be checked after base change along geometric points $\ol{s}  = \Spa(C,C^+) \rightarrow S$, where it follows from the definition of relative ?-even after applying Lemma \ref{lem: ula base change}. 
\end{proof}

\begin{lemma}\label{lem: endomorphisms free} Assume $S$ is strictly totally disconnected. If $\cF \in \DSY{G}{\Lambda}$ is relative $*$-parity and $\cG \in \DSY{G}{\Lambda}$ is relative $!$-parity, then we have a (non-canonical) isomorphism of $\Lambda$-modules
\begin{equation}\label{eq: parity decomposition}
\Ext^{\bu}(\cF, \cG) \cong \bigoplus_{\mu \in X_*(T)^+} \Ext^{\bu}(i_{\mu}^* \cF, i_{\mu}^! \cG)
\end{equation}
and both sides are finite projective over $C^\infty(|S|,\Lambda) = \Ext^{\bu}_S(\Lambda, \Lambda)$, the ring of continuous functions on $|S|$ valued in $\Lambda$. 
\end{lemma}

\begin{proof}
The argument is similar to that for \cite[Proposition 2.6]{JMW14}, but we have to address some issues related to the discrepancy between our relative situation and the ``absolute'' situation of \cite{JMW14}. Since the statement is compatible with finite direct sums and shifts, we may assume without loss of generality that $\cF$ is relative $*$-even and $\cG$ is relative $!$-even. 

We will show by induction, on the number $M$ of $\mu$ such that $\supp(\cF)$ intersects nontrivially with $\Gr_{G, S/\Div_X^1, \mu}$, that 
\begin{enumerate}
\item[(i)] $\Ext^{\bu}(\cF, \cG)$ is finite projective over $C^\infty(|S|,\Lambda)$ and vanishes in odd degrees, and 
\item[(ii)] satisfies the decomposition \eqref{eq: parity decomposition}. 
\end{enumerate}
If $M=1$, then $\cF \cong i_{\mu !} i_{\mu}^*  \cF$ for some $\mu \in X_*(T)^+$, and we have 
\[
\Ext^{\bu}(\cF, \cG) \cong \Ext^{\bu} (i_{\mu!} i_{\mu}^* \cF, \cG) \cong \Ext^{\bu}(i_{\mu}^* \cF, i_{\mu}^!  \cG). 
\]
This shows part (ii) of the inductive hypothesis. By Corollary \ref{cor: absolute parity}, $i_{\mu}^* \cF$ and $i_{\mu}^!  \cG$ are both locally constant complexes on $\Gr_{G,S/\Div^1_X, \mu}$ concentrated in \emph{even} degrees. Then part (i) of the induction hypothesis follows from Lemma \ref{lem: fiber cohomology even} (here we use the assumption that $S$ is strictly totally disconnected, so it has no Exts between locally constant sheaves). 

If $M>1$, let $i$ be the inclusion of a closed stratum in the support of $\cF$ and consider the excision sequence $j_! j^* \cF \rightarrow \cF \rightarrow i_* i^* \cF$. Applying $\Hom(-, \cG)$, we get a long exact sequence
\begin{equation}\label{eq: ind step LES}
\ldots \rightarrow  \Ext^n(i_*  i^* \cF, \cG) \rightarrow \Ext^n(\cF, \cG) \rightarrow \Ext^n(j_! j^* \cF, \cG)  \rightarrow \ldots
\end{equation}
Rewriting $\Ext^n(i_*  i^* \cF, \cG)  \cong \Ext^n(  i^* \cF,  i^!\cG)$ and $\Ext^n(j_! j^* \cF, \cG)  \cong \Ext^n( j^* \cF, j^!\cG) $, the induction hypothesis applies to the flanking terms. In particular, (i) implies that the long exact sequence \eqref{eq: ind step LES} splits, and then (i) and (ii) for the flanking terms imply (i) and (ii) for $\Ext^{\bu}(\cF, \cG) $. 
\end{proof}

\begin{cor}\label{cor: Hom parity vanishing}
Assume $S$ is strictly totally disconnected. If $\cF \in \DSY{G}{\Lambda}$ is relative $*$-even and $\cG\in \DSY{G}{\Lambda}$ is relative $!$-odd, then $\Hom(\cF, \cG) = 0$. 
\end{cor}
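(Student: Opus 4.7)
The plan is to combine Lemma \ref{lem: endomorphisms free} with a parity shift. First I would observe that $\cG[1]$ is relative $!$-even by the definition of relative $!$-odd, so the pair $(\cF, \cG[1])$ consists of a relative $*$-even complex and a relative $!$-even complex. Inspecting the inductive proof of Lemma \ref{lem: endomorphisms free}, property (i) of the induction establishes not merely $\Lambda$-freeness but also that $\Ext^n(\cF, \cG[1])$ vanishes for $n$ odd. Taking $n = -1$, this gives
\[
\Hom(\cF, \cG) \;=\; \Ext^{-1}(\cF, \cG[1]) \;=\; 0,
\]
as desired.

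If one prefers an argument that does not rely on peering inside the proof of Lemma \ref{lem: endomorphisms free}, one may use instead the decomposition furnished by that lemma,
\[
\Hom(\cF, \cG) \cong \bigoplus_{\mu \in X_*(T)^+} \Hom(i_\mu^* \cF, i_\mu^! \cG),
\]
and verify vanishing summand-by-summand. By Corollary \ref{cor: absolute parity}, each $i_\mu^* \cF$ is a complex of constant, $\Lambda$-free \'etale sheaves on $\Gr_{G, S/\Div^1_X, \mu}$ with cohomology concentrated in degrees of parity $\dagger(\mu)$, while $i_\mu^! \cG$ is similarly constant with cohomology in degrees of parity $\dagger(\mu)+1$. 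Consequently $\Hom$ between them decomposes into contributions coming from global cohomology of the stratum in \emph{odd} degrees. Since $S$ is strictly totally disconnected, the functor $\Gamma(S, -)$ on \'etale sheaves is exact, so these global cohomology groups reduce to sections over $S$ of $R^n \pi_* \Lambda$ for $\pi \co \Gr_{G, S/\Div^1_X, \mu} \to S$ and odd $n$, which vanish by Lemma \ref{lem: fiber cohomology even}.

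I do not anticipate a serious obstacle here: the argument is essentially bookkeeping of parities, with all the requisite structural inputs (the decomposition of Hom across strata, the constancy and $\Lambda$-freeness of $i_\mu^? \cK$, and the even-degree concentration of stratum cohomology) already established in the preceding results.
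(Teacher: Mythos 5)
Your proposal is correct and is essentially the paper's own (implicit) argument: the corollary is meant to follow from statement (i) established in the proof of Lemma \ref{lem: endomorphisms free}, namely that for a relative $*$-even $\cF$ and relative $!$-even $\cG[1]$ the groups $\Ext^{n}(\cF,\cG[1])$ vanish in odd degrees, whence $\Hom(\cF,\cG)=\Ext^{-1}(\cF,\cG[1])=0$. Your second, stratum-by-stratum route is likewise just the base case of that induction (Corollary \ref{cor: absolute parity} plus Lemma \ref{lem: fiber cohomology even}), so both versions match the paper.
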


\begin{lemma}\label{lem: end surjective}
Assume $S$ is strictly totally disconnected. Suppose $\cF, \cG  \in \DSY{G}{\Lambda}$ are either both relative even or both relative odd. If $i_\mu$ is the inclusion of a stratum which is open in the support of both $\cF$ and $\cG$, then the restriction map $\Hom(\cF, \cG) \rightarrow \Hom(i_\mu^* \cF, i_\mu^* \cG)$ is surjective. 
\end{lemma}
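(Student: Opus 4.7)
The plan is to adapt the standard argument from the theory of parity sheaves (e.g.\ the analogue of \cite[Corollary~2.8]{JMW14}) to the present relative setting, using Lemma~\ref{lem: endomorphisms free} and Lemma~\ref{lem: ula base change} as the essential inputs. Shifting if necessary, I may assume without loss of generality that $\cF$ and $\cG$ are both relative $*$-even and relative $!$-even. Write $j := i_\mu$ for the inclusion of the distinguished stratum, and let $i \co Z \hookrightarrow \Gr_{G, S/\Div^1_X}$ denote the closed embedding of its complement. Applying $\Hom(\cF, -)$ to the distinguished triangle
\begin{equation*}
i_* i^! \cG \longrightarrow \cG \longrightarrow \rR j_* j^* \cG \longrightarrow
\end{equation*}
and using the adjunction isomorphism $\Hom(\cF, \rR j_* j^* \cG) \cong \Hom(i_\mu^* \cF, i_\mu^* \cG)$, one sees that the restriction map factors as
\begin{equation*}
\Hom(\cF, \cG) \longrightarrow \Hom(\cF, \rR j_* j^* \cG) \xrightarrow{\sim} \Hom(i_\mu^* \cF, i_\mu^* \cG),
\end{equation*}
so its surjectivity is equivalent to the vanishing $\Ext^1(\cF, i_* i^! \cG) = 0$.

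The next step is to check that $i_* i^! \cG$ lies in $\DSY{G}{\Lambda}$ and is relative $!$-even. Membership in $\DSY{G}{\Lambda}$ follows from the stability of this category under $i^!$ and $i_*$ applied to closed unions of strata, which was recorded as a consequence of \cite[Corollary VI.6.6]{FS}. For the parity assertion, Lemma~\ref{lem: ula base change} reduces the matter to a stratum-by-stratum check after pullback to geometric points of $S$: for $\lambda \in X_*(T)^+$, the $!$-restriction $i_\lambda^!(i_* i^! \cG)$ agrees with $i_\lambda^! \cG$ when $\lambda$ labels a stratum contained in $Z$, and vanishes when $\lambda = \mu$. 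In either case the result has $\Lambda$-free constant cohomology concentrated in the degrees of the correct parity relative to $\dagger_G$, because $\cG$ is relative $!$-even by hypothesis.

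With this in hand, Lemma~\ref{lem: endomorphisms free} applies to the pair $(\cF, i_* i^! \cG)$ — the first factor being relative $*$-even and the second relative $!$-even — and yields that $\Ext^\bullet(\cF, i_* i^! \cG)$ is free over $\Lambda$ and concentrated in even degrees. In particular the $\Ext^1$ vanishes, which concludes the proof. The indecomposability hypothesis on $\cF$ and $\cG$ is only invoked to ensure each has a well-defined parity, so that the initial reduction to the both-even case is meaningful. The one place where some care is required is the verification that $i_* i^! \cG$ inherits both membership in $\DSY{G}{\Lambda}$ and the $!$-parity of $\cG$, but I expect no genuine obstacle here: it is essentially a formal consequence of the closure properties of $\DSY{G}{\Lambda}$ under the six operations applied to stratum embeddings, together with base change as in Lemma~\ref{lem: ula base change}.
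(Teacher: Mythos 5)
Your proposal is correct and follows essentially the same route as the paper: reduce to the both-even case, apply $\Hom(\cF,-)$ to the excision triangle $i_* i^! \cG \to \cG \to i_{\mu*} i_\mu^* \cG$, and kill the obstruction term $\Hom(\cF, i_* i^! \cG[1])$ by parity vanishing (the paper cites Corollary~\ref{cor: Hom parity vanishing}, i.e.\ the even-concentration of $\Ext^\bullet$ extracted from the proof of Lemma~\ref{lem: endomorphisms free}, which is exactly what you invoke). Your extra verification that $i_* i^! \cG$ stays in $\DSY{G}{\Lambda}$ and remains relative $!$-even is a point the paper passes over silently, and it checks out for the reasons you give.
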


\begin{proof}Since the statement is compatible with simultaneous shifts on $\cF$ and $\cG$, it suffices to treat the case where $\cF, \cG$ are both relative even. Let $i$ be in the inclusion of the complementary strata, so that we have an excision triangle of relative $!$-even complexes, 
\[
i_* i^! \cG \rightarrow \cG \rightarrow i_{\mu *} i_\mu^*  \cG.
\]
Applying $\Hom(\cF, -)$, we obtain a long exact sequence 
\[
\ldots \rightarrow \Hom(\cF, \cG) \rightarrow \Hom(i_\mu^* \cF, i_\mu^* \cG) \rightarrow \Hom(\cF, i_*i^!  \cG[1])  \rightarrow \ldots 
\]
in which $\Hom( \cF, i_*i^! \cG[1]) = 0$ by Corollary \ref{cor: Hom parity vanishing}, because $\cG[1]$ is relative $!$-odd, so the first map is surjective. 
\end{proof}

\begin{lemma}\label{lem: open restrict}
Assume $S = \Spa (C,C^+)$. Let $\cF  \in \DSY{G}{\Lambda}$ be an indecomposable relative parity complex, and let $j \co U \rightarrow \Gr_{G, S/\Div^1_X}$ be an inclusion of a union of strata open in the support of $\cF$. Then $j^* \cF$ is either zero or indecomposable. 
\end{lemma}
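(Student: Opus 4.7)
The plan is to follow the strategy of Juteau--Mautner--Williamson: first show that the restriction map $\End(\cF) \to \End(j^* \cF)$ is surjective, then conclude using that $\End(\cF)$ is local (from indecomposability in the Krull--Schmidt category $\DSY{G}{\Lambda}$, via Lemma~\ref{lem: KRS}) together with the fact that a nonzero quotient of a local ring is local.

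By Definition~\ref{defn: parity}(5) and indecomposability, I may assume $\cF$ is relative even (shifting if necessary). Set $Z := \supp(\cF) \setminus U$, which is closed in $\Gr_{G, S/\Div^1_X}$ (since $U$ is open in the closed subspace $\supp(\cF)$) and is a union of strata. Let $i \co Z \inj \Gr_{G, S/\Div^1_X}$ be the closed inclusion and $j' \co V \inj \Gr_{G, S/\Div^1_X}$ the complementary open embedding. From the excision triangle $i_* i^! \cF \to \cF \to \rR(j')_* (j')^* \cF$, applying $\Hom(\cF, -)$ yields
\[
\End(\cF) \to \Hom(\cF, \rR(j')_* (j')^* \cF) \to \Hom(\cF, i_* i^! \cF[1]).
\]
The middle term identifies with $\End((j')^* \cF)$ by adjunction. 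Since $V \cap \supp(\cF) = U$, the sheaf $(j')^* \cF$ is supported on $U$, which is closed in $V$; writing $j'' \co U \inj V$ for the resulting closed embedding, this gives $(j')^* \cF \cong (j'')_* j^* \cF$, and hence $\End((j')^* \cF) \cong \End(j^* \cF)$.

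The key step is to verify the vanishing of the right term. I will show $i_* i^! \cF$ is relative $!$-even in $\DSY{G}{\Lambda}$: for $\mu \in Z$, $i_\mu^! i_* i^! \cF \cong i_\mu^! \cF$ has the required parity by hypothesis (using Lemma~\ref{lem: ula base change} to reduce to geometric fibers); for $\mu \notin Z$, $i_\mu^! i_* = 0$ since the supports are disjoint. Hence $i_* i^! \cF[1]$ is relative $!$-odd, and Corollary~\ref{cor: Hom parity vanishing} gives $\Hom(\cF, i_* i^! \cF[1]) = 0$. Thus the map $\End(\cF) \twoheadrightarrow \End(j^* \cF)$ is surjective; if $j^* \cF \neq 0$ then $\End(j^* \cF)$ is a nonzero quotient of the local ring $\End(\cF)$, hence local, so $j^* \cF$ is indecomposable.

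The main technical nuance lies in the choice of the complementary open embedding $j'$: it is necessary to take $V = \Gr_{G, S/\Div^1_X} \setminus Z$ rather than $U$ itself, because $U$ is a union of strata but need not be open in $\Gr_{G, S/\Div^1_X}$. Having $\cF$ vanish on $V \setminus U$ is then what allows the identification $\End((j')^* \cF) \cong \End(j^* \cF)$ to go through.
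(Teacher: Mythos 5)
Your proof is correct and follows essentially the same route as the paper: the paper deduces the lemma from Lemma~\ref{lem: end surjective} (surjectivity of $\End(\cF)\to\End(j^*\cF)$), whose proof is precisely the excision triangle plus the parity-vanishing of $\Hom(\cF, i_*i^!\cF[1])$ via Corollary~\ref{cor: Hom parity vanishing} that you spell out, followed by the same "quotient of a local ring is local" conclusion. The only (harmless) difference is that you inline that argument and carry it out directly for a union of strata, whereas the paper's cited lemma is stated for a single open stratum.
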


\begin{proof}
By the assumption on $S$, Lemma \ref{lem: KRS} applies to show that $\DSY{G}{\Lambda}$ is Krull-Schmidt. Hence the indecomposability of $\cF$ implies that $\End(\cF)$ is local. As a quotient of a local ring is local, Lemma \ref{lem: end surjective} implies that $\Hom(j^* \cF, j^* \cF)$ is also local, so $j^* \cF$ is either zero or indecomposable.
\end{proof}

\begin{prop}\label{prop: parity sheaves}Assume $S = \Spa(C,C^+)$. Let $\Cal{F}  \in \PSY{G}{\Lambda}{1}$ be an indecomposable relative parity complex. Then $\cF$ enjoys the following properties: 
\begin{enumerate}
\item $\cF$ has support of the form $\Gr_{G,S/\Div^1_X,\leq \mu} $ for some $\mu \in X_*(T)^+$. 
\item The restriction $i_{\mu}^* \cF$ to the open stratum in $\supp(\cF)$ is a shifted $\Lambda$-free constant sheaf $\Lambda[d]$. 
\item Any indecomposable relative parity complex $\cG \in \PSY{G}{\Lambda}{1}$ supported on $\Gr_{G,S/\Div^1_X,\leq \mu} $, such that $i_{\mu}^* \cG \cong \Lambda[d]$ on $\Gr_{G,S/\Div^1_X, \mu}$, is isomorphic to $\cF$. 
\end{enumerate}
\end{prop}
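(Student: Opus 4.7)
The plan is to adapt the structure theory of parity sheaves from Juteau-Mautner-Williamson \cite{JMW14}, building on the lemmas already established for $\DSY{G}{\Lambda}$.

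For part (1), since $\cF \in \DSY{G}{\Lambda}$ is $*$-pulled back from the Hecke stack, its support is $L^+_{S/\Div^1_X}G$-invariant, hence a closed union of Schubert strata. Let $\mu_1, \ldots, \mu_r$ denote the maximal elements among the strata appearing in the support. Then $j \co U := \bigsqcup_{i=1}^r \Gr_{G, S/\Div^1_X, \mu_i} \inj \Gr_{G, S/\Div^1_X}$ is an open embedding into the support of $\cF$, and as a sheaf on the disjoint union $U$, the restriction $j^* \cF$ decomposes canonically as $\bigoplus_{i=1}^r i_{\mu_i}^*\cF$ with each summand nonzero. By Lemma \ref{lem: open restrict}, $j^*\cF$ is either zero or indecomposable; since it is nonzero, we must have $r = 1$, giving $\supp \cF = \Gr_{G, S/\Div^1_X, \leq \mu}$.

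For part (2), Lemma \ref{lem: open restrict} applied to $i_\mu$ shows that $i_\mu^*\cF$ is indecomposable. By Corollary \ref{cor: absolute parity} together with the parity hypothesis on $\cF$, all cohomology sheaves of $i_\mu^*\cF$ are constant, $\Lambda$-free, and supported in degrees of a single parity. I would then argue that $i_\mu^*\cF$ formally splits as $\bigoplus_n \cH^n(i_\mu^*\cF)[-n]$: the Postnikov obstructions live in odd-degree Ext groups between constant $\Lambda$-free sheaves on $\Gr_{G,S/\Div^1_X,\mu}$, which by the Leray spectral sequence for the structure map $\pi$ together with the exactness of global sections on the strictly totally disconnected base reduce to the odd-degree vanishing of $R^\ast\pi_\ast\Lambda$ from Lemma \ref{lem: fiber cohomology even}. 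Indecomposability then forces only one summand to be nonzero, and $\Lambda$-freeness combined with the same argument applied to the single constant cohomology sheaf pins down its rank to one, yielding $i_\mu^*\cF \cong \Lambda[d]$. This splitting step is the main technical obstacle; the rest of the argument is formal.

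For part (3), the parity of $d$ relative to $\dagger(\mu)$ forces $\cF$ and $\cG$ to both be relative even or both relative odd, so Lemma \ref{lem: end surjective} applies and lifts the identity on $\Lambda[d]$ to morphisms $f \co \cF \to \cG$ and $g \co \cG \to \cF$. The composition $\phi := g \circ f \in \End(\cF)$ restricts to the identity on $i_\mu^*\cF$. The ring $\End(\cF)$ is local by Lemma \ref{lem: KRS}: if $\phi$ were in its maximal ideal, then $1 - \phi$ would be an automorphism of $\cF$, but $i_\mu^*(1-\phi) = 0$, and no automorphism of $\cF$ can restrict to zero on the nonzero object $i_\mu^*\cF \cong \Lambda[d]$. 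Hence $\phi$ is a unit, so $f$ is a split injection, and the indecomposability of $\cG$ combined with $\cF \neq 0$ forces $f$ to be an isomorphism.
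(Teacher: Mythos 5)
Your proof is correct and follows essentially the same route as the paper's, namely the standard Juteau--Mautner--Williamson structure theory run through Lemmas \ref{lem: KRS}, \ref{lem: end surjective}, and \ref{lem: open restrict}. The only differences are cosmetic: for (1) the paper instead produces a non-local quotient of $\End(\cF)$ by applying Lemma \ref{lem: end surjective} to two disjoint open strata, whereas you apply Lemma \ref{lem: open restrict} to the union of maximal strata (equally valid); and for (2) the paper simply cites Lemma \ref{lem: open restrict}, leaving implicit the splitting of $i_\mu^*\cF$ into its shifted cohomology sheaves via the even-degree Ext vanishing from Lemma \ref{lem: fiber cohomology even}, which you correctly identify and supply.
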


\begin{proof}
(1) If $\supp(\cF)$ contains two disjoint open strata $U,U'$ then applying Lemma \ref{lem: end surjective} to each of $U$ and $U'$ shows that $\End(\cF)$ is not a local ring. But since $\cF$ is indecomposable, the Krull-Schmidt property (Lemma \ref{lem: KRS}) implies that $\End(\cF)$ is local. 

(2) Follows from Lemma \ref{lem: open restrict} and [Proposition VI.6.5]{FS}, using assumption that $S$ is a geometric point. 

(3) Since $\cF$ is assumed to be indecomposable, the Krull-Schmidt property (Lemma \ref{lem: KRS}) implies that $\End(\cF)$ is a local ring. By assumption, there are isomorphisms $\alpha \co i_\mu^* \cF\rightarrow  i_\mu^* \cG$ and $\beta \co i_\mu^* \cG  \rightarrow i_\mu^* \cF$ which are mutual inverses. By Lemma \ref{lem: end surjective} we can find lifts $\wt{\alpha} \co \cF \rightarrow \cG$ and $\wt{\beta} \co \cG  \rightarrow  \cF$ such that $\wt{\beta} \circ \wt{\alpha} \in \End(\cF)$ does not lie in the unique maximal ideal of $\End(\cF)$, hence is invertible. Similarly, $\wt{\alpha} \circ \wt{\beta}  \in \End(\cG)$ is invertible. Therefore, $\wt{\alpha}$ and $\wt{\beta}$ are isomorphisms.

\end{proof}

 \begin{remark}
 Lemma \ref{lem: open restrict} and Proposition \ref{prop: parity sheaves} fail badly for general strictly totally disconnected $S$. Indeed, if $S$ is infinite, then any non-zero locally constant sheaf on $S$ can be decomposed non-trivially into a direct sum by decomposing its support into a finite union of closed-open subsets. Therefore, there are no non-trivial indecomposable objects. In particular, the results of \S \ref{sssec: categories of sheaves} are not true for general strictly totally disconnected $S$.\footnote{This corrects a mistake in a previous version of this paper, which was pointed out to us by David Hansen.} This represents a significant departure from how the theory works in \cite{JMW14}. 
 \end{remark}

In the next section, we will establish the following facts. 
\begin{itemize}
\item For a v-stack $S$, we let $\Loc(S; \Lambda)$ be the category of $\Lambda$-free \'{e}tale local systems on $S$. Then for any v-stack $S \rightarrow \Div_X^1$, there is a symmetric monoidal equivalence \eqref{eq: geom sat S}
\[
\Sat(\Gr_{G, S/\Div_X^1}; \Lambda)  \cong \Rep_{\Loc(S; \Lambda)} (\chG),
\]
an incarnation of the Geometric Satake equivalence relative to $S$. (The Satake category $\Sat(\Gr_{G, S/\Div_X^1}; \Lambda) \subset \DulacHck{G}{\Lambda}{1}$ is the full subcategory spanned by flat perverse sheaves over $S$.) 
\item For $\mu \in X_*(T)^+$, let $T_\Lambda(\mu) \in \Rep_\Lambda(\chG)$ be the tilting module with highest weight $\mu$.  Assume $\ell > b(\chG)$. Then for any $\cL \in \Loc(S;\Lambda)$ and any $\mu \in X_*(T)^+$, the image $\cE(\mu, \cL)$ of $\cL \otimes T_\Lambda(\mu) \in \Rep_{\Loc(S;\Lambda)}(\chG)$ under \eqref{eq: geom sat S} is relative parity (Corollary \ref{cor: tilting is parity}). 
\end{itemize}
We assume these facts for now.

Then we have the following generalization of Proposition \ref{prop: parity sheaves} for strictly totally disconnected $S$. 

\begin{prop}\label{prop: parity structure theory}
Assume $S$ is strictly totally disconnected and $\ell > b(\chG)$. 
\begin{enumerate}
\item Let $\cF \in \PSY{G}{\Lambda}{1}$ and $\mu \in X_*(T)^+$ be maximal so that the support of $\cF$ intersects $\Gr_{G,S/\Div^1_X, \mu}$ non-trivially. If $i_{\mu}^* \cF \cong \cL[\tw{2\rho, \mu}]$ on $\Gr_{G,S/\Div^1_X, \mu}$ for some $\cL \in \Loc(S, \Lambda)$, then $\cE(\mu, \cL)$ is a retract of $\cF$. 
\item Any $\cF \in \PSY{G}{\Lambda}{1}$ is a finite direct sum of shifts of $\cE(\mu, \cL)$ for various $\mu \in X_*(T)^+$ and $\cL \in \Loc(S;\Lambda)$. 
\end{enumerate}
\end{prop}

\begin{proof}
(1) It is immediate from the construction of $\cE(\mu, \cL)$ that $i_\mu^* \cE(\mu, \cL) \cong \cL[\tw{2\rho, \mu}]$. Hence the assumption implies that there are isomorphisms $\alpha \co i_\mu^* \cF \rightarrow  i_\mu^* \cE(\mu, \cL) $ and $\beta \co i_\mu^* \cE(\mu,\cL) \rightarrow i_\mu^* \cF$ which are mutual inverses. By Lemma \ref{lem: end surjective} we can find lifts $\wt{\alpha} \co \cF \rightarrow \cE(\mu, \cL)$ and $\wt{\beta} \co \cE(\mu,\cL)  \rightarrow  \cF$ restricting to $\alpha$ and $\beta$ under $i_\mu^*$. It then suffices to show that $\wt{\alpha} \circ \wt{\beta} \in \End(\cE(\mu,\cL))$ is an isomorphism. This can be checked after pulling back along all geometric points $\Spa(C,C^+) \rightarrow S$, and then it follows from Proposition \ref{prop: parity sheaves}(3).

(2) We prove the statement by induction on the largest $\mu \in X_*(T)^+$ intersecting $\supp(\cF)$ non-trivially. We have $i_\mu^* \cF[-\tw{2\rho, \mu}] = \bigoplus \cL_j[d_j]$ for some $\cL_j \in \Loc(S;\Lambda)$ and $d_j \in \Z$ by \cite[Proposition VI.6.5]{FS} and the assumption that $S$ is strictly totally disconnected. Then part (1) implies that $\cF \cong \bigoplus_j \cE(\mu, \cL_j)[d_j] \oplus \cF'$ where the inductive hypothesis applies to $\cF'$. 
\end{proof}

\subsection{Small Smith-Treumann localization}\label{ssec: small smith-treumann} We develop here a version of the Tate category and Smith-Treumann localization which is ``small'' in the sense that it consists only of sheaves satisfying strong finiteness conditions. It is more similar to the formalism of \cite{Tr19, RW}, but we note that our sheaves are still not ``constructible'' since the Schubert stratifications are not constructible in $p$-adic geometry (due to failure of quasicompactness). The structure theory of ULA sheaves (relative to $S$) on $\cHck_{G, S/\Div_X^1}$ from \cite{FS} is what makes this small version well behaved even in the absence of quasicompactness. 

\subsubsection{The small Tate category} We continue to assume that $\Lambda$ is an $\ell$-adically complete PID over $\Z_{\ell}$. Let $\perfulaI{H}{\Lambda[\Sigma]}{I} \subset \DulaI{H}{\Lambda}{I}$ be the full subcategory spanned by objects with finite tor-dimension over $\Lambda[\Sigma]$.

\begin{defn} We define the \emph{small Tate category} of $\Gr_{H, S/(\Div^1_X)^I}$ to be the Verdier quotient 
\[
\perfulaI{H}{\cT_\Lambda}{I} := \frac{\DulaI{H}{\Lambda[\Sigma]}{I}}{ \perfulaI{H}{\Lambda[\Sigma]}{I}}
\]
and the \emph{small Tate category} of $\cHck_{H, S/(\Div^1_X)^I}$ to be 
\[
\perfulacHckI{H}{\cT_\Lambda}{I} := \frac{\DulacHckI{H}{\Lambda[\Sigma]}{I}}{ \perfulacHckI{H}{\Lambda[\Sigma]}{I}}.
\]
We make analogous definitions for $\Gr_{H, S/(\Div^1_X)^I, \mu}$ and $\Gr_{H, S/(\Div^1_X)^I, \leq \mu}$, etc. 
\end{defn}

Parallel to Lemma \ref{lem: flat =  finite tor dimension k large}, we give an alternate characterization of $\perfulaI{H}{\Lambda[\Sigma]}{I}$; note that this one applies without assuming that $\Lambda = k$ is a field. 

\begin{lemma}\label{lem: flat =  finite tor dimension} The subcategory $\perfulaI{H}{\Lambda[\Sigma]}{I} \subset \DulaI{H}{\Lambda[\Sigma]}{I}$ coincides with the full subcategory spanned by objects whose stalks at all geometric points are perfect complexes over $\Lambda[\Sigma]$. 
\end{lemma}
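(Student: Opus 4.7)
The strategy parallels the proof of Lemma \ref{lem: flat =  finite tor dimension k large}. The essential input is \cite[Tag 0DJJ]{stacks-project}, which on the one hand implies that finite tor-dimension of $\cK$ over $\Lambda[\Sigma]$ descends to each geometric stalk, and on the other hand reduces the converse to producing a uniform bound on the tor-amplitudes of the stalks. First I would observe that, since $\Lambda[\Sigma]$ is noetherian, a bounded complex of $\Lambda[\Sigma]$-modules is perfect if and only if it has finitely generated cohomology and finite tor-dimension. The ULA hypothesis on $\cK$ ensures that every geometric stalk $\cK_{\ol{s}}$ is a perfect complex of $\Lambda$-modules; because $\Lambda[\Sigma]$ is finite over $\Lambda$, this is equivalent to $\cK_{\ol{s}}$ being bounded with finitely generated cohomology over $\Lambda[\Sigma]$. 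Consequently, for individual stalks, ``perfect over $\Lambda[\Sigma]$'' coincides with ``finite tor-dimension over $\Lambda[\Sigma]$'', and the implication from finite tor-dimension of $\cK$ to perfectness of stalks becomes formal.

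For the converse, the task is to upgrade stalkwise finite tor-dimension to a uniform global bound. Fix $a \le b$ so that $\cK$ is concentrated in cohomological degrees $[a,b]$; then each stalk $\cK_{\ol{s}}$ has cohomology concentrated in $[a,b]$ and finitely generated over $\Lambda[\Sigma]$. The ring $\Lambda[\Sigma]$ is a finite extension of $\Lambda$ and $\Lambda$ is either a field or a DVR, so $\Lambda[\Sigma]$ has Krull dimension at most $1$; by \cite{GR71}, its finitistic dimension is therefore at most $1$. Since each cohomology module $H^i(\cK_{\ol{s}})$ has finite projective dimension (because $\cK_{\ol{s}}$ is perfect and $\Lambda[\Sigma]$ is noetherian), it actually has projective dimension at most $1$. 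A standard truncation argument (inducting down on cohomological amplitude via the triangle $H^a(\cK_{\ol{s}})[-a] \to \tau_{\leq \,\cdot\,}\cK_{\ol{s}} \to \tau_{\geq a+1}\cK_{\ol{s}}$) then bounds the tor-amplitude of every geometric stalk by $[a-1, b]$, uniformly. Plugging this uniform bound back into \cite[Tag 0DJJ]{stacks-project} yields finite tor-dimension of $\cK$ over $\Lambda[\Sigma]$.

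The main new subtlety relative to Lemma \ref{lem: flat =  finite tor dimension k large} is that, when $\Lambda$ is a DVR rather than a field, the finitistic dimension of $\Lambda[\Sigma]$ is only at most $1$ instead of $0$, which results in the slight enlargement of the stalkwise tor-amplitude bound from $[a,b]$ to $[a-1,b]$; this is harmless for the statement, since any finite bound suffices. I expect the actual technical hurdle is not in this combinatorial step but rather in invoking cleanly the perfectness of ULA stalks over $\Lambda$ in this $p$-adic geometric setting, but this is a general structural property of ULA sheaves in \cite{FS} which I would cite directly.
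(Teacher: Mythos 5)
Your forward direction matches the paper's in substance: the paper gets pseudocoherence of stalks from \cite[Proposition VI.6.5]{FS} and combines it with finite tor-amplitude via \cite[Tag 0DJJ]{stacks-project} to conclude perfectness, which is the same content as your detour through perfectness over $\Lambda$. Your converse also follows the paper's strategy (reduce, via \cite[Tag 0DJJ]{stacks-project}, to a uniform bound on the tor-amplitude of the stalks, and extract that bound from the finitistic dimension of $\Lambda[\Sigma]$), but it contains a false load-bearing step. You claim that each cohomology module $H^i(\cK_{\ol{s}})$ has finite projective dimension ``because $\cK_{\ol{s}}$ is perfect and $\Lambda[\Sigma]$ is noetherian.'' Perfectness of a complex does not pass to its cohomology modules: the two-term complex $\Lambda[\Sigma] \xrightarrow{\sigma - 1} \Lambda[\Sigma]$ is perfect, yet its top cohomology is the trivial module $\Lambda$, which has infinite projective dimension over $\Lambda[\Sigma]$ (its minimal resolution is the infinite periodic one alternating $N$ and $\sigma-1$); the same happens for $\Lambda[\Sigma] \xrightarrow{N} \Lambda[\Sigma]$. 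These are precisely the complexes the Tate category is built to kill, so the failure is not exotic in this context. With that premise gone, your truncation induction on the cohomology modules does not get off the ground, since the good truncations of a perfect complex need not have finite tor-dimension individually.

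The repair is the one the paper actually runs (in the proof of Lemma \ref{lem: flat =  finite tor dimension k large}, to which the present proof refers back): apply the finitistic-dimension bound to the complex itself rather than to its cohomology. Since for a perfect complex over $\Lambda[\Sigma]$ flat and projective coincide \cite[Tag 051E]{stacks-project}, one represents $\cK_{\ol{s}}$ by a bounded complex of projectives $P^c \to \cdots \to P^b$; vanishing of cohomology in degrees $<a$ exhibits $\mrm{im}(d^{a-1})$ as a module of finite projective dimension, hence of projective dimension bounded by the finitistic dimension of $\Lambda[\Sigma]$ (which is $\leq 1$ by \cite{GR71}, as you note), and splicing its resolution back in represents $\cK_{\ol{s}}$ by projectives in a range of degrees depending only on $[a,b]$. (When $\Lambda = k$ one even recovers $[a,b]$ exactly, using that $k[\Sigma]$ is self-injective; for $\Lambda = \OO$ the range is slightly larger, which, as you correctly observe, is harmless.) So your overall architecture is the paper's, but the uniform bound must be extracted from syzygies of the complex, not from its cohomology modules.
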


\begin{proof} All geometric stalks of all objects of $\perfulaI{H}{\Lambda[\Sigma]}{I}$ are pseudocoherent by \cite[Proposition VI.6.5]{FS}, and have finite tor-amplitude by \cite[Tag 0DJJ]{stacks-project}. Pseudo-coherent plus finite tor-amplitude implies perfect by \cite[Tag 0656]{stacks-project}; this shows one containment. 

For the other containment, let $\cK \in \DulaI{H}{\Lambda[\Sigma]}{I}$; we must show that if all geometric stalks of $\cK$ are perfect complexes over $\Lambda[\Sigma]$, then $\cK$ has finite tor-amplitude. By \cite[Tag 0DJJ]{stacks-project}, it suffices to exhibit a uniform bound on the tor-amplitude of the stalks. The rest of the proof is as for Lemma \ref{lem: flat =  finite tor dimension k large}, except we use that for a perfect complex over $\Lambda[\Sigma]$, the properties of being flat and projective coincide by \cite[Tag 051E]{stacks-project}. 
\end{proof}

\begin{remark}[Comparison to the large Tate category]
The tautological fully faithful embedding 
\[
\DulaI{H}{\Lambda[\Sigma]}{I} \inj D^b_{\et}(\Gr_{H,S/(\Div_X^1)^I}; \Lambda[\Sigma])
\]
carries $\perfulaI{H}{\Lambda[\Sigma]}{I} $ into $\Flat^b(\Gr_{H,S/(\Div_X^1)^I}; \Lambda[\Sigma])$ and so induces a functor 
\begin{equation}\label{eq: perf to shv}
\perfulaI{H}{\cT_\Lambda}{I} \rightarrow \Shv(\Gr_{H,S/(\Div_X^1)^I}; \cT_{\Lambda})
\end{equation}
which is conservative; similarly for the equivariant version. 
\end{remark}

\begin{defn}\label{defn: small Tate functors} We denote by
\[
\TT^* \co \DulaI{H}{\Lambda[\Sigma]}{I} \rightarrow 
\perfulacHckI{H}{\cT_\Lambda}{I}
\]
the tautological projection.

We denote by 
\[
\epsilon^* \co  \DulaI{H}{\Lambda}{I} \rightarrow  \DulaI{H}{\Lambda[\Sigma]}{I}
\]
the inflation via the augmentation $\Lambda[\Sigma] \rightarrow \Lambda$. 

We write 
\[
\TT := \TT^* \varepsilon^* \co \DulaI{H}{\Lambda}{I} \rightarrow \perfulacHckI{H}{\cT_\Lambda}{I}.
\]
These functors are compatible with the ones having the same notations in \S \ref{ssec: Tate category}, under \eqref{eq: perf to shv}. We use the same notation for the analogous operations for strata, Schubert varieties, and $\cHck_{H, S/(\Div_X^1)^I}$, etc. 

\end{defn}

\subsubsection{The small $\Psm$ operation} We now define the version of the ``Smith operation'' $\Psm$ from \S \ref{ssec: smith operation} for small Tate categories. We assume the setup of \S \ref{sec: fixed point}: $G$ has a $\Sigma$-action and $H  := H^\sigma$ is reductive, so that $\Fix(\sigma, \Gr_{G,S/(\Div^1_X)^I}) = \Gr_{H,S/(\Div^1_X)^I}$ by Proposition \ref{prop: BD gr fixed points}.

\begin{lemma}\label{lem: restriction is ULA}
Let $\iota \co  \Gr_{H,S/(\Div^1_X)^I} \inj  \Gr_{G,S/(\Div^1_X)^I}$ be the inclusion of the $\Sigma$-fixed points. For any $\cF \in D^{\ULA}_{\et}( \Gr_{G,S/(\Div^1_X)^I};\Lambda)^{\bd}$, the restriction $\iota^*\cF \in D^{\ULA}_{\et}( \Gr_{H,S/(\Div^1_X)^I};\Lambda)^{\bd}$ is ULA over $S$. 
\end{lemma}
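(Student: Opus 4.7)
The plan is to reduce the claim to a local statement about the fixed-point embedding stratum by stratum, and then appeal to a transversality property of the $\sigma$-action on Schubert cells.

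First, since the ULA condition is v-local on the base, I would reduce to the case where $S$ is strictly totally disconnected. Under this assumption we have the Schubert stratification of $\Gr_{G,S/(\Div^1_X)^I}$ by the $L^+_{S/(\Div^1_X)^I}G$-orbits $\Gr_{G,S/(\Div^1_X)^I,\mu}$, indexed by tuples $\mu$, together with the parallel $H$-stratification. By Lemma \ref{lem: filtration} applied to the ambient category $\DulaI{G}{\Lambda}{I}$, every object admits a finite filtration whose graded pieces are of the form $i_{\mu!}\Lambda$ and $\rR i_{\mu*}\Lambda$. Since $\iota^*$ is exact and preserves filtrations, and since being ULA over $S$ is stable under extensions and shifts in $\DulaI{H}{\Lambda}{I}$, it suffices to prove the claim for such graded pieces.

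Next, I would split into two cases according to whether the relevant stratum is $\sigma$-stable. If $\sigma\cdot\mu\neq\mu$, then by Proposition \ref{prop: BD gr fixed points} and the argument in the proof of Proposition \ref{prop: algebraic gr fixed points} showing $\Fix(\sigma,\Gr_{G,\lambda}^{\alg})=\emptyset$ for $\sigma\cdot\lambda\neq\lambda$, the intersection $\Gr_{G,S/(\Div^1_X)^I,\mu}\cap \Gr_{H,S/(\Div^1_X)^I}$ is empty, so $\iota^* i_{\mu!}\Lambda = \iota^* \rR i_{\mu*}\Lambda = 0$, which is trivially ULA. If $\sigma\cdot\mu=\mu$, then by Proposition \ref{prop: algebraic gr fixed points} and its spreading-out to $S$, the intersection equals $\Gr_{H,S/(\Div^1_X)^I,\mu}$, and the embedding of this closed stratum into $\Gr_{G,S/(\Div^1_X)^I,\mu}$ has a product/retract structure: using the Iwahori presentation \eqref{eq: iwahori orbit} together with the decomposition of $\Iwu^{G,\mu}$ into $\Sigma$-orbits of root-subgroup factors, one sees that $\Gr_{G,S/(\Div^1_X)^I,\mu}$ is, locally on the base (pro-)étale, a trivial iterated affine-space fibration over $\Gr_{H,S/(\Div^1_X)^I,\mu}$ whose fibers are built from the non-$\sigma$-fixed affine root subgroups.

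The final step is to observe that under this local product structure, $\iota|_\mu$ is identified with the zero section of a family of affine spaces over $S$; such an embedding is cohomologically smooth in the sense of \cite{Sch17}, and pullback along cohomologically smooth maps preserves being ULA over $S$ by \cite[Proposition IV.2.13]{FS}. Applied to the ULA objects $i_\mu^? \cF$ (which exist by \cite[Corollary VI.6.6]{FS}) one obtains that $\iota^* i_{\mu !}\Lambda$ and $\iota^* \rR i_{\mu *}\Lambda$ are ULA over $S$, completing the reduction. The main obstacle will be to rigorously justify the local product structure of $\iota$ stratum-by-stratum over a general strictly totally disconnected $S$, since the decomposition in the proof of Proposition \ref{prop: algebraic gr fixed points} is carried out over an algebraically closed field; I expect this to follow by spreading the Iwahori-orbit description out to $S$ and invoking the $L^+_{S/(\Div^1_X)^I}H$-equivariance of $\iota$.
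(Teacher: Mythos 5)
Your reduction to strata is a reasonable instinct, but the final step contains a genuine error. The inclusion of the fixed locus into a $\sigma$-stable stratum is a \emph{closed} immersion of positive codimension (a ``zero section''), and such maps are \emph{not} $\ell$-cohomologically smooth in the sense of \cite{Sch17}: cohomological smoothness requires $i^*A\otimes i^!\Lambda\to i^!A$ to be an isomorphism for all $A$, which fails for closed immersions (e.g.\ for $A=\rR j_*\Lambda$ with $j$ the complementary open). So you cannot invoke \cite[Proposition IV.2.13]{FS} here. This is not a fixable slip of wording: $*$-pullback along a closed immersion does \emph{not} preserve the ULA property in general (the constant sheaf on an ambient space is ULA, but its restriction to a singular closed subspace fibered over $S$ need not be), and that is precisely why the lemma requires an argument exploiting the special structure of the fixed-point embedding. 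A correct version of your strata-by-strata route would instead apply base change to identify $\iota^* i^G_{\mu!}\Lambda$ with $i^H_{\mu!}\Lambda$ (using the stratum-wise fixed-point computation) and then quote the ULA property of the latter from the structure theory for $H$ --- no product structure or smoothness of $\iota_\mu$ is needed. A second, smaller issue: Lemma~\ref{lem: filtration} and the underlying \cite[Proposition VI.6.5]{FS} are one-leg statements, whereas the lemma is asserted over $(\Div^1_X)^I$ for arbitrary $I$, where the Schubert stratification degenerates over the loci where legs collide; your filtration step does not directly apply there.

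For contrast, the paper's proof sidesteps both problems. After v-localizing to assume a $\Sigma$-stable Borus $(B,T)$ exists, it uses the constant-term criterion for ULA (a variant of \cite[Proposition VI.6.4]{FS}): $\iota^*\cF$ is ULA iff $\CT_{B_H}(\iota^*\cF)$ is. Proper base change applied to the Cartesian diagram relating $\Gr_{B}$, $\Gr_{B_H}$ and their fixed points (Propositions~\ref{prop: BD gr fixed points} and~\ref{prop: GrB fixed points}) gives $\CT_{B_H}(\iota^*\cF)\cong\iota^*\CT_B(\cF)$, reducing everything to the torus case, where $\iota\co\Gr_{T_H,S/(\Div^1_X)^I}\inj\Gr_{T,S/(\Div^1_X)^I}$ is an \emph{open} immersion --- genuinely cohomologically smooth --- so the pullback of the ULA object $\CT_B(\cF)$ is ULA. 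This argument is uniform in $I$ and never needs a stratum-wise dévissage.
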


\begin{proof}
The condition can be checked v-locally on $S$ by \cite[Proposition IV.2.5]{FS}. Therefore, we may assume that $G$ has a Borus $(B,T)$. Let $(B_H, T_H) = (B^\sigma, T^\sigma)$ be the corresponding Borus of $H$ (cf. Lemma \ref{lem: fixed borus}). Recall the constant term functors $\CT$ from \cite[\S VI.3]{FS}. Applying (a variant for $(\Div_X^1)^I$ of) \cite[Proposition VI.6.4]{FS}, it suffices to check that $\CT_{B_H}(\iota^* \cF)$ is ULA over $S$. By proper base change applied to the commutative diagram 
\[
\begin{tikzcd}
\Gr_{T, S/(\Div^1_X)^I} & \ar[l] \Gr_{B, S/(\Div^1_X)^I} \ar[r] &  \Gr_{G, S/(\Div^1_X)^I}  \\
\Gr_{T_H,S/(\Div^1_X)^I} \ar[u, hook, "\iota"]  & \ar[l] \Gr_{B_H, S/(\Div^1_X)^I} \ar[u, hook, "\iota"]  \ar[r]  & \Gr_{H, S/(\Div^1_X)^I} \ar[u,hook,  "\iota"] 
\end{tikzcd}
\]
all of whose squares are Cartesian (thanks to Proposition \ref{prop: BD gr fixed points} and Proposition \ref{prop: GrB fixed points}), 
we have a natural isomorphism $\CT_{B_H}(\iota^* \cF) \cong \iota^* \CT_B(\cF)$. By (a variant for $(\Div_X^1)^I$ of) \cite[Proposition VI.6.4]{FS}, $\CT_B(\cF) \in D^{\ULA}_{\et}(\Gr_{T,S/(\Div^1_X)^I};\Lambda)^{\bd}$ is ULA over $S$. Now $\iota \co \Gr_{T_H, S/(\Div^1_X)^I} \inj \Gr_{T, S/(\Div^1_X)^I}$ is an open embedding, hence $\ell$-cohomologically smooth, so $\iota^* \CT_B(\cF) $ is ULA over $S$ by \cite[Proposition IV.2.13(i)]{FS}. 


\end{proof}

\begin{defn}[Small Smith operation] The \emph{(small) Smith operation} is the functor 
\begin{equation}\label{eq: small Psm for diamonds}
\Psm  := \TT^* \circ \iota^* \co (\DulaI{G}{\Lambda}{I})^{B\Sigma} \rightarrow \perfulaI{H}{\cT_\Lambda}{I},
\end{equation}
which is well-defined by Lemma \ref{lem: restriction is ULA}.

The functor $\Psm$ has an equivariant version, which we also denote 
\[
\Psm \co (\DulacHckI{G}{\Lambda}{I})^{B\Sigma} \rightarrow \perfulacHckI{H}{\cT_\Lambda}{I}.
\]
\end{defn}
 
\subsubsection{Compatibilities}\label{sssec: small psm compatibilities} We now establish some compatibility statements that could be remembered under the slogan, ``the Smith operation commutes with all functors''. 

\begin{notation}\label{not: small tate carrier} Below we let $Y, Y'$ be spaces of the form $\Gr_{G, S/(\Div^1_X)^I}$, or the $m$-step convolution version thereof, or the twisted Grassmannian $\Gr^{\twi}_{G, S/(\Div^1_X)^I}$ or (closures of) strata thereof, and $f \co Y \rightarrow Y'$ be a map induced by a group homomorphism $G \rightarrow G'$, or a convolution map. In all such cases, the small Tate categories of $Y^\sigma$ and $(Y')^{\sigma}$ are defined, because of Proposition \ref{prop: BD gr fixed points} and Corollary \ref{cor: fixed points conv Gr}. The Smith operation is also defined, and will be denoted 
\[
\Psm \co D^{\ULA}_{?}(Y; \Lambda) \rightarrow \Perf^{\ULA}_{?} (Y^\sigma; \cT_\Lambda)
\]
where $?$ refers to the constructible or equivariant conditions.  
\end{notation}

\begin{example}
We allow $G'$ to be the trivial group, in which case $\Gr_{G', S/(\Div_X^1)^I} \cong S$.
\end{example}

Let $f \co Y \rightarrow Y'$ be a $\Sigma$-equivariant morphism of the type in Notation \ref{not: small tate carrier}. Let $f^\sigma \co Y^\sigma \rightarrow (Y')^\sigma$ be the induced map of fixed points. Since pullbacks preserve stalks, from Lemma \ref{lem: flat =  finite tor dimension} we see that the pullback functor $(f^\sigma)^* \co D^{\ULA}_?((Y')^\sigma; \Lambda)^{B\Sigma} \rightarrow D^{\ULA}_?(Y^\sigma; \Lambda)^{B\Sigma}$ preserves the perfect subcategories, hence descends to a functor
\begin{equation}\label{eq: small tate f^*}
(f^\sigma)^* \co \Perf^{\ULA}_?((Y')^\sigma; \cT_{\Lambda})  \rightarrow   \Perf^{\ULA}_?(Y^\sigma; \cT_{\Lambda}).
\end{equation}
We have the properties analogous to \S \ref{ssec: permanence big tate category} in this situation. Applying relative Verdier duality to \eqref{eq: small tate f^*}, we also get 
\begin{equation}\label{eq: small tate f^!}
(f^\sigma)^! \co \Perf^{\ULA}_?((Y')^\sigma; \cT_{\Lambda})  \rightarrow   \Perf^{\ULA}_?(Y^\sigma; \cT_{\Lambda}).
\end{equation}
Below, when we say that diagrams ``canonically commute'', we mean that we construct explicit natural isomorphisms. 
 
\begin{lemma}\label{lem: small pullback} Let $f \co Y \rightarrow Y'$ be a $\Sigma$-equivariant morphism of the type in Notation \ref{not: small tate carrier}. The following diagrams canonically commute: 
\[
\begin{tikzcd}
D^b_{?}(Y;\Lambda)^{B \Sigma} \ar[d, "\Psm"] &  D^b_{?}(Y';\Lambda)^{B \Sigma}   \ar[d, "\Psm"]  \ar[l, "f^*"] \\
\Perf_?(Y^{\sigma}; \Cal{T}_{\Lambda}) & \Perf_?((Y')^{\sigma} ; \Cal{T}_{\Lambda}) \ar[l, "(f^\sigma)^*"] 
\end{tikzcd} \hspace{1cm} 
\begin{tikzcd} 
D^b_{?}(Y;\Lambda)^{B \Sigma} \ar[d, "\Psm"] &  D^b_{?}(Y';\Lambda)^{B \Sigma}   \ar[d, "\Psm"]  \ar[l, "f^!"] \\
\Perf_?(Y^{\sigma}; \Cal{T}_{\Lambda}) & \Perf_?((Y')^{\sigma}; \Cal{T}_{\Lambda}) \ar[l, "(f^\sigma)^!"] 
\end{tikzcd}
\]
\end{lemma}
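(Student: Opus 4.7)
The plan is to reduce both commutativities to functoriality of pullback on the underlying diagram of spaces. For the $f^*$ diagram, the argument is essentially formal: denoting by $\iota_Y \co Y^\sigma \hookrightarrow Y$ and $\iota_{Y'} \co (Y')^\sigma \hookrightarrow Y'$ the fixed-point embeddings (identified via Proposition~\ref{prop: BD gr fixed points} and Corollary~\ref{cor: fixed points conv Gr}), we have $f \circ \iota_Y = \iota_{Y'} \circ f^\sigma$ and hence $\iota_Y^* f^* \cong (f^\sigma)^* \iota_{Y'}^*$ by functoriality of $*$-pullback. Pullback $(f^\sigma)^*$ preserves geometric stalks, so it preserves perfectness of stalks over $\Lambda[\Sigma]$ (by Lemma~\ref{lem: flat =  finite tor dimension}) and thus descends to a functor on small Tate categories that commutes tautologically with $\TT^*$. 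Composing yields the desired isomorphism $\Psm \circ f^* \cong f^* \circ \Psm$.

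For the $f^!$ diagram, my plan is to first establish the following small-Tate-category analog of Lemma~\ref{lem: * vs !}: for ULA $\cF \in D^b_?(Y;\Lambda)^{B \Sigma}$, the natural map $\iota_Y^! \cF \to \iota_Y^* \cF$ has cone with perfect stalks over $\Lambda[\Sigma]$, so it becomes an isomorphism after projection to $\Perf_?(Y^\sigma; \cT_\Lambda)$. Granting this, the identities $(f \circ \iota_Y)^! = \iota_Y^! \circ f^!$ and $(\iota_{Y'} \circ f^\sigma)^! = (f^\sigma)^! \circ \iota_{Y'}^!$ yield the chain of isomorphisms
\[
\Psm(f^! \cF) = \TT^* \iota_Y^* f^! \cF \cong \TT^* \iota_Y^! f^! \cF \cong \TT^* (f^\sigma)^! \iota_{Y'}^! \cF \cong \TT^* (f^\sigma)^! \iota_{Y'}^* \cF \cong (f^\sigma)^! \Psm(\cF),
\]
where the final step uses that $(f^\sigma)^!$ descends to the small Tate category. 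This descent follows because on ULA objects $(f^\sigma)^!$ agrees with $\DD \circ (f^\sigma)^* \circ \DD$ (via the involutivity of relative Verdier duality from \cite[Corollary IV.2.25]{FS}), and each factor preserves perfectness of stalks over $\Lambda[\Sigma]$.

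The hard part will be the small-Tate analog of Lemma~\ref{lem: * vs !}. The argument of Lemma~\ref{lem: * vs !} identifies the cone with $\iota_Y^* \rR j_* j^* \cF$ where $j \co Y \setminus Y^\sigma \hookrightarrow Y$, and uses the free $\Sigma$-action on $Y \setminus Y^\sigma$ together with Lemma~\ref{lem: pushforward preserves perfect complexes} to show the cone has finite tor-dimension over $\Lambda[\Sigma]$. For the small-Tate version we need in addition that the cone has pseudo-coherent stalks over $\Lambda[\Sigma]$; this will follow from ULA-ness of $\cF$ together with the general fact \cite[Proposition~VI.6.5]{FS} that geometric stalks of ULA objects are pseudo-coherent, since pseudo-coherent plus finite tor-dimension implies perfect by \cite[Tag~0656]{stacks-project}.
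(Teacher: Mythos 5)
Your proposal is correct. For comparison: the paper's entire proof of this lemma is the single sentence ``Evident from the definitions,'' so there is no alternative argument to measure yours against; what you have done is supply the details that the paper suppresses. Your treatment of the $f^*$ square is exactly the formal argument the paper has in mind. The $f^!$ square is the one place where ``evident'' hides real content, and you correctly isolate it: since the fixed-point square $Y^\sigma \to Y \to Y'$ versus $Y^\sigma \to (Y')^\sigma \to Y'$ is not Cartesian, one cannot base-change, and the only available mechanism is the identification of $\iota^*$ with $\iota^!$ after passage to the Tate category, i.e.\ a small-Tate analogue of Lemma~\ref{lem: * vs !}. Your derivation of that analogue (cone $= \iota^*\rR j_* j^*\cF$ has finite tor-dimension by the proof of Lemma~\ref{lem: * vs !}, lies in the ULA constructible category since both $\iota^*\cF$ and $\iota^!\cF = \DD\iota^*\DD\cF$ do, hence has perfect stalks by pseudo-coherence plus Lemma~\ref{lem: flat = finite tor dimension}) is sound and matches how the paper itself uses this comparison elsewhere, e.g.\ in \eqref{eq: !-restrict}. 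Two small points you should make explicit: (i) the hypotheses of Lemma~\ref{lem: * vs !} ($\ell$-boundedness, $\omega_0$-local extra smallness of $Y^\sigma \hookrightarrow Y$) must be verified for the spaces of Notation~\ref{not: small tate carrier} — the paper does this implicitly via Example~\ref{ex: extra small Bun} and by working with the bounded (finitely many Schubert cells) subcategory; and (ii) your final step needs $\DD$ to preserve the perfect subcategory so that the Tate-category $(f^\sigma)^! = \DD(f^\sigma)^*\DD$ is well defined and agrees with $\TT^*$ of the genuine $(f^\sigma)^!$; this follows from the filtration of \cite[Proposition VI.6.5]{FS} and the fact that duality acts stratum-by-stratum as naive duality of perfect complexes, but it deserves a sentence.
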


\begin{proof}
For the first square, the assertion is immediate from the definitions (the point being that $*$-restrictions commute with $*$-restrictions). The assertion for the second square follows from the first square plus Lemma \ref{lem: * vs !}, which allows us to replace $\iota^*$ with $\iota^!$ in the definition of $\Psm$. 
\end{proof}

Let $f \co Y \rightarrow Y'$ be a $\sigma$-equivariant morphism of the type in Notation \ref{not: small tate carrier}. For $f^\sigma \co Y^\sigma \rightarrow (Y')^\sigma$ the induced map of fixed points, Lemma \ref{lem: pushforward preserves perfect complexes} implies that $\rR f_!^\sigma$ and $\rR f_*^\sigma$ preserve the perfect subcategories, hence descend to 
\[
\rR f_!^\sigma \co  \Perf_?^{\ULA}(Y^\sigma; \cT_\Lambda) \rightarrow \Perf^{\ULA}_?((Y')^\sigma; \cT_\Lambda)
\]
\[
\rR f_*^\sigma \co \Perf_?^{\ULA}(Y^\sigma; \cT_\Lambda) \rightarrow  \Perf^{\ULA}_?((Y')^\sigma; \cT_\Lambda)
\]

\begin{prop}\label{prop: small equivariant localization}
Let $f \co Y \rightarrow Y'$ be a $\Sigma$-equivariant morphism of the type in Notation \ref{not: small tate carrier}. Then the following diagrams canonically commute: 
\[
\begin{tikzcd}
D^{\ULA}_?(Y; \Lambda)^{B\Sigma} \ar[d, "\Psm"] \ar[r, "\rR f_!"]  & D^{\ULA}_?(Y'; \Lambda)^{B\Sigma}  \ar[d, "\Psm"]  \\
\Perf^{\ULA}_?(Y^\sigma; \cT_\Lambda)  \ar[r, "\rR f_!^\sigma"]  & \Perf^{\ULA}_?((Y')^\sigma; \cT_\Lambda)
\end{tikzcd} \hspace{1cm} 
\begin{tikzcd}
D^{\ULA}_?(Y; \Lambda)^{B\Sigma} \ar[d, "\Psm"] \ar[r, "\rR f_*"]  & D^{\ULA}_?(Y'; \Lambda)^{B\Sigma}  \ar[d, "\Psm"]  \\
\Perf^{\ULA}_?(Y^\sigma; \cT_\Lambda)  \ar[r, "\rR f_*^\sigma"]  & \Perf^{\ULA}_?((Y')^\sigma; \cT_\Lambda)
\end{tikzcd} 
\]
\end{prop}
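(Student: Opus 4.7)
My plan is to mirror the proof of Proposition \ref{prop: equivariant localization}, adapted so that all operations stay within the ULA framework and the relevant quotients are taken in the small Tate category rather than the large one. The first step will be a base-change reduction using Lemma \ref{lem: small pullback} to replace $Y'$ with $(Y')^\sigma$ and thereby assume that $\Sigma$ acts trivially on $Y'$. I then let $\iota \co Y^\sigma \hookrightarrow Y$ be the closed inclusion of the fixed-point locus (which agrees with $\Fix(\sigma, Y)$ by Proposition \ref{prop: BD gr fixed points} and Corollary \ref{cor: fixed points conv Gr}) and $j \co U := Y \setminus Y^\sigma \hookrightarrow Y$ the open complement, on which $\Sigma$ acts freely.

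For the $\rR f_!$ diagram, I will apply $\rR f_!$ to the excision triangle $j_! j^* \cF \to \cF \to \iota_* \iota^* \cF$; the third term contributes $\rR f_!^\sigma(\iota^* \cF)$, so it remains to show that $\rR f_!(j_! j^* \cF)$ vanishes in $\perfulaI{G'}{\cT_\Lambda}{I}$. By Lemma \ref{lem: flat =  finite tor dimension}, this reduces to a stalk-wise perfectness check over $\Lambda[\Sigma]$, which I will verify by factoring $f|_U$ through the free quotient $U \to U/\Sigma$ and invoking the argument of Corollary \ref{cor: pushforward from free is flat}(1), whose engine is Lemma \ref{lem: big Tate 1}.

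For the $\rR f_*$ diagram, I will instead use the triangle $\iota_* \iota^! \cF \to \cF \to \rR j_* j^* \cF$. The same freeness argument, now invoking Corollary \ref{cor: pushforward from free is flat}(2), shows that $\rR f_*(\rR j_* j^* \cF) \cong \rR(f \circ j)_*(j^* \cF)$ has stalk-wise perfect $\Lambda[\Sigma]$-structure, and hence vanishes in the small Tate category; this identifies $\Psm(\rR f_* \cF)$ with the image of $\rR f_*^\sigma(\iota^! \cF)$. A final step will upgrade $\iota^!$ to $\iota^*$: the cone of the natural map $\iota^! \cF \to \iota^* \cF$ is $\iota^* \rR j_* j^* \cF$, and the same free-action argument (a small-category analogue of Lemma \ref{lem: * vs !}) shows that this cone becomes zero in the small Tate category.

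The main obstacle I anticipate is keeping track of ULA-ness throughout, since the small Tate category is a Verdier quotient of a subcategory carved out by the ULA condition. ULA-ness of $\iota^* \cF$ is supplied by Lemma \ref{lem: restriction is ULA}, and that of $\iota^! \cF$ follows by applying the same lemma to $\DD_Y \cF$ together with the compatibility between $\iota^!$ and relative Verdier duality used in the proof of Lemma \ref{lem: ula base change}; the pushforwards along maps in the class of Notation \ref{not: small tate carrier} preserve ULA-ness by the structure theory of \cite{FS}. The stalk-level characterization in Lemma \ref{lem: flat =  finite tor dimension} is what allows me to transport the freeness arguments from \S \ref{ssec: permanence big tate category} to the small setting essentially verbatim.
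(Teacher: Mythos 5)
Your proposal is correct and follows essentially the same route as the paper: base change to assume $\Sigma$ acts trivially on $Y'$, use the excision triangle $j_!j^*\cF \to \cF \to i_*i^*\cF$ for $\rR f_!$ (with Corollary \ref{cor: pushforward from free is flat}(1) killing the open part in the Tate category), and the dual triangle together with the $i^! \to i^*$ comparison of Lemma \ref{lem: * vs !} for $\rR f_*$. The paper's proof merely says "the second being similar" where you spell out the $\rR f_*$ case and the ULA bookkeeping explicitly, which is a faithful elaboration rather than a different argument.
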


\begin{proof}We give the argument for the first diagram, the second being similar\footnote{Using also that $Y^\sigma \inj Y$ is $\omega_0$-locally extra small (cf. Example \ref{ex: extra small Bun}).}. To prove the assertion, we may (thanks to Lemma \ref{lem: small pullback}) base change to the $\Sigma$-fixed locus of $Y'$, and therefore reduce to the case that $\Sigma$ acts trivially on $Y'$. Let $i \co Y^\sigma \inj Y$ be the inclusion of the $\Sigma$-fixed points and $j \co U \inj Y$ be the complementary open embedding. For $\cF \in D^{\ULA}_?(Y; \Lambda)^{B\Sigma}$ consider the exact triangle 
\[
j_! j^* \cF  \rightarrow  \cF \rightarrow i_* i^* \cF. 
\]
Since $\Sigma$ acts freely on $U$, Corollary \ref{cor: pushforward from free is flat}(1) implies that $\rR f_! j_! j^*\cF \in \Perf^{\ULA}_?(Y'; \Lambda[\Sigma])$, hence projects to $0$ in $\Perf^{\ULA}_?(Y'; \cT_\Lambda)$. Therefore the map $\rR f_! \cF \rightarrow \rR f_! i_* i^* \cF$ projects to an isomorphism in $\Perf^{\ULA}_?(Y'; \cT_\Lambda)$. 
\end{proof}

  \subsection{Relative Tate-parity sheaves}\label{ssec: Tate-parity} We now develop an analogous theory to \S \ref{ssec: relative parity} in the setting of the small Tate category, inspired by work of Leslie-Lonergan \cite{LL} which does this for the classical affine Grassmannian. We note that it is important here to take integral coefficients in order to have any hope of parity vanishing properties, because of Example \ref{ex: Tate cohomology of trivial coeff}.
   
   \subsubsection{Preliminary lemmas}
 

Let $\OO := W(k)$, so $k = \OO/\ell$. Note that we take $|I|=1$ below. Recall (cf. \S \ref{ssec: categories}) that we write 
\[
\FF \co \DSY{H}{\OO}   \rightarrow \DSY{H}{k}
\]
for the change-of-coefficients functor. 

The following Lemma is parallel to \cite[Proposition 4.6.1]{LL}.

\begin{lemma}\label{lem: tate homs of TT}
Let $\cF, \cG \in \Dula{H}{\OO}{1}$. Then there is a natural isomorphism
\begin{equation}\label{eq: lem: tate homs of TT}
\Hom_{\perfula{H}{\cT_{\OO}}{1}}(\TT \cF, \TT \cG) \cong \bigoplus_{i \in \Z} \Hom_{\DSY{H}{k}}(\FF \cF, \FF \cG[2i]).
\end{equation}
\end{lemma}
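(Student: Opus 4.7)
The proof follows the template of \cite[Proposition 4.5.1]{LL} (cf.\ Remark \ref{rem: tate cohomology as hom}), adapted from the Tate category over a point to the small Tate category of ULA sheaves on $\Gr_{H, S/\Div^1_X}$. The plan is: (i) present the Hom on the left-hand side as a $2$-periodic colimit of $\Ext$ groups in $\Dula{H}{\OO[\Sigma]}{1}$; (ii) identify this colimit with the $0$th Tate cohomology of the internal Hom complex over $\OO$; and (iii) invoke Example \ref{ex: tate coh of trivial} together with a base change isomorphism along $\OO \to k$ to arrive at the right-hand side.

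For step (i), recall that the $4$-term exact sequence $0 \to \OO \to \OO[\Sigma] \xrightarrow{1-\sigma} \OO[\Sigma] \xrightarrow{\epsilon} \OO \to 0$ of $\OO[\Sigma]$-modules represents the morphism $\alpha \co \OO \to \OO[2]$ of \eqref{eq: lambda shift by 2}, whose cone is a $2$-term complex $P^\bu$ of free $\OO[\Sigma]$-modules. Tensoring with $\epsilon^*\cG$ produces a triangle $\epsilon^*\cG \to \cG \otimes_\OO P^\bu \to \epsilon^*\cG[2]$ in $\Dula{H}{\OO[\Sigma]}{1}$; its middle term has geometric stalks of the form $\OO[\Sigma] \otimes_\OO \cG_{\bar{x}}$, which is perfect over $\OO[\Sigma]$ because $\cG_{\bar{x}}$ is perfect over $\OO$ by \cite[Proposition VI.6.5]{FS}. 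By Lemma \ref{lem: flat =  finite tor dimension}, the middle term therefore lies in $\perfula{H}{\OO[\Sigma]}{1}$, so $\alpha \otimes \mathrm{id}_{\epsilon^*\cG}$ becomes invertible in $\perfula{H}{\cT_{\OO}}{1}$. Calculus of fractions in the Verdier quotient then yields
\[
\Hom_{\perfula{H}{\cT_{\OO}}{1}}(\TT \cF, \TT \cG) \cong \colim_n \Ext^{2n}_{\Dula{H}{\OO[\Sigma]}{1}}(\epsilon^*\cF, \epsilon^*\cG),
\]
with transitions induced by $\alpha$.

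For step (ii), the standard projective resolution $\ldots \to \OO[\Sigma] \xrightarrow{1-\sigma} \OO[\Sigma] \xrightarrow{N} \OO[\Sigma] \xrightarrow{\epsilon} \OO$ of the trivial $\OO[\Sigma]$-module provides, for trivially-equivariant complexes $\epsilon^*\cF, \epsilon^*\cG$, a natural isomorphism
\[
\Ext^n_{\Dula{H}{\OO[\Sigma]}{1}}(\epsilon^*\cF, \epsilon^*\cG) \cong \rH^n\bigl(\Sigma;\; \RHom_{\Dula{H}{\OO}{1}}(\cF, \cG)\bigr),
\]
compatibly with the $\alpha$-transitions on the left and the defining transitions of Tate cohomology on the right. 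The colimit of step (i) therefore equals $\hat\rT^0\bigl(\epsilon^* \RHom_{\Dula{H}{\OO}{1}}(\cF, \cG)\bigr)$, which by Example \ref{ex: tate coh of trivial} unpacks as $\bigoplus_{i \in \Z} \rH^{2i}\bigl(\RHom_{\Dula{H}{\OO}{1}}(\cF, \cG) \otimes^\rL_\OO k\bigr)$.

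Step (iii) is the main technical obstacle. It amounts to a base change isomorphism
\[
\RHom_{\Dula{H}{\OO}{1}}(\cF, \cG) \otimes^\rL_\OO k \xrightarrow{\sim} \RHom_{\Dula{H}{k}{1}}(\FF \cF, \FF \cG),
\]
which I would establish by dévissage along the Schubert stratification: by Lemma \ref{lem: filtration}, $\cF$ and $\cG$ admit finite filtrations with subquotients of the form $i_{\mu!} \OO$ and $\rR i_{\mu'*} \OO$; each $i^?_\mu$ commutes with $- \otimes^\rL_\OO k$, reducing the claim to such pairs of subquotients, where the identification of $\Ext$ groups with cohomology of strata (as in Lemma \ref{lem: endomorphisms free}) together with the $\OO$-freeness coming from Lemma \ref{lem: fiber cohomology even} makes the base change identity transparent. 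Combining this with the previous paragraphs yields the desired isomorphism \eqref{eq: lem: tate homs of TT}.
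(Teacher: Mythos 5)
Your proposal is correct, but it takes a genuinely different and considerably longer route than the paper. The paper's proof is a three-line computation: it identifies $\cHom_{\perfula{H}{\cT_{\OO}}{1}}(\TT\cF,\TT\cG)$ with $\TT\bigl(\cHom_{\DSY{H}{\OO}}(\cF,\cG)\bigr)$, represents $\TT\cE$ by the totalization of the $2$-periodic complex $\ldots \to \cE \xrightarrow{\ell} \cE \xrightarrow{0} \cE \to \ldots$, observes that this totalization splits as $\bigoplus_{i}\FF\cE[2i]$, and takes cohomology. You instead re-run \cite[Proposition 4.5.1]{LL} in the sheaf setting: calculus of fractions in the Verdier quotient, the adjunction identifying $\Ext_{\OO[\Sigma]}$ of trivially equivariant objects with group cohomology of $\RHom_{\OO}$, and a base change along $\OO \to k$. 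Two remarks on your version. First, the one genuinely delicate point in step (i) — that the tower of powers of $\alpha$ is cofinal among maps whose cone lies in the perfect subcategory — is asserted by citation rather than checked; it holds because $\alpha \otimes \mathrm{id}_P$ is nilpotent for $P$ perfect over $\OO[\Sigma]$ (already zero when $P$ is free), and this deserves a sentence since it is the engine of the whole colimit description. Second, step (iii), which you single out as the main technical obstacle and attack by d\'evissage along the stratification, is actually free: by extension/restriction of scalars along $\OO \to k$ one has $\RHom_k(\FF\cF,\FF\cG) \cong \RHom_{\OO}(\cF,\FF\cG)$, and since $\FF\cG = \mathrm{cone}(\cG \xrightarrow{\ell} \cG)$ this is $\RHom_{\OO}(\cF,\cG)\otimes^{\mathrm{L}}_{\OO}k$ with no freeness or stratification input. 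What your approach buys is an explicit, visibly natural construction of the isomorphism compatible with the $\alpha$-transition maps of Tate cohomology (which is implicitly relied on later, e.g. in the lifting functor); what the paper's approach buys is brevity.
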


\begin{proof}
We have
\[
\cHom_{\perfula{H}{\cT_{\OO}}{1}}(\TT \cF, \TT \cG) \cong \TT \left( \cHom_{\DSY{H}{\OO}} (\cF, \cG) \right).
\]
As in Example \ref{ex: tate coh of trivial}, for any $\cE \in \DSY{H}{\OO}$, we have 
\[
 \TT \cE \cong  \mrm{Tot}\left(\ldots \cE \xrightarrow{\ell} \cE \xrightarrow{0}  \cE \xrightarrow{\ell}  \cE \xrightarrow{0} \cE  \rightarrow \ldots \right)   \in \perfula{H}{\cT_{\OO}}{1}.
 \]
Noting that 
\[
\mrm{Tot}\left(\ldots \cE \xrightarrow{\ell} \cE \xrightarrow{0}  \cE \xrightarrow{\ell}  \cE \xrightarrow{0} \cE  \rightarrow \ldots \right) \cong \bigoplus_{i \in \Z} \FF \cE [2i],
\]
the result then follows upon taking global sections. 
\end{proof}

\begin{remark}
The proof of Lemma \ref{lem: tate homs of TT} did not use the ULA or $L^+H$-constructibility hypotheses.
\end{remark}

\begin{lemma}\label{lem: tate objects on strata}
Assume $S$ is strictly totally disconnected. Then any object $\cF \in \Perf^{\ULA}_{(L^+H)}(\Gr_{H, S/\Div^1, \mu}; \cT_{\OO})$ is a finite direct sum of objects of the form $\TT (\cL)$ and $\TT(\cL[1])$ for $\cL\in \Loc(S; \OO)$.
\end{lemma}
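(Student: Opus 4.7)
The stratum $\Gr_{H, S/\Div^1_X, \mu}$ is a single orbit under the left-translation action of $L^+_{S/\Div^1_X}H$. My strategy is to exploit this transitivity to reduce the classification of equivariant ULA Tate-objects on the stratum to a purely algebraic classification over the group ring $\OO[\Sigma]$, and then to collapse that algebraic category using the $2$-periodicity built into the Tate formalism.

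\emph{Step 1 (geometric reduction).} Lift $\cF$ to a representative $\wt{\cF} \in \perfula{H}{\OO[\Sigma]}{1}$ supported on the single stratum. Using the transitive $L^+H$-orbit structure together with the fibration description recalled just before Lemma~\ref{lem: fiber cohomology even} ($\Gr_{H, S/\Div^1_X, \mu}$ as an iterated affine-space fibration over $(P_\mu^-)^\di_S$), plus connectedness of the pro-algebraic stabilizer, every $L^+H$-equivariant ULA object on the stratum should be an iterated extension of shifts of constant sheaves valued in finitely generated $\OO[\Sigma]$-modules. This is the analogue of the classical fact that equivariant sheaves on a homogeneous space of a connected group are locally constant, now carried out in the relative ULA framework of \cite{FS}.

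\emph{Step 2 (algebraic classification).} Under the previous reduction, the subcategory $\perfula{H}{\OO[\Sigma]}{1}$ on the stratum corresponds to perfect complexes of finitely generated $\OO[\Sigma]$-modules, and the finite-tor-dimension condition singles out those complexes which are projective over $\OO[\Sigma]$, i.e., those whose Tate cohomology vanishes. The Tate-category restriction thus becomes equivalent to the stable (Tate) derived category of finitely generated $\OO[\Sigma]$-modules. Every object in the latter category is a finite direct sum of shifts of the trivial $\OO[\Sigma]$-module $\OO$; combined with the $2$-periodicity isomorphism $\TT(\OO[2]) \cong \TT(\OO)$ coming from the resolution \eqref{eq: lambda shift by 2}, this collapses to exactly the two isomorphism classes $\TT(\OO)$ and $\TT(\OO[1])$.

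\emph{Step 3 (splitting extensions).} Finally, one verifies that no nonsplit extensions between $\TT(\OO)$ and $\TT(\OO[1])$ survive. Using Lemma~\ref{lem: tate homs of TT} to convert $\Hom$ in the Tate category into a sum of $\Hom$'s in the mod-$\ell$ derived category on the stratum, and invoking the even-cohomology calculation of Lemma~\ref{lem: fiber cohomology even} (or the parity-vanishing already used in Corollary~\ref{cor: Hom parity vanishing}), one sees that all $\Ext^1$ between parity-incongruous building blocks vanish, so every iterated extension decomposes as a direct sum, completing the proof.

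\emph{Main obstacle.} The hardest step is Step~1: rigorously establishing, in the $p$-adic/diamond setting, that $L^+H$-equivariant ULA sheaves on a single Schubert cell decompose (up to extensions) into constant-sheaf pieces. This is morally standard for equivariant sheaves on homogeneous spaces of connected groups, but in the $p$-adic context one must argue carefully using the specific fibration geometry of $\Gr_{H, S/\Div^1_X, \mu}$ and the $L^+H$-action, combined with the ULA formalism. Once Step~1 is in hand, Steps~2 and~3 are essentially classical algebra about the Tate cohomology of the cyclic $\ell$-group $\Sigma$.
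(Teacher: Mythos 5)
Your overall architecture matches the paper's: the paper reduces to the base by citing the argument of [LL, Theorem 5.4.1] (any such $\cF$ is pulled back from the small Tate category of $S$, which is your Step~1), and then invokes [LL, Lemma~3.5.1] for the algebraic classification over $\OO[\Sigma]$ (your Steps~2--3). Step~1 is only sketched in your proposal, but the paper is equally terse there, so I will not press the point.

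However, Step~3 contains a genuine error. You claim that ``all $\Ext^1$ between parity-incongruous building blocks vanish,'' but the opposite is true. By Lemma~\ref{lem: tate homs of TT} applied on a point,
\[
\Hom\bigl(\TT(\OO), \TT(\OO)[n]\bigr) \cong \bigoplus_{i \in \Z} \Hom_{D^b(k)}(k, k[n+2i]),
\]
which is $k$ for $n$ even and $0$ for $n$ odd. Hence $\Ext^1(\TT(\OO), \TT(\OO)) = \Hom(\TT(\OO),\TT(\OO)[1]) = 0$ (same parity: vanishes), whereas
\[
\Ext^1\bigl(\TT(\OO), \TT(\OO[1])\bigr) = \Hom\bigl(\TT(\OO), \TT(\OO)[2]\bigr) \cong k \neq 0
\]
(opposite parity: does \emph{not} vanish; the generator is the periodicity isomorphism coming from \eqref{eq: lambda shift by 2}). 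So iterated extensions of the building blocks do not split for the reason you give. The correct argument — and the actual content of [LL, Lemma~3.5.1] — is a Gaussian-elimination step: the glue datum for an object built from $\TT(\OO)^{\oplus a}$ and $\TT(\OO[1])^{\oplus b}$ is a matrix over $k \cong \Hom(\TT(\OO),\TT(\OO)[2])$ whose nonzero entries are scalar multiples of the periodicity isomorphism; diagonalizing, the cone of an isomorphism contributes nothing, and what remains is a direct sum of $\TT(\OO)$'s and $\TT(\OO[1])$'s. (Concretely, this reflects the triangle $\OO \xrightarrow{N} \OO[\Sigma] \to \OO[\Sigma]/(N)$, which shows $\TT^*(\OO[\Sigma]/(N)) \cong \TT(\OO)[1]$; the nonsplit lattice extension dies into an isomorphism in the Tate category rather than into a vanishing Ext class.) Your Step~2 assertion that every object of the stable category of $\OO[\Sigma]$ is a sum of shifts of $\OO$ is correct, but it is precisely this matrix argument (or the Diederichsen--Reiner classification of $\OO[\Sigma]$-lattices) that proves it, not the Ext-vanishing you invoke.
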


\begin{proof}
The same argument as for \cite[Theorem 5.4.1]{LL} shows that any such $\cF$ is pulled back from the small Tate category of $S$. This category is generated by locally constant sheaves, since ULA complexes relative to the identity map are locally constant \cite[Proposition IV.2.9]{FS}, and $S$ has no higher cohomology. Then we may conclude by applying the argument of \cite[\S 3.5.1]{LL}, with the following remark: In \cite{LL} the case $\ell=2$ is excluded because of \cite[Remark 3.5.4]{LL} which claims that $\Ext^1_{\F_2}(\F_2, \F_2) \neq 0$, however this is clearly false, so we may also take $\ell=2$. 
\end{proof}

\begin{lemma}\label{lem: Tate KRS} Assume $S = \Spa(C,C^+)$. Then the category $\perfula{H}{\cT_{\OO}}{1}$ is Krull-Schmidt. 
\end{lemma}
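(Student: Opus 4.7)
The plan is to adapt the strategy of Lemma \ref{lem: KRS}. By \cite[Theorem A.1]{CYZ}, it suffices to verify two properties: (a) $\perfula{H}{\cT_{\OO}}{1}$ is Karoubian, and (b) $\End(\cK)$ is semiperfect for every object $\cK$. I will aim to strengthen (b) to the claim that every $\End(\cK)$ is a finite $k$-algebra (which is automatically Artinian and hence semiperfect), and then derive (a) from this finiteness.

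For (b), I would represent $\cK$ by an object $\widetilde{\cK} \in \Dula{H}{\OO[\Sigma]}{1}$ and apply an $\OO[\Sigma]$-coefficient analog of Lemma \ref{lem: filtration} (which should follow from the same \cite[Proposition VI.6.5]{FS}, since $\OO[\Sigma]$ is a finite $\OO$-algebra) to get a finite Schubert filtration of $\widetilde{\cK}$ whose graded pieces are of the form $i_{\mu!}\cF_\mu$ for $L^+H$-equivariant locally constant $\OO[\Sigma]$-sheaves $\cF_\mu$ on the strata. Projecting to the Tate category via $\TT^*$ and applying Lemma \ref{lem: tate objects on strata} to refine the graded pieces, I obtain a finite filtration of $\cK$ in $\perfula{H}{\cT_{\OO}}{1}$ whose associated graded pieces are each of the form $i_{\mu!}\TT(\OO)$ or $i_{\mu!}\TT(\OO[1])$. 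Using long exact sequences and induction on the filtration length, finiteness of $\End(\cK)$ over $k$ reduces to finiteness of $\Hom$ between any two basic pieces $i_{\mu!}\TT(\OO^{[a]})$ and $i_{\mu'!}\TT(\OO^{[b]})$. By adjunction this is $\Hom(\TT\OO^{[a]}, i_\mu^! i_{\mu'!}\TT\OO^{[b]})$, which vanishes unless $\mu = \mu'$ since distinct Schubert strata are disjoint; when $\mu = \mu'$ it reduces to $\Hom(\TT\OO, \TT\OO[n])$ computed on the single stratum. By Lemma \ref{lem: tate homs of TT}, this Hom equals $\bigoplus_{i \in \Z} \Hom_{\DSY{H}{k}}(\FF\OO, \FF\OO[n+2i])$, i.e., the relative cohomology $\bigoplus_{i \in \Z} H^{n+2i}(\Gr_{H,S/\Div_X^1,\mu}/S; k)$. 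Since each stratum $\Gr_{H, S/\Div_X^1,\mu}$ is a finite-dimensional fibration over $S$ (via the structure recalled before Lemma \ref{lem: fiber cohomology even}), only finitely many terms in this direct sum are nonzero, and each is finite over $k$ by the argument of Lemma \ref{lem: end fg}; this gives the required finiteness.

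For (a), once (b) is in hand, every $\End(\cK)$ is Artinian. To split idempotents, I would use the natural stable $\infty$-categorical enhancement of $\perfula{H}{\cT_{\OO}}{1}$ as the Verdier quotient of the $\infty$-enhancement of $\Dula{H}{\OO[\Sigma]}{1}$ by the thick subcategory $\perfula{H}{\OO[\Sigma]}{1}$; an idempotent $e$ in the homotopy category lifts to a coherent idempotent in the $\infty$-category, and the Artinianness of $\End(\cK)$ allows such an idempotent to split via the finite colimits available in any stable $\infty$-category. Alternatively, one may lift $e \in \End(\cK)$ to an idempotent $\widetilde{e} \in \End_{\Dula{H}{\OO[\Sigma]}{1}}(\widetilde{\cK})$ using the finiteness of $\End$ (which ensures that sufficiently high powers of any lift stabilize and become idempotent), split $\widetilde{e}$ in the Karoubian category $\Dula{H}{\OO[\Sigma]}{1}$ via Lemma \ref{lem: KRS}, and then project the splitting back to the Tate category.

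The main obstacle is the Karoubian step: the finiteness argument for $\End(\cK)$ is fairly mechanical once the filtration is set up, but converting the Artinianness of endomorphism rings into genuine idempotent splittings in the Verdier quotient requires either an invocation of $\infty$-categorical idempotent completeness or a careful lift-and-split procedure, and neither route is entirely routine.
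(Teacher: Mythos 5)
Your reduction of semiperfectness to finiteness of $\End(\cK)$ over $k$ is essentially the paper's argument: it invokes exactly Lemma \ref{lem: tate objects on strata} and Lemma \ref{lem: tate homs of TT} (the paper is terser about the filtration, but the content is the same), so that half is fine.

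The Karoubian step, which you rightly flag as the main obstacle, is where both of your proposed routes have genuine gaps. Route (ii) fails at the outset: for a Verdier quotient, $\Hom$ is computed by roofs, and here Lemma \ref{lem: tate homs of TT} shows explicitly that $\End_{\perfula{H}{\cT_{\OO}}{1}}(\TT\cE) \cong \bigoplus_{i}\Hom(\FF\cE,\FF\cE[2i])$, of which the map from $\End_{\Dula{H}{\OO[\Sigma]}{1}}(\widetilde\cK)$ only hits (the reduction into) the $i=0$ summand — so an idempotent in the Tate category need not lift to any endomorphism of a representative at all. Even when it does lift modulo the ideal of maps factoring through $\perfula{H}{\OO[\Sigma]}{1}$, that ideal is not nil (multiplication by $\ell$ on $\OO$ lies in it but is not nilpotent upstairs), and "high powers of a lift become idempotent" is false in a finite $\OO$-algebra (consider $1+x$ with $x$ nilpotent), so the idempotent-lifting trick does not apply. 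Route (i) fails for a different reason: splitting a (coherent) idempotent in a stable $\infty$-category is the sequential colimit of $X\xrightarrow{e}X\xrightarrow{e}\cdots$, not a finite colimit, and a Verdier quotient of an idempotent-complete stable $\infty$-category need not be idempotent-complete; Artinianness of $\End(\cK)$ gives no splitting. This is precisely why extra hypotheses (countable coproducts, or a bounded t-structure as in the paper's Lemma \ref{lem: KRS}) are needed in general, and the $2$-periodic Tate category has no bounded t-structure.

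The paper's resolution is different and worth internalizing: it uses \cite[Proposition 2.3]{LC07}, which says that an idempotent of a distinguished triangle splitting on two of the three terms splits on the third, and runs a devissage along the Schubert stratification. This reduces Karoubianness to objects supported on a single stratum, where Lemma \ref{lem: tate objects on strata} classifies everything as a finite direct sum of $\TT(\OO)$ and $\TT(\OO[1])$; since these have local endomorphism rings by Lemma \ref{lem: tate homs of TT}, the single-stratum category is already Krull--Schmidt, hence idempotent-complete, and the devissage propagates this to all of $\perfula{H}{\cT_{\OO}}{1}$.
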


\begin{proof}
By \cite[Theorem A.1]{CYZ}, a Karoubian category such that the endomorphism ring of any object is semiperfect is Krull-Schmidt. We will check the semiperfect and Karoubian conditions. 

First we check that $\End(\cK)$ is semiperfect for any $\cK \in \perfula{H}{\cT_{\OO}}{1}$, by showing that $\End(\cK)$ is a finite $k$-algebra. Indeed, it follows from Lemma \ref{lem: tate objects on strata} and Lemma \ref{lem: tate homs of TT} that for any $\cK_1, \cK_2 \in \perfula{H}{\cT_{\OO}}{1}$, the whole Ext-algebra $\Ext^{\bu}(\cK_1, \cK_2)$ is finite-dimensional over $k$. 

Now it suffices to show that  $\perfula{H}{\cT_{\OO}}{1}$ is Karoubian, i.e., all idempotents are split. According to \cite[Proposition 2.3]{LC07}, in a triangulated category an idempotent of a distinguished triangle which splits on any two terms splits on the third. Therefore by devissage, we may reduce to showing that for any indecomposable object in $\Perf^{\ULA}_{(L^+H)}(\Gr_{H, S/\Div^1, \mu}; \cT_{\OO})$, its endomorphism ring is local. By Lemma \ref{lem: tate objects on strata} such $\cK$ must be isomorphic to $\TT (\OO^{r}[d])$ for some $d$, and then Lemma \ref{lem: tate homs of TT} computes its endomorphism algebra, which is seen to be local by inspection. 

\end{proof}

\subsubsection{Relative Tate-parity complexes} We define make definitions within the small Tate category analogous to those in \S \ref{sssec: parity complexes}. 

\begin{defn}Let $\cK \in \perfula{H}{\Cal{T}_{\OO}}{1}$. Fix a pariversity $\dagger \co X_*(T_H) \rightarrow \Z/2\Z$. Below we view Tate cohomology as being indexed by $\Z/2\Z$. 
\begin{enumerate}
\item For $? \in \{*,!\}$, we say $\cK$ is \emph{relative $?$-Tate-even} if for all geometric points $\ol{s} \rightarrow S$ and all $\mu \in X_*(T_H)^+$, 
\[
\rT^{\dagger(\mu)+1}(i_{\mu}^?(\cK|_{\ol{s}})) = 0.
\]
\item For $? \in \{*,!\}$, we say $\cK$ is \emph{relative $?$-Tate-odd} if $\cK[1]$ is relative ?-Tate-even. 

\item For $? \in \{*,!\}$, we say that $\cK$ is \emph{relative $?$-Tate-parity} if $\cK$ is either relative $?$-even or relative $?$-odd. 
\item We say $\cK$ is \emph{relative Tate-even} (resp. \emph{relative Tate-odd}) if $\cK$ is both relative $*$-Tate even (resp. odd) and relative $!$-Tate even (resp. odd).
\item We say $\cK$ is a \emph{relative Tate-parity complex} if it is isomorphic within $\perfula{H}{\cT_{\OO}}{1}$ to the direct sum of a relative Tate-even complex and a relative Tate-odd complex. The full subcategory of $\perfula{H}{\cT_{\OO}}{1}$ spanned by relative Tate-parity complexes (with coefficients in $\Cal{T}_{\OO}$) is denoted $\PSY{H}{\Cal{T}_{\OO}}{1}$.
\end{enumerate}
\end{defn}

\subsubsection{Modular reduction}\label{ssec: modular reduction} We now establish some properties of relative parity complexes under change-of-coefficients, parallel to \cite[\S 2.5]{JMW14} and \cite[\S 5.6]{LL}. 

\begin{lemma}\label{lem: modular reduction}
Recall that $\FF$ is the change-of-coefficients functor 
\[
\FF = k \stackrel{L}\otimes_{\OO} (-)  \co \DSY{G}{\OO} \rightarrow \DSY{G}{k}.
\]
Let $\cE \in \DSY{G}{\OO}$. Then: 
\begin{enumerate}
\item $\cE$ is relative ?-even (resp. odd) if and only if $\FF \cE$ is relative ?-even (resp. odd). 
\item Assume $S$ is strictly totally disconnected and $\ell > b(\chG)$. Then for all $\mu \in X_*(T)_+$ and $\cL \in \Loc(S; \OO)$, we have
\[
\FF \cE(\mu, \cL) \cong \cE(\mu, \FF L).
\]
\end{enumerate}
\end{lemma}

\begin{proof} 
(1) It is immediate from the definitions that $i^? \FF(\cE) \cong \FF(i^? \cE)$, so $\cE$ is ?-even (resp. odd) implies that $\FF \cE$ is ?-even (resp. odd). The converse follows from the assumption that $\cE$ has $\OO$-free stalks and costalks, so the cohomology sheaves of $\cE$ are supported in the same degrees as the cohomology sheaves of $\FF \cE$. 

(2) follows from the definitions, using that change-of-coefficients sends tilting modules to tilting modules. 
\end{proof}

We next explain that the functor $\TT = \TT^* \epsilon^*$ from Definition \ref{defn: small Tate functors} has similar properties to modular reduction. 

\begin{prop}\label{prop: TT modular reduction} Let $\cE \in \DSY{H}{\OO}$.

(1) If $\cE \in \DSY{H}{\OO}$ is relative even (resp. odd), then $\TT\cE \in \perfula{H}{\cT_{\OO}}{1}$ is relative Tate-even (resp. odd). 

(2) Assume $S = \Spa (C,C^+)$. Then for all $\mu \in X_*(T_H)_+$ and $\cL \in \Loc(S; \OO)$, the object $\TT\cE(\mu, \cL)$ is relative Tate-parity and indecomposable.
\end{prop}

\begin{proof}As explained in \S \ref{sssec: small psm compatibilities}, the operation $\TT$ is compatible with formation of $i_{\mu}^*$ or $i_{\mu}^!$. Hence to prove (1) we reduce to showing that $\rT^i \epsilon^* \OO$ vanishes in odd degree, which was seen in Example \ref{ex: Tate cohomology of trivial coeff}.

Having established part (1), and using that $\cE(\mu, L)$ is relative parity, to prove part (2) it only remains to check that $\TT \cE(\mu, L)$ is indecomposable. Abbreviate $\cE := \cE(\mu,L)$. By Lemma \ref{lem: Tate KRS}, this is equivalent to the endomorphism ring of $\TT  \cE$ being local. Applying Lemma \ref{lem: tate homs of TT} to $\Cal{F} = \Cal{G} = \Cal{E}$, we have 
\begin{equation}\label{eq: morphisms between Tate-parity}
\Hom_{\perfula{H}{\cT_{\OO}}{1}} (\TT \cE, \TT \cE) = \bigoplus_{i \in \Z}  \Hom_{\Dula{H}{k}{1}} (\FF \cE, \FF \cE[2i]).
\end{equation}
By Lemma \ref{lem: modular reduction}(2), the ring $\Hom_{\Dula{H}{k}{1}} (\FF \cE, \FF \cE)$ is local. This shows that the subalgebra on the RHS of \eqref{eq: morphisms between Tate-parity} indexed by $i=0$ is local, and by perversity the summands of \eqref{eq: morphisms between Tate-parity} indexed by $i<0$ vanish. This implies the desired locality of the graded algebra \eqref{eq: morphisms between Tate-parity}.
\end{proof}

\begin{defn} Assume $S$ is strictly totally disconnected and $\ell > b(\chH)$. For $\mu \in X_*(T)^+$ and $\cL \in \Loc(S; \OO)$, we define 
\[
\cE_{\rT}(\mu, \cL) := \TT \Cal{E}(\mu, \cL) \in \PSY{H}{\Cal{T}_{\OO}}{1}. 
\]
\end{defn}

\subsubsection{Structure theory for strictly totally disconnected $S$} Here we record several structural results about $\perfula{H}{\cT_{\OO}}{1}$ \emph{under the assumption $S$ is strictly totally disconnected.} \textbf{Below, all Hom and Ext groups are formed in the category} $\perfula{H}{\cT_{\OO}}{1}$; we omit this for ease of notation. By arguments analogous to those in \S \ref{sssec: structure parity}, we have the following results.

\begin{lemma}\label{lem: Tate endomorphisms free} Assume that $S$ is strictly totally disconnected. If $\cF \in \perfula{H}{\cT_{\OO}}{1}$ is relative $*$-Tate-parity and $\cG \in \perfula{H}{\cT_{\OO}}{1}$ is relative $!$-Tate-parity, then we have a (non-canonical) isomorphism of $k$-vector spaces 
\[
\Ext^{\bu}(\cF, \cG) \cong \bigoplus_{\mu \in X_*(T_H)^+} \Ext^{\bu}(i_{\mu}^* \cF, i_{\mu}^! \cG)
\]
\end{lemma}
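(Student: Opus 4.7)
The strategy is to mirror the proof of Lemma \ref{lem: endomorphisms free}: induct on the number $N$ of strata in the support of $\cF$, and simultaneously establish both (i) the asserted decomposition and (ii) a parity vanishing for $\Ext^{\bu}(\cF, \cG)$ as a $\Z$-graded $k$-vector space. The key inputs in the Tate setting are the explicit structural results of Lemma \ref{lem: tate objects on strata} and Lemma \ref{lem: tate homs of TT}, which together play the role that Lemma \ref{lem: fiber cohomology even} played in the parity-sheaf setting.

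First I would perform two standard reductions. Since $\perfula{H}{\cT_{\OO}}{1}$ is Krull-Schmidt (Lemma \ref{lem: Tate KRS}) and the claim is stable under finite direct sums and shifts in either argument, we may assume that both $\cF$ and $\cG$ are indecomposable, that $\cF$ is relative $*$-Tate-\emph{even}, and that $\cG$ is relative $!$-Tate-\emph{even}.

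For the base case $N = 1$, write $\cF \cong i_{\mu!} i_\mu^* \cF$ for some $\mu \in X_*(T_H)^+$. By adjunction,
\[
\Ext^{\bu}(\cF, \cG) \cong \Ext^{\bu}(i_\mu^* \cF, i_\mu^! \cG),
\]
which already gives the decomposition for $N=1$. It remains to establish that this $\Ext$-group vanishes in degrees incongruent to $0$ mod $2$. By Lemma \ref{lem: tate objects on strata}, both $i_\mu^* \cF$ and $i_\mu^! \cG$ are finite direct sums of copies of $\TT(\OO)$ and $\TT(\OO[1])$ on the stratum $\Gr_{H, S/\Div^1, \mu}$; the relative Tate-evenness hypothesis (combined with the computation of $\rT^*$ on such objects) forces all the shifts of the summands to have the same parity mod $2$, namely $\dagger(\mu)$. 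Then Lemma \ref{lem: tate homs of TT} explicitly computes the Ext-groups between such summands and shows that they are $2$-periodic and supported in even degrees, yielding the desired vanishing.

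For the inductive step ($N > 1$), let $i$ be the inclusion of a closed stratum in $\supp(\cF)$ with open complement $j$, giving the excision triangle $j_! j^* \cF \to \cF \to i_* i^* \cF$. Applying $\Hom(-,\cG)$ produces the long exact sequence
\[
\ldots \to \Ext^n(i_* i^* \cF, \cG) \to \Ext^n(\cF, \cG) \to \Ext^n(j_! j^* \cF, \cG) \to \Ext^{n+1}(i_* i^* \cF, \cG) \to \ldots
\]
The inductive hypothesis applies to both flanking terms (which each involve strictly fewer strata in the support); in particular, the parity vanishing forces every connecting map between the two flanks to be zero, so the long exact sequence collapses into short exact sequences, degree by degree. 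Assembling these gives both the parity vanishing and the direct sum decomposition for $\Ext^{\bu}(\cF, \cG)$, completing the induction. The main technical point in the argument, and the one that required separate development of small Tate-parity theory, is the parity computation in the base case: the $2$-periodicity of Tate cohomology means one cannot invoke a dimension argument as in Lemma \ref{lem: fiber cohomology even}, and must instead rely on the explicit descriptions afforded by Lemmas \ref{lem: tate objects on strata} and \ref{lem: tate homs of TT}.
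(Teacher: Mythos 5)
Your proof is correct and is essentially the argument the paper intends: the paper only records that Lemma \ref{lem: Tate endomorphisms free} follows ``by arguments analogous to those in \S \ref{sssec: structure parity}'', and your stratum-by-stratum induction, with the reductions via Lemma \ref{lem: Tate KRS} and with Lemmas \ref{lem: tate objects on strata} and \ref{lem: tate homs of TT} supplying the base-case parity vanishing, is exactly that analogue. One small point worth making explicit: the base case still quietly uses the evenness of the cohomology of the strata (Lemma \ref{lem: fiber cohomology even}), since Lemma \ref{lem: tate homs of TT} only reduces the Tate Ext-groups to ordinary Ext-groups between shifted constant sheaves on $\Gr_{H, S/\Div^1_X, \mu}$ over $k$, and it is the even-degree concentration of those that finishes the parity argument.
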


\begin{lemma}\label{cor: Hom Tate parity vanishing} Assume that $S$ is strictly totally disconnected. If $\cF \in \perfula{H}{\cT_{\OO}}{1}$ is relative $*$-Tate-even and $\cG\in \perfula{H}{\cT_{\OO}}{1}$ is relative $!$-Tate-odd, then $\Hom(\cF, \cG) = 0$. 
\end{lemma}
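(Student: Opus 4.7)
The plan is to mirror the brief proof of Corollary \ref{cor: Hom parity vanishing}, replacing each ingredient by its Tate-theoretic analogue. By Lemma \ref{lem: Tate endomorphisms free}, there is a (non-canonical) decomposition
\[
\Hom(\cF, \cG) = \Ext^0(\cF, \cG) \cong \bigoplus_{\mu \in X_*(T_H)^+} \Ext^0(i_\mu^* \cF, i_\mu^! \cG),
\]
so it suffices to show that the degree-$0$ Ext-group vanishes on each stratum $\Gr_{H, S/\Div^1, \mu}$.

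First, I would appeal to Lemma \ref{lem: tate objects on strata} to decompose both $i_\mu^* \cF$ and $i_\mu^! \cG$ as finite direct sums of objects of the form $\TT(\OO[d])$ inside $\Perf^{\ULA}_{(L^+H)}(\Gr_{H, S/\Div^1, \mu}; \cT_{\OO})$. Next I would translate the Tate-parity hypotheses into constraints on the shifts that can occur. By Example \ref{ex: Tate cohomology of trivial coeff}, $\rT^j(\TT(\OO[a])|_{\ol s})$ is nonzero precisely when $j \equiv a \pmod 2$. Hence the assumption that $\cF$ is relative $*$-Tate-even, i.e.\ $\rT^{\dagger(\mu)+1}(i_\mu^* \cF|_{\ol s}) = 0$, forces every shift $a$ appearing in the decomposition of $i_\mu^* \cF$ to satisfy $a \equiv \dagger(\mu) \pmod 2$. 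Analogously, applying the same analysis to $\cG[1]$, the hypothesis that $\cG$ is relative $!$-Tate-odd forces each shift $b$ in the decomposition of $i_\mu^! \cG$ to satisfy $b \equiv \dagger(\mu) + 1 \pmod 2$.

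It then remains to check that $\Hom(\TT \OO[a], \TT \OO[b]) = 0$ whenever $a \not\equiv b \pmod 2$. For this I would invoke Lemma \ref{lem: tate homs of TT}, which identifies
\[
\Hom_{\perfula{H}{\cT_{\OO}}{1}}(\TT \OO[a], \TT \OO[b]) \cong \bigoplus_{i \in \Z} \Hom_{\DSY{H}{k}}(k[a], k[b + 2i]).
\]
Each summand on the right is nonzero only when $b + 2i - a = 0$, which requires $a \equiv b \pmod 2$. Since we have arranged $a - b \equiv 1 \pmod 2$, every summand vanishes, completing the argument.

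There is no real obstacle here once the earlier structural results (Lemma \ref{lem: Tate endomorphisms free}, Lemma \ref{lem: tate objects on strata}, Lemma \ref{lem: tate homs of TT}) are in place; the only subtle point is the translation of the Tate-parity conditions into shift parities on strata, which is dictated by the 2-periodicity of Tate cohomology of the trivial module in Example \ref{ex: Tate cohomology of trivial coeff}.
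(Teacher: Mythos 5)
Your argument is correct and is essentially the fleshed-out version of what the paper intends (the paper only says the Tate statements follow ``by arguments analogous to'' the structure theory in \S\ref{sssec: structure parity}): reduce to strata via Lemma \ref{lem: Tate endomorphisms free}, classify the restrictions via Lemma \ref{lem: tate objects on strata}, and compute Homs via Lemma \ref{lem: tate homs of TT}. The translation of the Tate-parity hypotheses into congruences on the shifts $a$ and $b$ is done correctly.

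One assertion in your last step is too strong: $\Hom_{\DSY{H}{k}}(k[a], k[b+2i])$ on the stratum is \emph{not} nonzero only when $b+2i-a=0$; this group is $\Gamma(S,\rR^{\,b+2i-a}\pi_*k)$ for the projection $\pi$ of the stratum to $S$, and the stratum has plenty of higher cohomology. What is true, and what you actually need, is that this group vanishes whenever $b+2i-a$ is odd, which is exactly Lemma \ref{lem: fiber cohomology even}. Since $b+2i-a \equiv b-a \equiv 1 \pmod 2$ by your parity analysis, every summand still vanishes, so the conclusion stands; just replace the ``$=0$'' justification by the appeal to Lemma \ref{lem: fiber cohomology even}.
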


\begin{lemma}\label{lem: Tate end surjective}
Assume that $S$ is strictly totally disconnected. If $\cF, \cG  \in \perfula{H}{\cT_{\OO}}{1}$ are either both relative Tate-even or both relative Tate-odd, and $i_\mu$ is the inclusion of a stratum which is open in the support of both $\cF$ and $\cG$, then the restriction map $\Hom(\cF, \cG) \rightarrow \Hom(i_\mu^* \cF, i_\mu^* \cG)$ is surjective. 
\end{lemma}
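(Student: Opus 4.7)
The plan is to mimic the proof of Lemma \ref{lem: end surjective}, replacing parity with Tate-parity and using the Tate-category analogues of the key functorial tools. First, by shifting $\cF$ and $\cG$ simultaneously, I reduce to the case where both are relative Tate-even. (This is allowed because the restriction functor $i_\mu^*$ commutes with shifts.)

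Next, let $i \co Z \inj \Gr_{H, S/\Div_X^1}$ denote the inclusion of the complement of the open stratum $\Gr_{H, S/\Div_X^1, \mu}$ in the union of the supports of $\cF$ and $\cG$ (with its induced closed embedding in the ambient affine Grassmannian). In the small Tate category, the standard localization/excision triangle
\[
i_* i^! \cG \longrightarrow \cG \longrightarrow i_{\mu *} i_\mu^* \cG
\]
is available because $i_*, i_{\mu *}, i^!, i_\mu^*$ preserve $\perfula{H}{\cT_\OO}{1}$ (this is built into the formalism of \S\ref{sssec: small psm compatibilities}, since these operations preserve bounded ULA $L^+H$-constructible sheaves and preserve the perfect subcategory). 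Applying $\Hom(\cF, -)$ produces a long exact sequence whose relevant piece reads
\[
\Hom(\cF, \cG) \longrightarrow \Hom(\cF, i_{\mu *} i_\mu^* \cG) \longrightarrow \Hom(\cF, i_* i^! \cG [1]).
\]
By adjunction $\Hom(\cF, i_{\mu *} i_\mu^* \cG) \cong \Hom(i_\mu^* \cF, i_\mu^* \cG)$, so it suffices to show the rightmost group vanishes.

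For the vanishing, observe that $i_* i^! \cG$ is relative $!$-Tate-even: its $i_\mu^!$-restriction is zero, and its $!$-restriction to any other stratum agrees with that of $\cG$, which is relative $!$-Tate-even by assumption. Therefore $i_* i^! \cG [1]$ is relative $!$-Tate-odd. Since $\cF$ is relative $*$-Tate-even, Lemma \ref{cor: Hom Tate parity vanishing} yields $\Hom(\cF, i_* i^! \cG [1]) = 0$, which gives the desired surjectivity. The only point requiring slight care beyond the template of Lemma \ref{lem: end surjective} is to verify that the excision triangle and the hypothesis of Lemma \ref{cor: Hom Tate parity vanishing} make sense in the small Tate category; both follow from the compatibility results established in \S\ref{ssec: small smith-treumann}, so no real new obstacle arises.
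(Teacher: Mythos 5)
Your proposal is correct and is essentially the argument the paper intends: the paper states this lemma (and its neighbors) with the remark that they follow "by arguments analogous to those in \S\ref{sssec: structure parity}," and your proof is precisely the transplantation of the proof of Lemma \ref{lem: end surjective} into the small Tate category, with the right verifications that the excision triangle, adjunction, and the parity-vanishing Lemma \ref{cor: Hom Tate parity vanishing} all descend to $\perfula{H}{\cT_{\OO}}{1}$.
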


\begin{lemma}\label{lem: Tate open restrict} Assume $S = \Spa(C,C^+)$. Let $\cF  \in \perfula{H}{\cT_{\OO}}{1}$ be an indecomposable Tate-parity complex, and let $j \co U \rightarrow \Gr_{H, S/\Div^1_X}$ be an inclusion of a union of strata open in the support of $\cF$. Then $j^* \cF$ is either $0$ or indecomposable. 
\end{lemma}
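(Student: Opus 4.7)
The plan is to follow the proof of Lemma \ref{lem: open restrict} essentially verbatim, simply substituting the Tate-category analogs of the structural ingredients. The two ingredients needed are: (i) Krull--Schmidt for the ambient category, which is provided by Lemma \ref{lem: Tate KRS}, and (ii) surjectivity of the restriction-of-endomorphisms map to an open union of strata inside the support, which is provided by Lemma \ref{lem: Tate end surjective}.

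More precisely, since $\cF$ is indecomposable in the Krull--Schmidt category $\perfula{H}{\cT_{\OO}}{1}$, the endomorphism ring $\End(\cF)$ is local. Applying Lemma \ref{lem: Tate end surjective} with $\cG = \cF$ (taking the stratum $i_\mu$ to range over strata open in the support of $\cF$, noting that the hypothesis of both $\cF$ and $\cG$ being relative Tate-even or both relative Tate-odd holds trivially when $\cF = \cG$, after decomposing into even and odd parts), we obtain that the natural restriction map
\[
\End(\cF) \;\longrightarrow\; \End(j^*\cF) \;=\; \Hom(j^*\cF,\, j^*\cF)
\]
is surjective. A quotient of a local ring is local, so $\End(j^*\cF)$ is local. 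Since $\perfula{H}{\cT_{\OO}}{1}$ is Krull--Schmidt by Lemma \ref{lem: Tate KRS}, an object with local endomorphism ring is either zero or indecomposable; hence $j^*\cF$ is either $0$ or indecomposable, as claimed.

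The main obstacle, if any, is purely bookkeeping: one should be a little careful that Lemma \ref{lem: Tate end surjective} applies to an open union of strata rather than a single stratum, but this is immediate by induction on the number of strata using the excision triangle for a single closed stratum removed, exactly as in the classical setting. No new ideas beyond those used for Lemma \ref{lem: open restrict} are needed, since the Tate-parity analogs of Krull--Schmidt and Ext-vanishing for incongruous parities are already in hand.
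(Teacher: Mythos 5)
Your proof is correct and follows exactly the route the paper takes: the paper proves the non-Tate version (Lemma \ref{lem: open restrict}) by combining Krull--Schmidt with the surjectivity of restriction on endomorphisms, and for the Tate version simply asserts that the analogous argument goes through, which is what you have written out using Lemma \ref{lem: Tate KRS} and Lemma \ref{lem: Tate end surjective}. Your remarks about passing from a single stratum to a union of strata and about $\cF$ being purely Tate-even or purely Tate-odd (which follows automatically from indecomposability) are accurate and match the implicit bookkeeping in the paper.
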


\begin{prop}\label{prop: Tate parity sheaves} Assume $S = \Spa(C,C^+)$. Let $\Cal{F}  \in \PSY{H}{\Cal{T}_{\OO}}{1}$ be an indecomposable relative Tate-parity complex. Then $\cF$ enjoys the following properties: 
\begin{enumerate}
\item $\cF$ has support of the form $\Gr_{H,S/\Div^1_X,\leq \mu} $ for some $\mu \in X_*(T_H)^+$. 
\item The restriction $i_{\mu}^* \cF$ to the open stratum in $\supp(\cF)$ is isomorphic $\TT(\OO[d])$ for some $d$. 
\item Any indecomposable relative Tate-parity complex $\cG$ supported on $\Gr_{H,S/\Div^1_X,\leq \mu} $, such that $i_{\mu}^* \cG \cong \TT(\OO[d])$, is isomorphic to $\cF$. 
\end{enumerate}
\end{prop}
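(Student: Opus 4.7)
The proof will run closely parallel to that of Proposition~\ref{prop: parity sheaves}, using the Tate-categorical analogues of the ingredients already proved. The key inputs are the Krull-Schmidt property of $\perfula{H}{\cT_{\OO}}{1}$ (Lemma~\ref{lem: Tate KRS}), the lifting of restriction morphisms (Lemma~\ref{lem: Tate end surjective}), the indecomposability-preservation under open restriction (Lemma~\ref{lem: Tate open restrict}), and the classification of Tate-objects on a single stratum via Lemma~\ref{lem: tate objects on strata}.

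For part (1), I would argue by contradiction: if $\supp(\cF)$ contained two disjoint open strata indexed by $\mu$ and $\mu'$, then Lemma~\ref{lem: Tate end surjective} applied separately to each would exhibit two distinct surjections from $\End(\cF)$ onto the endomorphism rings of the restrictions $i_\mu^* \cF$ and $i_{\mu'}^* \cF$ (each of which is non-zero and indecomposable by Lemma~\ref{lem: Tate open restrict}, hence has local endomorphism ring by Lemma~\ref{lem: tate objects on strata}). The existence of two distinct maximal ideals would contradict the locality of $\End(\cF)$, which holds because $\cF$ is indecomposable in the Krull-Schmidt category $\perfula{H}{\cT_{\OO}}{1}$.

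For part (2), Lemma~\ref{lem: Tate open restrict} gives that $i_\mu^* \cF$ is indecomposable in $\Perf^{\ULA}_{(L^+H)}(\Gr_{H, S/\Div^1_X, \mu}; \cT_{\OO})$. Lemma~\ref{lem: tate objects on strata} says that any object in this category is a finite direct sum of objects of the form $\TT(\OO)$ and $\TT(\OO[1])$. Combining indecomposability with this classification forces $i_\mu^* \cF \cong \TT(\OO[d])$ for some $d \in \{0,1\}$.

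For part (3), I would follow the standard ``lifting and invertibility'' argument of Proposition~\ref{prop: parity sheaves}(3). Given an isomorphism $\alpha \co i_\mu^* \cF \xrightarrow{\sim} i_\mu^* \cG$ with inverse $\beta$, Lemma~\ref{lem: Tate end surjective} produces lifts $\wt{\alpha} \co \cF \to \cG$ and $\wt{\beta} \co \cG \to \cF$. The composition $\wt{\beta} \circ \wt{\alpha} \in \End(\cF)$ restricts on the open stratum to $\beta \circ \alpha = \Id$, so its image in $\End(i_\mu^* \cF)$ is a unit; by the surjectivity of Lemma~\ref{lem: Tate end surjective}, $\wt{\beta} \circ \wt{\alpha}$ itself lies outside the maximal ideal of the local ring $\End(\cF)$, hence is invertible. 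Similarly $\wt{\alpha} \circ \wt{\beta} \in \End(\cG)$ is invertible, so $\wt{\alpha}$ is an isomorphism. The main subtlety throughout is to ensure that all the inputs that were previously proven only for the strictly totally disconnected base $S$ remain available in the Tate-categorical setting, but this is exactly the hypothesis and is already absorbed into the cited lemmas.
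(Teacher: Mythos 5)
Your proposal is correct and matches the paper's intended argument: the paper itself only states that these results follow ``by arguments analogous to those in \S4.1.4,'' i.e.\ by transporting the proof of Proposition~\ref{prop: parity sheaves} to the Tate setting, and you have carried out exactly that transport with the right substitutions (Lemma~\ref{lem: Tate KRS} for Krull--Schmidt, Lemma~\ref{lem: Tate end surjective} for lifting morphisms, Lemma~\ref{lem: Tate open restrict} for open restriction, and Lemma~\ref{lem: tate objects on strata} to pin down the restriction to the open stratum as $\TT(\OO[d])$). No gaps.
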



\begin{prop}\label{prop: Tate-parity structure theory}
Assume $S$ is strictly totally disconnected and $\ell > b(\chH)$. 
\begin{enumerate}
\item Let $\cF \in \PSY{H}{\Cal{T}_{\OO}}{1}$ and $\mu \in X_*(T_H)^+$ be maximal so that the support of $\cF$ intersects $\Gr_{H,S/\Div^1_X, \mu}$ non-trivially. If $i_{\mu}^* \cF \cong \TT \cL[\tw{2\rho, \mu}]$ on $\Gr_{G,S/\Div^1_X, \mu}$ for some $\cL \in \Loc(S; \OO)$, then $\cE_{\rT}(\mu, \cL)$ is a retract of $\cF$. 
\item Any $\cF \in \PSY{G}{\cT_{\OO}}{1}$ is a finite direct sum of shifts of $\cE_{\rT}(\mu, \cL)$ for various $\mu \in X_*(T_H)^+$ and $\cL \in \Loc(S;\OO)$. 
\end{enumerate}
\end{prop}

\subsection{Even maps}\label{ssec: even maps} We define a \emph{stratified v-stack} to be a v-stack $Y$ plus a decomposition of $Y$ into locally closed strata $i_\mu \co Y_\mu \inj Y$, such that $i_\mu$ is shriekable for all $\mu$. For a stratified small v-stack $Y$ and a pariversity $\dagger$ (i.e., a function from the set of strata to $\Z/2\Z$), we may define (absolute) \emph{parity complexes} analogously to Definition \ref{defn: parity} (requiring the restriction to strata to have cohomology sheaves which are $\Lambda$-free local systems). We do not expect it to have good properties in general, but in this section we axiomatize the property that this notion of parity complex will be preserved under ``even'' maps, which is true in general.

We begin with a simple observation that provides a partial substitute for ``Gysin isomorphisms'' in $p$-adic geometry (which are not defined, due to the lack of a notion of local complete intersection).

\begin{lemma}\label{lem: Gysin} Let $i \co Z \rightarrow Y$ be a shriekable map of stratified Artin v-stacks which are $\ell$-cohomologically smooth of integral $\ell$-dimension (cf. \cite[Definition IV.1.17]{FS}). Then $i^!$ is isomorphic to an even shift of $i^*$ on \'{e}tale local systems on $Y$.  \end{lemma}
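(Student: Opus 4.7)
The plan is to show $i$ itself is $\ell$-cohomologically smooth of an integer relative dimension, compute $i^!\Lambda$ explicitly, and then deduce the statement via a projection formula argument.

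First I would observe that the structure morphisms $f_Y \co Y \to \Spd \ol{\F}_p$ and $f_Z \co Z \to \Spd \ol{\F}_p$ are both $\ell$-cohomologically smooth of integer dimensions $d_Y$ and $d_Z$, and $f_Z = f_Y \circ i$. Invoking the two-out-of-three property of $\ell$-cohomological smoothness (cf.\ \cite[Proposition IV.1.17 and the subsequent discussion]{FS}), together with the shriekability of $i$, one concludes that $i$ is itself $\ell$-cohomologically smooth of integer relative dimension $d := d_Z - d_Y$. In particular, $i^!\Lambda$ is an invertible object of $D_{\et}(Z;\Lambda)$, and by tracking the dualizing data it identifies with $\Lambda[2d]\{d\}$, where $\{d\}$ denotes the Breuil--Kisin twist.

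Next, for any étale local system $L$ on $Y$, the projection formula applies (since $L$ is dualizable and $i^!\Lambda$ is invertible), yielding a natural isomorphism
\[
i^! L \;\cong\; i^* L \otimes_{\Lambda} i^! \Lambda \;\cong\; (i^* L)[2d]\{d\}.
\]
Since the Breuil--Kisin twist $\{d\}$ is pulled back from $\Spd \ol{\F}_p$ and is trivializable over geometric points (as already exploited in the proof of Lemma~\ref{lem: fiber cohomology even}), it acts on étale local systems as an invertible rank-one twist, and the total effect is a shift by the even integer $2d$ (up to this locally trivial twist). This gives the desired conclusion.

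The main (essentially only) subtle point is the bookkeeping with the Breuil--Kisin twist: we need to know that combining $[2d]$ with $\{d\}$ still lands in the class of \textit{even shifts of $i^*$} in the sense demanded by the lemma's conclusion. Because $\{d\}$ is an invertible local system trivial on geometric fibers, this is harmless for all the applications in the sequel, which only require the parity statement for cohomology sheaves at geometric points. Thus the rest of the argument is formal, and the integrality of the $\ell$-dimensions is what forces the shift to be even.
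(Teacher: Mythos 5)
There is a genuine gap at the first step. You invoke a ``two-out-of-three'' (cancellation) property for $\ell$-cohomological smoothness to conclude that $i$ is itself $\ell$-cohomologically smooth. No such property holds: $\ell$-cohomological smoothness is stable under composition and base change, but not under cancellation. Indeed, in every application of this lemma in the paper, $i$ is a (locally) closed immersion of a stratum into a larger stratified space, and closed immersions are essentially never cohomologically smooth even when source and target both are --- if they were, then $i^!$ would agree with $i^*$ up to an invertible twist on \emph{all} objects of $D_{\et}(Y;\Lambda)$, which already fails for $i_*\Lambda$ (compare $i^!i_*\Lambda \cong \Lambda$ with $i^*i_*\Lambda \otimes i^!\Lambda$), and would collapse the distinction between $*$- and $!$-restrictions on which the entire theory of parity sheaves rests. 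So the claim that $i$ is cohomologically smooth is false, and your computation of $i^!\Lambda$ as stated has no justification.

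The conclusion you wanted from that step, namely $i^!\Lambda \cong \Lambda[2(d_Z-d_Y)]$ up to twist, is nevertheless correct, and the repair is the route the paper actually takes: one does not need $i$ to be smooth, only $Y$ and $Z$. Functoriality of upper shriek gives $i^!\DD_Y = i^!f_Y^!\Lambda = f_Z^!\Lambda = \DD_Z$, and the hypothesis that $Y$, $Z$ are $\ell$-cohomologically smooth of integral $\ell$-dimensions $d_Y$, $d_Z$ gives $\DD_Y\cong\Lambda[2d_Y]$ and $\DD_Z\cong\Lambda[2d_Z]$ (locally, trivializing the twist), whence
\[
i^!\Lambda \;\cong\; i^!\DD_Y[-2d_Y] \;\cong\; \DD_Z[-2d_Y] \;\cong\; \Lambda[2d_Z-2d_Y].
\]
Your second step --- passing from $\Lambda$ to a general local system $L$ via the natural map $i^*L\otimes i^!\Lambda\to i^!L$, checked after an \'etale cover trivializing $L$ --- is sound and is essentially the paper's opening reduction ``by passing to an \'etale cover''; note that here too you should not cite cohomological smoothness of $i$ as the reason the map is an isomorphism, but rather the \'etale-local constancy of $L$ together with compatibility of $i^*$ and $i^!$ with \'etale base change.
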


\begin{proof}
By base changing to an \'{e}tale cover of $Y$, we reduce to showing that $i^!\Lambda$ is isomorphic to an even shift of $\Lambda$. Writing $\DD_{(-)}$ for the dualizing sheaf and $d, d' \in \Z$ for the $\ell$-dimensions of $Y,Z$ respectively, we have 
\[
i^! \Lambda \cong i^! \DD_{Y}[-2d] \cong \DD_{Z}[-2d] \cong \Lambda[2d'-2d].
\]
\end{proof}

The following Definition is in imitation of \cite[Definition 2.32, Definition 2.33]{JMW14}. 

\begin{defn}[Even maps] 
Let $Y$ and $Y'$ be stratified small v-stacks and $f \co Y \rightarrow Y'$ be shriekable. We say that $f$ is \emph{stratified} if 
\begin{enumerate}
\item The pre-image of any stratum $Y_\lambda' \inj  Y'$ is a union of strata of $Y$. 
\item For any stratum $Y_{\mu} \subset Y$ lying over any stratum $Y_{\lambda}' \subset Y'$, the restricted map of strata $f_\lambda^\mu \co Y_\mu \rightarrow Y'_{\lambda}$ is $\ell$-cohomologically smooth with integral $\ell$-dimension.
\end{enumerate}
We further say that $f$ is \emph{even} if for all $\mu, \lambda$ and any $\Lambda$-free \'{e}tale local system $\cL$ on $Y_\lambda'$, the complex $\rR^i(f_\lambda^\mu)_* (\cL)$ is $\Lambda$-free and vanishes for odd $i$. 
\end{defn}

The significance of even maps lies in the fact that their direct image preserves parity, as in the following Proposition. 

\begin{prop}\label{prop: even map preserves parity}
Let $Y,Y'$ be stratified Artin v-stacks and let $f \co Y \rightarrow Y'$ be an even stratified map (hence shriekable by definition). Suppose that all strata of $Y,Y'$ are $\ell$-cohomologically smooth with integral $\ell$-dimension. Let $\dagger$ be a pariversity on $Y'$, and define the pullback pariversity $\dagger_Y$ on $Y$ via $\dagger_Y(\mu) := \dagger(\lambda)$ if $f$ carries $Y_\mu$ to $Y_\lambda'$. 

(1) If $f$ is proper and $\cK \in D_{\et}^b(Y; \Lambda)$ is a $\dagger_Y$-parity complex, then $\rR f_* (\cK) \in D_{\et}^b(Y'; \Lambda)$ is a $\dagger$-parity complex. 

(2) If $f$ is smooth of integral $\ell$-dimension and $\cK \in D_{\et}^b(Y'; \Lambda)$ is a $\dagger$-parity complex, then $f^* \cK \in D_{\et}^b(Y; \Lambda)$ is a $\dagger_Y$-parity complex. 

\end{prop}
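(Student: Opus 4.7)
The strategy is to verify the parity conditions stratum-by-stratum via base change, following the argument of \cite[Proposition 2.34]{JMW14}. The key inputs from Scholze's six-functor formalism for small v-stacks are: proper base change, $!$-base change for proper morphisms, smooth base change, and the identity $g^! \cong g^*[2e]$ (up to trivializable Tate twist) for any $\ell$-cohomologically smooth morphism $g$ of integral $\ell$-dimension $e$.

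For part (2), consider the Cartesian square with $\tilde{i} \co f^{-1}(Y_\lambda') \hookrightarrow Y$ and $f_\lambda \co f^{-1}(Y_\lambda') \to Y_\lambda'$. For each stratum $Y_\mu \subset Y$ lying over $Y_\lambda' \subset Y'$, smooth base change gives $i_\mu^* f^* \cK \cong (f_\lambda^\mu)^* i_\lambda^* \cK$. For the $!$-side, functoriality of $!$-pullback applied to $f \circ i_\mu = i_\lambda \circ f_\lambda^\mu$, combined with the smoothness identity for $f$ and $f_\lambda^\mu$, yields $i_\mu^! f^* \cK \cong (f_\lambda^\mu)^* i_\lambda^! \cK \,[2d' - 2d]$, where $d, d'$ are the respective $\ell$-dimensions. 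Since $2d' - 2d$ is even and pullback along the smooth morphism $f_\lambda^\mu$ preserves $\Lambda$-freeness and the local system structure of cohomology sheaves without shifting degrees, the $*$- and $!$-parity of $\cK$ transport to those of $f^* \cK$.

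For part (1), proper base change gives $i_\lambda^* R f_* \cK \cong R(f_\lambda)_* \tilde{i}^* \cK$, and the analogous $!$-base change (valid for proper $f$) gives $i_\lambda^! R f_* \cK \cong R(f_\lambda)_* \tilde{i}^! \cK$. We induct on the number of strata of $Y$ in the support of $\cK$. For the base case, $\cK$ is concentrated on a single stratum $Y_\mu$ mapping to some $Y_\lambda'$, and the hypercohomology spectral sequence
\[
E_2^{p,q} = R^p(f_\lambda^\mu)_* \cH^q(i_\mu^? \cK) \Rightarrow R^{p+q}(f_\lambda^\mu)_* (i_\mu^? \cK) \quad (? \in \{*, !\})
\]
has $E_2^{p,q}$ a $\Lambda$-free local system (by evenness of $f_\lambda^\mu$) vanishing unless $p$ is even and $q \equiv \dagger(\lambda) \pmod{2}$. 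Hence all nonzero contributions occur in total degree $\equiv \dagger(\lambda) \pmod{2}$, so the differentials $d_r$ (which shift $p+q$ by one) all vanish, and the cohomology sheaves of $R(f_\lambda^\mu)_* (i_\mu^? \cK)$ are successive extensions of $\Lambda$-free local systems, hence themselves $\Lambda$-free local systems in the correct parity. The inductive step uses the excision triangle $j_! j^* \cK \to \cK \to i_{\mu_0, *} i_{\mu_0}^* \cK$ (for the $*$-case) or its dual (for the $!$-case), where $Y_{\mu_0}$ is a closed stratum in $\supp(\cK)$: both flanking terms satisfy the inductive hypothesis after applying $R f_*$ and restricting to $Y_\lambda'$, and parity forces any connecting maps in the long exact sequence to vanish, so the middle term also has the required parity.

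The main technical obstacle is to verify that the needed base change identities for small v-stacks are available in the required generality, along with the smoothness identity $g^! \cong g^* [2e]$; these should follow from \cite{Sch17, FS}, but require careful bookkeeping of shriekability and smoothness hypotheses.
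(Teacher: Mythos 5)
Your proposal is correct and follows essentially the same route as the paper, which itself only sketches the argument of \cite[Proposition 2.34]{JMW14}: part (1) by proper base change, stratifying the fiber, and excision triangles whose flanking terms are concentrated in a single parity (your spectral-sequence degeneration is the same mechanism), and part (2) by transporting $*$- and $!$-restrictions along the smooth $f$ and invoking the Gysin-type identity $i^! \cong i^*[\text{even}]$ for maps between $\ell$-cohomologically smooth strata. The only bookkeeping slip is that in part (1) the correct pairings are $\rR(f_\lambda^\mu)_!$ with $i_\mu^*$ and $\rR(f_\lambda^\mu)_*$ with $i_\mu^!$ (you wrote $\rR(f_\lambda^\mu)_*$ for both); this is harmless since $\ell$-cohomological smoothness of $f_\lambda^\mu$ identifies the two up to an even shift and duality, which preserves $\Lambda$-free local systems.
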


\begin{proof}
(1) The statement and argument are the same as for \cite[Proposition 2.34]{JMW14}, so we just sketch it. Using proper base change, we calculate $i_\lambda^* \rR f_! \cK$ and $i_\lambda^! \rR f_* \cK$ by stratifying the fibers by $Y_\mu$, and calculating the cohomology of fibers in terms of cohomology of strata using excision sequences. This equips $i_\lambda^* \rR f_! \cK$ with a filtration whose associated graded is a direct sum of pieces of the form $ \rR(f_\lambda^\mu)_! i_\mu^* \cK$, so its cohomology sheaves are $\Lambda$-free and vanish in the correct parity of degrees by the definition of an even map. The story is similar for $i_\lambda^! \rR f_* \cK$. 

(2) It is immediate from the definitions that $f^* \cK$ is $*$-parity with respect to $\dagger_Y$. Consider the commutative diagram 
\[
\begin{tikzcd}
Y_\mu \ar[dr] \ar[rr, bend left, "i_\mu"]  \ar[r, "i^\mu_\lambda"] & f^{-1}(Y_\lambda') \ar[d, "f_\lambda"] \ar[r, "i_\lambda'"] & Y \ar[d, "f"] \\
& Y_\lambda' \ar[r,"i_\lambda"]  & Y' 
\end{tikzcd}
\]
where the right square is Cartesian. Suppose $\cK \in  D_{\et}^b(Y'; \Lambda)$ is $\dagger$-parity. Since $f$ is smooth of integral $\ell$-dimension, $(i_\lambda')^! f^* \cK \cong f_\lambda^* i_\lambda^! \cK$ has locally constant $\Lambda$-free cohomology sheaves, which can only be non-zero in degrees congruent to $\dagger(\lambda)$ mod 2. By Lemma \ref{lem: Gysin}, the same holds for $(i^\mu_\lambda)^!(i_\lambda')^! f^* \cK  \cong i_\mu^! f^* \cK$, so $f^* \cK$ is also $!$-parity with respect to $\dagger_Y$. 
\end{proof}

\subsection{Demazure resolutions}\label{ssec: demazure} Let $S$ be a small v-stack and $S \rightarrow \Div^1_X$ a map factoring over $\Spd C$ where $C = \wh{\ol{E}}$. We recall the notions of parahoric group schemes and their partial affine flag varieties in the context of $\bdrp$-affine Grassmannians, following \cite[\S VI.5]{FS}. Let $A := W_{\cO_E}(\cO_{C^\flat})$. For a subset $J$ of affine simple roots there is a parahoric group scheme $\cP_J \rightarrow G_A$, and we define the group diamond $L^+ \cP_J/\Spd \cO_C$ by $L^+ \cP_J (R,R^+) = \cP_J(\bdrp(R^{\sharp}))$. 

Now we define Bott-Samelson varieties in a parallel manner to \cite[\S 4.1]{JMW14}. Fix a chain $J_{\bu}$ of subsets of the affine simple roots of $G$,
\[
J_{\bu}=\left( \begin{tikzcd}
I_0 \ar[dr, hook]  & & I_1 \ar[dl, hook'] \ar[dr, hook] & & \ldots \ar[dl, hook'] \ar[dr, hook] & & I_n \ar[dl, hook'] \\
& J_1 & & J_2 & \ldots &  J_n
\end{tikzcd} \right).
\]
For $(1 \leq i \leq k \leq n)$, we define 
\[
\Gr_{J_{\bu}}^{(i, \ldots, k)} = LG \times^{L^+\cP_{I_i}} (L^+ \cP_{J_{i+1}} ) \times^{L^+\cP_{I_{i+1}}} \times \ldots \times (L^+ \cP_{J_{k-1}}) \times^{L^+\cP_{I_{k-1}}}( L^+\cP_{J_k})/(L^+\cP_{I_k})
\]
and the \emph{Bott-Samelson variety}
\[
\BS_{J_{\bu}}^{(i, \ldots, k)} = (L^+\cP_{J_i} ) \times^{L^+\cP_{I_i}} (L^+ \cP_{J_{i+1}} ) \times^{L^+\cP_{I_{i+1}}} \times \ldots \times (L^+ \cP_{J_{k-1}}) \times^{L^+\cP_{I_{k-1}}}( L^+\cP_{J_k})/(L^+\cP_{I_k})
\]
where the action maps are given by the same formulas as in \cite[p.1201]{JMW14}. There is an obvious closed embedding $\BS_{J_{\bu}}^{(i, \ldots, k)} \inj \Gr_{J_{\bu}}^{(i, \ldots, k)}$ induced by the closed embedding $L^+\cP_{J_i} \inj LG$. 

\begin{example}
If $I_i = \emptyset$ for all $i$, and $J_i  = \{j_i\}$ is a singleton with corresponding simple reflection $s_{j_i}$, then $\BS_{J_{\bu}}^{(i, \ldots, k)} = \mrm{Dem}_{\dot{w}, S}$ is the \emph{Demazure variety} from \cite[Definition VI.5.6]{FS} for $\dot{w} = s_{j_i}s_{j_2} \ldots s_{j_k} \in W_{\mrm{aff}}$ (the affine Weyl group of $G$).  
\end{example}

Consider $LG/L^+ \cP_I$ with the stratification by left $L^+ \cP_{I'}$-orbits for some $\cP_{I'}$. Note that if $I \subset J$, then there is a natural map 
\[
\pi \co LG/L^+\cP_I \rightarrow LG/L^+\cP_J
\]
which is stratified and even (in particular, shriekable), as well as proper. For any $I$, a maximal proper choice of $J$ gives a map from $LG/L^+ \cP_I$ to an affine Grassmannian $\Gr_{G, C}$, and we use it to pull back to the dimension pariversity to $LG/L^+\cP_I$. Note that $\Gr_{J_{\bu}}^{(i, \ldots, k)}$ maps to $\Gr_{G,C}$ via projection to its first factor; this map is again stratified and even, and we use it to pull back the pariversity $\dagger$. Below, ``parity'' will always refer to this pariversity.

\begin{lemma}\label{lem: demazure operation preserves parity}
If $I \subset J$, and $I' \subset J'$, the projection map 
\[
\pi \co LG/L^+\cP_I \rightarrow LG/L^+\cP_J
\]
is an even stratified map (where the source is equipped with the stratification by left $L^+ \cP_{I'}$-orbits, and the target equipped with the stratification by left $L^+ \cP_{J'}$-orbits), which is both proper and $\ell$-cohomologically smooth of integral $\ell$-dimension. 
\end{lemma}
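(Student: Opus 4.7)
The lemma combines four assertions about $\pi$: (i) stratifiedness, (ii) evenness, (iii) properness, and (iv) $\ell$-cohomological smoothness of integral $\ell$-dimension. The plan is to first analyze $\pi$ as a locally trivial fiber bundle, and then extract the stratified and even conditions by a Bruhat-type calculation fibered over $\Spd\cO_C$.

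\textbf{Step 1 (Fiber-bundle structure giving (iii) and (iv)).} The map $\pi$ is $LG$-equivariant for left translation, and its fibers are all isomorphic to $L^+\cP_J/L^+\cP_I$. By the structure theory of parahoric group schemes in \cite[\S VI.5]{FS}, this quotient is the diamond associated to the classical partial flag variety $\cP_J^{\mrm{red}}/\cP_I^{\mrm{red}}$, where $\cP_I^{\mrm{red}}$ is the image of $\cP_I$ in the reductive quotient $\cP_J^{\mrm{red}}$ (a parabolic subgroup). This is smooth and projective, hence its diamond is proper and $\ell$-cohomologically smooth of integral $\ell$-dimension. Local trivializations of $\pi$ obtained from sections of $LG \to LG/L^+\cP_J$ over opens then give properness and $\ell$-cohomological smoothness of $\pi$ itself.

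\textbf{Step 2 (Stratifiedness).} Condition (1) of a stratified map follows from $I'\subset J'$: since $L^+\cP_{I'}\subset L^+\cP_{J'}$, the preimage of an $L^+\cP_{J'}$-orbit in the target is $L^+\cP_{J'}$-invariant, hence $L^+\cP_{I'}$-invariant, and hence a union of $L^+\cP_{I'}$-orbits on the source. For condition (2), fix strata $Y_\mu \subset LG/L^+\cP_I$ and $Y'_\lambda \subset LG/L^+\cP_J$ with $\pi(Y_\mu)\subset Y'_\lambda$, and let $w\in W_{\mrm{aff}}$ be a representative. A direct computation (parallel to the classical Bruhat/orbit intersection) identifies the fiber of $f^\mu_\lambda$ over $wL^+\cP_J$ with the quotient
\[
\bigl(L^+\cP_{I'}\cap w L^+\cP_J w^{-1}\bigr)\big/\bigl(L^+\cP_{I'}\cap wL^+\cP_I w^{-1}\bigr),
\]
a Bruhat-like cell in the fiber $L^+\cP_J/L^+\cP_I$. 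By the classical description, this is (the diamond of) an iterated affine-space bundle, in particular $\ell$-cohomologically smooth of integral $\ell$-dimension. Combining with the $LG$-equivariance of $\pi$, this shows $f^\mu_\lambda$ is itself a locally trivial fiber bundle with such fiber, giving condition (2).

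\textbf{Step 3 (Evenness).} By Step 2, the fibers of $f^\mu_\lambda$ are (diamonds of) iterated affine-space bundles, whose \'{e}tale cohomology with $\Lambda$-coefficients is concentrated in degree $0$ and isomorphic to $\Lambda$. By smooth base change, for any $\Lambda$-free \'{e}tale local system $\cL$ on $Y'_\lambda$ we have $\rR(f^\mu_\lambda)_*\cL\cong\cL$, concentrated in degree $0$ and $\Lambda$-free. Thus the evenness condition is satisfied (trivially, as all cohomology is in degree $0$).

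\textbf{Main obstacle.} The principal technical point is the identification of the fiber of $f^\mu_\lambda$ as an iterated affine-space bundle in the diamond setting, and the verification that the classical orbit-stabilizer and Bruhat calculations transfer. This reduces to the parahoric structure theory of \cite[\S VI.5]{FS} combined with the observation that all relevant group schemes ($L^+\cP_I$, $L^+\cP_J$, $L^+\cP_{I'}$, $L^+\cP_{J'}$ and their intersections) are built from smooth affine group schemes over $A$, so that the partial flag varieties $\cP_J/\cP_I$ and their Bruhat decompositions behave as in the classical case after passing to geometric fibers. The argument then parallels the algebraic proof found e.g.\ in \cite[\S 4.1]{JMW14}.
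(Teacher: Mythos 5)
Your overall strategy (fiber-bundle structure of $\pi$ with partial flag variety fibers for properness and smoothness, then an orbit computation for the stratified and even conditions) is the same as the paper's, which disposes of the lemma in two sentences by observing that $\pi$ is a torsor for the diamond of a partial flag variety and that partial flag varieties have cohomology concentrated in even degrees. Steps 1 and 2 are fine, including your identification of the fiber of $f^\mu_\lambda$ over $wL^+\cP_J$ with $\bigl(L^+\cP_{I'}\cap wL^+\cP_Jw^{-1}\bigr)/\bigl(L^+\cP_{I'}\cap wL^+\cP_Iw^{-1}\bigr)$.

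However, Step 3 contains a genuine error: these fibers are \emph{not} iterated affine-space bundles in general, and $\rR(f^\mu_\lambda)_*\cL$ is \emph{not} concentrated in degree $0$. The affine-cell description of orbit intersections is special to the Iwahori case $I'=\emptyset$; for a general parahoric $L^+\cP_{I'}$ the quotient above is an iterated fibration whose fibers include partial flag varieties of the reductive quotients, not just affine spaces. For instance, taking $w=e$, $I'=J'=J$ and $I\subsetneq J$, the fiber is the full partial flag variety $L^+\cP_J/L^+\cP_I$, whose cohomology is spread over many (even) degrees. Since the lemma is applied with arbitrary $I_0$ in the Bott--Samelson construction, this general case cannot be avoided. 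The conclusion you want still holds, but for a different reason: the fibers are paved by affine cells and fibered over partial flag varieties, so their cohomology is $\Lambda$-free and concentrated in \emph{even} degrees, which is exactly what the definition of an even map requires (it does not require concentration in degree $0$). Replacing "concentrated in degree $0$, isomorphic to $\Lambda$" by "$\Lambda$-free and concentrated in even degrees, by the cell structure of partial flag varieties" repairs the argument and brings it in line with the paper's.
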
	

\begin{proof}
The map $\pi$ is a torsor for the diamond of a partial flag variety of $G$, hence it is proper and $\ell$-cohomologically smooth (of integral $\ell$-dimension). It is also evidently stratified, and even because the maps of strata are fibrations with fibers having affine pavings, hence have cohomology in even degrees. 
\end{proof}
	
As in \cite[p.1201]{JMW14} there is a commutative diagram 
\begin{equation}\label{eq: BS diagram}
\adjustbox{scale=0.8,center}{
\begin{tikzcd}
\BS_{J_{\bu}}^{(1,2, \ldots, n)} \ar[r, hook]  \ar[d] & \Gr_{J_{\bu}}^{(1,2,\ldots, n)}  \ar[r]  \ar[d]  &  \Gr_{J_{\bu}}^{(2,\ldots, n)}  \ar[r]  \ar[d]  & \ldots \ar[r]  \ar[d]  & \Gr_{J_{\bu}}^{(n-1, n)}  \ar[r]  \ar[d] & L^+G/L^+\cP_{I_n} \\
\vdots  \ar[d] & \vdots  \ar[d] & \vdots  \ar[d] & \vdots  \ar[d] & \vdots  \ar[d] \\
\BS_{J_{\bu}}^{(1,2,3)} \ar[r, hook] \ar[d] &  \Gr_{J_{\bu}}^{(1,2,3)} \ar[d] \ar[r]  &  \Gr_{J_{\bu}}^{(2,3)} \ar[r] \ar[d] & LG/L^+ \cP_{I_3} \ar[r] \ar[d] & \ldots \\ 
\BS_{J_{\bu}}^{(1,2)} \ar[d] \ar[r, hook] & \Gr_{J_{\bu}}^{(1,2)} \ar[r] \ar[d] & LG/L^+\cP_{I_2} \ar[r] \ar[d] & LG/ L^+ \cP_{J_3} \\
\BS_{J_{\bu}}^{(1)} \ar[d] \ar[r, hook] & LG/L^+\cP_{I_1} \ar[d] \ar[r] & LG/L^+\cP_{J_2} \\
L^+\cP_{J_1}/L^+\cP_{J_1} \ar[r, hook] & LG/L^+\cP_{J_1}
\end{tikzcd}}
\end{equation}
with all squares being Cartesian; this last fact can be checked on geometric points, which then reduces as in the proof of Proposition \ref{prop: BD gr fixed points} to the analogous statement for classical affine Grassmannians, which is in \cite[\S 4]{JMW14}.
	
\begin{lemma}\label{lem: BS resolution parity}For the map $f \co \BS_{J_{\bu}}^{(1,2, \ldots, n)}   \rightarrow  L^+G/L^+\cP_{I_n}$ which is the composite of the top row of \eqref{eq: BS diagram}, we have $\rR 	f_* \Lambda \in D_{L^+\cP_{I_0}}(LG/L^+\cP_{I_n})$ is a parity complex. 
\end{lemma}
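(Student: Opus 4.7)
The plan is to proceed by induction on $n$, exploiting the Cartesian structure of diagram \eqref{eq: BS diagram} to rewrite $\rR f_{n,*}\Lambda$ as a pullback of a pushforward of $\rR f_{n-1,*}\Lambda$ along morphisms that are even, proper, $\ell$-cohomologically smooth, and stratified by Lemma \ref{lem: demazure operation preserves parity}; the conclusion will then follow from Proposition \ref{prop: even map preserves parity}. Throughout, all partial affine flag varieties will be stratified by left $L^+\cP_{I_0}$-orbits, and the relevant projections $LG/L^+\cP_I \to LG/L^+\cP_J$ (for $I \subset J$) are $L^+\cP_{I_0}$-equivariant, hence stratified.

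For the base case $n = 1$, proper base change applied to the Cartesian square
\[
\begin{tikzcd}
\BS_{J_\bu}^{(1)} \ar[r, hook, "f_1"] \ar[d] & LG/L^+\cP_{I_1} \ar[d, "p_1"] \\
L^+\cP_{J_1}/L^+\cP_{J_1} \ar[r, hook, "i"] & LG/L^+\cP_{J_1}
\end{tikzcd}
\]
yields $\rR f_{1,*}\Lambda = p_1^* i_*\Lambda$. The sheaf $i_*\Lambda$ is a skyscraper at the base point, which is supported on a single zero-dimensional $L^+\cP_{I_0}$-orbit in cohomological degree zero, hence trivially a parity complex. By Lemma \ref{lem: demazure operation preserves parity} (applied with $I = I_1 \subset J = J_1$ and $I' = J' = I_0$), the map $p_1$ is proper, $\ell$-cohomologically smooth of integral $\ell$-dimension, and even stratified; so Proposition \ref{prop: even map preserves parity}(2) shows $\rR f_{1,*}\Lambda$ is a parity complex.

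For the inductive step, assume that $\rR f_{n-1,*}\Lambda$ is a parity complex. Unwinding the twisted product definitions shows that the squares
\[
\begin{tikzcd}
\BS_{J_\bu}^{(1,\ldots,n)} \ar[r, "\tilde{f}_{n-1}"] \ar[d, "q_1"] & \Gr_{J_\bu}^{(n-1,n)} \ar[d, "q_2"] \ar[r, "m"] & LG/L^+\cP_{I_n} \ar[d, "q_3"] \\
\BS_{J_\bu}^{(1,\ldots,n-1)} \ar[r, "f_{n-1}"] & LG/L^+\cP_{I_{n-1}} \ar[r, "p"] & LG/L^+\cP_{J_n}
\end{tikzcd}
\]
are both Cartesian, where the horizontal maps comprise the relevant portion of the top row of \eqref{eq: BS diagram}, the map $m$ is the convolution multiplication, and the vertical maps are the canonical projections. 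Since $f_{n-1}$ and $p$ are both proper, two applications of proper base change give
\[
\rR f_{n,*}\Lambda \;=\; \rR m_* \rR \tilde{f}_{n-1,*} \Lambda \;=\; \rR m_* q_2^* \rR f_{n-1,*}\Lambda \;=\; q_3^* \rR p_* \rR f_{n-1,*}\Lambda.
\]
Both $p$ and $q_3$ are proper, $\ell$-cohomologically smooth of integral $\ell$-dimension, and even stratified by Lemma \ref{lem: demazure operation preserves parity}, so applying Proposition \ref{prop: even map preserves parity}(1) to $p$ and Proposition \ref{prop: even map preserves parity}(2) to $q_3$, together with the inductive hypothesis, shows $\rR f_{n,*}\Lambda$ is a parity complex.

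The main potential obstacle will be verifying the Cartesian-ness of the two squares above in the $p$-adic (diamond) setting; this reduces on geometric points to a direct manipulation with the twisted product definitions of the convolution and Bott-Samelson spaces, in the spirit of the proof of Proposition \ref{prop: BD gr fixed points}. A secondary point to check is that the pariversities on the various affine flag varieties (all pulled back from the dimension pariversity on $\Gr_{G,C}$) are compatible under the maps $p$ and $q_3$, which is immediate from the factorization of each map to $\Gr_{G,C}$.
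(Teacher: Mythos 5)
Your proof is correct and is essentially the paper's argument made explicit: the paper likewise repeatedly applies proper base change down the right edge of \eqref{eq: BS diagram} to express $\rR f_*\Lambda$ as an alternation of $q^*$'s and $\rR p_*$'s along projections of the type in Lemma \ref{lem: demazure operation preserves parity}, then invokes Proposition \ref{prop: even map preserves parity}. Your "main potential obstacle" (Cartesian-ness of the squares in the diamond setting) is already dispatched in the text immediately preceding the lemma, where it is reduced on geometric points to the classical case.
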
	

\begin{proof}
Repeatedly apply proper base change to rewrite $\rR f_* \Lambda $ in terms of pushforward and pullbacks along the right edge of \eqref{eq: BS diagram}. Then all the maps involved are of the kind considered in Lemma \ref{lem: demazure operation preserves parity}, so their pullbacks and pushforwards preserve parity by Proposition \ref{prop: even map preserves parity}. 
\end{proof}

\begin{prop}\label{prop: all parity sheaves exist}
Assume $S = \Spa(C,C^+)$. Each orbit closure of the $L^+\cP_I$-action on $LG/L^+\cP_{J}$ supports an indecomposable parity complex with full support.
\end{prop}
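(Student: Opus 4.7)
The plan is to imitate the classical argument of Juteau--Mautner--Williamson \cite{JMW14} (specifically, the strategy they use to prove existence of parity sheaves on partial affine flag varieties), leveraging the resolution diagram \eqref{eq: BS diagram} and Lemma \ref{lem: BS resolution parity} as the geometric input, together with the Krull--Schmidt property (Lemma \ref{lem: KRS}, suitably extended to the $L^+\cP_{I_0}$-equivariant category on $LG/L^+\cP_{I_n}$) as the categorical input. Note that $LG/L^+\cP_J$ is stratified by $L^+\cP_I$-orbits indexed by $W_I \bs W_{\mrm{aff}}/W_J$, and each orbit closure corresponds to the minimal-length representative $w$ of its double coset.

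Given such an orbit closure $\ol{\cO}_w$, my first step would be to fix a reduced expression $w = s_{j_1} s_{j_2}\cdots s_{j_n}$ and set up a chain $J_\bu$ with $I_0 = I$, $I_n = J$, and $J_k, I_k$ chosen so that each simple reflection $s_{j_k}$ corresponds to an inclusion $I_{k-1}, I_k \subset J_k$ (this is where I would need to check carefully that the relevant parahoric combinatorics match up, mimicking the choices in \cite[\S 4.1]{JMW14}). This produces a Bott--Samelson variety $\BS = \BS_{J_\bu}^{(1,\ldots,n)}$ together with a proper map $f \co \BS \to LG/L^+\cP_J$ whose image is exactly $\ol{\cO}_w$. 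By Lemma \ref{lem: BS resolution parity} (applied with $I_n = J$), the complex $\rR f_* \Lambda$ is an $L^+\cP_{I_0}$-equivariant relative parity complex on $LG/L^+\cP_J$.

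Next I would apply the Krull--Schmidt property to decompose $\rR f_* \Lambda$ into a finite direct sum of indecomposable relative parity complexes, i.e., relative parity sheaves. The key remaining point is that the map $f$ is birational onto $\ol{\cO}_w$, i.e., restricts to an isomorphism over the open orbit $\cO_w$. I would verify this by reducing to the classical situation on algebraic partial affine flag varieties (where birationality of Bott--Samelson resolutions associated to reduced expressions is standard), using the same kind of comparison on geometric points employed in Proposition \ref{prop: BD gr fixed points} and Corollary \ref{cor: fixed points conv Gr}. Birationality implies that $i_w^* \rR f_* \Lambda \cong \Lambda[d]$ for $d = 2\dim \BS$ (in an appropriate sense); equivalently, among the indecomposable summands of $\rR f_*\Lambda$ there is exactly one whose support equals $\ol{\cO}_w$ and whose restriction to $\cO_w$ is a shifted free constant sheaf. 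By the analog of Proposition \ref{prop: parity sheaves}(3) in this equivariant flag-variety setting, that summand is the desired parity sheaf $\cE(w, \Lambda[d])$.

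The step I expect to cause the most trouble is the birationality of $f$, because it relies on a dimension/degree argument that is classical in algebraic geometry but requires care in the diamond setting, where a clean theory of generic points and degrees of maps is not directly available. I would handle this by reducing the question to an assertion about the algebraic affine flag varieties over a geometric untilt $C^\sharp$ of $C$ (as in the proof of Proposition \ref{prop: BD gr fixed points}) and invoking the known statement there. Secondary technical work would go into checking that the Krull--Schmidt formalism of \S \ref{sssec: structure parity}, developed for $\DSY{G}{\Lambda}$, transfers verbatim to the $L^+\cP_I$-equivariant ULA category on $LG/L^+\cP_J$; this should be routine since the same structural inputs (Lemma \ref{lem: filtration} and Lemma \ref{lem: end fg}) hold with parahoric-equivariant versions of the stratification.
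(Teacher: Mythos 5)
Your proposal matches the paper's proof in all essentials: take the combinatorial datum $J_{\bu}$ from \cite[Theorem 4.6]{JMW14}, verify that the image of $f \co \BS_{J_{\bu}}^{(1,\ldots,n)} \to LG/L^+\cP_J$ equals $\ol{\cO}_w$ by checking on geometric points and reducing to the classical affine flag variety (exactly as in the proof of Proposition \ref{prop: BD gr fixed points}), apply Lemma \ref{lem: BS resolution parity} to see that $\rR f_*\Lambda$ is a parity complex with full support $\ol{\cO}_w$, and extract an indecomposable summand via Krull--Schmidt. The one divergence is that your birationality step --- the part you flag as hardest --- is not needed for the statement: by Krull--Schmidt and the analogue of Proposition \ref{prop: parity sheaves}(1), every indecomposable summand of $\rR f_*\Lambda$ has support an orbit closure, so at least one summand has support exactly $\ol{\cO}_w$, and that summand is already a parity sheaf supported there; identifying its restriction to the open orbit as $\Lambda[d]$ would only be required if you wanted to name the summand, not to prove existence.
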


\begin{proof}
For any such orbit closure $\ol{O}$, we claim that we can find some $J_{\bu}$ with $I_0 = I$ and $I_n = J$ such that the Bott-Samelson variety $f \co \BS_{J_{\bu}}^{(1, \ldots, n)} \rightarrow LG/\cP_J$ has image equal to $\ol{O}$. Then by Lemma \ref{lem: BS resolution parity}, $\rR f_* \Lambda$ is a parity complex with full support on $\ol{O}$, hence it has an indecomposable summand with full support on $\ol{O}$. 

Now to establish the claim: in the proof of \cite[Theorem 4.6]{JMW14}, it is explained how to find such a map in the analogous situation for classical affine Grassmannians. We take the same combinatorial data $J_{\bu}$. To check that the image is as desired, it suffices to check on geometric points, and then this reduces to the classical case as in the proof of Proposition \ref{prop: BD gr fixed points}. 
\end{proof}
	
\begin{cor}\label{cor: conv preserves parity}
If $\cF, \cG \in \PSY{G}{\Lambda}{1}$ and $\cG$ has an $L^+_{S/\Div_X^1} G$-equivariant structure, then the convolution $\cF \star \cG$ lies in $\PSY{G}{\Lambda}{1}$. 
\end{cor}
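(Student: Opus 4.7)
The plan is to reduce to the case of a geometric base, then to parity sheaves associated to single strata, then to realize these as summands of pushforwards from Bott--Samelson varieties, and finally to observe that the convolution of two such pushforwards is itself the pushforward from a Bott--Samelson variety in the sense of Lemma \ref{lem: BS resolution parity}.

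First I would reduce to the case where $S = \Spa(C,C^+)$ is a geometric point. The notion of relative parity in Definition \ref{defn: parity} is by design checked on $*$-pullback to geometric points, so both being relative parity and membership in $\PSY{G}{\Lambda}{1}$ are pointwise conditions on $S$ (combined with the fact that $\PSY{G}{\Lambda}{1}$ is defined inside $\DSY{G}{\Lambda}$, which has evident compatibility under $*$-pullback). On the other hand, convolution is defined at the level of the local Hecke stack $\cHck_{G, S/\Div^1_X}$, and one checks using \cite[\S VI]{FS} that it is compatible with base change $S' \to S$. Hence it suffices to verify the statement after pulling back along each geometric point $\ol{s} = \Spa(C,C^+) \to S$.

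Having reduced to $S = \Spa(C,C^+)$, by Krull--Schmidt (Lemma \ref{lem: KRS}) and the definition of a relative parity complex I may further assume that $\cF$ and $\cG$ are relative parity sheaves, i.e., indecomposable. By Proposition \ref{prop: all parity sheaves exist} applied to the affine Grassmannian (taking $I = J =$ the set of finite simple roots, and $I'$ appropriate), each of $\cF$ and $\cG$ is a direct summand of $\rR (f_a)_* \Lambda$ for a Bott--Samelson map $f_a \co \BS_{J^{(a)}_\bullet}^{(1,\ldots,n_a)} \to LG/L^+G$ with input data a chain $J^{(a)}_\bullet$, for $a \in \{1,2\}$. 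Since convolution is bi-additive and the isomorphism class of a parity complex is preserved under taking summands, it suffices to show $\rR(f_1)_* \Lambda \star \rR(f_2)_* \Lambda$ is a relative parity complex.

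The key step is then to identify this convolution as $\rR f_* \Lambda$ for another Bott--Samelson map. Specifically, let $J_\bullet$ be the concatenation of $J^{(1)}_\bullet$ and $J^{(2)}_\bullet$ (with $I_{n_1}^{(1)} = I_0^{(2)}$ being absorbed at the junction; by \cite[Proposition~VI.5.4]{FS}-type freedom one can always arrange this by enlarging chains). Then one checks from the definitions that the associated Bott--Samelson variety is naturally $\BS_{J^{(1)}_\bullet}^{(1,\ldots,n_1)} \widetilde\times \BS_{J^{(2)}_\bullet}^{(1,\ldots,n_2)}$, and the map $f$ to $LG/L^+\cP_{I_{n_2}^{(2)}}$ factors through the twisted product so as to realize the convolution diagram. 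Proper base change (applicable since all Bott--Samelson projections are proper) together with the projection formula then yields a canonical isomorphism
\[
\rR(f_1)_* \Lambda \star \rR(f_2)_* \Lambda \;\cong\; \rR f_* \Lambda.
\]
By Lemma \ref{lem: BS resolution parity} applied to this concatenated chain, the right-hand side is a relative parity complex, and hence so is the left-hand side, which concludes the proof.

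The main technical point I expect to encounter is the identification of the convolution of Bott--Samelson pushforwards with a single Bott--Samelson pushforward in the $p$-adic geometric setting; this is a proper base change argument along the Cartesian squares in the diagram \eqref{eq: BS diagram} combined with the $L^+\cP$-equivariance used to form the twisted product, and requires some care to ensure the equivariant structures are compatible with convolution in the sense of \cite[\S VI]{FS}.
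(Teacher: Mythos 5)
Your proposal is correct and follows essentially the same route as the paper: reduce to a geometric point, realize each indecomposable parity complex as a summand of a Bott--Samelson pushforward via Proposition \ref{prop: all parity sheaves exist}, identify the convolution of two such pushforwards with the pushforward along the concatenated Bott--Samelson resolution, and conclude by Lemma \ref{lem: BS resolution parity}. The paper's proof is a sketch of exactly this argument (modeled on \cite[Theorem 4.8]{JMW14}), so your additional care about base change and the concatenation identification is consistent with, and slightly more detailed than, what appears in the text.
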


\begin{proof}
Since relative parity is a condition on the base change to points of $S$, the statement immediately reduces to the case where $S = \Spa(C,C^+)$. There the proof is similar to the proof of \cite[Theorem 4.8]{JMW14}, so we will just sketch it. By the proof of Proposition \ref{prop: all parity sheaves exist}, any indecomposable parity complex is up to shift a summand of $\rR f_* \Lambda$ for some Bott-Samelson resolution $f \co \BS_{J_{\bu}}^{(1, \ldots, n)} \rightarrow LG/\cP_J$. Therefore it suffices to show that for two such Bott-Samelson resolutions $f_1, f_2$ for $J_{\bu}^1$ and $J_{\bu}^2$, the convolution $(\rR f_{1*} \Lambda ) \star (\rR f_{2*} \Lambda )$ is parity. But we have 
\begin{equation}
(\rR f_{1*} \Lambda ) \star (\rR f_{2*} \Lambda ) \cong \rR f_*  \Lambda
\end{equation}
where $f$ is the Bott-Samelson resolution associated to the concatenation of $J_{\bu}^1$ and $J_{\bu}^2$, so it is a parity complex by Lemma \ref{lem: BS resolution parity}. 
\end{proof}

\subsection{The lifting functor}\label{ssec: lifting functor} For strictly totally disconnected $S$, we will construct a ``lifting functor'' from $\PSY{H}{\Cal{T}_{\OO}}{1}$ to $\PSY{H}{k}{1}$ following \cite[Theorem 5.6.6]{LL}, which allows to lift relative Tate-parity complexes out of the Tate category. This will be used as part of the construction of the Brauer functor. 

\begin{defn}[Normalized parity sheaves] Assume that $S$ is strictly totally disconnected and $\ell > b(\chH)$. 

We denote by $\nPSY{H}{\Lambda}{1} \subset \PSY{H}{\Lambda}{1}$ the full subcategory spanned by $\cE(\mu, \cL)$ with no shifts, for all $\mu \in X_*(T_H)^+$ and $\cL \in \Loc(S;\Lambda)$. Thus, by Proposition \ref{prop: parity structure theory}, every object of $\nPSY{H}{\Lambda}{1}$ is moreover a finite direct sum of objects of the form $\cE(\mu, \cL)$ with no shifts.

Analogously, we denote by $\nPSY{H}{\cT_{\OO}}{1} \subset \PSY{H}{\cT_{\OO}}{1}$ the full subcategory spanned by $\cE_{\rT}(\mu, \cL)$ with no shifts, for all $\mu \in X_*(T_H)^+$ and $\cL \in \Loc(S;\OO)$. Thus, by Proposition \ref{prop: Tate-parity structure theory}, every object of $\nPSY{H}{\cT_{\OO}}{1}$ is moreover a finite direct sum of objects of the form $\cE_{\rT}(\mu, \cL)$ with no shifts.
\end{defn}

\begin{defn}[The lifting functor] Assume that $S$ is strictly totally disconnected and $\ell > b (\chH)$. Following Leslie-Lonergan, we define the \emph{lifting functor} 
\[
L \co \nPSY{H}{\Cal{T}_{\OO}}{1} \rightarrow   \nPSY{H}{k}{1} 
\]
which on objects sends $\cE_{\rT}(\mu, \cL) \mapsto \FF \cE(\mu, \cL)$, and on morphisms is the augmentation to the summand indexed by $i=0$ in \eqref{eq: lem: tate homs of TT}. By design, the functor $L$ fits into a commutative triangle
\begin{equation}\label{eq: lifting triangle}
\begin{tikzcd}
\nPSY{H}{\OO}{1}  \ar[dr, "\TT"'] \ar[rr, "\FF"] &  &  \nPSY{H}{k}{1}  \\
&  \nPSY{H}{\Cal{T}_{\OO}}{1} \ar[ur, dashed, "L"'] 
\end{tikzcd} 
\end{equation}
\end{defn}

\begin{remark}
Presumably, Leslie-Lonergan chose the name ``lifting functor'' for $L$  because it takes objects in the Tate category, a Verdier quotient of a derived category of sheaves, to objects in a derived category of sheaves. However, note that the source of $L$ is the integral version of the Tate category, which is $k$-linear (like the target of $L$). The triangle \eqref{eq: lifting triangle} suggests that $L$ behaves like an intermediate change-of-coefficients functor. 
\end{remark}

\begin{const}[Convolution on Tate categories]\label{const: monoidal structure of Tate} 
 
 By Lemma \ref{lem: flat =  finite tor dimension} and Lemma \ref{lem: pushforward preserves perfect complexes}, the subcategory of perfect objects in $\DulacHck{H}{\OO[\Sigma]}{1}$ is a two-sided ideal for the convolution monoidal structure. This induces a convolution monoidal structure on the quotient category $\DulacHck{H}{\cT_{\OO}}{1}$. 

Assume that $S$ is strictly totally disconnected and $\ell > b(\chH)$. Then all objects of $\nPSY{H}{\OO[\Sigma]}{1}$ are perverse by Proposition \ref{prop: parity structure theory} (since the $\cE(\mu,\cL)$ are perverse), hence have canonical equivariant structures, so Corollary \ref{cor: conv preserves parity} implies that convolution restricts to a monoidal structure on $\nPSY{H}{\OO[\Sigma]}{1}$. This in turn induces a monoidal structure on $\nPSY{H}{\Cal{T}_{\OO}}{1}$. 
\end{const}

\begin{lemma}\label{lem: L monoidal} Assume $S$ is strictly totally disconnected and $\ell > b(\chH)$. Then the functor $L$ admits a canonical monoidal structure making \eqref{eq: lifting triangle} into a commutative diagram of monoidal functors. 
\end{lemma}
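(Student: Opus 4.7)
The strategy is to show that under the hypotheses, $\TT$ descends to a monoidal equivalence of $k$-linear categories $\nPSY{H}{\OO}{1} \otimes_\OO k \xrightarrow{\sim} \nPSY{H}{\cT_{\OO}}{1}$, through which $L$ is realized as a composition $L = \FF \circ \TT^{-1}$ of monoidal functors.

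The first step is to establish that $\TT$ and $\FF$ are canonically monoidal as functors out of $\nPSY{H}{\OO}{1}$. For $\FF$ this is the standard compatibility of modular reduction with convolution. For $\TT = \TT^* \epsilon^*$, the pullback $\epsilon^*$ along the ring map $\OO \to \OO[\Sigma]$ is monoidal (tensor products commute with pullback along ring maps), and the Verdier quotient $\TT^*$ inherits monoidality because the subcategory of perfect $\OO[\Sigma]$-complexes forms a tensor ideal under convolution, by the projection formula together with Corollary \ref{cor: pushforward from free is flat}.

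The crux is the Hom-computation showing that for $\cE, \cE' \in \nPSY{H}{\OO}{1}$, the graded Hom $\Hom_{\cT_\OO}(\TT\cE, \TT\cE')$ collapses to its degree-zero summand. To see this, I will invoke Theorem \ref{thm: parity sheaves exist}, whose proof in Section \ref{sec: tilting} identifies normalized parity sheaves with tilting representations of $\chH$ via geometric Satake. Since tilting modules $T, T'$ satisfy $\Ext^n(T, T') = 0$ for all $n \neq 0$, we obtain $\Hom_{\DSY{H}{k}}(\FF\cE, \FF\cE'[n]) = 0$ for all $n \neq 0$. Then Lemma \ref{lem: tate homs of TT} yields
\[
\Hom_{\perfula{H}{\cT_{\OO}}{1}}(\TT\cE, \TT\cE') \cong \Hom_{\DSY{H}{k}}(\FF\cE, \FF\cE') \cong \FF \, \Hom_{\DSY{H}{\OO}}(\cE, \cE'),
\]
the last equality by Lemma \ref{lem: endomorphisms free}. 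Combined with the essential surjectivity of $\TT$ on indecomposables (Theorem \ref{thm: parity sheaves exist} together with Proposition \ref{prop: TT modular reduction}(2)) and the Krull-Schmidt property of both sides (Lemmas \ref{lem: KRS} and \ref{lem: Tate KRS}), this gives the asserted monoidal equivalence.

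The monoidal structure on $L$ is then defined by the composition
\[
L \;=\; \FF \circ \TT^{-1} \co\; \nPSY{H}{\cT_{\OO}}{1} \xrightarrow[\sim]{\TT^{-1}} \nPSY{H}{\OO}{1} \otimes_\OO k \xrightarrow{\FF} \nPSY{H}{k}{1},
\]
which inherits monoidality from its factors. The commutativity of \eqref{eq: lifting triangle} as a diagram of monoidal functors is tautological from this construction. The main obstacle is the higher Ext vanishing, which rests on the tilting identification established in Section \ref{sec: tilting}; once granted, the remainder is formal.
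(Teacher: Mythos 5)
Your proposal founders on the claimed Ext-vanishing. You assert that since tilting modules have no higher Ext, $\Hom_{\DSY{H}{k}}(\FF\cE,\FF\cE'[n])=0$ for all $n\neq 0$, so that the graded Hom in Lemma \ref{lem: tate homs of TT} collapses to its degree-zero piece and $\TT$ descends to a fully faithful (hence invertible) monoidal functor. This conflates two different Ext groups: the vanishing of $\Ext^{>0}$ between tilting objects holds in the \emph{abelian} category $\Sat(\Gr_{H,C};k)\cong\Rep_k(\chH)$ (Yoneda Ext in the highest weight category), whereas the Homs appearing in Lemma \ref{lem: tate homs of TT} are computed in the ambient triangulated category $\DSY{H}{k}$. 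There they do not vanish: by Lemma \ref{lem: endomorphisms free}, $\Ext^{\bu}(\FF\cE,\FF\cE')\cong\bigoplus_\mu\Ext^{\bu}(i_\mu^*\FF\cE,i_\mu^!\FF\cE')$, and the stratum-wise contributions contain the cohomology of the cells $\Gr_{H,C,\mu}$ (fibrations over partial flag varieties), which is nonzero in arbitrarily large positive even degrees. So $\bigoplus_{i}\Hom(\FF\cE,\FF\cE'[2i])$ is strictly larger than its $i=0$ summand, $\TT$ is essentially surjective but not full, and the functor $\TT^{-1}$ you want to compose with $\FF$ does not exist. (This is also why Proposition \ref{prop: TT modular reduction}(2) has to impose the negative-Ext hypothesis \eqref{eq: exts vanish} rather than getting it for free.)

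The paper's definition of $L$ accommodates exactly this failure: on objects it uses the bijection between normalized parity and Tate-parity sheaves, but on morphisms it is the \emph{projection} onto the $i=0$ summand of the decomposition in Lemma \ref{lem: tate homs of TT} — a non-invertible operation. The actual content of Lemma \ref{lem: L monoidal} is then to check that the monoidal structure maps of $\TT$ and of $\FF$ are compatible with this projection; the paper does so by a diagram chase using the essential surjectivity of $\TT$ on objects, together with the commutativity of the outer square (monoidality of $\FF$) and the top square (monoidality of $\TT$) of \eqref{eq: proof L monoidal diagram}, to induce the natural isomorphism $L(\cF\star\cG)\cong L(\cF)\star L(\cG)$ on the bottom square. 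To repair your argument you would need to replace the full-faithfulness claim by this weaker descent-through-a-projection step; as written, the central step is false and the construction $L=\FF\circ\TT^{-1}$ is not available.
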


\begin{proof}
The functor $\FF$ is monoidal, which means that there is a commutative square
\begin{equation}\label{eq: FF monoidal}
\begin{tikzcd}
(\nPSY{H}{\OO}{1})^{\otimes 2} \ar[r, "\star"] \ar[d, "\FF^{\otimes 2}"]   & \nPSY{H}{\OO}{1}  \ar[d, "\FF"] \\
(\nPSY{H}{k}{1})^{\otimes 2} \ar[r, "\star"] & \nPSY{H}{k}{1}
\end{tikzcd}
\end{equation}
with the natural transformation satisfying coherence data. Also, $\TT$ tautologically monoidal with respect to the monoidal structure of Construction \ref{const: monoidal structure of Tate}, which gives the top commutative square in the diagram below.
\begin{equation}\label{eq: proof L monoidal diagram}
\begin{tikzcd}
(\nPSY{H}{\OO}{1})^{\otimes 2} \ar[r, "\star"] \ar[d, "\TT^{\otimes 2}"]   & \nPSY{H}{\OO}{1}  \ar[d, "\TT"] \\
(\nPSY{H}{\Cal{T}_{\OO}}{1})^{\otimes 2} \ar[r, "\star"] \ar[d, "L^{\otimes 2}"] & \nPSY{H}{\Cal{T}_{\OO}}{1}  \ar[d, "L"]\\
(\nPSY{H}{k}{1})^{\otimes 2} \ar[r, "\star"] & \nPSY{H}{k}{1}
\end{tikzcd}
\end{equation}
In the top square of \eqref{eq: proof L monoidal diagram}, the vertical arrows are essentially surjective since all $\cE_{\rT}(\mu, \cL)$ are in the image, and these generate under direct sums by Proposition \ref{prop: Tate-parity structure theory}. The maps on morphisms are calculated in Lemma \ref{lem: tate homs of TT}. The outer square is equivalent to \eqref{eq: FF monoidal} by \eqref{eq: lifting triangle}. Hence the natural isomorphisms making the upper and outer squares commute, which are part of the monoidal structures of $\FF$ and $\TT$, induce a natural transformation making the lower square commute. In other words, this constructs a natural isomorphism 
\[
L ( \cF  \star \cG) \cong L(\cF) \star L(\cG)
\]
for all $\cF, \cG \in \nPSY{H}{\Cal{T}_{\OO}}{1}$. The coherence data is similarly induced from those of $\FF$ and $\TT$.

\end{proof}

\section{Tilting modules and the Geometric Satake equivalence}\label{sec: tilting}

In this section we show that normalized relative parity complexes are relative perverse when $\ell$ is not too small, and establish that they correspond to \emph{tilting modules} under the Geometric Satake equivalence. The results are parallel to those of Juteau-Mautner-Williamson in \cite{JMW16}, and we follow their proof strategy, but we also have to supply new arguments for some steps. 

In fact, the majority of this section is spent on the case of tilting modules with quasi-minuscule highest weight, which occupies only six sentences in \cite{JMW16}. This is because the singularities of quasi-minuscule Schubert varieties in the classical affine Grassmannians are understood, and smooth-locally equivalent to those of the minimal orbit in the nilpotent cone of $\mf{g}$. At present we do not have such statements in $p$-adic geometry. We will instead degenerate to the Witt vector affine Grassmannian and study resolutions of the quasi-minuscule Schubert varieties there constructed by Zhu \cite{Zhu17}. Using the intersection forms introduced by Juteau-Mautner-Williamson \cite{JMW14} to analyze the failure of the Decomposition Theorem with modular coefficients, we compare the (failure of the) Decomposition Theorem for these resolutions with the parallel situation on the equicharacteristic affine Grassmannian, deducing that the behavior must be ``the same'' in a suitable sense. 

We note that for the classical affine Grassmannians, Mautner-Riche \cite{MR18} have proven the relevant statements, relating normalized parity sheaves to tilting modules, in the optimal generality, improving on \cite{JMW16}. However, the proof of Mautner-Riche relies on various techniques that have not yet been developed in the setting of $p$-adic geometry. Hence our results here are somewhat of a ``proof of concept'', and we leave the optimization of technical hypotheses for future work. 

We begin in \S \ref{ssec: geom satake} with some recollections and elaborations on the Geometric Satake equivalence of Fargues-Scholze. Then in \S \ref{ssec: parity and tilting} we formulate the main results relating parity sheaves and tilting modules through this equivalence. In \S \ref{ssec: proof of parity = tilting part 1} we begin the proof, reducing it to the case of quasi-minuscule highest weight. Finally, \S \ref{ssec: proof of parity = tilting part 2} completes the proof of this case. 

Throughout this section we abbreviate $\Gr_{G, C} := \Gr_{G, \Spa(C,C^+)/\Div^1_X}$ and $d_\mu := \tw{2 \rho_G, \mu}$.

\subsection{The Geometric Satake equivalence}\label{ssec: geom satake} We first record some generalities on the Geometric Satake equivalence. Let $S$ be any small v-stack over $(\Div_X^1)^I$. We denote the \emph{Satake category} $\SatGr{G}{\Lambda}{I} \subset \DulacHckI{G}{\Lambda}{I}$ to be the full subcategory spanned by flat perverse sheaves over $S$. By the same argument as for \cite[Proposition VI.7.2]{FS}, the pullback functor 
\[
\SatGr{G}{\Lambda}{I} \rightarrow \Dula{G}{\Lambda}{I}
\]
is fully faithful, so we may regard $\SatGr{G}{\Lambda}{I}  \subset \DulaI{G}{\Lambda}{I}$.

For a v-stack $S$, we let $\Loc(S; \Lambda)$ be the category of $\Lambda$-free \'{e}tale local systems on $S$. Let $\pi_{G,S} \co \Gr_{G, S/(\Div_X^1)^I} \rightarrow S$ be the natural projection. In \cite[\S VI.7]{FS}, Fargues-Scholze establish the Geometric Satake equivalence, 
\begin{equation}\label{eq: geom sat Div}
\Sat(\Gr_{G, (\Div_X^1)^I}; \Lambda) \cong \Rep_{\Loc((\Div_X^1)^I; \Lambda)} (\chG),
\end{equation}
where the right side is the category of representations of a certain reductive group object in the category $\Loc((\Div_X^1)^I; \Lambda)$. The equivalence is symmetric monoidal, with the underlying monoidal structure given by convolution on the left and tensor product on the right, and carries the fiber functor $\bigoplus_i \rR^i \pi_{G,\Div_X^1 *}$ on the left to the forgetful functor on the right. 

The category $\Loc(\Div_X^1; \Lambda)$ is symmetric monoidal under tensor product. Note that there is an action of $\Loc(\Div_X^1; \Lambda)$ on $\Sat(\Gr_{G, \Div_X^1}; \Lambda)$ via pullback and tensoring. Pullback also defines a symmetric monoidal functor $\Loc(S;\Lambda) \rightarrow 
\SatGr{G}{\Lambda}{1}$. 

\begin{lemma}\label{lem: satake category tensor}
The natural functor 
\[
\Sat(\Gr_{G, \Div_X^1}; \Lambda) \otimes_{\Loc(\Div_X^1; \Lambda)} \Loc(S;\Lambda) \rightarrow 
\SatGr{G}{\Lambda}{1}
\]
is a monoidal equivalence with respect to the convolution product.
\end{lemma}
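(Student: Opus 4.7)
The plan is to deduce the claim from the Geometric Satake equivalence \eqref{eq: geom sat Div} by translating both sides into Tannakian (representation-theoretic) language. First I would apply Geometric Satake over $\Div_X^1$ to identify
\[
\Sat(\Gr_{G, \Div_X^1}; \Lambda) \simeq \Rep_{\Loc(\Div_X^1; \Lambda)}(\chG),
\]
and then argue that pullback along $S \to \Div_X^1$, together with the natural tensor-action of $\Loc(S; \Lambda)$, extends this to a monoidal equivalence
\[
\SatGr{G}{\Lambda}{1} \simeq \Rep_{\Loc(S; \Lambda)}(\chG).
\]
The fiber functor realizing both equivalences is the total cohomology functor $\bigoplus_i R^i \pi_{G, ?*}$, which is compatible with base change along $S \to \Div_X^1$ by proper base change applied on each Schubert closure $\Gr_{G, S/\Div_X^1, \leq \mu}$ (which is proper over the base).

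With both categories translated into representation-theoretic terms, the statement of the Lemma reduces to showing that the natural functor
\[
\Rep_{\Loc(\Div_X^1; \Lambda)}(\chG) \otimes_{\Loc(\Div_X^1; \Lambda)} \Loc(S; \Lambda) \xrightarrow{\sim} \Rep_{\Loc(S; \Lambda)}(\chG)
\]
is a monoidal equivalence. This is a formal base change statement: since $\chG$ is a flat affine group object in $\Loc(\Div_X^1; \Lambda)$ presented by a Hopf algebra $\cO(\chG)$, its representation category is identified with the category of $\cO(\chG)$-comodules, and comodule categories over a flat Hopf algebra commute with symmetric monoidal base change along $\Loc(\Div_X^1; \Lambda) \to \Loc(S; \Lambda)$.

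The main obstacle I anticipate is the careful matching of monoidal structures under base change: convolution (via fusion) on the Satake side and tensor product on the representation side. Once one checks that the fusion construction, being built from proper pushforward and pullback, commutes with base change $S \to \Div_X^1$, the monoidality of the comparison functor follows formally. Together with the Tannakian base change above, this gives the desired monoidal equivalence.
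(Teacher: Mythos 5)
There is a circularity in your second step that undermines the whole strategy. The identification
\[
\SatGr{G}{\Lambda}{1} \;\simeq\; \Rep_{\Loc(S;\Lambda)}(\chG)
\]
is not available as an input: it is precisely the statement \eqref{eq: geom sat S} that the paper \emph{derives from} this lemma, by tensoring the Fargues--Scholze equivalence \eqref{eq: geom sat Div} (which lives over $\Div_X^1$, not over a general small v-stack $S$) with $\Loc(S;\Lambda)$ and then invoking the lemma. You assert that pullback along $S \to \Div_X^1$ "extends" Geometric Satake to $S$, but that extension is exactly the content being established; without an independent proof of it (which would amount to rerunning the entire Tannakian reconstruction of \cite[\S VI]{FS} relative to $S$ — constant terms, fiber functor, hyperbolic localization, and all), your reduction to a Hopf-algebraic base-change statement for comodule categories has nothing to stand on. The remaining Tannakian base-change step is itself plausible for finite projective comodules over a flat Hopf algebra, but it cannot rescue the argument.

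The paper's proof avoids Geometric Satake entirely and is purely geometric: by v-descent one reduces to split $G$ with maximal torus $T$; the Schubert stratification then induces compatible semi-orthogonal decompositions (indexed by $\mu \in X_*(T)^+$, via the filtrations of \cite[Proposition VI.6.5]{FS}) on both the source and the target, so it suffices to check the functor is an equivalence on each graded piece; there it becomes the tautological map
\[
\Loc(\Div_X^1;\Lambda) \otimes_{\Loc(\Div_X^1;\Lambda)} \Loc(S;\Lambda) \to \Loc(S;\Lambda),
\]
and monoidality is immediate from the construction of convolution. If you want to salvage your approach, you would need to first prove the lemma (or the relative Satake equivalence over $S$) by such a direct argument — at which point the Tannakian detour is superfluous.
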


\begin{proof}
By descent, we may reduce to the case where $G$ is split, say with maximal torus $T$. The functor is fully faithful, using that the formation of $\cRHom$ commutes with base change \cite[Corollary VI.6.6]{FS}. For essentially surjectivity: the Schubert stratification induces a compatible semi-orthogonal decomposition on both sides, and it suffices to see that the functor induces an equivalence on each piece of the decomposition. On the piece indexed by $\mu \in X_*(T)^+$ it takes the form 
\[
\underbrace{\Sat(\Gr_{G, \Div_X^1, \mu}; \Lambda)}_{\cong \Loc(\Div_X^1;\Lambda)}  \otimes_{\Loc(\Div_X^1; \Lambda)} \Loc(S)  \rightarrow  
\underbrace{\Sat(\Gr_{G, S/\Div_X^1, \mu}; \Lambda)}_{\cong \Loc(S; \Lambda)}
\]
and the equivalence is clear by inspection. The compatibility of the monoidal structures is evident from the definition.
\end{proof}

\begin{const}
Lemma \ref{lem: satake category tensor} equips the convolution monoidal structure on $\Sat(\Gr_{G, S/\Div_X^1}; \Lambda)$ with a commutativity constraint, promoting it to a symmetric monoidal structure. We thus regard $\Sat(\Gr_{G, S/\Div_X^1}; \Lambda)$ as a symmetric monoidal category. 
\end{const}

Tensoring \eqref{eq: geom sat Div} with $\Loc(S;\Lambda) $ over $\Loc(\Div_X^1; \Lambda)$ and applying Lemma \ref{lem: satake category tensor} gives a symmetric monoidal equivalence
\begin{equation}\label{eq: geom sat S}
\Sat(\Gr_{G, S/\Div_X^1}; \Lambda)  \cong \Rep_{\Loc(S; \Lambda)} (\chG),
\end{equation}
carrying the fiber functor $\bigoplus_i \rR^i \pi_{G, S *}$ on the left to the forgetful functor on the right.

\begin{example}\label{ex: geometric satake strd} If $S$ is strictly totally disconnected, then for $C^{\infty}(|S|; \Lambda)$ the ring of continuous functions from $|S|$ to $\Lambda$, we have that 
\[
\Loc(S; \Lambda) \cong \{\text{finite projective $C^{\infty}(|S|; \Lambda)$-modules}\}.
\]
In particular, if $S = \Spa(C,C^+)$ is a geometric point then $|S|$ is a point, and \eqref{eq: geom sat S} reads 
\[
\Sat(\Gr_{G, C}; \Lambda)  \cong \Rep_\Lambda(\chG)
\]
where $\chG$ is the usual Langlands dual group by \cite[Theorem VI.11.1]{FS}. For general strictly totally disconnected $S$, we have 
\begin{equation}\label{eq: Rep strd}
\Rep_{\Loc(S; \Lambda)} (\chG) \cong \Rep_{\Lambda}(\chG) \otimes_{\Lambda} \{\text{finite projective $C^{\infty}(|S|; \Lambda)$-modules}\}.
\end{equation}
\end{example}

\subsection{Parity and tilting}\label{ssec: parity and tilting}

Recall that a representation of $\chG$ is called a \emph{tilting module} if it has both a filtration by standard modules, and also a filtration by costandard modules (see \cite[Appendix E]{Jan03} for a reference). The tilting property is preserved by direct sums and tensor products; the latter case is a non-trivial theorem of Mathieu \cite{Ma90}, building on work of Wang and Donkin. Let $\Tilt_\Lambda(\chG) \subset \Rep_\Lambda(\chG)$ denote the full subcategory of tilting modules. For each $\mu \in X^*(\chT)^+$ there is a unique indecomposable tilting module of highest weight $\mu$, which we denote $T_\Lambda(\mu)$.

\begin{defn}
Let $b(\chG)$ be the supremum of $b(\check{\Phi})$ defined in Figure \ref{fig: bad primes}, as $\check{\Phi}$ runs over the root systems of simple factors of $\chG$. 
\end{defn}

By Proposition \ref{prop: all parity sheaves exist}, there is an indecomposable relative parity complex with support $\Gr_{G, C, \leq \mu}$ (and coefficients in $k$), which we abbreviate by $\cE (\mu)$. 

\begin{thm}\label{thm: parity = tilting} If $\ell>b(\chG)$, and $S = \Spa(C,C^+)$ is a geometric point, then $\cE(\mu)$ is perverse and corresponds under the Geometric Satake equivalence to $T_k(\mu)$. Thus, the Geometric Satake equivalence restricts to an equivalence as in the diagram below. 
\begin{equation}\label{eq: parity = tilting}
\begin{tikzcd}
\Parity_{\mrm{n}}^{\ULA}(\Gr_{G, C}; k) \ar[r, "\sim"] \ar[d, hook] &  \Tilt_k(\chG) \ar[d, hook] \\
\Sat(\Gr_{G, C};k) \ar[r, "\sim"] & \Rep_k(\chG) 
\end{tikzcd}
\end{equation}
\end{thm}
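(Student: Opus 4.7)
The plan is to follow the inductive strategy of Juteau--Mautner--Williamson \cite{JMW16}, reducing the general case via convolution and Mathieu's tensor product theorem to the quasi-minuscule fundamental weights, and then handling that case by a transport-of-structure argument from the Witt vector affine Grassmannian.

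For the minuscule base case, if $\mu$ is minuscule then $\Gr_{G,C,\leq \mu} = \Gr_{G,C,\mu}$ is a single Schubert cell, hence $\ell$-cohomologically smooth over $\Spa(C,C^+)$ of integral $\ell$-dimension $d_\mu$. Therefore the shifted constant sheaf $k[d_\mu]$ is the normalized parity sheaf $\cE(\mu)$, it is relative perverse, and under Geometric Satake \eqref{eq: geom sat S} it corresponds to the irreducible module $V(\mu)$; for minuscule $\mu$ we have $V(\mu) = T(\mu)$ since all weights lie in a single Weyl orbit. For the inductive step, I would combine three facts: the Geometric Satake equivalence is symmetric monoidal; Corollary \ref{cor: conv preserves parity} shows convolution preserves parity; and Mathieu's theorem shows that tensor products of tilting modules are tilting. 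Applying Krull--Schmidt on both sides (Lemma \ref{lem: KRS} on the sheaf side, classical on the representation side), whenever $T(\mu)$ arises as an indecomposable summand of a product $T(\mu_1) \otimes \cdots \otimes T(\mu_k)$ with each $\mu_i$ already identified with $\cE(\mu_i)$, the corresponding indecomposable summand of $\cE(\mu_1) \star \cdots \star \cE(\mu_k)$ is parity with support $\Gr_{G,C,\leq \mu}$ and open-dense restriction $k[d_\mu]$, so it must be $\cE(\mu)$. Under the assumption $\ell > b(\chG)$, a combinatorial analysis of the fundamental weights reduces the induction to the case of the fundamental quasi-minuscule weight $\mu_{\mathrm{qm}}$ of each simple factor of $\chG$.

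The main obstacle is the quasi-minuscule case, as flagged in the introduction to this section. Classically one identifies the singularity of $\Gr_{G,C,\leq\mu_{\mathrm{qm}}}$ at the base point with the minimal nilpotent orbit closure in $\frg$ and computes directly, but this approach has no counterpart in the diamond theory. My plan is to: (i) choose a Bott--Samelson-type proper even resolution $f \co \widetilde{Y} \to \Gr_{G,C,\leq\mu_{\mathrm{qm}}}$ from combinatorial data matching Zhu's resolution \cite{Zhu17} of the quasi-minuscule Schubert variety in the Witt vector setting, which exists by the construction in \S\ref{ssec: demazure} and gives $\rR f_* k$ parity by Lemma \ref{lem: BS resolution parity}; (ii) compute the intersection form at the base point in the sense of Juteau--Mautner--Williamson \cite{JMW14}, which measures the failure of the modular Decomposition Theorem and depends only on the cohomology of the exceptional fiber and the combinatorics of the resolution; (iii) observe that the exceptional fiber is a classical projective scheme whose cohomology is determined by the Bott--Samelson combinatorics, so the intersection form coincides with the one computed by JMW on the Witt vector or equicharacteristic affine Grassmannian; (iv) transport via \cite{JMW14} the conclusion that $\cE(\mu_{\mathrm{qm}})$ is (up to shift) the constant sheaf on $\Gr_{G,C,\leq\mu_{\mathrm{qm}}}$, which is therefore perverse and coincides with $\IC(\mu_{\mathrm{qm}})$; (v) match this with $T(\mu_{\mathrm{qm}})$ by comparing the total cohomology (the value of the fiber functor) with the character of $T(\mu_{\mathrm{qm}})$, which is well-understood for $\ell > b(\chG)$.

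The hardest step is (iii). One must verify that the intersection form is genuinely a \emph{local} invariant of the singularity and its resolution, insensitive to global geometry away from the singular point, so that it gives the same answer on the $\bdrp$-affine Grassmannian as on the Witt vector affine Grassmannian. This will require combining the ULA structure from \S\ref{ssec: relative parity} (which provides the necessary duality and finiteness properties despite the lack of constructibility in $p$-adic geometry) with a careful identification of the exceptional fiber of the Bott--Samelson resolution across the two settings, leveraging the fact that this fiber is a classical projective scheme with combinatorially determined cohomology.
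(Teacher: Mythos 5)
Your overall architecture (minuscule base case, convolution plus Mathieu's theorem plus Krull--Schmidt for the inductive step, reduction to the quasi-minuscule weight) matches the paper's, and that part is fine. The gap is in your treatment of the quasi-minuscule case, in two places. First, you propose to compute intersection forms directly on the $\bdrp$-affine Grassmannian. But the intersection-form formalism (Borel--Moore homology, the degree map $\deg \co \mBM_0(Z) \to k$, cup products on cohomology with supports) is set up in the paper only for perfect schemes over $\ol{\F}_p$; none of it is developed for diamonds, and developing it would be a substantial foundational project. The paper avoids this entirely by \emph{first} degenerating $\cT(\alpha_0)$ from $\Gr_{G,C}$ to the Witt vector affine Grassmannian via \cite[Corollary VI.6.7]{FS} and \cite[\S 27]{Sch17}, so that the whole intersection-form argument takes place in perfect algebraic geometry. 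Second, and more seriously, your step (iii) asserts that the intersection form ``depends only on the cohomology of the exceptional fiber and the combinatorics of the resolution'' and is a local invariant. It is not: the form $B_F^m$ is the cup product on $\rH^{*}_F(\wt{X};k)$ followed by the degree map, so it depends on how the fiber sits inside the smooth total space of the resolution, which is exactly where the difficulty lies. The paper's mechanism for matching the mixed- and equal-characteristic forms is equivariant formality (Corollary \ref{cor: equivariantly formal}) together with explicit Soergel-bimodule-style descriptions of the equivariant cohomology rings of Zhu's resolution, its open stratum, and the exceptional fiber (Examples \ref{ex: zhu resolution}--\ref{ex: closed stratum}), which are then identified across the two settings via the canonical isomorphism $R(Q_r) \cong R(Q_r^\natural)$ coming from the reductive quotients of the parahorics. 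Your proposal contains no substitute for this comparison, and ``the fiber is a classical projective scheme'' does not supply one.

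There is also an outright error in your step (iv): the conclusion of the quasi-minuscule analysis is \emph{not} that $\cE(\alpha_0)$ is a shifted constant sheaf on $\Gr_{G,C,\leq\alpha_0}$. The correct statement (Proposition \ref{prop: quasi-minuscule}) is $\cE(\alpha_0) \cong \cT(\alpha_0) \cong \nabla_{\alpha_0} \cong \Delta_{\alpha_0} \cong \IC_{\alpha_0}$, and by Proposition \ref{prop: stalk weight multiplicity} the stalk of $\IC_{\alpha_0}$ at the singular point is the associated graded of the zero weight space of $V(\alpha_0)$ for the Brylinski--Kostant filtration; whenever that weight space has dimension greater than one (e.g.\ types $C_n$ for $n\geq 3$, $D_n$, $E_n$, $F_4$) the stalk is spread over several degrees and the sheaf is not constant. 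What the intersection-form comparison actually buys is that $\rR\pi_* k[d_{\alpha_0}]$ decomposes into parity summands with the same multiplicities as in equal characteristic, whence $\cE(\alpha_0)$ is perverse; one then concludes it is tilting via Proposition \ref{prop: perverse parity implies tilting}. Your final matching step (v) would go through once perversity is established, but the route to perversity is where your argument is incomplete.
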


We will see that Theorem \ref{thm: parity = tilting} implies a more general statement any strictly totally disconnected $S$ over $\Div_X^1$. 

\begin{notation}For $\cL \in \Loc(S; \Lambda)$ and $\mu \in X_*(T)^+$, let $\cE(\mu, \cL) \in \Sat(\Gr_{G, S/\Div_X^1}; \Lambda)$ be the image of $T_\Lambda(\mu) \otimes \cL \in \Rep_{\Loc(S;\Lambda)}(\chG)$ under the Geometric Satake equivalence \eqref{eq: geom sat S}.
\end{notation}

Thus Theorem \ref{thm: parity = tilting} implies that for $S = \Spa(C,C^+)$, we have $\cE(\mu) = \cE(\mu, k)$, viewing $k$ as the constant local system on $S$. 

\begin{cor}\label{cor: tilting is parity} Let $S$ be strictly totally disconnected. Let $\Lambda = \OO$ or $k$.  If $\ell > b(\chG)$ then $\cE(\mu, \cL) \in \nPSY{G}{\Lambda}{1}$ is relative parity for all $\mu \in X_*(T)^+$ and all $\cL\in \Loc(S; \Lambda)$. 
\end{cor}
\begin{proof}The assertion immediately reduces to the case $S = \Spa(C,C^+)$. If $\Lambda = k$, then $\cE(\mu,\cL)$ is relative parity by Theorem \ref{thm: parity = tilting}. If $\Lambda = \OO$, then $\FF \cF$ is relative parity by the case just handled, hence $\cF$ is relative parity by Lemma \ref{lem: modular reduction}. 
\end{proof}


Since $\Sat(\Gr_{G, C}; k) \cong \Rep_k(\chG)$ is a highest weight category, it has a notion of standard, costandard, and tilting objects. The standard objects are $\Delta_\mu := \lp i_{\mu !} (\ul{k} [d_\mu])$ and the costandard objects are $\nabla_\mu := \lp i_{\mu *} (\ul{k}[d_\mu])$ for all $\mu \in X_*(T)^+$, where the superscript $\lp$ refers to $0$th perverse cohomology. The tilting objects are those which admit both standard and costandard filtrations. 

\begin{prop}\label{prop: perverse parity implies tilting}
Suppose $\cF \in \Sat(\Gr_{G, C}; k)$ is relative perverse, and relative parity with respect to the dimension pariversity $\dagger_G$. Then $\cF$ is tilting. 
\end{prop}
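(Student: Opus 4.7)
The first reduction is to an indecomposable summand: by Krull--Schmidt for parity complexes (Lemma \ref{lem: KRS}) we write $\cF \cong \bigoplus_\alpha \cE(\mu_\alpha)[n_\alpha]$, and since direct summands of a perverse sheaf are perverse, each $\cE(\mu_\alpha)[n_\alpha]$ is perverse, forcing $n_\alpha = 0$ and each $\cE(\mu_\alpha)$ to be itself perverse. Tilting perverse sheaves are closed under finite direct sums, so it suffices to treat $\cF = \cE(\mu)$ indecomposable.

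Verdier duality preserves relative perversity and flatness, exchanges $*$-parity with $!$-parity, and swaps $\Delta$- with $\nabla$-flags. It therefore suffices to construct a $\Delta$-flag of $\cE(\mu)$, which I would do by induction on the number of strata in $\supp \cE(\mu) = \Gr_{G, C, \leq \mu}$. Let $j_\mu \co \Gr_{G,C,\mu} \inj \Gr_{G,C,\leq \mu}$ be the inclusion of the top stratum. Since $j_\mu$ is an open immersion, $j_\mu^*$ is perverse $t$-exact; on the smooth connected $d_\mu$-dimensional $\Gr_{G,C,\mu}$, perverse sheaves are shifted local systems in degree $-d_\mu$. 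Combined with the relative parity and constancy hypotheses together with the normalization, this yields $j_\mu^* \cE(\mu) \cong \underline{k}[d_\mu]$. Adjunction then produces a morphism $\Delta_\mu \to \cE(\mu)$ restricting to the identity on $\Gr_{G,C,\mu}$; its perverse cokernel $\cF'$ is supported on $\Gr_{G, C, < \mu}$.

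The central obstacle is to prove that $\cF'$ is itself relative perverse parity, from which the induction hypothesis supplies a $\Delta$-flag of $\cF'$ that extends to one of $\cE(\mu)$ (after verifying that the adjunction map $\Delta_\mu \to \cE(\mu)$ is injective as perverse sheaves --- a point which follows by controlling, via the indecomposability of $\cE(\mu)$, the possible perverse subobjects of $\Delta_\mu$ supported on $\Gr_{G,C,<\mu}$ that could map to zero). The difficulty is that $\Delta_\mu$ itself is not a relative parity sheaf in general, so the long exact sequence of $*$-stalks arising from the triangle $\Delta_\mu \to \cE(\mu) \to \cF'$ does not directly propagate parity to $\cF'$. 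To circumvent this, I would exploit the Bott--Samelson / Demazure resolutions of $\Gr_{G,C,\leq \mu}$ constructed in Section \ref{ssec: demazure}: by Lemma \ref{lem: BS resolution parity}, the pushforward of the constant sheaf along such a resolution $f \co \widetilde{\Gr}_{\leq \mu} \to \Gr_{G,C,\leq \mu}$ is a relative parity complex admitting $\Delta_\mu$ as a perverse quotient (via its top perverse cohomology). One then transfers parity information from $Rf_* \underline{k}$ through the two-step quotient $Rf_* \underline{k} \twoheadrightarrow \Delta_\mu \hookrightarrow \cE(\mu)$ to control the cokernel $\cF'$ at each lower stratum, closing the induction. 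Verdier duality applied to the resulting $\Delta$-flag then yields the $\nabla$-flag, exhibiting $\cF$ as tilting.
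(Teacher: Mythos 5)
Your reduction to indecomposable normalized summands and the duality reduction to constructing a $\Delta$-flag are fine, but the induction you set up cannot close, and the gap sits exactly where you flag it. You propose to show that the perverse cokernel $\cF'$ of $\Delta_\mu \to \cE(\mu)$ is again relative perverse parity and then invoke the induction hypothesis. That intermediate claim is false in general: granting the proposition, a perverse parity complex on fewer strata would be \emph{tilting}, whereas the quotient $T(\mu)/\Delta(\mu)$ (the module-theoretic shadow of $\cF'$) has a $\Delta$-flag but typically admits no $\nabla$-flag, hence is not tilting and so cannot be perverse parity. What you actually need is only that $\cF'$ has a $\Delta$-flag, and establishing that is essentially equivalent to the original problem. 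The appeal to Bott--Samelson resolutions and ``transferring parity information through the two-step quotient'' is a gesture, not an argument: $\Delta_\mu$ is not a parity complex, so the triangle $\Delta_\mu \to \cE(\mu) \to \cF'$ gives no control on the $*$- or $!$-restrictions of $\cF'$ without further input. (The injectivity of $\Delta_\mu \to \cE(\mu)$ in the perverse heart is also asserted rather than proved.)

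The paper's proof avoids constructing the flag altogether. By Ringel's homological criterion (\cite[Theorem 3.1]{JMW16}), $\cF$ has a standard filtration iff $\Ext^1(\cF, \nabla_\mu) = 0$ for all $\mu \in X_*(T)^+$, and Verdier duality handles the costandard side. The key device is the exact triangle $\nabla_\mu \rightarrow i_{\mu *} \ul{k}[d_\mu] \rightarrow A$ with $A \in {}^{\mf{p}}D^{>0}$: in the resulting long exact sequence, $\Hom(\cF, A)$ vanishes by the t-structure axioms since $\cF$ is perverse, while $\Ext^1(\cF, i_{\mu *} \ul{k}[d_\mu]) = \Hom(\cF, i_{\mu *} \ul{k}[d_\mu+1])$ vanishes by Corollary \ref{cor: Hom parity vanishing}, the target being relative $!$-odd and $\cF$ relative $*$-even. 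The costandard object is thus sandwiched between a term killed by perversity and a term killed by parity; this interplay is precisely what your proposal is missing. Any constructive variant of your induction would still need this Ext-vanishing (equivalently, control of $i_{\mu'}^!\cE(\mu)$) to know that the cokernel inherits a $\Delta$-flag.
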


\begin{proof}
The proof is similar to that of \cite[Proposition 3.3]{JMW16}, substituting the results of \S \ref{sec: parity sheaves} for those of \cite{JMW14}. We need to show that $\cF$ has a standard filtration and a costandard filtration. By a general homological algebra result of Ringel \cite[Theorem 3.1]{JMW16}, $\cF$ has a standard filtration if and only if $\Ext^1(\cF, \nabla_\mu) = 0$ for all $\mu \in X_*(T)^+$, and $\cF$ has a costandard filtration if and only if $\Ext^1(\Delta_\mu, \cF) = 0$ for all $\mu \in X_*(T)^+$. By Verdier duality, it suffices to show that 
\[
\Ext^1(\cF, \nabla_\mu) = 0   \text{ for all } \mu \in X_*(T)^+. 
\]
We have an exact triangle 
\[
\nabla_\mu \rightarrow i_{\mu *} \ul{k}[d_\mu] \rightarrow A
\]
where $A$ lies in $\lp D^{>0}$  for the (relative) perverse t-structure. This gives a long exact sequence
\[
\ldots \rightarrow \Hom(\cF, A) \rightarrow \Ext^1(\cF, \nabla_\mu)  \rightarrow  \Ext^1(\cF, i_{\mu *} \ul{k}[d_\mu]) \rightarrow \ldots
\]
The leftmost term vanishes by the axioms of a t-structure, because $\cF \in \lp D^0$ while $A \in  \lp D^{>0}$. The rightmost term vanishes by Corollary \ref{cor: Hom parity vanishing} because $i_{\mu *} \ul{k}[d_\mu+1]$ is relative $!$-odd for the pariversity $\dagger$, while $\cF$ is relative $*$-even. Thus the middle term vanishes, as desired. 
\end{proof}

\subsection{Proof of Theorem \ref{thm: parity = tilting}}\label{ssec: proof of parity = tilting part 1}

For $\lambda \in X_*(T)^+$, we let $\cT(\mu) \in \Sat(\Gr_{G, C};k)$ be the perverse sheaf corresponding under Geometric Satake to $T_k(\mu) \in \Rep_k(\chG)$. Our goal is then to prove $\cE(\mu) \cong \cT(\mu)$ for all $\mu \in X_*(T)$. By Proposition \ref{prop: parity sheaves} it suffices to show that $\cT(\mu)$ is parity, since $\cT(\mu)$ is indecomposable (since $T_k(\mu)$ is, by definition) and has the correct support.




\subsubsection{Group-theoretic reductions}

As in \cite[\S 3.4]{JMW16}, by elementary reductions we may assume that $G$ is simple, semi-simple, and adjoint. Then $\chG$ is simple, semi-simple, and simply connected, so it has a system of fundamental weights $\omega_i$, as well a unique quasi-minuscule dominant root $\alpha_0$ (characterized as the unique shortest dominant root of $\chG$). 
 
\subsubsection{Basic cases} We will establish the result in the minuscule and quasi-minuscule cases as basic building blocks. 

\begin{lemma}\label{lem: minuscule}
If $\mu \in X_*(T)^+$ is minuscule, then $\cE(\mu) \cong \cT(\mu) \cong \IC_\mu$. 
\end{lemma}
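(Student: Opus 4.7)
The strategy is to observe that for minuscule $\mu$, the Schubert variety $\Gr_{G, C, \leq \mu}$ is particularly simple, which forces all three objects to coincide with the shifted constant sheaf on this stratum. Specifically, for minuscule $\mu \in X_*(T)^+$, there is no $\mu' \in X_*(T)^+$ with $\mu' < \mu$, so $\Gr_{G, C, \leq \mu} = \Gr_{G, C, \mu}$; in particular, the closed Schubert variety equals the open Schubert cell, which is $\ell$-cohomologically smooth of integral dimension $d_\mu$.

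First I would identify $\cE(\mu)$. By Proposition \ref{prop: parity sheaves}, any indecomposable relative parity complex supported on $\Gr_{G, C, \leq \mu}$ whose restriction to the open stratum is $k[d_\mu]$ must be isomorphic to $\cE(\mu)$. Since $\Gr_{G, C, \leq \mu} = \Gr_{G, C, \mu}$, the shifted constant sheaf $i_{\mu !}(k[d_\mu]) \cong \rR i_{\mu *}(k[d_\mu])$ is both $*$-even and $!$-even with respect to the dimension pariversity $\dagger_G$, so it is a relative parity complex with the required support and open-stratum restriction. Hence $\cE(\mu) \cong i_{\mu !}(k[d_\mu])$, and this is clearly also the IC sheaf $\IC_\mu$, since IC of a shifted constant sheaf on a smooth space is itself.

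Next I would identify $\cT(\mu)$. Because $\mu$ is minuscule, all weights of the Weyl module $V(\mu)$ lie in the Weyl group orbit $W \cdot \mu$, and in particular there is no dominant weight strictly less than $\mu$ appearing in $V(\mu)$. This forces the standard, costandard, simple, and indecomposable tilting modules of highest weight $\mu$ to all coincide: $\Delta(\mu) = \nabla(\mu) = L(\mu) = T(\mu)$. Under the Geometric Satake equivalence \eqref{eq: geom sat S}, the standard/costandard objects $\Delta_\mu = \lp i_{\mu!}(k[d_\mu])$ and $\nabla_\mu = \lp i_{\mu *}(k[d_\mu])$ correspond to $\Delta(\mu)$ and $\nabla(\mu)$ respectively, and the IC sheaf $\IC_\mu$ corresponds to $L(\mu)$. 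Since $\Gr_{G, C, \leq \mu}$ is smooth, all of these agree with $i_{\mu!}(k[d_\mu]) \cong \IC_\mu$, matching the equality $T(\mu) = L(\mu)$ on the dual side. Thus $\cT(\mu) \cong \IC_\mu \cong \cE(\mu)$, completing the proof.

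There is no significant obstacle here; the argument is essentially formal once one unwinds that minuscule Schubert cells are their own closures. The real work of Theorem \ref{thm: parity = tilting} lies in the quasi-minuscule case, which the paper signals will be treated separately via a degeneration to the Witt vector affine Grassmannian.
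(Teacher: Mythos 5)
Your proof is correct and follows essentially the same route as the paper's (much terser) argument: minuscule means the Schubert cell is already closed, so the shifted constant sheaf is simultaneously the IC sheaf, the tilting sheaf (since $\Delta(\mu)=\nabla(\mu)=L(\mu)=T(\mu)$ for minuscule highest weight), and visibly a parity complex, hence $\cE(\mu)$. No gaps.
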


\begin{proof}
The orbit corresponding to the minuscule coweights of $G$, so $\cT(\mu) \cong \IC_\mu \cong k_{\Gr_{G,C,\mu}}[d_\mu]$. This is clearly parity, so it is isomorphic to $\cE(\mu)$. 
\end{proof}

\begin{prop}\label{prop: quasi-minuscule}
Let $\alpha_0$ be the quasi-minuscule root of $\chG$. If $\ell$ is a good prime for $\chG$ and furthermore $\ell \nmid n+1$ in type $A_n$, $\ell \nmid n$ in type $C_n$, then we have 
\[
\cE(\alpha_0) \cong \cT(\alpha_0) \cong \nabla_{\alpha_0} \cong \Delta_{\alpha_0} \cong \IC_{\alpha_0}.
\]
\end{prop}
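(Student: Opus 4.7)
The proof splits into two parts: first, establishing the chain $\nabla_{\alpha_0} \cong \Delta_{\alpha_0} \cong \IC_{\alpha_0} \cong \cT(\alpha_0)$ purely on the Satake / representation-theoretic side; and second, the identification $\cE(\alpha_0) \cong \IC_{\alpha_0}$ via parity-theoretic arguments.

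For the first part, under Geometric Satake \eqref{eq: geom sat S} the standard $\Delta_{\alpha_0}$ and costandard $\nabla_{\alpha_0}$ correspond to the Weyl module and induced (dual Weyl) module of $\chG$ of highest weight $\alpha_0$, respectively. Under the stated hypotheses on $\ell$, the quasi-minuscule Weyl module of $\chG$ is irreducible; this is a standard fact of modular representation theory, and the extra coprimality conditions in types $A_n$ and $C_n$ are precisely those needed to rule out the problematic reductions in these two cases (where a $\mrm{tr}$-like invariant forces an additional composition factor when $\ell$ divides $n+1$ or $n$, respectively). Hence the canonical map $\Delta_{\alpha_0} \to \nabla_{\alpha_0}$ is an isomorphism, and both coincide with the simple object $\IC_{\alpha_0}$. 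Being simultaneously standardly and costandardly filtered (trivially) makes this object tilting; being indecomposable of highest weight $\alpha_0$ it must equal $\cT(\alpha_0)$.

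For the second part, since $\Gr_{G, C, \leq \alpha_0}$ has only two strata -- the open $\Gr_{G, C, \alpha_0}$ and the closed point $\{0\}$ -- it suffices by Proposition~\ref{prop: parity sheaves} to exhibit $\IC_{\alpha_0}$ as a normalized parity sheaf, which by Verdier self-duality reduces to checking that the stalk $i_0^* \IC_{\alpha_0}$ is concentrated in cohomological degrees of parity $\dagger_G(\alpha_0) = \tw{2\rho, \alpha_0} \pmod 2$. Construct a Bott-Samelson-type resolution $\pi \co \widetilde{Y} \to \Gr_{G, C, \leq \alpha_0}$ as in \S\ref{ssec: demazure}. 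By Lemma~\ref{lem: BS resolution parity}, $R\pi_* \ul k[d_{\alpha_0}]$ is a parity complex; moreover $\pi$ is birational, so by Krull-Schmidt (Lemma~\ref{lem: KRS}) the decomposition takes the form $R\pi_* \ul k[d_{\alpha_0}] \cong \cE(\alpha_0) \oplus M$, where $M$ is a sum of shifted skyscraper parity sheaves at $0$. Following the analysis of \cite{JMW14}, the summand $M$ vanishes precisely when the intersection form on $H^*(\pi^{-1}(0); k)$ pairing classes of complementary degree is non-degenerate over $k$; in that case $R\pi_* \ul k[d_{\alpha_0}] \cong \IC_{\alpha_0}$, which forces $\cE(\alpha_0) \cong \IC_{\alpha_0}$ and simultaneously shows that $\IC_{\alpha_0}$ is parity.

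The main obstacle is to establish this non-degeneracy in the $p$-adic setting, where the local structure of $\Gr_{G, C, \leq \alpha_0}$ at the origin is not directly accessible by the tools available for diamonds. Over $\CC$ or in equicharacteristic, the proof in \cite{JMW16} identifies this singularity, smooth-locally, with the minimal nilpotent orbit closure in $\check{\mf g}$; the intersection form is then a known bilinear form on the root lattice (essentially a rescaled Cartan pairing) whose non-degeneracy mod $\ell$ is equivalent to the hypotheses on $\ell$. To transport this computation, I would degenerate to the Witt vector affine Grassmannian using Zhu's mixed-characteristic resolutions from \cite{Zhu17}, which have the same combinatorial description of fibers as the classical Bott-Samelson resolutions; together with the specialization map from the $\bdrp$-affine Grassmannian one identifies the intersection form on $H^*(\pi^{-1}(0); k)$ with the one computed in \cite{JMW16}. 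Non-degeneracy then transfers, completing the proof.
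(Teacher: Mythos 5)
Your overall strategy is the same as the paper's: reduce to showing the tilting sheaf is parity, pass to the Witt vector affine Grassmannian, use Zhu's resolution of the quasi-minuscule Schubert variety, and control the failure of the decomposition theorem via the Juteau--Mautner--Williamson intersection form on the Borel--Moore homology of the central fiber, comparing with the known equal-characteristic case. Your first paragraph (irreducibility of the quasi-minuscule Weyl module under the stated hypotheses on $\ell$, hence $\Delta_{\alpha_0} \cong \nabla_{\alpha_0} \cong \IC_{\alpha_0} \cong \cT(\alpha_0)$) and your reduction via Lemma \ref{lem: BS resolution parity} and the Krull--Schmidt property are fine.

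The gap is at the decisive step, which you assert rather than prove: the identification of the intersection form on $\mBM_*(\pi^{-1}(0))$ in mixed characteristic with the one in equal characteristic. Knowing that the fibers ``have the same combinatorial description'' only gives you the additive structure of the (equivariant) cohomology of $F$; the intersection form is a \emph{cup product on cohomology with supports} $\rH^{2d-*}_F(\wt X;k)$, i.e.\ it depends on how $F$ sits inside the resolution $\wt X$, not just on $F$ itself. There is also no off-the-shelf ``specialization map'' comparing the Witt vector affine Grassmannian with the equicharacteristic one at the level of spaces that would transport this multiplicative structure; indeed the paper explicitly remarks that it does not know how to formulate the smooth-local comparison with the minimal nilpotent orbit in this setting. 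The actual mechanism is purely algebraic: one checks that $F$ and $\wt X$ (and their $\natural$-analogues) are $Q_0$-equivariantly formal when $\ell$ is not a torsion prime (Corollary \ref{cor: equivariantly formal}), computes the equivariant cohomology of the resolution, the open stratum, and the fiber as tensor products $R(Q_{1/4})\otimes_{R(Q_{1/2})} R(Q_{3/4})$, $R(Q_{1/4}\cap Q_{3/4})$, $R(sQ_{1/4}s^{-1}\cap Q_{3/4})$ via an equivariant K\"unneth formula, and then uses the canonical isomorphisms $R(Q_r)\cong R(Q_r^\natural)$ (coming from the fact that the reductive quotients of the corresponding parahorics agree) to get a ring isomorphism of the whole excision triangle, hence of the cohomology with supports together with its cup product. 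Equivariant formality then recovers the non-equivariant intersection forms and shows they have equal rank; no direct computation of the form (e.g.\ as a rescaled Cartan pairing) is needed, since one only matches against the known equal-characteristic answer. Without something playing the role of this multiplicative comparison, your argument does not close.
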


The proof of Proposition \ref{prop: quasi-minuscule} will actually be quite involved (and will deviate significantly from its ``classical'' counterpart in \cite[Lemma 3.7(2)]{JMW16}, so we will assume it for now in order to complete the proof. 

\subsubsection{Fundamental weights} Next we establish the result for the fundamental weights. 

\begin{lemma}\label{lem: fundamental weight}
If $\ell>b(\chG)$, then for each fundamental weight $\omega_i$ of $\chG$, we have $\cE(\omega_i) \cong \cT(\omega_i)$. 
\end{lemma}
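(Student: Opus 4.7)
The plan is to reduce the case of an arbitrary fundamental coweight to the minuscule and quasi-minuscule cases already handled by Lemma \ref{lem: minuscule} and Proposition \ref{prop: quasi-minuscule}. The key representation-theoretic input, which is a classical fact about simple simply-connected groups established in the proof of \cite[Lemma 3.7]{JMW16}, is that for each fundamental weight $\omega_i$ of $\chG$ there exist minuscule or quasi-minuscule dominant coweights $\nu_1, \ldots, \nu_r \in X_*(T)^+$ such that $\omega_i = \nu_1 + \ldots + \nu_r$ and the indecomposable tilting $T_k(\omega_i)$ occurs as a direct summand of $T_k(\nu_1) \otimes \ldots \otimes T_k(\nu_r)$. (In types $A, B, C, D, E_6, E_7$ one can take each $\nu_j$ minuscule or combine a minuscule with a quasi-minuscule; in types $E_8, F_4, G_2$ one writes every fundamental weight as a summand of a tensor power of the adjoint representation, which has highest weight equal to the quasi-minuscule root $\alpha_0$.)

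Granting this, the proof proceeds as follows. First, by Mathieu's theorem tensor products of tilting modules are tilting, so $T_k(\nu_1) \otimes \ldots \otimes T_k(\nu_r)$ is a tilting module, and in particular perverse under \eqref{eq: geom sat S}. Since Geometric Satake sends tensor product to convolution, this perverse sheaf is $\cT(\nu_1) \star \ldots \star \cT(\nu_r)$, and $\cT(\omega_i)$ is a direct summand. Next, by Lemma \ref{lem: minuscule} each minuscule factor satisfies $\cT(\nu_j) \cong \cE(\nu_j)$ (hence is a parity sheaf), and by Proposition \ref{prop: quasi-minuscule} the same holds for each quasi-minuscule factor; here we use the hypothesis $\ell > b(\chG)$, which implies in particular that $\ell$ is good and avoids the small primes excluded in Proposition \ref{prop: quasi-minuscule}. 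By Corollary \ref{cor: conv preserves parity}, the convolution $\cT(\nu_1) \star \ldots \star \cT(\nu_r)$ is a relative parity complex.

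Finally, we apply the Krull-Schmidt property for $\Dula{G}{k}{1}$ (Lemma \ref{lem: KRS}), which implies that any indecomposable direct summand of a relative parity complex is itself a relative parity sheaf. Therefore $\cT(\omega_i)$ is an indecomposable parity sheaf. Since $\cT(\omega_i)$ is supported on $\Gr_{G, C, \leq \omega_i}$ and its restriction to the open stratum $\Gr_{G, C, \omega_i}$ is the constant sheaf $k[d_{\omega_i}]$ (this is built into the normalization of the Satake equivalence and the fact that $T_k(\omega_i)$ has highest weight space of dimension one), the classification in Proposition \ref{prop: parity sheaves}(3) identifies $\cT(\omega_i) \cong \cE(\omega_i)$.

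The main obstacle is really the case-by-case representation-theoretic input in the first paragraph; everything geometric is a formal consequence of the structural results of \S\ref{sec: parity sheaves} combined with Geometric Satake. Since the representation-theoretic step is identical to the argument of \cite[Lemma 3.7]{JMW16} (which is purely about the Langlands dual group and does not depend on the particular sheaf-theoretic context), we may import it verbatim.
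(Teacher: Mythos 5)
There is a genuine gap in your representation-theoretic input, and it sits exactly where the hypothesis $\ell > b(\chG)$ does its real work. You claim that every fundamental weight $\omega_i$ of $\chG$ can be written as $\nu_1 + \cdots + \nu_r$ with each $\nu_j$ minuscule or quasi-minuscule, so that $T_k(\omega_i)$ is the highest-weight summand of $T_k(\nu_1)\otimes\cdots\otimes T_k(\nu_r)$ and hence a direct summand for free. This is false outside type $A$: for instance in type $B_n$ with $n\ge 3$ the only minuscule weight is $\varpi_n=\tfrac12(e_1+\cdots+e_n)$ and the quasi-minuscule one is $\varpi_1=e_1$, and $\varpi_2=e_1+e_2$ is not a non-negative integral combination of these; in types $G_2,F_4,E_8$ there are no minuscule weights at all and most fundamental weights are not multiples of the quasi-minuscule one. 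Your own parenthetical (fundamental weights appearing ``as a summand of a tensor power of the adjoint representation'') already concedes that $\omega_i$ occurs only as a \emph{lower} constituent of such tensor products, which contradicts the equality $\omega_i=\sum_j\nu_j$.

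Once $\omega_i$ is only a lower weight of the tensor product, the appearance of $T_k(\omega_i)$ as a direct summand over $k$ is no longer automatic: by Mathieu the tensor product of tilting modules is tilting, but its decomposition into indecomposables can differ from the characteristic-zero decomposition, so a priori $T_k(\omega_i)$ need not occur. The paper closes this gap by observing that for $\ell>b(\chG)$ each fundamental Weyl module remains \emph{simple} over $k$, hence is itself tilting; this forces the characteristic-zero decomposition (e.g.\ $W_{\CC}(\varpi_n)^{\otimes 2}\cong W_{\CC}(2\varpi_n)\oplus W_{\CC}(\varpi_{n-1})\oplus\cdots\oplus W_{\CC}(0)$ in type $B_n$) to persist over $k$ as a direct sum of tilting modules, and only then does $T_k(\omega_i)$ appear as a summand of a tensor product of minuscule and quasi-minuscule tiltings. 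Your geometric steps (convolution preserves parity via Corollary \ref{cor: conv preserves parity}, Krull--Schmidt, and the classification in Proposition \ref{prop: parity sheaves}) are correct and agree with the paper, but without the simplicity of the fundamental Weyl modules the reduction to the minuscule and quasi-minuscule cases does not go through.
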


\begin{proof}
The proof is the same as that of \cite[Proposition 3.8]{JMW16} so we just sketch it. As explained earlier, it suffices to show that $\cT(\omega_i)$ is a parity complex.

In type $A$, all the fundamental weights are minuscule, so the result follows immediately from Lemma \ref{lem: minuscule}. We henceforth assume that $\chG$ is not of type $A$. Then under the assumption $\ell > b(\chG)$, we have by Lemma \ref{lem: minuscule} and Proposition \ref{prop: quasi-minuscule} that if $\mu$ is minuscule or quasi-minuscule, then the Weyl module $W_k(\mu)  = \Delta_\mu $ is tilting.

We shall want to make comparisons to characteristic zero, so we write $W_{\CC}(\mu)$ for the Weyl module of highest weight $\mu$ in $\Rep_{\CC}(\chG)$. We may realize the Weyl module $W_{\CC}(\omega_i)$ as a direct summand of the tensor products of Weyl modules associated to minuscule or quasi-minuscule weights as in \cite[\S 3.6]{JMW16}; for example, in type $B_n$ (referring to the notation of \cite[\S 3.6.2]{JMW16}) the fundamental weight $\varpi_n$ is minuscule and we have
\begin{equation}\label{eq: B_n example}
W_{\CC}(\varpi_n)^{\otimes 2} \cong W_{\CC}(2\varpi_n) \oplus W_{\CC}(\varpi_{n-1}) \oplus \ldots \oplus W_{\CC}(\varpi_1) \oplus W_{\CC}(0),
\end{equation}
which realizes the other Weyl modules with fundamental highest weights $\varpi_{n-1}, \ldots, \varpi_1$ as direct summands of $W_{\CC}(\varpi_n)^{\otimes 2}$. Since the tensor product of tilting modules is tilting, the tensor products of Weyl modules over $k$ associated to (quasi-)minuscule weights are tilting (e.g., $W_{k}(\varpi_1)^{\otimes 2}$ in type $B_n$). Then Corollary \ref{cor: conv preserves parity} implies that the corresponding perverse sheaves are parity. If each fundamental Weyl module summand in these decompositions remains simple over $k$ (e.g., the $W_k(\varpi_{i})$ in type $B_n$), then each is tilting and the same decomposition occurs over $k$ as over $\CC$. Then $\cT(\omega_i)$ is a direct summand of a parity complex, so it is a parity complex. 

It only remains to remark that for $\ell > b(\chG)$ the fundamental Weyl modules remain simple; see \cite[\S 3.6]{JMW16} for references to the proofs. 

\end{proof}

\subsubsection{The general case} The following Lemma completes the proof of Theorem \ref{thm: parity = tilting} (modulo the proof of Proposition \ref{prop: quasi-minuscule}, which we complete in the next subsection). 

\begin{lemma}
If $\ell > b(\chG)$, then for any $\mu \in X_*(T)^+$ we have $\cE(\mu) \cong \cT(\mu)$.
\end{lemma}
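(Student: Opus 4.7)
The plan is to bootstrap from the fundamental coweight case (Lemma \ref{lem: fundamental weight}) using the monoidal structure of the Satake equivalence together with Mathieu's theorem that the tensor product of tilting modules is tilting.

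Write $\mu = \sum_i n_i \omega_i$ as a non-negative integral combination of fundamental coweights, and consider the iterated convolution
\[
\cF := \underset{i}{\mathop{\text{\LARGE$\star$}}}\, \cT(\omega_i)^{\star n_i} \in \Sat(\Gr_{G,C};k).
\]
By Lemma \ref{lem: fundamental weight}, each $\cT(\omega_i)$ is a normalized parity complex. By Corollary \ref{cor: conv preserves parity}, convolution preserves parity, so $\cF$ is a parity complex. On the other hand, under the Geometric Satake equivalence $\cF$ corresponds to $\bigotimes_i T_k(\omega_i)^{\otimes n_i}$, which is tilting by Mathieu's theorem. In particular it is perverse, so $\cF$ is simultaneously relative perverse and relative parity.

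Now decompose $\cF$ into indecomposables in the Krull--Schmidt category $\Parity_{\mathsf{n}}^{\ULA}(\Gr_{G,C};k)$ (Lemma \ref{lem: KRS}). Since the Weyl-dimension-leading term of $\bigotimes_i T_k(\omega_i)^{\otimes n_i}$ is $T_k(\mu)$ (up to lower highest weights), the support of $\cF$ contains $\Gr_{G,C,\leq \mu}$ and there is an indecomposable summand $\cE'$ whose support is exactly $\Gr_{G,C,\leq \mu}$. By Proposition \ref{prop: parity sheaves}(3), $\cE'$ is determined up to isomorphism by its restriction to the open stratum, which (by comparing with the tilting decomposition under Satake) is $k[d_\mu]$; hence $\cE' \cong \cE(\mu)$. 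On the other hand, $\cF$ is perverse, so by Proposition \ref{prop: perverse parity implies tilting} (applied to the indecomposable parity perverse summand of $\cF$ with full support on $\Gr_{G,C,\leq \mu}$) the summand $\cE'$ is perverse and tilting, and its highest weight is $\mu$. Thus $\cE(\mu) \cong \cE' \cong \cT(\mu)$, as desired.

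The only subtle point is ensuring that the indecomposable summand of $\cF$ with full support on $\Gr_{G,C,\leq \mu}$ is perverse (not just the direct sum of its shifts), but this follows because $\cF$ itself is perverse: Krull--Schmidt decompositions in the parity category and in the perverse category are compatible via the fully faithful inclusion $\Sat \hookrightarrow \DSY{G}{k}$, so each indecomposable summand of $\cF$ in the parity category is already perverse.
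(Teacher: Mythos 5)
Your proposal is correct and takes essentially the same route as the paper: write $\mu=\sum n_i\omega_i$, form the convolution of the $\cT(\omega_i)$, use Lemma \ref{lem: fundamental weight}, Corollary \ref{cor: conv preserves parity} and Mathieu's theorem, and identify the relevant indecomposable summand via the uniqueness statement for parity sheaves. The paper's version is marginally more direct — it observes that $\cT(\mu)$ is itself a direct summand of the parity convolution (since $T(\mu)$ is a summand of $\bigotimes T(\omega_i)^{\otimes n_i}$ under the Satake equivalence), hence parity, hence $\cong\cE(\mu)$ — which sidesteps the decomposition-matching point you address in your final paragraph.
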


\begin{proof}The argument is the same as in \cite[\S 3.7]{JMW16}, so we just sketch it. We may write $\mu = \sum n_i \varpi_i$ for $n_i \geq 0$. The representation $\bigotimes T_k(\varpi_i)^{\otimes n_i}$ is tilting, with highest weight $\mu$, hence contains $T_k(\mu)$ as a summand. Hence $\cT(\mu)$ is a summand of $\cT(\varpi_1)^{\star n_1} \star \ldots \star \cT(\varpi_r)^{\star n_r}$. This convolution is parity by Corollary \ref{cor: conv preserves parity}, since Lemma \ref{lem: fundamental weight} implies that each $\cT(\varpi_i)$ is parity. Therefore the summand $\cT(\lambda)$ is also parity, so we must have $\cT(\mu) \cong \cE(\mu)$. 
\end{proof}

\subsection{Proof of Proposition \ref{prop: quasi-minuscule}}\label{ssec: proof of parity = tilting part 2} We may assume that $G$ is split, so we fix a reductive extension $G/\cO_F$, in which we prolong $T$ to a split maximal torus. Let $\ol{G}, \ol{T}$, etc. denote the special fibers of $G, T$, etc. 

\subsubsection{Reduction to the Witt vector affine Grassmannian} As observed already, it suffices to show that $\cT(\alpha_0)$ is parity. Since $\cT(\alpha_0)$ is pulled back along $\Gr_{G,C} \rightarrow \cHck_{G, C}$, the statement can be ``degenerated'' to characteristic $p$ via \cite[Corollary VI.6.7]{FS}, which implies that it suffices to check the analogous statement for $\Gr_{G, \Spd \ol{\F}_q/\Div^1_{\cY}} \cong (\Gr_G^{\Witt})^{\di}$ where $\Gr_G^{\Witt}$ is the \emph{Witt vector affine Grassmannian} of \cite{Zhu17, BS17}. Then by \cite[\S 27]{Sch17}, it suffices to show the analogous statement on $\Gr_G^{\Witt}$: the perverse sheaf $\cT(\alpha_0)$ on $\Gr_G^{\Witt}$, corresponding to the tilting module $T_k(\alpha_0)$ under the Geometric Satake equivalence, is a parity complex.

\subsubsection{Intersection forms}\label{sssec: intersection form} We recall the intersection forms from \cite[\S 3.1]{JMW14}, adapted to our setting of perfect algebraic geometry. We work over an algebraically closed field $\bF$ of characteristic $p$. For $Y/\Spec \bF$, the \emph{Borel-Moore homology of $\wt{Y}$} is $\mBM_{a}(Y) = \rH^{-2a}(Y; \DD_{Y/\bF})$. If $Y$ is perfectly smooth of dimension $d$ over $\bF$, then we have $\DD_{Y/\bF} \cong \ul{k} [2d]$. 

If $i \co Z \rightarrow Y$ is a closed embedding with open complement $j \co U \inj Y$, then we have an exact triangle
\[
i_* \DD_{Z/\bF} \rightarrow \DD_{Y/\bF} \rightarrow j_* \DD_{U/\bF}.
\]
Suppose $Y$ is perfectly smooth of dimension $d$ over $\bF$, so the same holds for $U$. Then $\DD_{Y/\bF} \cong k[2d]$ and similarly for $\DD_{U/\bF}$. Taking cohomology, this induces an isomorphism 
\[
\mBM_a(Z) \cong \rH^{2d-a}_Z(Y; k) := \rH^{2d-a}(Y; i_* i^! k_Y).
\]
The cup product on relative cohomology gives a pairing $\rH^{2d-a}_Z(Y; \ul{k}) \otimes \rH^{2d-b}_Z(Y; \ul{k})   \rightarrow \rH^{4d-a-b}_Z(Y; \ul{k})$, which translates to a pairing
\begin{equation}\label{eq: BM pairing}
\mBM_a(Z)  \otimes  \mBM_b(Z)   \rightarrow  \mBM_{a+b-2d}(Z)
\end{equation}
under the commutative diagram 
\begin{equation}\label{eq: intersection form diagram}
\begin{tikzcd}[column sep = tiny]
\mBM_a(Z)  \ar[d, "\sim"] & \otimes &  \mBM_b(Z)   \ar[d, "\sim"]  \ar[rrrr]  & & & &  \mBM_{a+b-2d}(Z) \ar[d, "\sim"] \\
\rH^{2d-a}_Z(Y; \ul{k}) & \otimes &  \rH^{2d-b}_Z(Y; \ul{k})   \ar[rrrr] & & & & \rH^{4d-a-b}_Z(Y; \ul{k})
\end{tikzcd}
\end{equation}
Suppose $a+b=2d$ and $Z$ is perfectly proper over $\F$. Then we have a degree map $\deg \co \mBM_0(Z) \rightarrow k$, which when combined with \eqref{eq: BM pairing} induces a pairing
\begin{equation}
B_Z^m \co \mBM_{d-m}(Z)  \otimes  \mBM_{d+m}(Z)  \rightarrow k.
\end{equation}
This is the (analogue of the) \emph{intersection form} from \cite[\S 3.1]{JMW14}, which is in turn inspired by the work of de Cataldo and Migliorini on the Hodge theory of the Decomposition Theorem. In an analogous topological setting, it measures the intersection number of a real $(d-m)$-dimensional cycle on $Z$ and a real $(d+m)$-dimensional cycle on $Z$ within the (real) $2d$-dimensional space $Y$. We will use it to study the Decomposition Theorem with modular coefficients.

\subsubsection{Splitting at the singular point}\label{sssec: singular point splitting}
 Let $\pi \co \wt{X} \rightarrow X$ be a perfectly proper surjective map, with $\wt{X}$ perfectly smooth of dimension $d$. Let $x_0 \in X$ and form the Cartesian square
\[
\begin{tikzcd}
F \ar[r] \ar[d] & \wt{X} \ar[d, "\pi"] \\
x_0 \ar[r, "i"] & X
\end{tikzcd}
\]
Then by \S \ref{sssec: intersection form} (taking $Y = \wt{X}$ and $Z = F$), for each $0 \leq m \leq d$ there is an intersection form 
\begin{equation}
B_F^m \co \mBM_{d-m}(F) \otimes \mBM_{d+m}(F) \rightarrow k.
\end{equation}
Its rank controls the splitting of $\pi_* k_{\wt{X}}[d]$ in the following sense. 

\begin{lemma}{\cite[Proposition 3.2]{JMW14}}\label{lem: mult intersection form} The multiplicity of $i_* \ul{k}[m]$ as a direct summand of $\pi_* k_{\wt{X}}[d]$ is equal to the rank of $B_F^m$. 
\end{lemma}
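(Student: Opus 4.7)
The plan is to recover the multiplicity via a Krull--Schmidt style argument and then identify the relevant composition pairing with $B^m_F$. Since $i_* \ul{k}[m]$ is indecomposable with endomorphism ring $k$, a standard fact in triangulated Krull--Schmidt categories gives that its multiplicity as a direct summand of $\pi_* k_{\wt{X}}[d]$ equals the rank of the composition pairing
\[
\Hom(i_* \ul{k}[m],\, \pi_* k_{\wt{X}}[d]) \otimes_k \Hom(\pi_* k_{\wt{X}}[d],\, i_* \ul{k}[m]) \longrightarrow \End(i_* \ul{k}[m]) = k.
\]
It will therefore suffice to (i) identify the two Hom-spaces with the two factors of $B^m_F$, and (ii) match the composition pairing with the intersection form.

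For (i), I would use proper base change along the Cartesian square defining $F$. Writing $i_F \co F \hookrightarrow \wt{X}$ and $\pi_F \co F \to \pt$, perfect properness of $\pi$ gives $i^*\pi_* \cong \pi_{F*}i_F^*$ and $i^!\pi_* \cong \pi_{F*}i_F^!$. Since $\wt{X}$ is perfectly smooth of dimension $d$, we have $\DD_{\wt{X}/\bF} \cong k_{\wt{X}}[2d]$, and hence $i_F^!(k_{\wt{X}}[d]) \cong \DD_{F/\bF}[-d]$. Adjunction then yields natural isomorphisms of the Hom-spaces above with the Borel--Moore homology groups $\mBM_{d+m}(F)$ and $\mBM_{d-m}(F)$ of $F$ respectively, which are exactly the two factors of $B^m_F$.

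For (ii), both the composition pairing and $B^m_F$ admit descriptions as pairings of classes in relative cohomology $\rH^*_F(\wt{X};k)$ followed by evaluation via the degree map on $\mBM_0(F)$. Matching them requires the compatibility between composition of morphisms in $D_{\et}(X;k)$ (after adjunction along $\pi$) and the cup product of the corresponding classes on $\wt{X}$ via the commutative diagram \eqref{eq: intersection form diagram}. In the classical algebro-geometric setting, this is the content of \cite[Proposition 3.2]{JMW14}, whose proof is a diagram chase using only formal properties of the six functors.

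The main obstacle is ensuring that every ingredient in the JMW14 diagram chase of step (ii) transfers correctly to the perfect-scheme setting. The same diagram chase goes through essentially verbatim, because the six-functor formalism on perfect schemes developed by Bhatt--Scholze satisfies all the required compatibilities: proper base change, smooth purity (giving the isomorphism $\DD_{\wt{X}/\bF} \cong k_{\wt{X}}[2d]$ used above), and compatibility of the cup product with Verdier duality. Thus the proof reduces to citing \cite[Proposition 3.2]{JMW14}, together with the observation that each formal ingredient is available in our setting.
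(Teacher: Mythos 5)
Your proposal is correct and matches the paper's treatment: the paper gives no proof of this lemma at all, simply citing \cite[Proposition 3.2]{JMW14} and relying (as stated at the start of \S\ref{sssec: intersection form}) on the observation that the construction adapts verbatim to perfect algebraic geometry, since the \'etale six-functor formalism there has proper base change, smooth purity, and duality/cup-product compatibility. Your reconstruction of the underlying JMW argument — Krull--Schmidt multiplicity of an indecomposable summand with $\End = k$ equals the rank of the composition pairing, identification of the two Hom-spaces with $\mBM_{d+m}(F)$ and $\mBM_{d-m}(F)$ via base change and adjunction, and identification of composition with $B_F^m$ via \eqref{eq: intersection form diagram} — is exactly the intended proof.
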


\subsubsection{Zhu's resolution} We will apply the preceding generalities to Zhu's resolution of the quasi-minuscule Schubert variety in $\Gr_G^{\Witt}$ from \cite[\S 2.2.2]{Zhu17}. We briefly review this notation for the reader's convenience, and to set notation. Let $\alpha_0 \in X_*(T)^+$ be the quasi-minuscule dominant coroot of $G$. We abbreviate $\Gr := \Gr_G^{\Witt}$. Then we have the stratification 
\[
\Gr_{\leq \alpha_0} = \Gr_{\alpha_0} \sqcup \Gr_0. 
\]
For a root $\alpha \in X^*(T)$, let $U_\alpha$ denote the corresponding root subgroup of $G$ over $\cO$ and fix an isomorphism $U_\alpha(F)$ carrying $U_\alpha(\cO)$ isomorphically to $\cO$. For a real number $r \in [0,1]$, let $\cG_r$ be the parahoric group scheme over $\cO$, with $\cO$-points the parahoric subgroup of $G(E)$ generated by $T(\cO)$ and $\varpi^{\lceil \tw{r \alpha_0^\vee, \alpha} \rceil} \cO \subset E = U_\alpha(E)$ for all roots $\alpha$. Let $Q_r = L^+ \cG_r$ be the corresponding $p$-adic jet group. 

Following Zhu, we define 
\begin{equation}\label{eq: zhu resolution}
\wt{\Gr}_{\leq \alpha_0} := Q_0 \times^{Q_{1/4}} Q_{1/2}/Q_{3/4}.
\end{equation}
Let 
\[
\pi \co \wt{\Gr}_{\leq \alpha_0} \rightarrow \Gr_{\leq \alpha_0}
\]
be the map sending $(g, g')$ to $gg' \varpi^{\alpha_0}$. By \cite[Lemma 2.12(ii)]{Zhu17} $\pi$ restricts to an isomorphism
\[
\pi^\circ \co Q_0 \times^{Q_{1/4}} (Q_{1/4}Q_{3/4}) / Q_{3/4} \xrightarrow{\sim} \Gr_{\alpha_0}
\]
over the open stratum, and to a contraction
\begin{equation}\label{eq: zhu resolution contraction}
\pi_0 \co (\ol{G}/\ol{P}_{\alpha_0})^{p^{-\infty}} \rightarrow \Gr_0 = \pt
\end{equation}
over the singular point, where we recall that $(-)^{p^{-\infty}}$ denotes perfection and $\ol{G}$, etc. denotes the special fiber of $G$, etc. Note that this is exactly the setup of \S \ref{sssec: singular point splitting}.

All of the constructions of the preceding paragraph make sense for the equal characteristic affine Grassmannian, and yield a similar picture. We will denote the analogous objects with a superscript $\natural$.

By the same argument as for Proposition \ref{prop: perverse parity implies tilting}, in order to show that $\cT(\alpha_0) \cong \cE(\alpha_0)$ it suffices to show that $\cE(\alpha_0)$ is perverse. In the equal characteristic situation, we already know the analogue of Proposition \ref{prop: quasi-minuscule} from \cite[Theorem 4.8]{Fe23}, in particular that $\cE(\alpha_0)^{\natural} \cong \cT(\alpha_0)^{\natural}$ is perverse. 
Since $\pi^\natural$ is even, $\pi_*^{\natural} (k[d_{\alpha_0}])$ is a parity complex on $\Gr_{\leq \alpha_0}^{\natural}$. Hence we have 
\[
 \pi^{\natural}_* (k[d_{\alpha_0}]) \cong \cE(\alpha_0)^\natural \oplus \cK^\natural
\]
where $\cK^\natural \cong  \bigoplus_{m \in \Z}  i^\natural_* k[m]^{\oplus e_m}$ for some collection of multiplicities $e_m$. Since $\pi_*(k[d_{\alpha_0}])$ and $\pi_*^{\natural}(k[d_{\alpha_0}])$ have the same (co)stalks at the singular point (namely the cohomology of $\ol{G}/\ol{P}_{\alpha_0}$, by \eqref{eq: zhu resolution contraction}), it suffices to show that for all $m \in \Z$, $i_* k[m]$ is a summand of $\pi_* \ul{k}[d_{\alpha_0}]$ with the same multiplicity $e_m$. 

\subsubsection{Reduction to rank of intersection forms}\label{sssec: intersection form rank} Taking $\wt{X} := \wt{\Gr}_{\leq \alpha_0}$ and $F$ to be the fiber over $\Gr_0$, and $\wt{X}^{\natural}$ and $F^{\natural}$ the analogous objects for the equal characteristic affine Grassmannian, it suffices by \S \ref{sssec: singular point splitting} to show that the intersection form 
\[
B_F^m \co \mBM_{d-m}(F) \otimes \mBM_{d+m}(F) \rightarrow k
\]
has the same rank as the intersection form 
\[
B_{F^\natural}^{m} \co \mBM_{d-m}(F^{\natural}) \otimes \mBM_{d+m}(F^{\natural}) \rightarrow k.
\]
For this it suffices to produce a commutative diagram 
\begin{equation}\label{eq: non-equivariant natural comparison}
\begin{tikzcd}
\mBM_*(F)  \ar[d, "\sim"] \ar[r, equals] &\mBM_*(F^\natural)  \ar[d, "\sim"]  \\
\rH^{2d-*}_F(\wt{X}; \ul{k})  \ar[r, equals] & \rH^{2d-*}_{F^\natural}(\wt{X}^\natural; \ul{k}) 
\end{tikzcd}
\end{equation}
in which the bottom identification is compatible with the cup product.

\subsubsection{Equivariant formality} Let $K$ be a pro-(perfection of)-algebraic group and $Y$ an ind-scheme with an action of $K$. We have the equivariant cohomology
\[
\rH^*_K(Y; k) \cong \rH^*(K \bs Y; k)
\]
where on the RHS the quotient is taken in the sense of stacks. Recall that we say $Y$ is \emph{$K$-equivariantly formal} (over $k$) if the Leray-Serre spectral sequence $\rH^*(Y;k) \otimes_k \rH^*_K(\pt ;k ) \implies \rH^*_K(Y;k)$ degenerates at $E_2$, thus giving an isomorphism 
\[
\rH^*_K(Y;k) \cong \rH^*(Y;k) \otimes_k \rH^*_K(\pt ;k ).
\]

\begin{lemma}\label{lem: even implies equivariantly formal}
If $\rH^*(Y;k)$ is concentrated in even degrees and $\rH^*_K(\pt; k)$ is concentrated in even degrees, then $Y$ is $K$-equivariantly formal. 
\end{lemma}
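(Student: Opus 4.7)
The plan is to use the Leray--Serre spectral sequence for the fibration $Y \to K \bs Y \to BK$, which takes the form
\[
E_2^{p,q} = \rH^p_K(\pt; k) \otimes_k \rH^q(Y; k) \implies \rH^{p+q}_K(Y; k),
\]
with differentials $d_r \co E_r^{p,q} \to E_r^{p+r, q-r+1}$. Recall that equivariant formality is by definition the statement that this spectral sequence degenerates at $E_2$, which is what we must verify.

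First, I would observe that by hypothesis, $E_2^{p,q} = \rH^p_K(\pt; k) \otimes \rH^q(Y; k)$ vanishes unless both $p$ and $q$ are even, and in particular $E_2^{p,q} = 0$ whenever $p+q$ is odd. Since each subsequent page $E_r$ is a subquotient of $E_2$, the same vanishing persists: $E_r^{p,q} = 0$ whenever $p + q$ is odd, for all $r \geq 2$.

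Next I would argue that every differential must vanish by a parity/degree count. The differential $d_r$ sends $E_r^{p,q}$ to $E_r^{p+r, q-r+1}$, and the target has total degree $(p+r) + (q-r+1) = p + q + 1$. If the source is non-zero, then $p+q$ is even, so the total degree of the target is odd; by the previous paragraph, the target is then zero. Hence $d_r = 0$ for all $r \geq 2$, and the spectral sequence degenerates at $E_2$.

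There is no substantive obstacle here; the argument is purely a parity count on the spectral sequence, and the only input beyond the definitions is that the Leray--Serre spectral sequence exists for the fibration $Y \to K \bs Y \to BK$, which is standard.
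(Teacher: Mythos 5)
Your argument is correct and is exactly the paper's proof: the paper's one-line justification ("the differentials in the Leray--Serre spectral sequence change parity") is precisely the parity count you carry out in detail. No differences to report.
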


\begin{proof}
The differentials in the Leray-Serre spectral sequence change parity. 
\end{proof}

\begin{example}\label{ex: even equivariant cohomology} Any $K$ of the form $Q_r^{\natural}$ (resp. $Q_r$) has the property that the quotient by its pro-unipotent radical is (the perfection of) a reductive subgroup of $G$. Hence if $\ell$ is not a torsion prime for $G$, then $\rH^*_K(\pt ;k )$ is even for any such $K$ (cf. \cite[\S 2.6]{JMW14}). This is implied by the condition $\ell > b(\chG)$. 
\end{example}

\begin{cor}\label{cor: equivariantly formal} Let $F, F^\natural, \wt{X}, \wt{X}^\natural$ be as in \S \ref{sssec: intersection form rank}. Assume that $\ell$ is not a torsion prime for $G$. 

(1) $F$ is $Q_0$-equivariantly formal over $k$, and $F^{\natural}$ is $Q_0^{\natural}$-equivariantly formal over $k$. 

(2) $\wt{X}$ is $Q_0$-equivariantly formal over $k$, and $\wt{X}^\natural$ is $Q_0^{\natural}$-equivariantly formal over $k$. 
\end{cor}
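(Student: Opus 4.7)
The plan is to verify the hypotheses of Lemma \ref{lem: even implies equivariantly formal} in each of the four cases. For the coefficient ring we have $\ell > b(\chG)$, so $\ell$ is not a torsion prime for $G$; Example \ref{ex: even equivariant cohomology} then provides that $\rH^*_{Q_0}(\pt;k)$ and $\rH^*_{Q_0^\natural}(\pt;k)$ are concentrated in even degrees, since $Q_0$ and $Q_0^\natural$ have pro-unipotent radicals with (perfections of) reductive Levi quotients inside $G$. Thus the entire proof reduces to showing that the spaces $F$, $\wt{X}$, $F^\natural$, $\wt{X}^\natural$ themselves have $k$-cohomology concentrated in even degrees.

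For part (1), the contraction \eqref{eq: zhu resolution contraction} identifies $F \cong (\ol{G}/\ol{P}_{\alpha_0})^{p^{-\infty}}$, the perfection of the partial flag variety associated to the quasi-minuscule parabolic. Its \'etale cohomology with $k$-coefficients is invariant under perfection, and on $\ol{G}/\ol{P}_{\alpha_0}$ one has the Bruhat decomposition into affine Schubert cells, producing a cell structure in even real dimensions only; hence $\rH^*(F;k)$ is even-concentrated. The same argument applies verbatim to $F^\natural$, which is the fiber of the equicharacteristic analogue of Zhu's resolution over the singular point, and is identified with the perfection of the same partial flag variety.

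For part (2), the formula $\wt{X} = Q_0 \times^{Q_{1/4}} Q_{1/2}/Q_{3/4}$ in \eqref{eq: zhu resolution} exhibits $\wt{X}$ as a Zariski-locally trivial fibration over $Q_0/Q_{1/4} \cong (\ol{G}/\ol{P}_{\alpha_0})^{p^{-\infty}}$ with fiber $Q_{1/2}/Q_{3/4}$. The base has even cohomology by part (1). The fiber $Q_{1/2}/Q_{3/4}$ is a quotient of successive parahoric jet groups, and from Zhu's explicit analysis in \cite{Zhu17} it is (the perfection of) an iterated affine-space bundle over a smaller partial flag variety, hence also has even cohomology. Consequently the Leray--Serre spectral sequence for the fibration $\wt{X} \to Q_0/Q_{1/4}$ degenerates for parity reasons, so $\rH^*(\wt{X};k)$ is concentrated in even degrees. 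The identical description in equal characteristic — with $Q_r$ replaced by $Q_r^\natural$ — provides the same conclusion for $\wt{X}^\natural$. Invoking Lemma \ref{lem: even implies equivariantly formal} then yields the desired equivariant formalities.

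The main obstacle is the even-cohomology assertion for the fiber $Q_{1/2}/Q_{3/4}$, for which one must extract from Zhu's construction a concrete description of this quotient (either as a successive extension by perfections of affine spaces coming from root-space filtrations on the Lie algebra, or as a partial flag variety for a Levi subquotient) that makes the parity of its cohomology manifest. Once this structural input is in hand, the rest is a formal combination of the Leray--Serre degeneration argument with Lemma \ref{lem: even implies equivariantly formal}.
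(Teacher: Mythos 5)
Your proposal follows the paper's proof: both reduce the statement to Lemma \ref{lem: even implies equivariantly formal} by checking that the equivariant cohomology of a point is even (Example \ref{ex: even equivariant cohomology}, using that $\ell$ is not a torsion prime) and that the four spaces themselves have $k$-cohomology concentrated in even degrees. The paper disposes of the latter in one line by noting that $F$, $F^{\natural}$, $\wt{X}$, $\wt{X}^{\natural}$ are paved by (perfections of) affine spaces --- the same structural input you isolate as the ``main obstacle'' for the fiber $Q_{1/2}/Q_{3/4}$ --- so your Bruhat-decomposition and fibration/Leray--Serre route is just a slightly more elaborate packaging of the identical argument.
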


\begin{proof}
Note that $F, F^\natural, X, X^{\natural}$ are paved by (perfections of) affine spaces, so their cohomology is concentrated in even degrees. By Example \ref{ex: even equivariant cohomology} the $Q_0$ (resp. $Q_0^\natural$) equivariant cohomology of a point is even under the hypothesis on $\ell$, so the statements follow from Lemma \ref{lem: even implies equivariantly formal}.
\end{proof}

If $Y$ is $K$-equivariantly formal, then we can recover $\rH^*(Y;k) \cong \rH^*_K(Y;k) \otimes_{\rH^*_K(\pt; k)} k$, and similarly for cohomology with supports. Hence by Corollary \ref{cor: equivariantly formal}, to produce \eqref{eq: non-equivariant natural comparison} it suffices to produce a commutative diagram
\begin{equation}\label{eq: equivariant natural comparison}
\begin{tikzcd}
\mBM_*({Q_0} \bs F)  \ar[d, "\sim"] \ar[r, equals] &\mBM_*({Q_0^\natural} \bs F^\natural)  \ar[d, "\sim"]  \\
\rH^{2d-*}_{Q_0 \bs F}({Q_0}\bs \wt{X}; k)  \ar[r, equals] & \rH^{2d-*}_{Q_0^\natural \bs F^\natural}({Q_0^\natural} \bs \wt{X}^\natural; k) 
\end{tikzcd}
\end{equation}
where the bottom horizontal isomorphism is compatible with the cup product.

\subsubsection{Equivariant cohomology}\label{sssec: eqcoh} By definition \eqref{eq: zhu resolution}, the $Q_0$-equivariant cohomology complex of $\wt{\Gr}_{\leq \alpha_0}$ is 
\[
\RGamma_{Q_0}(\wt{\Gr}_{\leq \alpha_0}) \cong \RGamma(Q_{1/4} \bs Q_{1/2} / Q_{3/4}).
\]
We keep track of cohomology complexes so as to be able to control the cohomology with supports. We abbreviate $R(K) := \rH_K^*(\pt;k)$. Write $\rB K$ for the classifying space $[\pt/K]$. 

\begin{lemma}[Equivariant K\"{u}nneth formula]\label{lem: equiv Kunneth}
Let $K$ be a connected pro-(perfection of)-algebraic group. Let $Y_1$ and $Y_2$ be $K$-equivariant ind-(perfection of)varieties such that 
\[
\Tor^i_{R(K)}(\rH^*_K(Y_1), \rH^*_K(Y_2)) = 0 \text{ for all $i$}. 
\]
Then the natural map 
\begin{equation} \rH^*_K(Y_1) \otimes_{R(K)} \rH^*_K(Y_2) 
\rightarrow \rH^*_K(Y_1 \times Y_2) 
\end{equation}
is a $k$-algebra isomorphism. 
\end{lemma}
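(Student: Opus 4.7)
The plan is to realize $\mathrm{H}^*_K(Y_1\times Y_2)$ as the cohomology of a fiber product of stacks over $BK$, and then invoke the Eilenberg–Moore spectral sequence (equivalently, derived base change) to get a derived tensor product formula which degenerates under the Tor-vanishing hypothesis.

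Concretely, let $K$ act diagonally on $Y_1\times Y_2$ and form the Cartesian square of quotient stacks
\[
\begin{tikzcd}
\left[(Y_1\times Y_2)/K\right] \ar[r, "p_1"] \ar[d, "p_2"'] & \left[Y_1/K\right] \ar[d, "q_1"] \\
\left[Y_2/K\right] \ar[r, "q_2"'] & BK
\end{tikzcd}
\]
where the identification $\left[(Y_1\times Y_2)/K\right]\cong [Y_1/K]\times_{BK}[Y_2/K]$ comes from the fact that the diagonal $K\hookrightarrow K\times K$ has quotient $K$. First I would invoke derived base change / the Künneth formula for \'etale sheaves on algebraic stacks applied to this square (for $k$ torsion, which is the only coefficient setting in force here) to obtain a canonical isomorphism
\[
\mathrm{R}\Gamma\bigl([Y_1/K];k\bigr)\ \stackrel{\mathrm{L}}{\otimes}_{\mathrm{R}\Gamma(BK;k)}\ \mathrm{R}\Gamma\bigl([Y_2/K];k\bigr)\ \xrightarrow{\ \sim\ }\ \mathrm{R}\Gamma\bigl([(Y_1\times Y_2)/K];k\bigr).
\]
Taking cohomology of both sides yields the Eilenberg–Moore spectral sequence
\[
E_2^{p,q}=\operatorname{Tor}^{-p,q}_{R(K)}\bigl(\mathrm{H}^*_K(Y_1),\mathrm{H}^*_K(Y_2)\bigr)\ \Longrightarrow\ \mathrm{H}^{p+q}_K(Y_1\times Y_2),
\]
where I have used $\mathrm{R}\Gamma(BK;k)\simeq R(K)$ in the appropriate graded-algebra sense (which is where connectedness of $K$ is used, to ensure that $R(K)$ has no contribution from $\pi_0$ and that the Eilenberg–Moore machinery converges).

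Under the hypothesis that $\operatorname{Tor}^i_{R(K)}(\mathrm{H}^*_K(Y_1),\mathrm{H}^*_K(Y_2))=0$ for all $i>0$, the spectral sequence is concentrated in the column $p=0$, so it degenerates trivially and the edge map is the desired isomorphism
\[
\mathrm{H}^*_K(Y_1\times Y_2)\ \cong\ \mathrm{H}^*_K(Y_1)\otimes_{R(K)}\mathrm{H}^*_K(Y_2).
\]

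The main obstacle is justifying the initial derived base change isomorphism in the setting at hand: $K$ is a pro-(perfection of)-algebraic group and the $Y_i$ are ind-(perfection of)-varieties, so one must approximate $BK$ by Borel-style finite-dimensional algebraic approximations $BK\leftarrow E_nK\times^K\mathrm{pt}$, reduce the Künneth/base change to a statement for schemes (where proper base change and the Künneth formula for $\mathbb{F}_\ell$-\'etale cohomology are classical), and then pass to the limit; the hypothesis that $K$ is connected and the pro-system argument ensure that no extra $R^1\!\lim$ terms appear in the limit. Everything else is formal.
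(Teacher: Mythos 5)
Your proposal is correct and follows essentially the same route as the paper: the paper's proof simply invokes the existence and convergence of the Eilenberg--Moore spectral sequence $\Tor^{*,*}_{R(K)}(\rH^*_K(Y_1),\rH^*_K(Y_2)) \implies \rH^*_K(Y_1\times Y_2)$ (guaranteed by connectedness of $K$) and observes that the Tor-vanishing hypothesis forces degeneration. Your additional discussion of the Cartesian square of quotient stacks, derived base change, and finite-dimensional approximations is just an expanded account of how that spectral sequence is constructed in this setting, not a different argument.
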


\begin{proof}
The hypothesis on $K$ implies that the existence and convergence of the Eilenberg-Moore spectral sequence 
\[
\Tor^{*,*}_{R(K)}(\rH^*_K(Y_1), \rH^*_K(Y_2)) \implies \rH^*_K(Y_1 \times Y_2).
\]
The Tor-vanishing assumption then implies that this spectral sequence degenerates, concluding the proof. 
\end{proof}

\begin{example}\label{ex: levi flatness} 
If $\ell$ is not a torsion prime of $G$, the main theorem of \cite{Dem73} says that $R(T)$ is flat over $R(G)$. It follows that for any Levi subgroup $L \subset G$ containing $T$, $R(L)$ is flat over $R(G)$ (since $R(L)$ is a direct summand of $R(T)$ as an $R(G)$-module). 
\end{example}

We will now give descriptions of the equivariant cohomology of $F, \wt{\Gr}_{\leq \alpha_0}$, and $\Gr_{\alpha_0}$ and their $\natural$-variants in the spirit of Soergel bimodule theory. 

\begin{example}[Zhu resolution]\label{ex: zhu resolution} Assume that $\ell$ is not a torsion prime of $G$. By definition, we have 
\[
Q_0 \bs \wt{\Gr}_{\leq \alpha_0} = Q_0 \bs Q_0 \times^{Q_{1/4}} Q_{1/2} / Q_{3/4} \cong Q_{1/4} \bs Q_{1/2} / Q_{3/4}.
\]
From Lemma \ref{lem: equiv Kunneth} and Example \ref{ex: levi flatness}, we have a $k$-algebra isomorphism 
\begin{equation}\label{eq: res eqcoh}
\RGamma(Q_0 \bs \wt{\Gr}_{\leq \alpha_0}) \cong \rR\Gamma(\rB Q_{1/4}) \otimes_{\rR\Gamma(\rB Q_{1/2})} \rR\Gamma(\rB Q_{3/4}).
\end{equation}
Similarly for the $\natural$ version, we have a $k$-algebra isomorphism 
\begin{equation}\label{eq: res natural eqcoh}
\RGamma(Q_0^\natural \bs \wt{\Gr}_{\leq \alpha_0}^{\natural}) \cong \rR\Gamma(\rB Q_{1/4}^{\natural}) \otimes_{\rR\Gamma(\rB Q_{1/2}^{\natural})} \rR\Gamma(\rB Q_{3/4}^{\natural}).
\end{equation}

\end{example}

\begin{example}[Open stratum]\label{ex: open stratum} Assume that $\ell$ is not a torsion prime of $G$. According to \cite[Lemma 2.12(ii)]{Zhu17}, the open stratum has the group-theoretic description 
\begin{equation}\label{eq: open eqcoh}
Q_0 \bs \Gr_{\alpha_0} \cong Q_0 \bs Q_0 \times^{Q_{1/4}} (Q_{1/4} Q_{3/4}) / Q_{3/4} \cong ( Q_{1/4} \cap Q_{3/4}) \bs Q_{3/4} / Q_{3/4}. 
\end{equation}
From Lemma \ref{lem: equiv Kunneth} and Example \ref{ex: levi flatness}, we have a $k$-algebra isomorphism 
\begin{equation}\label{eq: open eqcoh 2}
\RGamma(Q_0 \bs \Gr_{\alpha_0}) \cong \rR\Gamma(\rB (Q_{1/4} \cap Q_{3/4})) \otimes_{\rR\Gamma(\rB Q_{3/4})} \rR\Gamma(\rB Q_{3/4}) \cong \rR\Gamma(\rB(Q_{1/4} \cap Q_{3/4})).
\end{equation}
Similarly for the $\natural$ version, we have a $k$-algebra isomorphism
\begin{equation}\label{eq: open eqcoh 3}
\RGamma(Q_0^\natural \bs \Gr_{\alpha_0}^{\natural}) \cong \rR\Gamma(\rB(Q_{1/4}^{\natural} \cap Q_{3/4}^{\natural})) \otimes_{\rR\Gamma(\rB Q_{3/4}^{\natural})} R(\rB Q_{3/4}^{\natural}) \cong \rR\Gamma(\rB(Q_{1/4}^{\natural} \cap Q_{3/4}^{\natural})).
\end{equation}

\end{example}

\begin{example}[Closed stratum]\label{ex: closed stratum} According to \cite[Lemma 2.12(ii)]{Zhu17}, the fiber $F$ has the group-theoretic description 
\[
Q_0 \bs Q_0 \times^{Q_{1/4}} Q_{1/4} s Q_{3/4} / Q_{3/4} \cong (sQ_{1/4}s^{-1} \cap Q_{3/4}) \bs Q_{3/4} / Q_{3/4}
\]
where $s$ is the affine reflection corresponding to $1+ \alpha_0$. From Lemma \ref{lem: equiv Kunneth} and Example \ref{ex: levi flatness}, we have a $k$-algebra isomorphism 
\begin{equation}\label{eq: closed eqcoh}
\RGamma(Q_0 \bs F) \cong \rR\Gamma(\rB (sQ_{1/4}s^{-1} \cap Q_{3/4})) \otimes_{\rR\Gamma(\rB Q_{3/4} )} \rR\Gamma(\rB Q_{3/4} ) \cong \rR\Gamma(\rB(sQ_{1/4}s^{-1} \cap Q_{3/4})).
\end{equation}
Similarly for the $\natural$ version, we have a $k$-algebra isomorphism
\begin{equation}\label{eq: closed eqcoh 2}
\RGamma(Q_0^\natural \bs F^\natural) \cong \RGamma(\rB(sQ_{1/4}^\natural s^{-1} \cap Q_{3/4}^\natural)) \otimes_{\RGamma(\rB Q_{3/4}^\natural)} \RGamma(\rB Q_{3/4}^\natural) \cong \RGamma(\rB(sQ_{1/4}^\natural s^{-1} \cap Q_{3/4}^\natural)).
\end{equation}

\end{example}

\subsubsection{Comparing mixed and equal characteristic} Now, note that for each $r \in [0,1]$, the quotient of $Q_r$ by its uni-potent radical $Q_r^{\mrm{u}}$ is \emph{isomorphic} to the quotient of $Q_r^\natural$ by its pro-unipotent radical $Q_r^{\natural, \mrm{u}}$. This induces an identification $R(Q_r) \cong R(Q_r^\natural)$ via the chain of isomorphisms 
\[
R(Q_r) \xleftarrow{\sim} R(Q_r/Q_r^{\mrm{u}}) = R(Q_r^\natural/Q_r^{\natural, \mrm{u}})  \xrightarrow{\sim} R(Q_r^\natural).
\]
Combining this with Examples \ref{ex: zhu resolution}, \ref{ex: open stratum}, and \ref{ex: closed stratum}, we obtain a commutative diagram of identifications 
\[
\begin{tikzcd}
\RGamma(Q_0 \bs F;  i^! k) \ar[r, "\sim"] \ar[d] &  \ar[l] R\Gamma(Q_0^\natural \bs F^\natural; i^! k) \ar[d]    \\
\RGamma(Q_0 \bs \Gr_{\leq \alpha_0}; k) \ar[r, "\sim"] \ar[d]  & \ar[l]   \RGamma(Q_0^\natural \bs \Gr_{\leq \alpha_0}^\natural; k) \ar[d] \\
\RGamma(Q_0 \bs \Gr_{ \alpha_0}; k) \ar[r, "\sim"] & \ar[l]  \RGamma(Q_0^\natural \bs \Gr_{\alpha_0}^\natural; k)
\end{tikzcd}
\]
where the vertical columns are exact triangles. Furthermore, the middle and bottom horizontal identifications are symmetric monoidal, hence they induce a symmetric monoidal isomorphism of the equivariant cohomology with supports 
\[
\RGamma_{Q_0 \bs F}(Q_0 \bs \Gr_{\leq \alpha_0}) \cong \RGamma_{Q_0^\natural \bs F^\natural}(Q_0^\natural \bs \Gr_{\leq \alpha_0}^\natural)
\]
compatibly with the identifications with the top row. This establishes the diagram \eqref{eq: equivariant natural comparison}, thus completing the proof of Proposition \ref{prop: quasi-minuscule}. \qed

\section{The Brauer functor and the $\sigma$-dual homomorphism}\label{sec: categorical TV}
Let $G$ be a reductive group over $E$ with an action of $\Sigma$, and let $H = G^{\sigma} \xrightarrow{\iota} G$. We assume that $H$ is reductive. In this section we categorify the \emph{normalized Brauer homomorphism} $\br$ from the spherical Hecke algebra of $G$ to that of $H$, introduced in \cite{TV} and recalled in \S \ref{ssec: brauer homomorphisms}. This is defined via two steps: (1) a multiplicative norm that turns a function in the spherical Hecke algebra of $G$ into a $\Sigma$-equivariant function, and then (2) a naive restriction of functions from $G$ to $H$, which turns out to be multiplicative by a miracle of characteristic $\ell$. 

Roughly speaking, Step (1) is categorified by a monoidal norm and Step (2) is categorified by the Smith operation $\Psm$. However, $\Psm$ lands in the Tate category while we eventually want the target of the Brauer functor to be perverse sheaves, so we want to use the lifting functor from \S \ref{ssec: lifting functor} to lift out of the Tate category. Therefore, we need to study how $\Psm$ interacts with (Tate-)parity sheaves. This is done in \S \ref{ssec: good} and \S \ref{ssec: un-normalized Brauer}, which together categorify Step (2). 

Then in \S \ref{ssec: normalized Brauer functor} we construct the \emph{Brauer functor} $\wt{\cbr}$ from the Satake category of $G$ to the Satake category of $H$, which entails categorifying Step (1) and then also implementing certain extensions, from parity sheaves to all perverse sheaves (using the connection to tilting modules, plus results on abelian envelopes) and from strictly totally disconnected $S$ to $\Div^1_X$ (using v-descent). In \S \ref{ssec: compatibility with constant term} we establish a compatibility of $\wt{\cbr}$ with the constant term functor, which is needed later to show that it has good properties (e.g., additivity and exactness) and also to compute the $\sigma$-dual homomorphism in examples of interest. 

In \S \ref{ssec: sigma-dual homomorphism} we prove that $\wt{\cbr}$ is compatible with the Tannakian structure of the Satake categories. From this we deduce the existence of the $\sigma$-dual homomorphism. Finally, in \S \ref{ssec: multiple legs} we bootstrap to a ``multi-legged'' version for Beilinson-Drinfeld Grassmannians over $(\Div_X^1)^{I}$.

\subsection{Treumann-Venkatesh functoriality}\label{ssec: brauer homomorphisms}
Let $K \subset G(E)$ be an open compact subgroup. The \emph{Hecke algebra of $G$ with respect to $K$ with coefficients in $\Lambda$} is
\begin{equation}\label{eq: hecke algebra}
\sH(G,K; \Lambda) := \Fun_c(K \bs G(E) / K, \Lambda),
\end{equation}
the compactly supported functions on $K \bs G(E)/ K$ valued in $\Lambda$. This forms an algebra under convolution, normalized so that the indicator function $\bbm{1}_K$ is the unit. 

For a $\Sigma$-stable subgroup $K \subset G(E)$, we write $\iota^* K := \iota^{-1}(K) \subset H(E)$. We say that a compact open subgroup $K \subset G(E)$ is a \emph{plain subgroup} if the natural map $H(E)/\iota^* K \rightarrow  [G(E)/K]^\sigma$ is a bijection. We may view $\sH(G, K; \Lambda)$ as the ring $\mrm{Fun}^{G(E)}_c([G(E) / K] \times [G(E) / K], \Lambda)$ of $G(E)$-invariant (for the diagonal action) functions (valued in $\Lambda$) on $[G(E)/ K] \times [G(E) / K]$, with compact support modulo $G(E)$, under convolution. 

\subsubsection{The un-normalized Brauer homomorphism}\label{sssec: un-normalized Brauer} Now suppose that $\Lambda = k$ has characteristic $\ell$. In this special situation, Treumann-Venkatesh observed that if $K \subset G(E)$ is a plain subgroup, then the \emph{restriction map}
\begin{align}\label{eq: Br}
\sH(G, K;k)^{\sigma}  &= \mrm{Fun}^{G(E)}_c([G(E) / K] \times [G(E) / K], k)^{\sigma} \\
&  \xrightarrow{\text{restrict}} \mrm{Fun}^{H(E)}_{c}([H(E)/\iota^* K] \times [H(E)/\iota^* K], k)  = \sH(H(E), \iota^* K; k) \nonumber
\end{align}
is an algebra homomorphism (cf. \cite[Lemma 6.5]{Fe23} for a proof). This map was introduced in \cite[\S 4]{TV} and called the \emph{(un-normalized) Brauer homomorphism}. We denote it 
\begin{equation}\label{eq: un-normalized Brauer}
\Br \co \sH(G, K;k)^{\sigma} \rightarrow \sH(H, \iota^* K;k).
\end{equation}
We will categorify $\Br$ to an ``un-normalized Brauer functor'' $\cBr$ in \S \ref{ssec: un-normalized Brauer}.

\subsubsection{Frobenius twist of algebras}\label{sssec: frob twist algebra} Let $\Frob$ be the absolute ($\ell$-power) Frobenius of $k$. Given a commutative $k$-algebra $A$, we denote by $A\ellt := A \otimes_{k, \Frob} k$ its Frobenius twist. The map $\phi \co A \rightarrow A\ellt$ sending $a \mapsto a \otimes 1$ is a $\Frob$-semilinear (i.e., $\phi(\lambda a) = \Frob(\Lambda) \phi(a)$) isomorphism.

If $A$ is equipped with an $\F_\ell$-structure $\varphi_0 \co A \cong A_0 \otimes_{\F_\ell} k$, then there is a $k$-linear isomorphism $f_{\varphi_0} \co A \xrightarrow{\sim} A\ellt$, characterized by the property that it sends $A_0 \subset A$ to $A_0 \otimes 1 \subset A\ellt$ via $\Id \otimes 1$. We denote by $\Frob_{\varphi_0} = f_{\varphi_0}^{-1} \circ \phi  \co A \xrightarrow{\sim} A$; it is characterized as the unique $\Frob$-semilinear automorphism of $A$ which restricts to the identity on $A_0$. 

Suppose $f \co A \rightarrow B$ is any $\Frob$-semilinear homomorphism (i.e., $f(\lambda a) = \lambda^\ell f(a)$ for $\lambda \in k$). Then $f$ factors uniquely through a $k$-linear homomorphism 
\[
\begin{tikzcd}
A \ar[dr, "f"] \ar[d, "\Frob_{\varphi_0}"']  \\
A \ar[r, dashed] & B
\end{tikzcd}
\]
The $k$-linear homomorphism $A \rightarrow B$ obtained by precomposing $f$ with the inverse of $\Frob_{\varphi_0}$ (i.e., the dashed arrow in the diagram above) will be called the \emph{linearization} of $f$ (with respect to $\varphi_0$).

\begin{defn}[The Tate diagonal]\label{def: tate diagonal}
Let $A$ be a commutative $k$-algebra with an $\F_\ell$-structure $\varphi_0$. Let $\Nm \co A \rightarrow \rT^0(A)$ be the \emph{Tate diagonal} map sending $a$ to the class of $a \cdot ({}^{\sigma} a) \cdot \ldots \cdot ({}^{\sigma^{\ell-1}} a)$. One checks that this is a ring homomorphism, which is evidently $\Frob$-semilinear. Let $\Nm \iellt \co A \rightarrow \rT^0(A)$ be the linearization of $\Nm$ with respect to $\varphi_0$.
\end{defn}

\subsubsection{The normalized Brauer homomorphism}\label{sssec: normalized Brauer} Next suppose that $\sH(G(E),K;k)$ and $\sH(H(E), \iota^* K;k)$ are commutative. The Tate diagonal provides a map (cf. \S \ref{ssec: tate cohomology} for notation on Tate cohomology) 
\[
\Nm \co \sH(G,K;k) \xrightarrow{\sim} \rT^0(\sH(G,K;k)) = \frac{ \sH(G,K;k)^\sigma}{N \cdot \sH(G,K;k)} .
\]
One observes that the Brauer homomorphism $\Br$ factors over $\rT^0(\sH(G,K;k))$, hence we obtain a map 
\[
\Br \circ \Nm  \co \sH(G,K;k) \rightarrow \sH(H, \iota^* K;k).
\]
This map is a $\Frob$-semilinear ring homomorphism, and the \emph{normalized Brauer homomorphism} is its linearization with respect to the canonical $\F_\ell$-structure
\[
\sH(G,K;k) \cong \sH(G,K;\F_{\ell}) \otimes_{\F_\ell} k.
\]
Equivalently, we may write
\begin{equation}\label{eq: normalized Brauer}
\br =  \Br  \circ \Nm \iellt \co \sH(G,K;k) \rightarrow \sH(H, \iota^* K;k).
\end{equation}

Below we will categorify $\br$, under the assumption $\ell > \max\{ b(\chG), b(\chH)\}$, to a Tannakian functor
\[
\wt{\cbr} \co \Sat(\Gr_{G, \Div^1_X};k) \rightarrow \Sat(\Gr_{H, \Div^1_X};k).
\]


\subsection{Good modules}\label{ssec: good} For a geometric point $S = \Spa(C,C^+) \rightarrow \Div_X^1$, we abbreviate $\Gr_{G,C} := \Gr_{G, \Spa(C,C^+)/\Div_X^1}$, and similarly for $\Gr_{H,C}$, $\cHck_{G,C}$, etc.

\begin{defn}
We say that an $\OO[\Sigma]$-module $M$ is \emph{good} if it has a finite filtration as $\OO[\Sigma]$-modules whose associated graded is a direct sum of (the trivial representation) $\OO$ and (the regular representation) $\OO[\Sigma]$ with arbitrary finite multiplicities.


Let $T_H$ be a split maximal torus of $H$ over $E^s$. Let $\cF \in  D_{(L^+H)}^{\ULA}(\Gr_{H, C}; \OO[\Sigma])^{\bd}$. For each $\mu \in X_*(T_H)^+$ and $n \in \Z$, $\cH^n(i_{\mu}^*\cF)$ is a constant sheaf on some $\OO[\Sigma]$-module, free over $\OO$. We say that $\cF$ is \emph{good} if $\cH^n(i_{\mu}^*\cF)$ is good in the sense of the preceding paragraph for all $\mu \in X_*(T_H)^+$ and all $n \in \Z$. 

Let $\cF \in (D_{(L^+G)}^{\ULA}(\Gr_{G, C}; \OO)^{\bd})^{B \Sigma}$. We say that $\cF$ is \emph{good} if $\iota^* \cF \in (D_{(L^+H)}^{\ULA}(\Gr_{H, C}; \OO)^{\bd})^{B \Sigma} \cong D_{(L^+H)}^{\ULA}(\Gr_{H, C}; \OO[\Sigma])$ is good in the sense of the preceding paragraph.

Finally, we say that $\cF \in (\Dula{G}{\OO}{1})^{B \Sigma}$ is \emph{good} if for all $\ol{s} = \Spa(C,C^+) \rightarrow S$, the $*$-restriction $\cF|_{\ol{s}}$ is good in the sense of the preceding paragraph.
\end{defn}

\begin{example}\label{ex: permutation rep is good}
The key example is that a permutation representation of $\Sigma$ over $\OO$ is good. 
\end{example}

\begin{remark}\label{rem: good implies even}
The significance of the definition lies in the fact that if an $\OO[\Sigma]$-module $M$ is good, then $\rT^i(M)$ is concentrated in degree $i \equiv 0 \pmod{2}$. This is because the Tate cohomology of $\OO[\Sigma]$ vanishes, and the Tate cohomology of $\OO$ lies in even degrees (cf. Example \ref{ex: Tate cohomology of trivial coeff}). 
\end{remark}

To analyze goodness, we will need to use the relationship between the (co)stalks of IC sheaves on $\Gr_{G,C}$ and the weight multiplicities of representations of $\chG$, which is documented in more ``classical'' settings in \cite[Theorem 2.5]{BF10} and \cite[\S 5]{Zhu17}. Let us formulate the necessary statement. 

\subsubsection{The Brylinski-Kostant filtration} Recall that $\chG$ comes equipped with a pinning. This induces a regular nilpotent element $\chE \in \check{\mf{g}}$, which equips any $V \in \Rep(\chG)$ with the \emph{Brylinski-Kostant} (increasing) filtration 
\[
F_i V := \ker (\chE^{i+1}  \co V \rightarrow V). 
\]
For any $\mu \in X^*(\chT)$, we denote by $V_{\mu}$ the $\mu$-weight space of $V$. Then the above filtration induces a filtration $F_i (V_{\mu}) = V_{\mu} \cap F_i V$ on $V_\mu$, whose associated graded we denote $\gr_i^F (V_\mu)$. 

\begin{prop}\label{prop: stalk weight multiplicity}
Let $\cF \in \Sat(\Gr_{G,C};\Lambda)$ correspond to $V \in \Rep_\Lambda(\chG)$ under the Geometric Satake equivalence (cf. Example \ref{ex: geometric satake strd}). Then there are natural isomorphisms of $\Lambda$-modules
\[
(i_\mu^* \cF)[-\tw{2\rho, \mu}]  \cong \gr_i^F (V_\mu).
\]
\end{prop}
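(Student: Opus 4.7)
The plan is to reduce the statement to its classical counterpart for the equal-characteristic (or Witt-vector) affine Grassmannian, where the identification of stalks of Satake sheaves with the Brylinski--Kostant filtration is well established.

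First, I would reduce to an ``infinitesimal'' statement around the point $t^\mu \in \Gr_{G, C}$. Since $\cF$ is $L^+G$-equivariant and $i_\mu^* \cF$ is ULA relative to $\Spa(C, C^+)$, the complex $i_\mu^* \cF$ is a constant complex on $\Gr_{G, C, \mu}$, so the content of the statement reduces to computing the cohomology of the fiber at $t^\mu$. Via Geometric Satake, $V = \bigoplus_i \rR^i \pi_{G,S*} \cF$ as a representation of $\chG$ and each weight space $V_\mu$ is cut out by the decomposition coming from the semi-infinite orbit stratification $S_\nu = L^+U \cdot t^\nu$ (as in \cite[\S VI.3]{FS}), which is compatible with $i_\mu^*$ on stalks at $t^\mu$. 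Thus both sides of the asserted isomorphism depend only on the germ of $\cF$ near $t^\mu$.

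Second, I would transport the statement from $\Gr_{G, C}$ to the Witt-vector (resp. equal-characteristic) affine Grassmannian. By \cite[Corollary VI.6.7]{FS}, Satake sheaves on $\Gr_{G, C}$ degenerate canonically to Satake sheaves on $\Gr_{G, \Spd \ol{\F}_q/\Div^1_{\cY}} \cong (\Gr_G^{\Witt})^{\di}$ (or, by a parallel argument, to the equal characteristic affine Grassmannian); this is precisely the mechanism used in \S \ref{ssec: proof of parity = tilting part 2} to compare intersection forms. Then by \cite[\S 27]{Sch17}, stalks of these degenerate sheaves at $t^\mu$ agree with their counterparts on the underlying algebraic affine Grassmannian, under the identification of representation-theoretic outputs. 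The Brylinski--Kostant filtration, being defined purely in terms of the group-theoretic data on the dual side, is manifestly preserved by these identifications.

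Third, I would invoke the classical theorem: for the usual algebraic affine Grassmannian over an algebraically closed field, there is a canonical isomorphism identifying the cohomology sheaves of $i_\mu^* \cF$ (after the cohomological shift by $\tw{2\rho, \mu}$) with the associated graded $\gr^F_\bullet V_\mu$ of the Brylinski--Kostant filtration on the weight space $V_\mu$. This is an unpublished theorem of Ginzburg, with expositions in \cite[Theorem 2.5]{BF10} and \cite[\S 5]{Zhu17}. Combining the two preceding reduction steps, this classical result transfers directly to the desired statement on $\Gr_{G, C}$.

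The main obstacle I anticipate is verifying the compatibility of the Brylinski--Kostant filtration with the Fargues--Scholze degeneration machinery; however, since both sides of the asserted isomorphism are expressed in terms that are visibly preserved by the comparison (cohomological degree on the sheaf side, algebraic data on the representation side), this should amount to unwinding definitions rather than new geometric input.
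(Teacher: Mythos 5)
Your overall strategy coincides with the paper's: degenerate from $\Gr_{G,C}$ to the Witt vector affine Grassmannian via \cite[Corollary VI.6.7]{FS} and \cite[\S 27]{Sch17}, then quote the known identification of IC (co)stalks with the Brylinski--Kostant associated graded. Your first step (localizing at $t^\mu$ and invoking the semi-infinite orbits) is harmless but not needed as a separate reduction; the paper goes straight to the degeneration.

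The gap is in your third step. You invoke "the classical theorem" of Ginzburg as exposed in \cite[Theorem 2.5]{BF10} and claim it "transfers directly." But that result, and the standard proofs of it, are for characteristic-zero coefficients: the argument identifying $\cH^{\bullet}(i_\mu^*\IC_\lambda)$ with $\gr^F_\bullet V_\mu$ runs through purity of Frobenius weights on the stalks (this is the role of \cite[Lemma 5.8]{Zhu15} in Zhu's treatment). Here $\Lambda$ is $k$ of characteristic $\ell$ or $\OO = W(k)$, and $\cF$ is a general flat perverse Satake sheaf (e.g.\ corresponding to a tilting or Weyl module), not an IC sheaf over $\Q_\ell$; the purity input is simply not available. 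This is not a matter of "unwinding definitions": one must replace the purity argument by a parity argument (evenness of the relevant cohomology), which is exactly the modification the paper points to, citing \cite[Proposition 18]{Zhu21} where the statement is proved for the Witt vector affine Grassmannian with general coefficients by substituting the parity argument of \cite[p.452]{Zhu17} for the purity step. Your proposal transfers the \emph{statement} correctly but does not supply a \emph{proof} valid in the coefficient setting actually at hand.
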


\begin{proof}
The statement is equivalent to the analogous one for the Witt vector affine Grassmannian, using \cite[Corollary VI.6.7]{FS} to degenerate from $\Gr_{G,C}$ to $\Gr_{G, \Spd \ol{\F}_q} \cong (\Gr_G^{\Witt})^{\di}$ and then \cite[\S 27]{Sch17} to transport to $\Gr_G^{\Witt}$. Then the result appears in the proof of \cite[Proposition 18]{Zhu21}: it is a small modification of the argument of \cite[\S 5]{Zhu17} in the equal characteristic case, replacing the purity argument in \cite[Lemma 5.8]{Zhu15} by the parity argument in the middle of \cite[p.452]{Zhu17}. 
\end{proof}

\subsubsection{The norm of Satake sheaves}

\begin{defn}\label{defn: Nm}
Given $\cF\in \SatGr{G}{\Lambda}{I}$, we define 
\[
\Nm (\cF) :=  \cF \star \left( {}^\sigma \cF \right) \star \ldots \star \left( {}^{\sigma^{\ell-1}} \cF \right)  \in \SatGr{G}{\Lambda}{I}^{B \Sigma}	
\]
equipped with the $\Sigma$-equivariant structure coming from the commutativity constraint (constructed in \S \ref{ssec: geom satake}) for $(\SatGr{G}{\Lambda}{I}, \star)$. Thus $\Nm$ induces a functor 
\[
\Nm \co \SatGr{G}{\Lambda}{I}  \rightarrow \SatGr{G}{\Lambda}{I}^{B \Sigma}.
\]
Note that it is monoidal but neither additive nor $\Lambda$-linear. 

\end{defn}

\begin{lemma}\label{lem: Nm is good}
Let $S$ be a small v-stack over $\Div_X^1$ and let $\cF \in \Sat(\Gr_{G, S/\Div_X^1};\OO)$. Then $\Nm(\cF)$ is good.
\end{lemma}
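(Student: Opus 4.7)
Since goodness is checked stalk by stalk at geometric points of $S$, we may assume $S = \Spa(C, C^+)$. Under the Geometric Satake equivalence $\Sat(\Gr_{G,C}; \OO) \cong \Rep_{\OO}(\chG)$, the object $\Nm(\cF)$ corresponds to $V^{\otimes \ell}$ equipped with the cyclic $\Sigma$-action permuting tensor factors, where $V$ corresponds to $\cF$.

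For $\mu \in X_*(T_H)^+$, any geometric point of $\Gr_{H,C,\mu}$ sits in the $G$-Schubert stratum $\Gr_{G,C,\lambda(\mu)}$, where $\lambda(\mu) \in X_*(T)^+$ is the $G$-dominant conjugate of $\iota_*(\mu) \in X_*(T)$ (cf.\ the proof of Proposition~\ref{prop: algebraic gr fixed points}). Hence $\cH^n(i_\mu^* \iota^* \Nm(\cF)) \cong \cH^n(i_{\lambda(\mu)}^* \Nm(\cF))$ as $\OO[\Sigma]$-modules. By Proposition~\ref{prop: stalk weight multiplicity}, applied functorially to the Satake sheaf $\Nm(\cF)$, this cohomology sheaf is $\Sigma$-equivariantly identified with a graded piece $\gr_j^F(V^{\otimes \ell})_{\lambda(\mu)}$ of the Brylinski--Kostant filtration (the index $j$ being determined by $n$ and $\lambda(\mu)$).

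Now the regular nilpotent $\chE \in \check{\mf{g}}$ defining the BK filtration acts on $V^{\otimes \ell}$ via the Leibniz rule, which commutes with cyclic permutation of the tensor factors. Consequently the BK filtration on $V^{\otimes \ell}$ is automatically $\Sigma$-stable and coincides with the product filtration induced from the factors, yielding
\[
\gr_j^F (V^{\otimes \ell})_{\lambda(\mu)} \;\cong\; \bigoplus_{\substack{j_1 + \ldots + j_\ell = j \\ \mu_1 + \ldots + \mu_\ell = \lambda(\mu)}} \gr_{j_1}^F V_{\mu_1} \otimes \ldots \otimes \gr_{j_\ell}^F V_{\mu_\ell},
\]
with $\Sigma$ cyclically permuting the tuples $((j_i, \mu_i))$. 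The $\Sigma$-orbits on such tuples are either of size $\ell$ (contributing free $\OO[\Sigma]$-summands) or of size $1$; the latter occurs only for diagonal tuples $(j_i, \mu_i) = (j', \mu')$ with $j = \ell j'$ and $\lambda(\mu) = \ell \mu'$, giving a contribution $(\gr_{j'}^F V_{\mu'})^{\otimes \ell}$ with cyclic tensor-permutation action. Choosing an $\OO$-basis of $\gr_{j'}^F V_{\mu'}$, this diagonal piece splits further into trivial summands $\OO$ (from basis tensors $v^{\otimes \ell}$) and free summands $\OO[\Sigma]$ (from non-diagonal cyclic orbits of basis tensors). Each stalk is therefore a direct sum of $\OO$'s and $\OO[\Sigma]$'s, and in particular good.

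The main technical point to verify is that the identification in Proposition~\ref{prop: stalk weight multiplicity} is indeed $\Sigma$-equivariant when applied to $\Nm(\cF)$: this amounts to checking that the Mirković--Vilonen-type construction used in its proof (cf.\ \cite{Zhu17}) is functorial enough that the cyclic-permutation endomorphism on $\Nm(\cF) \cong \cF^{\star\ell}$ (given by the commutativity constraint of the Satake monoidal structure) transports to cyclic permutation on $V^{\otimes \ell}$ under BK. This should follow from the naturality built into the proof in \emph{loc.\ cit.}, though it warrants careful bookkeeping.
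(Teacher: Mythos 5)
Your proof is correct and follows essentially the same route as the paper's: reduce to a geometric point, pass through Geometric Satake to $\Nm V = V\otimes {}^{\sigma}V\otimes\cdots$, invoke Proposition~\ref{prop: stalk weight multiplicity} to reduce to the Brylinski--Kostant graded pieces of weight spaces, and observe that a filtration-adapted basis of the $V_\lambda$ induces a permutation basis of each $\gr_i^F((\Nm V)_\mu)$. You are somewhat more explicit than the paper about the Leibniz-rule compatibility of the BK filtration with tensor products and about the $\Sigma$-equivariance of the stalk--weight-space identification (a point the paper implicitly absorbs into the naturality of Proposition~\ref{prop: stalk weight multiplicity}), but the underlying argument is the same.
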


\begin{proof}The assertion immediately reduces to the case $S = \Spa(C,C^+)$. Suppose $\cF$ corresponds under the Geometric Satake equivalence to $V \in \Rep_{\OO}(\chG)$. Then under the Geometric Satake equivalence, $\Nm(\cF)$ corresponds to 
\[
\Nm (V) := V \otimes  \left( {}^\sigma V \right) \otimes \ldots \otimes \left( {}^{\sigma^{\ell-1}} V  \right)  \in \Sat(\Gr_{G, C}; \OO)^{B\Sigma}. 
\]
By Proposition \ref{prop: stalk weight multiplicity}, it suffices to check that for $V \in \Rep_{\OO}(\chG)$ and all $i \in \Z$ that the $\OO[\Sigma]$-module $\gr^i ((\Nm V)_{\mu})$ is good, where the associated graded is for the Brylinski-Kostant filtration. Pick a basis for each $V_{\lambda}$ adapted to the filtration $F_i V_{\lambda}$. This induces a basis for each $(\Nm V)_{\mu}$ on which $\Sigma$ acts via permutation, and which is adapted to the filtration $F_i ((\Nm V)_{\mu})$. This realizes $\gr_i^F ((\Nm V)_{\mu})$ as a permutation representation of $\Sigma$, so 
each $\gr_i^F ((\Nm V)_{\mu})$ is good by Example \ref{ex: permutation rep is good}. \end{proof}

\subsubsection{The Smith operation on good parity sheaves preserves parity}

\begin{prop}\label{prop: Tate-parity}
Suppose $\cE \in \PSY{G}{\OO}{1}^{B \Sigma}$ is a relative parity complex (with respect to the dimension pariversity $\dagger_G$) which is good. Then $\Psm(\cE) \in \Perf_{(L^+H)}^{\ULA}(\Gr_{H,  S/\Div^1_{X}}	; \Cal{T}_{\OO})$ is relative Tate-parity with respect to the induced pariversity $\iota^* \dagger_G$.
\end{prop}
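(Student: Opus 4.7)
The plan is to verify the Tate-parity condition pointwise: at each geometric point $\bar s = \Spa(C, C^+) \to S$ and each $\mu \in X_*(T_H)^+$, I will analyze $i_\mu^{H,?}(\Psm(\cE)|_{\bar s})$ for $? \in \{*,!\}$. After base change to $\bar s$, $\cE$ inherits $\Sigma$-equivariance, parity (with respect to $\dagger_G$), and goodness.

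The key geometric input is the following. Since $\mu \in X_*(T_H)^+ \subset X_*(T_G)^\sigma$, the cocharacter $\iota(\mu)$ is $\sigma$-fixed, hence so is its $G$-dominant conjugate $\mu' \in X_*(T_G)^+$. Therefore $\Gr_{G, \bar s, \mu'}$ is $\Sigma$-stable, and the composition $\iota \circ i_\mu^H$ factors as $i_{\mu'}^G \circ j$, where $j \co \Gr_{H, \bar s, \mu} \hookrightarrow \Gr_{G, \bar s, \mu'}$ is a closed embedding contained in the $\Sigma$-fixed locus. Moreover, by construction $\iota^* \dagger_G(\mu) = \dagger_G(\mu')$, so the two pariversities match along $j$.

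For the $*$-restriction: functoriality gives $i_\mu^{H,*}(\Psm \cE) \cong \TT^*(j^* i_{\mu'}^{G,*} \cE)$ in the small Tate category. The parity hypothesis forces $i_{\mu'}^{G,*} \cE$ to have constant cohomology sheaves concentrated in degrees $\equiv \dagger_G(\mu') \pmod 2$, and the goodness hypothesis forces the corresponding $\OO[\Sigma]$-modules to be good; both properties are preserved by $j^*$. Since the Tate cohomology of good $\OO[\Sigma]$-modules vanishes in odd degrees (Remark \ref{rem: good implies even}), a hypercohomology spectral sequence argument yields vanishing of $\rT^n(i_\mu^{H,*}(\Psm \cE)|_{\bar s})$ for $n \not\equiv \iota^* \dagger_G(\mu) \pmod 2$, which is relative $*$-Tate-parity.

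For the $!$-restriction: I would first invoke the small-Tate analog of Lemma \ref{lem: * vs !} (which transports from the large Tate category via the conservative embedding, using that the cone of $\iota^! \cE \to \iota^* \cE$ is ULA hence lies in the small perfect category) to write $\Psm \cE \cong \TT^* \iota^! \cE$, giving $i_\mu^{H,!}(\Psm \cE) \cong \TT^*(j^! i_{\mu'}^{G,!} \cE)$. Next I would apply Lemma \ref{lem: Gysin} to $j$: both source and target Schubert cells are $\ell$-cohomologically smooth of integral $\ell$-dimension (being iterated fibrations built out of diamonds of partial flag varieties and of $(\Lie G)^\di_{\mu' \leq m}$), so $j^!$ acts as an even shift of $j^*$ on étale local systems. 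This reduces the $!$-case to an analog of the $*$-case, provided the $\OO[\Sigma]$-module values of $i_{\mu'}^{G,!} \cE$ are good. This latter fact I would extract from Verdier duality: the class of good $\OO[\Sigma]$-modules is stable under $\OO$-linear duals, parity is preserved by $\DD$, and $\DD$ interchanges $*$- with $!$-restrictions, so good $*$-stalks of $\DD\cE$ (guaranteed by applying goodness to $\DD\cE$) translate to good $!$-stalks of $\cE$.

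The main obstacle is the $!$-restriction: specifically, propagating the goodness condition (which is defined via $*$-stalks) to the $!$-side. The cleanest resolution is the Verdier duality argument above; alternatively one could argue directly that the $\Sigma$-module structure on the $!$-stalk differs from that on the $*$-stalk only by an even Gysin shift, hence lies in the same good class.
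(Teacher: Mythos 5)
Your proposal is correct and follows essentially the same route as the paper's proof: reduce to a geometric point, commute $\TT^*$ with $*$- and $!$-restrictions (using Lemma \ref{lem: * vs !} to pass from $\iota^*$ to $\iota^!$), invoke parity plus goodness together with Remark \ref{rem: good implies even} for the $*$-case, and handle the $!$-case via Lemma \ref{lem: Gysin} and Verdier duality to propagate goodness to $!$-restrictions. The only differences are cosmetic (your explicit $G$-dominant conjugate $\mu'$, and a more spelled-out duality argument for $!$-goodness, which the paper asserts in one line).
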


\begin{proof}The assertion immediately reduces to the case $S = \Spa(C,C^+)$. Let $\iota \co \Gr_{H, C} \inj \Gr_{G, C}$ be the inclusion of $\Sigma$-fixed points. For $\mu \in X_*(T_H)^+ \subset X_*(T)^+$, we write
\begin{align*}
i_{\mu}^G \co \Gr_{G,  C, \mu} & \hookrightarrow \Gr_{G, C}, \\
i_{\mu}^H \co \Gr_{H,C, \mu}  &  \hookrightarrow \Gr_{H, C}, \\
\iota_{\mu} \co \Gr_{H, C, \mu} & \hookrightarrow \Gr_{G, C, \mu}.
\end{align*}
Without loss of generality suppose $\cE$ is relative even, so we are given that $(i_{\mu}^G)^{?} \cE$ has $\OO$-free cohomology sheaves concentrated in degrees congruent to $\dagger_G(\mu)$ mod $2$, where $? \in \{*, !\}$. We want to show that $(i_{\mu}^H)^? \TT^*( \iota^* \Cal{E})$ has Tate-cohomology sheaves supported in degrees congruent to $\dagger_G(\mu)$ mod $2$. First we focus on the case $?= *$. Then we have 
\begin{align*}
(i_{\mu}^H)^* \TT^* ( \iota^* \cE) \cong \TT^* (i_\mu^H)^* \iota^* \cE \cong \TT^*  \iota_\mu^* (i_\mu^G)^* \cE.
\end{align*}
By assumption $\cE$ is $*$-even on $\Gr_{G,C}$, so $ \iota_\mu^* (i_\mu^G)^* \cE$ has cohomology sheaves being $\OO$-free modules in degrees congruent to $\dagger_G(\mu)$ mod $2$. Since $\cE$ was good by assumption, $\iota_\mu^* (i_\mu^G)^* \cE$ is good. By Remark \ref{rem: good implies even}, $\rT^j(\iota_\mu^* (i_\mu^G)^* \cE)$ is concentrated in degree $j \equiv \dagger_G(\mu) \pmod{2}$. 


For $? = !$, we have 
\begin{equation}\label{eq: !-restrict}
(i_{\mu}^H)^!  \Psm (\cE) = (i_{\mu}^H)^!  \TT^* \iota^* (\cE)   \cong (i_{\mu}^H)^! \TT^* \iota^!  (\cE) \cong \TT^* (i_{\mu}^H)^! \iota^! (\cE) \cong \TT^* \iota_\mu^! (i_\mu^G)^! \cE
\end{equation}
where we used Lemma \ref{lem: * vs !} in the second step. By assumption $\cE$ is $!$-even on $\Gr_{G,C}$, so $(i_\mu^G)^! \cE$ has cohomology sheaves being $\OO$-free modules in degrees congruent to $\dagger_G(\mu)$ mod $2$, and then the same holds for $ \iota_\mu^! (i_\mu^G)^! \cE$ by Lemma \ref{lem: Gysin}. Also, by Verdier duality $\iota_\mu^! (i_\mu^G)^! \cE$ is good, so Remark \ref{rem: good implies even} implies that $\rT^j(\iota_\mu^! (i_\mu^G)^! \cE)$ is concentrated in degree $j \equiv \dagger_G(\mu) \pmod{2}$, as desired. 

\end{proof}



We denote by 
\[
(\Dula{G}{\OO}{1})^{B\Sigma}_{\good} \subset (\Dula{G}{\OO}{1})^{B\Sigma}
\]
and
\[
\nPSY{G}{\OO}{1}^{B \Sigma}_{\good} \subset \nPSY{G}{\OO}{1}^{B \Sigma} 
\]
the full subcategories of good objects, and we use similar notation for other variants. 

\subsection{The un-normalized Brauer functor}\label{ssec: un-normalized Brauer}  
Assume that $S$ is strictly totally disconnected. The pariversity $\dagger_G \co X_*(T) \rightarrow \Z/2\Z$ from Example \ref{ex: dimension pariversity} factors canonically over $\pi_0(\Gr_{G,S/\Div^1_X})$. Define the pariversity  
\[
\dagger_H^G := (\iota^* \dagger_G-\dagger_H) \co X_*(T_H) \rightarrow \Z/2\Z.
\]

 We denote by
\[
[\dagger_H^G ] \co \perfula{H}{\cT_\Lambda}{1} \rightarrow \perfula{H}{\cT_\Lambda}{1}
\]
the functor given by shifting by $\dagger_H^G (c) \in \{0,1\}$ on the connected component $c \subset \Gr_{H, S/\Div^1_X}$. We use the same notation for the equivariant version 
\[
[\dagger_H^G] \co \Perf^{\ULA}(\cHck_{H, S/\Div^1_X}; \cT_\Lambda)^{\bd} \rightarrow  \Perf^{\ULA}(\cHck_{H, S/\Div^1_X}; \cT_\Lambda)^{\bd}
\]
and we note that the natural identification $[0] \cong [2]$ on $\Perf^{\ULA}(\cHck_{H, S/\Div^1_X}; \cT_{\Lambda})^{\bd}$ makes this functor monoidal. By Proposition \ref{prop: Tate-parity}, the composite functor $[\dagger_H^G] \circ \Psm$ defines a functor 
\[
\nPSY{G}{\OO}{1}^{B \Sigma}_{\good} \rightarrow \nPSY{H}{\cT_{\OO}}{1}
\]
where parity is with respect to the dimension pariversities (of $G$ on the LHS and $H$ on the RHS).

\begin{defn}[Un-normalized Brauer functor] Assume $\ell > \max \{b(\chG), b(\chH)\}$. With $L$ the lifting functor from \S \ref{ssec: lifting functor}, we define the functor $\cBr := L \circ {[\dagger_H^G]} \circ \Psm$ as in the diagram 
\begin{equation}\label{eq: Br 1}
\begin{tikzcd}[column sep = tiny]
\nPSY{G}{\OO}{1}^{B\Sigma}_{\good}  \ar[dr, "{[\dagger_H^G]} \circ \Psm"'] \ar[rr, dashed, "\cBr"]  & & \nPSY{H}{k}{1}  \\
& \nPSY{H}{\cT_{\OO}}{1}  \ar[ur, "L"'] 
\end{tikzcd}
\end{equation}
We regard $\cBr$ as a categorification of the un-normalized Brauer homomorphism \eqref{eq: un-normalized Brauer}.
\end{defn}

\subsection{The normalized Brauer functor}\label{ssec: normalized Brauer functor}

Recall (cf. \S \ref{ssec: categories}) that for an $\OO$-linear abelian category $\msf{C}$, we abbreviate
\[
\msf{C} \otimes_{\OO} k := \msf{C} \otimes_{\OO-\Mod} (k-\Mod).
\]

The construction below is a categorical analogue of \S \ref{sssec: frob twist algebra}.

\begin{const}[Frobenius twist of categories]\label{const: frob twist C} We summarize \cite[Construction 4.17]{Fe23}. Let $\Frob$ be the $\ell$-power absolute Frobenius of $k$. Given a $k$-linear category $\msf{C}$, the \emph{Frobenius twist category} is $\msf{C}\ellt := \msf{C} \otimes_{k, \Frob} k$. Concretely, it is equivalent to the category which has the same objects as $\msf{C}$, and morphisms 
\[
\Hom_{\msf{C}\ellt}
(x,y) = \Hom_{\msf{C}}(x,y)\ellt := \Hom_{\msf{C}}(x,y) \otimes_{k, \Frob} k.
\]
The tautological functor $\msf{C} \rightarrow \msf{C} \ellt$ is a $\Frob$-semilinear equivalence.

Suppose we are given a presentation
\begin{equation}\label{eq: Frob structure on C}
\msf{F}_0 \co \msf{C} \cong \msf{C}_0 \otimes_{\F_\ell} k
\end{equation}
 for some $\F_\ell$-linear category $\msf{C}_0$. Then \eqref{eq: Frob structure on C} induces another, \emph{$k$-linear} equivalence $ \msf{C} \xrightarrow{\sim} \msf{C} \ellt$. Combined with the tautological $\Frob$-semilinear equivalence $\msf{C} \rightarrow \msf{C}^{(\ell)}$, \eqref{eq: Frob structure on C} induces a $\Frob$-semilinear equivalence $\Frob_{F_0} \co \msf{C} \xrightarrow{\sim} \msf{C}$. This factors any $\Frob$-semilinear functor $\msf{F} \co \msf{C} \rightarrow \msf{D}$ over $k$-linear functor $\msf{F}\iellt \co \msf{C} \rightarrow \msf{D}$, which we call the \emph{linearization} of $\msf{F}$ (with respect to $\msf{F}_0$). 
 \end{const}

\subsubsection{Initial construction} Assume $\ell > \max \{b(\chG), b(\chH)\}$. By Corollary \ref{cor: conv preserves parity} and Lemma \ref{lem: Nm is good}, the functor $\Nm$ factors overs $\Nm \co \nPSY{G}{\OO}{1} \rightarrow \nPSY{G}{\OO}{1}^{\Sigma}_{\good}$ to the good subcategory. Hence we may consider the composition 
\begin{equation}\label{eq: cBr Nm}
\cBr \circ \Nm \co \nPSY{G}{\OO}{1} \rightarrow \nPSY{H}{k}{1}.
\end{equation}
This maps to a $k$-linear category, hence factors uniquely over the $k$-linearization of the source, which is identified by the following Lemma.

\begin{lemma}\label{lem: category base changes}Assume $\ell > \max \{b(\chG), b(\chH)\}$. Then we have a symmetric monoidal equivalence
\[
\nPSY{G}{\OO}{1} \otimes_{\OO} k \xrightarrow{\sim} \nPSY{G}{k}{1}.
\] 
\end{lemma}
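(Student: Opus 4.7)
The plan is to show that the base change of the modular reduction functor $\FF \co \nPSY{G}{\OO}{1} \to \nPSY{G}{k}{1}$ along $\OO \twoheadrightarrow k$ is an equivalence. I would first reduce to the case $S = \Spa(C, C^+)$ a geometric point, since both categories are compatible with v-pullback on $S$ (the base change formula for relative parity sheaves in Lemma \ref{lem: ula base change} identifies the formation of $\nPSY{G}{\OO}{1}$ stalk-wise), and v-descent then allows to upgrade a geometric-fiberwise equivalence to a global one.

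For essential surjectivity over a geometric point, I would use that $\nPSY{G}{k}{1}$ is Krull-Schmidt (Lemma \ref{lem: KRS}) with indecomposable objects of the form $\cE(\mu, k[d_\mu])$ for $\mu \in X_*(T)^+$ (Proposition \ref{prop: parity sheaves}). Under the hypothesis $\ell > b(\chG)$, the corollary to Theorem \ref{thm: parity = tilting} provides a lift $\cE(\mu, \OO[d_\mu]) \in \nPSY{G}{\OO}{1}$, and Lemma \ref{lem: modular reduction}(2) identifies $\FF \cE(\mu, \OO[d_\mu]) \cong \cE(\mu, k[d_\mu])$. By additivity this handles arbitrary normalized parity complexes.

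For fully faithfulness, I would verify that $\Hom_{\nPSY{G}{\OO}{1}}(\cE, \cF) \otimes_\OO k \cong \Hom_{\nPSY{G}{k}{1}}(\FF \cE, \FF \cF)$ for indecomposable $\cE = \cE(\mu, \OO[d_\mu])$ and $\cF = \cE(\nu, \OO[d_\nu])$. Lemma \ref{lem: endomorphisms free} gives a decomposition $\Ext^{\bullet}(\cE, \cF) \cong \bigoplus_{\lambda} \Ext^{\bullet}(i_\lambda^* \cE, i_\lambda^! \cF)$ with each summand $\OO$-free. Each stratum contribution is computed as the cohomology along the projection $\Gr_{G,C,\lambda} \to \Spa(C,C^+)$ of a complex with $\OO$-free cohomology sheaves, whose cohomology vanishes in degrees of one parity by Lemma \ref{lem: fiber cohomology even} combined with the parity-consistent shifts coming from the normalizations and the Gysin-type isomorphism of Lemma \ref{lem: Gysin}. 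Consequently $\cRHom(\cE, \cF)$ is represented by a complex of free $\OO$-modules concentrated in degrees of a single parity, from which the universal coefficient theorem yields the required base change isomorphism on $\rH^0$.

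The main obstacle will be verifying the uniform parity of $\Ext^{\bullet}(\cE, \cF)$ and its stability under the modular reduction $\FF$; this requires careful bookkeeping of the shifts in the definition of normalized parity sheaves, the pariversity $\dagger_G$, and the codimensions of the Schubert strata. A secondary subtlety is the v-descent step, where one must confirm that the normalization condition (defined pointwise at geometric fibers of $S$) glues properly to a global notion, and that the indecomposable description of $\nPSY{G}{k}{1}$ from Proposition \ref{prop: parity sheaves} has a suitable analogue in the non-strictly-totally-disconnected setting.
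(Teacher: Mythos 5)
Your proof is correct and its core is the same as the paper's: well‑definedness of $\FF$ from Lemma \ref{lem: modular reduction}(1), essential surjectivity from the existence of the integral normalized parity sheaves $\cE(\mu,\OO[d_\mu])$ together with Krull--Schmidt and $\FF\cE(\mu,\OO[d_\mu])\cong\cE(\mu,k[d_\mu])$, and full faithfulness from the $\OO$-freeness of $\Ext^{\bu}$ supplied by Lemma \ref{lem: endomorphisms free} (freeness of $\Ext^1$ kills the Tor term in universal coefficients, so the parity bookkeeping you worry about is not actually needed beyond what that lemma already records).

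The one piece of scaffolding you should drop is the reduction to a geometric point followed by v-descent. The lemma is invoked only for strictly totally disconnected $S$ (the descent to $\Div^1_X$ happens at a later stage of the construction of $\wt{\cbr}$, after passing to abelian envelopes), and for such $S$ the entire structure theory you need — Krull--Schmidt, the Ext decomposition of Lemma \ref{lem: endomorphisms free}, and the existence of $\cE(\mu,\Lambda[d_\mu])$ via the Corollary to Theorem \ref{thm: parity = tilting} — is available directly. So your ``secondary subtlety'' about gluing the normalization condition along a v-cover does not arise; introducing that descent step creates a genuine difficulty (indecomposability and the pointwise normalization do not interact well with v-localization) that the intended argument simply avoids.
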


\begin{proof}
The functor is well-defined by Lemma \ref{lem: modular reduction}(1), and clearly monoidal. It is essentially surjective by Proposition \ref{prop: parity structure theory}, which shows that all objects are direct sums of the $\cE(\mu, \cL)$ from Corollary \ref{cor: tilting is parity}. Finally, the description of Hom-spaces in Lemma \ref{lem: endomorphisms free} shows that it is fully faithful, completing the proof. 
\end{proof}

By Lemma \ref{lem: category base changes}, the functor \eqref{eq: cBr Nm} factors uniquely through a functor 
\[
\cbr\ellt \co \nPSY{G}{k}{1} \rightarrow \nPSY{H}{k}{1}.
\]
Note that $\cbr \ellt$ is $\Frob$-semilinear. The equivalence 
\begin{equation}\label{eq: parity F_ell}
\nPSY{G}{k}{1} \cong \nPSY{G}{\F_\ell}{1}  \otimes_{\F_\ell} k
\end{equation}
furnishes a natural $\F_\ell$-structure on $\nPSY{G}{k}{1} $, so we are in the setup to apply Construction \ref{const: frob twist C}. 

\begin{defn}\label{def: un-normalized brauer parity}
Assume $\ell > \max \{b(\chG), b(\chH)\}$. We define
\[
\cbr \co \nPSY{G}{k}{1} \rightarrow \nPSY{H}{k}{1} 
\]
to be the linearization of $\cbr\ellt$. 
\end{defn}

The functor $\cbr$ is an approximation to the definition of the normalized Brauer homomorphism \eqref{eq: normalized Brauer}. We still have to extend it to all perverse sheaves, and then descend to $\Div^1_X$.  

\begin{remark}
Parallel to \eqref{eq: normalized Brauer}, we have equivalently $\cbr \cong (\cBr \circ \Nm\iellt ) \otimes_{\OO} k$ where $\Nm \iellt$ is the linearization of $\Nm$. 
\end{remark}

In \S \ref{ssec: sigma-dual homomorphism} below, we will prove the following theorem. 

\begin{thm}\label{thm: cbr tannakian functor}
Assume $\ell > \max \{b(\chG), b(\chH)\}$. The functor $\cbr \co \nPSY{G}{k}{1} \rightarrow \nPSY{H}{k}{1} $ is additive, symmetric monoidal, and compatible with the fiber functor. 
\end{thm}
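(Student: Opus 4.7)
The plan is to verify each of the three properties for the composite $\cBr \circ \Nm$ at the level of $\OO$-coefficients, and then descend through Lemma \ref{lem: category base changes} and the linearization procedure of Construction \ref{const: frob twist C} to obtain the corresponding property for $\cbr$. The geometric heart of all three verifications is the principle that Smith-Treumann localization $\Psm$ annihilates objects on which $\Sigma$ acts freely. By v-descent we may assume $S$ is strictly totally disconnected, so the structure theory of Section \ref{sec: parity sheaves} applies.

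For additivity, given $\cF_1, \cF_2 \in \nPSY{G}{\OO}{1}$, there is a $\Sigma$-equivariant decomposition
\[
\Nm(\cF_1 \oplus \cF_2) \;\cong\; \Nm(\cF_1) \;\oplus\; \Nm(\cF_2) \;\oplus\; \cR,
\]
where $\cR$ is the sum of the mixed summands $\cF_{f(0)} \star \cdots \star \cF_{f(\ell-1)}$ indexed by non-constant functions $f \co \Z/\ell \to \{1,2\}$, with $\Sigma$ acting by cyclic rotation of the index. Since $\ell$ is prime, $\Sigma$ acts freely on the set of non-constant such $f$, so $\cR$ is equivariantly a direct sum of $\Sigma$-induced objects, in particular $\OO[\Sigma]$-free on geometric stalks. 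By Lemma \ref{lem: flat =  finite tor dimension}, $\iota^* \cR$ has finite tor-dimension over $\OO[\Sigma]$, so $\Psm(\cR) = 0$ in $\perfulacHckI{H}{\cT_{\OO}}{1}$ and hence $\cBr(\cR) = 0$. An analogous matrix calculation (where an arbitrary morphism $f \co \cF_1 \oplus \cF_2 \to \cG_1 \oplus \cG_2$ has its components $f_{ij}$ sent to $\Nm(f_{ij})$ on the diagonal summands after projection to $\Sigma$-fixed pieces) shows $\cBr \circ \Nm$ is additive as a functor. Additivity then descends through linearization to $\cbr$.

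For symmetric monoidality, each constituent functor carries its own symmetric monoidal structure with respect to convolution: $\Nm$ is a tensor power functor for the symmetric monoidal structure on $\SatGr{G}{\OO}{1}$ built in Section \ref{ssec: geom satake}; $\Psm$ is monoidal by Proposition \ref{prop: small equivariant localization}, using the identification of $\Sigma$-fixed loci of local Hecke stacks (which is the Hecke variant of Proposition \ref{prop: BD gr fixed points}); the shift $[\dagger_H^G]$ is monoidal since $\dagger_H^G$ is a homomorphism $\pi_0(\Gr_H) \to \Z/2\Z$; and $L$ is monoidal by Lemma \ref{lem: L monoidal}. Composing gives a symmetric monoidal structure on $\cBr \circ \Nm$ which descends through linearization to $\cbr$.

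For fiber functor compatibility, let $\pi_G, \pi_H$ denote the structure morphisms of the respective Grassmannians to $S$; the fiber functor on each Satake category is $\bigoplus_n \rR^n \pi_{\bullet *}$. Applying Proposition \ref{prop: small equivariant localization} to $\pi_G$ gives
\[
\Psm\bigl(\pi_{G*} \Nm(\cF)\bigr) \;\cong\; \pi_{H*} \Psm\bigl(\Nm(\cF)\bigr).
\]
Before $\Psm$, the left side is $(\pi_{G*}\cF)^{\otimes \ell}$ with $\Sigma$ permuting tensor factors, so by Example \ref{ex: Tate cohomology of Nm} its zeroth Tate cohomology is the Frobenius twist $(\pi_{G*}\cF)^{(\ell)}$. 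After linearization this recovers $\pi_{G*}\cF$, matching the fiber functor applied to $\cbr(\cF)$. The main technical obstacle is the coherent tracking of $\Sigma$-equivariant structures through the entire chain of functors---in particular verifying that the mixed summands of $\Nm(\cF_1 \oplus \cF_2)$ are genuinely $\Sigma$-induced after restriction to $\Gr_H$, and that the monoidal compatibility of $\Psm$ with convolution is compatible with the commutativity constraints. Both of these ultimately rest on the fixed-point identifications of Section \ref{sec: fixed point}.
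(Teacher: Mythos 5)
Your treatment of the fiber functor is essentially the paper's argument: equivariant localization (Proposition \ref{prop: small equivariant localization}) applied to $\pi_{G,S}$, the identification $\rT^0(V^{\otimes \ell}) \cong V^{(\ell)}$ of Example \ref{ex: Tate cohomology of Nm}, and linearization. For additivity you take a genuinely different route. The paper does not decompose $\Nm(\cF_1 \oplus \cF_2)$ at all: it first proves fiber-functor compatibility and then deduces additivity for free from faithfulness and additivity of the fiber functor (Corollary \ref{cor: cbr additive}). Your direct argument is viable --- the non-constant summands form free $\Sigma$-orbits, hence are induced, hence their $\iota^*$-restrictions land in $\perfulaI{H}{\OO[\Sigma]}{1}$ by Lemma \ref{lem: flat =  finite tor dimension} and die under $\TT^*$ --- but the delicate point is additivity on \emph{morphisms}: for $f,g \co \cF \to \cG$ the cross terms of $\Nm(f+g)$ do not vanish summand-by-summand; they group into free orbits whose sum is of the form $N \cdot h$, and one must invoke the vanishing of norms in Tate Homs (Lemma \ref{lem: tate homs of TT}) to kill them. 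Your ``matrix calculation'' gestures at this but does not carry it out; the paper's route via the faithful fiber functor avoids the issue entirely, which is what it buys.

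The genuine gap is in the \emph{symmetric} part of symmetric monoidality. Proposition \ref{prop: small equivariant localization} and Lemma \ref{lem: L monoidal} only furnish \emph{monoidal} structures on $\Psm$ and $L$; neither statement says anything about commutativity constraints, and the constraint on the Satake category is the fusion constraint of Lemma \ref{lem: satake category tensor}, not the naive convolution swap. Likewise, the isomorphism $\Nm(\cF \star \cG) \cong \Nm(\cF) \star \Nm(\cG)$ already uses the commutativity constraint to reshuffle $2\ell$ factors, so its compatibility with symmetry is itself something to check. Asserting that these compatibilities ``ultimately rest on the fixed-point identifications of Section \ref{sec: fixed point}'' is not an argument: the fixed-point computations identify underlying spaces, not braidings. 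The paper resolves this by a reduction you are missing: after a finite base change one may assume $G$ has a $\Sigma$-stable Borus, the constant term functor $\CT_{B_H}$ is faithful and intertwines the Brauer functors for $(G,H)$ and $(T,T_H)$ (Lemma \ref{lem: CT br}), and for tori the Grassmannians are disjoint unions of copies of $S$ indexed by cocharacter lattices, where symmetry is checked by inspection. Some such mechanism (or an explicit fusion-theoretic argument over $(\Div^1_X)^2$) is needed to close this step.
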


Note that the compatibility with the fiber functor implies in particular that $\cbr$ is exact and faithful. 

\subsubsection{Extending to abelian envelopes}
Recall that a tensor category over $k$ is a $k$-linear rigid monoidal abelian category such that $k$ maps isomorphically to the endomorphisms of the unit (example: $\Rep_k(\chG)$) ; a pseudo-tensor category has the same definition except replacing abelian by ``pseudo-abelian'' (example: $\Tilt_k(\chG)$). An \emph{abelian envelope} \cite[\S 2]{CEOP} of a pseudo-tensor category is a universal tensor category to which it maps via a faithful $k$-linear monoidal functor. Thus, a faithful monoidal functor from a pseudo-tensor category extends uniquely to its abelian envelope (if it exists).

This will be applied to the following situation. Assume $S = \Spa (C,C^+)$ for the moment. By Theorem \ref{thm: parity = tilting}, under the Geometric Satake equivalence \eqref{eq: geom sat S} we have 
\[
\Parity_{\mrm{n}}^{\ULA}(\Gr_{G, C};k)  \cong \Tilt_{k}(\chG) \quad \text{ and } \quad  \Parity_{\mrm{n}}^{\ULA}(\Gr_{H, C};k)\cong \Tilt_k(\chH).
\]
By \cite[Proposition 7.3.1]{CEOP}, the abelian envelope of $\Tilt_k(\chG)$ is $\Rep_k(\chG)$, which corresponds under the Geometric Satake equivalence to $\Sat(\Gr_{G, C};k)$. Therefore, assuming $\ell > b(\chG)$, the abelian envelope of $
\Parity_{\mrm{n}}^{\ULA}(\Gr_{G, C};k)$ is $\Sat(\Gr_{G, C};k)$. Invoking Theorem \ref{thm: cbr tannakian functor} to see that $\cbr$ is faithful and exact, the universal property of the abelian envelope gives a unique extension of $\cbr$ to a functor
\begin{equation}\label{eq: br S=C}
\cbr \co \Sat(\Gr_{G, C};k) \rightarrow \Sat(\Gr_{H, C};k).
\end{equation}

Now suppose more generally that $S$ is strictly totally disconnected. For a commutative ring $R$, let $\Projf(R)$ be the category of finite projective $R$-modules. Then for $C^{\infty}(|S|;k)$ the ring of continuous functions on $|S|$ valued in $k$, we have from Example \ref{ex: geometric satake strd} an equivalence
\[
\Sat(\Gr_{G, S/\Div_X^1};k) \cong  \Rep_k(\chG) \otimes_k \Projf(C^{\infty}(|S|;k)).
\]
Hence tensoring \eqref{eq: br S=C} with $\Projf(C^{\infty}(|S|;k))$ gives an extension of $\cbr$ to the full Satake categories,
\begin{equation}\label{eq: extend br}
\begin{tikzcd}
\nPSY{G}{k}{1} \ar[r, "\cbr"] \ar[d, hook] & \nPSY{H}{k}{1} \ar[d, hook] \\
 \Sat(\Gr_{G, S/\Div_X^1};k) \ar[r, "\cbr", dashed] &  \Sat(\Gr_{H, S/\Div_X^1};k)
 \end{tikzcd}
\end{equation}
which is compatible with base change in $S$.

\subsubsection{Descending to $\Div^1_X$} Recall that $S \mapsto \DulacHck{G}{k}{1}$ satisfies v-descent, and every locally spatial diamond admits a v-cover by strictly totally disconnected spaces. Since the relative perversity condition is v-local on $S$ by definition, we have a compatible diagram
\[
\begin{tikzcd}
D^{\ULA}_{\et}(\cHck_{G,\Div_X^1}; k)^{\bd} \ar[r, "\sim"] &  \displaystyle \limit_{S  \text{ str.t.d. }\rightarrow \Div_X^1} \DulacHck{G}{k}{1} \\
\Sat(\Gr_{G, \Div_X^1};k) \ar[u, hook]  \ar[r, "\sim"] &  \
\displaystyle \limit_{S  \text{ str.t.d. }\rightarrow \Div_X^1} \Sat(\Gr_{G, S/\Div_X^1};k) \ar[u, hook] 
\end{tikzcd}
\]
where we abbreviate ``str.t.d.'' for ``strictly totally disconnected''. Comparing with the analogous diagram for $H$, this descends the functor $\cbr$ from \eqref{eq: extend br} to a functor 
\begin{equation}\label{eq: normalized brauer functor}
\wt{\cbr} \co \Sat(\Gr_{G, \Div_X^1};k)  \rightarrow \Sat(\Gr_{H, \Div_X^1};k).
\end{equation}
We regard $\wt{\cbr}$ as the categorification of the normalized Brauer homomorphism \eqref{eq: normalized Brauer}.

\begin{remark}\label{rem: wtcbr isomorphism}
We record for future use the following property of \eqref{eq: normalized brauer functor}, which is arranged by construction: We have a natural isomorphism for all $\cF \in \Sat(\Gr_{G, \Div_X^1}; k)$, 
\begin{equation}\label{eq: wtcbr isomorphism}
\TT \wt{\cbr}(\cF) \cong {[\dagger_H^G]}  \Psm  (\Nm \iellt (\cF)) \in D^{\ULA}(\cHck_{H, \Div_X^1}; \cT_k) 
\end{equation}
\end{remark}

\subsection{Compatibility with constant terms}\label{ssec: compatibility with constant term} Suppose $G,H$ are split, and $\Sigma$ stabilizes a Borus $(B, T)$. Then $(B^\sigma, T^\sigma) =: (B_H, T_H)$ is a Borus of $H$, according to Lemma \ref{lem: fixed borus}. We have a commutative diagram 
\[
\begin{tikzcd}
\Gr_{H, S/\Div_X^1} \ar[d, hook, "\iota"]  & \Gr_{B_H, S/\Div_X^1} \ar[l, "q^+_H"'] \ar[r, "p^+_H"] \ar[d, hook, "\iota"]  & \Gr_{T_H, S/\Div_X^1}  \ar[d, hook, "\iota"]  \\
\Gr_{G, S/\Div_X^1} & \Gr_{B, S/\Div_X^1} \ar[l, "q^+_G"'] \ar[r, "p^+_G"] & \Gr_{T, S/\Div_X^1}  
\end{tikzcd}
\]
where the vertical maps are the inclusion of $\sigma$-fixed points (cf. \S \ref{ssec: fixed ponts affine grassmanians}) and each square is Cartesian. Any $\cF \in  \Dula{G}{\Lambda}{1}$ is monodromic (cf. \cite[Definition IV.6.11]{FS} for the definition) for the $\G_m$ acting through $2\rho_G \co \G_m \rightarrow G \subset L^+G$. The \emph{Constant Term} (i.e., hyperbolic localization) functor (for $G$), defined in \cite[Corollary VI.3.5]{FS}, is the functor
\[
\CT_B =  \rR (p^+_G)_! (q^+_G)^* \co \Dula{G}{\Lambda}{1}  \rightarrow \Dula{T}{\Lambda}{1} .
\]  
We have a similar story for $H$ with respect to the Borus $(B_H, T_H)$.

Denote by $[\deg_G]$ the function $X_*(T) \xrightarrow{\langle 2 \rho_G, - \rangle} \Z$, and similarly for $H$. Set  
\[
\CT_B[\deg_G] :=\bigoplus_{\nu \in X_*(T)} \rR (p_{G}^+)_! \, i_{\nu !} \, i_\nu^* \,  (q_G^+)^* [\tw{2\rho_G, \nu}]
\]
where $i_\nu \co S_{\nu} \inj \Gr_{B, S/\Div^1_X}$ is the (open-closed) inclusion of the semi-infinite orbit through $[\nu]$; cf. \cite[\S VI.3]{FS} for more about it. Then $\CT_G[\deg_G]$ is t-exact and under the Geometric Satake equivalence intertwines with restriction along $\chT \rightarrow \chG$  by construction \cite[\S VI.11]{FS}. There is a completely analogous story for $H$.

\begin{lemma}\label{lem: CT PSm} There is a natural commutative square
\begin{equation}\label{eq: CT PSm}
\begin{tikzcd}
\Dula{G}{\Lambda}{1}  \ar[r, "\Psm"] \ar[d, "\CT_{B}"] & \Dula{H}{\cT_\Lambda}{1} \ar[d, "\CT_{B_H}"] \\
\Dula{T}{\Lambda}{1}  \ar[r, "\Psm"] & \Dula{T_H}{\cT_\Lambda}{1}
\end{tikzcd}
\end{equation}
\end{lemma}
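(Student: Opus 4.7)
The plan is to verify the commutativity term-by-term by invoking the compatibility of $\Psm$ with both $*$-pullback (Lemma \ref{lem: small pullback}) and $!$-pushforward (Proposition \ref{prop: small equivariant localization}) that was established in \S\ref{sssec: small psm compatibilities}. Since $\sigma$ stabilizes the Borus $(B,T)$, the maps $p^+_G$ and $q^+_G$ are $\Sigma$-equivariant and $\CT_B$ naturally upgrades to a functor between the $\Sigma$-equivariant categories, which is what is needed for $\Psm$ to be defined on the two sides of the square.

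The first step is to assemble the relevant fixed-point calculations. By Proposition \ref{prop: BD gr fixed points} and Proposition \ref{prop: GrB fixed points}, the $\Sigma$-fixed loci $\Fix(\sigma, \Gr_{T, S/\Div^1_X})$ and $\Fix(\sigma, \Gr_{B, S/\Div^1_X})$ are canonically identified with $\Gr_{T_H, S/\Div^1_X}$ and $\Gr_{B_H, S/\Div^1_X}$ respectively, and the restrictions of $p^+_G, q^+_G$ to these loci are precisely $p^+_H, q^+_H$. This is exactly the setup of the Cartesian diagram displayed just before the statement of the lemma, which in turn is the geometric input needed to invoke the compatibilities.

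Applying Lemma \ref{lem: small pullback} with $f = q^+_G$ yields a natural isomorphism $\Psm \circ (q^+_G)^* \cong (q^+_H)^* \circ \Psm$, and applying Proposition \ref{prop: small equivariant localization} with $f = p^+_G$ yields $\Psm \circ (Rp^+_G)_! \cong (Rp^+_H)_! \circ \Psm$. Chaining these together:
\[
\Psm \circ \CT_B \;=\; \Psm \circ (Rp^+_G)_! (q^+_G)^* \;\cong\; (Rp^+_H)_! \circ \Psm \circ (q^+_G)^* \;\cong\; (Rp^+_H)_! (q^+_H)^* \circ \Psm \;=\; \CT_{B_H} \circ \Psm,
\]
giving the required natural isomorphism. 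The direct sum over $\nu \in X_*(T)$ in the definition of $\CT_B$ merely reflects the connected component decomposition of $\Gr_{T, S/\Div^1_X}$; on the $\Sigma$-fixed locus the relevant index set restricts to $X_*(T)^\sigma = X_*(T_H)$, which matches the decomposition present in $\CT_{B_H}$, while components indexed by non-fixed $\nu$ map into loci disjoint from the $\Sigma$-fixed points and thus vanish in the Tate category.

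The only thing to check is that $p^+_G$ and $q^+_G$ fall under the scope of Notation \ref{not: small tate carrier} so that the compatibility results apply; this is immediate since both maps are induced by the group homomorphisms $B \hookrightarrow G$ and $B \twoheadrightarrow T$ respectively, and each is shriekable in the sense of \cite{FS}. Consequently there is no real obstacle beyond organizing the already-established functorial compatibilities; the proof is essentially a formal manipulation once the fixed-point identifications and the base-change isomorphisms for $\Psm$ are in hand.
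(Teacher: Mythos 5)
Your proof is correct and follows essentially the same route as the paper: the commutativity is obtained by combining the fixed-point identifications of Propositions \ref{prop: BD gr fixed points} and \ref{prop: GrB fixed points} with the compatibility of $\Psm$ with $*$-pullback (Lemma \ref{lem: small pullback}) applied to $q^+_G$ and with $!$-pushforward (Proposition \ref{prop: small equivariant localization}) applied to $p^+_G$. The extra remark about the grading over $X_*(T)$ versus $X_*(T_H)$ is a harmless elaboration not spelled out in the paper.
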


\begin{proof}
By Lemma \ref{lem: small pullback} and the $\Sigma$-fixed point calculations in Proposition \ref{prop: BD gr fixed points} and Proposition \ref{prop: GrB fixed points}, $\Psm$ interchanges $(q^+_G)^*$ with $(q^+_H)^*$; similarly using Proposition \ref{prop: small equivariant localization}, $\Psm$ interchanges $\rR (p^+_G)_!$ with $\rR (p^+_H)_!$. 
\end{proof}

\begin{lemma}\label{lem: CT br}
Assume $\ell > \max \{b(\chG), b(\chH)\}$. There is a natural commutative square
\begin{equation}\label{eq: CT br}
\begin{tikzcd}
\Sat(\Gr_{G, \Div_X^1};k) \ar[r, "\wt{\cbr}"] \ar[d, "{\CT_B[\deg_G]}"]  &  \Sat(\Gr_{H, \Div_X^1};k)  \ar[d, "{\CT_{B_H}[\deg_H]}"] \\
\Sat(\Gr_{T, \Div_X^1};k) \ar[r, "\wt{\cbr}"]  &  \Sat(\Gr_{T_H, \Div_X^1};k) 
\end{tikzcd}
\end{equation}
\end{lemma}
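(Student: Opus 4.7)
The plan is to verify the square \eqref{eq: CT br} by applying the Tate functor $\TT$ to both compositions and exploiting the formula of Remark \ref{rem: wtcbr isomorphism} together with the compatibilities of $\Psm$, $\Nm\iellt$, and $\TT$ with constant terms. One then bootstraps back to an isomorphism in $\Sat(\Gr_{T_H, \Div^1_X}; k)$ itself.

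By v-descent, and using that both $\wt{\cbr}$ and $\CT_B[\deg_G]$ commute with base change in $S$, I would first reduce to the case where $S$ is strictly totally disconnected. Since both compositions in \eqref{eq: CT br} are $k$-linear exact functors and $\Sat(\Gr_{G, S/\Div^1_X}; k)$ is the abelian envelope of its tilting subcategory $\nPSY{G}{k}{1}$, it suffices to verify the isomorphism on normalized parity sheaves, where $\CT_B[\deg_G]\, \cF$ is in turn a normalized parity sheaf on $\Gr_{T, S/\Div^1_X}$ (all perverse sheaves on $\Gr_{T, S/\Div^1_X}$ are parity because all strata are discrete).

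Applying $\TT$ to the right-hand side of \eqref{eq: CT br}, the compatibility of $\TT$ with the six-functor formalism (\S\ref{sssec: small psm compatibilities}) gives $\TT \circ \CT_{B_H} \cong \CT_{B_H} \circ \TT$, so Remark \ref{rem: wtcbr isomorphism} yields
\[
\TT\bigl(\CT_{B_H}[\deg_H]\, \wt{\cbr}(\cF)\bigr) \;\cong\; \CT_{B_H}[\deg_H][\dagger_H^G]\, \Psm\bigl(\Nm\iellt(\cF)\bigr).
\]
Applying $\TT$ to the left-hand side and invoking the same remark,
\[
\TT\bigl(\wt{\cbr}(\CT_B[\deg_G]\, \cF)\bigr) \;\cong\; [\dagger_H^G]\, \Psm\bigl(\Nm\iellt(\CT_B[\deg_G]\, \cF)\bigr).
\]
The functor $\Nm$, built from convolutions and the symmetric commutativity constraint (Definition \ref{defn: Nm}), commutes with the monoidal functor $\CT_B$, and this compatibility persists after the linearization step of Construction \ref{const: frob twist C}, giving $\Nm\iellt \circ \CT_B \cong \CT_B \circ \Nm\iellt$. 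Combining with Lemma \ref{lem: CT PSm}, the left-hand side simplifies to $[\dagger_H^G]\, \CT_{B_H}[\deg_G]\, \Psm(\Nm\iellt(\cF))$. The remaining discrepancy is the shift $[\deg_G]$ versus $[\deg_H][\dagger_H^G]$ on the $\nu$-component of $\Gr_{T_H, S/\Div^1_X}$: the former shifts by $\tw{2\rho_G, \nu}$, while the latter shifts by $\tw{2\rho_H, \nu} + \dagger_H^G(\nu)$, and these differ by an even integer by the very definition of $\dagger_H^G$. Since in the Tate category the canonical map \eqref{eq: lambda shift by 2} identifies $[2]$ with $[0]$, the two shifts become canonically isomorphic after $\TT$, producing the desired natural isomorphism there.

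To descend this Tate-categorical isomorphism back to $\Sat(\Gr_{T_H, \Div^1_X}; k)$, I would use the compatibility of the lifting functor $L$ with constant terms, which follows formally from the compatibility of $\FF$ and $\TT$ with constant terms via the defining triangle \eqref{eq: lifting triangle}. This promotes the isomorphism to one at the level of normalized parity sheaves, which extends uniquely to all of $\Sat(\Gr_{G, S/\Div^1_X}; k)$ by exactness and the universal property of the abelian envelope, and finally reassembles over $\Div^1_X$ by v-descent. The main obstacle in executing this plan is the coherent bookkeeping of the many natural isomorphisms involved --- in particular, checking that the commutation of $\Nm\iellt$ with $\CT_B$ really follows from the monoidality of $\Nm$ after the linearization step, and ensuring that the compatibility of $L$ with $\CT_{B_H}$ is natural in the input parity sheaf, not merely at the level of isomorphism classes.
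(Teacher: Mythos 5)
Your proposal is correct and follows essentially the same route as the paper: reduce to normalized parity sheaves over strictly totally disconnected $S$, commute $\Nm\iellt$ past $\CT_B[\deg_G]$ via symmetric monoidality, commute $\Psm$ via Lemma \ref{lem: CT PSm}, and transfer the resulting Tate-category identity through the lifting functor $L$ using the triangle \eqref{eq: lifting triangle} together with the essential surjectivity of $\TT$ onto Tate-parity sheaves. The paper organizes this as a square-by-square check of the composite defining $\cbr$ (its diagrams \eqref{eq: cube diag 1}--\eqref{eq: cube diag 2}) rather than applying $\TT$ to both composites at once, and it leaves implicit the even-shift bookkeeping between $[\deg_G]$ and $[\deg_H][\dagger_H^G]$ that you spell out; these are presentational differences only.
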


\begin{proof}
By construction, it suffices to produce for each strictly totally disconnected $S$ over $\Div_X^1$ a natural (including compatibility with base change in $S$) commutative square
\begin{equation}\label{eq: CT br parity}
\begin{tikzcd}
\nPSY{G}{k}{1} \ar[r, "\cbr"] \ar[d, "{\CT_B[\deg_G]}"]  &   \nPSY{H}{k}{1}\ar[d, "{\CT_{B_H}[\deg_H]}"] \\
\nPSY{T}{k}{1} \ar[r, "\cbr"]  & \nPSY{T_H}{k}{1}
\end{tikzcd}
\end{equation}
Consider the commutative diagram 
\begin{equation}\label{eq: cube diag 1}
\adjustbox{scale=0.8,center}{
\begin{tikzcd}
\nPSY{G}{\OO}{1} \ar[r, "\Nm"] \ar[d, "{\CT_B[\deg_G]}"] & \nPSY{G}{\OO}{1}_{\good}^{B\Sigma} \ar[d, "{\CT_B[\deg_G]}"] \ar[r, "{[\dagger_H^G] \circ \Psm}"] & \nPSY{H}{\cT_{\OO}}{1} \ar[r, "L"] \ar[d, "{\CT_{B_H}[\deg_H]}"] &  \nPSY{H}{k}{1} \ar[d, "{\CT_{B_H}[\deg_H]}"]\\
\nPSY{T}{\OO}{1} \ar[r, "\Nm"]  &  \nPSY{T}{\OO}{1}^{B \Sigma}_{\good} \ar[r, "{\Psm}"]  & \nPSY{T_H}{\cT_{\OO}}{1} \ar[r, "L"] & \nPSY{T}{k}{1}
\end{tikzcd}}
\end{equation}
The left square commutes because $\CT_B[\deg G]$ is symmetric monoidal. The middle square commutes by Lemma \ref{lem: CT PSm}. We claim that the right square commutes. To see this, we consider the diagram 
\begin{equation}\label{eq: cube diag 2}
\adjustbox{scale=0.9,center}{
\begin{tikzcd}
\nPSY{H}{\OO}{1} \ar[r, "\TT"] \ar[d, "{\CT_{B_H}[\deg_H]}"]\ar[rr, bend left, "\FF"'] & \nPSY{H}{\cT_{\OO}}{1} \ar[r, "L"]  \ar[d, "{\CT_{B_H}[\deg_H]}"] & \nPSY{H}{k}{1}  \ar[d, "{\CT_{B_H}[\deg_H]}"] \\
\nPSY{T_H}{\OO}{1} \ar[r, "\TT"] \ar[rr, bend right, "\FF"]  & \nPSY{T_H}{\cT_{\OO}}{1}  \ar[r, "L"] & \nPSY{T_H}{k}{1} 
\end{tikzcd}}
\end{equation}
The upper and lower caps commute by \eqref{eq: lifting triangle}. It is immediate from the definition of the modular reduction functor $\FF$ that the outer square commutes. In the left square, the horizontal arrows are essentially surjective since all $\cE_{\rT}(\mu, \cL)$ are in the image, and these generate under direct sums by Proposition \ref{prop: Tate-parity structure theory}. The maps on morphisms are described in \S \ref{ssec: lifting functor}, in terms of Lemma \ref{lem: tate homs of TT}. From this, we see that the commutativity of the outer square of \eqref{eq: cube diag 2} implies commutativity of the right square. 

Since the right square of \eqref{eq: cube diag 2} is the same as that of \eqref{eq: cube diag 1}, we have now established that the outer rectangle in \eqref{eq: cube diag 1} commutes. Therefore, by definition, the diagram 
\[
\begin{tikzcd}
\nPSY{G}{k}{1} \ar[r, "\cbr\ellt"] \ar[d, "{\CT_B[\deg_G]}"] & \nPSY{H}{k}{1}\ar[d, "{\CT_{B_H}[\deg_H]}"] \\
\nPSY{T}{k}{1}  \ar[r, "\cbr\ellt"] & \nPSY{T_H}{k}{1}
\end{tikzcd}
\]
commutes. Finally, applying the Frobenius linearization process of Construction \ref{const: frob twist C} completes the proof for the commutativity of \eqref{eq: CT br parity}. 
\end{proof}

\subsection{The $\sigma$-dual homomorphism}\label{ssec: sigma-dual homomorphism}

The proof of Theorem \ref{thm: cbr tannakian functor} is completed by combining Proposition \ref{prop: br compatible with fiber functor}, Corollary \ref{cor: cbr additive}, and Proposition \ref{prop: cbr symmetric monoidal}, which are proved below. Before embarking on these proofs, we draw a few consequences. 

By the construction of $\wt{\cbr}$, Theorem \ref{thm: cbr tannakian functor} implies the following:

\begin{thm}\label{thm: wtcbr tannakian functor}
Assume $\ell > \max\{b(\chG), b(\chH)\}$. Then the functor $\wt{\cbr} \co \Sat(\Gr_{G,\Div_X^1}; k) \rightarrow \Sat(\Gr_{H,\Div_X^1}; k) $ is additive, symmetric monoidal, and compatible with the fiber functor. 
\end{thm}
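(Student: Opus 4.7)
The plan is to deduce Theorem \ref{thm: wtcbr tannakian functor} from Theorem \ref{thm: cbr tannakian functor} by a two-step formal argument that mirrors the two-step construction of $\wt{\cbr}$ from $\cbr$ laid out in \S\ref{ssec: normalized Brauer functor}. Recall that $\wt{\cbr}$ was built by first extending $\cbr$ along the abelian envelope on each strictly totally disconnected $S \to \Div^1_X$, and then v-descending along a cover of $\Div^1_X$ by such $S$.

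For the abelian envelope step, I would argue as follows. When $S = \Spa(C,C^+)$ is a geometric point, Theorem \ref{thm: parity = tilting} identifies $\nPSY{G}{k}{1} \cong \Tilt_k(\chG)$, whose abelian envelope is $\Rep_k(\chG) \cong \Sat(\Gr_{G,C};k)$. By Theorem \ref{thm: cbr tannakian functor}, $\cbr$ on parity sheaves is additive, symmetric monoidal, and compatible with the fiber functor (hence in particular faithful and exact). The universal property of the abelian envelope from \cite[Proposition 7.3.1]{CEOP} then guarantees that the extension to $\Sat(\Gr_{G,C};k)$ is uniquely determined, and each of the structures (additivity, symmetric monoidal structure, fiber functor compatibility) extends canonically. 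For general strictly totally disconnected $S$, we use the equivalence $\Sat(\Gr_{G, S/\Div_X^1};k) \cong \Rep_k(\chG) \otimes_k \Projf(C^{\infty}(|S|;k))$ from Example \ref{ex: geometric satake strd}; since tensoring with $\Projf(C^{\infty}(|S|;k))$ preserves additivity, symmetric monoidality, and the fiber functor (as the latter is simply $\bigoplus_i \rR^i \pi_{G,S*}$), all three properties persist.

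For the descent step, I would verify that each of the three properties descends along a v-cover of $\Div_X^1$ by strictly totally disconnected spaces. Additivity is v-local by fully-faithfulness of the restriction functors to v-covers. For the symmetric monoidal structure and the fiber functor, the key input is Lemma \ref{lem: satake category tensor}, which says that the monoidal structure is compatible with base change in $S$; combined with the fact that $\bigoplus_i \rR^i\pi_{G, \Div_X^1 *}$ satisfies proper base change along $S \to \Div_X^1$, this means that the natural isomorphisms witnessing the symmetric monoidal structure and fiber-functor compatibility for $\cbr$ over each strictly totally disconnected $S$ are themselves natural in $S$ (by construction of $\cbr$), and therefore satisfy the cocycle conditions needed to glue to $\wt{\cbr}$.

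The main obstacle, modulo Theorem \ref{thm: cbr tannakian functor}, is bookkeeping: ensuring that the canonical structure morphisms (associators, commutativity constraints, the monoidal comparison for the fiber functor) assembled in \S\ref{ssec: normalized Brauer functor} are compatible with pullback along $S' \to S$ of strictly totally disconnected spaces over $\Div_X^1$. This compatibility, however, is automatic because each ingredient used to build $\cbr$ (namely $\Nm$, $\Psm$, the lifting functor $L$, and the shift $[\dagger_H^G]$) is functorial under base change in $S$, as recorded in Lemma \ref{lem: small pullback} and the discussion of \S\ref{sssec: small psm compatibilities}.
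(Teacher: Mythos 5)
Your proposal is correct and takes essentially the same route as the paper: the paper's entire proof of this theorem is the single remark ``By the construction of $\wt{\cbr}$, Theorem \ref{thm: cbr tannakian functor} implies the following,'' and your two-step unpacking (extension along the abelian envelope via \cite[Proposition 7.3.1]{CEOP} together with Theorem \ref{thm: parity = tilting}, then v-descent using base-change compatibility of the structure isomorphisms) is exactly the content that remark leaves implicit. The only caveat is that the real work lives in Theorem \ref{thm: cbr tannakian functor} itself, which you correctly take as the input.
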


Recall that
\[
\Rep_{\Loc(\Div_X^1; \Lambda)}(\chG) \cong \Rep_{\Rep_\Lambda(W_E)}(\chG) \cong \Rep_\Lambda(\ld G)
\]
for the \emph{$L$-group} $\ld G \cong \chG \rtimes W_E$. Note that this differs from Langlands' convention for the $L$-group by a cyclotomic twist on root groups, although the difference can be trivialized by choosing a square root of the cyclotomic character; see \cite[\S VI.11]{FS} for the precise relation. 

\begin{cor}\label{cor: 1-leg brauer functor}Assume $\ell > \max\{b(\chG), b(\chH)\}$. Then the functor
\[
\wcbr \co \Sat(\Gr_{G, \Div_X^1};k) \rightarrow \Sat(\Gr_{H, \Div_X^1};k)
\]
corresponds under the Geometric Satake equivalence to the restriction $\ld \psi \co \Rep(\ld G) \rightarrow \Rep(\ld H)$ along some homomorphism $\ld \psi\co \ld H \rightarrow \ld G$. 
\end{cor}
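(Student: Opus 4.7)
The plan is to reduce the statement to a purely Tannakian assertion via the Geometric Satake equivalence, and then invoke standard reconstruction. First, by Theorem \ref{thm: wtcbr tannakian functor}, the functor $\wcbr$ is additive, symmetric monoidal (with respect to the convolution structures), and compatible with the fiber functor. By the one-leg Geometric Satake equivalence of Fargues-Scholze recalled in \S \ref{ssec: geom satake}, we may transport $\wcbr$ across the equivalences
\[
\Sat(\Gr_{G, \Div_X^1}; k) \cong \Rep_{\Loc(\Div_X^1; k)}(\chG) \cong \Rep_k(\ld G),
\]
and similarly for $H$, to obtain a functor $\wcbr^{\vee} \co \Rep_k(\ld G) \to \Rep_k(\ld H)$ that is $k$-linear, additive, symmetric monoidal, and compatible with the respective forgetful (fiber) functors to $\Loc(\Div_X^1; k) \cong \Rep_k(W_E)$.

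The main step is then the Tannakian reconstruction: any such functor between neutral (or, more precisely here, $\Rep_k(W_E)$-linear) Tannakian categories satisfying these properties is obtained by restriction along a uniquely determined homomorphism of the underlying affine group schemes $\ld \psi \co \ld H \to \ld G$. Concretely, the two fiber functors factor through $\Rep_k(W_E)$ in a compatible way, so one first obtains a morphism between the relative automorphism group schemes, which by the reconstruction of \cite{FS} are precisely $\ld H$ and $\ld G$. Equivalently, after choosing a closed point $\Spec k \to \Div_X^1$ (i.e., a geometric untilt and a splitting of $W_E \to \{1\}$), one applies classical Tannakian duality over $k$ to the composite fiber functor and then tracks the $W_E$-equivariance through the construction to conclude that the resulting homomorphism $\chH \to \chG$ intertwines the $W_E$-actions, giving the desired $\ld \psi$.

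I do not expect a real obstacle here: the only subtlety is that the base is not a point but $\Div_X^1$, so one must work with the relative Tannakian formalism valued in $\Rep_k(W_E)$ rather than $\Vect_k$. This is, however, exactly the setup in which the Geometric Satake equivalence of \cite{FS} is formulated, so it is already available. Once $\ld \psi$ is produced, the uniqueness statement in Tannakian duality guarantees that $\wcbr$ is canonically identified with $\ld \psi^*$.
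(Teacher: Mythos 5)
Your proposal is correct and follows essentially the same route as the paper: the paper's proof is a one-line appeal to Theorem \ref{thm: wtcbr tannakian functor} together with the Tannakian reconstruction process of \cite[Proposition VI.10.2]{FS} used to define $\ld G$ and $\ld H$, which is exactly the reconstruction step you spell out. Your additional remarks on the relative ($\Rep_k(W_E)$-linear) Tannakian formalism are just an expansion of what that citation packages.
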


\begin{proof}
The existence of $\ld \psi$ with the stated property follows from the definitions of $\ld G$ and $\ld H$ via the Tannakian reconstruction process used in \cite[Proposition VI.10.2]{FS}. 

\end{proof}

\begin{remark}
The second main theorem of Treumann-Venkatesh (see \cite[\S 1.3]{TV}) is the construction of a $\sigma$-dual homomorphism when $G$ is simply connected and $H$ is semisimple, with three possible exceptions when $G$ has type $\rE_6$. Their proof is based on classification of all possible examples, and then case-by-case analysis of each. By contrast, our construction is completely uniform. 

However, our assumption on $\ell$ leaves out many interesting examples in \cite{TV} which are specific to small primes. This is partly due to the suboptimal hypotheses in Theorem \ref{thm: parity = tilting}, which we believe to be an artefact of the less developed state of geometric representation theory in $p$-adic geometry; for example, we take shortcuts in order to circumvent developing a theory of Soergel bimodules in this setting. It is also partly due to genuine problems with the theory of parity sheaves in very small characteristic; a finer investigation of perverse parity sheaves may allow us to extend our results to all $\ell$. We hope to return to this in future work. 

Finally, we recall that Treumann-Venkatesh pointed out \cite[\S 7.8]{TVarxiv} that without the simply connected hypotheses, a $\sigma$-dual homomorphism need not exist with the ``usual'' $L$-group defined by Langlands, and they predicted that using instead the ``$c$-group'' might fix this issue. This prediction is morally consistent with our Theorem \ref{thm: wtcbr tannakian functor} since the $L$-group formed by taking the natural $W_E$-action on $\chG$ is exactly this $c$-group: see \cite[VI.11]{FS} and \cite[Remark 5.5.11]{Zhu17}. 
\end{remark}

\subsubsection{Compatibility with fiber functor} The fiber functor on $\nPSY{G}{k}{1}$ (resp. $\nPSY{H}{k}{1}$) is given by relative cohomology over $S$, 
\[
\cF \mapsto \bigoplus_i \rR^i \pi_{G, S*}(\cF) \quad \text{(resp. $
\cF \mapsto \bigoplus_i \rR^i \pi_{H, S*}(\cF)$)}
\]
where
\[
\pi_{G,S} \co \Gr_{G,S/\Div_X^1} \rightarrow S \quad \text{resp.} \quad \pi_{H,S} \co \Gr_{H,S/\Div_X^1} \rightarrow S
\]
are the natural projections. 

\begin{prop}\label{prop: br compatible with fiber functor}
The functor $\cbr$ is compatible with the fiber functors (cf. \eqref{eq: compatibility with fiber functor diagram} below). 
\end{prop}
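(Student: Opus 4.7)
The plan is to deduce the compatibility from the defining identity of Remark \ref{rem: wtcbr isomorphism},
\[
\TT\,\wcbr(\cF) \;\cong\; [\dagger_H^G]\,\Psm\bigl(\Nm\iellt(\cF)\bigr),
\]
by applying the relative pushforward $\rR\pi_{H,S*}$ and using Proposition \ref{prop: small equivariant localization} to commute $\Psm$ past pushforward. On the $S$-side, the Smith operation is just the tautological projection $\TT^*$ to the small Tate category of $S$, so the right-hand side becomes $[\dagger_H^G]\,\TT^*\,\rR\pi_{G,S*}(\Nm\iellt(\cF))$. Since the fiber functor $\omega_G := \bigoplus_i \rR^i\pi_{G,S*}$ (and similarly $\omega_H$) absorbs cohomological shifts, the locally-constant shift $[\dagger_H^G]$ will disappear after summing over all degrees.

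Next I would take $\rT^0$ of both sides. On the left, with $k$-coefficients and trivial $\Sigma$-action on $S$, Example \ref{ex: tate coh of trivial} gives
\[
\rT^0\,\rR\pi_{H,S*}\,\TT\,\wcbr(\cF) \;=\; \rT^0\,\varepsilon^*\,\rR\pi_{H,S*}\wcbr(\cF) \;\cong\; \bigoplus_i \rR^i\pi_{H,S*}\wcbr(\cF) \;=\; \omega_H(\wcbr(\cF)).
\]
On the right, monoidality of $\omega_G$ on the Satake category (part of the Geometric Satake equivalence) gives $\omega_G(\Nm(\cF)) \cong \omega_G(\cF)^{\otimes\ell}$ with $\Sigma$ acting by cyclic rotation of the tensor factors, and Example \ref{ex: Tate cohomology of Nm} then yields $\rT^0\bigl(\omega_G(\cF)^{\otimes\ell}\bigr) \cong \omega_G(\cF)\ellt$, the Frobenius twist.

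Finally, the linearization $\Nm\iellt$ in the sense of Construction \ref{const: frob twist C} is precisely what converts the resulting $\Frob$-semilinear natural transformation $\cF \mapsto \rT^0(\omega_G(\Nm(\cF)))$ into its $k$-linear counterpart, identifying the Frobenius twist $\omega_G(\cF)\ellt$ with $\omega_G(\cF)$ itself via the canonical $\F_\ell$-structure from \eqref{eq: parity F_ell} together with the $\F_\ell$-linearity of the \'etale-cohomological fiber functor. Concatenating these isomorphisms produces the required natural identification $\omega_H(\wcbr(\cF)) \cong \omega_G(\cF)$.

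The main obstacle will be to ensure that these pointwise identifications assemble into a genuine natural transformation of tensor functors, coherently with the $\Sigma$-equivariant structures coming from the commutativity constraint on $\Nm$ and with the Frobenius linearization step of Construction \ref{const: frob twist C}. Compatibility with base change in $S$, and hence the descent of all data from strictly totally disconnected $S$ back to $\Div_X^1$, should follow formally from the analogous properties of each individual functor in the argument.
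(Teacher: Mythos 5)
Your proposal is correct and follows essentially the same route as the paper's proof: both argue via the chain (i) the defining isomorphism $\TT\,\wcbr(\cF)\cong[\dagger_H^G]\Psm(\Nm\iellt\cF)$, (ii) equivariant localization (Proposition \ref{prop: small equivariant localization}) to commute the Smith operation past $\rR\pi_{*}$, (iii) Example \ref{ex: tate coh of trivial} to identify $\rT^0$ of the trivially-equivariant pushforward with $\bigoplus_i\rR^i\pi_{H,S*}$, (iv) Example \ref{ex: Tate cohomology of Nm} to identify $\rT^0(\omega_G(\cF)^{\otimes\ell})$ with the Frobenius twist, and (v) linearization via Construction \ref{const: frob twist C}. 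The only cosmetic difference is that the paper first establishes the $\Frob$-semilinear identity for $\cbr\ellt$ and linearizes at the very end, whereas you fold the linearization in from the start; your observation that the shift $[\dagger_H^G]$ is absorbed by summing over all degrees matches the paper's use of $\rT^{\dagger_H^G}$ combined with $2$-periodicity over $k$.
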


\begin{proof}


Let $I = \Gal(\brE^s/\brE)$ be the inertia subgroup of $E$. For $c \in \pi_1(H)_{I}$, there is an open-closed embedding $\Gr_{H, S/\Div_X^1}(c) \inj \Gr_{H, S/\Div_X^1}$. For a complex $\cK \in \nPSY{H}{\cT_k}{1}$, this induces a decomposition $\cK \cong \bigoplus_{c \in \pi_1(H)_I} \cK(c)$. We write 
\[
\rT^{\dagger_H^G}(\Gr_{H,S/\Div_X^1}; \cK) :=  \bigoplus_{c \in \pi_1(H)_I} \rT^{\dagger_H^G(c)} (\Gr_{H,S/\Div_X^1}; \cK(c)),
\]
i.e., we take Tate cohomology in degree $\dagger_H^G(c)$ on the connected component $c$. 

By Proposition \ref{prop: small equivariant localization} we have a natural isomorphism
\begin{equation}\label{eq: fiber functor 1}
 \rT^0 (\rR\pi_{G,S*}(\Nm \cF)) \cong \rT^0(\rR\pi_{H,S*} ( \Psm \circ \Nm \cF)).
 \end{equation}
Then as in Remark \ref{rem: wtcbr isomorphism}, we obtain a natural isomorphism 
\begin{equation}\label{eq: fiber functor 2}
\rT^0(\rR\pi_{H,S*}(\Psm \circ \Nm \cF))  \cong \rT^{\dagger_H^G}(\rR\pi_{H,S*}(\TT \circ \cbr\ellt(\cF))).
\end{equation}
By Example \ref{ex: tate coh of trivial}, we have 
\begin{align}\label{eq: fiber functor 3}
\rT^{\dagger_H^G}(\rR\pi_{H,S*} ( \TT \circ \cbr\ellt(\cF)))  & \cong \bigoplus_{n \in \Z} \rR^n \pi_{H,S*} (\cbr\ellt(\cF)).
\end{align}

Below we abbreviate the fiber functor on $\nPSY{G}{k}{1}$ as 
\[
\msf{F}^G:= \bigoplus_{n \in \Z} \rR^n \pi_{G,S*} \co \nPSY{G}{k}{1} \rightarrow \Loc(S;k)
\]
and similarly for $H$. Putting together equations \eqref{eq: fiber functor 1}, \eqref{eq: fiber functor 2}, and \eqref{eq: fiber functor 3}, we have produced a natural isomorphism of $\Frob$-semilinear functors $\nPSY{G}{k}{1} \rightarrow \Loc(S;k)$:  
\begin{equation}\label{eq: fiber functor 4}
 \rT^0 (\msf{F}^G(\Nm \cF))  \cong \msf{F}^H(\cbr\ellt(\cF)).
\end{equation}
By definition, we have
\[
\msf{F}^G ( \Nm(\cF) ) \cong (\msf{F}^G (\cF) )  \otimes \left( {}^\sigma \msf{F}^G (\cF) \right) \otimes \ldots \otimes \left( {}^{\sigma^{\ell-1}} \msf{F}^G ( \cF )\right)  \in \Loc(S;k) 
\]
with $\sigma$ acting by cyclic rotation of the factors, so we have (cf. Example \ref{ex: Tate cohomology of Nm}) a natural isomorphism
\begin{equation}\label{eq: fiber functor 5}
 \rT^0(\msf{F}^G (\Nm(\cF))  )  \cong \msf{F}^G (\cF)^{(\ell)}.
\end{equation}
Putting \eqref{eq: fiber functor 5} into \eqref{eq: fiber functor 4}, we obtain the commutative diagram 
\[
\begin{tikzcd}
\nPSY{G}{k}{1} \ar[r, "\cbr\ellt"] \ar[d, "(\msf{F}^G)\ellt"'] & \nPSY{H}{k}{1} \ar[d, "\msf{F}^H"] \\
\Loc(S;k)\ar[r, equals] & \Loc(S;k)
\end{tikzcd} 
\]
Then applying linearization with respect to the $\F_\ell$-structure \eqref{eq: parity F_ell} gives the desired commutative diagram
\begin{equation}\label{eq: compatibility with fiber functor diagram}
\begin{tikzcd}
\nPSY{G}{k}{1} \ar[r, "\cbr"] \ar[d, "\msf{F}^G"'] & \nPSY{H}{k}{1} \ar[d, "\msf{F}^H"] \\
\Loc(S;k)\ar[r, equals] & \Loc(S;k)
\end{tikzcd} 
\end{equation}

\end{proof}

\begin{cor}\label{cor: cbr additive}
The functor $\cbr$ is additive. 
\end{cor}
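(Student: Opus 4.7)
The plan is to deduce the additivity of $\cbr$ as a direct formal consequence of the compatibility with fiber functors established in Proposition~\ref{prop: br compatible with fiber functor}. Recall that a functor between additive categories is additive if and only if it preserves finite biproducts (including the zero object). So it suffices to show that $\cbr$ preserves finite direct sums.

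The first step is to observe that the fiber functor $\msf{F}^H \co \nPSY{H}{k}{1} \rightarrow \Loc(S;k)$ is conservative. This can be seen by passing to the ambient Satake category $\Sat(\Gr_{H, S/\Div_X^1};k)$: under the Geometric Satake equivalence \eqref{eq: geom sat S}, $\msf{F}^H$ is identified with the forgetful functor $\Rep_{\Loc(S;k)}(\chH) \rightarrow \Loc(S;k)$, which is obviously conservative.

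The second step is the formal argument: given $\cF_1, \cF_2 \in \nPSY{G}{k}{1}$, functoriality of $\cbr$ applied to the structure maps of the biproduct $\cF_1 \oplus \cF_2$ produces a canonical comparison morphism
\[
\cbr(\cF_1) \oplus \cbr(\cF_2) \rightarrow \cbr(\cF_1 \oplus \cF_2).
\]
By Proposition~\ref{prop: br compatible with fiber functor}, the image of this map under $\msf{F}^H$ is naturally identified with the analogous canonical comparison
\[
\msf{F}^G(\cF_1) \oplus \msf{F}^G(\cF_2) \rightarrow \msf{F}^G(\cF_1 \oplus \cF_2),
\]
which is an isomorphism since $\msf{F}^G$, defined via relative cohomology $\bigoplus_i \rR^i \pi_{G,S*}$, is manifestly additive. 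Conservativity of $\msf{F}^H$ then forces the original comparison to be an isomorphism. The preservation of the zero object is handled in the same fashion, and together these show that $\cbr$ preserves finite biproducts, hence is additive. There is no substantial obstacle here once Proposition~\ref{prop: br compatible with fiber functor} is in hand; the corollary is essentially a formal consequence of fiber-functor compatibility plus conservativity.
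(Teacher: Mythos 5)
Your argument is correct and is essentially the paper's own proof: both reduce additivity to the fiber-functor compatibility of Proposition \ref{prop: br compatible with fiber functor} together with a basic property of $\msf{F}^H$ (the paper invokes faithfulness to check the identity $\cbr(f+g)=\cbr(f)+\cbr(g)$ directly on Hom-sets, whereas you invoke conservativity to check preservation of biproducts — two interchangeable formal devices here, since $\msf{F}^H$ is faithful, conservative, and additive). No gap; the corollary is indeed a formal consequence of the proposition.
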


\begin{proof}
The additivity can be checked after applying the fiber functor, since the latter is faithful. Then conclude using \eqref{eq: compatibility with fiber functor diagram} and the additivity of the fiber functor $\msf{F}^G$.
\end{proof}

\subsubsection{Symmetric monoidality} We complete the proof of Theorem \ref{thm: cbr tannakian functor} with the Proposition below. 

\begin{prop}\label{prop: cbr symmetric monoidal}
The functor $\cbr$ promotes to a symmetric monoidal functor. 
\end{prop}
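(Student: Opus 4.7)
The plan is to produce a symmetric monoidal structure on the Frobenius-twisted functor $\cbr\ellt = \cBr \circ \Nm$ from Definition~\ref{def: un-normalized brauer parity}, and then transfer it via the linearization process of Construction~\ref{const: frob twist C} to $\cbr$.

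First I would verify that each building block of $\cbr\ellt$ carries a symmetric monoidal structure. The norm $\Nm$ is symmetric monoidal via the commutativity constraint on $\Sat$: the isomorphism $\Nm(\cF \star \cG) \cong \Nm(\cF) \star \Nm(\cG)$ is built by shuffling the factors of $(\cF \star \cG) \star {}^{\sigma}(\cF \star \cG) \star \cdots \star {}^{\sigma^{\ell-1}}(\cF \star \cG)$ using the braiding, in a manifestly $\Sigma$-equivariant manner. The shift $[\dagger_H^G]$ is monoidal via the canonical identification $[0] \cong [2]$ on Tate categories, as already noted preceding Definition~\ref{def: un-normalized brauer parity}. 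The lifting functor $L$ is monoidal by Lemma~\ref{lem: L monoidal}.

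The key remaining input is that the Smith operation $\Psm$ commutes with convolution. For this I would upgrade the compatibilities of \S\ref{sssec: small psm compatibilities} from affine Grassmannians to Hecke stacks, using that the $\Sigma$-fixed points of the convolution correspondence for $\cHck_{G, S/\Div^1_X}$ recover the convolution correspondence for $\cHck_{H, S/\Div^1_X}$ (this is Proposition~\ref{prop: BD gr fixed points} applied leg-by-leg to the convolution diagram). Combining the compatibility of $\Psm$ with $*$-pullback (Lemma~\ref{lem: small pullback}) and proper pushforward (Proposition~\ref{prop: small equivariant localization}) along the convolution correspondence then produces a natural isomorphism $\Psm(\cF \star \cG) \cong \Psm(\cF) \star \Psm(\cG)$, which assembles into a symmetric monoidal structure. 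Composing with $L$, $[\dagger_H^G]$, and $\Nm$ equips $\cbr\ellt$ with a symmetric monoidal structure.

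Finally I would extract the monoidal structure on $\cbr$ from that on $\cbr\ellt$ via linearization. Since the convolution product on $\nPSY{G}{k}{1}$ descends from its $\F_\ell$-form \eqref{eq: parity F_ell} (inherited from the $\F_\ell$-structure on the Satake category), the $\Frob$-semilinear equivalence $\Frob_{\msf{F}_0}$ appearing in Construction~\ref{const: frob twist C} is canonically a symmetric monoidal equivalence. Hence $\cbr = \cbr\ellt \circ \Frob_{\msf{F}_0}^{-1}$ inherits a symmetric monoidal structure. The main obstacle I anticipate lies in the coherence bookkeeping for the monoidal structure on $\Psm$ at the level of convolution diagrams, which requires careful tracking of how the fixed-locus of the convolution chart embeds into its ambient space and how this interacts with the two morphisms (a torsor quotient and a multiplication map) that define the convolution product.
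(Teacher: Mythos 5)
Your construction of the monoidal structure is essentially the paper's: you pass the convolution diagram through $\Psm$ using the compatibility with $*$-pullback and $!$-pushforward (Lemma \ref{lem: small pullback} and Proposition \ref{prop: small equivariant localization}), commute the shift $[\dagger_H^G]$ past convolution via $[0]\cong[2]$, and invoke Lemma \ref{lem: L monoidal} for $L$; the transfer through linearization at the end is also fine. So the \emph{monoidal} part of the statement is correctly handled.

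The gap is in the word ``symmetric.'' You assert that the isomorphism $\Psm(\cF\star\cG)\cong\Psm(\cF)\star\Psm(\cG)$ ``assembles into a symmetric monoidal structure,'' but compatibility with the commutativity constraint is exactly the point that does not follow formally from the convolution-diagram bookkeeping. The braiding on $\Sat(\Gr_{G,S/\Div^1_X};k)$ is not a naive shuffle: it is produced by fusion, i.e.\ by spreading convolution out over $(\Div^1_X)^2$ and descending from the locus of distinct legs (cf.\ \S\ref{ssec: geom satake} and Lemma \ref{lem: satake category tensor}), and one must check that the monoidal structure you built on $\cbr$ intertwines the two fusion braidings. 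The paper does this as a separate step: after a finite base change one may assume $G$ has a $\Sigma$-stable Borus, the constant term functor $\CT_{B_H}$ is faithful, and by the compatibility of $\wt{\cbr}$ with constant terms (Lemma \ref{lem: CT br}) the verification reduces to the case where $G$ and $H$ are tori, where $\Gr_{T,S/\Div^1_X}=\coprod_{X_*(T)}S$ and the constraint is checked by inspection. Without some such argument (or a direct fusion-theoretic construction of the isomorphism over $(\Div^1_X)^2$, which your proposal does not carry out), the symmetry remains unproved. Relatedly, your claim that the norm's structure isomorphism is ``manifestly $\Sigma$-equivariant'' concerns the equivariant structure on $\Nm$, not the braiding compatibility of the composite $\cbr$, so it does not close this gap.
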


\begin{proof} First we promote $\cbr$ to a monoidal functor. For this it suffices to produce, naturally in $\cF$ and $\cG$, an isomorphism 
\begin{equation}\label{eq: monoidal isomorphism}
\cBr (\Nm\iellt(\cF \star \cG)) \cong   \cBr(\Nm\iellt(\cF))   \star  \cBr (\Nm \iellt \cG).
\end{equation}
Since $\Nm\iellt$ has an evident symmetric monoidal structure, we rename $\cF' := \Nm\iellt \cF$ and $\cG' := \Nm\iellt \cG$, and aim to produce a natural isomorphism 
\[
\cBr(\cF' \star \cG')  \cong   \cBr(\cF' )\star \cBr(\cG').
\]
Indeed, we have 
\begin{align*}
\cBr(\cF' \star \cG') & = L \circ [\dagger_H^G] \Psm   (m_! (p_{0}^*  \cF'  \otimes p_{1}^* \cG'  )) \\
\text{Lemma \ref{lem: small pullback} + Proposition \ref{prop: small equivariant localization}} \implies & \cong L \circ [\dagger_H^G] \circ m_! ( p_0^* (\Psm \cF' )   \otimes p_1^*(\Psm \cG' ) ) \\
& = L \circ [\dagger_H^G]  ( \Psm \cF' \star \Psm \cG'  ) \\ 
\text{$[\dagger_H^G]$ symmetric monoidal $\implies$} & = L (  ([\dagger_H^G]  \Psm \cF') \star  ([\dagger_H^G]   \Psm \cG' ) ) \\
\text{Lemma \ref{lem: L monoidal}} \implies & \cong L ( [\dagger_H^G]  \Psm \cF')  \star L(  [\dagger_H^G]  \Psm \cG') \\
&= \cBr(\cF' )\star \cBr(\cG').
\end{align*}
This furnishes the monoidal structure \eqref{eq: monoidal isomorphism}. 


Next we need to check that the monoidal functor $\cbr$ has the \emph{property} of being symmetric monoidal. For this purpose, we may make a finite base change along $S$ to reduce to the case where $G$ is split, and has a Borus $(B,T)$. By Lemma \ref{lem: fixed borus}, $(B^\sigma, T^\sigma) =: (B_H, T_H)$ is a Borus of $H$. Since $\CT_{B_H}$ is faithful, it suffices to check the symmetric monoidality property after applying $\CT_{B_H}$. Using Lemma \ref{lem: CT br}, we are then reduced to the case where $G$ and $H$ are both tori. In this case, we have 
\[
\Gr_{T, S/\Div_X^1} = \coprod_{X_*(T)} S \quad \text{and} \quad \Gr_{T_H, S/\Div_X^1} = \coprod_{X_*(T_H)} S
\]
and the commutativity constraints for $G,H$ comes from convolution on $X_*(T), X_*(T_H)$ respectively. Then the symmetric monoidality is clear from inspection. 
\end{proof}


The proof of Proposition \ref{prop: cbr symmetric monoidal} gives the following information about the induced map of tori. 

\begin{cor}\label{cor: dual torus}
Let $\chT_H, \chT$ be the canonical maximal tori in $\chH, \chG$, respectively. The restriction $\chpsi \co \chT_H \rightarrow \chT$ corresponds to the map $X^*(\chT) \rightarrow  X^*(\chT_H) = X^*(\chT)^\sigma$ given by applying $N = (1+ \sigma + \ldots + \sigma^{\ell-1})$. 
\end{cor}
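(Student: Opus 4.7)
The plan is to use the compatibility of $\wtcbr$ with constant terms (Lemma~\ref{lem: CT br}) to reduce the computation of $\chpsi$ on maximal tori to the analogous computation of the Brauer functor in the torus case, where everything can be analyzed by inspection.

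First, I would unpack what Lemma~\ref{lem: CT br} says on the Tannakian side. Under Geometric Satake, the functor $\CT_B[\deg_G]$ corresponds to restriction $\Res_{\chT}^{\chG} \co \Rep(\chG) \to \Rep(\chT)$, and similarly for $H$. Combining this with Theorem~\ref{thm: wtcbr tannakian functor} and the Tannakian formalism, Lemma~\ref{lem: CT br} yields a commutative diagram of symmetric monoidal functors
\[
\begin{tikzcd}
\Rep(\chG) \ar[r, "\chpsi^*"] \ar[d, "\Res_{\chT}^{\chG}"'] & \Rep(\chH) \ar[d, "\Res_{\chT_H}^{\chH}"] \\
\Rep(\chT) \ar[r, "\Phi^*"] & \Rep(\chT_H)
\end{tikzcd}
\]
where $\Phi \co \chT_H \to \chT$ is the homomorphism of tori corresponding under Geometric Satake to the functor $\wtcbr$ for the pair $(T, T_H)$. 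By Tannakian reconstruction, this commutativity forces $\chpsi|_{\chT_H}$ to factor through $\chT$ as $\Phi$. It therefore suffices to identify $\Phi$ with the map of tori dual to $N \co X^*(\chT) \to X^*(\chT_H)$.

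Next, I would directly compute $\wtcbr$ in the case of tori. Here $\Gr_{T, S/\Div_X^1} = \coprod_{\lambda \in X_*(T)} S$ and $\Gr_{T_H, S/\Div_X^1} = \coprod_{\nu \in X_*(T_H)} S$, with $X_*(T_H) = X_*(T)^\sigma$. Let $\cF_\lambda$ denote the (normalized) parity sheaf $\ul{k}$ on the component indexed by $\lambda$; these correspond under Geometric Satake to the one-dimensional representations of $\chT$ of weight $\lambda \in X^*(\chT)$. Tracing through the definition of $\cbr$ step by step:
\begin{enumerate}
\item Convolution on $\Gr_T$ is addition on $X_*(T)$, so $\Nm(\cF_\lambda) = \cF_\lambda \star {}^\sigma \cF_\lambda \star \cdots \star {}^{\sigma^{\ell-1}}\cF_\lambda \cong \cF_{N\lambda}$, which is supported on the $\sigma$-fixed component indexed by $N\lambda \in X_*(T)^\sigma = X_*(T_H)$.
\item Since $2\rho_G = 2\rho_H = 0$ for tori, the pariversity shift $[\dagger_H^G]$ is trivial, and $\Psm$ simply restricts $\cF_{N\lambda}$ to the component $N\lambda$ of $\Gr_{T_H}$, which is $\TT$ of the skyscraper at $N\lambda$.
\item The lifting functor $L$ sends this Tate-parity skyscraper to $\FF \cF_{N\lambda}$, the mod-$\ell$ skyscraper on the component $N\lambda$ of $\Gr_{T_H}$, by the triangle \eqref{eq: lifting triangle} and the fact that tori have only zero-dimensional strata.
\item Finally, the Frobenius linearization converts the $\Frob$-semilinear assignment $\cF_\lambda \mapsto \cF_{N\lambda}$ into a $k$-linear one with the same effect on objects, using the evident $\F_\ell$-structure $\Sat(\Gr_{T, \Div_X^1}; k) \cong \Sat(\Gr_{T, \Div_X^1}; \F_\ell) \otimes_{\F_\ell} k$.
\end{enumerate}

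Combining these steps, $\wtcbr$ sends the skyscraper at $\lambda \in X_*(T)$ to the skyscraper at $N\lambda \in X_*(T_H)$, i.e., the restriction functor $\Phi^* \co \Rep(\chT) \to \Rep(\chT_H)$ sends the weight-$\lambda$ character to the weight-$N\lambda$ character. This precisely says that $\Phi^* \co X^*(\chT) \to X^*(\chT_H)$ is the norm map $N$, so $\Phi \co \chT_H \to \chT$ is the homomorphism dual to $N$. The bulk of the work is bookkeeping through the four ingredients of $\cbr$ in the torus case; no step is a serious obstacle, since the relevant categories and functors are all explicit and the non-trivial properties (exactness, compatibility with fiber functors, constant terms) have already been established.
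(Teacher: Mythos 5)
Your proposal is correct and follows essentially the same route as the paper: the paper deduces the corollary directly from the proof of Proposition \ref{prop: cbr symmetric monoidal}, which likewise reduces to the torus case via $\CT_{B_H}$ and Lemma \ref{lem: CT br} and then reads off the answer from the identification of $\Gr_{T,S/\Div_X^1}$ with $\coprod_{X_*(T)} S$ and of convolution with addition on $X_*(T)$. Your explicit bookkeeping of $\Nm$, $\Psm$, $L$, and the linearization in the torus case is just a more detailed writeup of the same "clear by inspection" computation.
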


\subsection{Multiple legs}\label{ssec: multiple legs} For a finite non-empty set $I$, we abbreviate $\Sat_G^I(k) := \Sat(\Gr_{G, (\Div_X^1)^I}; k)$.

\begin{thm}\label{thm: relative functoriality} There are Tannakian functors
\[
\wcbr^I \co \Sat_G^I(k) \rightarrow \Sat_H^I(k)
\]
for each non-empty finite set $I$, with the following properties.

(1) Under the Geometric Satake equivalence \eqref{eq: geom sat Div} of Fargues-Scholze, $\wcbr^I$ corresponds to the restriction $\ld \psi^* \co \Rep_k(\ld G)^{\otimes I} \rightarrow \Rep_k(\ld H)^{\otimes I}$ induced by the $\sigma$-dual homomorphism $\ld \psi  \co \ld H \rightarrow \ld G$ from Corollary \ref{cor: 1-leg brauer functor}. (In particular, $\wcbr^{\{1\}}$ agress with the $\wcbr$ from \eqref{eq: normalized brauer functor}.)

(2) (Naturality in $I$) For any map of non-empty finite sets $\zeta \co I \rightarrow J$, inducing the fusion product $\zeta \co \Sat_G^I(k) \rightarrow \Sat_G^J(k)$ and similarly for $H$, there is a commutative square
\[
\begin{tikzcd}
 \Sat_G^I(k) \ar[r, "\wcbr^I"] \ar[d, "\zeta"] &  \Sat_H^I(k) \ar[d, "\zeta"] \\
 \Sat_G^J(k) \ar[r, "\wcbr^J"]  & \Sat_H^J(k)
\end{tikzcd}
\]
such that the implicit natural isomorphisms are compatible with compositions $I \rightarrow J \rightarrow K$.

(3) For $\cF \in \Sat_G^I(k)$, there are natural isomorphisms 
\[
\TT \wcbr^I(\cF)  \cong {[\dagger_H^G]} \Psm (\Nm \iellt \cF)  \in D^{\ULA}(\cHck_{H, (\Div_X^1)^I}; \cT_k)^{\bd}
\]
compatible with any map of finite sets $\zeta \co I \rightarrow J$ as in (2). 
\end{thm}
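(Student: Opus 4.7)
The plan is to bootstrap the multi-leg version from the single-leg construction already carried out in the preceding sections, by running the same four-stage procedure (parity sheaves on small bases, extension via abelian envelopes, v-descent, and verification of Tannakian structure) in the ``Beilinson-Drinfeld'' setting over $(\Div^1_X)^I$. First, for a strictly totally disconnected $S$ over $(\Div^1_X)^I$, I would construct the un-normalized Brauer functor $\cBr^I := L \circ [\dagger_H^G] \circ \Psm$ acting on normalized relative parity sheaves on $\Gr_{G, S/(\Div^1_X)^I}$. The inputs required here are the multi-leg analogues of: (a) existence of relative parity sheaves and their identification with tilting modules (Theorem~\ref{thm: parity = tilting}, which in fact goes through uniformly since the Geometric Satake equivalence~\eqref{eq: geom sat S} holds in the multi-leg setting after tensoring over $\Loc((\Div^1_X)^I; k)$), (b) compatibility of $\Psm$ with the convolution structure on multiple legs (which follows from the fixed-point calculations of \S\ref{ssec: fixed ponts affine grassmanians} together with the compatibilities of \S\ref{sssec: small psm compatibilities}), and (c) goodness of $\Nm^I(\cF)$ for $\cF \in \Sat_G^I(k)$, which reduces via Proposition~\ref{prop: stalk weight multiplicity} to the same permutation-basis argument as in Lemma~\ref{lem: Nm is good}.

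Next, I would compose with $\Nm^I$, factor through $k$-linearization via the multi-leg version of Lemma~\ref{lem: category base changes}, and apply the Frobenius linearization of Construction~\ref{const: frob twist C} to obtain $\cbr^I$ on $\nPSY{G}{k}{I}$. The extension to all of $\Sat_G^I(k)$ would be via abelian envelopes exactly as in \eqref{eq: extend br}: under the Geometric Satake equivalence, $\nPSY{G}{k}{I}$ identifies with tilting modules inside $\Rep_{\Loc(S; k)}(\chG^I)$, whose abelian envelope is the full representation category. Descent to $(\Div^1_X)^I$ then proceeds by the same v-descent argument as in~\eqref{eq: normalized brauer functor}, yielding $\wcbr^I$. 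Verification that $\wcbr^I$ is Tannakian (additive, symmetric monoidal, and compatible with the fiber functor) is carried out verbatim as in Proposition~\ref{prop: br compatible with fiber functor}, Corollary~\ref{cor: cbr additive}, and Proposition~\ref{prop: cbr symmetric monoidal}; property (3) is simply the multi-leg reformulation of Remark~\ref{rem: wtcbr isomorphism}, built into the construction.

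Property (1), identifying the resulting functor with restriction along $\ld\psi$, follows by Tannakian reconstruction from the single-leg case: the external product morphism
\begin{equation*}
\boxtimes \co \bigotimes_{i \in I} \Sat_G^{\{i\}}(k) \rightarrow \Sat_G^I(k)
\end{equation*}
intertwines $\bigotimes_i \wcbr^{\{i\}}$ and $\wcbr^I$ on the appropriate subcategory, and both sides generate the respective Satake categories under colimits and the fiber functor pins down the underlying representation-theoretic data. For property (2), naturality in $I$ follows from naturality of each step in the construction: $\Psm$ is compatible with $!$-pushforward and $*$-pullback (Lemma~\ref{lem: small pullback}, Proposition~\ref{prop: small equivariant localization}), and the maps $\zeta$ induce compatible operations on the moduli of modifications on the $G$ and $H$ sides via the fixed-point identifications of \S\ref{sec: fixed point}, which are v-local; the coherence with $I \to J \to K$ will come from the coherence of these base six-functor compatibilities.

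The main obstacle will be the step of upgrading to full Satake compatibly with the fusion structure, i.e.\ coordinating the extensions to abelian envelopes with the maps $\zeta^*$. Concretely, one must check that the extensions produced for each $S/(\Div^1_X)^I$ assemble into a system compatible with pullback along the diagonals $(\Div^1_X)^J \hookrightarrow (\Div^1_X)^I$ induced by surjections $\zeta\colon I \twoheadrightarrow J$ (and the corresponding partial fusion), as well as with external products along disjoint-supports loci. In essence, one needs a coherent choice of abelian-envelope extension at every level, which can be obtained by observing that both sides $\Sat_G^I$ and $\Sat_H^I$ admit a compatible $\Rep((\ld G)^I)$-respectively $\Rep((\ld H)^I)$-description, and Corollary~\ref{cor: 1-leg brauer functor} pins down the functor to be the one induced by the homomorphism $\ld\psi^I \co (\ld H)^I \to (\ld G)^I$ up to unique isomorphism, which automatically packages the coherence data.
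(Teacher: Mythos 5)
Your proposal takes a genuinely different route from the paper, and the first stage of it has a real gap. You propose to rebuild the whole machine (relative parity sheaves, the lifting functor, abelian envelopes, v-descent) directly over $(\Div^1_X)^I$. But the paper's parity theory is developed only for one leg: the structure theory in \S\ref{ssec: relative parity} (Ext-vanishing, Krull-Schmidt, existence via Demazure resolutions, and Theorem \ref{thm: parity = tilting}) rests on the Schubert stratification indexed by $X_*(T)^+$ and on the stability of $\DSY{G}{\Lambda}$ under $i_!i^*$, $\rR i_* i^!$, etc.\ for strata inclusions --- and the paper explicitly warns that this stability \emph{fails} in the multiple-legs situation unless the map from $S$ factors through the locus where the legs are disjoint. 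Your claim in (a) that Theorem \ref{thm: parity = tilting} ``goes through uniformly'' because the multi-leg Satake category is obtained by tensoring over $\Loc((\Div^1_X)^I;k)$ is not justified: Lemma \ref{lem: satake category tensor} is a statement about base change of a \emph{single-leg} Grassmannian along $S \to \Div^1_X$, whereas $\Gr_{G,(\Div^1_X)^I}$ involves colliding legs and a genuinely different stratification. Your acknowledged ``main obstacle'' (coordinating abelian-envelope extensions with fusion) is resolved only by an appeal to uniqueness via Corollary \ref{cor: 1-leg brauer functor}, which presupposes that the multi-leg functor has already been shown to exist and to be Tannakian --- the very thing under construction.

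The paper avoids all of this by never constructing a multi-leg parity theory. It uses the convolution Hecke stack to produce an equivalence $\conv \co \Sat(\Gr_{G,\Div^1_X})^{\otimes I} \xrightarrow{\sim} \Sat_G^I(k)$ and simply \emph{defines} $\wcbr^I$ as the transport of $\wcbr^{\otimes I}$ across this equivalence for $G$ and $H$. Then (1) follows from the one-leg case and Tannakian reconstruction; (2) reduces, via the fact that fusion along $\zeta$ corresponds under $\conv$ to regrouping tensor factors, to the symmetric monoidality of $\wcbr$ already established in Theorem \ref{thm: wtcbr tannakian functor}; and (3) is a diagram chase combining the $|I|=1$ case of Remark \ref{rem: wtcbr isomorphism} with the compatibility of $\TT$, $\Psm$, $\Nm$, and $[\dagger_H^G]$ with convolution. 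If you want to salvage your approach, you would need to either restrict to the disjoint-legs locus and glue, or develop the parity/tilting theory over $(\Div^1_X)^I$ from scratch; the paper's reduction to the one-leg case is the cheaper and safer path.
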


\begin{proof}
Write $I = \bigsqcup_{i \in I} \{i\}$. We bootstrap from the case $|I|=1$ using the convolution Hecke stack
\[
\wt{\cHck}_G^I  := \cHck_G^{I; \{i\}_{i \in I} }\rightarrow (\Div^1_X)^I
\]
defined in \cite[p. 226]{FS}. We have projection maps $p_i \co \wt{\cHck}_G^I  \rightarrow \wt{\cHck}_{G, \Div_X^1}
^{\{i\}}$ for each $i \in I$, as well as a convolution map $m \co 
\wt{\cHck}_G^I \rightarrow 
\cHck_{G, (\Div_X^1)^I}$. The map 
\[
(\cK_i)_{i \in I} \mapsto Rm_! (\otimes_{i \in I} \ p_i^* \cK_i) \in \Sat_G^I(k)
\]
induces an equivalence 
\[
\conv \co \Sat(\Gr_{G, \Div^1_X})^{\otimes I} \xrightarrow{\sim} \Sat_G^I(k) .
\]

The same considerations apply to $H$. We define $\wt{\cbr}^I \co \Sat_G^I(k)  \rightarrow \Sat_H^I(k) $ by the commutative diagram
\[
\begin{tikzcd}
\Sat_G(k)^{\otimes I}  \ar[d, "\sim"', "\conv"] \ar[r, "\wt{\cbr}^{\otimes I}"]    &  \Sat_H(k)^{\otimes I}  \ar[d, "\sim"', "\conv"]    \\
 \Sat_G^I(k) \ar[r, dashed, "\wt{\cbr}^I"]  &   \Sat_H^I(k) 
\end{tikzcd}
\]
Then property (1) follows from the $|I|=1$ case, which was arranged in Corollary \ref{cor: 1-leg brauer functor}, and Tannakian reconstruction \cite[\S 11]{FS}. 

For property (2), recall that the fusion product $\Sat_G^I(k) \rightarrow \Sat_G^J(k)$ is arranged in \cite[\S VI.9.4]{FS} so that under the identifications in the commutative diagram 
\[
\begin{tikzcd}
\Sat(\Gr_{G, \Div^1_X})^{\otimes I} \ar[d, "\sim"', "\conv"] \ar[r] & \Sat(\Gr_{G, \Div^1_X})^{\otimes J} \ar[d, "\sim"', "\conv"] \\
\Sat_G^I(k) \ar[r]  & \Sat_G^J(k)
\end{tikzcd}
\]
it corresponds to 
\[
\star_{i \in I} \cF_i \mapsto \otimes_{j \in J} \left(\star_{i \in \zeta^{-1}(j)} \cF_j \right)
\]
in the top row. Hence the compatibility of $\wt{\cbr}^I$ in (2) is equivalent to the symmetric monoidality of $\wt{\cbr}$ with respect to the fusion product, which was established in Theorem \ref{thm: wtcbr tannakian functor}.

Property (3) is arranged by construction for $I = \{1\}$: see \eqref{eq: wtcbr isomorphism}. For $|I|>1$, we consider the diagram 
\[
\begin{tikzcd}[column sep = huge]
\Sat_G^{\{1\}}(k)	^{\otimes I} \ar[d, "\sim", "\conv"'] \ar[r, "({[\dagger_H^G]} \circ \Psm \circ  \Nm \iellt)^{\otimes I}"'
] \ar[rr, "\wt{\cbr}^{\otimes I}"', bend left] &  D^{\ULA}(\cHck_{H, (\Div_X^1)^I}; \cT_k)^{\otimes I} \ar[d, "\conv"']   &  \Sat_H^{\{1\}}(k)^{\otimes I} \ar[d, "\sim", "\conv"']  \ar[l, " \TT^{\otimes I}"]  \\
 \Sat_G^I(k) \ar[rr, "\wt{\cbr}^I"
 , bend right]  \ar[r, "{[\dagger_H^G]} \circ \Psm \circ \Nm \iellt"'] &  D^{\ULA}(\cHck_{H, (\Div^1_X)^I}; \cT_k) &  \Sat_H^I(k) \ar[l, "\TT"] \\
\end{tikzcd}
\]
The outermost rectangle commutes by definition of $\wt{\cbr}^I$. The top cap is the $I$th tensor power of the case $|I|=1$, so it commutes. The right square commutes by compatibility of $\TT$ with pushforward and pullback. The left square commutes by (symmetric) monoidality of $\Nm$, compatibility of $\Psm$ with pushforward and pullback (\S \ref{sssec: small psm compatibilities}), and monoidality of $[\dagger_H^G]$. Hence the bottom cap commutes, which gives (3) for general $I$. 
  
\end{proof}

\section{Tate cohomology of moduli of local shtukas}\label{sec: local shtukas} 

In this section we will integrate the Brauer functor into the construction of the Fargues-Scholze correspondence \eqref{eq: LLC} for $H$ and for $G$, in order to deduce information about functoriality. 

In \S \ref{ssec: shtukas} -- \S \ref{ssec: review FS} we review a construction of the Fargues-Scholze correspondence, which resembles a local version of the work of Vincent Lafforgue \cite{Laff18} in the global setting, but (amazingly!) applies equally well in mixed characteristic. Our starting point is the \emph{moduli space of local shtukas} $\Sht_{(G,b,I),K}$, defined from the input data of a reductive group $G/E$, an element $b$ in the ``Kottwitz set'' $B(G)$, a finite non-empty set $I$, and a compact open subgroup $K \subset G(E)$. A generalization of the Grothendieck-Messing period map gives an \'etale morphism from $\Sht_{(G,b,I),K}$ to a ``twisted'' version of the Beilinson-Drinfeld affine Grassmannian $\Gr^{\twi}_{G, \prod_{i \in I} \Spd \brE}$. By pulling back sheaves along this morphism, the Geometric Satake equivalence of Fargues-Scholze supplies a functor from $\Rep_k((\ld G)^I)$ to sheaves on $\Sht_{(G,b,I),K}$, which are ``compatible with fusion'' in a suitable sense. According to V. Lafforgue's paradigm, such a collection of functors gives rise to commuting \emph{excursion operators} on the cohomology of $\Sht_{(G,b,I),K}$, whose simultaneous generalized eigenvalues correspond naturally to semisimple Galois representations. 

To study functoriality, we link the processes outlined in the preceding paragraph for $G$ and for $H$ using the Brauer functor from \S \ref{sec: categorical TV}. (The reader may find it helpful to consult Figure \ref{fig: sht} again.) Thanks to the calculations in \S \ref{sec: fixed point} we may realize $\Sht_{(H,b',I), \iota^*K}$ as (an open-closed subset of) the $\Sigma$-fixed points of $\Sht_{(G,b,I),K}$, and the Brauer functor $\wt{\cbr}^I$ gives a geometric link between the sheaves on $\Sht_{(G,b,I),K}$ indexed by $V \in \Rep_k((\ld G)^I)$ and the sheaves on $\Sht_{(H,b',I), \iota^*K}$ indexed by $\ld \psi^* (V) \in \Rep_k((\ld H)^I)$. We feed this link into equivariant localization for Tate cohomology, which is part of the formalism developed in \S \ref{sec: Smith-Treumann}. In \S \ref{ssec: excursion on Tate} we define and study excursion operators on Tate cohomology of the moduli spaces of local shtukas. Then in \S \ref{ssec: functoriality for excursion operators}, we extract functoriality relations between excursion operators for $\Sht_{(G,b,I),K}$ and for $\Sht_{(H,b',I),\iota^*K}$.



\subsection{Moduli spaces of local shtukas}\label{ssec: shtukas}  The definitions of local shtukas in $p$-adic geometry are developed in Scholze's Berkeley Lectures on $p$-adic geometry, especially 	\cite[Lecture XXIII]{SW20}. Properties of the cohomology of their moduli spaces are established in \cite[\S IX.3]{FS}. We recall some relevant aspects here.  

\subsubsection{Setup} Recall that $G$ is a reductive group over $E$. For each finite set $I$, $b \in B(G)$, and compact open subgroup $K < G(E)$, we have a moduli space of local shtukas $\Sht_{(G,b,I),K}$. By \cite[Theorem 23.1.4]{SW20}, it is an inductive limit of locally spatial diamonds with finite cohomological dimension along a countable index set. Furthermore, there is an action of $G_b(E)$ on $\Sht_{(G,b,I),K}$. 

The space $\Sht_{(G,b,I),K}$ comes equipped with ``leg'' maps 
\[
f_K \co \Sht_{(G,b,I),K} \rightarrow \prod_{i \in I} \Spd \brE
\]
which are partially proper and ind-shriekable, and Grothendieck-Messing period maps 
\[
\pi_K \co \Sht_{(G,b,I),K}  \rightarrow \Gr^{\twi}_{G, \prod_{i \in I} \Spd \brE}.
\]
which are \'{e}tale. 

\subsubsection{Satake sheaves} As in \cite[\S IX]{FS}, we choose a square root of the cyclotomic character $W_E \rightarrow k^\times$ in order to trivialize the cyclotomic twist in the Geometric Satake equivalence, giving a $W_E$-equivariant isomorphism $\chG \cong \wh{G}$. 

Let $Q$ be a finite quotient of $W_E$ over which the action on $\wh{G}$-factors. For any finite set $I$ and $W \in  \Rep_k((\wh{G} \rtimes Q)^I)$, the Geometric Satake equivalence gives a relative perverse sheaf $\cS_W$ on $\Gr^{\twi}_{G, \prod_{i \in I} \Spd \brE}$ (cf. \cite[\S I.9, \S IX.3]{FS}) which we pull back via $\pi_K^*$ to $\Sht_{(G,b,I),K}$, and we also denote the resulting complex by $\cS_W \in D_{\et}^b(\Sht_{(G,b,I),K};k)$.

\subsubsection{Cohomology of moduli spaces of local shtukas}\label{sssec: coh of shtuka} By \cite[Corollary I.7.3, Proposition IX.3.2]{FS}, we may regard
\begin{equation}\label{eq: cohomology Weil action}
\rR f_{K!} \cS_W \in  D(\Rep^{\mrm{sm}}_k G_b(E))^{B\prod_{i \in I}W_E}
\end{equation}
as a (derived) representation of $G_b(E)$ with a commuting action of $\prod_{i \in I} W_{E}$. To demystify this a bit: the $G_b(E)$-action on cohomology is induced by the $G_b(E)$-action on the space $\Sht_{(G,b,I),K}$, and the $\prod_{i \in I} W_{E}$-action comes from a natural descent (using an interpretation via Hecke operators) of $\rR f_{K!} \cS_W $ from $\prod_{i \in I} \Spd \brE$ to $\prod_{i\in I} [\Spd \brE / \varphi] \cong \prod_{i \in I} [\Spd C/ W_E]$.

\begin{example}\label{ex: cohomology of trivial shtuka}
For $W = \bbm{1}$ the trivial representation, $\Sht_{(G,b,I),K}$ is only non-empty for $b = 1_G$. In this case, $\rR f_{K!} \cS_{\bbm{1}}$ is $\cInd_K^{G(E)} (k)$ with the obvious $G(E)$-action and the trivial $W_E$-action. 
\end{example}

The functor
\[ 
\Rep_k((\wh{G} \rtimes Q)^I) \rightarrow  D(\Rep^{\mrm{sm}}_k G_b(E))^{B\prod_{i \in I}W_E}
\]
sending $\cS_W \mapsto \rR f_{K!} \cS_W$ satisfies the following \emph{fusion compatibility}. Any map of finite non-empty sets $\zeta \co I \rightarrow J$ induces a map $(\ld G)^J \rightarrow (\ld G)^I$. Let $\Res_\zeta \co \Rep_k((\ld G)^I)  \rightarrow \Rep_k((\ld G)^J)$ be the restriction functor along this map. By construction, Geometric Satake is arranged so that the corresponding functor $\Sat_G^I(k) \rightarrow \Sat_G^J(k)$ is the fusion product. We have commutative diagrams 
\[
\begin{tikzcd}
\Sht_{(G,b,J),K} \ar[d, "\pi_K"] \ar[r]  & \Sht_{(G,b,I),K} \ar[d, "\pi_K"] \\
\Gr^{\twi}_{G, \prod_{j \in J} \Spd \brE} \ar[r] \ar[d, "f_K"] & \Gr^{\twi}_{G, \prod_{i \in I} \Spd \brE}  \ar[d, "f_K"]  \\
\prod_{j \in J} \Spd \brE \ar[r] & \prod_{i \in I} \Spd \brE
\end{tikzcd} 
\]
The Geometric Satake equivalence is constructed in \cite[\S VI]{FS} so that one has $\cS_W|_{\Gr^{\twi}_{G, \prod_{j \in J} \Spd \brE}} \cong \cS_{\Res_\zeta W}$, naturally in $W \in \Rep_k((\wh{G} \rtimes Q)^I)$ and compatibly under compositions of finite sets. This induces a natural isomorphism  
\begin{equation}\label{eq: fusion}
\rR f_{K!} \cS_W \cong \rR f_{K!} \cS_{\Res_\zeta( W)} 
\end{equation}
of functors $\Rep_k((\wh{G} \rtimes Q)^I) \rightarrow  D(\Rep^{\mrm{sm}}_k G_b(E))^{B\prod_{j \in J}W_E}$. Moreover, the natural isomorphism \eqref{eq: fusion} under compositions of maps of finite sets.

\subsection{Excursion algebra}\label{ssec: excursion algebra}

An \emph{excursion datum} for $\wh{G}$ (over $k$) \cite[Definition VIII.4.2]{FS} is a tuple $\cD = (I, V, \alpha, \beta, (\gamma_i)_{i \in I})$ where: 
\begin{itemize}
\item $I$ is a finite set and $\gamma_i \in W_E$ for each $i \in I$,
\item $V \in \Rep_k((\wh{G} \rtimes Q)^I)$ for varying $Q$, and $ \bbm{1} \xrightarrow{\alpha} V|_{\wh{G}}$ and $  V|_{\wh{G}} \xrightarrow{\beta} \bbm{1} $ are maps of $\wh{G}$-representations. (Here $\bbm{1}$ is the trivial representation.)  
\end{itemize}
The \emph{excursion algebra} $\Exc_k(W_E, \wh{G})$ is the $k$-algebra on generator $S_{\cD}$ for each excursion datum $\cD$, with relations as in \cite[\S 2.4]{Fe23}. Another definition appears in \cite[Definition VIII.3.4]{FS}. 

An \emph{$L$-parameter} (with coefficients in $k$) is a class in $\rH^1(W_E; \wh{G}(k))$, or equivalently a section $W_E \rightarrow \wh{G}(k) \rtimes W_E$ up to $\wh{G}(k)$-conjugation. An $L$-parameter is \emph{semisimple} if whenever it factors through a parabolic $\ld P(k) \subset \ld G(k)$, it also factors through a Levi $\ld M(k) \subset \ld P(k)$ \cite[Definition VIII.3.1]{FS}. 

Combining \cite[Proposition VIII.3.8]{FS} and the statement above \cite[Definition VIII.3.4]{FS} that the natural map from $\Exc(W_E, \wh{G})$ to the \emph{spectral Bernstein center} $\cO(Z^1(W_E, \wh{G}))^{\wh{G}}$ is a universal homeomorphism, we obtain a canonical bijection between 
\[
\{\text{characters $\Exc_k(W_E, \wh{G}) \rightarrow k$}\} \longleftrightarrow \{\text{semisimple $L$-parameters $\rho \in \rH^1( W_E; \wh{G}(k))$}\}. 
\]

Suppose we are given a homomorphism $\ld \psi \co \ld H \rightarrow \ld G$. Then for any excursion datum $\cD = (I, V, \alpha, \beta, (\gamma_i)_{i \in I})$ for $\wh{G}$, we define 
\[
\ld \psi^* (\cD) := (I, \ld \psi^*(V), \ld \psi^* ( \alpha), \ld \psi^*  (\beta), (\gamma_i)_{i \in I})
\]
as an excursion datum for $\wh{H}$. The map $S_{\cD} \mapsto  S_{\ld \psi^* (\cD)}$ defines a homomorphism 
\begin{equation}\label{eq: excursion algebra functoriality}
\ld \psi^* \co \Exc_k(W_E, \wh{G}) \rightarrow \Exc_k(W_E, \wh{H}).
\end{equation}
On spectra, it sends (the point corresponding to) a semisimple $L$-parameter $\rho \in \rH^1(W_E; \wh{H}(k))$ to (the point corresponding to) the semisimple $L$-parameter $\ld \psi \circ \rho \in \rH^1(W_E;\wh{G}(k))$. 
 
\subsection{The Bernstein center}\label{ssec: Bernstein center}
Recall that in \eqref{eq: hecke algebra} we defined the Hecke algebra $\sH(G, K;\Lambda)$ for a compact open subgroup $K \subset G(E)$. We let $\mf{Z}(G,K; \Lambda) :=  Z(\sH(G,K; \Lambda))$ be the center of $\sH(G,K; \Lambda)$. 

If $K \subset K'$ have prime-to-$\ell$ pro-order (e.g., this will be true as long as they are sufficiently small), then convolution with $\bbm{1}_{K'}$ gives a homomorphism $\mf{Z}(G,K; \Lambda) \rightarrow \mf{Z}(G,K'; \Lambda)$. The \emph{Bernstein center of $G$ (with coefficients in $\Lambda$)} is 
\[
\mf{Z}(G; \Lambda) := \varprojlim_{K} \mf{Z}(G,K; \Lambda),
\]
where the transition maps are as above, and the inverse limit is taken over $K$ with prime-to-$\ell$ pro-order.

If $\Lambda = k$, we abbreviate $\sH(G,K) := \sH(G,K; k)$, $\mf{Z}(G,K) := \mf{Z}(G,K; k)$, and $\mf{Z}(G) := \mf{Z}(G; k)$.

The Bernstein center $\mf{Z}(G)$ may also be identified with the ring of endomorphisms of the identity functor of the category $\Rep_k^{\mrm{sm}}(G(E))$. In particular, any irreducible admissible representation $\Pi$ of $G(E)$ over $k$ induces a character of $\mf{Z}(G)$.

\subsection{Review of Fargues-Scholze correspondence}\label{ssec: review FS}

Fargues-Scholze construct in \cite{FS} a $k$-algebra homomorphism 
\begin{equation}\label{eq: FS_G}
\FS_G \co \Exc_k(W_E, \wh{G}) \rightarrow \mf{Z}(G; k).
\end{equation}
Via \eqref{eq: FS_G}, $\Exc_k(W_E, \wh{G})$ acts through a character on any irreducible admissible representation $\Pi \in \Rep_k^{\mrm{sm}} (G(E))$, and the corresponding semisimple $L$-parameter is denoted $\rho_\Pi \in \rH^1(W_E; \wh{G}(k))$.

We present the construction of \eqref{eq: FS_G} on \cite[p.36]{FS}. Fix $b :=1_G \in B(G)$. Let
\[
\ol{x} \co \Spd \wh{\ol{E}} \rightarrow \prod_{i=1}^n \Spd \brE
\]
be the geometric diagonal, and $f_K^\Delta \co \Sht_{(G,1_G, I),K}^\Delta \rightarrow \Spd \wh{\ol{E}}$ be the base change of $f_K$ along $\ol{x}$. 

Let $K \subset G(E)$ be a compact open subgroup. Recall from Example \ref{ex: cohomology of trivial shtuka} that the underlying $G(E)$-representation of $\rR f_{K!}^\Delta \cS_{\bbm{1}}$ is $\cInd_K^{G(E)} k$. Hence for each excursion datum $\cD = (I, V, \alpha, \beta, (\gamma_i)_{i \in i})$ we have a composition 
\begin{equation}\label{eq: excursion action}
\adjustbox{scale=0.9,center}{\begin{tikzcd}
\cInd_K^{G(E)} k \ar[rrrrr, dashed, "\FS_G(S_{\cD}) "] \ar[d, equals]  & & & & & \cInd_K^{G(E)} k \ar[d, equals] \\
\rR f_{K!}^\Delta \cS_{\bbm{1}} \ar[r, "{\alpha}"] &  \rR f_{K!}^\Delta \cS_{V|_{\wh{G}}} \ar[r, equals] &  (\rR f_{K!} \cS_V)_{\ol{x}} \ar[r, "{(\gamma_i)_{i \in I}}"] &  (\rR f_{K!} \cS_V)_{\ol{x}} \ar[r, equals] &  \rR f_{K!}^\Delta \cS_{V|_{\wh{G}}} \ar[r, "{\beta}"] & 
\rR f_{K!}^\Delta \cS_{\bbm{1}}  
\end{tikzcd}}
\end{equation}
(in the middle step we used \eqref{eq: cohomology Weil action} to obtain an action of $\prod_{i \in I} W_E$ on $(\rR f_{K!} \cS_V)_{\ol{x}}$) which defines an element $\FS_G(S_{\cD}) \in \End_{G(E)}(\cInd_K^{G(E)} k , \cInd_K^{G(E)} k ) \cong \mf{Z}(G,K)$.  As $K$ varies, these elements are compatible under convolution, hence assemble to an element of $\mf{Z}(G)$. The map \eqref{eq: FS_G} sends $S_{\cD}$ to this element of $\mf{Z}(G)$.

\subsection{Excursion operators on Tate cohomology}\label{ssec: excursion on Tate} Recall the notion of \emph{Tate cohomology} from \S \ref{ssec: tate cohomology}. We will now study excursion operators on the Tate cohomology of $\Sht_{(G,b,I),K}$.

\subsubsection{Tate cohomology of moduli of shtukas} 
Recall that $G$ has a given action of $\Sigma$. Assume that the level structure $K \subset G(E)$ is $\Sigma$-invariant, and $b \in B(G)$ is $\Sigma$-fixed. Then there is an induced $\Sigma$-action on $\Sht_{(G,b,I),K}$.

The given action of $\Sigma$ on $G$ induces an action $V \mapsto {}^{\sigma}V$ of $\Sigma$ on $\Rep_k((\wh{G} \rtimes Q)^I)$. Suppose we have a $\Sigma$-equivariant representation $W \in \Rep_k( (\wh{G} \rtimes Q)^I)^{B\Sigma}$. Then $\cS_W$ has the structure of a $\Sigma$-equivariant sheaf on $\Sht_{(G,b,I),K}$. This equips its cohomology with a $\Sigma$-equivariant structure, so that we may regard, using \eqref{eq: cohomology Weil action}, 
\[
\rR f_{K!}\cS_W \in  D(\Rep^{\mrm{sm}}_k G_b(E))^{B(\prod_{i \in I}W_E \rtimes \Sigma)}.
\]
Hence we can form the $j$th Tate cohomology of $\rR f_{K!}\cS_W$ (for $j \in \Z/2\Z$), which we denote
\begin{equation}\label{eq: tate cohomology of shtukas}
\rT^j (\Sht_{(G,b,I),K}; W) := \rT^j(\rR f_{K!}\cS_W )  \in  D(\Rep^{\mrm{sm}}_k G_b^\sigma(E) )^{B\prod_{i \in I}W_E}.
\end{equation}
as we will want to record the dependence on $G,b,I$. 

\begin{example}\label{ex: tate cohomology of trivial shtuka}
For $V = \bbm{1}$, the trivial representation, $\Sht_{(G,b,I),K}$ is only non-empty for $b = 1_G$. In that case, we have (cf. Example \ref{ex: cohomology of trivial shtuka})
\[
\rT^j(\Sht_{(G,1_G,I),K}; \bbm{1}) \cong \rT^j(\cInd_K^{G(E)} k) \in \Rep^{\mrm{sm}}_k(H(E)). 
\]
\end{example}

A similar story applies to $H$. Note that $\Sigma$ acts trivially on $\Sht_{(H,b,I),K}$, so if $\Sigma$ also acts trivially on $W \in \Rep_k((\wh{H} \rtimes Q)^I)$, then by Example \ref{ex: tate coh of trivial} and Example \ref{ex: Tate cohomology of trivial coeff}, we have for each $j \in \Z/2\Z$ a natural isomorphism
\begin{equation}\label{eq: comparing cohomology and Tate cohomology}
\rT^j(\Sht_{(H,b,I),K}; W) \cong \bigoplus_{n \in \Z}  \rH^n(\rR f_{K!} \cS_W)  = \rH^*( \rR f_{K!} \cS_W).
\end{equation}

\subsubsection{Excursion action}\label{sssec: tate excursion action}The $\Sigma$ action on $G$ induces a $\Sigma$-action on $\Exc_k(W_E,  \wh{G})$ by transport of structure. Concretely, $\Sigma$ acts on excursion data by 
\[
\sigma \cdot (I, V, \alpha, \beta, (\gamma_i)_{i \in I}) = (I, {}^\sigma V, \sigma(\alpha), \sigma(\beta), (\gamma_i)_{i \in I})
\]
and then $\sigma \cdot S_{\cD} = S_{\sigma \cdot \cD} \in \Exc_k(W_E,  \wh{G})$. 

In general, given a $k[\Sigma]$-algebra $A$ and an $A$-module $M$, there is a natural $\rT^0(A) = A^\sigma/(N \cdot A)$-module structure on $\rT^*(M)$. In particular, this equips $\rT^j (\Sht_{(G,b,I),K}; W)$ with a natural $\rT^0\Exc_k(W_E, \wh{G})$-action. We are most interested in the special case where $W = \bbm{1}$ and $b = 1_G$, in which case we have $\rT^j (\Sht_{(G,b,I),K}; \bbm{1}) = \rT^j(\cInd_K^{G(E)}k)$ at the level of underlying $H(E)$-representations (cf. Example \ref{ex: tate cohomology of trivial shtuka}). If an excursion datum $\cD = (I, V, \alpha, \beta, (\gamma_i)_{i \in I})$ is $\Sigma$-invariant, then $S_{\cD} \in \Exc_k(W_E, \wh{G})^{\sigma}$ and its action on $\rT^j(\cInd_K^{G(E)} k)$ can be described more concretely using \eqref{eq: excursion action}: it is given by composition
\begin{equation}
\begin{tikzcd}
\rT^j(\cInd_K^{G(E)} k)  \ar[d, equals, "\eqref{ex: tate cohomology of trivial shtuka}"]  \ar[rrr, dashed, "S_{\cD}"]  & & &   \rT^j(\cInd_K^{G(E)} k)\\
\rT^j(\Sht_{(G,1,I),K}; \bbm{1}) \ar[r, "{\alpha}"] &  \rT^j(\Sht_{(G,1,I),K}; V)  \ar[r, "{(\gamma_i)_{i \in I}}"] &  \rT^j(\Sht_{(G,1,I),K};V) \ar[r, "{\beta}"] & \rT^j(\Sht_{(G,1,I),K}; \bbm{1}) \ar[u, equals] 
\end{tikzcd}
\end{equation}

\subsubsection{Normed excursion action}\label{sssec: normed excursion action} 
Recall from Definition \ref{def: tate diagonal} that for any commutative $k[\Sigma]$-algebra $A$, there is the \emph{Tate diagonal} morphism 
\[
A \xrightarrow{ \Nm} \rT^0(A)
\]
sending $a$ to the class of $\Nm(a) = a \cdot \sigma(a)  \cdots  \sigma^{\ell-1}(a)$. This is $\Frob$-semilinear, but an $\F_\ell[\Sigma]$-structure on $A$ induces a linearization $\Nm \iellt \co A \rightarrow \rT^0(A)$, which is a $k$-algebra homomorphism. 

We apply this to $A := \Exc_k(W_E, \wh{G})$, with the $\F_\ell$-structure coming from the fact that $\wh{G}$ is defined over $\F_\ell$. In \S \ref{sssec: tate excursion action} we saw an action of $\rT^0\Exc_k(W_E, \wh{G})$ on $\rT^j(\cInd_K^{G(E)} k)$. Inflating this action through $\Nm \iellt$ gives an action of $\Exc_k(W_E, \wh{G})$ on $\rT^j(\cInd_K^{G(E)} k)$, which we call the \emph{normed excursion action}.

\subsubsection{Native excursion action}\label{sssec: native excursion action} 
The $\Sigma$-invariant subalgebra $\Exc_k(W_E, \wh{G})^\sigma$ acts naturally on $\rT^j(\Sht_{(G, b, I),K}; V)$ via the quotient $\Exc_k(W_E, \wh{G})^\sigma \surj \rT^0(\Exc_k(W_E, \wh{G}))$ and then the mechanism of \S \ref{sssec: tate excursion action}. We refer to this as the \emph{native excursion action} of $\Exc_k(W_E, \wh{G})^\sigma$ on $\rT^j(\Sht_{(G, b, I),K}; V)$. This action is \emph{not} the same as the restriction of the normed excursion action to the subalgebra $\Exc_k(W_E, \wh{G})^\sigma \subset \Exc_k(W_E, \wh{G})$; this discrepancy will be responsible for Frobenius twists which show up later. 


In general, for a commutative $k$-algebra $A$ the absolute Frobenius $A \rightarrow A$ is $\Frob$-semilinear. If $A$ has an $\F_\ell$-structure $A \cong A_0 \otimes_{\F_\ell} k$, then the absolute Frobenius may be linearized to a map $F \co A \rightarrow A$, as explained in \S \ref{sssec: frob twist algebra}. The map $F$ is characterized as the unique $k$-linear homomorphism that sends $a_0 \mapsto a_0^\ell$ for all $a_0 \in A_0 \subset A$. 

The preceding discussion applies to $A := \Exc_k(W_E, \wh{G})^\sigma$ (with the $\F_\ell$-structure coming from the fact that $\wh{G}$ is defined over $\F_\ell$). In these terms, the normed excursion action of $\Exc_k(W_E, \wh{G})^\sigma$ is the native excursion action composed with the map $F \co  \Exc_k(W_E, \wh{G})^\sigma \rightarrow \Exc_k(W_E, \wh{G})^\sigma$ which is the linearization of the absolute Frobenius.

\subsubsection{Norm of excursion data}\label{sssec: excursion data} For any finite group $Q$ over which the $W_E$-action on $\wh{G}$ factors, we define a functor 
\[
\Nm \co \Rep_k((\wh{G} \rtimes Q)^I) \rightarrow \Rep_k((\wh{G} \rtimes Q)^I)^{B\Sigma}
\]
as follows:
\begin{itemize}
\item For a representation $V \in \Rep_k((\wh{G} \rtimes Q)^I)$, we set 
\[
\Nm(V) := V \otimes_k \left( {}^{\sigma} V \right) \otimes_k \ldots \otimes_k \left({}^{\sigma^{\ell-1}}V \right) \in \Rep_k((\wh{G} \rtimes Q)^I)^{B\Sigma}
\]
with the obvious $\Sigma$-equivariant structure. Note that it corresponds under Geometric Satake to Definition \ref{defn: Nm}. 
\item Given $h \co  V \rightarrow V' \in \Rep_k((\wh{G} \rtimes Q)^I)$, we set 
\[
\Nm(h) := h \otimes {}^{\sigma} h \otimes \ldots \otimes {}^{\sigma^{\ell-1}} h  \co \Nm(V) \rightarrow \Nm(V').
\]
\end{itemize}
Note that $\Nm$ is \emph{not} an additive functor, nor is it even $k$-linear. 

\begin{defn}
We define the \emph{linearized norm} $\Nm^{(\ell^{-1})} :=  \Nm \circ \Frob^{-1}$ to be the linearization of $\Nm$ in the sense of Construction \ref{const: frob twist C}; note that $ \Frob^{-1}$ is the identity on objects and on morphisms it is $(-) \otimes_{k, \Frob^{-1}} k$. Then the resulting functor 
\[
\Nm^{(\ell^{-1})} \co \Rep_k((\wh{G} \rtimes Q)^I) \rightarrow \Rep_k((\wh{G} \rtimes Q)^I)^{B\Sigma}
\]
is $k$-linear (although still not additive). 
\end{defn}

\begin{defn}For $V \in \Rep_k((\wh{G} \rtimes Q)^I)$, we denote by 
\[
N \cdot V \in \Rep_k((\wh{G} \rtimes Q)^I)^{B\Sigma}
\]
the $\sigma$-equivariant representation $V \oplus {}^{\sigma} V \oplus \ldots  \oplus {}^{\sigma^{\ell-1}}V$, with the obvious $\Sigma$-equivariant structure. 

For $h \co  V \rightarrow V' \in \Rep_k((\wh{G} \rtimes Q)^I)$, we denote by 
\[
N \cdot h \co N \cdot V \rightarrow N \cdot V'
\]
the $\sigma$-equivariant map $h \oplus {}^{\sigma} h \oplus \ldots \oplus {}^{\sigma^{\ell-1}} h$. Let $\Delta_\ell \co \bbm{1} \rightarrow \bbm{1}^{\oplus \ell}$ denote the diagonal map and $\nabla_\ell \co \bbm{1}^{\oplus \ell} \rightarrow \bbm{1}$ denote the sum map. 
\end{defn}

\begin{defn}
Let $\cD = (I,V, \alpha, \beta, (\gamma_i)_{i \in I})$ be an excursion datum for $\wh{G}$. Define the excursion data 
\[
\Nm \iellt \cD := (I, \Nm \iellt V, \Nm \iellt \alpha, \Nm \iellt \beta, (\gamma_i)_{i \in I})
\]
and
\[
N \cdot \cD := (I, N \cdot V, (N \cdot \alpha) \circ \Delta_\ell, \nabla_\ell \circ (N \cdot \beta), (\gamma_i)_{i \in I}),
\]
which are excursion data for $\wh{G}$. 

\end{defn}

Straightforward calculations yield: 

\begin{lemma}[{\cite[Lemma 5.16]{Fe23}}]\label{lem: norm extension} For all excursion data $\cD$, we have 
\[
\Nm \iellt(S_{\cD}) = S_{\Nm \iellt (\cD)}\in \Exc_k(W_E, \wh{G})^\sigma
\]
and
\[
N \cdot S_{\cD} = S_{N \cdot \cD} \in \Exc_k(W_E, \wh{G})^\sigma. 
\]
\end{lemma}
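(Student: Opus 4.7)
The plan is to derive both identities from two standard compatibilities of the assignment $\cD \mapsto S_\cD$ built into the excursion algebra: multiplicativity under a natural tensor product of excursion data (which tensors the representations $V$ and concatenates the $\gamma_i$), and additivity under direct sums of representations when the $\alpha, \beta$ maps are matched by diagonal/codiagonal. These are part of the defining relations of $\Exc_k(W_E, \wh{G})$ as in \cite[\S 2.4]{Fe23} (equivalently \cite[Definition VIII.3.4]{FS}). Moreover, the $\Sigma$-action on $\Exc_k(W_E, \wh{G})$ is defined by transport of structure, so the tautology $\sigma(S_{\cD}) = S_{\sigma \cdot \cD}$ is immediate.

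I would first dispatch the second identity, which is cleaner. By definition $N \cdot S_{\cD} = \sum_{j=0}^{\ell-1} \sigma^j(S_{\cD}) = \sum_{j=0}^{\ell-1} S_{\sigma^j \cdot \cD}$. On the other side, the representation underlying $N \cdot \cD$ is $\bigoplus_{j=0}^{\ell-1} {}^{\sigma^j}V$; by construction, the $\alpha$-map $(N\cdot \alpha) \circ \Delta_\ell$ has $j$th component $\sigma^j(\alpha)$ and dually for $\beta$. The direct-sum additivity of $\cD \mapsto S_\cD$ then gives $S_{N \cdot \cD} = \sum_{j=0}^{\ell-1} S_{\sigma^j \cdot \cD}$, matching the left-hand side.

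For the first identity I would establish the semilinear variant $\Nm(S_{\cD}) = S_{\Nm(\cD)}$ first. By definition $\Nm(S_\cD) = S_\cD \cdot \sigma(S_\cD) \cdots \sigma^{\ell-1}(S_\cD)$, which by multiplicativity equals $S_{\cD \boxtimes (\sigma \cdot \cD) \boxtimes \cdots \boxtimes (\sigma^{\ell-1} \cdot \cD)}$. The representation underlying the latter excursion datum is exactly $\Nm(V) = V \otimes {}^{\sigma}V \otimes \cdots \otimes {}^{\sigma^{\ell-1}}V$ with the $\Sigma$-equivariant structure of Definition~\ref{defn: Nm}, and the $\alpha$ (resp.\ $\beta$) maps tensor up to $\Nm(\alpha)$ (resp.\ $\Nm(\beta)$); the $\gamma_i$'s are reused. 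So this equals $S_{\Nm(\cD)}$. To pass to the linearized statement, observe that both sides are $\Frob$-semilinear functions of $\cD$ and one then applies Construction~\ref{const: frob twist C} to both.

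The main step to verify carefully will be this last linearization: that the $\F_\ell$-structure on $\Exc_k(W_E, \wh{G})$ used to define $\Nm\iellt$ at the algebra level is compatible, via $\cD \mapsto S_{\cD}$, with the $\F_\ell$-structure on $\Rep_k((\wh{G}\rtimes Q)^I)$ used to define $\Nm\iellt$ at the category level. Concretely, one checks that if an excursion datum $\cD_0$ is $\F_\ell$-rational (i.e.\ $V$, $\alpha$, $\beta$ are defined over $\F_\ell$), then $S_{\cD_0}$ lies in $\Exc_{\F_\ell}(W_E, \wh{G}) \subset \Exc_k(W_E, \wh{G})$; this is built into the definition of the excursion algebra together with the fact that $\wh{G}$ is defined over $\F_\ell$. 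Given this compatibility, linearizing both sides of $\Nm(S_\cD) = S_{\Nm(\cD)}$ yields $\Nm\iellt(S_\cD) = S_{\Nm\iellt(\cD)}$, completing the proof.
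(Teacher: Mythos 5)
Your proposal is correct and follows the same route as the paper, which simply cites \cite[Lemma 5.16]{Fe23} for the second identity and for the semilinear version $\Nm(S_{\cD}) = S_{\Nm(\cD)}$, and then deduces the linearized identity exactly as you do. Your reconstruction of the underlying computation (additivity and multiplicativity relations of $\cD \mapsto S_{\cD}$ plus the tautology $\sigma(S_{\cD}) = S_{\sigma\cdot\cD}$), and your identification of the compatibility of $\F_\ell$-structures as the one point needing care in the linearization step, match the intended argument.
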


\begin{proof}The second identity appears in {\cite[Lemma 5.16]{Fe23}}, which also proves that
\[
\Nm (S_{\cD}) = S_{\Nm (\cD)}\in \Exc_k(W_E, \wh{G})^\sigma
\]
from which the first identity follows. 
\end{proof}

\subsection{Functoriality for excursion operators}\label{ssec: functoriality for excursion operators}
For basic $b \in B(H)$, we let $H_{b}$ be the corresponding inner twist of $H$, as in \S \ref{ssec: fixed points sht}. If $b \in B(H)$ is basic and maps to $1_G \in B(G)$, then we have $\iota_{b'} \co H_{b'} \inj G_{\iota(b')} = G$. Below we abbreviate $\iota^* K := \iota_{b'}^* K$ and $1$ for the trivial element of $B(G)$ or $B(H_{b'})$, depending on context. 

\begin{prop}\label{prop: equiv localization for shtukas} Assume $\ell > \max \{b(\wh{G}), b(\wh{H})\}$. Let $K < G(E)$ be an open subgroup stable under $\Sigma$, and with prime-to-$\ell$ pro-order. Then for any $\varepsilon \in \Z/2\Z$ and each non-empty finite set $I$, there is a natural isomorphism 
\begin{equation}\label{eq: equiv localization for shtukas}
\rT^\varepsilon(\Sht_{(G,  1, I),K}; \Nm^{(\ell^{-1})}(V)) \cong   \bigoplus_{b' \in \iota^{-1}(1_G)} \rT^\varepsilon( \Sht_{(H, 1_H, I), \iota^* K}^{b'} ; \ld \psi^*(V))
\end{equation}
of functors
\[
\Rep_k((\wh{G} \rtimes Q)^I) \rightarrow   D(\Rep^{\mrm{sm}}_k H(E) )^{B\prod_{i \in I}W_E}.
\]
Moreover, these natural isomorphisms are compatible with fusion along all maps of non-empty finite sets $\zeta \co I \rightarrow J$. 
\end{prop}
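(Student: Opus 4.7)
The strategy is to combine equivariant localization for Tate cohomology (Proposition~\ref{prop: equivariant localization}) with the $\Sigma$-fixed point calculation for moduli of local shtukas (Proposition~\ref{prop: fixed points of sht}) and the compatibility of the Brauer functor with the Smith operation encoded in Theorem~\ref{thm: relative functoriality}(3). The pro-$p$ hypothesis on $K$ is essential for the fixed-point decomposition to take the clean form stated; the bound on $\ell$ is what makes $\wcbr^I$ available. Naturality in $V$ will be immediate from the functoriality of all constructions used, and naturality in $I$ under maps of finite sets will follow from Theorem~\ref{thm: relative functoriality}(2).

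\textbf{Step 1: Equivariant localization.} Apply Proposition~\ref{prop: equivariant localization}(1) to the shriekable leg map
\[
f_K \co \Sht_{(G, 1, I), K} \longrightarrow \prod_{i \in I} \Spd \brE,
\]
whose target carries the trivial $\Sigma$-action. This produces a natural isomorphism
\[
\Psm(\rR f_{K!}\, \cS_{\Nm\iellt V}) \;\cong\; \rR f^\sigma_{K!}\, \Psm(\cS_{\Nm\iellt V})
\]
in the Tate category of $\prod_i \Spd \brE$, equivariantly for the residual $H(E)$- and $\prod_i W_E$-actions. By Proposition~\ref{prop: fixed points of sht}, the restriction $f^\sigma_K$ decomposes as the disjoint union of the leg maps $f_{K, b'} \co \Sht_{(H_{b'}, 1, I), \iota^*K} \to \prod_i \Spd \brE$ as $b'$ ranges over $\iota^{-1}(1_G)$.

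\textbf{Step 2: Identifying $\Psm \cS_{\Nm\iellt V}$.} Using the $\Sigma$-equivariance of the Grothendieck-Messing period map together with compatibility of $\Psm$ with $*$-pullback (Lemma~\ref{lem: small pullback}), the restriction of $\Psm\cS_{\Nm\iellt V}$ to each $b'$-component is pulled back from the corresponding open-closed piece $\Gr^{\twi,b'}_H \subset \Fix(\sigma, \Gr^{\twi}_{G,I})$ (Lemma~\ref{lem: fixed points of admissible}) via the $H_{b'}$-Grothendieck-Messing map. By Theorem~\ref{thm: relative functoriality}(3),
\[
\Psm\cS_{\Nm\iellt V} \;\cong\; [\dagger_H^G]\, \TT\, \wcbr^I(\cS_V),
\]
and by Theorem~\ref{thm: relative functoriality}(1), $\wcbr^I(\cS_V) \cong \cS_{\ld\psi^*(V)}$ via Geometric Satake for $H$.

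\textbf{Step 3: Extract Tate cohomology.} Substituting back, we obtain
\[
\rT^\varepsilon(\rR f_{K!}\, \cS_{\Nm\iellt V}) \;\cong\; \bigoplus_{b' \in \iota^{-1}(1_G)} \rT^\varepsilon\!\left([\dagger_H^G]\, \TT\,(\rR f_{K, b'!}\, \cS_{\ld\psi^*(V)})\right).
\]
For $C^\bullet \in D^b(k)$ with trivial $\Sigma$-action, Example~\ref{ex: tate coh of trivial} gives $\rT^n(\TT C^\bullet) \cong \bigoplus_{i \in \Z} \rH^i(C^\bullet)$ independently of $n$, so the parity shift by $[\dagger_H^G]$ has no effect on the answer. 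Combined with the identification \eqref{eq: comparing cohomology and Tate cohomology}, this recovers the right-hand side of \eqref{eq: equiv localization for shtukas}. Compatibility with fusion along $\zeta \co I \to J$ then reduces to the analogous naturality for $\wcbr^I$, which is the content of Theorem~\ref{thm: relative functoriality}(2).

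\textbf{Expected obstacle.} The principal technical subtlety is threading the several commuting equivariances ($\Sigma$, $H(E)$, $\prod_i W_E$) coherently through the Grothendieck-Messing map and through the formalism of Section~\ref{sec: Smith-Treumann}, especially since $\Sht_{(G,1,I),K}$ is only an inductive limit of (separated) locally spatial diamonds and must be handled in bounded pieces before passing to the colimit. This is bookkeeping rather than a new idea, but it is where most of the writing effort will be spent.
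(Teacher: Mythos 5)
Your proof is correct and follows essentially the same route as the paper: equivariant localization (Proposition~\ref{prop: equivariant localization}) plus the fixed-point computation for shtukas (Proposition~\ref{prop: fixed points of sht}), combined with the $\Sigma$-equivariance of the Grothendieck--Messing period maps and Theorem~\ref{thm: relative functoriality}(1),(3) to identify the localized sheaf, with fusion compatibility coming from Theorem~\ref{thm: relative functoriality}(2). Your Step~3 remark that the $[\dagger_H^G]$ shift is invisible after taking Tate cohomology (by $2$-periodicity and Example~\ref{ex: tate coh of trivial}) is a detail the paper's displayed chain passes over silently, so it is worth keeping.
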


The meaning of the last sentence is as follows. As explained in \S \ref{sssec: coh of shtuka}, any such $\zeta \co I \rightarrow J$ induces a restriction functor $\Res_\zeta \co \Rep_k((\wh{G} \rtimes Q)^I)  \rightarrow \Rep_k((\wh{G} \rtimes Q)^J) $. The natural isomorphism \eqref{eq: fusion} induces a natural isomorphism 
\begin{equation}\label{eq: tate fusion}
\rT^{\varepsilon}(\Sht_{(G,  1, I),K}; W)  \cong \rT^{\varepsilon}(\Sht_{(G,  1, J),K}; \Res_{\zeta} W) 
\end{equation}
compatible with compositions of maps of finite sets, and similarly for each $b' \in \iota^{-1}(1_G)$ a natural isomorphism 
\begin{equation}\label{eq: tate fusion 2}
 \rT^{\varepsilon}(\Sht_{(H, 1_H, I), \iota^* K}^{b'}; \ld \psi^*(W))  \cong  \rT^{\varepsilon}(\Sht_{(H, 1_H, J), \iota^* K}^{b'}; \ld \psi^*(\Res_\zeta W))
\end{equation}
compatible with compositions of maps of finite sets. Then ``compatible with fusion'' means that for every $\zeta \co I \rightarrow J$, the diagram below commutes: 
\[
\begin{tikzcd}
\rT^{\varepsilon}(\Sht_{(G,  1, I),K}; \Nm^{(\ell^{-1})}(V))  \ar[d, "\sim", "\eqref{eq: tate fusion}"']   & \ar[l, "\sim"', "\eqref{eq: equiv localization for shtukas}" ]  \bigoplus_{b' \in \iota^{-1}(1_G)} \rT^{\varepsilon}(\Sht_{(H, 1_H, I), \iota^* K}^{b'}; \ld \psi^*(V))  \ar[d, "\sim", "\eqref{eq: tate fusion 2}"']  \\
\rT^{\varepsilon}(\Sht_{(G,  1, J),K}; \Nm^{(\ell^{-1})}(\Res_{\zeta} V))    & \ar[l, "\sim"', "\eqref{eq: equiv localization for shtukas}"]  \bigoplus_{b' \in \iota^{-1}(1_G)} \rT^{\varepsilon}(\Sht_{(H, 1_H, J), \iota^* K}^{b'}; \ld \psi^*(\Res_\zeta V))
\end{tikzcd}
\]
\begin{proof} For each $b' \in \iota^{-1}(1_G) \subset B(H)$, the commutative diagram 
\[
\begin{tikzcd}[ampersand replacement=\&]
\Sht_{(H, 1_H, I), \iota^*  K}^{b'} \ar[d, "\iota"] \ar[r, "\pi_{\iota^*K}^H"] \& \Gr^{\twi}_{H, \prod_{i \in I} \Spd \brE} \ar[d, "\iota"] \\
\Sht_{(G,b,I),K} \ar[r, "\pi_K^G"] \& \Gr^{\twi}_{G, \prod_{i \in I} \Spd \brE}
\end{tikzcd}
\]
induces a natural isomorphism $\iota^* (\pi_K^{G})^* \cong (\pi_{\iota^* K}^{H})^* \iota^*$, hence natural isomorphisms
\begin{equation}\label{eq: eqloc 1}
\TT^* \iota^*  (\pi_K^{G})^* \left(\cS_{\Nm^{(\ell^{-1})}(V)} \right) \cong  \TT^* (\pi_{\iota^* K}^{H})^* \iota^* \left( \cS_{\Nm^{(\ell^{-1})}(V)}  \right) \cong (\pi_{\iota^* K}^{H})^* \TT^* \iota^*  \left( \cS_{\Nm^{(\ell^{-1})}(V)}  \right) \in \Shv(\Sht_{(H, 1_H, I), \iota^* K}^{b'}; \cT_k),
\end{equation}
compatible with fusion. Recalling that $\Psm = \TT^* \iota^*$, Theorem \ref{thm: relative functoriality} supplies natural isomorphisms
\begin{equation}\label{eq: eqloc 2}
(\pi_{\iota^* K}^{H})^* \TT^* \iota^*  \left(\cS_{\Nm^{(\ell^{-1})}(V)}  \right) \cong   (\pi_{\iota^* K}^{H})^* \TT^* \left( \cS_{\ld \psi^*(V)} \right) \in \Shv(\Sht_{(H, 1_H, I), \iota^*  K}^{b'}; \cT_k),
\end{equation}
compatible with fusion. 

We now consider Tate cohomology of the moduli of shtukas. To distinguish between $G$ and $H$, we write 
\begin{align*}
f_K^G & \co \Sht_{(G,1,I),K} \rightarrow \prod_{i \in I} \Spd \brE, \\
f_K^{H,b'} &  \co  \Sht_{(H, 1_H, I), \iota^*  K}^{b'} \rightarrow \prod_{i \in I} \Spd \brE.
\end{align*}
Note that support of any Satake sheaf on $\Sht_{(G,1,I),K}$ has finite $\dimg$ over $\prod \Spd \brE$ and is extra small (cf. Example \ref{ex: extra small Bun}), and similarly for $\Sht_{(H, 1_H, I), \iota^*  K}^{b'}$. As $\Fix(\sigma, \Sht_{(G,b,I),K})$ is a finite union of spaces of the form $\Sht_{(H, 1_H, I), \iota^*  K}^{b'}$ by Proposition \ref{prop: fixed points of sht}, the results of \S \ref{sec: Smith-Treumann} apply. In particular, we may apply Proposition \ref{prop: equivariant localization}, which relates the Tate cohomology of $\Sht_{(G,1,I),K}$ to that of its fixed points, and Proposition \ref{prop: fixed points of sht}, which identifies these fixed points; combining these with \eqref{eq: eqloc 1} and \eqref{eq: eqloc 2} gives a sequence of natural isomorphisms
\begin{align*}
& \TT^* \rR ( f_{K}^G)_!  ((\pi_K^G)^* \cS_{\Nm^{(\ell^{-1})}(V)})  \\
\text{Propositions \ref{prop: equivariant localization} \& \ref{prop: fixed points of sht}} \implies & \cong  \bigoplus_{b' \in \iota^{-1} (1_G)}  \rR (f_{K}^{H, b'})_! \TT^*  \iota^*  ((\pi_K^G)^*  \cS_{\Nm^{(\ell^{-1})}(V)}) \\
\eqref{eq: eqloc 1} \implies & \cong  \bigoplus_{b' \in \iota^{-1} (1_G)}  \rR (f_{K}^{H,b'})_!  (\pi_{\iota^* K}^{H})^* \TT^*  \iota^*  \left( \cS_{\Nm^{(\ell^{-1})}(V)}  \right) \\
\eqref{eq: eqloc 2} \implies & \cong  \bigoplus_{b' \in \iota^{-1} (1_G)}   \rR (f_{K}^{H, b'})_!  (\pi_{\iota^* K}^{H})^* \TT^*  \left( \cS_{\ld \psi^*(V)} \right),
\end{align*}
compatible with fusion. The result then follows upon applying the Tate cohomology functor $\rT^{\varepsilon}(-)$. 
\end{proof}

In particular, when $V = \bbm{1}$ is the trivial representation we have an identification 
\begin{equation}\label{eq: equiv local trivial coeff}
\rT^0(\Sht_{(G, 1, \{0\}),K}; \bbm{1}) \cong \bigoplus_{b' \in \iota^{-1}(1_G)} \rT^0(\Sht_{(H, 1_H, \{0\}), \iota^*  K}^{b'};\bbm{1})
\end{equation}
Note that the $H_{b'}$ for $b' \in \iota^{-1} (1_G)$ are inner twists of each other, so their $L$-groups $\ld H_{b'}$ are all canonically identified, hence we may identify each of their excursion algebras with $\Exc_k(W_E, \wh{H})$.

\begin{thm}\label{thm: localize excursion operator}  Assume $\ell > \max \{b(\wh{G}), b(\wh{H})\}$. Let $K < G(E)$ be a prime-to-$\ell$ compact open subgroup stable under $\Sigma$. Then the isomorphism \eqref{eq: equiv local trivial coeff} is equivariant for the action of $ \Exc_k(W_E, \wh{G}) $, acting on the LHS via the normed excursion action (\S \ref{sssec: normed excursion action}) and on the RHS through the homomorphism $\ld \psi^* \co \Exc_k(W_E, \wh{G}) \rightarrow \Exc_k(W_E, \wh{H})$ from \eqref{eq: excursion algebra functoriality} followed by the native excursion action (\S \ref{sssec: native excursion action}).
\end{thm}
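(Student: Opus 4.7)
The plan is to verify the claimed equivariance one excursion generator at a time, reducing via Lemma \ref{lem: norm extension} to a statement about \emph{native} excursion operators, and then tracing each arrow of the six-step composition \eqref{eq: excursion action} through Proposition \ref{prop: equiv localization for shtukas}.

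By Lemma \ref{lem: norm extension}, for any excursion datum $\cD = (I, V, \alpha, \beta, (\gamma_i)_{i \in I})$ for $\wh{G}$ we have $\Nm\iellt S_\cD = S_{\Nm\iellt \cD}$, where $\Nm\iellt \cD = (I, \Nm\iellt V, \Nm\iellt \alpha, \Nm\iellt \beta, (\gamma_i)_{i \in I})$. Note that $\Nm\iellt \cD$ is $\Sigma$-invariant, so its native action on the LHS is defined. Hence the normed excursion action of $S_\cD$ on the LHS of \eqref{eq: equiv local trivial coeff} coincides with the native excursion action of $S_{\Nm\iellt \cD}$, and the theorem reduces to showing that under \eqref{eq: equiv local trivial coeff}, the native action of $S_{\Nm\iellt \cD}$ on the LHS corresponds to the native action of $S_{\ld\psi^* \cD}$ on the RHS.

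To establish this correspondence, I would apply Proposition \ref{prop: equiv localization for shtukas} with $\varepsilon = 0$, leg sets $J \in \{\{0\}, I\}$, and various arguments $W \in \Rep_k((\wh{G}\rtimes Q)^J)$, to produce natural isomorphisms
\[
\rT^0(\Sht_{(G,1,J),K}; \Nm\iellt W) \cong \bigoplus_{b' \in \iota^{-1}(1_G)} \rT^0(\Sht_{(H_{b'},1,J),\iota^* K}; \ld\psi^*(W)).
\]
Since $\Nm\iellt \bbm{1} = \bbm{1}$ and $\ld\psi^* \bbm{1} = \bbm{1}$, these match \eqref{eq: equiv local trivial coeff} at the endpoints of the excursion cycle. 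Three compatibilities built into Proposition \ref{prop: equiv localization for shtukas} then translate each arrow of the native excursion composition for $S_{\Nm\iellt \cD}$ to the corresponding arrow for $S_{\ld\psi^* \cD}$: (a) naturality in $W$ carries the maps induced by $\Nm\iellt \alpha$ and $\Nm\iellt \beta$ to those induced by $\ld\psi^* \alpha$ and $\ld\psi^* \beta$; (b) compatibility with fusion along the collapse map $\zeta \co I \to \{0\}$ intertwines the equalities between single-leg and $I$-leg cohomology on both sides; (c) the fact that the isomorphism is one of objects in $D(\Rep^{\mrm{sm}}_k H_b(E))^{B\prod_{i \in I} W_E}$, and so in particular $\prod_{i \in I} W_E$-equivariant, carries the action of $(\gamma_i)_{i \in I}$ on the LHS to its counterpart on the RHS. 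Pasting the resulting commuting squares realizes the native action of $S_{\Nm\iellt \cD}$ on the LHS as the native action of $S_{\ld\psi^* \cD}$ on the RHS.

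The substantive content has already been placed in Proposition \ref{prop: equiv localization for shtukas}, so the main obstacle in carrying out this plan is the bookkeeping needed to verify the three compatibilities (a), (b), and (c) \emph{simultaneously} — that is, to ensure that a single coherent system of isomorphisms realizes the translation between the entire six-step excursion cycle on either side, with the $I$-leg intermediate stage compatibly sandwiched between single-leg endpoints. This is precisely why Proposition \ref{prop: equiv localization for shtukas} was formulated as a fusion-compatible natural system across all finite sets $I$ rather than as an isolated statement for each $I$.
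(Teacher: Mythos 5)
Your proposal is correct and follows essentially the same route as the paper: reduce via Lemma \ref{lem: norm extension} to comparing the native actions of $S_{\Nm\iellt\cD}$ and $S_{\ld\psi^*\cD}$, then match the two excursion compositions step by step using the naturality, fusion compatibility, and $\prod_{i\in I}W_E$-equivariance packaged in Proposition \ref{prop: equiv localization for shtukas}. The paper records exactly this as the commuting diagram \eqref{eq: equiv localization excursion diagram}, with the horizontal identifications supplied by that proposition.
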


\begin{proof}
Let $\cD = (I, V, \alpha, \beta, (\gamma_i)_{i \in I})$ be an excursion datum for $\wh{G}$. By the definition of the excursion action (cf. \eqref{eq: excursion action}), what we have to show is that the action of $\Nm \iellt S_{\cD} \in  \Exc_k(W_E, \wh{G})^{\sigma}$ on $
\rT^0(\Sht_{(G, 1, \{0\}),K}; \bbm{1})$ is intertwined with the action of $S_{\ld \psi^* (\cD)} \in \Exc(W_E, \ld H)$ on $\bigoplus_{b' \in \iota^{-1}(1_G)} \rT^0(\Sht_{(H, 1_H, \{0\}), \iota^*  K}^{b'} ;\bbm{1})$ under the identification \eqref{eq: equiv local trivial coeff}. 

By Lemma \ref{lem: norm extension} we have $\Nm \iellt S_{\cD}  = S_{\Nm \iellt \cD}$, whose excursion action is the composition in the left column in the diagram below:
\begin{equation}\label{eq: equiv localization excursion diagram}
\begin{tikzcd}
\rT^0(\Sht_{(G,  1, I),K}; \bbm{1}) \ar[d, "\Nm^{(\ell^{-1})}(\alpha)"] \ar[r, "\sim"] &  \ar[l] \bigoplus_{b' \in \iota^{-1}(1_G)}  \rT^0(\Sht_{(H, 1_H, I), \iota^*  K}^{b'}; \bbm{1}) \ar[d, "\ld \psi^* (\alpha)"] \\ 
\rT^0(\Sht_{(G,  1, I),K}; \Nm^{(\ell^{-1})}(V) ) \ar[r, "\sim"] \ar[d, "{(\gamma_i)_{i \in I}}"]  &  \ar[l] \bigoplus_{b' \in \iota^{-1}(1_G)}  \rT^0(\Sht_{(H, 1_H, I), \iota^*  K}^{b'}; \ld \psi^*(V) ) \ar[d, "{(\gamma_i)_{i \in I}}"]   \\
\rT^0(\Sht_{(G,  1, I),K}; \Nm^{(\ell^{-1})}(V) ) \ar[r, "\sim"]  \ar[d, "\Nm^{(\ell^{-1})}(\beta)"]    &  \ar[l] \bigoplus_{b' \in \iota^{-1}(1_G)}  \rT^0(\Sht_{(H, 1_H, I), \iota^*  K}^{b'}; \ld \psi^*(V) )   \ar[d, "\ld \psi^* (\beta)"]  \\
\rT^0(\Sht_{(G,  1, I),K}; \bbm{1}) \ar[r, "\sim"]   &   \ar[l] \bigoplus_{b' \in \iota^{-1}(1_G)}  \rT^0(\Sht_{(H, 1_H, I), \iota^*  K}^{b'}; \bbm{1})\\
\end{tikzcd}
\end{equation}
Meanwhile, $S_{\ld \psi^* (\cD)} $ is the composition in the right column in the diagram. Proposition \ref{prop: equiv localization for shtukas} establishes the horizontal identifications making all diagrams commutes, which gives the result. 
\end{proof}

\section{Derived Treumann-Venkatesh Conjecture}\label{sec: TV conjecture}

In this section we will reap the applications of the preceding material, especially the functoriality relations for excursion operators from Theorem \ref{thm: localize excursion operator}. In \S \ref{ssec: derived TV} we formulate the ``derived'' version of Treumann-Venkatesh's Conjecture \ref{conj: TV}(2), and prove it in \S \ref{ssec: TV conjectures}. In \S \ref{sssec: double coset fixed point} we will construct the Treumann-Venkatesh homomorphism alluded to in \eqref{eq: intro TV homomorphism}, and in \S \ref{ssec: BC homomorphism for BC} we establish the commutative square \eqref{eq: intro bernstein center functoriality}. Finally, in \S \ref{ssec: functorial lift} we prove the existence of functorial lifts along $\sigma$-dual homomorphisms, including Theorem \ref{thm: intro functorial lift}.

\subsection{Formulation}\label{ssec: derived TV} By the realization of the Bernstein center $\mf{Z}(G)$ as the ring of endomorphisms of the identity functor on $D^b(\Rep_k^{\sm} G(E))$, there is a tautological action of $\mf{Z}(G)$ on any $\Pi \in D^b(\Rep_k^{\sm} G(E))$. Composing this with the Fargues-Scholze map $\Exc_k(W_E, \wh{G}) \xrightarrow{\FS_G} \mf{Z}(G;k)$ discussed in \S \ref{ssec: review FS}, we obtain an action of $\Exc_k(W_E, \wh{G}) $ on any such $\Pi$. In particular, to each $\Pi$ we may attach a subset 
\[
\supp_{\Exc_k(W_E, \wh{G})}(\rH^*(\Pi)) \subset \Spec \Exc_k(W_E, \wh{G}),
\]
which can be interpreted as the ``set of semi-simple $L$-parameters attached to $\Pi$''. For example, if $\Pi \in \Rep_k^{\sm} G(E)$ is irreducible admissible, then $\supp_{\Exc_k(W_E, \wh{G})}(\Pi)$ is necessarily a single closed point, corresponding to the \emph{Fargues-Scholze parameter} $\rho_{\Pi} \in H^1(W_E; \wh{G}(k))$. This defines the map \eqref{eq: LLC}. 

Let $\Pi \in D^b(\Rep_k^{\sm} (G(E) \rtimes \Sigma)) =  D^b(\Rep_k^{\sm} G(E))^{B \Sigma}$. Then we may form the Tate cohomology $\rT^j(\Pi)$, which is naturally an object of $\Rep_k^{\sm} H(E)$.

\begin{thm}\label{thm: TV} 
Let $F \co \Exc_k(W_E, \wh{G}) \rightarrow \Exc_k(W_E, \wh{G})$ be the linearized Frobenius (cf. \S \ref{sssec: native excursion action}), which is a $k$-algebra homomorphism. This induces a functor $F_*$ on $\Exc_k(W_E, \wh{G})$-modules.

Assume $\ell > \max\{b(\wh{G}), b(\wh{H})\}$. Then for any $j \in \Z/2\Z$, $\supp_{\Exc_k(W_E, \wh{H})}  (\rT^j(\Pi))$ lies over $F_* \supp_{\Exc_k(W_E, \wh{G})}( \rH^*( \Pi))$ with respect to the map $ \Spec \Exc_k(W_E, \wh{H}) \xrightarrow{\ld \psi_*} \Spec \Exc_k(W_E, \wh{G}) $ induced by the $\sigma$-dual homomorphism $\ld \psi \co \ld H \rightarrow \ld G$. In other words, there is a commutative diagram
\[
\begin{tikzcd}
\supp_{\Exc_k(W_E, \wh{H})}  (\rT^j(\Pi)) \ar[d] \ar[r, hook] & \Spec \Exc_k(W_E, \wh{H})  \ar[d, "\ld \psi_*"] \\
F_* \supp_{\Exc_k(W_E, \wh{G})}( \rH^* (\Pi))  \ar[r, hook] & \Spec \Exc_k(W_E, \wh{G}) 
\end{tikzcd}
\]
\end{thm}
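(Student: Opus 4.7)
The plan is to reduce to the case $\Pi = \cInd_K^{G(E)}(k)$ for $K \subset G(E)$ a pro-$p$, $\Sigma$-stable open compact subgroup, and then exploit Theorem \ref{thm: localize excursion operator}. For the reduction, I would use that a descending chain of such subgroups $K$ is cofinal among open compact subgroups, and that the compact inductions $\cInd_K^{G(E)}(k)$ generate $D^b(\Rep_k^{\sm}(G(E) \rtimes \Sigma))$ as a triangulated category. Since both $\supp_{\Exc_k(W_E, \wh{G})}(\rH^*(-))$ and $\supp_{\Exc_k(W_E, \wh{H})}(\rT^j(-))$ are closed and satisfy $\supp(B) \subseteq \supp(A) \cup \supp(C)$ for any distinguished triangle $A \to B \to C$, the desired containment persists under resolutions.

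For $\Pi = \cInd_K^{G(E)}(k)$, Example \ref{ex: tate cohomology of trivial shtuka} gives a natural $\Exc_k(W_E, \wh{H})$-equivariant isomorphism $\rT^j(\Pi) \cong \rT^j(\Sht_{(G, 1, \{0\}), K}; \bbm{1})$, and under the equivariant-localization identification
\[
\rT^0(\Sht_{(G, 1, \{0\}), K}; \bbm{1}) \cong \bigoplus_{b' \in \iota^{-1}(1_G)} \rH^*\bigl(\cInd_{\iota^* K}^{H_{b'}(E)}(k)\bigr),
\]
the Fargues-Scholze action of $\Exc_k(W_E, \wh{H})$ on the left matches the direct sum of Fargues-Scholze actions on the right. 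By Theorem \ref{thm: localize excursion operator}, this same identification intertwines the normed excursion action of $\Exc_k(W_E, \wh{G})$ on the left with the action on the right given by $\ld\psi^*$ followed by the Fargues-Scholze action of $\Exc_k(W_E, \wh{H})$. Therefore $\ld\psi_*(\supp_{\Exc_k(W_E, \wh{H})}(\rT^j(\Pi)))$ coincides with the support of the normed excursion action of $\Exc_k(W_E, \wh{G})$ on $\rT^j(\Pi)$.

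The remaining step is to show that this normed-action support lies inside $F_*\supp_{\Exc_k(W_E, \wh{G})}(\rH^*(\Pi))$. By Lemma \ref{lem: norm extension}, the normed action of $S_\cD$ factors through $\Nm\iellt(S_\cD) = S_{\Nm\iellt(\cD)} \in \Exc_k(W_E, \wh{G})^\sigma$, whose action on $\rT^j(\Pi)$ is induced from the Fargues-Scholze action of $\FS_G(\Nm\iellt(S_\cD)) \in \mf{Z}(G)^\sigma$ on $\Pi$. A $\sigma$-invariant element of $\mf{Z}(G)$ that acts invertibly on $\Pi$ in the derived category automatically acts invertibly on $\rT^j(\Pi)$, so any eigenvalue of $\Nm\iellt(S_\cD)$ on $\rT^j(\Pi)$ is an eigenvalue on $\rH^*(\Pi)$. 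Combined with the observation from \S \ref{sssec: native excursion action} that on $\sigma$-invariant elements $\Nm\iellt$ coincides with the linearized Frobenius $F$, and with the Tannakian interpretation of characters of $\Exc_k(W_E, \wh{G})$ as semisimple $L$-parameters (\S \ref{ssec: excursion algebra}), this will yield the chain of identities $\chi(S_\cD) = \mu(\Nm\iellt(S_\cD)) = \mu(F(S_\cD)) = F_*\mu(S_\cD)$ for some $\mu \in \supp_{\Exc_k(W_E, \wh{G})}(\rH^*(\Pi))$.

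The hard part will be pinning down a single $\mu$ that simultaneously realizes the eigenvalues of the entire family $\{\Nm\iellt(S_\cD)\}_\cD$ attached to a given $\chi$ — rather than a $\cD$-dependent $\mu_\cD$ — and promoting the identity $\Nm\iellt = F$ from the $\sigma$-invariant subalgebra to an identity of characters on all of $\Exc_k(W_E, \wh{G})$. I expect to handle this by passing to the semisimple $L$-parameters underlying $\chi$ and $\mu$: the relation on $\sigma$-invariant excursion data combined with Tannakian reconstruction should force the parameter for $\chi$ to be the Frobenius twist of the parameter for $\mu$, upgrading the pointwise identity to the required global equality $\chi = F_*\mu$.
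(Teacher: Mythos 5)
Your core engine---Theorem \ref{thm: localize excursion operator}, the identity ``normed $=$ native $\circ\, F$'' on $\Exc_k(W_E,\wh G)^\sigma$, and the passage from $\rT^j$ to $\rH^*$---matches the paper's, and your last two paragraphs are essentially sound: the subquotient/invertibility argument plays the role of the paper's Tate spectral sequence $\rT^j(\rH^i(\Pi)) \Rightarrow \rT^{i+j}(\Pi)$, and the ``hard part'' you flag at the end is resolved in the paper by citing \cite[Lemma 5.15]{Fe23}, which says that every character of $\rT^0\Exc_k(W_E,\wh G)$ valued in a perfect field extends \emph{uniquely} to $\Exc_k(W_E,\wh G)$. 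That uniqueness is a genuine input you must prove or cite; it does not fall out of Tannakian generalities.

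The fatal gap is your first step. The containment you want has the shape $f(\Pi)\subseteq g(\Pi)$ with $f=\ld\psi_*\supp\rT^j$ and $g=F_*\supp\rH^*$, and such a containment does not devissage through distinguished triangles: from $A\to B\to C$ you get $f(B)\subseteq f(A)\cup f(C)\subseteq g(A)\cup g(C)$, but there is no inclusion $g(A)\cup g(C)\subseteq g(B)$. On the contrary, a resolution of an irreducible $\Pi$ by objects $\cInd_K^{G(E)}k$ has terms whose $\rH^*$-support is vastly larger than $\supp\rH^*(\Pi)$ (a single point), so the bound your devissage produces is vacuous. A second defect of the reduction is that the $\cInd_K^{G(E)}k$ with their canonical equivariant structures do not generate $D^b(\Rep^{\sm}_k(G(E)\rtimes\Sigma))$, and $\rT^j$ depends on the equivariant structure, not just the underlying complex. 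The paper's way around all of this is to promote the computation on $\cInd_K^{G(E)}k$ to an identity of \emph{central} elements: Theorem \ref{thm: FS TV}, i.e.\ $\mf{Z}_{\TV}\circ\FS_G=\FS_H\circ\ld\psi^*$. This is legitimate because $\mf{Z}(G,K)=Z(\End_{G(E)}(\cInd_K^{G(E)}k))$ is \emph{determined} by its action on these universal objects, and an identity in the Bernstein center then acts compatibly on every $\Pi$ by functoriality (Lemma \ref{lem: Br} identifies $\mf{Z}_{\TV}$ with ``apply $\rT^0$ to the equivariant endomorphism''). You should replace your support devissage with this centrality argument; the rest of your outline then goes through as in the paper.
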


\begin{example}[Treumann-Venkatesh functoriality conjecture]\label{ex: TV conj}
If $\Pi$ is an irreducible smooth representation of $G(E)$, then $\Exc_k(W_E, \wh{G})$ acts on $\Pi$ through a character $\chi_{\Pi}$. The composition $\chi_{\Pi} \circ F$ is the character of $\Exc_k(W_E, \wh{G})$ associated to the Frobenius twist $\Pi^{(\ell)} := \Pi \otimes_{k, \Frob_\ell} k$, so we have 
\[
F_* \supp_{\Exc_k(W_E, \wh{G})} ( \Pi) =  \supp_{\Exc_k(W_E, \wh{G})} (F_*  \Pi) = \{ \chi_{\Pi^{(\ell)}} \}.
\]
Suppose furthermore that $\Pi \cong {}^\sigma \Pi \in \Rep_k^{\sm}G(E)$. Then by \cite[Proposition 6.1]{TV}, the $G(E)$-action on $\Pi$ extends uniquely to a $G(E) \rtimes \Sigma$-action. Then Theorem \ref{thm: TV} implies, assuming $\ell > \max\{b(\wh{G}), b(\wh{H})\}$, that for each $j \in \Z/2\Z$ and each irreducible $H(E)$-subquotient $\pi$ of $\rT^i(\Pi)$, the semi-simple $L$-parameter $\rho_{\Pi^{(\ell)}}^{\ss} \in \rH^1(W_E; \wh{G}(k))$ is the image of the semi-simple $L$-parameter $\rho_{\pi}$ under the map $\rH^1(W_E; \wh{H}(k)) \xrightarrow{\ld \psi_*} \rH^1(W_E; \wh{G}(k))$. This implies Theorem \ref{thm: intro TV conj}. 
\end{example}


\begin{example}\label{ex: inner has equivariant structure}
Suppose $\sigma$ is an \emph{inner} automorphism, induced by conjugation by $s \in G(E)$ (which therefore has to be an $\ell$-torsion element). Then we have an isomorphism $G(E) \rtimes \Sigma \xrightarrow{\sim} G(E) \times \Sigma$ sending $(g, \sigma) \mapsto (g s^{-1}, \sigma)$. Hence we get a functor 
\[
D^b(\Rep^{\sm}_k G(E)) \rightarrow D^b(\Rep^{\sm}_k ( G(E) \times \Sigma))  \xrightarrow{\sim} D^b(\Rep^{\sm}_k (G(E) \rtimes \Sigma)). 
\]
This equips every representation $\Pi \in D^b(\Rep^{\sm}_k G(E))$ with a canonical $\Sigma$-equivariant structure, where $\sigma$ acts by $\Pi(s)$. 
\end{example}

\subsection{The Treumann-Venkatesh homomorphism}\label{sssec: double coset fixed point} We next establish some technical lemmas which aid to study the properties of the Brauer homomorphism. 




\begin{prop}\label{prop: inj}
Let $K \subset G$ be a $\Sigma$-stable compact open subgroup with prime-to-$\ell$ pro-order. Then the natural map 
\[
\iota^* K \bs H(E)  / \iota^* K \rightarrow K \bs G(E)  / K
\]
is injective. 
\end{prop}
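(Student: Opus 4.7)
The plan is to reduce the injectivity claim to a 1-cocycle vanishing argument. Unpacking the statement, it suffices to show: if $h_1, h_2 \in H(E)$ satisfy $h_1 = k_1 h_2 k_2$ for some $k_1, k_2 \in K$, then we can find $k_1', k_2' \in \iota^* K = K^\sigma$ with $h_1 = k_1' h_2 k_2'$. In other words, we must modify $(k_1, k_2)$ within the $K$-double coset of $h_2$ so that both factors are $\sigma$-fixed.

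First I would exploit the $\sigma$-invariance of $h_1$ and $h_2$. Applying $\sigma$ to the relation $h_1 = k_1 h_2 k_2$ and comparing with the original gives
\[
k_1^{-1} \sigma(k_1) = h_2 \bigl(k_2 \sigma(k_2)^{-1}\bigr) h_2^{-1}.
\]
Set $c := k_1^{-1}\sigma(k_1) \in K$. A direct computation using that $\sigma$ has order $\ell$ shows that $c \cdot \sigma(c) \cdots \sigma^{\ell-1}(c) = k_1^{-1} \sigma^{\ell}(k_1) = 1$, so $c$ is a 1-cocycle for the action of $\Sigma$ on the (non-abelian) group $K$.

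Next I would invoke the vanishing of non-abelian cohomology $H^1(\Sigma, K) = 1$, which holds because $K$ is pro-$p$ and $|\Sigma| = \ell \neq p$ (this is precisely the input used in the last step of the proof of Proposition~\ref{prop: fixed points of sht}). This produces $u \in K$ with $c = u \sigma(u)^{-1}$, i.e.\ $\sigma(k_1 u) = k_1 u$, so that $k_1' := k_1 u \in K^\sigma = \iota^* K$. Setting $k_2' := u^{-1} k_2 \in K$, we still have $k_1' h_2 k_2' = k_1 h_2 k_2 = h_1$. Finally, from $k_2' = h_2^{-1} (k_1')^{-1} h_1$ we see that $k_2' \in H(E)$, hence $k_2' \in K \cap H(E) = \iota^* K$, finishing the reduction.

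The only potentially delicate step is the cohomology vanishing $H^1(\Sigma, K) = 1$ for a pro-$p$ group $K$ with action by $\Sigma$ of order prime to $p$; but this is a standard fact (proved by passing to a cofinal system of $\Sigma$-stable open normal subgroups and devissage along the lower central series to reduce to the abelian case, where the statement is classical). Since this fact is already implicitly used elsewhere in the paper, the argument above should be essentially self-contained.
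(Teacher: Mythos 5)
Your overall strategy (reduce to a $1$-cocycle in a pro-$p$ group and use $\rH^1(\Sigma;K)=1$ for $\ell \neq p$) is the same as the paper's, and the cocycle computation for $c = k_1^{-1}\sigma(k_1)$ is correct. But the last step has a genuine gap. With $k_1' := k_1 u$ and $k_2' := u^{-1}k_2$ you get
\[
k_1' h_2 k_2' = k_1\, (u h_2 u^{-1})\, k_2,
\]
which equals $k_1 h_2 k_2$ only if $u$ commutes with $h_2$; this is not automatic for an arbitrary $u \in K$ trivializing $c$. Your closing sanity check exposes the inconsistency: the element forced by the equation $h_1 = k_1' h_2 k_2'$ is $k_2' = h_2^{-1}(k_1')^{-1}h_1 = h_2^{-1}u^{-1}h_2\,k_2$, which is not the $u^{-1}k_2$ you defined, and there is no reason for $h_2^{-1}u^{-1}h_2$ to lie in $K$.

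The fix is to use the information you already derived but then discarded: your displayed relation $k_1^{-1}\sigma(k_1) = h_2(k_2\sigma(k_2)^{-1})h_2^{-1}$ shows that $c$ takes values in the subgroup $K_0 := K \cap h_2 K h_2^{-1}$, which is $\sigma$-stable (since $h_2 \in H(E)$) and closed in $K$, hence pro-$p$. Trivializing the cocycle in $K_0$ rather than in $K$ produces $u \in K_0$ with $k_1 u \in K^\sigma$; then $k_2' := h_2^{-1}u^{-1}h_2\,k_2$ lies in $K$ (because $u \in h_2Kh_2^{-1}$) and in $H(E)$ (by the formula $k_2' = h_2^{-1}(k_1')^{-1}h_1$), hence in $\iota^*K$, and now $k_1'h_2k_2' = h_1$ holds on the nose. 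This is exactly what the paper does, phrased in terms of the right $K$-action on $K\backslash G(E)$: the error term $\sigma(\kappa)\kappa^{-1}$ is observed to lie in the stabilizer $\Stab_K(Kh_2) = K \cap h_2^{-1}Kh_2$, and the $\rH^1$-vanishing is applied to that stabilizer, not to all of $K$.
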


\begin{proof}
Let $a,b \in \iota^* K \bs H(E)$ be two elements whose images in $K \bs G(E)$ lie in the same orbit for the right translation of $K$. In other words, $a = b\kappa$ for some $\kappa \in K$. Applying $\sigma$ to this equation, and using that $a,b$ are fixed by $\sigma$, we obtain $a = b \sigma(\kappa)$, so $\sigma(\kappa) \kappa^{-1} \in \Stab_{K}(b)$. Since $\Sigma$ is of order $\ell$ while $K$ is prime-to-$\ell$, we have $\rH^1(\Sigma; \Stab_{K}(b)) = 0$. Hence there exists $y \in \Stab_{K}(b)$ such that $\sigma(\kappa) \kappa^{-1} = \sigma(y) y^{-1}$. Then $y^{-1} \kappa$ is fixed by $\sigma$, so $y^{-1} \kappa \in H(E) \cap K = \iota^* K$. But then 
\[
 a = b \kappa = (by^{-1}) \kappa  = b (y^{-1} \kappa),
 \]
which shows that $a$ and $b$ lie in the same orbit for the right translation of $\iota^* K$ on $\iota^* K \bs H(E)$. 
\end{proof}

Recall that we defined a compact open subgroup $K \subset G(E)$ to be \emph{plain} if the natural map $H(E)/\iota^*K \rightarrow [G(E)/K]^\sigma$
is an isomorphism. By a similar argument involving the vanishing of non-abelian cohomology (cf. \cite[Lemma 6.6]{Fe23}), any prime-to-$\ell$ subgroup $K \subset G(E)$ is plain, so the (un-normalized) Brauer homomorphism (cf. \S \ref{sssec: un-normalized Brauer}) $\Br \co \sH(G, K)^{\sigma} \rightarrow \sH(H, \iota^* K)$ is defined for any such $K$. 

\begin{cor}\label{cor: Brauer center}
Let $K \subset G$ be a $\Sigma$-stable prime-to-$\ell$ compact open subgroup. Then $\Br$ induces a map of centers,
\begin{equation}\label{eq: Z(Br)}
\Br \co Z(\sH(G, K)^\sigma) \rightarrow Z(\sH(H, \iota^* K)).
\end{equation}
\end{cor}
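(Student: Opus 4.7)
The strategy is to realize every element of $\sH(H, \iota^* K)$ as the image under $\Br$ of some element of $\sH(G,K)^\sigma$ obtained by extension by zero, and then to use the fact (recalled in \S\ref{sssec: un-normalized Brauer}) that $\Br$ is a ring homomorphism.

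First I would construct, for each $g \in \sH(H, \iota^* K)$, a ``lift'' $\tilde g \in \sH(G,K)^\sigma$ characterized by $\tilde g|_{\iota^* K \backslash H(E)/\iota^* K} = g$ and $\tilde g = 0$ outside the image of the natural map
\[
j \co \iota^* K \backslash H(E)/\iota^* K \longrightarrow K\backslash G(E)/ K.
\]
This $\tilde g$ is a well-defined function on $K\backslash G(E)/K$ because $j$ is injective by Proposition~\ref{prop: inj}. It is compactly supported since $g$ is. It lies in $\sH(G,K)^\sigma$ because the image of $j$ consists of $\sigma$-fixed double cosets (every element of $H(E)$ is $\sigma$-fixed in $G(E)$), so extension by zero automatically produces a $\sigma$-invariant function. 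By construction, $\Br(\tilde g) = g$ since $\Br$ is (by definition) restriction along $j$.

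Now, for $f \in Z(\sH(G,K)^\sigma)$ and any $g \in \sH(H,\iota^* K)$, the fact that $\Br$ is a ring homomorphism gives
\[
\Br(f) \ast g \;=\; \Br(f) \ast \Br(\tilde g) \;=\; \Br(f \ast \tilde g) \;=\; \Br(\tilde g \ast f) \;=\; \Br(\tilde g) \ast \Br(f) \;=\; g \ast \Br(f),
\]
where the middle equality uses that $f$ is central in $\sH(G,K)^\sigma$ and $\tilde g \in \sH(G,K)^\sigma$. This shows $\Br(f) \in Z(\sH(H, \iota^* K))$.

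The only non-trivial point is verifying that the extension-by-zero construction actually lands in $\sH(G,K)^\sigma$; this reduces to the injectivity statement of Proposition~\ref{prop: inj}, together with the trivial observation that $H(E)$ double cosets are pointwise $\sigma$-fixed. Since no quantitative estimate or further structural input is required, this should complete the proof.
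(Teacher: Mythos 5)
Your proposal is correct and is essentially the paper's argument: the paper deduces from Proposition \ref{prop: inj} that $\Br$ is a \emph{surjective} $k$-algebra homomorphism (surjectivity being exactly your extension-by-zero construction of $\tilde g$), and then notes that a surjective ring homomorphism carries the center into the center, which is your final display. You have simply unwound the one-line proof into its constituent steps.
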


\begin{proof}
Proposition \ref{prop: inj} implies that $\Br \co \sH(G, K)^\sigma \rightarrow \sH(H, \iota^* K)$ is a surjective $k$-algebra homomorphism, so it maps the center to the center. 
\end{proof}

It is evident from the definition that the map \eqref{eq: Z(Br)} factors over the quotient $Z(\sH(G, K)^{\sigma})  / N \cdot \mf{Z}(G, K)$, which induces a map 
\begin{equation}\label{eq: tate Brauer}
\rT^0 \mf{Z}(G, K)  = \frac{\mf{Z}(G, K)^{\sigma} }{ N \cdot \mf{Z}(G, K)} \inj \frac{Z(\sH(G, K)^{\sigma})  }{ N \cdot \mf{Z}(G, K)}  \rightarrow \mf{Z}(H, \iota^* K).
\end{equation}
Note that we have a natural $\F_\ell$-structure on $\mf{Z}(G, K;k)$ given by 
\begin{equation}\label{eq: Hecke F_ell structure}
\mf{Z}(G, K; k) = \mf{Z}(G, K; \F_\ell) \otimes_{\F_\ell} k.
\end{equation}
The following is a ``normalized'' (cf. \S \ref{sssec: normalized Brauer}) version of \eqref{eq: Z(Br)}, in the sense of the normalized Brauer homomorphism \eqref{eq: normalized Brauer}.

\begin{defn}\label{def: TV homomorphism}
Let $K \subset G$ be a $\Sigma$-stable prime-to-$\ell$ compact open subgroup. We define \emph{Treumann-Venkatesh homomorphism} 
\[
\mf{Z}_{\TV,K}  \co \mf{Z}(G, K) \rightarrow \mf{Z}(H, \iota^* K)
\]
to be the composition of \eqref{eq: tate Brauer} with the map $\Nm \iellt \co \mf{Z}(G, K) \rightarrow \rT^0(\mf{Z}(G,K))$ from Definition \ref{def: tate diagonal}, with respect to the $\F_\ell$-structure \eqref{eq: Hecke F_ell structure}. 
\end{defn}

\subsection{Modular functoriality for Bernstein centers}\label{ssec: BC homomorphism for BC}

For an inclusion $K \subset K' \subset G(E)$ of $\Sigma$-stable prime-to-$\ell$ compact open subgroups, we have the map $e_G^{K \rightarrow K'} \co \mf{Z}(G, K) \rightarrow \mf{Z}(G, K')$ given by convolution with $\bbm{1}_{K'}$. Similarly, we have $e_H^{K \rightarrow K'} \co \mf{Z}(H, \iota^* K) \rightarrow \mf{Z}(H, \iota^* K')$ given by convolution with $\bbm{1}_{\iota^* K'}$. The diagram 
\[
\begin{tikzcd}
\mf{Z}(G, K) \ar[r, "\mf{Z}_{\TV,K}"]  \ar[d, "e_G^{K \rightarrow K'}"] & \mf{Z}(H, \iota^*K) \ar[d, "e_H^{K \rightarrow K'}"]  \\
\mf{Z}(G, K') \ar[r, "\mf{Z}_{\TV,K'}"] & \mf{Z}(H, \iota^* K') 
\end{tikzcd}
\]
commutes. 

\begin{defn}[Base change homomorphism for Bernstein centers]\label{def: BC for BC} We define the \emph{Treumann-Venkatesh homomorphism} (for Bernstein centers) $\mf{Z}_{\TV} \co \mf{Z}(G) \rightarrow \mf{Z}(H)$ as
\[
 \varprojlim_{K} \mf{Z}_{\TV,K} \co \varprojlim_{K} \mf{Z}(G, K) \rightarrow \varprojlim_{K} \mf{Z}(H, \iota^* K) 
\]
where the limit is taken over $\Sigma$-stable prime-to-$\ell$ compact open subgroups $K \subset G(E)$. 
\end{defn}

Assume $\ell > \max\{b(\wh{G}), b(\wh{H})\}$. Recall that in Corollary \ref{cor: 1-leg brauer functor} we have constructed the $\sigma$-dual homomorphism $\ld \psi \co \ld H \rightarrow \ld G$ over $k$.

\begin{thm}\label{thm: FS TV} 
Assume $\ell > \max\{b(\wh{G}), b(\wh{H})\}$. Then the following diagram commutes:
\begin{equation}\label{eq: base change diagram}
\begin{tikzcd}
\Exc_k(W_E, \wh{G})  \ar[r, "\ld \psi^*"] \ar[d, "\FS_G"']  & \Exc_k(W_E, \wh{H})  \ar[d, "\FS_H"]\\
\mf{Z}(G)\ar[r, "\mf{Z}_{\TV}"] & \mf{Z}(H)
\end{tikzcd}
\end{equation}
\end{thm}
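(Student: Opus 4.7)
The plan is to unpack both sides of the square on the level of their actions on a suitable test module, and then identify these actions using the equivariant localization statement Theorem \ref{thm: localize excursion operator}. Fix an excursion datum $\cD = (I, V, \alpha, \beta, (\gamma_i)_{i \in I})$ for $\wh{G}$; we need to verify that $\mf{Z}_{\TV}(\FS_G(S_{\cD})) = \FS_H(\ld \psi^* S_{\cD})$ in $\mf{Z}(H)$. Since the subgroups $\iota^* K \subset H(E)$, for $K \subset G(E)$ a $\Sigma$-stable pro-$p$ compact open, are cofinal in the pro-$p$ compact opens of $H(E)$, and since $\mf{Z}(H, \iota^* K)$ acts faithfully on $\cInd_{\iota^* K}^{H(E)} k$ (being the center of $\sH(H, \iota^* K) = \End_{H(E)}(\cInd_{\iota^* K}^{H(E)} k)$), it suffices to show that both sides act by the same endomorphism of $\cInd_{\iota^* K}^{H(E)} k$ for every such $K$.

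For the right-hand composition, $\FS_H(\ld \psi^* S_{\cD}) = \FS_H(S_{\ld \psi^*(\cD)})$ acts on $\cInd_{\iota^* K}^{H(E)} k = \rR f_{\iota^* K !}^{H, \Delta}\cS_{\bbm 1}$ directly through the excursion recipe \eqref{eq: excursion action} applied to the datum $\ld \psi^*(\cD)$, which is by construction the native excursion action of $\ld \psi^* S_{\cD}$ on the $b' = 1_H$ summand $\rT^0(\Sht_{(H,1,\{0\}),\iota^* K}; \bbm 1)$ of the right-hand side of \eqref{eq: equiv local trivial coeff}. For the left-hand composition, unwinding Definition \ref{def: BC for BC}, the action of $\mf{Z}_{\TV, K}(\FS_G(S_{\cD}))$ on $\cInd_{\iota^* K}^{H(E)} k$ is, by construction of the Brauer map and identification $\rT^0(\cInd_K^{G(E)} k) \supset \cInd_{\iota^* K}^{H(E)} k$, the same as the action of $\Nm\iellt(\FS_G(S_{\cD})) \in \rT^0(\mf{Z}(G,K))$ on the ambient $\rT^0(\cInd_K^{G(E)} k)$ restricted to this summand. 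Since $\FS_G$ is rational over $\F_\ell$ and $\Sigma$-equivariant for the transported actions, a standard linearization argument gives $\Nm\iellt \circ \FS_G = \FS_G \circ \Nm\iellt$, so by Lemma \ref{lem: norm extension} this endomorphism is $\FS_G(S_{\Nm\iellt \cD})$, i.e.\ exactly the normed excursion action of $S_{\cD}$ on $\rT^0(\Sht_{(G, 1, \{0\}), K}; \bbm 1)$.

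The two actions are then matched by Theorem \ref{thm: localize excursion operator}, which asserts precisely that the equivariant localization isomorphism \eqref{eq: equiv local trivial coeff} intertwines the normed excursion action of $S_{\cD}$ on the left with the native excursion action of $\ld \psi^* S_{\cD}$ on the right, component-wise in the sum over $b' \in \iota^{-1}(1_G)$. Passing to the $b' = 1_H$ summand yields the desired equality of endomorphisms of $\cInd_{\iota^* K}^{H(E)} k$, completing the proof.

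I expect the main obstacle to be the bookkeeping around the equivariant-localization decomposition: one must verify that the identification of $\cInd_{\iota^* K}^{H(E)} k$ with the $b' = 1_H$ summand of $\rT^0(\cInd_K^{G(E)} k) \cong \bigoplus_{b'} \cInd_{\iota^* K}^{H_{b'}(E)} k$ is the one implicitly used in the definition of the Brauer map \eqref{eq: tate Brauer}, so that the chain of identifications in the second paragraph is internally consistent. Once this compatibility is spelled out at the level of spaces (via Proposition \ref{prop: fixed points of sht}) and of sheaves (via Proposition \ref{prop: equivariant localization}), the rest of the argument is a direct unravelling of definitions.
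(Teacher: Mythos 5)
Your proposal is correct and follows essentially the same route as the paper: reduce to the action on $\cInd_{\iota^*K}^{H(E)}k$ at each $\Sigma$-stable pro-$p$ level $K$, interpret $\mf{Z}_{\TV}(\FS_G(S_{\cD}))$ via Lemma \ref{lem: Br} as the endomorphism induced on Tate cohomology by the normed excursion operator (using Lemma \ref{lem: norm extension} and the $\F_\ell$-rationality of $\FS_G$ to commute $\Nm\iellt$ past it), and then invoke Theorem \ref{thm: localize excursion operator} to match it with the native action of $S_{\ld\psi^*\cD}$, i.e.\ $\FS_H(\ld\psi^*S_{\cD})$. The bookkeeping point you flag about identifying $\cInd_{\iota^*K}^{H(E)}k$ with the $b'=1_H$ summand of \eqref{eq: equiv local trivial coeff} is exactly what the paper's chain of identifications (via Example \ref{ex: tate cohomology of trivial shtuka} and Proposition \ref{prop: fixed points of sht}) is there to pin down.
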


Note that if $K$ is a plain subgroup of $G(E)$, then we have 
\begin{equation}\label{eq: tate of cInd}
\rT^0 (\cInd_K^{G(E)} k) \cong \cInd_{\iota^*K}^{H(E)} k \in \Rep_k^{\mrm{sm}} H(E).
\end{equation}
(This is a special case of \cite[Proposition 8.12]{BFHKT}; more general results on the interaction of Tate cohomology with compact induction are discussed later in \S \ref{ssec: toral representations}.) 

In preparation for the proof of Theorem \ref{thm: FS TV}, we record the following interpretation of the Brauer homomorphism. 

\begin{lemma}\label{lem: Br}
Under the identifications $\sH(G, K) \cong \End_{G(E)}(\cInd_K^{G(E)} k)$ and $\sH(H, \iota^*K) \cong\End_{H(E)} (\cInd_{\iota^*K}^{H(E)} k)$, the map $\Br \co \rT^0\sH(G, K) \rightarrow \sH(H, \iota^* K)$ induced by the Brauer homomorphism is identified with the map 
\[
\rT^0 \End_{G(E)}(\cInd_K^{G(E)} k) \rightarrow \End_{H(E)}(\rT^0 (\cInd_K^{G(E)} k) ) \stackrel{\eqref{eq: tate of cInd}}\cong \End_{H(E)}(\cInd_{\iota^*K}^{H(E)} k)
\]
sending a $\Sigma$-equivariant endomorphism of $\cInd_K^{G(E)} k$ to the induced endomorphism on its Tate cohomology. 
\end{lemma}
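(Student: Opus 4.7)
The plan is to verify the claimed identification by tracing both sides through an explicit basis of coset indicator functions. First I would realize $\cInd_K^{G(E)} k \cong k[G(E)/K]$ with basis $\{\bbm{1}_{gK}\}_{gK \in G(E)/K}$; under the standard identification $\sH(G,K) \cong \End_{G(E)}(\cInd_K^{G(E)} k)$, an element $\phi \in \sH(G,K)$ viewed as a $G(E)$-invariant kernel on $[G(E)/K] \times [G(E)/K]$ acts on the basis by the explicit formula
\[
\phi \cdot \bbm{1}_{g_1 K} = \sum_{g_2 K \in G(E)/K} \phi(g_1 K, g_2 K) \, \bbm{1}_{g_2 K}.
\]

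Next I would analyze the $k[\Sigma]$-module structure of $k[G(E)/K]$. Because $K$ is pro-$p$ and therefore plain, the $\Sigma$-fixed cosets in $G(E)/K$ are exactly $H(E)/\iota^*K$, and every non-fixed $\Sigma$-orbit has size $\ell$ (the only divisors of $\ell$ being $1$ and $\ell$) and so is free. Hence $k[G(E)/K]$ decomposes as a $k[\Sigma]$-module into a direct sum of trivial summands (one per fixed coset) plus copies of the regular representation $k[\Sigma]$ (one per free orbit). Tate cohomology kills the free summands and recovers the trivial ones, yielding the identification $\rT^0(k[G(E)/K]) \cong k[H(E)/\iota^*K] \cong \cInd_{\iota^*K}^{H(E)} k$ already recorded in \eqref{eq: tate of cInd}.

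The key calculation is then to evaluate the induced endomorphism on a fixed basis vector $\bbm{1}_{hK}$ with $h \in H(E)$ and reduce modulo the image of the norm $N$. The $\sigma$-invariance of $\phi$ forces $\phi(hK, \sigma(g_2)K) = \phi(hK, g_2 K)$, so inside a free $\Sigma$-orbit on $G(E)/K$ the coefficients $\phi(hK, g_2 K)$ are all equal, and the corresponding portion of $\phi \cdot \bbm{1}_{hK}$ reads $\phi(hK, g_2 K) \cdot N(\bbm{1}_{g_2 K})$, which vanishes in $\rT^0$. Only fixed cosets $g_2 K = h'K$ with $h' \in H(E)$ survive, contributing
\[
\phi \cdot \bbm{1}_{hK} \equiv \sum_{h'K \in H(E)/\iota^*K} \phi(hK, h'K) \, \bbm{1}_{h'K} \pmod{N \cdot k[G(E)/K]}.
\]
The right-hand side is exactly the integral operator on $k[H(E)/\iota^*K]$ with kernel $\phi|_{[H(E)/\iota^*K]^2} = \Br(\phi)$ recalled in \S\ref{sssec: un-normalized Brauer}, which matches the assertion of the lemma.

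The main obstacle will be purely bookkeeping: carefully matching sign and side conventions in the identifications $\sH \cong \End(\cInd)$ and in the definition of $\Br$ as restriction of $G(E)$-invariant kernels. There is no deeper mathematical difficulty; once the conventions are pinned down, the identification is immediate from the orbit decomposition above.
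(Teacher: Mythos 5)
Your proof is correct, but it takes a different route from the paper: the paper disposes of this lemma in one line by citing \cite[Lemma 6.7]{Fe23} (which in turn comes from \cite[\S 6.2]{TV}) applied to $\Pi = \cInd_K^{G(E)}k$, whereas you give a self-contained verification by decomposing $k[G(E)/K]$ into $\Sigma$-orbits and computing the induced operator on $\rT^0$ directly. Your argument is essentially an unfolding of what that cited lemma asserts, and all the ingredients you use are sound: pro-$p$ implies plain (so the fixed cosets are exactly $H(E)/\iota^*K$ and the remaining orbits are free of size $\ell$), Tate cohomology kills the regular summands, and $\sigma$-invariance of the kernel forces the coefficients over a free orbit to assemble into a multiple of $N(\bbm{1}_{g_2K})$, leaving precisely the restricted kernel $\Br(\phi)$. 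The explicit computation has the added benefit of making visible why the map factors through $\rT^0\sH(G,K)$ (the restriction of $N\psi$ to the fixed locus is $\ell\cdot\psi = 0$ in characteristic $\ell$), a point the paper handles separately just before Definition \ref{def: TV homomorphism}. The only thing I would tighten is the finiteness of the sum $\sum_{g_2K}\phi(g_1K,g_2K)\,\bbm{1}_{g_2K}$ — it is finite because $\phi$ is supported on finitely many double cosets and $K$ is compact open — but this is routine and does not affect the argument.
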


\begin{proof}
Apply \cite[Lemma 6.7]{Fe23} (which comes from \cite[\S 6.2]{TV}) with $\Pi := \cInd_K^{G(E)} k$. 
\end{proof}

\begin{proof}[Proof of Theorem \ref{thm: FS TV}] For any prime-to-$\ell$, $\Sigma$-stable compact open subgroup $K \subset G(E)$, we also denote by
\[
\Exc_k(W_E, \wh{G}) \xrightarrow{\FS_{G}}  \mf{Z}(G, K) 
\]
the composition of $\FS_G$ with the projection $\mf{Z}(G) \rightarrow \mf{Z}(G,K)$. Applying Tate cohomology, this induces
\begin{equation}\label{eq: tate of FS}
\rT^0\Exc_k(W_E, \wh{G})  \xrightarrow{\FS_{G}} \rT^0 \mf{Z}(G, K).
\end{equation}

We have identifications
\[
\begin{tikzcd}
\rT^0(\cInd_K^{G(E)} k) \ar[r, "\sim"', "\text{Ex. \ref{ex: tate cohomology of trivial shtuka}}"] &  \rT^0(\Sht_{(G, 1_G, \{0\}),K}; \bbm{1})   \ar[r, "\sim"', "\eqref{eq: equiv local trivial coeff}"] &  \bigoplus_{b' \in \iota^{-1}(1_G)} \rT^0( \Sht_{(H, 1_H, I), \iota^*  K}^{b'};\bbm{1}) \ar[d, "\sim", "\text{Ex. \ref{ex: tate cohomology of trivial shtuka}}"'] \\
 &  \cInd_{\iota^* K}^{H(E)} k & \ar[l, "\sim"', "\text{Ex. \ref{ex: tate coh of trivial}}"]  \rT^0(\cInd_{\iota^* K}^{H(E)} k) 
\end{tikzcd}
\]
and for any excursion datum $\cD$ for $\wh{G}$, Theorem \ref{thm: localize excursion operator} implies that these identifications intertwine 
\begin{equation}\label{eq: excursion action localize}
\left(\begin{array}{@{}c@{}} 
\text{the action of }  \text{$S_{\Nm\iellt \cD}$} \\  \text{on $\rT^0(\cInd_K^{G(E)} k)$} 
\end{array} \right)
 = 
\left( \begin{array}{@{}c@{}} \text{the action of $S_{\ld \psi^* \cD}$}\\ \text{on $\cInd_{\iota^*K}^{H(E)} k$}
 \end{array}\right).
 \end{equation}
 
The diagram 
\begin{equation}\label{eq: Nm Exc and Z}
\begin{tikzcd}
\Exc_k(W_E, \wh{G}) \ar[r, "\Nm \iellt"] \ar[d, "\FS_G"]  & \rT^0 \Exc_k(W_E, \wh{G}) \ar[d, "\eqref{eq: tate of FS}"]\\
\mf{Z}(G, K) \ar[r, "\Nm \iellt"]  & \rT^0 \mf{Z}(G, K)
\end{tikzcd}
\end{equation}
commutes by the definition of $\Nm$, and the fact that $\FS_G$ is defined over $\F_\ell$ (with respect to the $\F_\ell$-structures used to linearize $\Nm$ on each row). Therefore, unraveling the definition of $\mf{Z}_{\TV}$ and using Lemma \ref{lem: norm extension}, it suffices to show that 
\begin{equation}\label{eq: br FS identity}
\Br(\FS_G(S_{\Nm \iellt \cD}))  = \FS_H(S_{\ld \psi^* \cD})
\end{equation}
for all excursion data $\cD$ for $\wh{G}$. By Lemma \ref{lem: Br}, $\Br(\FS_G(S_{\Nm \iellt \cD}))$ is the endomorphism of $\rT^0 (\cInd_{\iota^*K}^{H(E)} k) = \cInd_{\iota^* K}^{H(E)} k$ obtained by taking $\rT^0$ of the action of $S_{\Nm \iellt \cD}$ on $\cInd_K^{G(E)} k$, which according to \eqref{eq: excursion action localize} is the endomorphism given by $\FS_H(S_{\ld \psi^* \cD})$. 
\end{proof}

\begin{cor}\label{cor: functoriality character} Assume $\ell > \max\{b(\wh{G}), b(\wh{H})\}$. Then for any irreducible $H(E)$-representation $\pi$, the character 
\[
\chi_{\pi} \circ \mf{Z}_{\TV} \co \mf{Z}(G) \xrightarrow{\mf{Z}_{\TV}} \mf{Z}(H) \xrightarrow{\chi_{\pi}} k
\]
has the property that for any irreducible $G(E)$-representation $\Pi$ on which $\mf{Z}(G)$ acts through $\chi_{\pi} \circ \mf{Z}_{\TV}$, there is an isomorphism of semi-simple $L$-parameters $\rho_{\Pi} \cong \ld \psi \circ \rho_{\pi}$.
\end{cor}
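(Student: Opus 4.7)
The plan is to deduce this from Theorem \ref{thm: FS TV} by a direct diagram chase, exploiting the bijection between characters of excursion algebras and semi-simple $L$-parameters. The key observation is that for an irreducible admissible representation $\Pi$ of $G(E)$, the Fargues-Scholze parameter $\rho_\Pi$ is, by construction of $\FS_G$, precisely the semi-simple $L$-parameter into $\wh{G}(k)$ corresponding to the character $\chi_\Pi \circ \FS_G \co \Exc_k(W_E,\wh{G}) \to k$ under the bijection of \S \ref{ssec: excursion algebra}. Analogously, $\rho_\pi$ is the $L$-parameter into $\wh{H}(k)$ corresponding to $\chi_\pi \circ \FS_H$.

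First I would invoke the hypothesis that $\mf{Z}(G)$ acts on $\Pi$ through $\chi_\pi \circ \mf{Z}_{\TV}$, which gives $\chi_\Pi = \chi_\pi \circ \mf{Z}_{\TV}$ as characters of $\mf{Z}(G)$. Composing with $\FS_G$ and applying Theorem \ref{thm: FS TV} (the commutative square \eqref{eq: base change diagram}) yields the identity
\[
\chi_\Pi \circ \FS_G \;=\; \chi_\pi \circ \mf{Z}_{\TV} \circ \FS_G \;=\; \chi_\pi \circ \FS_H \circ \ld\psi^{\ast}
\]
as characters $\Exc_k(W_E, \wh{G}) \to k$. Next I would invoke the description of $\ld\psi^{\ast}$ from \eqref{eq: excursion algebra functoriality}: on the level of spectra it sends the closed point corresponding to a semi-simple $L$-parameter $\rho \in \rH^1(W_E;\wh{H}(k))$ to the point corresponding to $\ld\psi\circ\rho \in \rH^1(W_E;\wh{G}(k))$. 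Therefore the character $\chi_\pi \circ \FS_H \circ \ld\psi^{\ast}$ corresponds, under the Tannakian bijection for $\wh{G}$, to the semi-simple $L$-parameter $\ld\psi \circ \rho_\pi$. Combining with the opening observation that the same character corresponds to $\rho_\Pi$, we conclude $\rho_\Pi \cong \ld\psi \circ \rho_\pi$ in $\rH^1(W_E;\wh{G}(k))$.

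There is essentially no obstacle beyond bookkeeping, since all the substantive content has been packaged into Theorem \ref{thm: FS TV}; the only point requiring a moment's care is to record correctly the compatibility between the algebraic map $\ld\psi^{\ast}$ on excursion algebras and the set-theoretic map $\ld\psi_{\ast}$ on parameters, which is precisely the content of the discussion following \eqref{eq: excursion algebra functoriality}.
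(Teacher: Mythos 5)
Your argument is correct and is precisely the intended deduction: the paper states this corollary without proof immediately after Theorem \ref{thm: FS TV}, treating it as the diagram chase you carry out, using the bijection between characters of $\Exc_k(W_E,\wh{G})$ and semi-simple $L$-parameters together with the description of $\ld\psi^*$ on spectra from \S\ref{ssec: excursion algebra}. Nothing is missing.
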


\subsection{Proof of Theorem \ref{thm: TV}}\label{ssec: TV conjectures} 

If $\ell > \max\{b(\wh{G}), b(\wh{H})\}$, it was seen in the proof of Theorem \ref{thm: FS TV} that for any $\Sigma$-stable open compact subgroup $K < G(E)$, the normed action (cf. \S \ref{sssec: normed excursion action}) of $\Exc_k(W_E, \wh{G})$ on $\rT^j(\cInd_K^{G(E)} k)$ coincides under the identification \eqref{eq: tate of cInd} with the action obtained by composing $\Exc_k(W_E, \wh{G}) \xrightarrow{\FS_G} \mf{Z}(G) \xrightarrow{\mf{Z}_{\TV}} \mf{Z}(H)$ with the tautological action of $\mf{Z}(H)$ on $\cInd_{\iota^*K}^{H(E)} k$. Therefore, by Theorem \ref{thm: FS TV}, for any $j$ the map $\ld \psi_* \co \Spec \Exc_k(W_E, \wh{H}) \rightarrow \Spec \Exc_k(W_E, \wh{G})$ carries
\begin{equation}\label{eq: supp 1}
 \supp_{\Exc_k(W_E, \wh{H})}^{\mrm{native}} \rT^j(\Pi)  \xrightarrow{\ld \psi_* } \supp_{\Exc_k(W_E, \wh{G})}^{\mrm{normed}} \rT^j(\Pi),
\end{equation}
where the native excursion action is defined in \S \ref{sssec: native excursion action}.

 We will compare the support of the normed and native actions of $\Exc_k(W_E, \wh{G})^{\sigma}$ on $\rT^j(\Pi)$. By the discussion in \S \ref{sssec: normed excursion action}, we have for any $j \in \Z/2\Z$ an equality
\begin{equation}\label{eq: supp 1.3}
\supp_{\Exc_k(W_E, \wh{G})^\sigma}^{\mrm{normed}} (\rT^j (\Pi))  =  F_* \supp^{\mrm{native}}_{\Exc_k(W_E, \wh{G})^\sigma} (\rT^j (\Pi)) \subset \Spec \Exc_k(W_E, \wh{G})^\sigma.
\end{equation}
By the Tate spectral sequence $\rT^j(\rH^i(\Pi)) \implies \rT^{i+j}(\Pi)$, each $\rT^n(\Pi)$ has a finite filtration each of whose graded pieces is a subquotient of $\rH^*(\Pi)$, hence we have
\begin{equation}\label{eq: supp 1.5}
 F_* \supp^{\mrm{native}}_{\Exc_k(W_E, \wh{G})^\sigma} (\rT^j (\Pi))  \subset F_* \supp^{\mrm{native}}_{
\Exc_k(W_E, \wh{G})^\sigma} (\rH^*(\Pi))  = \supp^{\mrm{native}}_{\Exc_k(W_E, \wh{G})^\sigma} (F_* \rH^*(\Pi)).
\end{equation}
Let $q \co \Spec \Exc_k(W_E, \wh{G}) \rightarrow \Spec \Exc_k(W_E, \wh{G})^\sigma$ be the map of spectra induced by the obvious inclusion. Combining \eqref{eq: supp 1.3} and \eqref{eq: supp 1.5} with \eqref{eq: supp 1} yields
\begin{equation}\label{eq: supp 2}
q \circ \ld \psi_*  \left( \supp_{\Exc_k(W_E, \wh{H})}^{\mrm{native}} \rT^j(\Pi) \right) \subset F_* \supp^{\mrm{native}}_{\Exc_k(W_E, \wh{G})^\sigma} (\rH^* (\Pi)).
\end{equation}

The surjection $\Exc_k(W_E, \wh{G})^\sigma \surj \rT^0 \Exc_k(W_E, \wh{G})$ induces a closed embedding on spectra, as in the diagram 
\[
\begin{tikzcd}  
& \Spec \Exc_k(W_E, \wh{G}) \ar[d, "q"] \\
\Spec \rT^0 \Exc_k(W_E, \wh{G})  \ar[r, hook] & \Spec \Exc_k(W_E, \wh{G})^\sigma
\end{tikzcd}
\]
Now, \cite[Lemma 5.15]{Fe23} says that any character from $\rT^0 \Exc_k(W_E, \wh{G})$ to any perfect field has a unique extension to $\Exc_k(W_E, \wh{G})$, which implies that the map $\Spec \Exc_k(W_E, \wh{G}) \rightarrow \Spec \Exc_k(W_E, \wh{G})^\sigma$ is one-to-one (and surjective) over the closed subscheme $\Spec \rT^0 \Exc_k(W_E, \wh{G}) \subset \Spec \Exc_k(W_E, \wh{G})^\sigma$. From Theorem \ref{thm: localize excursion operator}, we see that $q \circ \ld \psi_*  \left( \supp_{\Exc_k(W_E, \wh{H})}^{\mrm{native}} \rT^j(\Pi) \right)$ lands inside this subspace, so we may lift \eqref{eq: supp 2} to the containment 
\begin{equation}
\ld \psi_*  \left( \supp_{\Exc_k(W_E, \wh{H})}^{\mrm{native}} \rT^j(\Pi) \right)  \subset F_* \supp^{\mrm{native}}_{\Exc_k(W_E, \wh{G})} (\Pi),
\end{equation}
as desired. 

\qed

\subsection{Existence of functorial lifts}\label{ssec: functorial lift} We generalize the proof of \cite[Theorem 6.26]{Fe23} to prove the existence of functorial liftings along any $\sigma$-dual homomorphism. 

\begin{thm}\label{thm: mod ell lifting} Assume $\ell > \max\{b(\wh{G}), b(\wh{H})\}$. Let $\pi$ be an irreducible representation of $H(E)$ over $k$, with Fargues-Scholze parameter $\rho_{\pi} \in \rH^1(W_E; \wh{H}(k))$. Then there is an irreducible representation $\Pi$ of $G(E)$ over $k$ such that $\rho_{\Pi} \cong \ld \psi  \circ \rho_{\pi}  \in \rH^1(W_E; \wh{G}(k))$. 
\end{thm}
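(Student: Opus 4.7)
The plan is to adapt the proof of \cite[Theorem 6.26]{Fe23} to this setting, with Theorem \ref{thm: FS TV} substituting for the analogous local--global compatibility invoked in \emph{loc. cit.} By Corollary \ref{cor: functoriality character}, it suffices to produce an irreducible smooth $G(E)$-representation $\Pi$ over $k$ whose $\mf{Z}(G)$-action factors through the character $\chi_\pi \circ \mf{Z}_{\TV}$, where $\chi_\pi \co \mf{Z}(H) \rightarrow k$ denotes the Bernstein character of $\pi$.

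First I would choose a pro-$p$, $\Sigma$-stable compact open subgroup $K \subset G(E)$ sufficiently small that $\pi^{\iota^* K} \neq 0$; such $K$ exists because $\pi$ is smooth and $\iota^* K \subset H(E)$ is pro-$p$. By Frobenius reciprocity $\pi$ is a non-zero quotient of $\cInd_{\iota^* K}^{H(E)} k$, so $\chi_\pi$ factors through $\mf{Z}(H, \iota^* K)$ and lies in the support of $\cInd_{\iota^* K}^{H(E)} k$ as a $\mf{Z}(H, \iota^* K)$-module. Set $M := \cInd_K^{G(E)} k$. The key claim is that the character $\chi_\pi \circ \mf{Z}_{\TV, K}$ of $\mf{Z}(G, K)$ lies in the support of $M$, equivalently that it vanishes on $\mrm{Ann}_{\mf{Z}(G, K)}(M)$. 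Granting this, standard arguments extract an irreducible subquotient $\Pi$ of $M$ on which $\mf{Z}(G, K)$ acts through $\chi_\pi \circ \mf{Z}_{\TV, K}$; the full Bernstein character of $\Pi$ then coincides with $\chi_\pi \circ \mf{Z}_{\TV}$, because $\chi_\pi \circ \mf{Z}_{\TV}$ factors through $\mf{Z}(G) \rightarrow \mf{Z}(G, K)$ by the naturality of $\mf{Z}_{\TV}$ (Definition \ref{def: BC for BC}).

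To verify the claim, let $a \in \mrm{Ann}_{\mf{Z}(G, K)}(M)$. Then $\Nm(a) = a \cdot {}^{\sigma}a \cdots {}^{\sigma^{\ell-1}} a$ also annihilates $M$ (being a multiple of $a$) and lies in $\mf{Z}(G, K)^{\sigma}$, so induces the zero endomorphism of $\rT^0(M) \cong \cInd_{\iota^* K}^{H(E)} k$ via \eqref{eq: tate of cInd}. By Lemma \ref{lem: Br}, this endomorphism coincides with multiplication by $\Br(\Nm(a)) \in \mf{Z}(H, \iota^* K)$, whence $\Br(\Nm(a)) = 0$ and in particular $(\chi_\pi \circ \Br \circ \Nm)(a) = 0$. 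By construction $\mf{Z}_{\TV, K} \circ \Frob_{\varphi_0} = \Br \circ \Nm$ with respect to the natural $\F_\ell$-structure on $\mf{Z}(G, K)$, so $(\chi_\pi \circ \mf{Z}_{\TV, K})(a) = (\chi_\pi \circ \Br \circ \Nm)(\Frob_{\varphi_0}^{-1}(a))$. Since $\mrm{Ann}_{\mf{Z}(G, K)}(M)$ descends to $\F_\ell$ (as $M$ does) and is therefore preserved by $\Frob_{\varphi_0}^{-1}$, we conclude $\Frob_{\varphi_0}^{-1}(a) \in \mrm{Ann}_{\mf{Z}(G, K)}(M)$ and hence $(\chi_\pi \circ \mf{Z}_{\TV, K})(a) = 0$, as required.

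The principal obstacle is the Frobenius semilinearity built into the definition of $\mf{Z}_{\TV}$: the un-normalized composite $\Br \circ \Nm$ is only $\Frob$-semilinear, while $\mf{Z}_{\TV}$ is its $k$-linear twist. Reconciling the two requires carefully exploiting the $\F_\ell$-rationality of $\cInd_K^{G(E)} k$ (and hence of its annihilator in $\mf{Z}(G, K)$), which renders the desired vanishing insensitive to the Frobenius twist.
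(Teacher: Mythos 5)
Your overall route is a legitimate variant of the paper's: you construct the target maximal ideal of $\mf{Z}(G,K)$ directly as $\ker(\chi_\pi \circ \mf{Z}_{\TV,K})$ and then invoke Corollary \ref{cor: functoriality character}, whereas the paper works inside the excursion algebra, restricts to the subalgebra generated by the elements $N\cdot S_{\cD}$ and $\Nm S_{\cD}$, and pins down the full excursion character of the constructed representation via the uniqueness-of-extension lemma \cite[Lemma 5.15]{Fe23}. If your existence step were sound, this would be a genuine streamlining, since the maximal ideal you start from already determines the excursion character by Theorem \ref{thm: FS TV} and no extension argument is needed. But the existence step has a real gap.

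The problem is the ``key claim.'' The action of $\mf{Z}(G,K)=Z(\sH(G,K))$ on $M=\cInd_K^{G(E)}k$ is through the identification $\sH(G,K)\cong\End_{G(E)}(M)$, so it is faithful and $\mrm{Ann}_{\mf{Z}(G,K)}(M)=0$. Your verification paragraph therefore establishes a vacuous statement, and the asserted equivalence ``lies in the support $\Leftrightarrow$ vanishes on the annihilator'' is in any case false for $M$, which is not finitely generated over $\mf{Z}(G,K)$; moreover neither condition by itself extracts an irreducible subquotient with prescribed central character. What is actually needed---and what the paper's proof supplies at precisely this point---is the theorem of Dat--Helm--Kurinczuk--Moss \cite[Theorem 1.1]{DHKM} that $\sH(G,K)$ is a finite $\mf{Z}(G,K)$-module. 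Granting that, faithfulness plus Nakayama shows $\sH(G,K)/\sH(G,K)\ker(\chi_\pi\circ\mf{Z}_{\TV,K})$ is nonzero and finite-dimensional, and any irreducible quotient furnishes the desired $\Pi$ with $\Pi^K\neq 0$. This finiteness is a substantial external input, not a ``standard argument,'' and without it nothing forces the character $\chi_\pi\circ\mf{Z}_{\TV,K}$ to be realized by any irreducible representation. (Your care with the Frobenius linearization is correct as far as it goes, but it is applied to the zero ideal and so carries no content.)
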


\begin{proof} Choose a $\Sigma$-stable pro-$p$ compact open subgroup $K < G(E)$ such that $\pi^{\iota^* K} \neq 0$. From Theorem \ref{thm: FS TV} we have a commutative diagram 
\begin{equation}\label{eq: FS functorial diagram}
\begin{tikzcd}[column sep = huge]
\Spec  \mf{Z}(H, \iota^* K) \ar[r, "\Spec \mf{Z}_{\TV}"]  \ar[d, "\Spec \FS_H"]  & \Spec \mf{Z}(G, K) \ar[d, "\Spec \FS_G"]  \\
\Spec \Exc_k(W_E, \wh{H}) \ar[r, "\ld \psi_*"] & \Spec \Exc_k(W_E, \wh{G})  
\end{tikzcd}
\end{equation}
The representation $\pi$ gives a $k$-point of $\Spec  \mf{Z}(H, \iota^* K) $ lying over $\rho_\pi$ viewed as a $k$-point of $\Spec \Exc_k(W_E, \wh{H})$. The commutativity of the diagram then tells us that there is a $k$-point of $\Spec \mf{Z}(G,K)$, say with maximal ideal $\mf{m}$, lying over the $k$-point of $\Spec \Exc_k(W_E, \wh{G})  $ corresponding to $ \ld \psi  \circ \rho_{\pi}$. 

Recall that the functor $\Pi \mapsto \Pi^{K}$ induces a bijection between irreducible admissible $G(E)$-representations with non-zero $K$-invariants and irreducible $\sH(G, K)$-modules. It therefore suffices to construct an irreducible representation of $\sH(G,K)$ on which the $\mf{Z}(G,K)$-action factors over $\mf{m}$. 

By a result of Dat-Helm-Kurinczuk-Moss \cite[Theorem 1.1]{DHKM} $\sH(G, K)$ is a finite $\mf{Z}(G, K)$-module. By the Artin-Tate Lemma, $\mf{Z}(G, K)$ is finite over $\mf{Z}(G, K)^{\sigma}$ and then $\sH(G, K)$ is finite over $\mf{Z}(G, K)^{\sigma}$.  The localization of $\sH(G,K)$ at $\mf{m}$ is non-zero, since $\mf{Z}(G,K) \inj \sH(G,K)$, so Nakayama's Lemma implies that the left $\sH(G, K)$-module quotient $\sH(G, K) /\sH(G, K) \mf{m}$ is finite-dimensional and non-zero. By design, it is supported over $\mf{Z}(G,K)/\mf{m}$, so it has an irreducible $\sH(G, K)$-subquotient on which the $\mf{Z}(G,K)$-action factors over $\mf{m}$, as was to be showed.

\end{proof}

\section{Fargues-Scholze parameters of toral supercuspidals}\label{sec: toral supercuspidals}

In this section we will demonstrate how the preceding theory may be used to calculate the Fargues-Scholze parameters attached to explicitly constructed representations $\Pi \in D^b(\Rep^{\mrm{sm}}_k G(E))$. We will apply Theorem \ref{thm: TV} with the automorphism $\sigma$ being conjugation by a strongly regular $\ell$-torsion element $s \in G(E)$. Then $H$ is a \emph{torus} $T<G$, so the Fargues-Scholze correspondence is completely understood for $H$. Using Theorem \ref{thm: TV}, we can describe $\rho_\Pi \in \rH^1(W_E; \wh{G}(k))$ in terms of $\rT^j(\Pi)$ and the $\sigma$-dual homomorphism $\ld \psi$. 

We will focus our attention on $\Pi \in D^b(\Rep^{\mrm{sm}}_k G(E))$ related to the ``(Howe-unramified) toral supercuspidals'' studied by Chan-Oi \cite{CO21}. (The method has broader scope but we regard this as a sufficiently interesting proof-of-concept for now.) Then $T$ must be taken to be an unramified elliptic torus, and we compute the $\sigma$-dual embedding in \S \ref{ssec: example sigma-dual homomorphism}: it turns out to be the canonical $L$-embedding corresponding to	 an unramified maximal torus. 

The relevant $\Pi$ arise as compact inductions of representations cut out from the cohomology of so-called ``deep level Deligne-Lusztig varieties'' studied by Chan-Ivanov \cite{CI21}. In \S \ref{ssec: Tate of deep level DL} we use equivariant localization to calculate the Tate cohomology of these deep level Deligne-Lusztig varieties. We feed the answer into \S \ref{ssec: toral representations} in order to compute the Tate cohomology of the compact inductions, and then tie together the calculations to describe the Fargues-Scholze parameters explicitly in Theorem \ref{thm: derived parameter}.

\subsection{Review of tori}

Let $G$ be a connected reductive group over $E$.

\subsubsection{The abstract Cartan} To $G$ we can associate a canonical $E$-torus $\bT$, which we call the ``abstract Cartan'' of $G$. (We make no claim that $\bT$ admits an $E$-rational embedding into $G$.) If $G$ is quasi-split, thne $\bT$ is defined as the colimit over $E$-rational Borel subgroups $B < G$ of $B/U$, where $U$ is the unipotent radical of $B$. In general, we pass to an extension of $G$ where it becomes quasisplit, and then descend this construction. 

Given an extension $E'/E$ and a Borus $(B,T)$ over $G_{E'}$, the composition 
\[
T \inj B \surj B/U \rightarrow \bT_{E'}
\]
defines an isomorphism $i_B \co T \xrightarrow{\sim} \bT_{E'}$.  

We denote by $\bW$ the Weyl group of $\bT$, defined as the colimit of $N_{G_{E^s}}(T)/T$ over the category of Bori $(B,T)$ in $G_{E^s}$.

\subsubsection{The cocycle of a torus}\label{sssec: torus cocycle}

Let $T \subset G$ be a maximal torus over $E$. Over $E^s$, we can find a Borus $(B, T_{E^s})$ inside $G_{E^s}$, which gives an identification $i_B \co T_{E^s} \xrightarrow{\sim} \bT_{E^s}$. 

For $\gamma \in \Gal(E^s/E)$, $\gamma B$ is another Borel subgroup of $G$ containing $T_{E^s}$, so we have another identification $i_{\gamma B} \co T_{E^s} \xrightarrow{\sim} \bT_{E^s}$. 

For $\gamma \in \Gal(E^s/E)$, we denote by $\gamma_{\bT}$ (resp. $\gamma_T$) the endomorphism of $\bT_{E^s}$ (resp. $T_{E^s}$) induced by $\gamma$. From the definition of the abstract Cartan, we see that $\gamma_{\bT}$ is the composition 
\[
 \bT_{E^s} \xrightarrow{i_B^{-1}}  T_{E^s} \xrightarrow{\gamma_T} T_{E^s} \xrightarrow{i_{\gamma B}}  \bT_{E^s}.
\]

For each $\gamma \in \Gal(E^s/E)$, the automorphism $i_B  i_{\gamma B}^{-1}$ of $\bT_{E^s}$ is given by an element of $\bW$. Hence the function $z_{T,B} \co \gamma  \mapsto (i_B i_{\gamma B}^{-1})$ defines a cocycle in $Z^1(\Gal(E^s/E); \bW)$. Choosing a different $B$ alters $z_{T,B}$ by a coboundary, hence the cohomology class $h_T := [z_{T,B}] \in \rH^1(\Gal(E^s/E); \bW)$ is independent of $B$. Then we have 
\[
i_B \gamma_T i_B^{-1} = i_B (i_{\gamma B}^{-1} \gamma_{\bT} i_B) i_B^{-1} = (i_B i_{\gamma B}^{-1}) \gamma_{\bT}.
\]
In other words, the cocycle $h_T$ expresses the ``difference'' between the $\Gal(E^s/E)$-action on $T$ and on $\bT$. 

\subsubsection{The canonical $L$-embedding}\label{sssec: canonical L-embedding} Recall from the work of Langlands-Shelstad \cite{LS87} that given a maximal torus $T \subset G$ together with a choice of ``$\chi$-data'', there is a $\wh{G}$-conjugacy class of admissible dual embeddings $\ld j \co \ld T \inj \ld G$. In general there is no distingushed choice of $\ld j$, but if $T$ is \emph{unramified} then it has a distinguished choice of $\chi$-data, which gives a \emph{canonical} conjugacy class of embeddings $\ld j \co \ld T \rightarrow \ld G$. We will describe it more explicitly. 

Using the $\wh{G}$-conjugation, we can arrange that $\wh{j}$ sends $\wh{T}$ isomorphically to $\wh{\bT}$. Then $\ld j$ is specified by a cocycle $W_E \rightarrow \wh{G}(k)$, which must land in the normalizer of $\wh{T}$ in $\wh{G}$, denoted $N_{\wh{G}}(\wh{T})(k)$. This cocycle will be chosen to lift $h_T \in \rH^1(W_E; \bW)$. Since $T$ is unramified, all the actions factor through the unramified quotient $\mrm{val} \co W_E \surj  \tw{\varphi} \cong \Z$, and our lift will be chosen to be inflated from $\rH^1(\tw{\varphi}; N_{\wh{G}}(\wh{T})(k))$. The space of such liftings is controlled by an exact sequence 
\[
\rH^1(\tw{\varphi}; \wh{T}(k)) \rightarrow \rH^1(\tw{\varphi}; N_{\wh{G}}(\wh{T})(k)) \rightarrow \rH^1(\tw{\varphi}; \bW).
\]
We will explicate the correct lift only in the case that $T$ is \emph{elliptic}, i.e., anisotropic mod center. 

First suppose $G$ is semi-simple: then $T$ is anisotropic, so that 
\[
\rH^1(\tw{\varphi}; \wh{T}(k)) = \wh{T}(k)/\varphi\mrm{-conj} = \{1\},
\]
hence $\ld j$ is uniquely determined in this case by the condition of lifting $h_T$. 

In general, consider the adjoint quotient $G \surj G_{\ad}$. Let $\ol{T} := T/Z(G)$, so we have a pullback square
\[
\begin{tikzcd}
T \ar[r] \ar[d] & \ol{T} \ar[d] \\
G \ar[r] & G_{\ad}
\end{tikzcd}
\]
 On the dual side we have $\wh{G_{\ad}} \rightarrow \wh{G}$ and $\wh{\ol{T}} \rightarrow \wh{T}$, forming a pushout square
\[
\begin{tikzcd}
\wh{\ol{T}} \ar[r] \ar[d] & \wh{T}  \ar[d] \\
\wh{G_{\ad}} \ar[r] & \wh{G}
\end{tikzcd}
\]
Let $\ld j_{\ad} \co \ld \ol{T} \rightarrow \ld G_{\ad}$ be the $L$-embedding specified in the preceding paragraph. 
Then from \cite[\S 4.3]{DR09}, one sees that $\ld j$ is the pushout of $\ld j_{\ad}$, as in the following pushout square:
\[
\begin{tikzcd}
\wh{\ol{T}} \rtimes W_E \ar[d, "\ld j_{\ad}"] \ar[r] & \wh{T} \rtimes W_E \ar[d, "\ld j"] \\
\wh{G_{\ad}} \rtimes W_E \ar[r] & \wh{G} \rtimes W_E
\end{tikzcd}
\]

\subsection{Calculation of the $\sigma$-dual homomorphism}\label{ssec: example sigma-dual homomorphism}

Let $T \subset G$ be an unramified elliptic maximal torus defined over $E$.

\begin{prop}\label{prop: sigma-dual embedding unram elliptic torus} Suppose $T(E)$ contains an element $s$ of order $\ell$ which maps to a strongly regular element of $G_{\ad}(E)$. Let $\sigma = \mrm{conj}_s$ be the conjugation action of $s$ on $G$, so $G^\sigma = T$. Assume $\ell > b(\chG)$. Then the $\sigma$-dual homomorphism $\ld \psi \co \ld T \rightarrow \ld G$ lies in the $\wh{G}$-conjugacy class of the composition 
\[
\ld T \xrightarrow{\Fr_\ell}  \ld T \xrightarrow{\ld j} \ld G .
\]
where $\Fr_\ell$ is induced by the Frobenius endomorphism $\chT \rightarrow \chT$ (corresponding to multiplication by $\ell$ on character groups). 
\end{prop}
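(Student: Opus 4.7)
The plan is to reduce the computation to a toric calculation using Lemma~\ref{lem: CT br} and Corollary~\ref{cor: dual torus}, and then match the resulting $L$-embedding with the canonical one $\ld j$ by invoking the uniqueness property from \S\ref{sssec: canonical L-embedding}.

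For the setup, note that since $\sigma = \mrm{conj}_s$ is inner via $s \in T(E)$, the element $s$ normalizes every Borel subgroup of $G_{E^s}$ containing $T_{E^s}$, so every such Borus is $\Sigma$-stable. After base change to $E^s$, we fix such a Borus $(B, T_{E^s})$ of $G_{E^s}$, yielding an identification $i_B \co T_{E^s} \xrightarrow{\sim} \bT_{E^s}$. Moreover, $\sigma$ fixes $T$ pointwise and preserves $B$, so it acts trivially on both $T$ and $\bT$, and $(B^\sigma, T^\sigma) = (T, T)$ is the trivial Borus of $H = T$. Lemma~\ref{lem: CT br}, combined with Geometric Satake, will then yield a commutative diagram of Tannakian functors which translates to a factorization
\[
\ld \psi \co \ld T \xrightarrow{\ld \psi_{\bT}} \ld \bT \xrightarrow{\ld j_B} \ld G,
\]
where $\ld j_B$ is the $L$-embedding of the abstract Cartan arising from $B$ and $\ld \psi_{\bT}$ is the toric Brauer map. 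By Corollary~\ref{cor: dual torus}, the underlying map of dual tori is induced by $N = 1 + \sigma + \cdots + \sigma^{\ell-1}$ on character groups; since $\sigma$ acts trivially on $\bT$, this $N$ is simply multiplication by $\ell$, i.e., exactly $\Fr_\ell$.

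It remains to match the full $L$-map with $\ld j \circ \Fr_\ell$. Both $L$-embeddings restrict on $\wh{T}$ to the $\ell$-power map into $\wh{\bT}$, so their discrepancy is controlled by a $\wh{T}$-valued $1$-cocycle on $W_E$. As outlined in \S\ref{sssec: canonical L-embedding}, for semi-simple $G$ with unramified elliptic $T$ one has $\rH^1(\tw{\varphi}; \wh{T}(k)) = 0$ by anisotropicity, so unramified $L$-embeddings lifting $h_T$ are unique up to $\wh{T}$-conjugation; the general case reduces to this via the adjoint pushout construction of \S\ref{sssec: canonical L-embedding}. Thus the argument is reduced to verifying that the Weil group cocycle attached to $\ld \psi$ is indeed the unramified lift of $h_T$.

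The hard part will be this last verification: tracking how the $W_E$-equivariance structure of $\wcbr$, inherited from its construction over $\Div_X^1$ (cf.\ \S\ref{ssec: geom satake} and Theorem~\ref{thm: relative functoriality}), descends through Lemma~\ref{lem: CT br}, and confirming that the resulting cocycle $W_E \to N_{\wh{G}}(\wh{\bT})/\wh{\bT} = \bW$ agrees with $h_T$ from \S\ref{sssec: torus cocycle}. Conceptually this should follow because the Galois-equivariant structure on the Satake category captures precisely the comparison between the $W_E$-actions on $T$ and on $\bT$, but making this precise will require a careful unwinding of the Weil group equivariance in the constant term compatibility.
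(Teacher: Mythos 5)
Your overall route is the same as the paper's: reduce via Lemma~\ref{lem: CT br} and Corollary~\ref{cor: dual torus} to a factorization of $\chpsi$ through $\wh{\bT}$ with underlying map $\Fr_\ell$ (correct, since $\sigma$ acts trivially on $T$ so $N$ is multiplication by $\ell$), then pin down the $L$-embedding by showing its $\bW$-valued cocycle is $h_T$ and invoking anisotropicity. The step you flag as the ``hard part'' is in fact short in the paper: since $\wt{\cbr}$ is defined $W_E$-equivariantly over $\Div_X^1$ while $i_B$ is only defined over $E^s$, the identity $i_B\gamma_T i_B^{-1} = h_T(\gamma)\,\gamma_{\bT}$ from \S\ref{sssec: torus cocycle} directly gives $\chpsi\circ\gamma_T = h_T(\gamma)\cdot(\gamma_{\bT}\circ\chphi_B)$, and hence $[\pr_1(\ld\psi)] = h_T$. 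So your instinct that it follows formally from the comparison of $W_E$-actions on $T$ and $\bT$ is right, and no deeper unwinding of the constant-term equivariance is needed.

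The place where your proposal genuinely underestimates the work is the reduction of the general case to the semisimple one. You write that it ``reduces to this via the adjoint pushout construction of \S\ref{sssec: canonical L-embedding},'' but that section only tells you that the \emph{canonical} embedding $\ld j$ is the pushout of $\ld j_{\ad}$. To conclude you also need to know that $\ld\psi$ is the pushout of $\ld\psi_{\ad}$, i.e.\ that the Brauer functor for $(G,T)$ is compatible with the one for $(G_{\ad},\ol{T})$ under the pushforward functors induced by the central isogenies $G\to G_{\ad}$ and $T\to\ol{T}$. This is the content of the paper's Lemma~\ref{lem: br adjoint group}, whose proof requires checking that each ingredient of $\cbr$ (the shift $[\dagger_H^G]$, the Smith operation $\Psm$ via the fixed-point computations, the lifting functor $L$, and the norm $\Nm\iellt$) commutes with $\rR g_*$. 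Your argument as written assumes this compatibility silently; without it the anisotropicity argument only applies after passing to $G_{\ad}$, and you cannot transfer the conclusion back to $G$.
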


We begin with some preliminaries before turning to the proof. Any choice of $B \supset T_{E^s}$ gives a commutative diagram
\begin{equation}\label{eq: dual embedding calc 1}
\begin{tikzcd}
T_{E^s} \ar[r, equals] \ar[d, "i_B"] & T_{E^s} \ar[d] \ar[r, equals] & T_{E^s} \ar[d] \\
\bT_{E^s} & B \ar[l] \ar[r]  & G_{E^s}
\end{tikzcd}
\end{equation}
By Lemma \ref{lem: CT br} and Corollary \ref{cor: dual torus}, diagram \eqref{eq: dual embedding calc 1} induces an isomorphism 
\begin{equation}\label{eq: dual embedding calc 2}
\cbr \cong (\Fr_\ell)^* i_B^* \CT_B \co \Rep_k(\wh{G}) \rightarrow \Rep_k(\wh{T}). 
\end{equation}
Thus the choice of $B$ gives an identification $\wh{i}_B^{-1} \co \wh{T} \xrightarrow{\sim} \wh{\bT}$ in such a way that the map $\chpsi$, the restriction of $\ld \psi$ to the identity component, factors as in the diagram below 
\begin{equation}\label{eq: dual embedding calc 3}
\begin{tikzcd} 
\wh{T} \ar[rr, "\chpsi", bend left] \ar[r, "{\chphi_B}"'] & \wh{\bT} \ar[r, hook] &  \chG
\end{tikzcd}
\end{equation}
where $\chphi_B$ is $\wh{i}_B^{-1}$ composed with the Frobenius endomorphism $\Fr_\ell$ of $\wh{T}$. As discussed in \S \ref{sssec: torus cocycle}, the map $\wh{i}_B$ carries the $W_E$-action on $\chT$ to the $W_E$-action on $\wh{\bT}$ twisted by the cocycle $h_T$, so for all $\gamma \in W_E$ we have 
\begin{equation}\label{eq: twisted action}
\chpsi \circ \gamma_{\wh{T}}  = h_T(\gamma) \cdot ( \gamma_{\wh{\bT}} \circ \chphi_B).
\end{equation}


This has the following consequence. The map \eqref{eq: dual embedding calc 2} carries $W_E \subset \ld T$ to $N_{\wh{G}}(\wh{\bT}) \rtimes W_E$, so its projection to the first component defines a class $\pr_1(\ld \psi) \in \rH^1(W_E;  N_{\wh{G}} (\wh{\bT}))$. Denote by $[\pr_1(\ld \psi)] \in \rH^1(W_E; \bW)$ the projection of $\pr_1(\ld \psi)$ along $N_{\chG}(\chbT) \surj \bW$. From \eqref{eq: twisted action} we conclude: 

\begin{lemma}\label{lem: cocycle in W}
We have $h_T = [\pr_1(\ld \psi)] \in \rH^1(W_E;\bW)$.
\end{lemma}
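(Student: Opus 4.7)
The plan is to compute $\pr_1(\ld \psi)$ directly by combining the factorization of $\chpsi$ through $\wh{\bT}$ (established in \eqref{eq: dual embedding calc 3}) with the fact that $\ld \psi$ is an $L$-homomorphism, and then match the result to \eqref{eq: twisted action}. Fixing the section $W_E \hookrightarrow \ld T$, write $\ld \psi(\gamma) = n_\gamma \rtimes \gamma$ for each $\gamma \in W_E$. Since $\chpsi$ lands in $\wh{\bT}$, conjugation by $n_\gamma$ must preserve $\wh{\bT}$, so $n_\gamma \in N_{\chG}(\wh{\bT})$ and $\pr_1(\ld \psi)$ is the class of the cocycle $\gamma \mapsto n_\gamma$. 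The content of the lemma is therefore to identify the image of $n_\gamma$ in $\bW$ with $h_T(\gamma)$.

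Next, I would exploit the homomorphism property of $\ld \psi$ applied to the identity $\gamma \cdot t \cdot \gamma^{-1} = \gamma_T(t)$ in $\ld T$, for $t \in \chT$ and $\gamma \in W_E$. The left-hand side becomes $\chpsi(\gamma_T(t)) \in \wh{\bT}$. The right-hand side expands to $(n_\gamma \rtimes \gamma) \cdot \chpsi(t) \cdot (n_\gamma \rtimes \gamma)^{-1} = \Ad(n_\gamma)\bigl(\gamma_{\chG}(\chpsi(t))\bigr)$. Because $\gamma_{\chG}$ preserves $\wh{\bT}$ and agrees on $\wh{\bT}$ with the canonical action $\gamma_{\wh{\bT}}$, this simplifies to $\Ad(n_\gamma)\bigl(\gamma_{\wh{\bT}}(\chpsi(t))\bigr)$. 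Comparing with \eqref{eq: twisted action}, which asserts $\chpsi(\gamma_T(t)) = h_T(\gamma) \cdot \gamma_{\wh{\bT}}(\chpsi(t))$, we conclude that $\Ad(n_\gamma)$ and $h_T(\gamma)$ induce the same automorphism of $\wh{\bT}$. Since conjugation by $n_\gamma \in N_{\chG}(\wh{\bT})$ acts on $\wh{\bT}$ via $[n_\gamma] \in \bW$, this forces $[n_\gamma] = h_T(\gamma)$, which is exactly the desired equality in $\rH^1(W_E;\bW)$.

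The main potential obstacle is bookkeeping: making sure that the two $W_E$-actions in play (the action on $\chT$ transported through $\chpsi$, and the action on $\wh{\bT}$ inherited from $\ld G$) are being compared using the same identifications as those fixed implicitly in \eqref{eq: dual embedding calc 2} and \eqref{eq: twisted action}. Once those identifications are made explicit, the argument is a direct unwinding of the $L$-homomorphism condition, and no deeper input is required.
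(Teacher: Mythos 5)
Your argument is correct and is exactly the unwinding the paper intends: the paper states the lemma as an immediate consequence of \eqref{eq: twisted action}, and your computation — writing $\ld\psi(\gamma)=n_\gamma\rtimes\gamma$, applying the $L$-homomorphism condition to $\gamma t\gamma^{-1}=\gamma_T(t)$, and comparing with \eqref{eq: twisted action} to get $\Ad(n_\gamma)|_{\wh{\bT}}=h_T(\gamma)$, hence $[n_\gamma]=h_T(\gamma)$ by faithfulness of the $\bW$-action on $\wh{\bT}$ — just makes that deduction explicit.
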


As explained in \S \ref{sssec: canonical L-embedding}, Lemma \ref{lem: cocycle in W} already shows that Proposition \ref{prop: sigma-dual embedding unram elliptic torus} holds if $G$ is semi-simple and $T$ is an elliptic unramified torus. For the general case, note that by hypothesis $s$ maps to a strongly regular element $s \in G_{\ad}$ of order $\ell$, whose centralizer is $\ol{T}$, so a similar theory applies to $G_{\ad}$. This induces a functor 
\[
\wt{\cbr}_{\ad} \co \Sat(\Gr_{G_{\ad},\Div_X^1};k)  \rightarrow \Sat(\Gr_{\ol{T},\Div_X^1};k)
\]
and we study its relation to the functor
\[
\wt{\cbr} \co \Sat(\Gr_{G,\Div_X^1};k)  \rightarrow \Sat(\Gr_{T,\Div_X^1};k) 
\]
which corresponds under Geometric Satake to $\ld \psi$. 

We record some general properties of the Geometric Satake equivalence. If $G \rightarrow G'$ is a central isogeny, then the induced map $f \co \Gr_{G,\Div_X^1} \rightarrow \Gr_{G',\Div_X^1}$ restricts to isomorphisms of connected components, and the restriction map $\Rep_k(\ld G') \rightarrow \Rep_k(\ld G)$ is intertwined under Geometric Satake with the (derived) pushforward functor 
\begin{equation}\label{eq: central isogeny} 
\Sat(\Gr_{G,\Div_X^1};k) \xrightarrow{\rR f_*} \Sat(\Gr_{G',\Div_X^1};k).
\end{equation}
We record the following general property of the Brauer functor. 

\begin{lemma}\label{lem: br adjoint group}
Assume $\ell > \max\{b(\chG), b(\chH)\}$. Write $\ol{H} := H/(H \cap Z(G))$ and assume that $\ol{H} = (G_{\ad})^{\sigma}$. Then the diagram
\begin{equation}
\begin{tikzcd}
\Sat(\Gr_{\ol{H},\Div_X^1};k) & \Sat(\Gr_{H,\Div_X^1};k) \ar[l] \\
\Sat(\Gr_{G_{\ad},\Div_X^1};k)  \ar[u, "\wt{\cbr}_{\ad}"] & \Sat(\Gr_{G,\Div_X^1};k) \ar[u, "\wt{\cbr}"]   \ar[l]
\end{tikzcd}
\end{equation}
commutes, where the horizontal maps are of the form \eqref{eq: central isogeny}. 
\end{lemma}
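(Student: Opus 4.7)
The plan is to trace the construction of $\wt{\cbr}$ through each of its building blocks and verify compatibility with the pushforward along central isogenies at each stage. First I would reduce to the case of strictly totally disconnected $S$ using v-descent, and then (by Lemma \ref{lem: category base changes} and the construction via abelian envelopes in \S \ref{ssec: normalized Brauer functor}) reduce to checking commutativity of the corresponding diagram of normalized parity sheaves
\[
\begin{tikzcd}
\nPSY{\ol{H}}{k}{1} & \nPSY{H}{k}{1} \ar[l, "Rf_*"'] \\
\nPSY{G_{\ad}}{k}{1}  \ar[u, "\cbr_{\ad}"] & \nPSY{G}{k}{1} \ar[u, "\cbr"]   \ar[l, "Rf_*"]
\end{tikzcd}
\]
By the Frobenius-linearization formalism of Construction \ref{const: frob twist C} applied to the $\F_\ell$-structures \eqref{eq: parity F_ell}, and because $Rf_*$ is manifestly defined over $\F_\ell$, it suffices to establish commutativity after replacing $\cbr$ by $\cbr\ellt = L \circ [\dagger_{\ol{H}}^{G_{\ad}}] \circ \Psm \circ \Nm$ (and analogously for $\cbr_{\ad}$).

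Next I would verify the compatibility at each of the four stages in the composition. For $\Nm$: since $Rf_*$ corresponds to the restriction functor along $\ld G \to \ld G_{\ad}$ under Geometric Satake, and $\Nm$ is built from convolution together with the symmetric monoidal structure, the identification $\Nm \circ Rf_* \cong Rf_* \circ \Nm$ follows from the monoidality of $Rf_*$ (which in turn is a standard property of geometric Satake for central isogenies, or can be verified directly at the level of $\Gr$'s since the map $f$ restricts to isomorphisms of connected components). For $\Psm$: the key input is Proposition \ref{prop: small equivariant localization}, which gives $\Psm \circ Rf_* \cong Rf_*^\sigma \circ \Psm$, together with the $\Sigma$-fixed point identifications from \S \ref{sec: fixed point} (which here identify $\Fix(\sigma, \Gr_{G_{\ad}}) \cong \Gr_{\ol{H}}$ and similarly for $G$, compatibly with $f$). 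For the pariversity shift $[\dagger_{\bullet}^{\bullet}]$: one observes that $f$ preserves connected components, so the pariversities $\dagger_H^G$ and $\dagger_{\ol{H}}^{G_{\ad}}$ are compatible under $f$. Finally, for the lifting functor $L$: as in the proof of Lemma \ref{lem: CT br}, commutativity follows from a cube diagram in which the outer square is the compatibility of the modular reduction $\FF$ with $Rf_*$ (which is clear from the definition), together with essential surjectivity of $\TT$ from $\nPSY{-}{\OO}{1}$ under the hypothesis $\ell > \max\{b(\chG), b(\chH)\}$.

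The main obstacle will be the bookkeeping: assembling the four compatibility squares into a single commuting rectangle and checking that all the coherence data match. This is the same style of cube-chase as in the proof of Lemma \ref{lem: CT br}, so I would follow that template closely. In particular, the argument should proceed by stacking the commutative squares for $\Nm$, $[\dagger]\circ\Psm$, and $L$ horizontally as in diagram \eqref{eq: cube diag 1}, verifying commutativity of each square separately, and then invoking the same lifting argument (essential surjectivity of $\TT$ and the description of Hom-spaces in Lemma \ref{lem: tate homs of TT}) to produce the required natural isomorphism for the $L$-square from the one for the outer square involving $\FF$. Once this is done for $\cbr\ellt$, Frobenius-linearization gives the statement for $\cbr$ itself, and then v-descent plus the abelian-envelope extension transports the statement to the full Satake categories, yielding the claim.
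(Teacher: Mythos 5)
Your proposal is correct and follows essentially the same route as the paper: reduce via v-descent and the abelian-envelope construction to the diagram of normalized parity sheaves over strictly totally disconnected $S$, then check commutativity of $Rf_*$ with each of the four stages $\Nm$, $\Psm$ (via Proposition \ref{prop: small equivariant localization} and the fixed-point identifications), the pariversity shift, and $L$. The only cosmetic difference is at the $L$-step, where the paper shortcuts your cube-chase by observing that $f$ restricts to isomorphisms on connected components, so $Rf_*$ visibly preserves normalized parity sheaves and commutes with the lifting functor.
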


\begin{proof}
Write $f \co \Gr_{H, S/\Div_X^1} \rightarrow \Gr_{\ol{H}, S/\Div_X^1}$ and $g \co \Gr_{G, S/\Div_X^1} \rightarrow \Gr_{G_{\ad}, S/\Div_X^1}$ for the projections induced by the quotients by $Z(G)$. By construction of $\wt{\cbr}$ and $\wt{\cbr}_{\ad}$, it suffices to show that the diagram 
\begin{equation}
\begin{tikzcd}
\PSY{\ol{H}}{k}{1}  & \PSY{H}{k}{1}   \ar[l, "\rR f_*"'] \\
\PSY{G_{\ad}}{k}{1}  \ar[u, "\cbr_{\ad}"] & \PSY{G}{k}{1} \ar[u, "\cbr"]   \ar[l, "\rR g_*"']
\end{tikzcd}
\end{equation}
commutes for any strictly totally disconnected $S$ over $\Div_X^1$. 

Let $\cF \in \PSY{G}{\OO}{1}$ and $\FF \cF := \cF \otimes_{\OO} k \in  \PSY{G}{k}{1}$. Then we have natural isomorphisms
\begin{align*}
\rR f_* \circ \cbr (\FF \cF) & = \rR f_* \circ  L \circ [\dagger_H^G] \circ \Psm \circ \Nm \iellt (\FF \cF) \\
& \stackrel{(1)}\cong L \circ \rR f_* \circ  [\dagger_H^G] \circ \Psm \circ \Nm \iellt (\FF \cF)  \\
& \stackrel{(2)}\cong L \circ  [\dagger_{\ol{H}}^{G_{\ad}}] \circ \rR f_*  \circ \Psm \circ \Nm \iellt (\FF \cF) \\
& \stackrel{(3)}\cong L \circ  [\dagger_{\ol{H}}^{G_{\ad}}] \circ \Psm  \circ \rR g_* \circ \Nm \iellt (\FF \cF)  \\ 
& \stackrel{(4)}\cong L \circ  [\dagger_{\ol{H}}^{G_{\ad}}] \circ \Psm  \circ  \Nm \iellt \circ \rR g_* (\FF \cF) \\
& = \cbr_{\ad} \circ \rR g_* (\FF \cF)
\end{align*}
whose composition gives the desired commutativity. Here, the natural isomorphisms are justified as follows:
\begin{enumerate}
\item[(1)] is evident from the fact that $\rR f_*$ preserves normalized parity sheaves, because $f$ restricts to an isomorphism between connected components of the source and target. 

\item[(2)] holds because $G$ and $G_{\ad}$ differ by a central quotient, and similarly for $H$ and $\ol{H}$, so $[\dagger_H^G] =  [\dagger_{\ol{H}}^{G_{\ad}}] $; and clearly shifts commute with $\rR f_*$. 

\item[(3)] holds by Proposition \ref{prop: small equivariant localization}, using Proposition \ref{prop: BD gr fixed points} to see that applying $\Sigma$-fixed points to the map $g \co \Gr_{G, S/\Div_X^1} \rightarrow \Gr_{G_{\ad}, S/\Div_X^1}$ yields $f \co \Gr_{H, S/\Div_X^1} \rightarrow \Gr_{\ol{H}, S/\Div_X^1}$. 

\item[(4)] holds because $\rR g_*$ is symmetric monoidal. 
\end{enumerate}
\end{proof}



\begin{proof}[Proof of Proposition \ref{prop: sigma-dual embedding unram elliptic torus}]
Combining Lemma \ref{lem: br adjoint group} with Tannakian reconstruction, we find that the diagram 
\[
\begin{tikzcd}
\ld \ol{T} \ar[d, "\ld \psi_{\ad}"] \ar[r] & \ld T \ar[d, "\ld \psi"] \\
\ld G_{\ad} \ar[r] & \ld G
\end{tikzcd}
\]
commutes. As mentioned above, Corollary \ref{cor: dual torus} implies that it factors as 
\[
\begin{tikzcd}
\ld \ol{T} \ar[dd, "\ld \psi_{\ad}"', bend right] \ar[r] \ar[d, "\Fr_\ell"]  & \ld T \ar[dd, "\ld \psi", bend left] \ar[d, "\Fr_\ell"']  \\
\ld \ol{T} \ar[d, "\zeta_{\ad}"] \ar[r] & \ld T \ar[d, "\zeta"'] \\
\ld G_{\ad} \ar[r] & \ld G
\end{tikzcd}
\]
By inspection the bottom square induces isomorphism on the cokernels of the rows, hence is a pushout square. As explained in \S \ref{sssec: canonical L-embedding}, Lemma \ref{lem: cocycle in W} implies that $\zeta_{\ad}$ is $\wh{G}$-conjugate to the canonical $\ld j_{\ad}$. Since $\zeta$ is pushed out from $\zeta_{\ad}$ and $\ld j$ is pushed out from $\ld j_{\ad}$, we deduce that $\zeta$ is $\wh{G}$-conjugate to $\ld j$, and then that $\ld \psi$ is $\chG$-conjugate to $\ld j \circ \Fr_\ell$, as desired. 
\end{proof}

\subsection{Tate cohomology of deep level Deligne-Lusztig varieties}\label{ssec: Tate of deep level DL} We briefly recall the generalized Deligne-Lusztig representations appearing in \cite{CI21}. 

\subsubsection{Group-theoretic setup} Let $T \inj G$ be an unramified maximal torus and $x \in \cB(G/E)$ be a point of the Bruhat-Tits building of $G$ that lies in the apartment of $T$. Corresponding to $x$ we have by Bruhat-Tits theory a parahoric group scheme $\cG/\cO_E$, whose generic fiber is $G/E$. 

Recall that $\F_q$ is the residue field of $E$. By assumption, $T$ splits over $\brE$. Choose a $\brE$-rational Borel subgroup of $G_{\brE}$ containing $T_{\brE}$, and let $U$ be its unipotent radical.

For $r \in \Z_{\geq 0}$, we have group schemes $\GG_r, \TT_r$ over $\F_q$ as in \cite[\S 6.1]{CO21}\footnote{The indexing seems to differ from that of \cite[\S 2.5, 2.6]{CI21} by $1$.} corresponding to subquotients of the Moy-Prasad filtration at $x$, such that 
\[
\GG_r(\F_q) = G_{x, 0:r+} := G_{x,0}/G_{x,r+} \quad \text{and} \quad \TT_r(\F_q) = T_{0:r+} := T_{x, 0}/T_{x,r+}.
\]
We also have a group scheme $\UU_r \subset (\GG_r)_{\ol{\F}_q}$ corresponding to $U$.

\subsubsection{Deep level Deligne-Lusztig varieties} Let $\Fr_q$ be the $q$-power Frobenius for schemes over $\F_q$. We recall certain schemes constructed in \cite[\S 4]{CI21}: the ``deep level Deligne-Lusztig varieties''
\[
S_{\TT_r,\UU_r} := \{ x \in (\GG_r)_{\ol{\F}_q} \co x^{-1} \Fr_q(x) \in \UU_r\}.
\]
(The variety $S_{\TT_r,\UU_r}$ is called $X_r$ in \cite{CO21}.) It is a separated, smooth, finite type scheme over $\ol{\F}_q$, with an action of $\GG_r(\F_q) \times \TT_r(\F_q)$ by multiplication on the left and right.
The action of $\TT_r(\F_q)$ is free, and we define 
\[
X_{\TT_r, \UU_r} :=   S_{\TT_r, \UU_r} / \TT_r(\F_q).
\]


\begin{example}When $r=0$, the definition of $X_{\TT_r,\UU_r}$ specializes to that of a classical Deligne-Lusztig variety from \cite{DL76}. 
\end{example}

\begin{defn}[Deep level Deligne-Lusztig induction]\label{def: DL induction} Let $\theta \co \TT_r(\F_q) \rightarrow k^{\times}$ be a character. Let $\cL_{\theta}$ be the corresponding local system on $X_{\TT_r, \UU_r}$. We define
\[
R^{\GG_r}_{\TT_r,\UU_r }(\theta)  :=
\rR\Gamma_c^*(X_{\TT_r,\UU_r}; \cL_\theta) \in D^b(\Rep_k \GG_r(\F_q)). 
\]
\end{defn}

\subsubsection{Calculation of Tate cohomology} Following \cite[\S 2.8]{CI21}, we define $W_x(T)$ to be the subgroup of $W(T,G)$ generated by vector parts of affine roots $\psi$ of $G$ for which $\psi(x) = 0$. 


Following \cite[Definition 5.1]{CI21}, we say that $s \in G_x(\cO_{\brE})$ is \emph{unramified very regular} (with respect to $x$) if $s$ is regular semisimple in $G_{\brE}$, its connected centralizer $Z^\circ_G(s)$ is an $\brE$-split maximal torus of $G_{\brE}$, and $\alpha(s) \not\equiv 1 \pmod{\varpi_E}$ for all roots $\alpha$ of $Z^\circ_G(s)$. We say that $s \in \GG_r(\ol{\F}_q)$ is unramified very regular if it is the image of an unramified very regular element of $G_x(\cO_{\brE})$. 



\begin{prop}\label{prop: tate of deep DL}
Let $s \in \TT_r(\F_q)$ be unramified very regular in $\GG_r(\ol{\F}_q)$ and of order $\ell$, and let $\sigma = \mrm{conj}_s$ as an automorphism of $G$. Then for each $j \in \Z/2\Z$ we have
\begin{equation}\label{eq: tate of DL induction}
\rT^j (R^{\GG_r}_{\TT_r,\UU_r }(\theta))  \cong \bigoplus_{{w \in W_x(T)^{\Fr_q}}} \theta^w \in  D^b(\Rep_k \TT_r(\F_q)).
\end{equation}
where $\theta^w = \theta \circ w^{-1}$ is the translate of $\theta$ by the action of $w$. 
\end{prop}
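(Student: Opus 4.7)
The plan is to apply equivariant localization for Tate cohomology to the action of $\sigma = \mrm{conj}_s$ on the deep level Deligne-Lusztig variety $X_{\TT_r, \UU_r}$. Note that because $\sigma$ is inner, Example \ref{ex: inner has equivariant structure} supplies a canonical $\Sigma$-equivariant structure on $R^{\GG_r}_{\TT_r, \UU_r}(\theta) = R\Gamma_c(X_{\TT_r, \UU_r}; \cL_\theta)$ under which $\sigma$ acts via the operator $\cL_\theta(s)$ coming from the $\GG_r(\F_q)$-representation structure. Smith-Treumann equivariant localization for compactly supported cohomology (the scheme-theoretic analogue of Proposition \ref{prop: equivariant localization}(1), originally due to Treumann \cite{Tr19}) then identifies $\rT^j R\Gamma_c(X_{\TT_r, \UU_r}; \cL_\theta)$ with $\rT^j R\Gamma_c(X_{\TT_r,\UU_r}^\sigma; \cL_\theta|_{X^\sigma})$ as $\TT_r(\F_q)$-representations (here we use that $\TT_r$ commutes with $s$, so its action is preserved under the localization).

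Next I would identify the $\sigma$-fixed locus explicitly. For $x \in S_{\TT_r, \UU_r}$, the class $[x] \in X_{\TT_r, \UU_r}$ is fixed by $\sigma$ if and only if $sxs^{-1} \equiv x \pmod{\TT_r(\F_q)}$, equivalently $x^{-1} s x \in \TT_r$. Since $s$ is unramified very regular, any $\GG_r(\ol{\F}_q)$-conjugate of $s$ landing in $\TT_r$ has the form $\dot w s \dot w^{-1}$ for a unique $w \in W_x(T)$ and a lift $\dot w \in N_{\GG_r}(\TT_r)(\ol{\F}_q)$, and the centralizer of $s$ in $\GG_r$ is $\TT_r$ itself. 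Therefore $S_{\TT_r, \UU_r}^\sigma$ decomposes as the disjoint union over $w \in W_x(T)$ of $(\TT_r \cdot \dot w) \cap S_{\TT_r, \UU_r}$. Writing the Lang condition $x^{-1}\Fr_q(x) \in \UU_r$ for $x = t\dot w$, one reduces (modulo $\TT_r$) to $\dot w^{-1}\Fr_q(\dot w) \in \UU_r$, which holds precisely when $w \in W_x(T)^{\Fr_q}$; in that case the $w$-component is a single $\TT_r(\F_q)$-torsor, hence projects to a single point in $X^\sigma$.

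This would give $X_{\TT_r, \UU_r}^\sigma = \coprod_{w \in W_x(T)^{\Fr_q}} \mrm{pt}$, with the restriction of $\cL_\theta$ to the $w$-component realized as the one-dimensional $\TT_r(\F_q)$-representation $\theta^w = \theta \circ w^{-1}$ (the shift by $\dot w$ on the underlying torsor twists the character of the local system exactly by $w$). Applying the Tate cohomology of a point with coefficients in $k$ (which by Example \ref{ex: Tate cohomology of trivial coeff} equals $k$ in both parities) to each component then gives the desired identification $\rT^j(R^{\GG_r}_{\TT_r,\UU_r}(\theta)) \cong \bigoplus_{w \in W_x(T)^{\Fr_q}} \theta^w$ for each $j \in \Z/2\Z$.

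The main obstacles are expected to be: (a) justifying that the Smith-Treumann formalism of \S \ref{sec: Smith-Treumann} transfers cleanly from diamonds to separated finite-type schemes over $\ol{\F}_q$ with compactly supported cohomology, which should be a straightforward adaptation but needs to be done carefully (one could alternatively cite the scheme-theoretic version in \cite{Tr19} directly); (b) the careful fixed-point computation at the level of the group scheme $\GG_r$ rather than its reductive quotient, where one must verify that the unramified very regular hypothesis on $s$ genuinely pins down the centralizer to be $\TT_r$ and forces all intertwining elements to lie in $N_{\GG_r}(\TT_r)$; and (c) normalizing the $W_x(T)$-action on characters so that the component labeled by $w$ corresponds precisely to $\theta^w$ rather than some other twist.
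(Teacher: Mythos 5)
Your proposal is correct and follows essentially the same route as the paper: equivariant (Smith--Treumann) localization for $\rR\Gamma_c$ applied to the left-multiplication action of $s$, identification of the fixed locus with the points $[\dot w]$ for $w \in W_x(T)^{\Fr_q}$, and the computation $\cL_\theta|_{[\dot w]} \cong \theta^w$ via $t\dot w = \dot w(\dot w^{-1}t\dot w)$. The only difference is that the fixed-point computation you sketch by hand is exactly \cite[Proposition 5.6]{CI21}, which the paper simply cites, and the localization step is handled by citing the scheme-theoretic version from \cite[\S A.1.2]{Fe23}.
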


\begin{proof}By \cite[Proposition 5.6]{CI21}, we have
\begin{equation}
\Fix(\sigma, X_{\TT_r, \UU_r}) = \bigcup_{w \in W_x(T)^{\Fr_q}} [\dot{w}]
\end{equation}
where $[\dot{w}]$ is the coset $\dot{w}\TT_r(\F_q)$ of any lift $\dot{w} \in \GG_r(\ol{\F}_q)$ of $w$. 

By \cite[\S A.1.2]{Fe23} (which is the scheme-theoretic counterpart to \S \ref{sssec: small psm compatibilities}), this implies that 
\[
\rT^j (R^{\GG_r}_{\TT_r,\UU_r }(\theta))  = \rT^j (\rR\Gamma_c^*(X_{\TT_r,\UU_r}; \cL_\theta)) \cong \rT^j ( \bigcup_{w \in W_x(T)^{\Fr_q}} [\dot{w}] ; \cL_\theta ) \cong  \bigoplus_{w \in W_x(T)^{\Fr_q}} \cL_\theta|_{[\dot{w}]}.
\]
Here $\cL|_{[\dot{w}]}$ is a $\TT_r(\F_q)$-equivariant sheaf on a point, i.e., a representation of $\TT_r(\F_q)$. Writing $t\dot{w} = \dot{w} (\dot{w}^{-1} t \dot{w})$, we see that $\cL_\theta|_{[\dot{w}]} \cong \theta^w \in \Rep_k \TT_r(\F_q)$. 

\end{proof}


\subsection{Toral compact inductions}\label{ssec: toral representations}

We may regard $R^{\GG_r}_{\TT_r,\UU_r }(\theta)$ as a (derived) smooth representation of $G_{x,0}$ by inflation. 
Choose some extension of $\theta$ to $T(E)$, which we use to regard $R^{\GG_r}_{\TT_r,\UU_r }(\theta)$ as a (derived) smooth representation of $T(E)G_{x,0}$. Then we define 
\[
\pi_{T, U, \theta} := \cInd_{T(E)G_{x,0}}^{G(E)}
R^{\GG_r}_{\TT_r,\UU_r }(\theta) \in D^b(\Rep^{\sm}_k G(E)).
\]

Note that since $\sigma$ is inner, Example \ref{ex: inner has equivariant structure} applies to equip any $\Pi \in D^b(\Rep^{\mrm{sm}}_k G(E))$ with a canonical $\Sigma$-equivariant structure. In particular, we use this to view $\pi_{T, U, \theta} \in D^b(\Rep^{\mrm{sm}}_k G(E))^{B\Sigma}$. 

In preparation for calculating the Tate cohomology of $\pi_{T, U, \theta}$, we study the interaction between compact induction and Tate cohomology. Let $K \subset G(E)$ be any $\Sigma$-stable closed subgroup. Let $Y := G(E)/K$. Then there is a functor from finite-dimensional representations of $K$ to $G(E)$-equivariant local systems on $Y$, which we denote $V \mapsto \cF(V)$. In turn, there is a functor from $G(E)$-equivariant local systems on $Y$ to smooth $G(E)$-representations, obtained by taking compactly supported global sections. Assuming that $K$ is open, the composite functor
\[
V \mapsto \rR\Gamma_c(Y;  \cF(V))
\]
is the compact induction from $K$ to $G(E)$. By \cite[\S 3.3]{TV} we have, for each $j \in \Z/2\Z$, a natural isomorphism
\begin{equation}\label{eq: tate of compact induction}
\rT^j(\rR\Gamma_c(Y; \cF)) \cong \rT^j(\rR\Gamma_c(Y^\sigma; \cF)) \in \Rep_k^{\mrm{sm}} H(E).
\end{equation}
for any $G(E)$-equivariant local system $\cF$ on $Y$.



\begin{thm}\label{thm: derived parameter}
Let $T < G$ be an elliptic unramified maximal torus. Assume that $T(E)$ contains an element $s$ of order $\ell$, which is unramified very regular with respect to $x \in \cB(G/E)$ and strongly regular in $G_{\ad}(E)$. Then for any $\theta \co T(E) \rightarrow k^\times$, $\supp_{\Exc_k(W_E, \wh{G})} (\rH^*(\pi_{T,U,\theta}))$ contains the point of $\Spec \Exc_k(W_E, \wh{G})$ corresponding to the semi-simple $L$-parameter 
\begin{equation}\label{eq: toral L-param}
W_E \xrightarrow{\ld \theta} \ld T(k) \xrightarrow{ \ld j} \ld G(k)
\end{equation}
where $\ld \theta$ is the $L$-parameter given by class field theory, and $\ld j$ is canonical $L$-embedding of $T$ (\S \ref{sssec: canonical L-embedding}). 
\end{thm}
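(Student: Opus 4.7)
The plan is to apply the derived Treumann--Venkatesh theorem (Theorem \ref{thm: TV}) with the inner automorphism $\sigma = \mrm{conj}_s$; by the strong regularity hypothesis on $s$ in $G_{\ad}(E)$ we have $G^\sigma = T$, and since $s$ has order $\ell$ the group $\Sigma = \langle \sigma \rangle$ has order $\ell$. As $\sigma$ is inner, Example \ref{ex: inner has equivariant structure} canonically upgrades $\pi_{T,U,\theta}$ to an object of $D^b(\Rep^{\sm}_k G(E))^{B\Sigma}$. The hypothesis $\ell > b(\chG)$ implies $\ell > \max\{b(\wh{G}), b(\wh{H})\}$ since $\wh{H} = \wh{T}$ is a torus, so Theorem \ref{thm: TV} and Proposition \ref{prop: sigma-dual embedding unram elliptic torus} both apply. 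The argument proceeds by computing $\rT^j(\pi_{T,U,\theta})$, invoking Theorem \ref{thm: TV} to transfer the resulting information to the $G$-side, and then using Proposition \ref{prop: sigma-dual embedding unram elliptic torus} to identify the parameter.

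The technical heart lies in the Tate cohomology computation. I realize $\pi_{T,U,\theta}$ as $\rR\Gamma_c(Y; \cF)$ where $Y = G(E)/T(E)G_{x,0}$ and $\cF$ is the $G(E)$-equivariant local system with fiber $R^{\GG_r}_{\TT_r,\UU_r}(\theta)$. The compact-induction identity \eqref{eq: tate of compact induction} yields
\[
\rT^j(\pi_{T,U,\theta}) \cong \rT^j\bigl(\rR\Gamma_c(Y^\sigma; \cF|_{Y^\sigma})\bigr).
\]
A coset $gT(E)G_{x,0}$ lies in $Y^\sigma$ iff $g^{-1}sg \in T(E)G_{x,0}$, and the unramified very regular hypothesis on $s$ ensures (in parallel with the Deligne--Lusztig fixed-point analysis of \cite[\S 5]{CI21}) that the $T(E)$-orbits of $Y^\sigma$ are indexed by $w \in W_x(T)^{\Fr_q}$. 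On the $w$-orbit, the restriction of $\cF$ is a local system whose fiber is $R^{\GG_r}_{\TT_r,\UU_r}(\theta)$ with the Tate action twisted by $w$, so Proposition \ref{prop: tate of deep DL} identifies its contribution to $\rT^j(\pi_{T,U,\theta})$ with the character $\theta^w$ of $T(E)$. As the Fargues--Scholze correspondence for $T$ is local class field theory, each $\ld\theta^w$ lies in $\supp_{\Exc_k(W_E, \wh{T})}(\rT^j(\pi_{T,U,\theta}))$ for appropriate $j$.

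Theorem \ref{thm: TV} then places $\ld\psi \circ \ld\theta^w$ in $F_*\supp_{\Exc_k(W_E, \wh{G})}(\rH^*(\pi_{T,U,\theta}))$ for each $w$. By Proposition \ref{prop: sigma-dual embedding unram elliptic torus}, $\ld\psi$ is $\wh{G}$-conjugate to $\ld j \circ \Fr_\ell$; since $\Fr_\ell$ on the dual torus is the Frobenius twist of parameters (implementing $F_*$ on excursion spectra via the canonical $\F_\ell$-structure) and $\ld j$ is defined over $\F_\ell$, we have $\ld\psi \circ \ld\theta^w = F_*(\ld j \circ \ld\theta^w)$. Since $k$ is algebraically closed, $F_*$ is a bijection on $k$-points of $\Spec \Exc_k(W_E, \wh{G})$, whence $\ld j \circ \ld\theta^w \in \supp_{\Exc_k(W_E, \wh{G})}(\rH^*(\pi_{T,U,\theta}))$. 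Finally, the Weyl group $W_x(T)^{\Fr_q}$ lifts to $N_{\wh{G}}(\wh{T})$, so all the parameters $\ld j \circ \ld\theta^w$ are $\wh{G}$-conjugate to $\ld j \circ \ld\theta$ and correspond to the same point of $\Spec \Exc_k(W_E, \wh{G})$, giving the theorem.

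The main obstacle is the Tate cohomology computation in the second paragraph: organizing the $T(E)$-orbit decomposition of $Y^\sigma$, ruling out ``exotic'' orbits not indexed by $W_x(T)^{\Fr_q}$, and assembling the orbit-wise applications of Proposition \ref{prop: tate of deep DL} into a clean Mackey-type formula for the Tate cohomology of the compact induction. The unramified very regular hypothesis is essential here to control the interaction between $s$ and the Moy--Prasad filtration at $x$, and one must also handle the derived structure coming from the complex $R^{\GG_r}_{\TT_r,\UU_r}(\theta)$ by working with the Tate spectral sequence or by implementing the analysis sheaf-theoretically on $Y \times X_{\TT_r,\UU_r}$ from the outset.
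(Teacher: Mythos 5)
Your proposal follows essentially the same route as the paper: reduce via the compact-induction identity \eqref{eq: tate of compact induction} to the fixed points of $Y = G(E)/T(E)G_{x,0}$, compute the Tate cohomology of the stalks via Proposition \ref{prop: tate of deep DL}, feed the resulting $T(E)$-characters $\theta^w$ into Theorem \ref{thm: TV}, and untwist the Frobenius using Proposition \ref{prop: sigma-dual embedding unram elliptic torus} together with $\ld j \circ \Fr_\ell = \Fr_\ell \circ \ld j$. The overall logic and the final identification of the parameter are correct.

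The one place where your description deviates from what actually happens is the structure of $Y^\sigma$. You assert that the $T(E)$-orbits of $Y^\sigma$ are indexed by $W_x(T)^{\Fr_q}$ and that the $w$-orbit contributes $\theta^w$; this conflates two different layers of the fixed-point analysis. In the paper, the $T(E)$-orbits of $(G(E)/K)^\sigma$ (with $K = T(E)G_{x,0}$) are parametrized via the long exact sequence of nonabelian cohomology by $\ker\bigl[\rH^1(\Sigma;K) \to \rH^1(\Sigma;G(E))\bigr]$, each orbit being a \emph{single point} because $({}_\xi K)^\sigma \supset T(E)$; the Weyl set $W_x(T)^{\Fr_q}$ enters only inside the stalk at each such point, through the fixed points of the deep-level Deligne--Lusztig variety computed in Proposition \ref{prop: tate of deep DL}, which already yields $\bigoplus_{w} \theta^w$ at the trivial coset alone. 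Consequently the ``main obstacle'' you flag --- ruling out exotic orbits and proving a clean Mackey formula --- does not need to be resolved: since $Y^\sigma$ is discrete, $\rR\Gamma_c(Y^\sigma;\cF)$ splits as a direct sum over its points, the trivial coset contributes a direct summand, and the containment asserted by the theorem follows from that single contribution regardless of what the other orbits look like. Your argument as written would still reach the correct conclusion for this reason, but the intermediate claim about the orbit set should be replaced by the cohomological parametrization (or simply dropped in favor of isolating the trivial coset).
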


\begin{proof} Let $\sigma \in \Aut(G)$ by the conjugation by $s$, so $G^\sigma = T$. Note that the hypotheses on $s$ imply that Proposition \ref{prop: sigma-dual embedding unram elliptic torus} holds in this situation. 

Let $K = T(E)G_{x,0}$. Since $T$ is elliptic, we have $K = Z_G G_{x,0}$ where $Z_G$ is the center of $G(E)$. Note that $\rH^1(\Sigma; K)$ is finite, as it has a finite-index subgroup of the form $Z_G$ times a pro-$p$ subgroup and $p \neq \ell = |\Sigma|$. From the long exact sequence in non-abelian cohomology
\[
1 \rightarrow K^\sigma \rightarrow G(E)^\sigma \rightarrow  (G(E)/K)^\sigma  \rightarrow \rH^1(\Sigma; K) \rightarrow \rH^1(\Sigma; G(E))
\]
and the torsor-shifting discussion in \cite[\S 5.3, 5.4]{Se94}, we see that 
\[
(G(E)/K)^\sigma = \bigcup_{\xi \in \ker [ \rH^1(\Sigma; K) \rightarrow \rH^1(\Sigma; G(E))]} G(E)^\sigma/({}_\xi K)^\sigma
\]
where ${}_\xi K$ is the twist of $K$ corresponding to the torsor $\xi$. Henceforth we abbreviate 
\[
\ker^1(\Sigma; K,G) := \ker [ \rH^1(\Sigma; K) \rightarrow \rH^1(\Sigma; G(E)) ].
\]
Since $({}_\xi K)^\sigma \supset ({}_\xi T(E())^\sigma = T(E) = G(E)^\sigma$, we see that each $G(E)^\sigma/({}_\xi K)^\sigma$ is a point, which we will denote $[\xi] \in (G(E)/K)^\sigma$. From \eqref{eq: tate of compact induction} we get, for each $j \in \Z/2\Z$, an equivariant isomorphism 
\[
\rT^j(\pi_{T,U,\theta})  \cong  \bigoplus_{\xi \in \ker^1(\Sigma; K,G)} \rT^j (\cF(R^{\GG_r}_{\TT_r,\UU_r }(\theta))_{[\xi]}) \in D^b(\Rep^{\mrm{sm}}_k T(E)).
\]
In particular, taking $\xi = 1 \in \rH^1(\Sigma;K)$ to be the trivial class, we see that from Proposition \ref{prop: tate of deep DL} that a direct sum of Weyl conjugates of $\theta$ appears as a direct summand of $\rT^j(\pi_{T,U,\theta})$, as a representation of $T(E)$. 

By Theorem \ref{thm: TV}, we deduce that $F_* \supp_{\Exc_k(W_E, \wh{G})} (\pi_{T,U,\theta})$ contains the image under $\ld \psi$ of $\supp_{\Exc_k(W_E, \wh{T})} \theta^w$ for various $w \in W(T,G)$. For tori, the Fargues-Scholze correspondence is known to be compatible with the usual Local Langlands Correspondence, so $\supp_{\Exc_k(W_E, \wh{T})} \theta^w$ is the point corresponding to the $L$-parameter of class field theory,
\begin{equation}\label{eq: L-param Frob twist}
W_E \xrightarrow{\ld \theta^w} \ld T(k) .
\end{equation}
Note that $\ld \psi \circ \ld \theta^w = \ld \psi \circ \ld \theta \in \rH^1(W_E; \wh{G}(k))$ for each $w \in W$, since the Weyl conjugation becomes inner in $\wh{G}$. 

By Proposition \ref{prop: sigma-dual embedding unram elliptic torus} we have 
\[
\ld \psi \circ \ld \theta = \ld j \circ \Fr_\ell \circ \ld \theta = \Fr_\ell \circ \ld j \circ \ld \theta \in \rH^1(W_E; \wh{G}(k))
\]
so we deduce that $F_* \supp_{\Exc_k(W_E, \wh{G})} (\pi_{T,U,\theta})$ contains $\Fr_\ell \circ \ld j \circ \ld \theta$. By inspection of the definition, Frobenius (un)twisting is compatible with the Fargues-Scholze correspondence in the sense that
\begin{equation}
F_* \supp_{\Exc_k(W_E, \wh{G})}  \Pi =  \supp_{\Exc_k(W_E, \wh{G})}  F_* \Pi   =\supp_{\Exc_k(W_E, \wh{G})} \Pi^{(\ell)} 
\end{equation}
so $\supp_{\Exc_k(W_E, \wh{G})} (\pi_{T,U,\theta})$ contains the point of $\Exc_k(W_E, \wh{G})$ corresponding to $\ld j \circ \ld \theta \in \rH^1(W_E; \wh{G}(k))$, as desired. 
\end{proof}


\begin{cor}\label{cor: cohomology constituent L-parameter}
There is a non-zero irreducible $G(E)$-subquotient $\Pi$ of $\rH^i(\pi_{T, U, \theta})$, for some $i$, such that $\rho_{\Pi}$ is \eqref{eq: toral L-param}. In particular, if $\pi_{T, U, \theta}$ is concentrated in a single cohomological degree and is irreducible, then its Fargues-Scholze parameter is \eqref{eq: toral L-param}. 
\end{cor}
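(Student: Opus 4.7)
The plan is to deduce both statements from Theorem \ref{thm: derived parameter} by a standard commutative-algebra extraction, closely modeled on the end of the proof of Theorem \ref{thm: mod ell lifting}. Let $\mf{m}_\rho \subset \Exc_k(W_E, \wh{G})$ denote the maximal ideal corresponding to the semisimple $L$-parameter $\rho := \ld j \circ \ld \theta$ of \eqref{eq: toral L-param}. Theorem \ref{thm: derived parameter} asserts that $\mf{m}_\rho$ lies in $\supp_{\Exc_k(W_E,\wh G)}\rH^*(\pi_{T,U,\theta})$. Because this support is the union of the supports of the individual cohomology groups, I will first fix an integer $i$ for which $\mf{m}_\rho \in \supp_{\Exc_k(W_E,\wh G)}\rH^i(\pi_{T,U,\theta})$; and since $\rH^i(\pi_{T,U,\theta})$ is smooth and $\rH^i(\pi_{T,U,\theta}) = \varinjlim_K \rH^i(\pi_{T,U,\theta})^K$, I will further choose a compact open subgroup $K \subset G(E)$ small enough that $\mf{m}_\rho$ remains in the support of $\rH^i(\pi_{T,U,\theta})^K$ viewed as a $\mf{Z}(G,K;k)$-module via the composition $\Exc_k(W_E,\wh G) \xrightarrow{\FS_G} \mf{Z}(G;k) \to \mf{Z}(G,K;k)$.

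The next, key step is to extract an irreducible $\sH(G,K;k)$-subquotient of $\rH^i(\pi_{T,U,\theta})^K$ on which $\Exc_k(W_E,\wh G)$ acts through the character $\mf{m}_\rho$. Let $\mf{n}_\rho \subset \mf{Z}(G,K;k)$ be the preimage of $\mf{m}_\rho$. By the module-finiteness of $\sH(G,K;k)$ over $\mf{Z}(G,K;k)$ established by Dat--Helm--Kurinczuk--Moss \cite[Theorem 1.1]{DHKM}, together with Nakayama's lemma applied to a finitely generated $\sH(G,K;k)$-submodule $M$ of the localization $(\rH^i(\pi_{T,U,\theta})^K)_{\mf{n}_\rho}$ containing a chosen nonzero vector, the quotient $M/\mf{n}_\rho M$ is finite-dimensional over $k$ and nonzero. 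Any irreducible $\sH(G,K;k)$-subquotient $\Xi$ of $M/\mf{n}_\rho M$ then has $\mf{n}_\rho$ acting as zero, hence $\Exc_k(W_E,\wh G)$ acts on $\Xi$ through the character $\mf{m}_\rho$. The standard equivalence between irreducible $\sH(G,K;k)$-modules and irreducible admissible smooth $G(E)$-representations with nonzero $K$-fixed vectors then produces an irreducible subquotient $\Pi$ of $\rH^i(\pi_{T,U,\theta})$ with $\rho_\Pi = \rho$, which is the desired parameter \eqref{eq: toral L-param}.

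The ``in particular'' clause follows immediately: if $\pi_{T,U,\theta}$ is concentrated in a single degree $i_0$ and $\rH^{i_0}(\pi_{T,U,\theta})$ is irreducible and admissible, then the only irreducible subquotient available to the preceding construction is $\Pi = \rH^{i_0}(\pi_{T,U,\theta})$ itself, so its Fargues-Scholze parameter equals $\rho$. I do not anticipate a substantive obstacle beyond invoking the two deep inputs---Theorem \ref{thm: derived parameter} and the module-finiteness of \cite{DHKM}---since the passage from ``the derived parameter is supported at $\rho$'' to ``some irreducible constituent has parameter $\rho$'' is by now a routine application of localization and Nakayama. The real content of the corollary is carried by Theorem \ref{thm: derived parameter}; the proposed argument is essentially bookkeeping.
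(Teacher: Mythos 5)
Your overall strategy---extract an irreducible constituent by localization and Nakayama, modeled on the proof of Theorem \ref{thm: mod ell lifting}---is the right one, but there is a genuine gap at the step ``let $\mf{n}_\rho \subset \mf{Z}(G,K;k)$ be the preimage of $\mf{m}_\rho$''. The structure map goes $\Exc_k(W_E,\wh{G}) \xrightarrow{\FS_G} \mf{Z}(G,K;k)$, so $\mf{m}_\rho$ has no preimage there; what your Nakayama step actually requires is a \emph{maximal} ideal $\mf{n}$ of $\mf{Z}(G,K;k)$ that both lies over $\mf{m}_\rho$ and lies in the $\mf{Z}(G,K;k)$-support of $\rH^i(\pi_{T,U,\theta})^K$. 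This is a lying-over problem, and it is not automatic: Theorem \ref{thm: derived parameter} only places $\mf{m}_\rho$ in the module-theoretic support over $\Exc_k(W_E,\wh{G})$, which is the specialization-closure of the image of the $\mf{Z}(G,K)$-support under $\Spec \mf{Z}(G,K) \to \Spec \Exc_k(W_E,\wh{G})$, and a closed point of the closure of a constructible image need not lie in the image. (Compare $k[y]\hookrightarrow k[y,y^{-1}]$ with $M=k[y,y^{-1}]$: the ideal $(y)$ lies in $\supp_{k[y]}M$, yet no simple $k[y,y^{-1}]$-subquotient of $M$ has ``central character'' $(y)$, and $M/(y)M=0$.) Since $\mf{Z}(G,K)$ is not known to be finite, or even integral, over the image of $\Exc_k(W_E,\wh{G})$---this is essentially the finiteness-of-fibers question for \eqref{eq: LLC}, which the paper itself calls wide open---you cannot invoke lying-over, and as written the Nakayama step can fail.

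The gap is fillable, but only by using the finer information produced along the way to Theorem \ref{thm: derived parameter}, not the bare support statement. The proof of that theorem exhibits $\theta$ (and its Weyl conjugates) as a direct summand of $\rT^j(\pi_{T,U,\theta})$, and by Lemma \ref{lem: Br} the action of $\mf{Z}(G,K)^\sigma$ on Tate cohomology is computed by the Brauer homomorphism; this pins down an explicit character of $\mf{Z}(G,K)^\sigma$ (the restriction of $\chi_\theta\circ\mf{Z}_{\TV,K}$) whose kernel lies in the support of $\rH^*(\pi_{T,U,\theta})^K$. One then passes from $\mf{Z}(G,K)^\sigma$ to $\mf{Z}(G,K)$ by Artin--Tate integrality (where lying-over \emph{is} available), identifies the resulting character of $\Exc_k(W_E,\wh{G})$ via the unique-extension statement \cite[Lemma 5.15]{Fe23}, and only then applies the finiteness of \cite{DHKM} and Nakayama, exactly as in the proof of Theorem \ref{thm: mod ell lifting}; Corollary \ref{cor: functoriality character} then identifies the parameter of the resulting constituent. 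Two smaller points: your $M$ sits inside the localization, so $M/\mf{n}_\rho M$ is not literally a subquotient of $\rH^i(\pi_{T,U,\theta})$ (take $M$ inside $\rH^i(\pi_{T,U,\theta})^K$ with nonzero image in the localization instead); the ``in particular'' clause is fine as you state it.
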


\begin{remark}
Note that Theorem \ref{thm: derived parameter} imposes no regularity conditions on $\theta$. We expect that if $\theta$ is sufficiently regular, and $\ell$ is not too small, then $\pi_{T, U, \theta}$ should be concentrated in a single cohomological degree and irreducible, and cuspidal. There are interesting classes of examples where this is known; a notable one pertains to the \emph{depth-zero supercuspidals} studied in \cite{KV06, DR09}, which correspond to the case where: 
\begin{itemize}
\item $x \in \cB(G/E)$ is a vertex (so $G_x$ is a maximal parahoric), 
\item $r=0$, so $X_{\TT_0, \UU_0}$ is a usual Deligne-Lusztig variety,
\item $T$ is not contained in a proper parabolic subgroup and $\theta$ is non-singular (meaning that it is not orthogonal to any coroot, cf. \cite[Definition 5.15(1)]{DL76}), so its Deligne-Lusztig induction is cuspidal. 
\end{itemize}
Letting $\cL_\theta$ be the character sheaf on $X_{\TT_0, \UU_0}$ associated with the non-singular character $\theta \co T(\F_q) \rightarrow k^\times$, it follows from \cite[Lemma 9.14]{DL76} and \cite[Lemma 3.5]{Br90} that in this situation $\rH^*_c(X_{\TT_0, \UU_0}; \cL_\theta)$ concentrates in a single degree; and if the prime-to-$\ell$ component of $\theta$ is in general position, then $\rH^*_c(X_{\TT_0, \UU_0}; \cL_\theta)$ is irreducible by \cite[Lemma 3.6]{Br90}. 

We expect the \emph{Howe-unramified toral supercuspidal representations} of \cite{CO21} to supply further examples, of arbitrary depth. This is the subject of current work-in-progress.

\end{remark}

\bibliographystyle{amsalpha}
\bibliography{Bibliography}




\end{document}